\newtheorem{theorem}{\bf Theorem}[subsection]
\newtheorem{prop}[theorem]{\bf Proposition}
\newtheorem{cor}[theorem]{\bf Corollary}
\newtheorem{lemme}[theorem]{\bf Lemma}
\newtheorem{definition}[theorem]{\bf Definition}
\theoremstyle{remark}
\newtheorem{example}[theorem]{\bf Exemple}
\theoremstyle{remark}
\newtheorem{rem}[theorem]{\bf Remark}
\theoremstyle{remark}
\newtheorem{notation}[theorem]{\bf Notation}
 \numberwithin{equation}{subsection}
\newcommand{\resp}{{\it resp.\;}}
\newcommand{\sorth}{\begin{picture}(15,10)(-3,-10)
\put(0,-9){\line(1,0){10}}
\put(4,-9){\line(0,1){7}}
\put(6,-9){\line(0,1){7}}
   \end{picture}}
\newcommand{\ad}{\operatorname{ad}}
\newcommand{\tr}{\operatorname{tr}}
\newcommand{\codim}{\operatorname{codim}}
\def\plus{\mathop{\hbox{$\oplus$}}}
\def\go{\mathfrak}
\def\bb{\mathbb}
\def\C{\bb C}
\def\Z{\bb Z}
\def\N{\bb N}
\def\R{\bb R}
\def\Q{\bb Q}
\def\H{\bb H}
\def\cal{\mathcal}
\def\cqfd{\qed}
\def\plus{\mathop{\hbox{$\oplus$}}}
 \def\adots{\mathinner{\mkern2mu\raise1pt\hbox{.}
\mkern3mu\raise4pt\hbox{.}\mkern1mu\raise7pt\hbox{.}}}
\author{ Pascale Harinck}   
\author{Hubert Rubenthaler}
\address
{Pascale.~Harinck, CMLS, CNRS, \'Ecole polytechnique, Institut Polytechnique de Paris,   91128 Palaiseau Cedex, France.\\
E-mail: {\tt pascale.harinck@polytechnique.edu}}
\address{Hubert Rubenthaler\\ Institut de Recherche Math\'ematique Avanc\'ee\\
Universit\'e de Strasbourg et CNRS\\
7 rue Ren\'e Descartes\\
67084 Strasbourg Cedex\\ France\\
E-mail: {\tt rubenth@math.unistra.fr}}
\begin{document}
\parindent=0pt
\title{Local Zeta Functions for a class of p-adic symmetric spaces ({\bf I})}


 
  \maketitle
 \centerline{\bf Part I: Structure and Orbits}
 \vskip 100pt 
 
 {\bf Abstract:}\,{\it This is an extended version of the first part of a forthcoming paper where we will study the local Zeta functions of the minimal spherical series for the symmetric spaces arising as open orbits of the parabolic prehomogeneous spaces of commutative type over a p-adic field. The case where the ground field is $\R$ has already been considered by  Nicole Bopp and the second author (\cite{BR}). If $F$ is a p-adic field of caracteristic $0$, we consider a reductive Lie algebra $\widetilde{\go g}$ over $F$ which is endowed with a short $\Z$-grading:  $\widetilde{\go g}=\go{g}_{-1}\oplus \go{g}_{0}\oplus \go{g}_{1}$. We also suppose that the representation $(\go{g}_{0},  \go{g}_{1})$ is absolutely irreducible. Under a so-called regularity condition we  study  the orbits of $G_{0}$ in $\go{g}_{1}$, where $G_{0}$ is an algebraic group defined over $F$,  whose Lie algebra is $\go{g}_{0}$. We also investigate the $P$-orbits, where $P$ is a minimal $\sigma$-split parabolic  subgroup of $G$ ($\sigma$ being the involution which defines a structure of symmetric space on any open $G_{0}$-orbit in $\go{g}_{1}$).}
 
 \maketitle
\vskip 200pt
\hskip 30ptAMS classification: 22E46, 16S32
\vfill\eject
 {\small \def\contentsname{Table of Contents}
\tableofcontents}

\medskip\medskip\medskip\medskip\medskip\medskip
\section*{Introduction}

 \hskip 10pt The ultimate purpose of this paper will be (in a final  version) to define  local Zeta functions for a class of reductive p-adic symmetric spaces (attached to the   representations of the $\sigma$-minimal series) and to prove their explicit functional equation.
 \vskip 10pt

\hskip 10pt
But in the present first part, we are only concerned with classification and structure theory for these symmetric spaces.

\hskip 10pt Let $F$ be a p-adic field of characteristic $0$ whose residue class field has characteristic $\neq 2$. The main object under consideration is a reductive  Lie algebra $\widetilde{ {\go g}}$ endowed with a short $\Z$-grading 
$$\widetilde{ {\mathfrak g}}=V^-\oplus {\mathfrak g}\oplus V^+\quad (V^+\not = \{0\}).$$
 \vskip 10pt
\hskip 10pt This means that $[\go{g}, \go{g}]\subset \go{g}$, $[\go{g},V^\pm]\subset V^\pm$, $[V^+,V^+]=[V^-,V^-]=\{0\}$. We also suppose that the corresponding representation of $\go{g}$ on $V^+$ is absolutely irreducible.  Then $(\go{g},V^+)$ is an infinitesimal prehomogeneous vector space. It is well known that such a grading is defined by a grading element $H_{0}$. We normalize it in such a way that $\ad(H_{0})$ has eigenvalues $-2$,$0$, $2$ on $V^-$, $\go{g}$, $V^+$, respectively.

  \vskip 10pt

\hskip 10pt We introduce a natural algebraic subgroup $G$ of the group of automorphisms of $\widetilde{\go{g}}$ whose Lie algebra is $\go{g}$ (which was first used by Iris Muller (\cite {Mu97})). This group is defined  at the beginning of section \ref{sub-section-generic-strongly-orth} as the centralizer of $H_{0}$ in the group of automorphisms of  $\widetilde{\go{g}}$ which become elementary over a field extension.  Then $(G, V^+)$ is a prehomogeneous vector space.   
 \vskip 10pt
\hskip 10pt Although a part of the structure theory can be carried out with  no further assumption (section \ref{sub-section-definitions} up to section \ref{sub-section-generic-strongly-orth}), we need to introduce the so-called {\it regularity condition}, and we will essentially only consider regular graded Lie algebras in this paper. 
A graded Lie algebra is said to be regular (Definition \ref{def-regulier}) if the grading element $H_{0}$   is the semi-simple element of an $\go{sl}_{2}$-triple. This condition is also equivalent to the existence of a non-trivial relative invariant polynomial $\Delta_{0}$ of the prehomogeneous space $(G, V^+)$, and also, as we will see in section \ref{section-G/H}, to the fact that the various open $G$-orbits in $V^+$ are symmetric spaces. 
 \vskip 10pt
\hskip 10pt These    open orbits of $G$ in $V^+$ (and in $V^-$) are precisely the symmetric spaces we are interested in.

 \vskip 10pt
 \hskip 10pt  One  can always suppose that  $ {\widetilde{\go{g}}}$ is semi-simple. Then the  assumptions on the grading imply that $\overline{\go{g}}+ \overline{V^+}$  (where the overline stands for scalar extension to an algebraic closure) is a maximal parabolic subalgebra of $\overline{\widetilde{\go{g}}}$, and hence it is defined by the single root which is removed from the root basis of  $\overline{\widetilde{\go{g}}}$ to obtain the root system of $\go{g}$. It can be shown that this single root is a ``white'' root in the Satake-Tits diagram of $\widetilde{\go{g}}$. Therefore the gradings we are interested in are in one to one correspondence with ``weighted'' Satake-Tits diagrams where one ``white'' root is circled. The classification is done in section  \ref{section-classification} and the list of the allowed diagrams is given in Table 1 (section \ref{section-Table}).
 \vskip 10pt
 \hskip 10pt  A key tool in the orbital descripion of $(G,V^+)$ is a kind of {\it principal diagonal} 
 $$\widetilde{\go{g}}^{\lambda_{0}}\oplus\ldots\oplus \widetilde{\go{g}}^{\lambda_{k}}\subset V^+$$
 where $\lambda_{0},\lambda_{1},\ldots,\lambda_{k}$ is a maximal subset of strongly orthogonal roots living in $V^+$. This sequence starts with the root $\lambda_{0}$, the root defining the above mentioned parabolic, and  is obtained by an induction process which we call ``descent'' (see sections \ref{section-descente1} and \ref{sub-section-descente2}). 
  \vskip 10pt
 \hskip 10pt
 A first step in the classification of the orbits is to prove that any $G$-orbits meets this principal diagonal. This is done in Theorem \ref{th-V+caplambda}. Another step is the study of the so-called rank 1 case in section \ref{subsectionk=0} (Theorem \ref{th-k=0}).    

 \vskip 10pt
 \hskip 10pt

 Finally, in order to classify the orbits, we need to distinguish  three Types (see Definition \ref{def-type}) and the full classification of the orbits is obtained in Theorem   \ref{thm-orbites-e04} for Type $I$, Theorem \ref{thm-orbites-e1} and Theorem \ref{thm-orbites-e2} for Type $II$, and Theorem  \ref{th-d=3} for Type $III$. \\
 \vskip 10pt
 \hskip 10pt Section \ref{section-G/H}  is devoted to the study of the associated symmetric spaces.  Let $\Omega_{1},\ldots,\Omega_{m}$ be the open orbits in $V^+$. As we said before these open orbits are symmetric spaces. More precisely this means that if we choose elements $I_{j}^+\in \Omega_{j}$, and if $H_{j}={ Z}_{G}(I_{j}^+)$  then for all $j$  there exists an involution $\sigma_{j}$ of $\go{g}$ (in fact the restriction of an  involution of $\widetilde{\go{g}}$ which stabilizes $\go{g}$) such that $H_{j}$ is an open subgroup of the fixed point group $G^{\sigma_{j}}$. Therefore $\Omega_{j}\simeq G/H_{j}$ can be viewed as a symmetric space. A striking fact is that all these $G$-symmetric spaces have the same minimal $\sigma_{j}$-split parabolic subgroup $P$  (i.e. $\sigma_j(P)$ is the opposite parabolic of $P$). Moreover $(P,V^+)$ is again a prehomogeneous space. We define and study a family of polynomials $\Delta_{0},\ldots,\Delta_{k}$ which are the fundamental relative invariants of $(P,V^+)$. We also determine the open orbits of $(P, V^+)$  in terms of the values of the $\Delta_{j}$ (Theorem \ref{th-Porbites}). Finally we introduce an involution $\gamma$ of $\widetilde{\go{g}}$ which exchanges $V^+$ and $V^-$ and which allows to define the fundamental relative invariants of $(P,V^-)$ and to determine its open orbits.
 
 \vskip 10pt
 \hskip 10pt This paper follows the same lines as the corresponding paper which dealt with the real case by Nicole Bopp and the second author (\cite{BR}). But of course, due to the big   difference between  the structure of the base fields, the proofs (as well as the definition of the group which acts), are often rather different.
 
  \vskip 10pt
 \hskip 10pt We must also mention that the study of graded algebras over a $p$-adic field was initiated by Iris Muller in a series of paper (\cite{Mu98}, \cite{Mu97}, \cite{Mu02}, \cite{Mu08}),  in a more general context (the so-called quasi-commutative case), but her results seem to us less precise and sometimes weaker than  ours. Moreover she never considered the symmetric space   aspects of these spaces.
 
  \vskip 10pt
 \hskip 10pt Finally it should be noticed that, via the Kantor-Koecher-Tits construction (which is still valid over a p-adic field), there is a bijection between the regular graded Lie algebras we consider here and absolutely simple Jordan algebra structures on $V^+$ (see \cite{FK}, and the references there). And, probably,   the group $G$ which is used in this paper is very closed to the  ``structure group'' for the $p$-adic Jordan algebra $V^+$.

 \section{3-graded Lie algebras}\label{section-PH}
 
 \subsection{A class of graded algebras}\label{sub-section-definitions}\hfill
 
 In this paper the ground field $F$ is a $p$-adic field of characteristic $0$, i.e. a finite extension of $\Q_{p}$. Moreover we will always suppose that the residue class field has characteristic $\neq 2$ (non dyadic case). We will denote by $\overline{F}$ an algebraic closure of $F$. In the sequel, if $U$ is a $F$-vector space, we will set 
 $$\overline U=U\otimes_{F}\overline{F}$$.

 \begin{definition}
 
 Throughout this paper a reductive  Lie algebra $\widetilde{ {\mathfrak g}}$ over $F$   satisfying the following two  hypothesis will be called {\it a graded Lie algebra}:
 
 \begin{itemize}
 \item[$(\bf H_1)$]\label{H1}{\it  There exists an  element $H_0\in \widetilde{ {\mathfrak g}}$ such that 
$\ad H_0$ defines a   ${\mathbb  Z}$-grading  of the form }
$$\widetilde{ {\mathfrak g}}=V^-\oplus {\mathfrak g}\oplus V^+\quad (V^+\not = \{0\})\ ,$$
{\it where } $[H_0,X]=\begin{cases}
 0&\hbox{  for }X\in {\mathfrak g}\ ;\\
         2X&\hbox{  for }X\in V^+\ ;\\
        -2X&\hbox{  for }X\in V^-\ .
\end{cases}$

 ({\it Therefore, in fact,  } $H_{0}\in \go{g}$)
\item[$(\bf H_2)$]\label{H2}{\it  The $($bracket$)$ representation  of ${\mathfrak g}$ on  $\overline {V^+}$ is irreducible. $($In other words, the representation $({\mathfrak g},V^+)$ is absolutely irreducible$)$}
\end{itemize}

\end{definition}

\vskip 5pt 

\noindent The following relations are trivial consequences from $(\bf H_1)$ :
$$[{\mathfrak g},V^+]\subset V^+\ ;\ [{\mathfrak g},V^-]\subset V^-\ ;\ [{\mathfrak g},{\mathfrak g}]\subset {\mathfrak g}\
;\ [V^+,V^-]\subset {\mathfrak g}\ ;\ [V^+,V^+]=[V^-,V^-]=\{0\} .$$

\vskip 10pt

\subsection{The restricted root system}\hfill

There exists a maximal split abelian Lie subalgebra  $\go{a}$ of $\go{g}$ containing $H_{0}$. Then $\go{a}$ is also maximal split abelian in $\widetilde{\go g}$.

Denote by  $\widetilde \Sigma$ the roots of $(\widetilde{\go g},\go{a})$ and by  $\Sigma$ the roots of $(\go{g},\go{a})$. (These are effectively root systems: see \cite{Seligman}, p.10).

Let $\text{Aut}_{e}(\go{g})$ denote the group of elementary automorphisms of $\go{g}$ (\cite{Bou2} VII, \S3, $n^\circ 1$).

Two maximal split abelian subalgebras of $\go{g}$ are conjugated by  $\text{Aut}_{e}(\go{g})$ (\cite{Seligman}, Theorem 2, page 27, or  \cite{Schoeneberg}, Theorem 3.1.16 p. 27)

\vskip 10pt

\begin{theorem}\label{thbasepi} {\rm (Cf. Th.  1.2 p.10 of  \cite{BR})}\- 

{\rm (1)} There exists a system of simple roots   $\widetilde{ \Pi }$\label{Pitilde} in $\widetilde{ \Sigma }$
such that
$${\nu}\in \widetilde{ \Pi }\Longrightarrow  \nu (H_0)=0 \hbox{  or }2\ .$$

{\rm (2)} There exists an unique root \label{lambda0} $\lambda _0\in \widetilde{ \Pi }$ such that $\lambda _0(H_0)=2$.

{\rm (3)} If the decomposition of a  positive root $\lambda\in \widetilde{ \Sigma }$ in the basis
$\widetilde{
\Pi }$ is given by
$$\lambda =m_0\lambda _0+\sum_{\nu \in \widetilde{ \Pi } \backslash \{\lambda _0\}} m_\nu \nu \
,\ m_0\in {\mathbb Z}^+, m_\nu \in {\mathbb Z}^+$$
then $m_0=0$ or $m_0=1$. Moreover $\lambda $ belongs to $\Sigma $ if and only if $m_0=0$.

\end{theorem}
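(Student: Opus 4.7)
The plan is to build a positive root system in $\widetilde\Sigma$ for which $H_0$ lies in the closure of the dominant chamber, then exploit absolute irreducibility of $(\go g, V^+)$ to single out $\lambda_0$.

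For part (1), I would choose a regular element $X = H_0 + \varepsilon Y \in \go a$ in the following sense: $Y$ is a generic element of $\go a$ such that no restricted root in $\widetilde\Sigma$ with $\nu(H_0)=0$ vanishes on $Y$, and $\varepsilon > 0$ is small enough so that for every $\nu \in \widetilde\Sigma$ the sign of $\nu(X)$ equals the sign of $\nu(H_0)$ whenever the latter is nonzero. Declaring a root positive if $\nu(X) > 0$ defines a positive system $\widetilde\Sigma^+$ whose associated basis $\widetilde\Pi$ consists of simple roots satisfying $\nu(X)>0$. Because $\text{ad}(H_0)$ has eigenvalues in $\{-2,0,2\}$ on $\widetilde{\go g}$, every restricted root takes value in $\{-2,0,2\}$ on $H_0$; the construction of $X$ forces $\nu(H_0) \geq 0$ for $\nu \in \widetilde\Pi$, whence $\nu(H_0) \in \{0,2\}$.

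For parts (2) and (3), write any positive root $\lambda = \sum_{\nu\in\widetilde\Pi} m_\nu \nu$ with $m_\nu \in \Z^+$. Applying to $H_0$ gives $\lambda(H_0) = 2\sum_{\nu(H_0)=2} m_\nu \in \{0,2\}$. Hence this sum is $0$ or $1$. If $\lambda(H_0)=0$ then all coefficients of simple roots with $H_0$-value $2$ vanish, while if $\lambda(H_0)=2$ exactly one such coefficient equals $1$ and the rest vanish. Since $V^+\neq\{0\}$ there exists at least one positive root with $\lambda(H_0)=2$, so at least one simple root $\lambda_0\in\widetilde\Pi$ satisfies $\lambda_0(H_0)=2$. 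The bound $m_0\in\{0,1\}$ and the equivalence $\lambda\in\Sigma \Leftrightarrow \lambda(H_0)=0 \Leftrightarrow m_0=0$ then follow at once from the previous observation, once uniqueness of $\lambda_0$ is established.

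The main obstacle is the uniqueness statement in (2), which I would prove using $({\bf H_2})$. Suppose there were two distinct simple roots $\lambda_0,\lambda_0'\in\widetilde\Pi$ with $\lambda_0(H_0)=\lambda_0'(H_0)=2$. Work over $\overline F$ and decompose $\overline{V^+}$ into $\overline{\go a}$-weight spaces indexed by restricted roots $\lambda$ with $\lambda(H_0)=2$. By the coefficient analysis above, each such $\lambda$ has either $m_0=1$ (and $m_{\lambda_0'}=0$) or $m_{\lambda_0'}=1$ (and $m_0=0$). Let $V_1^+, V_2^+ \subset \overline{V^+}$ be the sums of the weight spaces in each case; both are nonzero as they contain $\overline{\widetilde{\go g}}{}^{\lambda_0}$ and $\overline{\widetilde{\go g}}{}^{\lambda_0'}$ respectively. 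Any restricted root $\nu\in\Sigma$ of $\go g$ satisfies $\nu(H_0)=0$; its decomposition on $\widetilde\Pi$ (or on $-\widetilde\Pi$) has nonnegative coefficients, and since $2\sum_{\mu(H_0)=2} m_\mu = 0$ these coefficients vanish, so $\nu$ has zero coefficient on both $\lambda_0$ and $\lambda_0'$. Consequently the action $\overline{\go g}{}^\nu \cdot \overline{V^+}{}^\lambda \subset \overline{V^+}{}^{\lambda+\nu}$ preserves both $V_1^+$ and $V_2^+$, and these subspaces are also preserved by $\overline{\go a}$. Hence $\overline{V^+} = V_1^+\oplus V_2^+$ is a nontrivial $\overline{\go g}$-stable decomposition, contradicting absolute irreducibility.
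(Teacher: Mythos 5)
Your proof is correct and follows essentially the same route as the paper: the order produced by your regular perturbation $H_0+\varepsilon Y$ (to be read inside the rational span of the restricted roots, since $F$ carries no order) plays the role of the order the paper gets from Bourbaki's parabolic-subset proposition, the evaluation-at-$H_0$ coefficient count for (2)--(3) is the paper's use of the three-term grading, and your subspace $V_1^+$ is exactly the paper's subspace $V_0$ spanned by the root spaces through $\lambda_0$, whose $\go g$-invariance contradicts $({\bf H_2})$. The one point to tighten is the stability argument: since $\go g=\go m\oplus\bigl(\oplus_{\nu\in\Sigma}\go g^{\nu}\bigr)$ with $\go m={\cal Z}_{\go g}(\go a)$ possibly strictly larger than $\go a$ (anisotropic kernel), you must also observe that $\go m$ preserves every $\go a$-weight space -- which is immediate -- before concluding that $V_1^+$ is stable under all of $\overline{\go g}$ (and note that the contradiction only requires $V_1^+$ to be nonzero, proper and invariant, so the two-case dichotomy is not actually needed).
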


\begin{proof}

Let $S$ be the subset of $\widetilde{ \Sigma }$ given by
$$S=\{\lambda \in \widetilde{ \Sigma }\mid \lambda (H_0)=0\ or\   2\}\ .$$
It is easily seen from $(\bf H_1)$  that $S$ is a parabolic subset of $\widetilde{ \Sigma }$,
i.e.
${\mathfrak g}\oplus V^+$ is a parabolic subalgebra of $\widetilde{ {\mathfrak g}}$. It is well known 
(see \cite{Bou1} chap. 6 Prop. 20) that there exists an order on $\widetilde{ \Sigma }$ such that,
 if  a   root 
$\lambda\in \widetilde{ \Sigma } $ is positive for this order, then $\widetilde{ {\mathfrak
g}}^\lambda
$ is a subspace of    ${\mathfrak g}\oplus V^+$. If $\widetilde{ \Pi }$ denotes the set of simple roots
of
$\widetilde{ \Sigma }$  corresponding  to this order, then $\widetilde{ \Pi }$ satisfies (1).
\vskip 5pt 

There exists at least  one root $\lambda _0\in \widetilde{ \Pi }$ such that $\lambda _0(H_0)=2$
because  $V^+\not =\{ 0\}$. Moreover the commutativity of  
$  V^+$ which is the nilradical of the parabolic algebra $\go
g\oplus V^+$ implies  (3).
\vskip 5pt 
 Let us suppose  that there exists in $\widetilde{ \Pi }$ a root $\lambda _1\not = \lambda
_0$ such that $\lambda _1(H_0)=2$. Let $V_0$ be  the sum of the root spaces $\widetilde{ {\mathfrak
g}}^\lambda $ for the roots $\lambda $ of the form 
$$ \lambda =\lambda _0+\sum_{\nu \in \widetilde{ \Pi },\nu (H_0)=0}m_\nu \nu
\quad (m_\nu \in {\mathbb Z}^+)\ .$$
Since $V_0$ does not contain  $ \widetilde {\go g}^{\lambda_1}$, it
is  a non-trivial  subspace of $V^+$ which is invariant under the action of ${\mathfrak g}$.
This gives a contradiction with $(\bf H_2)$ and the uniqueness of $\lambda _0$ such that $\lambda
_0(H_0)=2$ follows, hence  (2) is proved.

 \end{proof}
Let us fix once and for all such a set of simple roots $\widetilde{ \Pi }$ of $\widetilde{ \Sigma }$. Then 
$$\widetilde{ \Pi }=\Pi \cup \{\lambda _0\}\ ,$$ 
where $\Pi=\{\nu \in \widetilde{ \Pi }\,|\, \nu(H_{0})=0\}$  is a set of simple roots of  $ \Sigma $. We denote by  $\widetilde{\Sigma}^+$ (resp. $\Sigma^+$) the set of positive roots of $\widetilde{\Sigma}$ (resp. $\Sigma$) for the order defined by $\widetilde{ \Pi }$ (resp. $\Pi$).

  \vskip 5pt

Then we have the following
characterization of  $\lambda _0\ $:
\vskip 5pt 

\begin{cor}\label{corlambda0} The root $\lambda _0$ is the unique  root in $\widetilde{
\Sigma }$ such that 
\begin{align*} 
\bullet \hskip 15pt &\lambda _0(H_0)=2\ ;\cr
   \bullet \hskip 15pt & \lambda \in \Sigma ^+\Longrightarrow \lambda _0-\lambda \notin
\widetilde{ \Sigma }\ . 
\end{align*}
 \end{cor}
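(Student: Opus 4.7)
The plan is first to verify that $\lambda_0$ itself meets the two listed conditions, and then to obtain uniqueness by characterising $\lambda_0$ as the unique ``lowest weight'' of the irreducible $\go{g}$-module $V^+$.

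For the existence half, the identity $\lambda_0(H_0)=2$ is built into Theorem \ref{thbasepi}. For the second property, I would write any $\lambda\in\Sigma^+$ as $\lambda=\sum_{\nu\in\Pi}n_\nu\nu$ with $n_\nu\in{\mathbb Z}^+$ not all zero. In the basis $\widetilde{\Pi}$, the element $\lambda_0-\lambda$ then has coefficient $+1$ on $\lambda_0$ and nonpositive coefficients $-n_\nu$ on the $\nu\in\Pi$, with at least one strictly negative. Since the nonzero coefficients of a root of $\widetilde{\Sigma}$ in a basis of simple roots must all have the same sign, this forces $\lambda_0-\lambda\notin\widetilde{\Sigma}$.

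For uniqueness, I would take any $\mu\in\widetilde{\Sigma}$ satisfying both conditions and aim to show $\mu=\lambda_0$. The first condition gives $\widetilde{\go{g}}^\mu\subset V^+$, and the second yields $\widetilde{\go{g}}^{\mu-\alpha}=\{0\}$, hence $[\go{g}^{-\alpha},\widetilde{\go{g}}^\mu]=0$, for every $\alpha\in\Sigma^+$. Setting $\go{m}:=Z_{\go{g}}(\go{a})$ and $\go{n}^\pm:=\bigoplus_{\alpha\in\Sigma^+}\go{g}^{\pm\alpha}$, so that $\go{g}=\go{n}^-\oplus\go{m}\oplus\go{n}^+$, this says that $\go{n}^-$ annihilates $\widetilde{\go{g}}^\mu$ while $\go{m}$ (centralising $\go{a}$) preserves it. Applying the PBW theorem with the ordering positives--$\go{m}$--negatives then gives
\[
U(\go{g})\cdot\widetilde{\go{g}}^\mu\;=\;U(\go{n}^+)\cdot\widetilde{\go{g}}^\mu,
\]
a $\go{g}$-submodule whose $\go{a}$-weights all lie in $\mu+{\mathbb Z}^+\Pi$.

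Since $(\go{g},V^+)$ is irreducible (a consequence of $(\mathbf{H}_2)$), this submodule must equal $V^+$, so every root of $V^+$ is $\ge\mu$ in the partial order defined by ${\mathbb Z}^+\Pi$; in particular $\lambda_0\ge\mu$. On the other hand, Theorem \ref{thbasepi}(3) applied to $\mu$ gives the reverse inequality $\mu\ge\lambda_0$, whence $\mu=\lambda_0$. The one step calling for a bit of care is the PBW identity $U(\go{g})\cdot\widetilde{\go{g}}^\mu=U(\go{n}^+)\cdot\widetilde{\go{g}}^\mu$: it is the standard lowest-weight-module manipulation, using only that $\go{n}^-\cdot\widetilde{\go{g}}^\mu=0$ and that $\widetilde{\go{g}}^\mu$ is stable under $\go{m}$.
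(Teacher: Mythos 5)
Your proof is correct, but your route to uniqueness is genuinely different from the paper's. The paper argues purely combinatorially: given another candidate $\mu$, it uses Theorem \ref{thbasepi} (3) to write $\mu=\lambda_0+\sum_{\nu\in A}m_\nu\nu$, then invokes the standard fact that a positive root can be written as an ordered sum of simple roots with every partial sum a root, so that either $\mu-\nu_p\in\widetilde{\Sigma}$ for the last simple root $\nu_p\in\Pi\subset\Sigma^+$, or $\mu-(\nu_1+\cdots+\nu_k)=\lambda_0\in\widetilde{\Sigma}$ with $\nu_1+\cdots+\nu_k$ itself a (positive) root of $\Sigma$ -- in both cases contradicting the second property. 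You instead interpret the two conditions as saying that $\widetilde{\go{g}}^{\mu}\subset V^+$ is annihilated by $\go{n}^-$ and preserved by $\go{m}={\cal Z}_{\go{g}}(\go{a})$, and run the lowest-weight/PBW argument $U(\go{g})\cdot\widetilde{\go{g}}^{\mu}=U(\go{n}^+)\cdot\widetilde{\go{g}}^{\mu}$, using irreducibility of $(\go{g},V^+)$ (a consequence of $({\bf H_2})$) to force $\lambda_0\in\mu+\Z^+\Pi$, and Theorem \ref{thbasepi} (3) for the reverse inequality. Both are sound; the paper's version is shorter and stays entirely at the level of root combinatorics, making no further appeal to $({\bf H_2})$ beyond what Theorem \ref{thbasepi} already encodes, while yours is more conceptual -- it identifies $\lambda_0$ as the lowest restricted $\go{a}$-weight of $V^+$, in the same spirit as the characterization of the highest root $\lambda^0$ in Proposition \ref{prop.plusgranderacine} -- at the cost of handling the p-adic subtleties (multi-dimensional restricted root spaces, non-abelian $\go{m}$), which you do correctly; the only microscopic point left implicit is that $\mu-\alpha\neq 0$ for $\alpha\in\Sigma^+$ (immediate from $(\mu-\alpha)(H_0)=2$), so the bracket $[\go{g}^{-\alpha},\widetilde{\go{g}}^{\mu}]$ indeed lands in a genuine root space and not in $\go{m}$.
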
 

 \begin{proof}  It is clear that $\lambda_{0}$ satisfies the two properties. Let $\mu\in \widetilde{
\Sigma }$ a root satisfying the same properties. Then the first property implies  that $\mu\in \widetilde{\Sigma }^+$. Suppose that $\mu\neq\lambda_{0}$. Then $\mu=\lambda_{0}+\Sigma_{\nu \in A} m_{\nu}\nu$ with $A\subset \Pi $, $A\neq \emptyset$,  and  $m_{\nu}\neq0$. This implies that $\mu=\nu_{1}+\dots+\nu_{k}+\lambda_{0}+\nu_{k+1}+\dots+\nu_{p}$, where each partial sum is a root. Then either $\mu-\nu_{p}\in \widetilde{
\Sigma }$, or (if $\nu_{k+1}=\dots=\nu_{p}=0$) one has  $\mu - (\nu_{1}+\dots+\nu_{k})=\lambda_{0}\in \widetilde{
\Sigma }$. In both cases  the second property would not be verified.

 \end{proof}
 
 \begin{rem}\label{rem-precisions}
 
 Let  $\go{m}$ be the centralizer of  $\go{a}$ in $\go{g}$ (which is also the centralizer of $\go{a}$ in $\widetilde {\go g}$). We have then the following decompositions:
 $$\widetilde{\go{g}}=\go{m}\oplus \sum_{\lambda\in \widetilde{\Sigma}}\widetilde{\go{g}}^\lambda,\hskip 10pt\go{g}=\go{m}\oplus \sum_{\lambda\in  {\Sigma}} {\go{g}}^\lambda, \hskip 10pt\ V^+=\sum_{\lambda\in \widetilde{\Sigma}^+\setminus \Sigma^+}\widetilde{\go{g}}^\lambda.$$
 The algebra  $[\go{m},\go{m}]$ is {\it anisotropic} (i.e., his unique  split abelian subalgebra  is $\{0\}$), and is called the    {\it anisotropic kernel} of  $\widetilde{\go{g}}$.
 \end{rem}

 \vskip 20pt
 \subsection{Extension to the algebraic closure}\label{section1-extension}\hfill
 \vskip 10pt
 Let us fix an algebraic closure  $\overline F$ of $F$. Remember that for each vector space  $U$ over  $F$ we  note $\overline U $ the vector space  obtained by extension of the scalars:
$$\overline U =U\otimes _{F} \overline F.$$
  We have then the decomposition 
 $$\overline{\widetilde {\go g}}=\overline{V^-}\oplus \overline{\go g}\oplus \overline{V^+}$$
 and according to   $(\bf{H_{2}})$, the representation $(\overline{\go g},\overline{V^+})$ is irreducible.
 Let $\go j$ be a  Cartan subalgebra of  $ \widetilde {\go g}$ which contains  $\go a$. Then  $\go j\subset \go{g}$ and ${\go j}$ is also a Cartan subalgebra of  ${\go g}$. This implies that  $\overline {\go j}$ is a Cartan subalgebra of $\overline{\widetilde {\go g}}$ (and also of $\overline{\go g}$) (see \cite{Bou2}, chap.VII, \S 2, Prop.3)

 Let  $\widetilde{\cal R}$ (resp. ${\cal R}$) be the roots of the pair $(\overline{\widetilde {\go g}}, \overline {\go j})$ (resp.   $(\overline{\go g}, \overline {\go j})$) 	and let  $\overline{\widetilde {\go g}}^{^{\alpha}}$ (resp. ${\overline{\go g}}^{^{\alpha}}$) be the corresponding root spaces.

 Let $X\in \overline{\widetilde {\go g}}^{^{\alpha}}$. Let us write $X=\sum a_{i}X_{i}$ where  $a_{i}\in \overline{F}^{^*}$ and where the  elements  $X_{i}\in \widetilde {\go g}$ are $\overline{F}$-free eigenvectors of $\go{a}$. Then for $H\in \go{a}$, we have  $[H,X]=\alpha(H)X=\sum a_{i}[H,X_{i}]=\sum a_{i}\alpha(H)X_{i}$. Hence $[H,X_{i}]=\alpha(H)X_{i}$. If $X_{i}=\sum_{\lambda\in \Sigma \cup \{0\}}X_{\lambda}$, we obtain $[H,X_{i}]=\sum \lambda(H)X_{\lambda}=\alpha(H)\sum X_{\lambda}$. Therefore  $\alpha(H)\in F$, in other words the  restrictions to $\go{a}$ of roots belonging to $\widetilde{\cal R}$,  take values in  $F$. 
 
  We denote by $\rho: {\overline {\go j}}^*\longrightarrow {\go a}^*$ the restriction morphism.   
  One sees easily that  $\rho(\widetilde{\cal R})= \widetilde{\Sigma}\cup\{0\}$ and that $\rho({\cal R})=\Sigma\cup \{0\}$.
 
 Let us recall the following well known result:
 
 \begin{lemme}\label{lemme-espacesradiciels-cloture}\hfill
 
 Let  $\lambda\in \widetilde{\Sigma}$. Let $S_{\lambda}=\{\alpha\in \widetilde{\cal R},\,\rho(\alpha)=\lambda\}$. Then we have:
 $$\overline{\widetilde{\go{g}}^\lambda}=\sum_{\alpha\in S_{\lambda}}\overline{\widetilde {\go g}}^{^{\alpha}}$$
 \end{lemme}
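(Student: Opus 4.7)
The plan is to deduce the equality from the compatibility of two decompositions: the $\go{a}$-weight decomposition of $\widetilde{\go{g}}$ (which descends from $F$ and extends cleanly to $\overline{F}$), and the $\overline{\go{j}}$-root decomposition of $\overline{\widetilde{\go{g}}}$, the link between them being the restriction map $\rho$.

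First I would observe that since $\go{j}$ is a Cartan subalgebra of $\widetilde{\go{g}}$ containing $\go{a}$, it lies in the centralizer $\go{m}$ of $\go{a}$ in $\widetilde{\go{g}}$; in particular $\go{a}\subset\overline{\go{j}}$, so any $\overline{\go{j}}$-root $\alpha$ admits a well defined restriction $\rho(\alpha)\in\go{a}^{*}$, and (by the calculation already recalled just before the lemma) $\rho(\alpha)\in\widetilde{\Sigma}\cup\{0\}$. Consequently, for any $\alpha\in\widetilde{\cal R}$ and any $X\in\overline{\widetilde{\go{g}}}^{\alpha}$, one has $[H,X]=\rho(\alpha)(H)X$ for every $H\in\go{a}$, i.e.\ each $\overline{\go{j}}$-root space is contained in the $\rho(\alpha)$-weight space of $\go{a}$ acting on $\overline{\widetilde{\go{g}}}$.

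Next I would use the fact that the $\go{a}$-weight decomposition given in Remark \ref{rem-precisions} commutes with scalar extension: tensoring with $\overline{F}$ is exact, and $\go{a}\subset F$-acts on $\overline{\widetilde{\go{g}}}$ through the same eigenvalues (all lying in $F$), so
\[
\overline{\widetilde{\go{g}}}=\overline{\go{m}}\oplus\sum_{\mu\in\widetilde{\Sigma}}\overline{\widetilde{\go{g}}^{\mu}},
\]
and for $\lambda\in\widetilde{\Sigma}$ (hence $\lambda\neq 0$) the $\lambda$-weight space of $\go{a}$ acting on $\overline{\widetilde{\go{g}}}$ is exactly $\overline{\widetilde{\go{g}}^{\lambda}}$.

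Finally I would combine these two facts. Starting from the $\overline{\go{j}}$-root space decomposition $\overline{\widetilde{\go{g}}}=\overline{\go{j}}\oplus\bigoplus_{\alpha\in\widetilde{\cal R}}\overline{\widetilde{\go{g}}}^{\alpha}$, and grouping terms according to $\rho(\alpha)$, the first step shows that for $\lambda\in\widetilde{\Sigma}$ the sum $\sum_{\alpha\in S_{\lambda}}\overline{\widetilde{\go{g}}}^{\alpha}$ lies inside the $\lambda$-weight space of $\go{a}$, which the second step identifies with $\overline{\widetilde{\go{g}}^{\lambda}}$; conversely, any element of $\overline{\widetilde{\go{g}}^{\lambda}}$, when expanded in the $\overline{\go{j}}$-root decomposition, can only have nonzero components on root spaces $\overline{\widetilde{\go{g}}}^{\alpha}$ with $\rho(\alpha)=\lambda$ (the $\overline{\go{j}}$-component is killed because $\lambda\neq 0$). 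The two inclusions give the desired equality. The only mildly delicate point is checking that scalar extension preserves the $\go{a}$-weight decomposition, but this is immediate because the weights of $\go{a}$ already take values in $F$.
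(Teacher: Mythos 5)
Your proof is correct and follows essentially the same route as the paper: both inclusions are obtained by comparing $\go a$-eigenvalues against the $\overline{\go j}$-root decomposition of $\overline{\widetilde{\go g}}$, the reverse inclusion being argued exactly as in the paper. The only difference is cosmetic: where you invoke compatibility of the $\go a$-weight decomposition with scalar extension, the paper proves the same fact by writing an element of $\overline{\widetilde{\go g}}^{\alpha}$ as an $\overline{F}$-combination of $\go a$-eigenvectors of $\widetilde{\go g}$ that are free over $F$.
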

 
 \begin{proof} Let  $\alpha\in S_{\lambda}$ and $X\in \overline{\widetilde {\go g}}^{^{\alpha}}$. The element $X$ can be written
$X=\sum_{i=1}^{n} a_{i}X_{i}$ where $a_{i}\in \overline{F}^*$ and where the elements  $X_{i}\in \widetilde{\go{g}}$ are free over  $\overline{F}$ and are eigenvectors $\go{a}$ ($\go{a}$ is split). Then for $H\in \go{a}$ we have:

$[H,X]=\sum_{i=1}^{n}a_{i}[H,X_{i}]= \sum_{i=1}^{n}a_{i}\gamma_{i}(H)X_{i}=\alpha(H)X=\lambda(H)X= \sum_{i=1}^{n}   a_{i}\lambda(H)X_{i}$. Therefore for each $H\in \go{a}$,  $\gamma_{i}(H)=\lambda(H)$.This implies the inclusion $\sum_{\alpha\in S_{\lambda}}\overline{\widetilde {\go g}}^{^{\alpha}}\subset \overline{\widetilde{\go{g}}^\lambda}$. Conversely let $X\in \overline{\widetilde{\go{g}}^\lambda}$ whose root space decomposition in $\overline{\widetilde{\go{g}}}$ is given by  $X= \sum _{\beta\in \widetilde{\cal R}\cup\{0\}}X_{\beta}$. For $H\in \go{a}$ we have: 
$$[H,X]=\lambda(H)X=\sum \lambda(H)X_{\beta}=\sum \beta(H)X_{\beta},$$
and hence $\rho(\beta)=\lambda$.
 \end{proof}

\vskip 3pt
 
Set $\widetilde P=\{\alpha\in \widetilde{\cal R},\, \rho(\alpha)\in \widetilde{\Sigma}^+\cup \{0\}\}$. One shows easily that  $\widetilde P$ is a parabolic subset $\widetilde{\cal R}$. Therefore there is an order on  $\widetilde{\cal R}$ such that if $\widetilde{\cal R}^+$ is the set of positive roots for this order  one has  $\widetilde{\cal R}^+\subset \widetilde P$ (\cite{Bou1}, chap. VI, \S1, $n^\circ 7$, Prop. 20). Then:

 $$\rho(\widetilde{\cal R}^+)= \widetilde{\Sigma}^+\cup \{0\}$$
 and hence
 $$\rho({\cal R}^+)= {\Sigma}^+\cup\{0\}.$$
 We will denote by  $\widetilde{\Psi}$ the set of simple roots of  $\widetilde{\cal R}$ corresponding to  $\widetilde{\cal R}^+$.
 
 \begin{prop}\label{prop.alpha0}\hfill
 
  There is a unique simple root  $\alpha_{0}\in \widetilde{\Psi}$ such that  $\rho(\alpha_{0})= \lambda_{0}$.
 \end{prop}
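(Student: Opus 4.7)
The plan is to mimic the proof of Theorem~\ref{thbasepi}, now transported to the absolute root system $\widetilde{\cal R}$, with the absolute irreducibility hypothesis $(\bf H_2)$ replacing the $F$-irreducibility used there. The key preliminary observation is that for every $\beta \in \widetilde{\Psi}$ one has $\beta(H_0) \in \{0,2\}$: indeed $\beta \in \widetilde{\cal R}^+ \subset \widetilde P$ forces $\rho(\beta) \in \widetilde{\Sigma}^+ \cup \{0\}$, and the equality $\beta(H_0) = \rho(\beta)(H_0)$ combined with Theorem~\ref{thbasepi}(3) leaves only the values $0$ and $2$.

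\emph{Existence.} Since $\rho(\widetilde{\cal R}^+) = \widetilde{\Sigma}^+\cup\{0\}$, I would pick $\alpha \in \widetilde{\cal R}^+$ with $\rho(\alpha) = \lambda_0$ and expand $\alpha = \sum_{\beta\in\widetilde{\Psi}} n_\beta \beta$ with $n_\beta \geq 0$. Evaluating at $H_0$ gives $2 = 2\sum_{\beta:\,\beta(H_0)=2} n_\beta$, so exactly one simple root $\alpha_0$ with $\alpha_0(H_0) = 2$ occurs in $\alpha$, and with multiplicity one. Applying $\rho$ produces the identity
$$\lambda_0 \;=\; \rho(\alpha_0) + \sum_{\beta\neq\alpha_0} n_\beta\,\rho(\beta),$$
in which each $\rho(\beta)$ with $\beta(H_0)=0$ lies in $\Sigma^+\cup\{0\}$, while $\rho(\alpha_0) = \lambda_0 + \sum_{\nu\in\Pi} m_\nu\nu$ with $m_\nu\geq 0$ by Theorem~\ref{thbasepi}(3); comparing the coefficient of each $\nu\in\Pi$ on both sides -- zero on the left against a sum of non-negative integers on the right -- forces $\rho(\alpha_0) = \lambda_0$.

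\emph{Uniqueness.} Suppose, for contradiction, that two distinct simple roots $\alpha_0 \neq \alpha_0'$ in $\widetilde{\Psi}$ both satisfy $\rho = \lambda_0$, and set
$$V_0 \;=\; \sum_{\gamma}\overline{\widetilde{\go g}}^\gamma$$
where $\gamma$ runs over the roots in $\widetilde{\cal R}^+$ with $\gamma(H_0) = 2$ whose $\widetilde{\Psi}$-expansion has vanishing coefficient on $\alpha_0'$. Then $V_0\subset\overline{V^+}$ is non-zero (it contains $\overline{\widetilde{\go g}}^{\alpha_0}$) and proper (it misses $\overline{\widetilde{\go g}}^{\alpha_0'}$), so the contradiction with $(\bf H_2)$ will follow as soon as $V_0$ is shown to be $\overline{\go g}$-stable. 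This stability check is the step I expect to be the main obstacle: it rests on the fact that any $\delta \in \widetilde{\cal R}$ with $\delta(H_0) = 0$ has all its $\widetilde{\Psi}$-coefficients $k_\beta$ of a single sign, so that the identity $2\sum_{\beta(H_0)=2} k_\beta = 0$ forces the coefficient of each simple root of $H_0$-value $2$ (in particular of $\alpha_0'$) to vanish in $\delta$. Whenever $\gamma+\delta$ is then a root, it still has vanishing $\alpha_0'$-coefficient and lies in $\widetilde{\cal R}^+$ (being in the $\overline{V^+}$-part of the grading), and $\overline{\go j}$ stabilises each root space, so $V_0$ is indeed $\overline{\go g}$-stable. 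Everything else is routine bookkeeping with $H_0$-values and simple-root coefficients.
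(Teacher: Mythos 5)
Your argument is correct, but it follows a different route from the paper's. The paper proves only uniqueness (existence is left implicit) and does so by a case analysis on the irreducible components of $\widetilde{\cal R}$: if the two simple roots $\alpha_{0},\beta_{0}$ with restriction $\lambda_{0}$ lay in the same component, the highest root of that component would restrict to a root with $\lambda_{0}$-coefficient $\geq 2$, contradicting Theorem~\ref{thbasepi}~(3); if they lay in different components, the two highest roots $\omega_{0},\omega_{1}$ would both be dominant weights of the irreducible module $(\overline{\go g},\overline{V^+})$, contradicting $(\bf H_{2})$ (cf.\ Remark~\ref{simplicite-eventuelle}, which notes the second case disappears when $[\widetilde{\go g},\widetilde{\go g}]$ is absolutely simple). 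You instead transport the invariant-subspace construction used in the paper's proof of Theorem~\ref{thbasepi}~(2) from the relative system $\widetilde{\Sigma}$ to the absolute system $\widetilde{\cal R}$: your $V_{0}$ (sum of the $\overline{\widetilde{\go g}}^{\gamma}$ with $\gamma(H_{0})=2$ and vanishing $\alpha_{0}'$-coefficient) is nonzero, proper, and $\overline{\go g}$-stable, and the stability check is sound -- a root $\delta$ with $\delta(H_{0})=0$ has same-sign coefficients, so its coefficients on all simple roots of $H_{0}$-value $2$ vanish, and a root of $H_{0}$-value $2$ is automatically in $\widetilde{\cal R}^{+}$ because $\widetilde{\cal R}^{+}\subset\widetilde P$ and positive roots of $\widetilde{\Sigma}$ take the values $0,2$ on $H_{0}$. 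Note that your argument in fact proves the stronger statement that $\widetilde{\Psi}$ contains a unique simple root with $H_{0}$-value $2$, avoids the component-by-component discussion entirely, and you also supply the existence step (evaluation at $H_{0}$ plus comparison of $\Pi$-coefficients) that the paper does not spell out; what the paper's version buys in exchange is the explicit structural dichotomy exploited in Remark~\ref{simplicite-eventuelle}.
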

 
 \begin{proof} Suppose that there are two distinct simple roots $\alpha_{0},\beta_{0}$ such that  $\rho(\alpha_{0})=\rho(\beta_{0})= \lambda_{0}$, and suppose that these  two roots belong to the same irreducible component of   $\widetilde{\cal R}$.
 
  Then the highest root in this irreducible component will be of the form:
 $$\gamma=m_{0}\alpha_{0}+m_{1}\beta_{0} +\sum_{\alpha\in \widetilde{\Psi}\setminus\{\alpha_{0},\beta_{0}\}}m_{\alpha}\alpha,\,\,\, m_{0},m_{1}\geq1, \,m_{\alpha}\geq 0,$$
 and then
 $$\rho(\gamma)=(m_{0}+m_{1})\lambda_{0} +\sum_{\nu \in \Pi}m_{\nu}\nu.$$
 And this is impossible according to Theorem \ref{thbasepi}. 
 
 Consequently each of the roots $\alpha_{0}$ and $\beta_{0}$ belong to a different irreducible component of  $\widetilde{\cal R}$. Let $\omega_{0}$ (resp.  $\omega_{1}$) the highest root of the irreducible  component of $\widetilde{\cal R}$ containing  $\alpha_{0}$ (resp. $\beta_{0}$). As $\omega_{i}(H_{0})=2$ (because $\alpha_{0}(H_{0})=\beta_{0}(H_{0})=2$ and $\omega_{i}(H_{0})=-2,0, 2$) the linear forms   $\omega_{i}$ are dominant weights of the irreducible representation  $(\overline{\go g},\overline{V^+})$. This implies the result.

 
 
  \end{proof}
  
  \begin{rem}\label{simplicite-eventuelle} If we had supposed that $[\widetilde {\go g},\widetilde {\go g}]$ was absolutely simple, then the second part of the proof would have been superfluous. 
  \end{rem}

  \vskip 20pt
 \subsection{The highest root in  $\widetilde{\Sigma}$}\hfill
 \vskip 10pt
 \begin{prop}\label{prop.plusgranderacine}\hfill
 
There is a unique root $\lambda^0 \in \widetilde{\Sigma}$ such that
 \begin{align*}
\bullet\hskip 15pt &\lambda ^0(H_0)=2\ ;\\
\bullet \hskip 15pt &\forall  \lambda \in  \Sigma ^+,  \lambda ^0+\lambda \notin \widetilde{
\Sigma }\ .
\end{align*}
This root  $\lambda^0$ is the highest root of the irreducible component of $\widetilde{\Sigma}$ which contains  $\lambda_{0}$.

 \end{prop}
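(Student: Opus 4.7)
The plan is to take $\lambda^0$ to be the highest root of the irreducible component $\widetilde{\Sigma}_0$ of $\widetilde{\Sigma}$ containing $\lambda_0$, check that it satisfies the two listed properties, and then deduce uniqueness by exploiting the $H_0$-gradation of $\widetilde{\Pi}$. The statement is dual to Corollary~\ref{corlambda0}: there, $\lambda_0$ was characterized as the unique root with $H_0$-value $2$ that cannot be decreased by any $\lambda\in\Sigma^+$; here $\lambda^0$ should be the unique root with $H_0$-value $2$ that cannot be increased by any $\lambda\in\Sigma^+$.

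To check $\lambda^0(H_0)=2$, I first observe that $\lambda^0\geq \lambda_0$ in the partial order on $\widetilde{\Sigma}_0$, so the coefficient of $\lambda_0$ in the $\widetilde{\Pi}$-expansion of $\lambda^0$ is at least $1$; by Theorem~\ref{thbasepi}(3) this coefficient is at most $1$, hence exactly $1$, and therefore $\lambda^0(H_0)=2$. For the second property, let $\lambda\in\Sigma^+$: if $\lambda$ lies in $\widetilde{\Sigma}_0$, then $\lambda^0+\lambda$ fails to be a root because $\lambda^0$ is the highest root of $\widetilde{\Sigma}_0$; if $\lambda$ lies in another irreducible component of $\widetilde{\Sigma}$, then $\lambda^0+\lambda$ cannot be a root, since roots from distinct irreducible components never sum to a root.

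For uniqueness, let $\mu\in\widetilde{\Sigma}$ be any root satisfying both properties. The first property forces $\mu\in\widetilde{\Sigma}^+$, and by Theorem~\ref{thbasepi}(3), $\mu=\lambda_0+\sum_{\nu\in\Pi}m_\nu\nu$ with $m_\nu\in\mathbb{Z}^+$; since any root lies in the $\mathbb{Z}$-span of the simple roots of its own irreducible component, this places $\mu$ in $\widetilde{\Sigma}_0$. If $\mu$ were not the highest root of $\widetilde{\Sigma}_0$, the standard fact that any non-maximal root in an irreducible root system can be increased by some simple root to yield another root (cf.\ \cite{Bou1}, Ch.~VI) would produce $\alpha\in\widetilde{\Pi}$ with $\mu+\alpha\in\widetilde{\Sigma}_0$. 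Then $(\mu+\alpha)(H_0)=2+\alpha(H_0)$ must lie in $\{-2,0,2\}$, while $\alpha(H_0)\in\{0,2\}$ by Theorem~\ref{thbasepi}(1); hence $\alpha(H_0)=0$, so $\alpha\in\Pi\subset\Sigma^+$, contradicting the second property of $\mu$. Therefore $\mu=\lambda^0$, which completes the proof and identifies $\lambda^0$ as the highest root of $\widetilde{\Sigma}_0$. The main obstacle is really organizational: keeping track of which simple roots lie in $\Pi$ versus $\{\lambda_0\}$, and matching the $H_0$-gradation of $\widetilde{\Pi}$ with the chain-of-simple-roots step; no input beyond Theorem~\ref{thbasepi} is needed, and the extension to the algebraic closure from the previous subsection does not enter.
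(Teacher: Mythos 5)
Your proof is correct, but it follows a genuinely different route from the paper. You stay entirely inside the restricted root system $\widetilde{\Sigma}$: you take $\lambda^0$ to be the highest root of the component containing $\lambda_0$, use Theorem~\ref{thbasepi}(3) to pin the $\lambda_0$-coefficient at $1$ (hence $\lambda^0(H_0)=2$), and for uniqueness you combine the constraint $\alpha(H_0)\in\{0,2\}$ on simple roots with the standard fact that a non-maximal positive root of an irreducible system can be increased by a simple root; all steps check out, including the reduction of a candidate $\mu$ to the component of $\lambda_0$ via its support. The paper instead passes to the algebraic closure and uses hypothesis $(\mathbf H_2)$: it defines $\lambda^0=\rho(\omega)$, where $\omega$ is the highest weight of the absolutely irreducible module $(\overline{\go g},\overline{V^+})$, verifies the two properties by weight arguments, and proves uniqueness by showing that a maximal element of the fiber $\rho^{-1}(\lambda^1)\cap\widetilde{\cal R}^+$ is a highest weight, hence equals $\omega$; the identification with the highest root of the component only comes at the end. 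Your argument is more elementary and self-contained (only Theorem~\ref{thbasepi} plus root-system combinatorics, no extension of scalars, no use of $(\mathbf H_2)$ beyond what Theorem~\ref{thbasepi} already encodes), whereas the paper's approach buys the extra identification $\lambda^0=\rho(\omega)$, which is explicitly reused later (e.g.\ in the proof of Proposition~\ref{propWconjugues}, where $s_0.\lambda_0$ is recognized as the restriction of the highest weight); if one adopts your proof, that later argument would need the identification supplied separately. One small caveat: your appeal to the highest root and to the ``increase by a simple root'' lemma implicitly treats $\widetilde{\Sigma}$ like a (possibly non-reduced) root system with the usual Bourbaki properties, but this is exactly the footing the paper itself adopts for $\widetilde{\Sigma}$, so it is not a gap.
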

 
 \begin{proof} Let $\omega$ be the highest weight of the representation $(\go{g}, \overline{V^+})$. We will show that the restriction of $\omega$ to $\go{a}$   is the unique root in $\widetilde{\Sigma}$ satisfying the conditions of the proposition.

 Define  $\lambda^0=\rho(\omega)$. Then $\lambda^0\in \widetilde{\Sigma}^+$ and  $\lambda^0(H_{0})=\omega(H_{0})=2$.  Let $\lambda\in \Sigma^+$. If $\lambda+\lambda^0\in \widetilde{\Sigma}$ there exist two roots $\alpha, \beta\in \widetilde{\cal R}$ such that $\rho(\alpha)=\lambda$ and $\rho(\beta)=\lambda+\lambda^0$. As  ${\overline{\go{g}}}^\beta\subset\overline{V^+}$, the root $\beta$ will be a weight of  $(\overline{\go{g}},\overline{V^+})$ and therefore can be written :
 $\beta=\omega-\sum_{\gamma\in \widetilde{\Psi}}m_{\gamma}\gamma$ ($m_{\gamma}\in \N$).
 Restricting this equality to  $\go{a}$ one gets:
 $$\rho(\beta)=\lambda+\lambda^0=\rho(\omega)-\sum_{\gamma\in \widetilde{\Psi}}m_{\gamma}\rho(\gamma)=\lambda^0- \sum_{\gamma\in \widetilde{\Psi}}m_{\gamma}\rho(\gamma).$$
 Hence
 $$\lambda=- \sum_{\gamma\in \widetilde{\Psi}}m_{\gamma}\rho(\gamma).\eqno{(*)}$$
 
 But  $\lambda\in \Sigma^+\subset \widetilde\Sigma^+$ and from the hypothesis  $\rho(\gamma)\in \widetilde\Sigma^+\cup\{0\}$; the preceding equation $(*)$ is therefore  impossible and we have showed that $\lambda^0=\rho(w)$ satisfies the required properties.

Let $\lambda^1$ be another root in $ \widetilde\Sigma$ having these properties. In fact $\lambda^1\in \widetilde\Sigma^+$. Set
$$S=\{\alpha\in \widetilde{\cal R}^+\,|\, \rho(\alpha)=\lambda^1\}.$$
Each element in  $S$ is a weight   of $(\go{g},\overline{V^+})$ (as $\alpha(H_{0})=\rho(\alpha)(H_{0})=\lambda^1(H_{0})=2$, we have $\overline{\widetilde {\go g}}^{^{\alpha}}\in \overline{V^+}$). Let $\omega^1$ be a maximal element in $S$ for the order induced by $\widetilde{\cal R}^+$.

\hskip 10pt $\bullet $ if  $\beta\in \widetilde{\cal R}^+$ is such that $\rho(\beta)=0$, then $\omega^1+\beta$ is not a root, because we would have  $\omega^1+\beta\in S$ and  this contradicts  the maximality of $\omega^1$.

\hskip 10pt $\bullet $ If $\beta\in \widetilde{\cal R}^+$ is such that $\rho(\beta)\neq 0$, then  $\omega^1+\beta$ is not a root, because in that case $\rho(\omega^1+\beta)=\lambda^1+\rho(\beta)$ would be a root of $ \widetilde\Sigma^+$ and this contradicts the second property.

Therefore  $\omega^1$ is a highest weight of   $(\overline{\go{g}},\overline{V^+})$, hence $\omega^1=\omega$. This implies $\lambda^1=\lambda^0$.

The commutativity of  $V^+$ implies that  $\lambda^0+\lambda\notin \widetilde{\Sigma}$ for $\lambda\in \widetilde{\Sigma}^+\setminus {\Sigma}^+$. From the obtained characterisation of  $\lambda^0$ we obtain the last assertion.
 
 \end{proof}

  \vskip 20pt
 \subsection{The first step in the descent}\label{section-descente1}\hfill
  \vskip 10pt
  
  Let  $\widetilde{ {\go l}}_0$ be the algebra generated by the root spaces ${\widetilde{\go{g}}^{\lambda_0}}$ and ${\widetilde{\go{g}}^{-\lambda_{0}}}$.
  One has:
  $$\widetilde{ {\go l}}_0={\widetilde{\go{g}}^{-\lambda_{0}}}\oplus[{\widetilde{\go{g}}^{-\lambda_{0}}},{\widetilde{\go{g}}^{\lambda_{0}}}]\oplus {\widetilde{\go{g}}^{\lambda_{0}}}.$$
  (Just remark that ${\widetilde{\go{g}}^{-\lambda_{0}}}\oplus[{\widetilde{\go{g}}^{-\lambda_{0}}},{\widetilde{\go{g}}^{\lambda_{0}}}]\oplus {\widetilde{\go{g}}^{\lambda_{0}}}$ is a Lie algebra). This algebra is graded by the element  $H_{\lambda_{0}}\in \go{a}$.
 
 We will need the following result:
 
 \begin{lemme}\label{lemme-alg.simple} \hfill

 Let $\go{u}=\go{u}_{-1}\oplus \go{u}_{0}\oplus \go{u}_{1}$ be a semi-simple graded Lie algebra over $F$. Suppose that $\go{u}_{1}$ is an absolutely simple   $\go{u}_{0}$-module. Then the Lie algebra  $\go{u}'$ generated by $\go{u}_{1} $ and  $\go{u}_{-1}$ absolutely simple.

 \end{lemme}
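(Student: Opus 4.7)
The plan is to extend scalars to $\overline{F}$ and show that $\overline{\go u'}$ is simple. Since $[\go u_{\pm 1},\go u_{\pm 1}]=0$ by the $3$-grading, the subalgebra generated by $\go u_{-1}$ and $\go u_1$ is already
$$\go u'=\go u_{-1}\oplus [\go u_{-1},\go u_1]\oplus \go u_1,$$
with the induced $3$-grading. Absolute simplicity is stable under base extension, so it suffices to check that $\overline{\go u'}=\overline{\go u_{-1}}\oplus [\overline{\go u_{-1}},\overline{\go u_1}]\oplus \overline{\go u_1}$ is a simple $\overline{F}$-Lie algebra.

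A first observation: the Killing form $B$ of the semisimple algebra $\go u$ is non-degenerate and, since it vanishes between graded pieces whose degrees do not sum to zero, it restricts to a non-degenerate $\go u_0$-invariant pairing between $\go u_{-1}$ and $\go u_1$. Therefore $\go u_{-1}$ is the contragredient $\go u_0$-module of $\go u_1$, hence itself absolutely simple.

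Now decompose the semisimple algebra $\overline{\go u}$ into simple ideals $\overline{\go u}=\go s_1\oplus\cdots\oplus \go s_m$. The projection onto each ideal is $\overline{\go u}$-equivariant, hence commutes with the grading, so each $\go s_j$ inherits a $3$-grading $\go s_j=\go s_{j,-1}\oplus \go s_{j,0}\oplus \go s_{j,1}$, with $\overline{\go u_i}=\bigoplus_j \go s_{j,i}$. Since each $\go s_{j,\pm 1}$ is an $\overline{\go u_0}$-submodule of $\overline{\go u_{\pm 1}}$, and both $\overline{\go u_{\pm 1}}$ are simple $\overline{\go u_0}$-modules, there are unique indices $j_+$ and $j_-$ with $\go s_{j_+,1}=\overline{\go u_1}$ and $\go s_{j_-,-1}=\overline{\go u_{-1}}$, all other graded pieces of these degrees being zero.

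The remaining point, and the key step, is to show $j_+=j_-$. If for some $j$ one had $\go s_{j,-1}=0$ but $\go s_{j,1}\neq 0$, then $\go s_{j,1}$ would be an abelian ideal of the simple Lie algebra $\go s_j$, forcing $\go s_{j,1}=0$; by the symmetric argument $\go s_{j,1}=0$ if and only if $\go s_{j,-1}=0$, hence $j_+=j_-=:j_0$. Applying the same reasoning inside $\go s_{j_0}$, the subalgebra generated by $\go s_{j_0,-1}$ and $\go s_{j_0,1}$ is a nonzero ideal of $\go s_{j_0}$, so it equals $\go s_{j_0}$ and in particular $\go s_{j_0,0}=[\overline{\go u_{-1}},\overline{\go u_1}]$. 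Therefore
$$\overline{\go u'}=\overline{\go u_{-1}}\oplus [\overline{\go u_{-1}},\overline{\go u_1}]\oplus \overline{\go u_1}=\go s_{j_0}$$
is simple, which is what was needed. The main obstacle is precisely the last paragraph: one must rule out the configuration in which the two simple $\overline{\go u_0}$-modules $\overline{\go u_{\pm 1}}$ sit in different simple ideals of $\overline{\go u}$, and for this the $3$-graded structure of a simple Lie algebra (no isolated outer piece) is crucial.
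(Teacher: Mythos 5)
Your proof is correct, but it follows a genuinely different route from the paper's. The paper keeps the decomposition $\go{u}'=\go{u}_{-1}\oplus[\go{u}_{-1},\go{u}_{1}]\oplus\go{u}_{1}$ and argues directly on an arbitrary ideal $J$ of $\overline{\go{u}'}$: it shows that $J\cap\overline{\go{u}_{1}}$ is an $\overline{\go{u}_{0}}$-submodule (the subtle point being that the orthogonal ideal $\overline{\go{u}'}^{\perp}\subset\overline{\go{u}_{0}}$ commutes with $\overline{\go{u}_{1}}$), then uses the simplicity of the module to split into two cases, concluding with the grading element $U_{0}\in[\go{u}_{-1},\go{u}_{1}]$ when $J\supset\overline{\go{u}_{1}}$ and with Killing-form orthogonality when $J\cap\overline{\go{u}_{1}}=\{0\}$. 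You instead decompose $\overline{\go{u}}$ into simple ideals, observe that each inherits the $3$-grading, confine $\overline{\go{u}_{1}}$ (and, via the "no isolated outer piece" dichotomy $\go{s}_{j,1}=0\Leftrightarrow\go{s}_{j,-1}=0$) also $\overline{\go{u}_{-1}}$ to a single simple ideal $\go{s}_{j_{0}}$, and identify $\overline{\go{u}'}$ with $\go{s}_{j_{0}}$ because a nonzero ideal of a simple algebra is the whole algebra. What each approach buys: yours avoids the paper's most delicate step (checking that $J\cap\overline{\go{u}_{1}}$ is stable under all of $\overline{\go{u}_{0}}$, not just under $[\overline{\go{u}_{-1}},\overline{\go{u}_{1}}]$), while the paper avoids invoking the decomposition into simple ideals and its compatibility with the grading. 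On that last point your justification is slightly loose: "the projection onto each ideal is $\overline{\go{u}}$-equivariant, hence commutes with the grading" should rather be argued by noting that the grading of a semisimple algebra is given by an inner derivation $\operatorname{ad}U_{0}$ with $U_{0}\in\go{u}_{0}$ (the grading element the paper also uses), and each simple ideal is stable under $\operatorname{ad}U_{0}$, hence graded. Also, your Killing-form argument for the simplicity of $\overline{\go{u}_{-1}}$ is not actually needed: once $\go{s}_{j,1}=0\Leftrightarrow\go{s}_{j,-1}=0$ is known, the fact that $\overline{\go{u}_{1}}$ lies in a single simple ideal already forces $\overline{\go{u}_{-1}}=\bigoplus_{j}\go{s}_{j,-1}=\go{s}_{j_{+},-1}$ to lie in the same one. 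These are cosmetic points; the argument is sound.
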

 
 \begin{proof} The algebra $\overline{\go{u}}$ is again graded and semi-simple:
 $$\overline{\go{u}}=\overline{\go{u}_{-1}}\oplus \overline{\go{u}_{0}}\oplus \overline{\go{u}_{1}}$$
  and from the hypothesis  $\overline{\go{u}_{1}}$ is a simple $\overline{\go{u}_{0}}$-module. As before for $\widetilde{ {\go l}}_0$, one has  $\go{u}'=\go{u}_{-1}\oplus  [\go{u}_{-1}, \go{u}_{1}]\oplus \go{u}_{1}$, and one verifies easily that  $\go{u}'=\go{u}_{-1}\oplus  [\go{u}_{-1}, \go{u}_{1}]\oplus \go{u}_{1}$ is an ideal of $\go{u}$, and therefore semi-simple. Then it is enough to prove that $\overline{\go{u}'}=\overline{\go{u}_{-1}}\oplus  [\overline{\go{u}_{-1}},\overline{ \go{u}_{1}}]\oplus \overline{\go{u}_{1}}$ is a simple algebra over  $\overline{F}.$ Let $J$ be an ideal of $\overline{\go{u}'}$. We will show that $J=\{0\}$ or  $J=\overline{\go{u}'}$. 
  
  Note first that the ideal $\overline{\go{u}'}^{\perp}=\overline{\go{u}}''$, orthogonal of $\overline{\go{u}'}$ for the Killing form of  $\overline{\go{u}}$ , is a subset of  $\overline{\go{u}_{0}}$. Hence
  $$\overline{\go{u}}=\overline{\go{u}_{-1}}\oplus ( [\overline{\go{u}_{-1}},\overline{ \go{u}_{1}}]\oplus\overline{\go{u}_{0}}'')\oplus \overline{\go{u}_{1}}$$
 As $J$ is an ideal of $\overline{\go{u}'}$, the space  $J\cap  \overline{\go{u}_{1}}$ is stable  under $[\overline{\go{u}_{-1}},\overline{ \go{u}_{1}}] $. On the other hand  $J\cap  \overline{\go{u}_{1}}$ is also stable under  $\overline{\go{u}_{0}}''$ because  $[\overline{\go{u}_{0}}'',\overline{\go{u}_{1}}]=\{0\}$ (In a semi-simple Lie algebras orthogonal ideals commute). Therefore  $J\cap  \overline{\go{u}_{1}}$ is a sub-$\overline{\go{u}_{0}}$-module of $\overline{ \go{u}_{1}}$. From the hypothesis, either $J\cap  \overline{\go{u}_{1}}=   \overline{\go{u}_{1}}$ or   $J\cap  \overline{\go{u}_{1}}=\{0\}$.
  
  $\bullet$ If $J\cap  \overline{\go{u}_{1}}=   \overline{\go{u}_{1}}$, then  $[\overline{\go{u}_{-1}},\overline{ \go{u}_{1}}]\subset J$. Let  $U_{0}$ be the grading element  (which always exists). From the semi-simplicity of $\go{u}'$, one gets $U_{0}\in [\go{u}_{-1}, \go{u}_{1}]$. Hence $U_{0}$  is in $J$. This implies that $\overline{\go{u}_{-1}}\subset J$, and finally  $J=\overline{\go{u}'}$.
  
   $\bullet$ If $J\cap  \overline{\go{u}_{1}}=   \{0\}$, then if $J^{\perp}$ is the ideal of $\overline{\go{u}'}$ orthogonal to $J$ for the Killing form, we get $J^{\perp}=\overline{\go{u}'}$, hence $J=\{0\}$.
   
   This proves that $\overline{\go{u}'}$ is simple.
 
 \end{proof}
 
 \begin{prop}\label{ell-0-simple}\hfill
 
The representation  $([{\widetilde{\go{g}}^{-\lambda_{0}}},{\widetilde{\go{g}}^{\lambda_{0}}}], {\widetilde{\go{g}}^{\lambda_{0}}})$ is absolutely simple and the algebra  $\widetilde{ {\go l}}_0={\widetilde{\go{g}}^{-\lambda_{0}}}\oplus[{\widetilde{\go{g}}^{-\lambda_{0}}},{\widetilde{\go{g}}^{\lambda_{0}}}]\oplus {\widetilde{\go{g}}^{\lambda_{0}}}$ is absolutely simple of split rank  $1$.
 \end{prop}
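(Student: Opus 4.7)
The plan is to apply Lemma~\ref{lemme-alg.simple} to $\widetilde{\go{l}}_0$ itself, viewed as a $3$-graded Lie algebra with $\go{u}_{\pm 1}=\widetilde{\go{g}}^{\pm\lambda_0}$, $\go{u}_0=[\widetilde{\go{g}}^{-\lambda_{0}},\widetilde{\go{g}}^{\lambda_{0}}]$ and grading element $H_{\lambda_0}$ (normalized so that $\lambda_0(H_{\lambda_0})=2$). Once (i) $\widetilde{\go{l}}_0$ is semi-simple and (ii) $\widetilde{\go{g}}^{\lambda_0}$ is absolutely simple as a $\go{u}_0$-module, the lemma yields absolute simplicity of $\widetilde{\go{l}}_0$. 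The split rank assertion then follows from the fact that $\go{a}\cap\widetilde{\go{l}}_0=FH_{\lambda_0}$ is a maximal split torus of $\widetilde{\go{l}}_0$: any split element of $\widetilde{\go{l}}_0\subset\widetilde{\go{g}}$ centralizing $H_{\lambda_0}$ lies in $\go{m}$, and by the anisotropicity of $[\go{m},\go{m}]$ (Remark~\ref{rem-precisions}) must in fact lie in $\go{a}$, while the restricted root system of $(\widetilde{\go{l}}_0,FH_{\lambda_0})$ is $\{\pm\lambda_0\}$, of type $A_1$.

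For (i), Jacobson--Morozov produces an $\go{sl}_2$-triple $(H_{\lambda_0},e,f)$ with $e\in\widetilde{\go{g}}^{\lambda_0}$, $f\in\widetilde{\go{g}}^{-\lambda_0}$, so in particular $H_{\lambda_0}\in[\widetilde{\go{g}}^{-\lambda_0},\widetilde{\go{g}}^{\lambda_0}]$. The algebra $\widetilde{\go{l}}_0$ is $\ad(\go{sl}_2)$-stable, and since $\ad H_{\lambda_0}$ takes only the values $-2,0,2$ on it, every irreducible $\ad(\go{sl}_2)$-summand has highest weight $\leq 2$, hence is either trivial or three-dimensional. The Killing form of $\widetilde{\go{g}}$ pairs $\widetilde{\go{g}}^{\lambda_0}$ non-degenerately with $\widetilde{\go{g}}^{-\lambda_0}$; a trivial summand $X\subset\go{u}_0$ commutes with $e$ and $f$, and letting these vectors vary over spanning families of $\widetilde{\go{g}}^{\pm\lambda_0}$ forces $X$ to commute with all of $\widetilde{\go{g}}^{\pm\lambda_0}$, hence $X=0$. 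This shows $\widetilde{\go{l}}_0$ is semi-simple.

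For (ii), I extend scalars and use Lemma~\ref{lemme-espacesradiciels-cloture} to write $\overline{\widetilde{\go{g}}^{\lambda_0}}=\bigoplus_{\alpha\in S_{\lambda_0}}\overline{\widetilde{\go{g}}}^{\alpha}$. By Proposition~\ref{prop.alpha0} there is a unique simple root $\alpha_0\in\widetilde{\Psi}$ with $\rho(\alpha_0)=\lambda_0$. Expanding an arbitrary $\alpha\in S_{\lambda_0}$ in the basis $\widetilde{\Psi}$ and using $\rho(\alpha)=\rho(\alpha_0)=\lambda_0$ together with the linear independence of the restricted simple roots, one finds $\alpha=\alpha_0+\sum m_\gamma\gamma$ where the sum runs over simple roots $\gamma\in\widetilde{\Psi}$ of the anisotropic kernel (those with $\rho(\gamma)=0$). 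Standard path-connectivity inside $\widetilde{\cal R}$ then provides a chain $\alpha_0,\alpha_0+\gamma_1,\dots,\alpha_0+\gamma_1+\dots+\gamma_s=\alpha$ whose partial sums all belong to $S_{\lambda_0}$; each bridging root vector $\overline{\widetilde{\go{g}}}^{\gamma_j}$ equals, up to scalar, $[\overline{\widetilde{\go{g}}}^{-(\alpha_0+\gamma_1+\dots+\gamma_{j-1})},\overline{\widetilde{\go{g}}}^{\alpha_0+\gamma_1+\dots+\gamma_j}]$ and thus lies in $\overline{\go{u}_0}$. Successive bracketing transports $\overline{\widetilde{\go{g}}}^{\alpha_0}$ onto every $\overline{\widetilde{\go{g}}}^{\alpha}$, and since any nonzero $\overline{\go{u}_0}$-submodule of $\overline{\widetilde{\go{g}}^{\lambda_0}}$ is stable under $\overline{\go{j}}\cap\overline{\go{u}_0}$ and therefore contains some $\overline{\widetilde{\go{g}}}^{\alpha}$, it must exhaust the whole space.

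The main obstacle lies in step (ii), namely in combining the combinatorial path-connectivity inside $\widetilde{\cal R}$ with the precise description of $\overline{\go{u}_0}$ as a bracket image, so as to certify that the bridging root vectors needed to connect the various $\overline{\widetilde{\go{g}}}^{\alpha}$ truly lie in $[\overline{\widetilde{\go{g}}^{-\lambda_0}},\overline{\widetilde{\go{g}}^{\lambda_0}}]$ rather than merely in $\overline{\go{g}}$.
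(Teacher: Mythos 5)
The decisive problem is your step (i). Lemma \ref{lemme-alg.simple} requires the ambient graded algebra to be semi-simple, and your argument that $\widetilde{\go l}_0$ is semi-simple does not hold up. First, ``trivial summand'' is a notion attached to one fixed triple $(f,H_{\lambda_0},e)$: an element $X$ annihilated by this particular $e$ and $f$ has no reason to be annihilated by other elements of $\widetilde{\go g}^{\pm\lambda_0}$, and when you change $e,f$ the isotypic decomposition changes, so ``letting these vectors vary over spanning families'' proves nothing. Second, even granting that $X$ centralizes $\widetilde{\go g}^{\pm\lambda_0}$, you only get that $X$ is central in $\widetilde{\go l}_0$, and triviality of the centre is part of what must be proved; the non-degenerate pairing of $\widetilde{\go g}^{\lambda_0}$ with $\widetilde{\go g}^{-\lambda_0}$ only yields $\widetilde B(X,[\widetilde{\go g}^{-\lambda_0},\widetilde{\go g}^{\lambda_0}])=0$, not $X=0$. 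Third, absence of trivial $\go{sl}_2$-summands would not give semi-simplicity anyway: a radical made of three-dimensional summands (as in $\go{sl}_2\ltimes(\text{adjoint module})$) is not excluded. The paper sidesteps all of this by applying Lemma \ref{lemme-alg.simple} not to $\widetilde{\go l}_0$ but to $\go u=\widetilde{\go g}^{-\lambda_0}\oplus\go m\oplus\widetilde{\go g}^{\lambda_0}$ with $\go m={\cal Z}_{\go g}(\go a)$ (a centralizer of a subtorus, hence reductive); the lemma then exhibits $\widetilde{\go l}_0$ as a semi-simple ideal, and the only thing left to prove is simplicity of the $\overline{\go m}$-module $\overline{\widetilde{\go g}^{\lambda_0}}$.

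In step (ii) the statement you call ``standard path-connectivity'' is precisely the non-trivial point, not an elementary root-system fact: Bourbaki only guarantees a chain starting at \emph{some} simple root, not at the prescribed root $\alpha_0$. This is exactly where the paper invokes $({\bf H_2})$: the weights of the irreducible $\overline{\go g}$-module $\overline{V^+}$, whose lowest weight is $\alpha_0$, are joined to $\alpha_0$ by strings of simple roots of $\Psi$, and these are then forced to restrict to $0$; your proposal never uses $({\bf H_2})$ (or the classical irreducibility of the degree-one piece of the grading of a simple algebra by one simple root), so the central step is unsupported, although the chain statement itself is true. By contrast, the point you single out as ``the main obstacle'' — that the bridging vectors $\overline{\widetilde{\go g}}^{\gamma_j}$ lie in $[\overline{\widetilde{\go g}^{-\lambda_0}},\overline{\widetilde{\go g}^{\lambda_0}}]$ — is fine as you argue it. Finally, the assertion that a nonzero $\overline{\go u_0}$-submodule contains a whole root space needs a word: it holds because the coroots $H_\alpha, H_{\alpha'}\in\overline{\go u_0}$ already separate distinct $\alpha,\alpha'\in S_{\lambda_0}$ (from $\alpha(H_{\alpha'})=\alpha'(H_\alpha)=2$ one gets $\alpha=\alpha'$), so the joint eigenspaces of $\overline{\go j}\cap\overline{\go u_0}$ are the individual root spaces — a point that the paper's choice of $\go m\supset\go j$ as degree-zero algebra renders automatic, and one of the concrete advantages of its version of the argument.
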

 
 \begin{proof} Let $\go{m}={\cal Z}_{\go{g}}(\go{a})$ be the centralizer of $\go{a}$ in  $\go{g}$. Consider the graded algebra
 $$\go{u}= {{\widetilde{\go{g}}}^{-\lambda_{0}}\oplus\go{m}\oplus{\widetilde{\go{g}}}^{\lambda_{0}}}$$
From Lemma  \ref{lemme-alg.simple}, to prove that $\widetilde{ {\go l}}_0$ is absolutely simple, it is enough to show that  the representation $(\go{m}, {\widetilde{\go{g}}^{\lambda_{0}}})$ is absolutely simple. One has:
 $$\overline{\go{m}}=\overline{\go{j}}\oplus\sum_{\{\alpha\in \widetilde{\cal R}\,|\, \rho(\alpha)=0\}}{\overline{\widetilde{\go g}}}^{^{\alpha}}.$$
 
The algebra $\overline{\go{m}}$ is reductive, and his root system for the Cartan subalgebra $\overline{\go{j}}$ is  $\widetilde{\cal R}_{0}=\{\alpha\in \widetilde{\cal R}\,|\, \rho(\alpha)=0\}$. We put the order induced by $\widetilde{\cal R}^+$ on $\widetilde{\cal R}_{0}$.
    
   We have to show that the module $(\overline{\go{m}},\overline{{\widetilde{\go{g}}}^{\lambda_{0}})}$ is  simple. If it would not, this  module would have a lowest weight $\alpha_{1}$  distinct from $\alpha_{0}$ (see  Proposition \ref{prop.alpha0}). But from hypothesis  (${\bf {H_{2}}}$), the module ($\overline{\go{g}},\overline {V^+}$) is simple, and his lowest weight (with respect to ${\cal R}^+=\widetilde{\cal R}^+\cap {\cal R}$) is  $\alpha_{0}$. There exists then a sequence  $\beta_{1},\dots,\beta_{k}$ of simple roots in $\Psi=\widetilde{\Psi}\setminus \{\alpha_{0}\}$ such that $\alpha_{1}=\alpha_{0}+\beta_{1}+\dots+\beta_{k}$, and such that each partial sum is a root. But as  $\rho(\alpha_{0})=\rho(\alpha_{1})=\lambda_{0}$, and as the roots $\beta_{i}$ are in  $\widetilde{\cal R}^+$, we obtain  $\rho(\beta_{i})=0$, for $i=1,\dots,k$. Hence $\beta_{i}\in \widetilde{\cal R}_{0}^+$ and  $\alpha_{1}-\beta_{k}=\alpha_{0}+\beta_{1}+\dots+\beta_{k-1}$ is a root. As $\alpha_{1}$ is a lowest weight, this is impossible. 
   
  The fact that this algebra is of split rank one is easy. 
 
  \end{proof}
  
  Let  $\widetilde{\go{g}}_{1}={\cal Z}_{\widetilde{\go{g}}}(\widetilde{ {\go l}}_0)$ be the  centralizer of $\widetilde{ {\go l}}_0$ in $\widetilde{\go{g}}$. This is a reductive subalgebra (see \cite{Bou2},chap.VII, \S 1, $n^\circ 5$, Prop.13). The same is then true for $\overline{\widetilde{\go{g}}_{1}}=\overline{{\cal Z}_{\widetilde{\go{g}}}(\widetilde{ {\go l}}_0)}={\cal Z}_{\overline{\widetilde {\go g}}}(\overline{\widetilde{ {\go l}}_0})$ (\cite{Bou3}, Chap. I, \S 6, $n^\circ 10$). (We will show the last equality in the proof of the next proposition).

  \begin{prop}\label{prop-gtilde(1)}\hfill
  
  If $\widetilde{\go{g}}_{1}\cap V^+\neq\{0\}$, then  $\widetilde{\go{g}}_{1}$ satisfies the hypothesis $({\bf H_{1}})$ and $({\bf H_{2}})$, where the grading is defined by the element  $H_{1}= H_{0}-H_{\lambda_{0}}$.
  \end{prop}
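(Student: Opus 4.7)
The plan is to first establish that $H_{1}$ induces the claimed $\Z$-grading of $\widetilde{\go{g}}_{1}$, and then to prove absolute irreducibility of the action of $\go{g}\cap \widetilde{\go{g}}_{1}$ on $V^{+}\cap \widetilde{\go{g}}_{1}$ by extending scalars and mimicking the highest-weight arguments of Propositions \ref{prop.alpha0} and \ref{prop.plusgranderacine}.

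\medskip

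For $(\bf{H_{1}})$, the first task is to show $H_{1}\in \widetilde{\go{g}}_{1}$. As $H_{1}\in \go{a}$ is $\ad$-semisimple and $\widetilde{\go{l}}_{0}$ is generated by $\widetilde{\go{g}}^{\lambda_{0}}$ and $\widetilde{\go{g}}^{-\lambda_{0}}$, it suffices to verify that $\ad(H_{1})$ vanishes on these two root spaces. On $\widetilde{\go{g}}^{\pm\lambda_{0}}$, $\ad(H_{0})$ acts by $\pm 2$; likewise, by the normalization that makes $H_{\lambda_{0}}$ the grading element of $\widetilde{\go{l}}_{0}$, $\ad(H_{\lambda_{0}})$ acts by $\pm\lambda_{0}(H_{\lambda_{0}})=\pm 2$. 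Hence $\ad(H_{1})=0$ on $\widetilde{\go{g}}^{\pm\lambda_{0}}$. Conversely, on the whole of $\widetilde{\go{g}}_{1}$ one has $\ad(H_{\lambda_{0}})=0$ by definition of the centralizer, so $\ad(H_{1})$ and $\ad(H_{0})$ coincide there. The ambient $H_{0}$-grading thus restricts to
\[
\widetilde{\go{g}}_{1}=V^{-}_{1}\oplus \go{g}_{1}\oplus V^{+}_{1},\qquad V^{\pm}_{1}:=V^{\pm}\cap \widetilde{\go{g}}_{1},\quad \go{g}_{1}:=\go{g}\cap \widetilde{\go{g}}_{1},
\]
with the correct eigenvalues $-2,0,2$ of $\ad(H_{1})$, and $V^{+}_{1}\neq \{0\}$ by hypothesis.

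\medskip

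For $(\bf{H_{2}})$, I pass to the algebraic closure. Since $\widetilde{\go{l}}_{0}$ is absolutely reductive by Proposition \ref{ell-0-simple}, centralizers commute with base change, so $\overline{\widetilde{\go{g}}_{1}}={\cal Z}_{\overline{\widetilde{\go{g}}}}(\overline{\widetilde{\go{l}}_{0}})$ and $\overline{V^{+}_{1}}=\overline{V^{+}}^{\overline{\widetilde{\go{l}}_{0}}}$. I choose a Cartan subalgebra $\overline{\go{j}_{1}}$ of $\overline{\widetilde{\go{g}}_{1}}$ and a Cartan $\overline{\go{j}}_{L}$ of $\overline{\widetilde{\go{l}}_{0}}$ containing $H_{\lambda_{0}}$; the sum $\overline{\go{j}}:=\overline{\go{j}_{1}}\oplus \overline{\go{j}}_{L}$ is a Cartan of $\overline{\widetilde{\go{g}}}$. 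Let $S_{1}$ be the set of $\alpha\in \widetilde{\cal R}$ with $\alpha(H_{0})=2$ and $\alpha|_{\overline{\go{j}}_{L}}=0$; then $\overline{V^{+}_{1}}=\bigoplus_{\alpha\in S_{1}}\overline{\widetilde{\go{g}}}^{\alpha}$. Complete reducibility of $\overline{V^{+}}$ under the semisimple algebra $\overline{\widetilde{\go{l}}_{0}}$ makes $\overline{V^{+}_{1}}$ semisimple as a $\overline{\go{g}_{1}}$-module, so absolute irreducibility reduces to showing that $\overline{V^{+}_{1}}$ has a unique highest weight.

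\medskip

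The main obstacle is this uniqueness. Fix the order on $\widetilde{\cal R}$ compatible with $\widetilde{\Sigma}^{+}$ from Section \ref{section1-extension}, and let $\omega_{1}$ be a maximal element of $S_{1}$. Imitating the arguments of Propositions \ref{prop.alpha0} and \ref{prop.plusgranderacine}, I would show that adding any positive root of $\overline{\widetilde{\go{g}}}$ to $\omega_{1}$ either falls outside $\widetilde{\cal R}$, or takes us outside the set of weights of $\overline{V^{+}}$; hence $\omega_{1}$ must in fact be a highest weight of the absolutely irreducible $\overline{\go{g}}$-module $\overline{V^{+}}$, which by $(\bf{H_{2}})$ for $\widetilde{\go{g}}$ is unique. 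The delicate point is to rule out a second maximal $\omega_{1}'\in S_{1}$: $\omega_{1}'$ cannot differ from $\omega_{1}$ by a root of $\overline{\widetilde{\go{l}}_{0}}$ (ruled out by the condition $\alpha|_{\overline{\go{j}}_{L}}=0$ defining $S_{1}$), nor by a positive root of $\overline{\widetilde{\go{g}}_{1}}$ (which would contradict the maximality of $\omega_{1}$ or $\omega_{1}'$ in $S_{1}$). Uniqueness of the highest weight, combined with the semisimplicity of $\overline{V^{+}_{1}}$, then forces irreducibility and yields $(\bf{H_{2}})$.
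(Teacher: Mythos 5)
Your treatment of $({\bf H_1})$ is correct and is essentially the paper's (check $\ad H_1$ kills $\widetilde{\go g}^{\pm\lambda_0}$, then note $\ad H_1=\ad H_0$ on the centralizer), and your overall plan for $({\bf H_2})$ — pass to $\overline F$, take a maximal weight $\omega_1$ of $\overline{V_1^+}$, show it is a highest weight of the irreducible module $\overline{V^+}$ and invoke $({\bf H_2})$ for $\widetilde{\go g}$ — is also the paper's strategy. But the crucial step is exactly the one you defer to ``imitating Propositions \ref{prop.alpha0} and \ref{prop.plusgranderacine}'', and those arguments do not supply it. To conclude, you must show $\omega_1+\beta\notin\widetilde{\cal R}$ for \emph{every} $\beta\in{\cal R}^+$. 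When $\overline{\go g}^{\beta}$ lies in $\overline{\go g_1}$ (i.e. $\beta$ is strongly orthogonal to all of $S_{\lambda_0}$) this follows from maximality of $\omega_1$ inside $\overline{V_1^+}$; but when it does not, the bracket $[\overline{\go g}^{\beta},\overline{\widetilde{\go g}}^{\omega_1}]$ leaves $\overline{V_1^+}$ and maximality says nothing. The paper closes this case by a dedicated root-string argument: it produces $\gamma\in S_{\lambda_0}$ with $\gamma+\beta\in\widetilde{\cal R}$ and $\gamma-\beta\notin\widetilde{\cal R}$ (using that $\lambda_0$ is simple when $\rho(\beta)\neq 0$, and the $\beta$-string through a non-strongly-orthogonal $\alpha\in S_{\lambda_0}$ when $\rho(\beta)=0$), deduces $(\beta,\gamma)<0$, and then, since $(\omega_1,\gamma)=0$, the assumption that $\omega_1+\beta$ is a root forces $\omega_1+\beta+\gamma$ to be a root with $H_0$-eigenvalue $4$, contradicting $({\bf H_1})$. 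Nothing of this kind occurs in Propositions \ref{prop.alpha0} or \ref{prop.plusgranderacine}, so your argument has a genuine gap at its central point.

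Two secondary problems. First, your ``delicate point'' paragraph is not a valid argument: two distinct maximal elements of $S_1$ need not differ by a root at all, so the dichotomy ``difference is a root of $\overline{\widetilde{\go l}_0}$ or a positive root of $\overline{\widetilde{\go g}_1}$'' does not exhaust the possibilities; uniqueness comes instead from the missing step above (every maximal weight of $\overline{V_1^+}$ equals the unique highest weight $\omega$ of $\overline{V^+}$, whose weight space is a single root space). Second, your identification $\overline{V_1^+}=\oplus_{\alpha\in S_1}\overline{\widetilde{\go g}}^{\alpha}$, with $S_1$ defined by $\alpha(H_0)=2$ and $\alpha|_{\overline{\go j}_L}=0$, is weaker than what is needed: vanishing on a Cartan subalgebra of $\overline{\widetilde{\go l}_0}$ only makes $\alpha$ orthogonal to the roots of $\overline{\widetilde{\go l}_0}$, which does not imply that $\overline{\widetilde{\go g}}^{\alpha}$ centralizes $\overline{\widetilde{\go l}_0}$ (orthogonality forces strong orthogonality only when both roots have $H_0$-eigenvalue $2$, not when the second one lies in the part of $\overline{\widetilde{\go l}_0}$ of eigenvalue $0$); likewise the assertion that $\overline{\go j_1}\oplus\overline{\go j}_L$ is a Cartan subalgebra of $\overline{\widetilde{\go g}}$ is not automatic. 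The paper avoids both difficulties by working inside one fixed Cartan subalgebra $\overline{\go j}\supset\go a$, proving $\overline{\widetilde{\go g}_1}={\cal Z}_{\overline{\widetilde{\go g}}}(\overline{\widetilde{\go l}_0})$ explicitly, and characterizing its roots by strong orthogonality to $S_{\lambda_0}$.
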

  
  \begin{proof} It is clear that  $H_{1}\in \widetilde{\go{g}}_{1}$. As $H_{\lambda_{0}}\in \widetilde{ {\go l}}_0$, the actions of  $\ad H_{0}$ and of  $\ad H_{1}$ on $\widetilde{\go{g}}_{1}$ are the same. As $\widetilde{\go{g}}_{1}\cap V^+\neq\{0\}$, the eigenvalues of $\ad H_{1}$ on  $\widetilde{\go{g}}_{1}$ are $-2,0,2$. Therefore the hypothesis ${\bf{H_{1}}}$ is satisfied.
  
  It remains to show that if $\go{g}_{1}=\go{g}\cap \widetilde{\go{g}}_{1}$ and if $\overline{V_{1}^+}=\overline{V^+}\cap \overline{\widetilde{\go{g}}_{1}}$, the the representation  $(\go{g}_{1},\overline{V_{1}^+})$ (or $(\overline{\go{g}_{1}},\overline{V_{1}^+})$) is irreducible over $\overline{F}$.

 From Lemma \ref{lemme-espacesradiciels-cloture} one has  $\overline{\widetilde{ {\go l}}_0}= \overline{\widetilde{\go{g}}}^{-S_{\lambda_{0}}}\oplus [\overline{\widetilde{\go{g}}}^{-S_{\lambda_{0}}},\overline{\widetilde{\go{g}}}^{S_{\lambda_{0}}}]\oplus \overline{\widetilde{\go{g}}}^{S_{\lambda_{0}}}$ where $\overline{\widetilde{\go{g}}}^{S_{\lambda_{0}}}= \overline{\widetilde{\go{g}}^{\lambda_{0}}}=\sum_{\alpha\in S_{\lambda_{0}}}\overline{\widetilde {\go g}}^{^{\alpha}}$.
  Let us first describe  $\overline{\widetilde{\go{g}}_{1}}= \overline{{\cal Z}_{\widetilde{\go{g}}}(\widetilde{ {\go l}}_0)}$. It is easy to see that  $\overline{{\cal Z}_{\widetilde{\go{g}}}(\widetilde{ {\go l}}_0)}\subset {\cal Z}_{\overline{\widetilde {\go g}}}(\overline{\widetilde{ {\go l}}_0})$. Let  $X\in {\cal Z}_{\overline{\widetilde {\go g}}}(\overline{\widetilde{ {\go l}}_0})$. Let us write
  $X=H+\sum_{\alpha\in \widetilde{\cal R}}X_{\alpha} $ whith $H\in \overline{\go{j}}$ and with $X_{\alpha}\in {\overline{\widetilde{\go g}}}^{^{\alpha}}$. As $[X, X_{\beta}]=0$ for  $\beta\in \pm S_{\lambda_{0}}$, we obtain that   $X\in \overline{\go{j}}_{1}\oplus \sum_{\beta\in \widetilde{ {\cal R}}_1}\overline{\widetilde {\go g}}^{^{\beta}}$, where 
   $$\overline{\go{j}}_{1}=\{H\in \overline{\go {j}}\,|\, \alpha(H)=0\,, \forall \alpha\in S_{\lambda_{0}}\}$$
   
   $$ \widetilde{ {\cal R}}_1=\{\beta \in \widetilde{ {\cal R}}\mid  \ \beta \sorth
\alpha, \, \forall \alpha \in S_{\lambda_{0}} \}.$$
We will now show that $\overline{\go{j}}_{1}\oplus \sum_{\beta\in \widetilde{ {\cal R}}_1}\overline{\widetilde {\go g}}^{^{\beta}}\subset \overline{{\cal Z}_{\widetilde{\go{g}}}(\widetilde{ {\go l}}_0)}$.

$\bullet$ \hskip 5pt Consider first $\overline{\widetilde {\go g}}^{^{\beta}}$ for  $\beta\in \widetilde{ {\cal R}}_1$. Any element  $X\in \overline{\widetilde {\go g}}^{^{\beta}}\subset \widetilde{\go{g}}$ can be written $X= \sum_{i=1}^{n}e_{i}X_{i}$ where $X_{i}\in \widetilde{\go{g}}$ and where the $e_{i}$'s are elements of $\overline{F}$ which are free over  $F$. As $\beta \in \widetilde{ {\cal R}}_1$ one has  $[X, \overline{\widetilde {\go g}}^{^{\beta}}]=\{0\}$ for each $\gamma\in S_{\lambda_{0}}$. As $\widetilde{\go{g}}^{\lambda_{0}}\subset\overline{\widetilde{\go{g}}}^{S_{\lambda_{0}}}\subset \sum_{\alpha\in S_{\lambda_{0}}}\overline{\widetilde {\go g}}^{^{\alpha}}$, one has $[X, \widetilde{\go{g}}^{\lambda_{0}}]=\{0\}=\sum_{i=1}^{n}e_{i}[X_{i},\widetilde{\go{g}}^{\lambda_{0}}]$. As $[X_{i},\widetilde{\go{g}}^{\lambda_{0}}]\subset\widetilde{\go{g}}$ and as the  $e_{i}$'s are free over  $F$, we obtain that  $ [X_{i},\widetilde{\go{g}}^{\lambda_{0}}]=\{0\}$.

One would similarly prove that $ [X_{i},\widetilde{\go{g}}^{-\lambda_{0}}]=\{0\}$. Hence $X\in \overline{{\cal Z}_{\widetilde{\go{g}}}(\widetilde{ {\go l}}_0)}$.  

$\bullet$ \hskip 5pt Consider now  $\overline{\go{j}}_{1}=\{H\in \overline{\go {j}}\,|\, \alpha(H)=0\,, \forall \alpha\in S_{\lambda_{0}}\}$. An element  $u\in\overline{\go{j}}_{1}$ can be written  $u=\sum_{i=1}^{n}a_{i}u_{i}$ where the elements $a_{i}\in \overline{F}$ are free over $F$ and where $u_{i}\in \go{j}$. Then for  $\alpha\in S_{\lambda_{0}}$ one has: $\alpha(u)=0=\sum_{i=1}^{n}a_{i}\alpha(u_{i})$. This implies that $\alpha(u_{i})=0$, for any  $\alpha\in S_{\lambda_{0}}$ and any   $i$. Therefore the elements $u_{i}$ belong to $\go{j}_{1}=\{u\in \go{j}, \alpha(u)=0,\,\forall \alpha\in S_{\lambda_{0}}\}$. But then  $u=\sum_{i=1}^{n}a_{i}u_{i}\in \overline{(\go{j}_{1})}$ and $[u_{i},  \widetilde{\go{g}}^{\pm\lambda_{0}}]\subset [u_{i}, \overline{\widetilde{\go{g}}}^{\pm S_{\lambda_{0}}}]=\{0\}$ (from the definition of  $\go{j}_{1}$). Hence $u_{i}\in {\cal Z}_{\widetilde{\go{g}}}(\widetilde{ {\go l}}_0)$, and therefore $u\in \overline{{\cal Z}_{\widetilde{\go{g}}}(\widetilde{ {\go l}}_0)}$.

Finally we have proved that 
$$\overline{\widetilde{\go{g}}_{1}}=\overline{{\cal Z}_{\widetilde{\go{g}}}(\widetilde{ {\go l}}_0)}={\cal Z}_{\overline{\widetilde {\go g}}}(\overline{\widetilde{ {\go l}}_0})=\overline{\go{j}}_{1}\oplus \sum_{\beta\in \widetilde{ {\cal R}}_1}\overline{\widetilde {\go g}}^{^{\beta}}.$$

If $\beta\in  \widetilde{ {\cal R}}_1$, then  $\alpha(H_{\beta})=0$ for all root $\alpha\in S_{\lambda_{0}}$. Hence $H_{\beta}\in \overline{\go{j}}_{1}$, and the restriction of $\beta $ to $\overline{\go{j}}_{1}$ is non zero. This implies that $\overline{\go{j}}_{1}$ is a Cartan subalgebra of the reductive algebra $\overline{\widetilde{\go{g}}_{1}}$ and $\widetilde{ {\cal R}}_1$ can be seen as the root system of the pair  $(\overline{\widetilde{\go{g}}_{1}},\overline{\go{j}}_{1})$. The order on  $\widetilde{\cal R}$ defined by  $\widetilde{\cal R}^+$ induces an order on  $\widetilde{ {\cal R}}_1$ and on the root system  $ {\cal R}_1= \widetilde{ {\cal R}}_1\cap  {\cal R}$ of the pair  $(\overline{\go{g}_{1}},\overline{\go{j}}_{1})$ by setting:
$$\widetilde{ {\cal R}}_1^+=\widetilde{ {\cal R}}_1\cap \widetilde{ {\cal R}}^+,\,\, {\cal R}_1^+={\cal R}_{1}\cap  \widetilde{ {\cal R}}^+=\widetilde{ {\cal R}}_1\cap {\cal R}^+.$$

Let  $\omega_{1}$ be the highest weight, for the preceding order, of one of the irreducible components of the representation $(\overline{\go{g}_{1}},\overline{V_{1}^+})$. $\omega_{1}$ is a root of $\widetilde{ {\cal R}}_1^+$ and we will show  that it is also    the highest weight of the representation   $(\overline{\go{g}}, \overline{V^+})$ which is irreducible from ${\bf (H_{2})}$. This will show that  $(\overline{\go{g}_{1}},\overline{V_{1}^+})$  is irreducible. To do this we will show  that if $\beta\in {\cal R}^+$ then $\omega_{1}+\beta\notin \widetilde{\cal R}$.
\vskip 5pt

\hskip 10pt $(1)$ If  $\beta\in {\cal R}_{1}^+$, then from the definition of  $\omega_{1}$ we get $[\overline{\widetilde {\go g}}^{^{\omega_{1}}},\overline{\widetilde {\go g}}^{^{\beta}}]$=\{0\}. Hence $\omega_{1}+\beta\notin \widetilde{\cal R}$.
\vskip 5pt
\hskip 10pt $(2)$ If $\beta\notin {\cal R}_{1}^+$, there exists  a root $\alpha\in S_{\lambda_{0}}$ such that $\alpha$ and  $\beta$ are not strongly orthogonal. We will show that in that case there is a root $\gamma\in S_{\lambda_{0}}$ such that 
$$(*)\hskip 15pt \gamma+\beta\in \widetilde{\cal R}\text{  and  }\gamma-\beta\notin \widetilde{\cal R}.$$

\vskip 5pt
\hskip 25pt $(2.1)$ If $\rho(\beta)\neq 0$, then $\alpha-\beta$ is not a root. If it would be the case, we would have 
$$\rho(\alpha-\beta)=\rho(\alpha)-\rho(\beta)=\lambda_{0}-\rho(\beta)$$
But  $\rho(\beta)\neq \lambda_{0}$ as $\beta\in \Sigma^+$, so  $\lambda_{0}-\rho(\beta)$ would be a root, and this is impossible as  $\lambda_{0}$ is a simple root in  $\widetilde{\Sigma}^+$. In this case the root  $\alpha$ satisfies $(*)$.

\vskip 5pt
\hskip 25pt $(2.2)$ If  $\rho(\beta)=0$, consider the root  $\beta$-string  through $\alpha$. These roots are in  $S_{\lambda_{0}}$ and as $\alpha$ and  $\beta$ are not strongly orthogonal, this string contains at least two roots. This implies that there exists a root  $\gamma$ verifying  $(*)$.

Therefore, from  \cite{Bou1} (chap. 6, \S 1, Prop. 9) we obtain that  $(\beta,\gamma)<0$. As $\omega_{1}$ and  $\gamma$ are strongly orthogonal, we have    $(\omega_{1},\gamma)=0$ ($\omega_{1}\in \widetilde{ {\cal R}}_1^+$ and  $\gamma\in S_{\lambda_{0}}$).

Now if  $\omega_{1}+\beta$ is a root, we would have:
$$(\omega_{1}+\beta, \gamma)=(\omega_{1},\gamma)+(\beta,\gamma)=(\beta,\gamma)<0.$$
Then $\omega_{1}+\beta+\gamma$ is a root and  $(\omega_{1}+\beta+\gamma)(H_{0})=4 $ (as $\omega_{1}(H_{0})=2, \beta(H_{0})=0, \gamma(H_{0})=2$). Hence $\omega_{1}+\beta$ is not a root.

This means that  $\omega_{1}$ is a highest weight of  $(\overline{\go{g}}, \overline{V^+})$ which is irreducible from  ${\bf (H_{2})}$. Therefore $(\go{g}_{1},\overline{V_{1}^+})$ is irreducible (and  $\omega_{1}=\omega$).
\vskip 5pt
 
  \end{proof}
  
  \vskip 10pt
  
  Set  $\go{a}_{1}=\go{a}\cap \widetilde{\go{g}}_{1}$. Then  $\go{a}_{1}$ is a maximal split abelian  subalgebra of    $\widetilde{\go{g}}_{1} $  included in $\go{g}_{1}$ (because the actions of the maximal split abelian subalgebras can be diagonalized in all finite dimensional representation).
  
  \begin{prop}\label{racines-g1}\hfill

  $1)$ The root system $\widetilde{\Sigma}_{1}$ of the pair $(\widetilde{\go{g}}_{1},\go{a}_{1})$ is
  $$\widetilde{\Sigma}_{1}=\{\lambda\in  \widetilde{\Sigma}\,,\,\lambda\sorth \lambda_{0}\}=\{\lambda\in  \widetilde{\Sigma}\,,\,\lambda\perp \lambda_{0}\}$$
  (where $\perp$ means "orthogonal" and where  $\sorth$ means  "strongly  orthogonal".)
  \vskip 3pt
  
  $2)$ Consider the order on $\widetilde{\Sigma}_{1}$ defined by 
  $$\widetilde{\Sigma}_{1}^+=\widetilde{\Sigma}^+\cap \widetilde{\Sigma}_{1}. $$
  This order satisfies the properties of    Theorem \ref{thbasepi} for the graded algebra $\widetilde{\go{g}}_{1}$. 
   \vskip 3pt
  
  $3)$ The set of simple roots $\widetilde{\Pi}_{1}$ defined by  $\widetilde{\Sigma}_{1}^+$ is given by:
  $$\widetilde{\Pi}_{1}=({\Pi}\cap\widetilde{\Sigma}_{1})\cup\{\lambda_{1}\}$$
  where  $\lambda_{1}$ is the unique root of  $\widetilde{\Pi}_{1}$ such that $\lambda_{1}(H_{0}-H_{\lambda_{0}})=\lambda_{1}(H_{0})=2$.

  \end{prop}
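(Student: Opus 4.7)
The plan is to identify the roots of $(\widetilde{\go{g}}_1, \go{a}_1)$ inside $\widetilde{\Sigma}$ via restriction, to show that orthogonality to $\lambda_0$ already implies strong orthogonality in this graded setting, and finally to match the chosen positive system with the one produced by Theorem~\ref{thbasepi} applied to $\widetilde{\go{g}}_1$. Throughout I use that $\widetilde{\go{l}}_0$ is generated by $\widetilde{\go{g}}^{\pm\lambda_0}$ and that $\go{a}_1 = \{H \in \go{a} : \lambda_0(H) = 0\}$.

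For $(1)$, the root spaces of $\widetilde{\go{g}}$ contained in $\widetilde{\go{g}}_1 = {\cal Z}_{\widetilde{\go{g}}}(\widetilde{\go{l}}_0)$ are exactly those $\widetilde{\go{g}}^\lambda$ with $[\widetilde{\go{g}}^\lambda, \widetilde{\go{g}}^{\pm\lambda_0}] = 0$, i.e.\ $\lambda \sorth \lambda_0$; since the restriction $\lambda \mapsto \lambda|_{\go{a}_1}$ has kernel $F\lambda_0$ in $\go{a}^*$, it is injective on $\{\lambda \perp \lambda_0\}$, and this identifies $\widetilde{\Sigma}_1$ with $\{\lambda \in \widetilde{\Sigma} : \lambda \sorth \lambda_0\}$. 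The delicate equality $\{\lambda \sorth \lambda_0\} = \{\lambda \perp \lambda_0\}$ is proved as follows: one inclusion is formal; conversely, when $(\lambda, \lambda_0) = 0$ the $\lambda_0$-string through $\lambda$ is symmetric, so it suffices to show that $\lambda + \lambda_0 \notin \widetilde{\Sigma}$. If $\lambda(H_0) = 2$, then $\lambda + \lambda_0$ would have $H_0$-eigenvalue $4$, forbidden by $(\mathbf{H}_1)$; if $\lambda(H_0) = -2$, the same argument applied to $\lambda - \lambda_0$ yields eigenvalue $-4$; if $\lambda(H_0) = 0$, i.e.\ $\lambda \in \Sigma$, one may assume $\lambda \in \Sigma^+$, and then $\lambda - \lambda_0$ expressed in $\widetilde{\Pi}$ has coefficient $-1$ on $\lambda_0$ and non-negative, not all zero, coefficients on $\Pi$, hence mixed signs and is not a root.

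For $(2)$, every $\lambda \in \widetilde{\Sigma}_1^+ = \widetilde{\Sigma}^+ \cap \widetilde{\Sigma}_1$ satisfies $\lambda \perp \lambda_0$, whence $\lambda(H_1) = \lambda(H_0) - \lambda(H_{\lambda_0}) = \lambda(H_0) \in \{0, 2\}$ by Theorem~\ref{thbasepi} applied to $\widetilde{\Pi}$. This is exactly the parabolicity condition in the proof of Theorem~\ref{thbasepi} for $\widetilde{\go{g}}_1$ with grading element $H_1$, so $\widetilde{\Sigma}_1^+$ is a positive system of the type constructed there.

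For $(3)$, Proposition~\ref{prop-gtilde(1)} ensures that $\widetilde{\go{g}}_1$ satisfies $(\mathbf{H}_1)$ and $(\mathbf{H}_2)$ with grading element $H_1$, so Theorem~\ref{thbasepi} applied to $\widetilde{\Sigma}_1^+$ produces a unique simple root $\lambda_1 \in \widetilde{\Pi}_1$ with $\lambda_1(H_1) = 2$, and by $(2)$ one also has $\lambda_1(H_0) = 2$. For any $\mu \in \widetilde{\Pi}_1 \setminus \{\lambda_1\}$ one has $\mu(H_1) = \mu(H_0) = 0$, hence $\mu \in \Sigma^+$; if $\mu = \nu_1 + \nu_2$ with $\nu_i \in \Sigma^+$, then $(\nu_i, \lambda_0) \leq 0$ (since $\lambda_0$ is simple in $\widetilde{\Pi}$ and the $\nu_i$ are non-negative combinations of the other simple roots), while $(\nu_1, \lambda_0) + (\nu_2, \lambda_0) = (\mu, \lambda_0) = 0$ forces $\nu_i \perp \lambda_0$, i.e.\ $\nu_i \in \widetilde{\Sigma}_1^+$, contradicting the simplicity of $\mu$ in $\widetilde{\Pi}_1$; thus $\mu \in \Pi \cap \widetilde{\Sigma}_1$. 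Conversely, a decomposition $\nu = \mu_1 + \mu_2$ in $\widetilde{\Sigma}_1^+$ of some $\nu \in \Pi \cap \widetilde{\Sigma}_1$ would force $\mu_i(H_0) = 0$ and hence $\mu_i \in \Sigma^+$, contradicting $\nu \in \Pi$. The main obstacle is the identity $\{\lambda \sorth \lambda_0\} = \{\lambda \perp \lambda_0\}$ in $(1)$, which fails in general root systems and here relies on combining the restricted eigenvalues of $\ad H_0$ with the fact that $\lambda_0$ is simple in $\widetilde{\Pi}$.
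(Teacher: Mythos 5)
Your proposal is correct and follows essentially the same route as the paper: the key point (orthogonal to $\lambda_{0}$ implies strongly orthogonal) is proved by the same symmetric $\lambda_{0}$-string argument with the case split $\lambda(H_{0})\in\{2,-2,0\}$ using $(\bf H_{1})$ and the basis $\widetilde{\Pi}=\Pi\cup\{\lambda_{0}\}$, part $(2)$ is the same eigenvalue check $\lambda(H_{1})=\lambda(H_{0})$, and part $(3)$ rests on the same inequality $(\nu,\lambda_{0})\le 0$ for $\nu$ supported on $\Pi$, your version merely phrasing the inclusion $\Pi_{1}\subset\Pi\cap\widetilde{\Sigma}_{1}$ via indecomposability instead of expanding $\mu$ in the simple basis. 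No gaps worth noting.
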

  
  \begin{proof}\hfill
  
 $1)$ Let us first show that a root  $\lambda\in \widetilde{\Sigma}$ is strongly orthogonal to  $\lambda_{0}$ if and only if it is orthogonal to $\lambda_{0}$.

   Let   $\lambda\in \widetilde{\Sigma}$ be a root orthogonal to  $\lambda_{0}$. Let
   $$\lambda-q\lambda_{0},\dots,\lambda-\lambda_{0},\lambda, \lambda+\lambda_{0},\dots,\lambda+p\lambda_{0}$$
   be the  $\lambda_{0}$-string of roots through $\lambda$.
   From    \cite{Bou1} (Chap. VI, \S1, $n^\circ 3$, Prop. 9) one has  $p-q= -2\frac{(\lambda,\lambda_{0})}{(\lambda_{0},\lambda_{0})}=0$. Hence $p=q$, and the string is symmetric.

  \vskip 5pt
   
  $a)$ If $\lambda(H_{0})=2$, then as  $\lambda+\lambda_{0}\notin \widetilde{\Sigma}$  (from ${\bf(H_{1})}$),  we get  $\lambda-\lambda_{0}\notin \widetilde{\Sigma}$
  
  \vskip 5pt
  
 $b)$ If $\lambda(H_{0})=-2$, then $\lambda-\lambda_{0}\notin \widetilde{\Sigma}$,  and the same proof shows that  $\lambda+\lambda_{0}\notin\widetilde{\Sigma}$.    
     \vskip 5pt
  
    $c) $ If  $\lambda(H_{0})=0$ then $\lambda\in \Sigma$. Recall that  $\widetilde{\Pi}= \Pi\cup \{\lambda_{0}\}$. If $\lambda\in \Sigma^+$ then $\lambda-\lambda_{0}\notin \widetilde{\Sigma}$, and if $\lambda\in \Sigma^-$ then $\lambda+\lambda_{0}\notin \widetilde{\Sigma}$.  Again the same proof shows that, respectively,  $\lambda+\lambda_{0}\notin \widetilde{\Sigma}$ and  $\lambda-\lambda_{0}\notin \widetilde{\Sigma}$.
    
      \vskip 5pt
      In all cases we have showed that $\lambda\sorth \lambda_{0}$.
      
      For  $\lambda \in \{\lambda\in  \widetilde{\Sigma}\,,\,\lambda\sorth \lambda_{0}\}$, it is clear that  ${\widetilde{\go{g}}}^{\lambda}\in \widetilde{\go{g}}_{1}={\cal Z}_{\widetilde{\go{g}}}(\widetilde{ {\go l}}_0)$, that $H_{\lambda}\in \go{a}_{1}=\go{a}\cap \widetilde{\go{g}}_{1}$ and that  $\lambda_{|_{\go{a}_{1}}}$ is a root of the pair  $(\widetilde{\go{g}}_{1},\go{a}_{1})$. Conversely any root of the pair  $(\widetilde{\go{g}}_{1},\go{a}_{1})$ can be extended to a linear form on $\go{a}$ by setting  $\lambda(H_{0})=0$, and this extension  is a root orthogonal to $\lambda_{0}$, and hence strongly orthogonal to $\lambda_{0}$ from above, and finally this extension is in $\widetilde{\Sigma}_{1}$ .
      
      \vskip 5pt
      2) The set  $\widetilde{\Sigma}_{1}^+=\widetilde{\Sigma}^+\cap \widetilde{\Sigma}_{1}      $ defines an order on $\widetilde{\Sigma}$. For $\lambda\in \widetilde{\Sigma}_{1}^+$ one has  $\lambda(H_{1})=\lambda(H_{0})= 0 \text{ or }2$, and this gives 2).
      
       \vskip 5pt
       
       3) Let  $\widetilde{\Pi}_{1}$ be the set of simple roots in $\widetilde{\Sigma}_{1}^+$. From Theorem \ref{thbasepi}, $\widetilde{\Pi}_{1}=\Pi_{1}\cup \{\lambda_{1}\}$ where $\Pi_{1}=\{\lambda\in \widetilde{\Pi}_{1}\,,\, \lambda(H_{1})=0\}$.  If $\lambda\in {\Pi}\cap\widetilde{\Sigma}_{1}$ then  $\lambda\in \widetilde{\Pi}_{1}$ and $\lambda(H_{1})=\lambda(H_{0})-\lambda(H_{\lambda_{0}})=0$.  Therefore $ {\Pi}\cap\widetilde{\Sigma}_{1}\subset \Pi_{1}$. Conversely let  $\mu\in \Pi_{1}$. Then  $\mu\in \Sigma^+$. Hence $\mu$ can be written  $\mu=\sum_{\nu \in \Pi}m_{\nu}\nu$, with $m_{\nu}\in \N$. One has  $(\mu,\lambda_{0})=0$ and   $(\nu, \lambda_{0})\leq 0$ (because $\Pi\cup \{\lambda_{0}\}= \widetilde{\Pi}$ is a set of simple roots). Therefore  if $m_{\nu}\neq 0$, one has $(\nu, \lambda_{0})=0$, and hence $\nu\in \widetilde{\Sigma}_{1}$. Finally we have proved that  $\Pi_{1}={\Pi}\cap\widetilde{\Sigma}_{1}$.

    \end{proof}
    
    \begin{rem} \label{rem-unicite-lambda0}\hfill
    
    One may remark that  $\Pi_{1}\subset \Pi$, but  $\widetilde{\Pi}_{1}$ is not a subset of $\widetilde{\Pi}$. Indeed $\lambda_{1}(H_{0})=2$, but there is only one root $\lambda$ in $\widetilde{\Pi}$ such that $\lambda(H_{0})=2$, namely $\lambda_{0}$. Hence $\lambda_{1}\notin \widetilde{\Pi}$.
    \end{rem}

  \vskip 20pt
 \subsection{The descent}\label{sub-section-descente2}\hfill
  \vskip 10pt
  
  \begin{theorem}\label{th-descente}\hfill 
  
 There exists a unique sequence of strongly orthogonal roots in  $\widetilde{\Sigma}^+\setminus \Sigma^+$, denoted by  $\lambda_{0},\lambda_{1},\dots,\lambda_{k}$ and a sequence of reductive Lie algebras  $\widetilde{\go{g}}\supset \widetilde{\go{g}}_{1}\supset \dots \supset \widetilde{\go{g}}_{k}$ such that 
  \vskip 5pt
  
  \hskip 10pt $(1)$ $\widetilde{\go{g}}_{j}={\cal Z}_{\widetilde{\go{g}}}(\widetilde{ {\go l}}_0\oplus \widetilde{ {\go l}}_1\oplus \dots\oplus \widetilde{ {\go l}}_{j-1})$ where $\widetilde{ {\go l}}_i={\widetilde{\go{g}}^{-\lambda_{i}}}\oplus[{\widetilde{\go{g}}^{-\lambda_{i}}},{\widetilde{\go{g}}^{\lambda_{i}}}]\oplus {\widetilde{\go{g}}^{\lambda_{i}}}$ is the subalgebra generated by  $\widetilde{\go{g}}^{\lambda_{i}}$ and $\widetilde{\go{g}}^{-\lambda_{i}}$.
  
 \vskip 5pt
 
 \hskip 10pt $(2)$ The algebra $\widetilde{\go{g}}_{j}$ is a graded Lie algebra verifying the hypothesis ${\bf (H_{1})}$ and ${\bf (H_{2})}$ with the grading element $H_{j}=H_{0}-H_{\lambda_{0}}-\dots-H_{\lambda_{j-1}}$.
 
  \vskip 5pt
 
 \hskip 10pt $(3)$  $V^+\cap {\cal Z}_{\widetilde{\go{g}}}(\widetilde{ {\go l}}_0\oplus \widetilde{ {\go l}}_1\oplus \dots\oplus \widetilde{ {\go l}}_{k})=\{0\}$
  \end{theorem}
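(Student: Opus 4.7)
The plan is a straightforward induction on $j$ that iterates the single-step construction carried out in the previous subsection, and then shows the process must terminate.

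\textbf{Base case.} Take $\widetilde{\go g}_0=\widetilde{\go g}$, $H_0$ the given grading element, and let $\lambda_0$ be the simple root provided by Theorem \ref{thbasepi}. By hypothesis $(\bf H_1)$, $(\bf H_2)$ are satisfied. Proposition \ref{ell-0-simple} makes $\widetilde{\go l}_0$ well-defined and absolutely simple.

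\textbf{Inductive step.} Assume that we have already constructed strongly orthogonal roots $\lambda_0,\dots,\lambda_{j-1}\in\widetilde{\Sigma}^+\setminus\Sigma^+$, the subalgebras $\widetilde{\go l}_0,\dots,\widetilde{\go l}_{j-1}$, and the graded reductive algebra
\[
\widetilde{\go g}_j={\cal Z}_{\widetilde{\go g}}(\widetilde{\go l}_0\oplus\cdots\oplus\widetilde{\go l}_{j-1}),
\]
with grading element $H_j=H_0-H_{\lambda_0}-\cdots-H_{\lambda_{j-1}}$, satisfying $(\bf H_1)$ and $(\bf H_2)$. If $V^+\cap \widetilde{\go g}_j=\{0\}$, we stop and set $k=j-1$; item $(3)$ then holds by definition. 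Otherwise, apply Theorem \ref{thbasepi} to $\widetilde{\go g}_j$ to obtain a distinguished simple root $\lambda_j$ in the restricted root system $\widetilde{\Sigma}_j$ of the pair $(\widetilde{\go g}_j,\go a_j)$, where $\go a_j=\go a\cap\widetilde{\go g}_j$ is maximal split in $\widetilde{\go g}_j$. By Proposition \ref{racines-g1}(1), applied inductively, $\widetilde{\Sigma}_j$ consists of the roots of $\widetilde{\Sigma}$ strongly orthogonal to $\lambda_0,\dots,\lambda_{j-1}$; hence $\lambda_j\in\widetilde{\Sigma}^+\setminus\Sigma^+$ is strongly orthogonal to all previous $\lambda_i$. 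Define $\widetilde{\go l}_j$ by the formula in $(1)$, and set
\[
\widetilde{\go g}_{j+1}={\cal Z}_{\widetilde{\go g}}(\widetilde{\go l}_0\oplus\cdots\oplus\widetilde{\go l}_j).
\]
A direct verification (using that $\widetilde{\go l}_j\subset \widetilde{\go g}_j$ and that the previous $\widetilde{\go l}_i$ already commute with $\widetilde{\go g}_j$) shows
\[
\widetilde{\go g}_{j+1}={\cal Z}_{\widetilde{\go g}_j}(\widetilde{\go l}_j),
\]
so that Proposition \ref{prop-gtilde(1)}, applied to the graded algebra $\widetilde{\go g}_j$ with its new distinguished root $\lambda_j$, yields property $(2)$ for $\widetilde{\go g}_{j+1}$ with grading element $H_{j+1}=H_j-H_{\lambda_j}$, provided $V^+\cap\widetilde{\go g}_{j+1}\neq\{0\}$.

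\textbf{Termination and uniqueness.} At each step, $\widetilde{\Sigma}^+_{j+1}\subsetneq\widetilde{\Sigma}^+_j$ because $\lambda_j$ itself and at least the roots in its $\lambda_j$-string within $\widetilde{\Sigma}_j$ are removed. Since $V^+\cap\widetilde{\go g}_j=\bigoplus_{\lambda\in\widetilde{\Sigma}_j^+\setminus\Sigma_j^+}\widetilde{\go g}^\lambda$ is finite-dimensional and strictly decreases, the procedure terminates at some index $k$ with $V^+\cap {\cal Z}_{\widetilde{\go g}}(\widetilde{\go l}_0\oplus\cdots\oplus\widetilde{\go l}_k)=\{0\}$, giving $(3)$. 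Uniqueness of the sequence $\lambda_0,\dots,\lambda_k$ follows step by step from the uniqueness of the distinguished simple root in Theorem \ref{thbasepi} applied to each $\widetilde{\go g}_j$ (together with the characterization of $\widetilde{\Sigma}_j$ through the previous $\lambda_i$'s).

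\textbf{Main obstacle.} The one non-formal point is the identification ${\cal Z}_{\widetilde{\go g}}(\widetilde{\go l}_0\oplus\cdots\oplus\widetilde{\go l}_j)={\cal Z}_{\widetilde{\go g}_j}(\widetilde{\go l}_j)$, needed to legitimately invoke Proposition \ref{prop-gtilde(1)} at every stage. I would establish it by the argument already used in the proof of Proposition \ref{prop-gtilde(1)}: pass to $\overline F$, decompose elements along root spaces, and use that $\widetilde{\go l}_j\subset\widetilde{\go g}_j$ together with the strong orthogonality of $\lambda_j$ to $\lambda_0,\dots,\lambda_{j-1}$ to show that centralizing $\widetilde{\go l}_j$ inside $\widetilde{\go g}_j$ is equivalent to centralizing all of $\widetilde{\go l}_0\oplus\cdots\oplus\widetilde{\go l}_j$ in $\widetilde{\go g}$. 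Everything else is an iteration of results already established.
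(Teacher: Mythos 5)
Your proof is correct and follows essentially the same route as the paper, which also proceeds by induction using the single-step results (Propositions \ref{racines-g1} and \ref{prop-gtilde(1)}) together with the identification of $\widetilde{\go g}_{j}$ as the centralizer of $\widetilde{\go l}_{j-1}$ inside $\widetilde{\go g}_{j-1}$, and stops when ${\cal Z}_{V^+}(\widetilde{\go l}_0\oplus\cdots\oplus\widetilde{\go l}_k)=\{0\}$. Your treatment of termination and uniqueness is in fact slightly more explicit than the paper's brief sketch.
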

  
  \begin{proof} The proof is done by induction on  $j$, starting from Proposition \ref{racines-g1}, and using the fact that $\widetilde{\go{g}}_{j}$ is the centralizer of  $\widetilde{ {\go l}}_{j-1}$ in  $\widetilde{\go{g}}_{j-1}$. It is worth noting that the construction stops for the index  $k$ such that $V^+\cap {\cal Z}_{\widetilde{\go{g}}_{k}}(\widetilde{ {\go l}}_{k})=\{0\}$,  which amounts to saying that  ${\cal Z}_{V^+}(\widetilde{ {\go l}}_0\oplus \widetilde{ {\go l}}_1\oplus \dots\oplus \widetilde{ {\go l}}_{k})=\{0\}$.
  
  \end{proof}

\begin{definition}\label{def-rang} The number $k+1$ of strongly orthogonal roots  appearing in the preceding  Theorem  will be called the   {\bf rank}  of the graded algebra  $\widetilde{\go{g}}$.
  \end{definition}
  
 \begin{notation}\label{notationsgj} Everything above also applies to the graded algebra $\widetilde{\go{g}}_{j}=V_{j}^-\oplus \go{g}_{j}\oplus V_{j}^+$  which is graded by  $H_{j}=H_{0}-H_{\lambda_{0}}-\dots-H_{\lambda_{j-1}}$. The algebra  $\go{a}_{j}=\go{a}\cap \widetilde{\go{g}}_{j}$ is a maximal split abelian subalgebra of $\widetilde{\go{g}}_{j}$ contained in  $\go{g}_{j}$. The set  $\widetilde{\Sigma}_{j}$ of roots of the pair  $(\widetilde{\go{g}}_{j}, \go{a}_{j})$ is the set of roots in $\widetilde{\Sigma}$ which are strongly orthogonal to $\lambda_{0},\lambda_{1},\dots,\lambda_{j-1}$. We put an order on  $\widetilde{\Sigma}_{j}$ by setting $\widetilde{\Sigma}_{j}^+= \widetilde{\Sigma}^+\cap \widetilde{\Sigma}_{j}$. The corresponding set of simple roots $\widetilde{\Pi}_{j}$ is given by  $\widetilde{\Pi}_{j}=\Pi_{j}\cup\{\lambda_{j}\}$ where $\Pi_{j}$ is the set of simple roots of $\Sigma_{j}$ defined by  $\Sigma_{j}^+=\Sigma_{j}\cap \Sigma^+$ (where $\Sigma_{j}= \Sigma\cap \widetilde{\Sigma}_{j}$). 
 
 \end{notation}
 
 \vskip 20pt
 
 \subsection{Generic elements in $V^+$ and maximal systems of long strongly orthogonal roots}\label{sub-section-generic-strongly-orth}\hfill
  \vskip 10pt
  
  We  define now the groups we will use. If  $\go{k}$ is a Lie algebra over  $F$, we will denote by  $\text{Aut}(\go{k})$ the group of automorphisms of $\go{k}$. The map 
  $g\longmapsto g\otimes1$ is an injective homomorphism from  $\text{Aut}(\go{k})$ in   $\text{Aut}(\go{k}\otimes_{F}\overline{F})= \text{Aut}(\overline{\go{k}})$. This allows to consider $\text{Aut}(\go{k})$ as a subgroup of    $\text{Aut}(\overline{\go{k}})$. 
  
  From now on   $\go{k}$ will be  reductive.
  
   We will denote by  $\text{Aut}_{e}(\go{k})$ the subgroup of elementary automorphisms of $\go{k}$, that is the  automorphisms which are finite products of automorphisms of the form $e^{\ad x}$, where $\ad(x)$ is nilpotent in $\go{k}$. If $g\in \text{Aut}_{e}(\go{k})$, then $g$ fixes pointwise the elements of   the center of $\go{k}$ and therefore  $\text{Aut}_{e}(\go{k})$ can be identified with $\text{Aut}_{e}([\go{k},\go{k}])$. 
   
   Let us set
   $\text{Aut}_{0}(\go{k})= \text{Aut}(\go{k})\cap \text{Aut}_{e}(\overline{\go{k}})$. The elements of  $\text{Aut}_{0}(\go{k})$ are the automorphisms of $\go{k}$ which become elementary  after extension from  $F$ to  $\overline{F}$. Again $\text{Aut}_{0}(\go{k})$ can be identified with  $\text{Aut}_{0}([\go{k},\go{k}])$. 
   Finally we have the following inclusions:
   $$\text{Aut}_{e}(\go{k})\subset \text{Aut}_{0}(\go{k})\subset\text{Aut}([\go{k},\go{k}])\subset \text{Aut}(\go{k}).$$
   From \cite{Bou2} (Chap. VIII, \S 8, $n^\circ 4$, Corollaire de la Proposition 6, p.145), $\text{Aut}_{0}(\go{k})$ is open and closed in  $\text{Aut}([\go{k},\go{k}])$, and  $\text{Aut}([\go{k},\go{k}])$ is closed in  $\text{End}([\go{k},\go{k}])$ for the Zariski topology (\cite{Bou2}, Chap. VIII, \S5, $n^\circ 4$, Prop. 8 p.111), therefore $\text{Aut}_{0}(\go{k})$ is an algebraic group. As  $\text{Aut}_{0}(\go{k})$ is   open in $\text{Aut}([\go{k},\go{k}])$, its Lie algebra is the same as the Lie algebra of  $\text{Aut}(\go{k})$, namely $[\go{k},\go{k}]$. We know also that   $\text{Aut}_{0}(\overline{\go{k}})=\text{Aut}_{e}(\overline{\go{k}})$  is the connected component of the neutral element of  $\text{Aut}([\overline{\go{k}},\overline{\go{k}}])$ (\cite{Bou2}, Chap. VIII,\S5, $n^\circ 5$, Prop. 11, p. 113). 
   
 \vskip 5pt
 The group  $G$ we consider here is the following  (this group was first introduced by Iris Muller in \cite{Mu97} and \cite{Mu98}):  
 
\vskip 5pt

\centerline{$\bullet $   $G= {\cal Z}_{\text{Aut}_{0}(\widetilde{\go{g}})}(H_{0})=\{g\in \text{Aut}_{0}(\widetilde{\go{g}}),\, g.H_{0}=H_{0}\}$ is the centralizer of $H_{0}$ in  $\text{Aut}_{0}(\widetilde{\go{g}})$.}
\vskip 5pt
 The Lie algebra of  $G$ is then ${\cal Z}_{ [\widetilde{\go{g}}, \widetilde{\go{g}}]}(H_{0})=\go{g}\cap [\widetilde{\go{g}}, \widetilde{\go{g}}]=[ \go{g},  \go{g}]+[V^+, V^-]\supset F H_{0}\oplus[\go{g},  \go{g}]$.

As the elements of  $G$ fix $H_{0}$, we obtain  that $V^+$ (which is the eigenspace of $\ad H_{0}$ for the eigenvalue $2$) is stable under the action of $G$. Of course the same is true for  $V^-$.  

\vskip 5pt
The representation  $(G, V^+)$ is a prehomogeneous vector space, or more precisely it is an $F$-form of a prehomogeneous vector space. This means just that this representation has a Zariski-open orbit.  In fact  $G$  can be viewed as  the $F$-points of an algebraic group  (also noted $G$). Then if  $\overline{G}= G(\overline{F})$ stands for the points over  $\overline{F}$ of $G$, $( \overline{G}, \overline{V^+})$ is a prehomogeneous vector space   from a well known result of Vinberg (\cite{Vinberg}). As $\overline{F}$ is algebraically closed, this prehomogeneous vector space has of course only one open orbit. It is well known that then  the representation   $(G, V^+)$ has only a finite number of open orbits. This is a consequence of a result of Serre (see \cite{Sato1}, p.469, or \cite{Ig1},  p. 1).

\vskip 5pt \vskip 5pt


One has  $G(\overline{F})={\cal Z}_{\text{Aut}_{0}(\overline{\widetilde{\go{g}}})}(H_{0})={\cal Z}_{\text{Aut}_{e}(\overline{\widetilde{\go{g}}})}(H_{0})$. (Because over an algebraically closed field one has $\text{Aut}_{e}(\overline{\widetilde{\go{g}}})= \text{Aut}_{0}(\overline{\widetilde{\go{g}}})$). 
Moreover, as we mentioned before, $\text{Aut}_{0}(\widetilde{\go{g}})$ and $\text{Aut}_{0}(\overline{\widetilde{\go{g}}})$ are closed and are the connected components of the neutral element  in $\text{Aut}(\widetilde{\go{g}})$ and  $\text{Aut}(\overline{\widetilde{\go{g}}})$ respectively (for the Zariski topology of   $\text{End}([\widetilde{\go{g}},\widetilde{\go{g}}])$ and $\text{End}([\overline{\widetilde{\go{g}}},\overline{\widetilde{\go{g}}}])$. 

The Lie subalgebra $\go{t}=FH_{0}$ is algebraic. Let us denote by $T$ the corresponding one dimensional torus.
Then as $G(\overline{F})={\cal Z}_{\text{Aut}_{0}(\overline{\widetilde{\go{g}}})}(H_{0})= {\cal Z}_{\text{Aut}_{0}(\overline{\widetilde{\go{g}}})}(T)$, we obtain   that $G(\overline{F})$ is connected (\cite{Hum} Theorem 22.3 p.140).

\vskip 5pt \vskip 5pt

\begin{definition}\label{def-elementsgeneriques}  An element $X\in V^+$ is called generic if it satisfies one of the following equivalent conditions:

$(i)$ The $G$-orbit of $X$ in $V^+$ is open.

$(ii)$ $\ad(X): \go{g}\longrightarrow V^+$ is surjective.
\end{definition}
\vskip 5pt
\begin{lemme}\label{lem-sl2-generique} \hfill

Let $X\in V^+$. If there exists $Y\in V^-$ such that  $(Y,H_{0},X)$ is an $\go{sl}_{2}$-triple, then $X$ is generic in $V^+$.
\end{lemme}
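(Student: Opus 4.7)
The plan is to verify condition $(ii)$ from Definition \ref{def-elementsgeneriques}, namely that $\ad(X)\colon \go{g}\to V^+$ is surjective. Everything needed sits inside the $\go{sl}_{2}$-triple relations
$$[X,Y]=H_{0},\qquad [H_{0},X]=2X,\qquad [H_{0},Y]=-2Y,$$
together with the grading identities from $(\bf H_1)$, in particular $[V^+,V^+]=\{0\}$ and $[V^+,V^-]\subset \go{g}$.

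Fix an arbitrary $Z\in V^+$. The candidate preimage is $W=\tfrac{1}{2}[Y,Z]$, which indeed belongs to $\go{g}$ since $[V^-,V^+]\subset\go{g}$. The core computation is then a single application of the Jacobi identity:
$$[X,[Y,Z]]=[[X,Y],Z]+[Y,[X,Z]]=[H_{0},Z]+[Y,0]=2Z,$$
where $[X,Z]=0$ follows from $X,Z\in V^+$ and $[V^+,V^+]=\{0\}$, and $[H_{0},Z]=2Z$ follows from $Z\in V^+$. Hence $\ad(X)(W)=Z$, and since $Z\in V^+$ was arbitrary, $\ad(X)\colon\go{g}\to V^+$ is surjective. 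By the equivalence stated in Definition \ref{def-elementsgeneriques}, $X$ is generic.

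There is no real obstacle here: the commutativity of $V^+$ collapses the Jacobi identity immediately and the $\go{sl}_{2}$-relation $[X,Y]=H_{0}$ does the rest. The only point worth checking carefully is that $W=\tfrac{1}{2}[Y,Z]$ lands in $\go{g}$ rather than merely in $[\widetilde{\go{g}},\widetilde{\go{g}}]$, which is immediate from the grading relation $[V^-,V^+]\subset \go{g}$. Note that one could alternatively decompose $\widetilde{\go{g}}$ as an $\go{sl}_{2}$-module under the triple $(Y,H_{0},X)$ (only trivial and three-dimensional summands can appear, since $\ad(H_{0})$ has eigenvalues in $\{-2,0,2\}$) and observe that $\ad(X)$ surjects the zero-weight space of each three-dimensional summand onto its top weight space; but this is just a repackaging of the Jacobi computation above.
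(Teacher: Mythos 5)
Your proof is correct and follows essentially the same route as the paper: the paper also applies $\ad(X)\ad(Y)$ to an arbitrary $v\in V^+$, uses $[V^+,V^+]=\{0\}$ together with the bracket relation of the triple, and reads off surjectivity of $\ad(X)\colon\go{g}\to V^+$. The only discrepancy is the sign convention (the paper takes $[Y,X]=H_{0}$ rather than $[X,Y]=H_{0}$), which merely changes your preimage to $-\tfrac12[Y,Z]$ and does not affect the argument.
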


\begin{proof}
For $v\in V^+$ one has  $2v=[H_{0},v]=\ad([Y,X])v=-\ad(X)\ad(Y)v$. Hence $\ad(X)$ is surjective from  $\go{g}$ onto   $V^+$.

\end{proof}

{\bf Note}: Although, in the preceding proof we used the commutativity of  $V^+$, the result is in fact true for any $\Z$-graded algebra.

\vskip 5pt

For $i=0,\dots,k$, let us  choose once and for all $X_{i}\in \widetilde{\go{g}}^{\lambda_{i}}$ and  $Y_{i}\in \widetilde{\go{g}}^{-\lambda_{i}}$, such that  $(Y_{i},H_{\lambda_{i}},X_{i}) $ is an  $\go{sl}_{2}$-triple. This is always possible  (see for example \cite{Seligman}, Corollary of Lemma 6, p.6, or \cite{Schoeneberg}, Proposition 3.1.9 p.23)

\begin{lemme}\label{Xk-generique}\hfill

The element $X_{k}$ is generic in $V_{k}^+ $.
\end{lemme}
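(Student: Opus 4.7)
The plan is to verify condition $(ii)$ of Definition \ref{def-elementsgeneriques} directly, that is to show that $\ad(X_{k}):\go{g}_{k}\longrightarrow V_{k}^+$ is surjective. One is tempted to apply Lemma \ref{lem-sl2-generique} to the graded algebra $\widetilde{\go{g}}_{k}$, which by Theorem \ref{th-descente}\,(2) still satisfies $(\bf H_1)$ and $(\bf H_2)$. However, this would require an $\go{sl}_{2}$-triple of the form $(Y,H_{k},X_{k})$ with $Y\in V_{k}^-$, and such a triple need not exist: by a Killing-form computation the $\go{a}_{k}$-component of $[X_{k},\widetilde{\go{g}}^{-\lambda_{k}}]$ lies in $FH_{\lambda_{k}}$, whereas $H_{k}$ is generally not a scalar multiple of $H_{\lambda_{k}}$. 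I will instead mimic the proof of Lemma \ref{lem-sl2-generique} using the $\go{sl}_{2}$-triple $(Y_{k},H_{\lambda_{k}},X_{k})$ at our disposal, proceeding weight by weight on $V_{k}^+$.

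For the core computation, write $V_{k}^+=\bigoplus_{\mu}\widetilde{\go{g}}^{\mu}$ with $\mu$ running over $\widetilde{\Sigma}_{k}^+\setminus \Sigma_{k}^+$ and fix a nonzero $v\in \widetilde{\go{g}}^{\mu}$. Since $\widetilde{\go{g}}_{k}$ satisfies $(\bf H_1)$, $V_{k}^+$ is abelian and $[X_{k},v]=0$; the Jacobi identity combined with $[Y_{k},X_{k}]=H_{\lambda_{k}}$ then gives
$$[X_{k},[Y_{k},v]]=-[[Y_{k},X_{k}],v]=-[H_{\lambda_{k}},v]=-\mu(H_{\lambda_{k}})\,v.$$
The element $[Y_{k},v]$ belongs to $\widetilde{\go{g}}^{\mu-\lambda_{k}}$, and since $(\mu-\lambda_{k})(H_{k})=0$ it lies in $\go{g}_{k}$. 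Provided $\mu(H_{\lambda_{k}})\neq 0$, the vector $u:=-\mu(H_{\lambda_{k}})^{-1}[Y_{k},v]\in\go{g}_{k}$ is a preimage of $v$ under $\ad(X_{k})$; summing over the weight spaces yields $\ad(X_{k})(\go{g}_{k})=V_{k}^+$.

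The main obstacle is therefore the nonvanishing $\mu(H_{\lambda_{k}})\neq 0$ for every weight $\mu$ occurring in $V_{k}^+$. For $\mu=\lambda_{k}$ this is immediate since $\lambda_{k}(H_{\lambda_{k}})=2$. If $\mu\neq \lambda_{k}$ and $\mu(H_{\lambda_{k}})=0$, then $\mu\perp\lambda_{k}$, and the ``orthogonality-implies-strong-orthogonality'' argument carried out in part $1)$ of the proof of Proposition \ref{racines-g1}, applied inside the 3-graded algebra $\widetilde{\go{g}}_{k}$, would force $\mu\pm\lambda_{k}\notin \widetilde{\Sigma}_{k}$. Then $\widetilde{\go{g}}^{\mu}$ would commute with $\widetilde{\go{g}}^{\pm\lambda_{k}}$ and with $H_{\lambda_{k}}$, hence with the whole of $\widetilde{\go{l}}_{k}$, placing $v\in V_{k}^+\cap {\cal Z}_{\widetilde{\go{g}}_{k}}(\widetilde{\go{l}}_{k})$. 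By the termination condition in Theorem \ref{th-descente}\,(3) this intersection is $\{0\}$, contradicting $v\neq 0$, and the proof is complete.
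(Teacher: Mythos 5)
Your proof is correct and takes essentially the same route as the paper: it establishes surjectivity of $\ad(X_{k})\colon \go g_{k}\to V_{k}^+$ weight space by weight space via $\ad(X_{k})\ad(Y_{k})v=-\mu(H_{\lambda_{k}})v$, and excludes $\mu(H_{\lambda_{k}})=0$ for $\mu\neq\lambda_{k}$ through the orthogonality-implies-strong-orthogonality argument of Proposition \ref{racines-g1} combined with the termination condition of Theorem \ref{th-descente}. Your write-up is even slightly more precise on one point: the paper's displayed computation writes $H_{k}$ where the bracket $[X_{k},[Y_{k},z]]$ actually produces the coroot $H_{\lambda_{k}}$, which is the element its subsequent case analysis in fact uses.
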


\begin{proof} The Lie algebra $\widetilde{\go{g}}_{k}$ is graded by  $H_{k}=H_{0}-H_{\lambda_{0}}-\dots-H_{\lambda_{k-1}}$ and not by $H_{\lambda_{k}}$, therefore we cannot use Lemma \ref{lem-sl2-generique}.

We will show that  $\ad(X_{k}):\go{g}_{k}\longrightarrow V_{k}^+$ is surjective (Cf.  definition \ref{def-elementsgeneriques}). Let  $\lambda$ be a root such that $ \widetilde{\go{g}}^{\lambda}\subset V_{k}^+$ and let $z\in \widetilde{\go{g}}^{\lambda}$. Then 
$$\ad(X_{k})\ad(Y_{k})z=-\ad(H_{k})z=-\lambda(H_{k})z.$$
 As $\ad(Y_{k})z\in \go{g}_{k}$, it is enough to show that  $\lambda(H_{k})\neq0$. If $\lambda=\lambda_{k}$, then of course  $\lambda(H_{k})=2$. If $\lambda\neq\lambda_{k}$ and  $\lambda(H_{k})=0$, then $\lambda\perp \lambda_{k}$, and hence  $\lambda\sorth \lambda_{k}$, by Proposition \ref{racines-g1} 1). Then $\widetilde{\go{g}}^{\lambda}\subset V^+\cap {\cal Z}_{\widetilde{\go{g}}}(\widetilde{ {\go l}}_0\oplus \widetilde{ {\go l}}_1\oplus \dots\oplus \widetilde{ {\go l}}_{k})=\{0\}$. Contradiction.

\end{proof}

\begin{lemme}\label{recurrence-genericite}\hfill

If $X$ is generic in  $V^+_{j}$, then $X_{0}+X_{1}+\dots+X_{j-1}+X$ is generic in  $V^+$.
\end{lemme}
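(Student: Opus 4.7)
The plan is to proceed by induction on $j$, with the trivial base case $j=0$. For the inductive step, assuming the statement holds for $j-1$, it suffices to prove the following \emph{one-step assertion} applied to $\widetilde{\go{g}}_{j-1}$ (which is graded and satisfies $({\bf H_{1}})$, $({\bf H_{2}})$ by Theorem \ref{th-descente}, and whose first descent is $\widetilde{\go{g}}_{j}$): if $Y$ is generic in $V_{1}^+$, then $X_{0}+Y$ is generic in $V^+$. Applied to $\widetilde{\go{g}}_{j-1}$, this yields that $X_{j-1}+X$ is generic in $V_{j-1}^+$, and the inductive hypothesis then gives the conclusion.

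To establish the one-step assertion, I would use the $\go{sl}_{2}$-triple $\go{s}=\langle Y_{0},H_{\lambda_{0}},X_{0}\rangle$ and decompose $V^+$ and $\go{g}$ into $\ad(H_{\lambda_{0}})$-eigenspaces $V^+_{(e)}$ and $\go{g}_{(e)}$. Examining $\lambda_{0}$-strings under the constraint $(\lambda-k\lambda_{0})(H_{0}) \in \{-2,0,2\}$ shows that the eigenvalues of $\ad(H_{\lambda_{0}})$ on $V^+$ lie in $\{0,1,2\}$ and on $\go{g}$ in $\{-1,0,1\}$; in particular, as an $\go{s}$-module, $\widetilde{\go{g}}$ decomposes into copies of the trivial, $2$-dimensional and $3$-dimensional irreducibles only. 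Proposition \ref{racines-g1} identifies $V^+_{(0)}=V_{1}^+$.

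The map $\ad(X_{0}+Y):\go{g} \to V^+$ then splits into blocks: $\go{g}_{(\pm 1)}$ maps only into $V^+_{(1)}$ (since $V^+$ has no weight $-1$ or $3$), while $\go{g}_{(0)}$ maps into $V^+_{(2)}\oplus V^+_{(0)}$ via $(\ad X_{0},\ad Y)$. Standard $\go{sl}_{2}$-representation theory, combined with the brackets $[X_{0},V^+]=0$ and $[X_{0},V^-]\subset \go{g}$, shows that $\ad X_{0}:\go{g}_{(-1)}\to V^+_{(1)}$ is a bijection (by restriction of the isomorphism $\ad X_{0}:\widetilde{\go{g}}_{(-1)}\to\widetilde{\go{g}}_{(1)}$ coming from the $L(1)$-isotypic part, block-diagonalized by the decompositions $\widetilde{\go{g}}_{(-1)} = V^-_{(-1)} \oplus \go{g}_{(-1)}$ and $\widetilde{\go{g}}_{(1)} = \go{g}_{(1)}\oplus V^+_{(1)}$), and that $\ad X_{0}:\go{g}_{(0)}\to V^+_{(2)}$ is surjective.

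The main obstacle is the coupled surjectivity $(\ad X_{0},\ad Y):\go{g}_{(0)}\to V^+_{(2)}\oplus V^+_{(0)}$. Given $(v_{2},v_{0})$, first choose $w_{0}\in\go{g}_{(0)}$ with $\ad X_{0}(w_{0})=v_{2}$, and set $v_{0}' = v_{0}-\ad Y(w_{0})\in V_{1}^+$. Since $\go{g}_{1}$ centralizes $\widetilde{\go{l}}_{0}$, it lies in $\ker(\ad X_{0}|_{\go{g}_{(0)}})$; and the genericity of $Y$ in $V_{1}^+$ furnishes $w_{1}\in\go{g}_{1}$ with $\ad Y(w_{1}) = v_{0}'$. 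Then $w=w_{0}+w_{1}$ satisfies $\ad(X_{0}+Y)(w) = v_{2}+v_{0}$. The crux is precisely this decoupling step: the inclusion $\go{g}_{1}\subset\ker(\ad X_{0}|_{\go{g}_{(0)}})$ provides exactly the room to correct the $\ad Y$-component without disturbing the $\ad X_{0}$-component, and the genericity hypothesis supplies the surjectivity needed on $\go{g}_{1}$.
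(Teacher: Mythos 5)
Your argument is correct, and it follows the same overall skeleton as the paper's proof: reduce by induction to the one-step case $j=1$, then prove surjectivity of $\ad(X_{0}+X):\go{g}\to V^+$ (for $X$ generic in $V_{1}^+$) using the $\go{sl}_{2}$-triple $(Y_{0},H_{\lambda_{0}},X_{0})$, with the genericity hypothesis supplying the weight-zero part $V_{1}^+=[\go{g}_{1},X]$. Where you genuinely diverge is in how the remaining weight components are treated. The paper exhibits explicit preimages: for $z\in\widetilde{\go{g}}^{\lambda_{0}}$ it takes $[Y_{0},z]\in\go{g}\cap\widetilde{\go{l}}_{0}$, which commutes with $X$, so there is no coupling between the $\widetilde{\go{g}}^{\lambda_{0}}$-component and the $V_{1}^+$-component to correct; and for $z$ of $H_{\lambda_{0}}$-weight one it takes $\ad(Y_{0})z\in\go{g}^{\lambda-\lambda_{0}}$ and proves by a root-string computation that $[\go{g}^{\lambda-\lambda_{0}},V_{1}^+]=\{0\}$. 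You instead organize the whole map by $\ad(H_{\lambda_{0}})$-weights: the vanishing that the paper establishes combinatorially is replaced by the cheaper weight bound $V^+_{(-1)}=\{0\}$ together with the $\go{sl}_{2}$-bijection $\ad X_{0}:\go{g}_{(-1)}\to V^+_{(1)}$, while the coupling between the weight-$2$ and weight-$0$ targets, which the paper's choice of preimages avoids altogether, you resolve by the correction step using $\go{g}_{1}\subset\ker(\ad X_{0}|_{\go{g}_{(0)}})$ and genericity. So your version trades the paper's explicit, root-by-root computations for a block-triangular surjectivity argument; both are complete. The only point you leave implicit is that $\ad(X_{0}+X)$ restricted to $\go{g}_{(-1)}$ coincides with $\ad X_{0}$ there (again because $V^+_{(-1)}=\{0\}$), which is what converts the bijectivity of $\ad X_{0}$ on that block into surjectivity onto $V^+_{(1)}$; this follows at once from the weight bound you already state, so it is not a gap.
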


\begin{proof}By induction we must just prove the Lemma  for $j=1$. Let $X$ be be generic in  $V_{1}^+$. We will prove that  $[\go{g},X_{0}+X]=V^+$. 

$\bullet $ One has $[\go{g}_{1}, X_{0}+X]=[\go{g}_{1}, X]= V^+_{1}$, from the definition of  $\go{g}_{1}$ and because $X$ is generic in $V_{1}^+$.

$\bullet$ If $z\in {\widetilde{\go{g}}}^{\lambda_{0}}$, then $[Y_{0},z]\in \go{g}\cap  \widetilde{ {\go l}}_0$ and hence  $[Y_{0},z]$ commutes with $X\in V^+_{1}$. But then  $\ad(X_{0}+X)[Y_{0},z]=\ad(X_{0})[Y_{0},z]= [-H_{0},z]=-2z$.

$\bullet$   If  $z\in {\widetilde{\go{g}}}^{\lambda}$ with $\lambda\neq\lambda_{0}$ and  $z\notin V^+_{1}$, then $\mu=\lambda-\lambda_{0}$ is a root (as $\lambda$ is not strongly orthogonal to  $\lambda_{0}$) which is positive, hence in  $\Sigma^+$ (because  $\lambda=\lambda_{0}+\dots$). Therefore  $\mu-\lambda_{0}$ is not a root. As $\mu+\lambda_{0}$ is a root, one has $(\mu,\lambda_{0})<0$ (see \cite{Bou1} (Chap. VI, \S1, $n^\circ 3$, Prop. 9)).

Suppose first that  $[\go{g}^{\mu},V_{1}^+]\neq\{0\}$. Then there exists a root  $\nu\in \widetilde{\Sigma}^+$ such  $\widetilde{\go{g}}^{\nu}\subset V^+_{1}$ and  $\mu+\nu\in \widetilde{\Sigma}^+$. One would have   $(\mu+\nu, \lambda_{0})= (\mu,\lambda_{0})<0$. Hence  $\mu+\nu+\lambda_{0}$ would be a root, such that 
$$(\mu+\nu+\lambda_{0})(H_{0})=\mu(H_{0})+\nu(H_{0})+\lambda_{0}
(H_{0})=0+2+2=4$$
and this is impossible from  the hypothesis ${\bf (H_{1})}$

Therefore  $[\go{g}^{\mu},V_{1}^+]=\{0\}$ and then, as  $U=\ad( Y_{0})z\in \go{g}^{\mu}$,  we have  $\ad(X_{0}+X)U=\ad(X_{0})U=\ad(X_{0})\ad(Y_{0})z=-\ad(H_{0})z=-2z$.

\end{proof}

\vskip 5pt
 
The two preceding results imply:
 
 \vskip 5pt
 
 \begin{prop}\label{X0+...+Xkgenerique}
  An element of the form  $X_{0}+X_{1}+\dots+X_{k}$, where $X_{i}\in \widetilde{\go{g}}^{\lambda_{i}}\setminus \{0\}$, is generic in  $V^+$.
 \end{prop}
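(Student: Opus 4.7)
The statement is essentially the telescoped form of Lemma \ref{recurrence-genericite}, so the plan is to combine it with Lemma \ref{Xk-generique}. Explicitly, the argument I would give has three short steps.

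First I would reduce to the case of the distinguished elements used in the previous two lemmas. Given arbitrary nonzero $X_i \in \widetilde{\go{g}}^{\lambda_i}$, I would invoke the $\go{sl}_2$-triple existence result cited just before Lemma \ref{Xk-generique} (\cite{Seligman}, Corollary of Lemma 6, or \cite{Schoeneberg}, Proposition 3.1.9) to produce, for each $i$, a companion $Y_i \in \widetilde{\go{g}}^{-\lambda_i}$ such that $(Y_i, H_{\lambda_i}, X_i)$ is an $\go{sl}_2$-triple. The key point justifying this reduction is that the subalgebras $\widetilde{\go{l}}_i = \widetilde{\go{g}}^{-\lambda_i} \oplus [\widetilde{\go{g}}^{-\lambda_i},\widetilde{\go{g}}^{\lambda_i}] \oplus \widetilde{\go{g}}^{\lambda_i}$, and therefore the whole tower $\widetilde{\go{g}} \supset \widetilde{\go{g}}_1 \supset \cdots \supset \widetilde{\go{g}}_k$ together with the roots $\lambda_0,\ldots,\lambda_k$ produced by Theorem \ref{th-descente}, depend only on the roots $\lambda_i$ and not on the choice of nonzero vector in each root space; hence the hypotheses of Lemmas \ref{Xk-generique} and \ref{recurrence-genericite} are met for this choice of $X_i$'s.

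Second, Lemma \ref{Xk-generique} gives that $X_k$ is generic in $V_k^+$. Then applying Lemma \ref{recurrence-genericite} with $j = k$ — whose proof, as recorded there, proceeds by induction on $j$ and so is already available in the required generality — immediately yields that $X_0 + X_1 + \cdots + X_{k-1} + X_k$ is generic in $V^+$, which is exactly the conclusion.

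The main (and really the only) non-routine work has already been done in Lemma \ref{recurrence-genericite}; the present proposition contains no genuine obstacle, and the only minor point that deserves a sentence of justification is the insensitivity of the descent to the specific choice of nonzero $X_i$ inside each $\widetilde{\go{g}}^{\lambda_i}$.
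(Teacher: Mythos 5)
Your proof is correct and is essentially the paper's own argument: the paper deduces the proposition directly from Lemma \ref{Xk-generique} and Lemma \ref{recurrence-genericite} ("the two preceding results imply"), exactly as you do. Your extra sentence observing that the descent and the two lemmas are insensitive to the choice of nonzero $X_i$ in each root space (any such $X_i$ can be completed to an $\go{sl}_2$-triple $(Y_i,H_{\lambda_i},X_i)$) is a harmless, and indeed welcome, clarification of a point the paper leaves implicit.
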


   \begin{prop}\label{prop-conjdeslambda(i)}\hfill
 
For $j=0,\dots,k$ let us denote by   $W_{j}$ the weyl group of the pair  $(\go{g}_{j},\go{a}_{i})$ $($hence $W=W_{0}$$)$. Let $w_{j}$ be the unique element of  $W_{j}$ such that $w_{j}(\Sigma_{j}^+)=\Sigma_{j}^-$. Then $w_{j}(\lambda_{j})=\lambda^0$ and the roots  $\lambda_{j}$ are long roots in  $\widetilde{\Sigma}$.

\end{prop}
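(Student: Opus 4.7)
The plan is to apply the same argument uniformly inside each graded algebra $\widetilde{\go{g}}_{j}$ of the descent, and then to identify the resulting highest roots with $\lambda^{0}$.

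For the base case $j=0$, I would show that $w_{0}(\lambda^{0})$ satisfies the two characterizing properties of $\lambda_{0}$ in Corollary \ref{corlambda0}, so that by uniqueness $w_{0}(\lambda^{0})=\lambda_{0}$ and hence $w_{0}(\lambda_{0})=\lambda^{0}$ from $w_{0}^{2}=\mathrm{id}$. The condition $(w_{0}(\lambda^{0}))(H_{0})=2$ holds because $W_{0}$ is generated by reflections $s_{\alpha}$ with $\alpha\in\Sigma$, each of which fixes $H_{0}$ (since $\alpha(H_{0})=0$ for $\alpha\in\Sigma$, as $\go{g}=\ker\ad(H_{0})$). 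For the condition $w_{0}(\lambda^{0})-\lambda\notin\widetilde{\Sigma}$ when $\lambda\in\Sigma^{+}$, rewrite the left-hand side as $w_{0}\bigl(\lambda^{0}+(-w_{0}(\lambda))\bigr)$, observe that $-w_{0}(\lambda)\in\Sigma^{+}$ since $w_{0}(\Sigma^{+})=\Sigma^{-}$, and invoke the characterization of $\lambda^{0}$ from Proposition \ref{prop.plusgranderacine}. Here one uses that $w_{0}$ permutes $\widetilde{\Sigma}$, which holds because each $s_{\alpha}$ ($\alpha\in\Sigma$) is induced by an elementary automorphism of $\widetilde{\go{g}}$ permuting root spaces.

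For general $j$, the same argument applied inside $\widetilde{\go{g}}_{j}$ --- which by Theorem \ref{th-descente} is a graded Lie algebra satisfying $(\mathbf{H_{1}})$ and $(\mathbf{H_{2}})$ with grading element $H_{j}$ --- produces $w_{j}(\lambda_{j})=\lambda_{j}^{0}$, where $\lambda_{j}^{0}\in\widetilde{\Sigma}_{j}$ is the highest root furnished by Proposition \ref{prop.plusgranderacine} applied to $\widetilde{\go{g}}_{j}$. To identify $\lambda_{j}^{0}$ with $\lambda^{0}$, I would iterate Proposition \ref{prop-gtilde(1)}: its proof shows that the highest weight $\omega$ of $(\overline{\go{g}},\overline{V^{+}})$ remains the highest weight of every $(\overline{\go{g}_{j}},\overline{V_{j}^{+}})$, so both $\lambda^{0}=\rho(\omega)$ and $\lambda_{j}^{0}=\rho_{j}(\omega)$ come from the same absolute weight $\omega$. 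Under the embedding $\widetilde{\Sigma}_{j}\subset\widetilde{\Sigma}$ of Proposition \ref{racines-g1}, the equality $\lambda_{j}^{0}=\lambda^{0}$ amounts to $\lambda^{0}\in\widetilde{\Sigma}_{j}$, i.e., $\lambda^{0}\perp\lambda_{i}$ for $i<j$. I would obtain this by induction on $j$: the inductive hypothesis already places $\lambda^{0}$ in $\widetilde{\Sigma}_{j-1}$ as a weight of $V_{j-1}^{+}$ together with $\lambda_{j-1}$, so commutativity of $V_{j-1}^{+}$ yields $\lambda^{0}+\lambda_{j-1}\notin\widetilde{\Sigma}$, and a $\lambda_{j-1}$-string argument combined with the lowest-weight characterization of $\lambda_{j-1}$ in $\widetilde{\go{g}}_{j-1}$ excludes $\lambda^{0}-\lambda_{j-1}\in\widetilde{\Sigma}$ as well, forcing $(\lambda^{0},\lambda_{j-1})=0$.

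Finally, the long-root assertion follows at once: each $w_{j}$ is an isometry of $\go{a}^{*}$ for the Killing form, so $|\lambda_{j}|=|w_{j}(\lambda_{j})|=|\lambda^{0}|$, and $\lambda^{0}$, being the highest root of an irreducible component of $\widetilde{\Sigma}$, is long. I expect the main obstacle to be the inductive orthogonality $\lambda^{0}\perp\lambda_{i}$ that underlies the identification $\lambda_{j}^{0}=\lambda^{0}$; the rest is a clean transcription of the base case into each descended algebra.
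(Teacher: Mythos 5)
Your base case and the overall skeleton coincide with the paper's proof: the authors show that $w_{0}(\lambda_{0})$ satisfies the two characterizing properties of $\lambda^{0}$ in Proposition \ref{prop.plusgranderacine} (you run the same argument in the reverse direction through Corollary \ref{corlambda0}, which is equivalent), they then assert that $\lambda^{0}$ is also the highest root of each $\widetilde{\Sigma}_{j}$ so that the argument transports to $\widetilde{\go{g}}_{j}$, and they conclude the length statement exactly as you do, from $\lambda^{0}$ being the highest root of an irreducible component and $w_{j}$ being an isometry. So the route is essentially the paper's; the only substantive issue is your justification of $\lambda^{0}\in\widetilde{\Sigma}_{j}$.

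There the argument you sketch has a gap. Writing $\mu=\lambda^{0}-\lambda_{j-1}$, your string/lowest-weight induction only disposes of the case $\mu\in\Sigma_{j-1}^{-}$ (then $\lambda_{j-1}+\mu=\lambda^{0}$ with $-\mu\in\Sigma_{j-1}^{+}$ contradicts Corollary \ref{corlambda0} applied to $\widetilde{\go{g}}_{j-1}$). If instead $\mu\in\Sigma_{j-1}^{+}$, the tools you invoke give no contradiction: the characterization of $\lambda_{j-1}$ only says $\lambda_{j-1}-\mu\notin\widetilde{\Sigma}$, and combining this with the $\lambda_{j-1}$-string through $\lambda^{0}$ merely pins down $n(\lambda^{0},\lambda_{j-1})=1$, which is a perfectly consistent root configuration. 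Indeed it actually occurs at the bottom of the descent: for $\go{sl}_{3}$ graded by a simple root (rank one, $k=0$) one has $\lambda^{0}-\lambda_{0}\in\Sigma^{+}$, so no purely combinatorial argument of this type can exclude the positive case without using that the descent continues, i.e.\ that $V_{j}^{+}\neq\{0\}$. The repair is already contained in the ingredient you cite: since the proof of Proposition \ref{prop-gtilde(1)}, iterated, shows that $\omega$ is the highest weight of $(\overline{\go{g}_{j}},\overline{V_{j}^{+}})$, the root space $\overline{\widetilde{\go{g}}}^{\omega}$ lies in $\overline{\widetilde{\go{g}}_{j}}={\cal Z}_{\overline{\widetilde{\go{g}}}}(\overline{\widetilde{\go{l}}_{0}}\oplus\dots\oplus\overline{\widetilde{\go{l}}_{j-1}})$, hence $\omega(H_{\lambda_{i}})=0$ and so $\lambda^{0}(H_{\lambda_{i}})=0$ for $i<j$ directly; strong orthogonality then follows from Proposition \ref{racines-g1}, and your identification $\lambda_{j}^{0}=\lambda^{0}$ goes through. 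This is precisely how the paper itself later justifies the strong orthogonality of $\lambda^{0}$ to $\lambda_{0},\dots,\lambda_{k-1}$ in the proof of Proposition \ref{propWconjugues}, so with this one-line substitution your proof matches theirs.
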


\begin{proof}\hfill

One has   $w_{0}\in W_{0}\subset{\cal Z}_{\text{Aut}_{e}(\widetilde{\go{g}})}(H_{0})\subset G={\cal Z}_{\text{Aut}_{0}(\widetilde{\go{g}})}(H_{0})$. Hence  $$w_{0}(\lambda_{0})(H_{0})=\lambda_{0}(w_{0}(H_{0}))=\lambda_{0}(H_{0})=2.$$

On the other hand, from Corollary  \ref{corlambda0}, one has $\lambda_{0}-\lambda\notin \widetilde{\Sigma}$ for $\lambda\in \Sigma^+$. Therefore $w_{0}(\lambda_{0})-w_{0}(\lambda)\notin \widetilde{\Sigma}$. But  $w_{0}(\lambda)$ takes all values in $-\Sigma^+$ when $\lambda$ varies in $  \Sigma^+$. Therefore  $w_{0}(\lambda_{0})+\lambda\notin \widetilde{\Sigma}$ if $\lambda\in \Sigma^+$.  Proposition \ref{prop.plusgranderacine} implies then that  $w_{0}(\lambda_{0})=\lambda^0$. It is easy to see that  $\lambda^0$ is also the highest root of  the root systems  $\widetilde{\Sigma}_{j}$.   As $\lambda_{j}$ is the analogue of  $\lambda_{0}$ in $\widetilde{\go{g}}_{j}$, we obtain that $w_{j}(\lambda_{j})=\lambda^0$. As the highest root  $\lambda^{0}$ is a long root (see \cite{Bou1}, Chap. VI,\S1 $n^\circ 1$, Proposition 25 p.165), the roots $\lambda_{j}$ are all long.

\end{proof}

\begin{prop}\label{prop-systememax}\hfill

$(1)$ The set  $\lambda_{0},\dots,\lambda_{k}$ is a maximal system of strongly orthogonal long roots in  $\widetilde{\Sigma}^+\setminus \Sigma^+  $.

$(2)$ If  $\beta_{0},\dots,\beta_{m}$ is another maximal system of strongly orthogonal long roots in $\widetilde{\Sigma}^+\setminus \Sigma^+  $ then  $m=k$ and there exists  $w\in W$ such that  $w(\beta_{j})=\lambda_{j}$ for  $j=0,\dots,k$.

\end{prop}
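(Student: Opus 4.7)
\textbf{Part (1).} The sequence $\lambda_{0},\dots,\lambda_{k}$ is strongly orthogonal by construction, and its members are long by Proposition \ref{prop-conjdeslambda(i)}. Since strong (hence ordinary) orthogonality forces $\lambda_{j}(H_{\lambda_{i}})=0$ for $i<j$, one finds $\lambda_{j}(H_{0})=\lambda_{j}(H_{j})=2$, so $\lambda_{j}\in \widetilde{\Sigma}^+\setminus\Sigma^+$. For maximality, suppose $\beta\in \widetilde{\Sigma}^+\setminus\Sigma^+$ were strongly orthogonal to every $\lambda_{i}$. Then $\widetilde{\go{g}}^{\beta}$ commutes with each $\widetilde{\go{l}}_{i}$, and since $\beta(H_{0})=2$,
$$\widetilde{\go{g}}^{\beta}\subset V^+\cap {\cal Z}_{\widetilde{\go{g}}}(\widetilde{\go{l}}_{0}\oplus\dots\oplus\widetilde{\go{l}}_{k})=\{0\}$$
by Theorem \ref{th-descente}(3), a contradiction. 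In particular no further long strongly orthogonal root can be added.

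\textbf{Part (2).} I would proceed by induction on the rank $k+1$. Given a maximal family $\beta_{0},\dots,\beta_{m}$ of strongly orthogonal long roots in $\widetilde{\Sigma}^+\setminus\Sigma^+$:

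\emph{Step A: reducing $\beta_{0}$ to $\lambda_{0}$.} Pick $\beta_{0}':=w(\beta_{0})$ in the $W$-orbit of $\beta_{0}$ maximizing the pairing with a $\Sigma^+$-regular element of $\go{a}$, so that $(\beta_{0}',\alpha)\geq 0$ for every $\alpha\in\Sigma^+$; since $W$ fixes $H_{0}$, one still has $\beta_{0}'(H_{0})=2$. Next, $\beta_{0}'$ lies in the same irreducible component of $\widetilde{\Sigma}$ as $\lambda_{0}$: otherwise two distinct components would each contain a restricted root of value $2$ on $H_{0}$, yielding a $\go{g}$-stable direct-sum decomposition $V^+=V^+_{A}\oplus V^+_{B}$ (since brackets between root spaces of different components vanish), in contradiction with $({\bf H_{2}})$. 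Now, if $\alpha\in\Sigma^+$ satisfies $\beta_{0}'+\alpha\in\widetilde{\Sigma}$, then
$$|\beta_{0}'+\alpha|^{2}=|\beta_{0}'|^{2}+2(\beta_{0}',\alpha)+|\alpha|^{2}>|\beta_{0}'|^{2},$$
contradicting the fact that $\beta_{0}'$, being long, attains the maximum length in its irreducible component. Hence $\beta_{0}'$ satisfies the characterisation of Proposition \ref{prop.plusgranderacine}, so $\beta_{0}'=\lambda^{0}$; composing $w$ with $w_{0}^{-1}$ (which sends $\lambda^{0}$ to $\lambda_{0}$ by Proposition \ref{prop-conjdeslambda(i)}) produces an element of $W$ sending $\beta_{0}$ to $\lambda_{0}$.

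\emph{Step B: descent and assembly.} Assume now $\beta_{0}=\lambda_{0}$. For $j\geq 1$, $\beta_{j}$ is long and (strongly, hence plainly) orthogonal to $\lambda_{0}$, so Proposition \ref{racines-g1} places it in $\widetilde{\Sigma}_{1}$, and $\beta_{j}(H_{1})=\beta_{j}(H_{0})-\beta_{j}(H_{\lambda_{0}})=2$ shows $\beta_{j}\in\widetilde{\Sigma}_{1}^+\setminus\Sigma_{1}^+$. The family $\beta_{1},\dots,\beta_{m}$ remains maximal there, since any further long strongly orthogonal root in $\widetilde{\Sigma}_{1}^+\setminus\Sigma_{1}^+$ would also be strongly orthogonal to $\lambda_{0}$ and would extend the original family. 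The inductive hypothesis applied to the rank-$k$ graded algebra $\widetilde{\go{g}}_{1}$ yields $m=k$ and $w_{1}\in W_{1}$ with $w_{1}(\beta_{j})=\lambda_{j}$ for $j\geq 1$. Since $W_{1}$ is generated by reflections in roots of $\Sigma_{1}\subset\Sigma$ orthogonal to $\lambda_{0}$, one has $W_{1}\subset W$ and $w_{1}$ fixes $\lambda_{0}$, completing the argument.

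The main obstacle is Step A, which combines two delicate ingredients: the length-comparison estimate (which is only available because $\beta_{0}'$ is long), and the verification via $({\bf H_{2}})$ that $\beta_{0}'$ sits in the irreducible component of $\widetilde{\Sigma}$ containing $\lambda_{0}$, so that the uniqueness part of Proposition \ref{prop.plusgranderacine} can be invoked to conclude $\beta_{0}'=\lambda^{0}$.
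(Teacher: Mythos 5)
Your proof is correct, and its overall skeleton coincides with the paper's: part (1) is the same appeal to Theorem \ref{th-descente} (3), and part (2) has the same two-stage structure (first conjugate $\beta_{0}$ to $\lambda_{0}$ by an element of $W$, then descend to $\widetilde{\go g}_{1}$ and use $W_{1}\subset W$, which fixes $\lambda_{0}$). The genuine difference is in how you get $\beta_{0}$ to $\lambda_{0}$. The paper introduces $\Phi=\{\gamma\in\Sigma \mid \gamma-\beta_{0}\notin\widetilde{\Sigma}\}$, proves it is a parabolic subset of $\Sigma$ (closedness via the Jacobi identity; $\Phi\cup(-\Phi)=\Sigma$ via the $\beta_{0}$-string argument, where longness enters through $n(\gamma_{0}-\beta_{0},\beta_{0})=-2$), moves a basis $\Pi'\subset\Phi$ onto $\Pi$ by some $w_{0}\in W$, and identifies $w_{0}(\beta_{0})$ with $\lambda_{0}$ directly by the characterization of Corollary \ref{corlambda0}. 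You instead take the $\Sigma^{+}$-dominant representative $\beta_{0}'$ of the $W$-orbit, use the length estimate $|\beta_{0}'+\alpha|^{2}>|\beta_{0}'|^{2}$ (this is where longness enters for you) to exclude $\beta_{0}'+\alpha\in\widetilde{\Sigma}$ for $\alpha\in\Sigma^{+}$, identify $\beta_{0}'$ with the highest root $\lambda^{0}$ via Proposition \ref{prop.plusgranderacine}, and pull back by $w_{0}^{-1}$ using $w_{0}(\lambda_{0})=\lambda^{0}$ from Proposition \ref{prop-conjdeslambda(i)}. Your route avoids the parabolic-subset machinery at the cost of invoking Proposition \ref{prop-conjdeslambda(i)} (which is proved just before and creates no circularity) together with the standard dominant-chamber argument; the paper's route reaches $\lambda_{0}$ without passing through $\lambda^{0}$. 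Two small remarks: your component verification is not really needed, since if $\beta_{0}'+\alpha$ is a root it automatically lies in the irreducible component of $\beta_{0}'$, so the length argument applies as soon as ``long'' means of maximal length in its own component, and $\beta_{0}'=\lambda^{0}$ then places it in the component of $\lambda_{0}$ a posteriori; and your induction should say a word about the case $k=0$, where $\widetilde{\go g}_{1}$ no longer satisfies $({\bf H_{1}})$ and one concludes $m=0$ directly from ${\cal Z}_{V^{+}}(\widetilde{\go l}_{0})=\{0\}$, exactly as the paper does.
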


\begin{proof} The following proof is adapted from  Th\'eor\`eme 2.12 in \cite{M-R-S} which concerns the complex case.

$(1)$ We have already seen that the roots $\lambda_{j}$ are long and strongly orthogonal. Suppose that there exists a long root  $\lambda_{k+1}$which is strongly orthogonal to each root $\lambda_{j}$ ($j=0,\ldots,k$). Then $\widetilde{\go{g}}^{\lambda_{k+1}}$ would be included in ${\cal Z}_{V^+}(\widetilde{ {\go l}}_0\oplus \widetilde{ {\go l}}_1\oplus \dots\oplus \widetilde{ {\go l}}_{k})=\{0\}$ (Theorem \ref{th-descente}). Contradiction.

$(2)$ Set  $\Phi=\{\gamma\in \Sigma\,|\, \gamma-\beta_{0}\notin \widetilde{\Sigma}\}$. Let us show that $\Phi$ is a parabolic subset of  $\Sigma$.

- We first show that $\Phi$ is a closed subset. Let $\gamma_{1},\gamma_{2}\in \Phi$ such that  $\gamma_{1}+\gamma_{2}\in \Sigma$. Then $[\go{g}^{\gamma_{1}},\go{g}^{\gamma_{2}}]\neq\{0\}$.  Let  $X_{\gamma_{i}}\in \go{g}^{\gamma_{i}}\setminus \{0\}$ ($i=1,2$) such that  $X_{\gamma_{1}+\gamma_{2}}=[X_{\gamma_{1}},X_{\gamma_{2}}]\neq0$. The Jacobi identity implies that  $[X_{\gamma_{1}+\gamma_{2}},X_{-\beta_{0}}]=0$. Hence $\gamma_{1}+\gamma_{2}-\beta_{0}\notin \Phi$. And hence $\Phi$ is closed.

-  It remains to show that  $\Phi\cup(-\Phi)=\Sigma$. If this is not the case, it exists  $\gamma_{0}\in \Sigma$ such that  $\gamma_{0}-\beta_{0}\in \widetilde{\Sigma}$ and $\gamma_{0}+\beta_{0}\in \widetilde{\Sigma}$. Therefore the $\beta_{0}$-string of roots through $\gamma_{0}$ can be written:
$$\gamma_{0}-\beta_{0},\gamma_{0},\gamma_{0}+\beta_{0}$$
(remember that  $V^\pm$ are commutative). From \cite{Bou1}(chap. VI, \S1, $n^\circ 3$, Corollaire de la proposition 9 p.149) one has
$$n(\gamma_{0}-\beta_{0},\beta_{0})=-2 \eqno{(*)}.$$

But from  \cite{Bou1} (Chap. VI, p.148) this is only  possible, as $\beta_{0}$ is long, if  $\gamma_{0}-\beta_{0}=-\beta_{0}$, in other words if $\gamma_{0}=0$. Contradiction. Hence $\Phi$ is parabolic in $\Sigma$

Then  (\cite{Bou1}, Chap. VI, \S 1, $n^\circ 7$, Prop. 20, p.161) there exists a basis $\Pi'$ of  $\Sigma$ such that  $\Pi'\subset \Phi$. Hence it exists  $w_{0}\in W$ such that  $w_{0}(\Pi')= \Pi$. Then  $w_{0}(\beta_{0})(H_{0})=\beta_{0}(w_{0}(H_{0}))=\beta_{0}(H_{0})=2$. One also has $\lambda-w_{0}(\beta_{0})\notin \widetilde{\Sigma}$ for  $\lambda\in \Sigma^+$. If one would have $\lambda-w_{0}(\beta_{0})\in \widetilde{\Sigma}$, then as  $\lambda=w_{0}\lambda'$ (with $\lambda'$ positive for $\Pi'$), then $\omega(\lambda')-w_{0}(\beta_{0})\in \widetilde{\Sigma}$, and hence $\lambda'-\beta_{0}\in \widetilde{\Sigma}$, this is impossible from the definition of  $\Phi$.
  But we know from Corollary \ref{corlambda0} that these properties characterize   $\lambda_{0}$. Hence $w_{0}(\beta_{0})=\lambda_{0}$. 

Suppose first  $k=0$. In this case the set  $\lambda_{0}=w_{0}(\beta_{0}), w_{0}(\beta_{1}),\dots,w_{0}(\beta_{m})$ is a set of strongly orthogonal roots. Hence $w_{0}(\beta_{1}),\dots,w_{0}(\beta_{m})\in {\cal Z}_{V^+}(\widetilde{ {\go l}}_0)=\{0\} $  This means that if  $k=0$ then $m=0$  and the assertion $(2)$ is proved in that case.

The general case goes by induction on $k$. Suppose that the result is true when the rank of the graded algebra is  $<k$. In view of the above,  there exists  $w_{0}\in W$ such that $w_{0}(\beta_{0})=\lambda_{0}$. Then $w_{0}(\beta_{1}),\dots,w(\beta_{m})$ is a maximal system of strongly orthogonal long roots in $\widetilde{\Sigma}_{1}^+\setminus \Sigma_{1}^+  $. As the graded algebra $\widetilde{\go{g}}_{1}$ is of rank  $k-1$, we have $m=k $ by induction  and there exists $w_{1}\in W_{1}$ ($W_{1}\subset W$ is the Weyl group of $\Sigma_{1}$) such that $w_{1}(w_{0}(\beta_{i}))=\lambda_{i} $ for $i=1,\dots,k$. The assertion $(2)$ is then proved with  $w=w_{0}w_{1}.$

\end{proof}

 \begin{cor} \label{cor-structureG}{\rm (see \cite{Mu98} Lemme 2.1. p. 166, for the regular case defined below)} \hfill
 
 Let  $\lambda_{0},\lambda_{1},\dots,\lambda_{k}$ be the maximal system of strongly orthogonal long roots obtained from the descent. Let $H_{\lambda_{0}}, H_{\lambda_{1}},\dots,H_{\lambda_{k}}$ be the corresponding co-roots. For $i=0,\dots,k$ we denote by  $G_{H_{\lambda_{i}}}$ the stabilizer of $H_{\lambda_{i}}$ in $G$. Then we have :
 
 $$G= {\rm Aut}_{e}(\go{g}).(\bigcap_{i=0}^k G_{H_{\lambda_{i}}}).$$
  \end{cor}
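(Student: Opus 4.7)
Given $g\in G$, the idea is to construct $h\in\text{Aut}_e(\go{g})$ such that $hg$ stabilizes each $H_{\lambda_i}$; writing $g=h^{-1}(hg)$ then yields the claimed factorization. We first identify $\text{Aut}_e(\go{g})$ with a subgroup of $G$: for any generator $\exp(\ad X)$ of $\text{Aut}_e(\go{g})$ with $X\in\go{g}$ nilpotent, the endomorphism $\ad_{\widetilde{\go{g}}}X$ is again nilpotent (nilpotency is intrinsic to reductive Lie algebras), so $\exp(\ad X)$ extends to an automorphism of $\widetilde{\go{g}}$, which centralizes $H_0$ because $[H_0,X]=0$, and hence lies in $G$. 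The same observation applied to root vectors of $\go{g}^{\pm\lambda}$ ($\lambda\in\Sigma$) shows that $W=W(\Sigma,\go{a})\subset\text{Aut}_e(\go{g})\subset G$.

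\emph{Step 1 (bringing $g\go{a}$ back to $\go{a}$).} Since $g$ centralizes $H_0$ it preserves $\go{g}$, and $g\cdot\go{a}$ is a maximal split abelian subalgebra of $\go{g}$. The conjugacy theorem recalled in \S 1.2 supplies $h_1\in\text{Aut}_e(\go{g})$ such that $h_1(g\go{a})=\go{a}$. The key point is that $H_0\in Z(\go{g})$ (because $[H_0,\go{g}]=0$) and elementary automorphisms fix the center pointwise, so $h_1H_0=H_0$; consequently the extension of $h_1$ to $\widetilde{\go{g}}$ actually lies in $G$, and the product $h_1g\in G$ normalizes $\go{a}$.

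\emph{Step 2 (straightening the $H_{\lambda_j}$'s via $W$).} The induced action $(h_1g)_*$ on $\go{a}^*$ permutes $\widetilde{\Sigma}$, preserves the bilinear form induced by the Killing form (hence root lengths and strong orthogonality), and fixes $H_0$; it therefore preserves the subset $\widetilde{\Sigma}^+\setminus\Sigma^+=\{\lambda\in\widetilde{\Sigma}:\lambda(H_0)=2\}$. Consequently $\{(h_1g)_*\lambda_0,\ldots,(h_1g)_*\lambda_k\}$ is once again a maximal system of strongly orthogonal long roots in $\widetilde{\Sigma}^+\setminus\Sigma^+$, and Proposition \ref{prop-systememax}(2) furnishes $w\in W$ with $w\bigl((h_1g)_*\lambda_j\bigr)=\lambda_j$ for every $j$. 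Passing to coroots, this reads $(wh_1g)(H_{\lambda_j})=H_{\lambda_j}$, so $wh_1g\in\bigcap_{i=0}^kG_{H_{\lambda_i}}$ and $g=(h_1^{-1}w^{-1})(wh_1g)$ is the desired decomposition. The single delicate point of the argument is that the conjugator $h_1$ produced by the conjugacy theorem lives \emph{a priori} only in $\text{Aut}_e(\go{g})$; lifting it to an element of $G$ is where we crucially use $H_0\in Z(\go{g})$ together with the fact that elementary automorphisms act trivially on the center.
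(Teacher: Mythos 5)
Your proof is correct and follows essentially the same route as the paper: conjugate $g\cdot\go{a}$ back to $\go{a}$ by an element of $\text{Aut}_e(\go{g})\subset G$, observe that the resulting automorphism sends $\lambda_0,\dots,\lambda_k$ to another maximal system of strongly orthogonal long roots in $\widetilde{\Sigma}^+\setminus\Sigma^+$, and invoke Proposition \ref{prop-systememax}(2) to straighten it by an element of $W$. Your extra care in checking that the conjugator fixes $H_0$ (hence lies in $G$) only makes explicit what the paper leaves implicit in the inclusion $\text{Aut}_e(\go{g})\subset G$.
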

 \begin{proof}\hfill
 
 Let $g\in G$. The elements  $g.H_{\lambda_{i}}$ belong to a maximal split torus  $\go{a}'$. As the maximal split tori of $\go{g}$ are conjugated under $\text{Aut}_{e}(\go{g})\subset G$ (\cite{Seligman}, Theorem 2, page 27, or \cite{Schoeneberg}, Theorem 3.1.16 p. 27) there exists  $h\in \text{Aut}_{e}(\go{g})$ such that $hg(\go{a})=\go{a}$, hence  $hg\in \text{Aut}(\go{g}, \go{a})$, the group of automorphisms of  $\go{g}$ stabilizing  $\go{a}$. But then $hg$ acts on  $\widetilde{\Sigma}$ and  $hg({\lambda_{0}}),\dots,hg(\lambda_{k})$ is a sequence of strongly orthogonal long roots in  $\widetilde{\Sigma}^+\setminus \Sigma^+  $. From the preceding proposition  \ref{prop-systememax}  , there exists  $w\in W\subset \text{Aut}_{e}(\go{g})$ such that  $whg({\lambda_{i}})=\lambda_{i}$.
Hence $whg\in \bigcap_{i=0}^k G_{H_{\lambda_{i}}}$. It follows that  $g=h^{-1}w^{-1}whg\in {\rm Aut}_{e}(\go{g}).(\bigcap_{i=0}^k G_{H_{\lambda_{i}}})$.

 \end{proof}
  \begin{definition}\label{def-a^0-L}
  We set:
   $$\go{a}^{0}=\oplus_{j=0}^k F H_{\lambda_{j}}\subset \go{a}$$
   and
   $$L=Z_{G}(\go{a}^{0})=\bigcap_{i=0}^k G_{H_{\lambda_{i}}}$$
   
 Hence, from the preceding Corollary, we have $G= {\rm Aut}_{e}(\go{g}).L$.

\end{definition}
 \begin{rem} \label{rem-inclusion-groupes}  Let  $j\in\{0,\ldots,k\}$. Let us denote by   $G_j$ the analogue of the group $G$ for the Lie algebra  $\tilde{\go g}_j$ (hence $G_0=G$). As $\overline{G_{j}}={\rm Aut}_e(\tilde{\go g}_j\otimes \overline{F})\subset {\rm Aut}_e(\tilde{\go g}\otimes \overline{F})$, any element $g$ of  $G_j$  extends to  an elementary automorphism of $ \tilde{\go g}\otimes\overline{F}$, which we denote by $ext(g)$ and which acts trivially on $\oplus_{s=0}^{j-1}\tilde{\go l}_s$. Therefore  $ext(g)$ centralizes $H_{0}$ and one has:
$$ {\rm Aut}_e(\go g_j)\subset G_j\subset\overline{G_{j}} \subset \overline{G} ={\mathcal Z}_{{\rm Aut}_e(\tilde{\go g}\otimes \overline{F})}(H_0).$$ 
However, it may happen that, for  $g\in G_j$, the automorphism  $ext(g)$ does not stabilize  $\tilde{\go g}$ and then   $ext (g)$   does not define an automorphism of  $\tilde{\go g}$. For example  (see the proof of  Theorem \ref{thm-orbites-e1}), when   $\tilde{\go g}$ is the symplectic algebra $ {\go sp}(2n,F)$, graded by $H_0=\left(\begin{array}{cc} I_n & 0\\ 0 & -I_n\end{array}\right)$ where  $I_n$ is the identity matrix of size $n$, the group $G$ is the group of elements  ${\rm Ad}(g)$ for  $g=\left(\begin{array}{cc} \mathbf g & 0\\ 0 & \mu\;^{t}{\mathbf g}^{-1}\end{array}\right)$ where $\mathbf g\in GL(n,F)$ and  $\mu\in F^*$ and where  $G_j$, as a subgroup of  $\overline{G}$, is the subgroups of elements of the form  ${\rm Ad}(g_j)$ where $$g_j=\left(\begin{array}{c|c} \begin{array}{cc}\mathbf g_j & 0\\ 0 & I_{j}\end{array} & 0\\
\hline
 0 &  \begin{array}{cc}\mu\; ^{t}{\mathbf g}_j^{-1} & 0\\ 0 & I_{j}\end{array} \end{array}\right)$$ with $\mathbf g_j\in GL(n-j,F)$ and  $\mu\in F^*$.\\
This shows that $G_j$ is not always included in   $G$. 
\end{rem} 
 \begin{definition}
 \label{def-regulier} A reductive graded Lie algebra $\widetilde{\go g}$ which verifies condition ${\bf (H_{1})}$ and ${\bf (H_{2})}$ is called  {\bf regular} if furthermore it satisfies: \vskip 3pt
  \hskip 15pt  $\bf{(H_{3})}$ There exist $I^+\in V^+$ and $I^-\in V^-$ such that $(I^-,H_{0},I^+)$ is an $\go{sl}_{2}$-triple.  
 \end{definition}
 
  \vskip 5pt
  
  \begin{prop}\label{prop-generiques-cas-regulier}\hfill
  
  In a regular graded Lie algebra $\widetilde{\go g}$, an element  $X\in V^+$ is generic  if and only if it exists $Y\in V^-$, such that  $(Y,H_{0},X)$ is an $\go{sl}_{2}$-triple. Moreover, for a fixed generic element $X$ , the element $Y$   is unique.
  \end{prop}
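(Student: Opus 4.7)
The direction $(\Leftarrow)$ is exactly Lemma~\ref{lem-sl2-generique}. For $(\Rightarrow)$, fix $X\in V^+$ generic. By $({\bf H_3})$ and Lemma~\ref{lem-sl2-generique} the reference element $I^+$ is also generic, and I would first argue that $X$ and $I^+$ are $\overline G$-conjugate over the algebraic closure. Indeed, over $\overline F$ the group $\overline G$ is connected and $(\overline G,\overline{V^+})$ is a prehomogeneous vector space, so it admits a unique Zariski-open $\overline G$-orbit; both $\overline G\cdot X$ and $\overline G\cdot I^+$ are open in $\overline{V^+}$ (genericity over $F$ transfers to $\overline F$ via the tangent-space computation $[\overline{\mathrm{Lie}(G)},X]=\overline{V^+}$), hence they coincide. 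Choose $g\in\overline G$ with $g\cdot I^+=X$.

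Since $g$ centralizes $H_0$, it preserves the $\ad(H_0)$-grading, so $(g\cdot I^-,\,H_0,\,X)$ is an $\go{sl}_2$-triple in $\overline{\widetilde{\go g}}$ with $g\cdot I^-\in\overline{V^-}$; in particular $H_0\in[X,\overline{V^-}]$. To descend to $F$, I would invoke injectivity of $\ad(X):V^-\to\go g$, which holds for every generic $X$. Assuming $\widetilde{\go g}$ semisimple, invariance of the Killing form $B$ makes the maps $\ad(X):\go g\to V^+$ and $\ad(X):V^-\to\go g$ into (negative) transposes of one another under the non-degenerate pairings $\go g\times\go g\to F$ and $V^+\times V^-\to F$ induced by $B$ (the latter being non-degenerate because $V^+$ and $V^-$ lie in opposite nilradicals). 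Consequently, surjectivity of $\ad(X):\go g\to V^+$, which is the definition of genericity (Definition~\ref{def-elementsgeneriques}), is equivalent to injectivity of $\ad(X):V^-\to\go g$.

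Injectivity persists after base change to $\overline F$, so the $F$-rational linear equation $[X,Y]=H_0$ has at most one solution in $\overline{V^-}$; existence over $\overline F$ (namely $g\cdot I^-$) together with rationality of the equation forces the (unique) solution $Y$ to lie in $V^-$. The bracket relations $[H_0,X]=2X$ and $[H_0,Y]=-2Y$ are automatic from the grading, so $(Y,H_0,X)$ is the desired $\go{sl}_2$-triple. The uniqueness assertion is then the same injectivity statement applied to $Y_1-Y_2$.

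The main point is the reduction of all generic elements of $V^+$ to a single $\overline F$-orbit, which allows the $\go{sl}_2$-triple structure at $I^+$ to be transported to any generic $X$; this circumvents the fact that over $F$ the set of generic elements may split into several open $G$-orbits. Killing-form duality is the routine ingredient supplying the injectivity needed both for the $F$-rational descent and for uniqueness.
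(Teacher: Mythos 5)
Your proof is correct and follows essentially the same route as the paper: the forward direction is Lemma \ref{lem-sl2-generique}, and for the converse you use that $(\overline G,\overline{V^+})$ is prehomogeneous with a single open orbit to conjugate $X$ to $I^+$ over $\overline F$ and transport the $\go{sl}_2$-triple $(I^-,H_0,I^+)$. The only (harmless) variation is the rationality step: the paper descends $g\cdot I^-$ to $V^-$ by a tensor-product/linear-independence argument and quotes Bourbaki for uniqueness, whereas you obtain both from injectivity of $\ad(X)$ on $V^-$ (via Killing-form duality with the surjectivity $[\go g,X]=V^+$) together with the $F$-rationality of the linear equation $[X,Y]=H_0$ --- equally valid and routine.
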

  
  \begin{proof}
  
  If $X$ can be put in such an  $\go{sl}_{2}$-triple, then $X$ is generic in $V^+$ from Lemma  \ref{lem-sl2-generique}.
  
  Conversely, if $X$ is generic in  $V^+$, then $\ad(X): \go{g}\longrightarrow V^+$ is surjective (cf. Definition  \ref{def-elementsgeneriques}), hence $\ad(X): \overline{\go{g}}\longrightarrow \overline{V^+}$ is also surjective, therefore the  $\overline{G}$-orbit of  $X$ in $\overline{V^+}$ is open  and  $X$ is generic in   $\overline{V^+}$. But there is only one open orbit in  $\overline{V^+}$. Therefore $X$ is in the $\overline{G}$-orbit of  $I^+$ (Definition \ref{def-regulier}). Hence there exists $g\in \overline{G}$ such that  $g.I^+=X$. But then $(g.I^-, g.H_{0}=H_{0}, g.I^+=X)$ is an $\go{sl}_{2}$-triple. A standard tensor product argument (write
  $g.I^-=Y$ under the form $Y=\sum_{i=1}^n a_{i}Y_{i}$, with $a_{1}=1$, $a_{2},\dots,a_{n}$ elements of  $\overline{F}$ free over sur $F$, $Y_{i}\in V^-$), shows that $Y\in V^-$. Uniqueness is classical,(\cite{Bou2}, Chap VII, \S 11, $n^\circ 1$, Lemme 1).
  \end{proof}
  \vskip 10pt
  {\bf From now on we will always suppose that the graded Lie algebra $(\widetilde{\go{g}},H_{0})$ is regular.}
  
   \vskip 20pt
 \subsection{Structure of the regular graded Lie algebra $(\widetilde{\go{g}},H_{0})$}\hfill
  \vskip 10pt
  
  Let $\lambda_{0},\lambda_{1},\dots,\lambda_{k}$ be the sequence of strongly orthogonal  roots defined in Theorem \ref{th-descente}. Let $H_{\lambda_{0}},H_{\lambda_{1}},\dots,H_{\lambda_{k}}$ be the sequence of the corresponding co-roots.
  
 Remember that we have defined:
  $$\go{a}^{0}=\oplus_{j=0}^k F H_{\lambda_{j}}\subset \go{a}$$

For  $i,j\in
\{0,1,\ldots ,k\}$ and 
$p,q\in {\bb Z}$ we define the subspaces $E_{i,j}(p,q)$ \label{Eij} of
$\widetilde{ {\go g}}$  by setting:
$$ E_{i,j}(p,q)=\left\{ X\in \widetilde{ {\go g}}\ \Bigl|\   [H_{\lambda
_\ell},X]=
\begin{cases}
pX&\hbox{  if }\ell=i\ ;\cr
qX&\hbox{ if }\ell=j\ ;\cr
0&\hbox{ if }\ell\notin \{i,j\}\ .
\end{cases}\right\}
$$
 \vskip 5pt
\begin{theorem}\label{th-decomp-Eij}\hfill

If $(\widetilde{\go{g}}, H_{0})$ is regular then 
$$H_{0}=H_{\lambda_{0}}+H_{\lambda_{1}}+\dots+H_{\lambda_{k}}.$$
Moreover one has the following decompositions:
\begin{align*}
(1) \hskip 25pt {\go g}&={\cal Z}_{\go g}({\go a}^0)\oplus\bigl(\oplus _{i\not =
j}E_{i,j}(1,-1)\bigr)\ ;\cr 
(2) \hskip 15pt V^+&=\bigl(\oplus_{j=0}^k \widetilde{ {\go g}}^{\lambda _j}\bigr)
\oplus\bigl(\oplus _{i<j}E_{i,j}(1,1)\bigr)\ ;\cr
(3) \hskip 15pt V^-&=\bigl(\oplus_{j=0}^k \widetilde{ {\go g}}^{-\lambda _j}\bigr) \oplus\bigl(\oplus
_{i<j}E_{i,j}(-1,-1)\bigr)\ .
\end{align*}
\end{theorem}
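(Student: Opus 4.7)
The plan is to first establish the identity $H_0=\sum_{i=0}^kH_{\lambda_i}$, and then deduce the three decompositions via root-string bookkeeping on the $\go a$-weights of root spaces. To prove the identity, set $X'=X_0+\cdots+X_k$, which is generic in $V^+$ by Proposition \ref{X0+...+Xkgenerique}. By regularity and Proposition \ref{prop-generiques-cas-regulier} there exists a unique $Y\in V^-$ with $(Y,H_0,X')$ an $\go{sl}_2$-triple, so $[X',Y]=H_0$. Decomposing $Y=\sum_\mu Y_\mu$ along the $\go a$-weight spaces in $V^-$ gives
$$
H_0=[X',Y]=\sum_{i,\mu}[X_i,Y_\mu],\qquad [X_i,Y_\mu]\in\widetilde{\go g}^{\lambda_i+\mu};
$$
only terms with $\mu=-\lambda_i$ survive when we project onto the zero-weight space $\go m=\mathcal Z_{\widetilde{\go g}}(\go a)$. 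Proposition \ref{ell-0-simple} ensures each $\widetilde{\go l}_i$ is split of rank one, so $\widetilde{\go g}^{\pm\lambda_i}$ is one-dimensional over $F$; writing $Y_{-\lambda_i}=c_iY_i$ one has $[X_i,Y_{-\lambda_i}]=c_iH_{\lambda_i}\in FH_{\lambda_i}$. The $\go m$-component of $[X',Y]$ therefore already lies in $\go a^0=\bigoplus_iFH_{\lambda_i}$, forcing $H_0=\sum_ic_iH_{\lambda_i}$; evaluating $\lambda_j$ on both sides and using $\lambda_j(H_{\lambda_i})=2\delta_{ij}$ (from strong orthogonality) yields $c_j=1$.

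For (2), fix a root $\lambda$ with $\widetilde{\go g}^\lambda\subset V^+$ and set $a_i=\lambda(H_{\lambda_i})$. The commutativity $[V^+,V^+]=0$ rules out $\lambda+\lambda_i\in\widetilde\Sigma$, so the $\lambda_i$-string through $\lambda$ terminates on top at $\lambda$ and $a_i\ge 0$; the grading bounds that string to length at most three, so $a_i\in\{0,1,2\}$. A Cauchy--Schwarz argument using that $\lambda_i$ is long (Proposition \ref{prop-conjdeslambda(i)}) gives $a_i=2\Rightarrow\lambda=\lambda_i$. Combined with $\sum_ia_i=\lambda(H_0)=2$ from Step~1, either exactly one $a_i$ equals $2$ (and $\lambda=\lambda_j$) or exactly two equal $1$ (placing $\widetilde{\go g}^\lambda$ in $E_{i,j}(1,1)$); this proves (2), and (3) follows by the mirror argument on $V^-$.

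For (1), when $\lambda\in\Sigma$ the $\lambda_i$-string still has length at most three, so $a_i\in\{-1,0,1\}$ with $\sum_ia_i=0$. If $a_i=a_j=1$ for distinct $i,j$, then $\mu:=\lambda-\lambda_i$ is a root in $V^-$ with $\mu(H_{\lambda_j})=1$, contradicting the bound $\mu(H_{\lambda_j})\le 0$ obtained from the same string analysis applied in $V^-$. Hence at most one $a_i$ equals $+1$ and at most one equals $-1$; as their sum vanishes, either all $a_i=0$ (placing $\widetilde{\go g}^\lambda$ inside $\mathcal Z_{\go g}(\go a^0)$) or exactly one pair $(a_i,a_j)=(1,-1)$ (placing it in $E_{i,j}(1,-1)$), proving (1).

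The main technical obstacle is Step~1: extracting an equality of Cartan elements from the single relation $[X',Y]=H_0$ hinges on the one-dimensionality of $\widetilde{\go g}^{\pm\lambda_i}$ (so that the $\go a$-projection of $[X_i,Y_{-\lambda_i}]$ already lies in $FH_{\lambda_i}$) together with the uniqueness of $Y$ provided by regularity through Proposition \ref{prop-generiques-cas-regulier}. Once the identity is in place, the decompositions (1)--(3) reduce to the elementary Peirce-type root analysis sketched above.
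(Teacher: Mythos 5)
The weight/string analysis in your second half is sound and close in spirit to the paper's (the paper gets the same dichotomies by conjugating with the Weyl elements $w_j$ of the triples $(X_{-j},H_{\lambda_j},X_j)$ and by using that $\lambda_j$ is the simple root of $\widetilde{\Pi}_j$, where you instead use longness of $\lambda_i$ and Cauchy--Schwarz; both work). The problem is Step~1, which you yourself identify as the crux. Your argument there rests on the claim that $\widetilde{\go g}^{\pm\lambda_i}$ is one-dimensional over $F$ because $\widetilde{\go l}_i$ has split rank one, and this is false: split rank one does not bound the dimension of restricted root spaces. In the paper's setting $\dim\widetilde{\go g}^{\lambda_i}=\ell$ equals $\delta^2$ or $3$ (Notation \ref{notdle}, Theorem \ref{th-k=0}, Table 1); for instance for $\widetilde{\go g}=\go{sl}(2(k+1),D)$ with $D$ a quaternion division algebra one has $\ell=4$, and $\widetilde{\go l}_i\simeq\go{sl}_2(D)$ is absolutely simple of split rank one with $4$-dimensional root spaces. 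Consequently you cannot write $Y_{-\lambda_i}=c_iY_i$, and the bracket $[X_i,Y_{-\lambda_i}]$ only lies in $[\widetilde{\go g}^{\lambda_i},\widetilde{\go g}^{-\lambda_i}]\subset\go m$, which in general has a nonzero component in the anisotropic kernel $[\go m,\go m]$ (already visible for $\go{sl}_2(D)$). So the zero-weight projection of $[X',Y]$ does not visibly land in $\go a^0$, and the identity $H_0=\sum_i c_iH_{\lambda_i}$ does not follow as written.

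The gap can be closed in two ways. The paper's route: take $Y'=X_{-0}+\cdots+X_{-k}$ with $(X_{-j},H_{\lambda_j},X_j)$ an $\go{sl}_2$-triple; by strong orthogonality $(Y',H_{\lambda_0}+\cdots+H_{\lambda_k},X')$ is an $\go{sl}_2$-triple, and since $(\ad X')^2:V^-\to V^+$ is injective while $(\ad X')^2Y=2X'=(\ad X')^2Y'$, one gets $Y=Y'$ and hence $H_0=H_{\lambda_0}+\cdots+H_{\lambda_k}$. Alternatively, your projection idea can be repaired without one-dimensionality: project the identity $H_0=\sum_i[X_i,Y_{-\lambda_i}]$ onto $\go a$ along the $\widetilde B$-orthogonal complement of $\go a$ in $\go m$, using $\widetilde B([X_i,Y_{-\lambda_i}],H)=\lambda_i(H)\,\widetilde B(X_i,Y_{-\lambda_i})$ for $H\in\go a$, which shows that the $\go a$-component of each $[X_i,Y_{-\lambda_i}]$ is proportional to $H_{\lambda_i}$; since $H_0\in\go a$, the $[\go m,\go m]$-components cancel and evaluation of $\lambda_j$ then gives the coefficients $1$. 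As it stands, however, your Step~1 is not a proof, and everything downstream uses $H_0=\sum_jH_{\lambda_j}$.
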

\vskip 5pt
\begin{proof}
For  $j=\{0,1,\dots,k\}$, we choose $X_{j}\in  \widetilde{ {\go g}}^{\lambda _j}\setminus \{0\}$. From Proposition \ref{X0+...+Xkgenerique} the element $X=X_{0}+X_{1}+\dots+X_{k} $ is generic. Therefore  $X$ can be put in an  $\go{sl}_{2}$-triple  of the form  $(Y,H_{0},X)$ with  $Y\in V^-$ (Proposition \ref{prop-generiques-cas-regulier}). We choose also  $X_{-j}\in  \widetilde{ {\go g}}^{-\lambda _j}$ such that  $(X_{-j},H_{\lambda_{j}}, X_{j})$ is an $\go{sl}_{2}$-triple, and we set $Y'=X_{-0}+X_{-1}+\dots+X_{-k}$. 

Then  $(Y', H_{\lambda_{0}}+H_{\lambda_{1}}+\dots+H_{\lambda_{k}},X)$ is again an  $\go{sl}_{2}$-triple. On the other hand
$$\ad(X)^2: V^-\longrightarrow V^+$$
is injective  and as  $\ad(X)^2Y=2X= \ad(X)^2Y'$, we obtain that $Y=Y'$. But then $H_{0}=\ad(X)Y=\ad(X)Y'=H_{\lambda_{0}}+H_{\lambda_{1}}+\dots+H_{\lambda_{k}}$. The first assertion is proved.
\vskip 10pt
Let now  $X$ be an element  of an eigenspace of  $\ad(\go{a})$, i.e. either an element of a root space of  $\widetilde{\go{g}}$ or an element of the centralizer  $\go{m}$ of $\go{a}$ in $\widetilde{\go{g}}$ (cf. Remark \ref{rem-precisions}). The representation theory of  $\go{sl}_{2}$ implies the existence, for all $j=0,1,\dots,k$, of an integer  $p_{j}\in \Z$ such that $[H_{\lambda_{j}},X]=p_{j}X$. As 
$H_{0}=H_{\lambda_{0}}+H_{\lambda_{1}}+\dots+H_{\lambda_{k}}$ one has 
$$p_0+p_1+\cdots +p_k=
\begin{cases}
2&\hbox{ if   } X\in V^+\ ;\cr
0&\hbox{  if } X\in {\go g}\ ;\hfill\cr
-2&\hbox{  if }X\in V^-\ .
\end{cases}$$

Define $w_j=e^{\ad X_j}e^{\ad X_{-j}}e^{\ad X_j}$. Hence $w_{j}$ is the unique non trivial element of the Weyl group of the Lie algebra isomorphic  to  $\go{sl}_{2}$ generated by the triple $(X_{-j},H_{\lambda_{{j}}}, X_{j})$. As the  $\lambda_{j}$ are strongly orthogonal  the elements   $w_{j}$ commute and  
$$w_j.H_{\lambda _i}=
\begin{cases}
H_{\lambda _i}\hbox{  for }i\not = j\ ;\cr
-H_{\lambda _j}\hbox{  for }i = j\ .
\end{cases}$$
Let $J\subset \{p_{0},p_{1},\dots,p_{k}\}$ be the subset of $i$'s such that $p_{i}<0$. If we set $w=\prod_{i\in J}w_{i}$, one obtains that the sequence of eigenvalues of $H_{\lambda_{j}}$ on $wX$ is  $|p_{0}|, |p_{1}|, \dots,|p_{k}|$. Hence  $|p_{0}|+|p_{1}|+\dots+|p_{k}|=0 \text{ or }2$. 

The case  $|p_{0}|+|p_{1}|+\dots+|p_{k}|=0$  occurs if and only if $X\in {\cal Z}_{\go g}({\go a}^0)$.  

If   $|p_{0}|+|p_{1}|+\dots+|p_{k}|=2$ then all  $p_{j}$ are zero, except for one which takes the value $\pm2$, or for two among them which take the value $\pm1$.  

 In the first case or if the two nonzero $p_i$'s are equal then $X\in V^+$ or $X\in V^-$.
Otherwise one  $p_{i}$ equals $1$ and the other $-1$,  and then $X\in \go{g}$.

To obtain the announced decompositions it remains to prove that
 $$E_{i,j}(0,2)=\widetilde{ {\go
g}}^{\lambda _j} \text{ and }  E_{i,j}(0,-2)=\widetilde{ {\go
g}}^{-\lambda _j} \text{ for } i\not =
j\ .$$

A root space  $\widetilde{ {\go g}}^{\lambda } $occurs in  $E_{i,j}(0,2)$ if $\lambda(H_{j})=2$ and  $\lambda(H_{\ell})=0$ for $\ell\neq j$, and this means that  $\lambda\perp \lambda_{\ell}$ if $\ell\neq j$. As $(\lambda+\lambda_{\ell})(H_{0})=4$,   $\lambda+\lambda_{\ell}$ is not a root. If $\lambda-\lambda_{\ell}$ is a root then  $(\lambda,\lambda_{\ell})\neq 0$  (\cite{Bou1} (Chap. VI, \S1, $n^\circ 3$, Prop. 9)). Hence $\lambda\sorth \lambda_{\ell}$ for $\ell \neq j$ and $[\widetilde{ {\go g}}^{\lambda },\widetilde{ {\go g}}^{\pm \lambda_{\ell} }]=0  $ for $\ell \neq j$. In particular 
$\widetilde{ {\go g}}^{\lambda } \in {\cal Z}_{\widetilde{\go{g}}}(\widetilde{ {\go l}}_0\oplus \widetilde{ {\go l}}_1\oplus \dots\oplus \widetilde{ {\go l}}_{j-1})$, and hence  $\lambda\in \widetilde{\Sigma}_{j}$. As $\lambda\geq 0$, one has  $\lambda\in \widetilde{\Sigma}_{j}^+$. But  $\lambda_{j}$ is a simple root in $ \widetilde{\Sigma}_{j}^+$, and therefore $\lambda-\lambda_{j}$ is not a root. If $\lambda\neq\lambda_{j}$, the equality $\lambda(H_{\lambda_{j}})=2$ would imply that  $\lambda-\lambda_{j}$ is a root.
Hence $\lambda=\lambda_{j}$ and  $E_{i,j}(0,2)=\widetilde{ {\go
g}}^{\lambda _j}$. The same proof shows that $E_{i,j}(0,-2)=\widetilde{ {\go
g}}^{-\lambda _j}$.

\end{proof}
\vskip 5pt
\begin{cor}\label{cor-orth=fortementorth}\hfill

Let  $\lambda\in \widetilde{\Sigma}$. Then for $j=0,1,\dots,k$, one has :
$$\lambda\perp \lambda_{j}\Longleftrightarrow \lambda\sorth \lambda_{j}$$
\end{cor}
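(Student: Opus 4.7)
The plan is to prove the nontrivial implication by combining the standard root string formula with the rigid eigenvalue patterns forced by the decomposition in Theorem~\ref{th-decomp-Eij}.

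First, the forward implication $\lambda \sorth \lambda_j \Rightarrow \lambda \perp \lambda_j$ is immediate: strong orthogonality means that neither $\lambda + \lambda_j$ nor $\lambda - \lambda_j$ belongs to $\widetilde{\Sigma}$, so the $\lambda_j$-string through $\lambda$ reduces to $\{\lambda\}$, and the standard formula $p - q = -2(\lambda, \lambda_j)/(\lambda_j, \lambda_j)$ with $p = q = 0$ gives $(\lambda, \lambda_j) = 0$.

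For the converse I would argue by contradiction. Assume $\lambda(H_{\lambda_j}) = 0$ and that $\mu := \lambda + \epsilon \lambda_j \in \widetilde{\Sigma}$ for some $\epsilon \in \{+1, -1\}$. Since the roots $\lambda_0, \ldots, \lambda_k$ are pairwise strongly orthogonal (in particular orthogonal), $\lambda_j(H_{\lambda_\ell}) = 0$ for every $\ell \neq j$. Consequently $\mu(H_{\lambda_j}) = 2\epsilon$ and $\mu(H_{\lambda_\ell}) = \lambda(H_{\lambda_\ell})$ for $\ell \neq j$.

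The key step is then to invoke Theorem~\ref{th-decomp-Eij}: every root space $\widetilde{\go g}^\mu$ is a simultaneous eigenspace for the commuting operators $\ad H_{\lambda_0}, \ldots, \ad H_{\lambda_k}$, hence sits inside a single summand of the decomposition given there. Reading off the summands, the admissible patterns $(\mu(H_{\lambda_0}), \ldots, \mu(H_{\lambda_k}))$ are: all zero (when $\widetilde{\go g}^\mu \subset {\cal Z}_{\go g}(\go a^0)$); exactly one entry equal to $\pm 2$ and the rest zero (when $\widetilde{\go g}^\mu$ lies in some $\widetilde{\go g}^{\pm \lambda_\ell}$); or exactly two entries equal to $\pm 1$ with the rest zero. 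Since $\mu(H_{\lambda_j}) = \pm 2$, only the second case is possible, which forces $\lambda(H_{\lambda_\ell}) = 0$ for all $\ell \neq j$ and $\widetilde{\go g}^\mu \subset \widetilde{\go g}^{\epsilon \lambda_j}$, hence $\mu = \epsilon \lambda_j$. This gives either $\lambda = 0$ (contradicting $\lambda \in \widetilde{\Sigma}$) or $\lambda = -2\epsilon \lambda_j$ (contradicting the fact that $\pm 2\lambda_j \notin \widetilde{\Sigma}$ since $\lambda_j$ is already a root). The case $\lambda - \lambda_j \in \widetilde{\Sigma}$ is treated identically.

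The only mild subtlety is extracting the admissible eigenvalue patterns from the explicit form of the summands in Theorem~\ref{th-decomp-Eij}; this is essentially a bookkeeping step, so I do not foresee any serious obstacle.
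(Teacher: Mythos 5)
Your proof is correct and rests on the same key ingredient as the paper's own argument, namely the eigenvalue-pattern classification coming from Theorem \ref{th-decomp-Eij}; you simply organize it uniformly by applying that classification to $\mu=\lambda\pm\lambda_j$, whereas the paper splits into cases according to $\lambda(H_0)\in\{2,0,-2\}$ and uses a root-string argument for the first two cases. The only blemish is the spurious second alternative $\lambda=-2\epsilon\lambda_j$: since $\mu(H_{\lambda_j})=2\epsilon$ already pins down $\mu=\epsilon\lambda_j$, only $\lambda=0$ can occur, and in any case the claim $\pm2\lambda_j\notin\widetilde{\Sigma}$ is better justified by $(2\lambda_j)(H_0)=4$, which is excluded by $({\bf H_1})$, rather than by ``twice a root is never a root'', since the restricted root system $\widetilde{\Sigma}$ is not a priori reduced.
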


\begin{proof} \hfill

Let  $\lambda\perp \lambda_{j}$.

If $\lambda(H_{0})=2$, then $\lambda+\lambda_{j}$ is not a root, and if  $\lambda-\lambda_{j}$ is a root  one would have $(\lambda,\lambda_{j})\neq 0$, see  \cite{Bou1} (Chap. VI, \S1, $n^\circ 3$, Prop. 9)). Therefore  $\lambda\sorth \lambda_{j}$.

If  $\lambda(H_{0})=-2$, the same proof shows that  $\lambda\sorth \lambda_{j}$.

If  $\lambda(H_{0})=0$, then either  $\go{g}^\lambda\subset {\cal Z}_{\go{g}}(\go{a}^0)$ or  $\go{g}^\lambda \subset  E_{r,s}(1,-1)$ for  $r\neq j$, $s\neq j$.

\hskip 15pt a) If  $\go{g}^\lambda\subset {\cal Z}_{\go{g}}(\go{a}^0)$, then $(\lambda+\lambda_{j})(H_{\lambda_{j}})=2$ and if  $\lambda+\lambda_{j}$ is a root the preceding Theorem  \ref{th-decomp-Eij} says that  $\lambda+\lambda_{j}=\lambda_{j}$, this is not possible. Hence  $\lambda+\lambda_{j}$ is not a root , and the same argument as before shows that $\lambda\sorth \lambda_{0}$.

\hskip 15pt b) If  $\go{g}^\lambda \subset  E_{r,s}(1,-1)$ and if  $\lambda+\lambda_{j}$ is a root, then $(\lambda+\lambda_{j})(H_{\lambda_{j}})=2$, $(\lambda+\lambda_{j})(H_{\lambda_{r}})=1$, and  $(\lambda+\lambda_{j})(H_{\lambda_{s}})=-1$, which is impossible. Again the same argument as before shows that $\lambda\sorth \lambda_{0}$.
\end{proof}
\vskip 5pt

\begin{rem}\label{rem-avantages-Eij} The decomposition of  $\widetilde{\go{g}}$ using the subspaces  $E_{i,j}(p,q)$ will be more useful that the root spaces decomposition. The bracket between two such spaces can be easily computed using the Jacobi identity. We will also show   that this decomposition is also  a ``root space decomposition`` with respect to another system of roots than  $\widetilde{\Sigma}$ (see Remark \ref{rem-decomp-racines} and Proposition \ref{prop-inclusionSP}). 

 \end{rem}
\vskip 5pt
The first part of Therorem \ref{th-decomp-Eij} shows that the grading of $\widetilde{\go{g}}_{j}$ is defined by  $H_{\lambda_{j}}+\dots+H_{\lambda_{k}}$. By setting  $I^+_{j}=X_{j}+\dots+X_{k}$, $I^-_{j}=X_{-j}+\dots+X_{-k}$, where the elements  $X_{\pm\ell}\in\widetilde{\go{g}}^{\pm \lambda_{\ell}}$ are chosen such that  $(X_{-\ell},H_{\lambda_{\ell}}, X_{\ell})$ is an ${\go{sl}}_{2}$-triple, one obtains an  ${\go{sl}}_{2}$-triple $(I^-_{j}, H_{\lambda_{j}}+\dots+H_{\lambda_{k}}, I^+_{j})$. Theorem \ref{th-descente} implies then the following decomposition of   $\widetilde{\go{g}}_{j}$.

\begin{cor}\label{cor-decomp-j}\hfill

 For $j=0,\ldots ,k$, the graded algebra  $(\widetilde{ {\go
g}}_j,H_{\lambda _j}+\cdots +H_{\lambda _k})$ satisfies the hypothesis $(\bf H_1)$, $(\bf H_2)$ and $(\bf H_3)$.
One also has the following decompositions:
 
\begin{align*}
(1)& \hskip 20pt {\go g}_j=\Bigl({\go z}_{\go g}({\go a}^0)\cap {\widetilde{\go g}}_j\Bigr)\oplus\Bigl(\oplus
_{r\not = s;j\leq r;j\leq s }E_{r,s}(1,-1)\Bigr)\ ;\cr 
(2)& \hskip 15pt V^+_j=\Bigl(\oplus_{s=j}^k \widetilde{ {\go g}}^{\lambda _s}\Bigr)
\oplus\Bigl(\oplus _{j\leq r<s}E_{r,s}(1,1)\Bigr)\ ;\cr
(3)& \hskip 15pt V^-_j=\Bigl(\oplus_{s=j}^k \widetilde{ {\go g}}^{-\lambda _s}\Bigr) \oplus\Bigl(\oplus
_{j\leq r<s}E_{r,s}(-1,-1)\Bigr)\ .
\end{align*}

\end{cor}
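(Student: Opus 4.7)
The plan is to reduce to Theorem~\ref{th-decomp-Eij} applied to the graded Lie algebra $\widetilde{\go g}_j$ itself. Theorem~\ref{th-descente} already ensures that $\widetilde{\go g}_j$ satisfies $(\bf H_1)$ and $(\bf H_2)$ with grading element $H_j$. Using the first assertion of Theorem~\ref{th-decomp-Eij} for the ambient $(\widetilde{\go g}, H_0)$, the grading element of $\widetilde{\go g}_j$ rewrites as
$$H_j \;=\; H_0 - H_{\lambda_0} - \cdots - H_{\lambda_{j-1}} \;=\; H_{\lambda_j} + \cdots + H_{\lambda_k}.$$

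Next I would verify $(\bf H_3)$ for $\widetilde{\go g}_j$ by the explicit construction suggested in the excerpt. Set $I_j^+ = X_j + \cdots + X_k$ and $I_j^- = X_{-j} + \cdots + X_{-k}$ with the $\go{sl}_2$-triples $(X_{-\ell}, H_{\lambda_\ell}, X_\ell)$ chosen before Lemma~\ref{Xk-generique}. Strong orthogonality of $\lambda_j, \ldots, \lambda_k$ yields $\lambda_i(H_{\lambda_\ell}) = 0$ for $i \neq \ell$ and $[X_i, X_{-\ell}] = 0$ for $i \neq \ell$ (since $\lambda_i - \lambda_\ell \notin \widetilde{\Sigma}$); brackets among the $X_i$ (resp.\ among the $X_{-i}$) vanish by commutativity of $V^+$ (resp.\ $V^-$). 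A short computation then gives $[H_j, I_j^\pm] = \pm 2 I_j^\pm$ and $[I_j^+, I_j^-] = \sum_{\ell \geq j} H_{\lambda_\ell} = H_j$, so $(I_j^-, H_j, I_j^+)$ is an $\go{sl}_2$-triple in $\widetilde{\go g}_j$, and $I_j^\pm \in V_j^\pm$ since each $X_{\pm\ell}$ lies in $\widetilde{\go g}_j$ by Proposition~\ref{racines-g1}.

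Having established that $(\widetilde{\go g}_j, H_j)$ is itself a regular graded Lie algebra whose descent produces the strongly orthogonal roots $\lambda_j, \ldots, \lambda_k$ (the uniqueness part of Theorem~\ref{th-descente} ensures this), Theorem~\ref{th-decomp-Eij} applies and yields the three decompositions of $\go g_j, V_j^+, V_j^-$ in terms of subspaces $E_{r,s}^{(j)}(p,q) \subset \widetilde{\go g}_j$ defined using only the $H_{\lambda_\ell}$ for $\ell \in \{j, \ldots, k\}$.

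The remaining step, which I expect to be the main technical point, is to identify $E_{r,s}^{(j)}(p,q)$ with $E_{r,s}(p,q)$ for $r,s \in \{j,\ldots,k\}$. The inclusion $E_{r,s}^{(j)}(p,q) \subset E_{r,s}(p,q)$ is automatic, since every $X \in \widetilde{\go g}_j$ centralizes $\widetilde{\go l}_\ell$ for $\ell < j$ and hence has trivial $H_{\lambda_\ell}$-eigenvalue. For the converse, I would decompose $X \in E_{r,s}(p,q)$ into $\go a$-weight vectors; each root $\mu$ appearing satisfies $\mu(H_{\lambda_\ell}) = 0$, hence $\mu \perp \lambda_\ell$, for every $\ell \notin \{r,s\}$. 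Corollary~\ref{cor-orth=fortementorth} then upgrades this orthogonality to strong orthogonality for $\ell < j$, forcing $\mu \pm \lambda_\ell \notin \widetilde{\Sigma}$ and hence $[X, \widetilde{\go g}^{\pm \lambda_\ell}] = 0$, so $X \in \widetilde{\go g}_j$. A similar comparison identifies $Z_{\go g_j}(\tilde{\go a}_j^0)$ with $Z_{\go g}(\go a^0) \cap \widetilde{\go g}_j$, completing the claimed decompositions.
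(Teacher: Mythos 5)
Your proposal is correct and follows essentially the same route as the paper: the grading element $H_{\lambda_j}+\cdots+H_{\lambda_k}$ from Theorem \ref{th-decomp-Eij}, the hypotheses $(\bf H_1)$, $(\bf H_2)$ from Theorem \ref{th-descente}, the regularity $(\bf H_3)$ via the diagonal $\go{sl}_2$-triple $(I_j^-, H_{\lambda_j}+\cdots+H_{\lambda_k}, I_j^+)$, and the decompositions by matching the eigenspaces of Theorem \ref{th-decomp-Eij} with $\widetilde{\go g}_j$ through Corollary \ref{cor-orth=fortementorth}. (Only note that with the paper's convention for an $\go{sl}_2$-triple $(Y,H,X)$ one has $[I_j^-,I_j^+]=H_j$ rather than $[I_j^+,I_j^-]=H_j$, a harmless sign of convention.)
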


\vskip 5pt

One can also extend the preceding decomposition to a subset $A$ which is different from  $\{j,j+1,\dots,k\}$:
\vskip 5pt
\begin{cor}\label{cor-gA}\hfill

Let $A$ be a non empty subset of  $\{0,1,\dots,k\}$. Define  $H_{A}=\sum_{j\in A}H_{\lambda_{j}}$ and set:
\begin{align*}
{\go g}_A&=\{X\in {\go g}\mid [H_A,X]=0\}\ ;\cr
V^+_A&=\{X\in V^+\mid [H_A,X]=2X\}\ ;\cr
V^-_A&=\{X\in V^+\mid [H_A,X]=-2X\}\ ;
\end{align*}
Then the graded algebra   $(\widetilde{ {\go g}}_A=V^-_A\oplus{\go g}_A\oplus V^+_A, H_A)$ 
 satisfies  $(\bf H_1)$, $(\bf H_2)$ and  $(\bf H_3)$.
\end{cor}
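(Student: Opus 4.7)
The plan is to verify the three hypotheses in the order $(\bf H_1)$, $(\bf H_3)$, $(\bf H_2)$, reusing the machinery of Corollary \ref{cor-decomp-j} and of the descent process of Theorem \ref{th-descente}.

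Hypothesis $(\bf H_1)$ reduces to a direct computation on the refined decomposition of Theorem \ref{th-decomp-Eij}. Since $H_A\in\go{a}^0$ the operator $\ad(H_A)$ is semisimple, and the strong orthogonality $\lambda_i(H_{\lambda_j})=2\delta_{ij}$ shows that $H_A$ acts on $\widetilde{\go g}^{\pm\lambda_j}$ as $\pm 2$ when $j\in A$ and as $0$ otherwise, on $E_{i,j}(1,1)\subset V^+$ as $|\{i,j\}\cap A|$, on $E_{i,j}(-1,-1)\subset V^-$ as $-|\{i,j\}\cap A|$, and on $E_{i,j}(1,-1)\subset\go g$ as an integer in $\{-1,0,1\}$ determined by membership of $i$ and $j$ in $A$. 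One reads off that $V^+_A$, $\go g_A$, $V^-_A$ are well-defined sums of $E_{i,j}(p,q)$-pieces and root spaces $\widetilde{\go g}^{\pm\lambda_j}$, and bracket closure follows from the Jacobi identity together with $[V^+,V^+]=[V^-,V^-]=\{0\}$. For $(\bf H_3)$, put $I^+_A=\sum_{j\in A}X_j$ and $I^-_A=\sum_{j\in A}X_{-j}$, where $(X_{-j},H_{\lambda_j},X_j)$ are the $\go{sl}_2$-triples fixed before Lemma \ref{Xk-generique}; strong orthogonality gives $[X_{-j},X_\ell]=\delta_{j\ell}H_{\lambda_j}$ and $[H_{\lambda_\ell},X_j]=2\delta_{j\ell}X_j$, whence $[I^-_A,I^+_A]=H_A$ and $[H_A,I^\pm_A]=\pm 2\,I^\pm_A$.

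The hard step is $(\bf H_2)$, the absolute irreducibility of $(\go g_A,V^+_A)$. The plan is to reduce to Theorem \ref{th-descente} by re-starting the descent with the $\lambda_j$'s indexed by $B:=\{0,\dots,k\}\setminus A$. By Proposition \ref{prop-systememax}(2) there exists $w\in W\subset G$ realizing any prescribed permutation of $\{\lambda_0,\dots,\lambda_k\}$; choose $w$ so that $w(B)=\{0,1,\dots,|B|-1\}$. Since $w$ fixes $H_0$ and preserves $V^\pm$, it transports $\widetilde{\go g}_A$ isomorphically onto $\widetilde{\go g}_{w(A)}$ as a graded Lie algebra, so we may assume $B=\{0,\dots,|B|-1\}$ from the outset, in which case the grading element $H_A=H_0-H_{\lambda_0}-\cdots-H_{\lambda_{|B|-1}}$ coincides with the grading element $H_{|B|}$ of the descent algebra $\widetilde{\go g}_{|B|}$ furnished by Theorem \ref{th-descente}. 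A direct comparison, using Corollary \ref{cor-orth=fortementorth} and the description of the roots of $\widetilde{\go g}_{|B|}$ as those strongly orthogonal to each $\lambda_j$ for $j\in B$, shows $V^+_{|B|}=V^+_A$ and $(\widetilde{\go g}_{|B|})_0\subset\go g_A$. Since $\widetilde{\go g}_{|B|}$ satisfies $(\bf H_2)$ by Theorem \ref{th-descente}, the $\overline{(\widetilde{\go g}_{|B|})_0}$-module $\overline{V^+_A}$ is absolutely irreducible, hence the action of the larger subalgebra $\overline{\go g_A}$ on the same space is a fortiori absolutely irreducible.

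The main obstacle will be the identification $V^+_{|B|}=V^+_A$ together with the inclusion $(\widetilde{\go g}_{|B|})_0\subset\go g_A$ in the reduced case: the eigenspace $\go g_A$ properly contains pieces such as $E_{r,s}(1,-1)$ with $r,s\in B$ which lie outside the descent centralizer $\widetilde{\go g}_{|B|}$, so one must verify that these extra pieces never enlarge the $+2$-eigenspace beyond $V^+_{|B|}$. This amounts to a root-by-root check using Corollary \ref{cor-orth=fortementorth}, after which the chain $(\widetilde{\go g}_{|B|})_0\subset\go g_A$ suffices to transfer absolute irreducibility upward from the known descent case.
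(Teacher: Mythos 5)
Your proof is correct, but your treatment of $(\bf H_2)$ takes a genuinely different route from the paper's. The paper argues directly: given a $\go g_A$-invariant subspace $U\subset\overline{V^+_A}$, it decomposes $\go g=\go g(-1)\oplus\go g_A\oplus\go g(1)$ into $\ad H_A$-eigenspaces, forms $U_1=[\go g(-1),U]$ and $U_0=[\go g(-1),U_1]$, checks that $U_0\oplus U_1\oplus U$ is $\go g$-stable in $\overline{V^+}$, invokes $(\bf H_2)$ for $\widetilde{\go g}$ to conclude $U_0\oplus U_1\oplus U=\overline{V^+}$, and recovers $U=\overline{V^+_A}$ by comparing $\ad H_A$-eigenvalues; this works uniformly in $A$ with no reordering. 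You instead conjugate $A$, by a lift to $G$ of a Weyl element furnished by Proposition \ref{prop-systememax} (2), into an end segment $\{|B|,\dots,k\}$, identify $V^+_A$ with the descent space $V^+_{|B|}$ and note $\go g_{|B|}\subset\go g_A$ (exactly the computation the paper records in Remark \ref{remgjdifferent}), and then transfer absolute irreducibility upward from Theorem \ref{th-descente}; this is also valid, and it buys a shorter irreducibility step at the price of the conjugation bookkeeping and of leaning on the full descent theorem rather than only on $(\bf H_2)$ for $\widetilde{\go g}$ itself. Your verifications of $(\bf H_1)$ and $(\bf H_3)$ coincide in substance with the paper's (the diagonal triple $(I^-_A,H_A,I^+_A)$ is exactly what the paper means by ``the same way as for $\widetilde{\go g}_j$''). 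One small omission you should repair: the corollary implicitly asserts that $\widetilde{\go g}_A$ is again a \emph{reductive} Lie algebra (this is what makes it a graded Lie algebra in the sense of the paper, and it is used later, e.g.\ for the normalized form $b_A$); the paper gets this in one line from $\widetilde{\go g}_A={\cal Z}_{\widetilde{\go g}}(H_{A^c})$ with $H_{A^c}=H_0-H_A\in\go a$, the centralizer of a semisimple element, and your reduction to $\widetilde{\go g}_{|B|}$ does not yield it since that inclusion is strict.
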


\begin{proof}  Note that if  $A^c$ is the complementary set of $A$ in  $\{0,1,\dots,k\}$ and if  $H_{A^c}=\sum_{j\in A^c}H_{\lambda_{j}}$,  then  $\widetilde{ {\go g}}_A= {\cal Z}_{\widetilde{ {\go g}}}(H_{A^c})$. This implies that  $\widetilde{ {\go g}}_A$ is reductive (\cite{Bou2},chap.VII, \S 1, $n^\circ 5$, Prop.13). The hypothesis  $(\bf H_1)$ is then clearly verified.  The regularity condition  $(\bf H_3)$ can be proved the same way as for  $\widetilde{\go{g}}_{j}$.

It remains to show  $(\bf H_2)$, that is that the representation $({\go g}_A, \overline{V^+_A})$ (or $(\overline{{\go g}_A}, \overline{V^+_A})$) is irreducible.  Let  $U$ be a subspace of $ \overline{V^+_A}$ which is invariant by ${\go g}_A$. From Theorem  \ref{th-decomp-Eij} the algebra  ${\go g}$ decomposes as follows:
$${\go g}={\go g}(-1)\oplus {\go g}_A \oplus {\go g}(1)\ ,$$
where  ${\go g}(-1)=\oplus _{i\notin A,j\in A} E_{i,j}(1,-1)$ and   ${\go g}(1)=\oplus _{i\in A
,j\notin A} E_{i,j}(1,-1)$. Remark also that ${\go g}(-1)=\{X\in \go{g}\,,\,[H_{A},X]=-X\}$   and  ${\go g}(1)=\{X\in \go{g}\,,\,[H_{A},X]=X\}$.

\vskip 5pt 
Note also  that $U$ is included in the eigenspace of $\ad(H_{A})$ for the eigenvalue  $2$.
\vskip 3pt
\hskip 15pt $\bullet$ $U_{1}=[\go{g}(-1),U] $ is then included in the eigenspace of $\ad(H_{A})$ for the eigenvalue  $1$.
\vskip 3pt
\hskip 15pt $\bullet$  $U_{0}=[\go{g}(-1),U_{1}] $ is then included in the eigenspace of  $\ad(H_{A})$ for the eigenvalue  $0$.
\vskip 3pt
\hskip 15pt   $\bullet$  One also has $[\go{g}(1),U]=\{0\}$.

\vskip 3pt

We will now show that  $U_{0}\oplus U_{1}\oplus U$ is  $\go{g}$-invariant  in  $ \overline{V^+}$.
\vskip 3pt
\hskip 10pt a) One has : $[\go{g}_{A},U]\subset U$, $[\go{g}(1),U]=\{0\}$, $[\go{g}(-1),U]=U_{1}$. Hence $[\go{g},U]\subset U_{1}\oplus U$.
\vskip 3pt
One also shows easily that:

\hskip 10pt b) $[\go{g},U_{1}]\subset U_{0}\oplus U_{1}\oplus U$,
\vskip 3pt
\hskip 10pt c) $[\go{g},U_{0}]\subset U_{0}\oplus U_{1}\oplus U$.

Therefore, using the hypothesis  ${\bf (H_{2})}$ for $\widetilde{ {\go g}}$, $U_{0}\oplus U_{1}\oplus U= \overline{V^+}$. But $U$ is contained in the eigenspace of  $\ad(H_{A})$ for the eigenvalue $2$, namely $\overline{V^+_A}$, and $U_{1}$ (resp.  $U_{0}$) corresponds to the eigenvalue $1$ (resp. $2$). This implies $U=\overline{V^+_A}$.

\end{proof}
\vskip 5pt

\begin{rem}\label{remgjdifferent}
Define $A_{j}=\{j,j+1,\dots,k\}$. Then the reductive algebras $\widetilde{\go{g}}_{j}$ and $\widetilde{ {\go g}}_{A_{j}}$ are graded by the same element $H_{\lambda_{j}}+\dots+H_{\lambda_{k}}$. But these algebras are not equal. The obvious inclusion $\widetilde{\go{g}}_{j}\subset \widetilde{ {\go g}}_{A_{j}}$ is strict as $H_{\lambda_{0}}\in {\go g}_{A_{j}}\setminus  {\go g}_j$. More precisely one has
\begin{align*}
&V^+_{A_{j}}=\widetilde{ {\go g}}_{A_{j}}\bigcap V^+=\Bigl(\oplus_{s=j}^k \widetilde{ {\go g}}^{\lambda _s}\Bigr)
\oplus\Bigl(\oplus _{j\leq r<s}E_{r,s}(1,1)\Bigr)
=V^+_j\ ;\cr
&{\go g}_{A_{j}}={\cal Z}_{\go g}({\go a}^0)\oplus\Bigl(\oplus _{r \not = s;j\leq r ;j\leq s
}E_{r ,s}(1,-1)\bigr)\oplus\bigl(\oplus _{r \not = s;r <j;s<j} E_{r ,s}(1,-1)\Bigr)\cr
&\supset_{\text{strict}} {\go g}_j={\cal Z}_{\go g}({\go a}^0)\cap {\go g}_j \oplus\Bigl(\oplus _{r \not = s;j\leq r ;j\leq s
}E_{r ,s}(1,-1)\bigr)\ .
\end{align*}

\end{rem}
\vskip 5pt

\begin{prop}\label{prop-lie(V+V-)simple}\hfill

The Lie algebra  $\widetilde{\go{G}}=V^-\oplus [V^-,V^+]\oplus V^+$  generated by $V^+$ and $V^-$ is a regular graded algebra  which is an absolutely simple ideal of $\widetilde{\go{g}}$.
\end{prop}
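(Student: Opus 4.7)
The plan is to recognise $\widetilde{\go{G}}$ as the subalgebra generated by $V^+$ and $V^-$ inside a semisimple 3-graded ambient, whereupon Lemma \ref{lemme-alg.simple} delivers absolute simplicity, and then to verify the regular graded hypotheses $({\bf H_{1}})$--$({\bf H_{3}})$ directly.

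First I would check that $\widetilde{\go G} = V^- \oplus [V^-, V^+] \oplus V^+$ is a Lie subalgebra of $\widetilde{\go g}$. Setting $\go h := [V^-, V^+]$, the brackets $[\go h, V^\pm] \subset V^\pm$ follow from $[\go g, V^\pm] \subset V^\pm$, and $[\go h, \go h] \subset \go h$ is a short Jacobi computation together with $[V^\pm, V^\pm] = 0$. The same Jacobi identity gives $[\go g, \go h] \subset \go h$, so combined with $[\go g, V^\pm] \subset V^\pm$ it shows $\widetilde{\go G}$ is an ideal of $\widetilde{\go g}$. This also confirms that $\widetilde{\go G}$ is nothing but the Lie subalgebra generated by $V^+$ and $V^-$.

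For absolute simplicity, I would observe that $V^\pm$ lie in $[\widetilde{\go g}, \widetilde{\go g}]$, because the centre of $\widetilde{\go g}$ is contained in $\ker \ad H_0 \subset \go g$; hence $\widetilde{\go G}$ sits inside the semisimple ideal $[\widetilde{\go g}, \widetilde{\go g}]$, which inherits the $H_0$-grading together with the absolute irreducibility on $V^+$ (the centre of $\widetilde{\go g}$ acts trivially on $V^+$). Lemma \ref{lemme-alg.simple} applied in this semisimple ambient asserts that the subalgebra generated by $V^+$ and $V^-$, namely $\widetilde{\go G}$, is absolutely simple.

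The remaining items are the three hypotheses of a regular graded algebra for $\widetilde{\go G}$. Condition $({\bf H_{1}})$ requires $H_0 \in \widetilde{\go G}$: by Theorem \ref{th-decomp-Eij} one has $H_0 = \sum_j H_{\lambda_j}$, and each $H_{\lambda_j} = [X_j, Y_j] \in [\widetilde{\go g}^{\lambda_j}, \widetilde{\go g}^{-\lambda_j}] \subset \go h$, so in fact $H_0 \in \go h \subset \widetilde{\go G}$. Condition $({\bf H_{3}})$ is inherited since the $\go{sl}_2$-triple $(I^-, H_0, I^+)$ of Definition \ref{def-regulier} already lies in $\widetilde{\go G}$. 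The subtler point is $({\bf H_{2}})$, the absolute irreducibility of $(\go h, V^+)$, which I would deduce from simplicity: given any $\overline{\go h}$-stable $\overline W \subset \overline{V^+}$, a direct Jacobi verification (using $[V^\pm, V^\pm] = 0$ to cap the chain after two steps) shows that
\[
\overline W \oplus [\overline{V^-}, \overline W] \oplus [\overline{V^-}, [\overline{V^-}, \overline W]]
\]
is closed under bracketing with every graded piece of $\overline{\widetilde{\go G}}$, hence is a graded ideal. Absolute simplicity of $\overline{\widetilde{\go G}}$ then forces $\overline W = \{0\}$ or $\overline W = \overline{V^+}$. This last bracket-closure check is really the only technical step in the argument; the reduction to the semisimple case is routine and everything else is a direct consequence of Lemma \ref{lemme-alg.simple} and Theorem \ref{th-decomp-Eij}.
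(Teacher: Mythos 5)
Your proof is correct, and while it runs on the same engine as the paper's — the ideal property of $\widetilde{\go{G}}$ plus Lemma \ref{lemme-alg.simple} — the logical order and the treatment of ${\bf (H_{2})}$ are genuinely different. The paper first gets ${\bf (H_{1})}$ at once from regularity ($H_{0}=[I^-,I^+]\in[V^-,V^+]$, rather than your slightly heavier detour through Theorem \ref{th-decomp-Eij}), then deduces ${\bf (H_{2})}$ directly from the ideal property: the orthogonal complement of $\widetilde{\go{G}}$ in $\widetilde{\go{g}}$ is an ideal contained in $\go{g}$ and hence commutes with $V^+$, so $[V^-,V^+]$ is the only part of $\go{g}$ acting effectively on $V^+$ and absolute irreducibility transfers immediately from ${\bf (H_{2})}$ for $\widetilde{\go{g}}$; absolute simplicity of $\widetilde{\go{G}}$ is then the final step, via Lemma \ref{lemme-alg.simple}. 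You reverse this: you prove absolute simplicity first, by embedding everything in the semisimple ideal $[\widetilde{\go{g}},\widetilde{\go{g}}]$ — a welcome precision, since the Lemma requires a semisimple ambient algebra and the paper applies it without comment on this point — and you then recover ${\bf (H_{2})}$ from simplicity by checking that $\overline W\oplus[\overline{V^-},\overline W]\oplus[\overline{V^-},[\overline{V^-},\overline W]]$ is a graded ideal of $\overline{\widetilde{\go{G}}}$ for any $\overline{\go h}$-stable $\overline W\subset\overline{V^+}$ (a computation which indeed closes up, thanks to $[V^\pm,V^\pm]=0$ and the Jacobi identity). Your route costs a page of bracket-checking where the paper's "only $[V^-,V^+]$ acts effectively" argument is a one-liner, but it buys a self-contained derivation of ${\bf (H_{2})}$ and a cleaner justification of the hypotheses of Lemma \ref{lemme-alg.simple}; both proofs are sound.
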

\begin{proof} As  $\widetilde{\go{g}}$ is regular, the element  $H_{0}\in [V^-,V^+]$, hence  $\bf(H_{1})$ is satisfied. The hypothesis  $\bf(H_{3})$ holds also clearly.

One easily verifies that  $\widetilde{\go{G}}$ is an ideal of  $\widetilde{\go{g}}$. Therefore $[V^-,V^+]$ is the only part of  $\go{g}$ which acts effectively on $V^+$. Therefore the representation $([V^-,V^+],V^+)$ is absolutely simple. In other words  $\bf(H_{2})$ is true.  
 
 The fact that $\widetilde{\go{G}}$ is simple is then a consequence of  Lemma \ref{lemme-alg.simple}.

\end{proof}

\begin{rem}\label{rem-simple} From the preceding Proposition   \ref{prop-lie(V+V-)simple}, we obtain that 
$\tilde{\go g}=\tilde{\go G}\oplus  {\go G}'$ where the subalgebra  $\tilde{\go G}=V^-\oplus [V^-, V^+]\oplus V^+$ is an abolutely simple graded  Lie algebra and where  ${\go G}'$ is the orthogonal of  $\tilde{\go G}$ in  $\tilde{\go g}$ with respect to the form $\tilde{B}$. Moreover, the subalgebra  $\tilde{\go G}$ is an ideal of  $\tilde{\go g}$ and hence  $\tilde{\go G}'$ is an ideal of  $\tilde{\go g}$ too. Therefore, if  $X\in \tilde{\go G}'$ is nilpotent over an algebraic closure of  $F$,  then  $e^{\ad X}$ acts trivially on  $\tilde{\go G}$. \\
Hence,  in order to classify the orbites of  $G$ in  $V^+$, one can suppose that  $\tilde{\go g}$ is simple. 
\end{rem}

  \vskip 20pt
 \subsection{Properties of the spaces  $E_{i,j}(p,q)$}\hfill
  \vskip 10pt
  
  \begin{prop}\label{proplambda>0} \hfill
  
  Let $\lambda\in \Sigma$ such that $\go{g}^{\lambda}\subset E_{i,j}(1,-1)$. Then $\lambda$ is positive if and only if  $i>j$.
  
  \end{prop}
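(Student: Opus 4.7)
The plan is to reduce to the case $i < j$ by a symmetry argument and then exploit the descent of Theorem \ref{th-descente} so that Theorem \ref{thbasepi}(3) applies inside the subalgebra $\widetilde{\go{g}}_i$. Since $\go{g}^\lambda \subset E_{i,j}(1,-1)$ implies $\go{g}^{-\lambda} \subset E_{j,i}(1,-1)$, replacing $\lambda$ by $-\lambda$ swaps the roles of $i$ and $j$, and the equivalence to be proved is symmetric under this substitution. It will therefore suffice to show that $i < j$ implies $\lambda \in \Sigma^-$.

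Assuming $i < j$, I would first observe that $\lambda(H_{\lambda_\ell}) = 0$ for every $\ell \notin \{i,j\}$, so Corollary \ref{cor-orth=fortementorth} converts this orthogonality into strong orthogonality: $\lambda \sorth \lambda_\ell$ for all $\ell < i$, whence $\lambda \in \widetilde{\Sigma}_i$. A direct computation gives $\lambda(H_i) = \lambda(H_{\lambda_i}) + \lambda(H_{\lambda_j}) = 1 - 1 = 0$ (using $i < j \leq k$, so both summands occur in $H_i = H_{\lambda_i} + \cdots + H_{\lambda_k}$), hence $\lambda \in \Sigma_i$. Next, since $\lambda(H_{\lambda_i}) = 1$, the $\go{sl}_{2}$-string through $\lambda$ for the triple $(Y_i, H_{\lambda_i}, X_i)$ satisfies $q - r = 1$, so $q \geq 1$ and $\lambda - \lambda_i$ is a root. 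The same strong-orthogonality argument shows that the whole string lies in $\widetilde{\Sigma}_i$, and $(\lambda - \lambda_i)(H_i) = -2$ places $\lambda - \lambda_i$ in $V_i^-$.

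To finish, I would apply Theorem \ref{thbasepi}(3) to the graded subalgebra $\widetilde{\go{g}}_i$, whose simple-root system is $\widetilde{\Pi}_i = \Pi_i \cup \{\lambda_i\}$ with $\Pi_i \subset \Pi$ (Proposition \ref{racines-g1}(3)). The positive root $\lambda_i - \lambda \in V_i^+$ expands as $\lambda_i + \sum_{\nu \in \Pi_i} m_\nu \nu$ with $m_\nu \in \mathbb{Z}^+$, so $\lambda = -\sum_{\nu \in \Pi_i} m_\nu \nu$. Because $\Pi_i \subset \Pi$ and $\lambda \neq 0$, this displays $\lambda$ as a non-zero non-positive integer combination of simple roots of $\Sigma$, which forces $\lambda \in \Sigma^-$.

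The main technical point will be keeping the $\lambda_i$-string through $\lambda$ inside the descent subsystem $\widetilde{\Sigma}_i$, so that Theorem \ref{thbasepi}(3) can legitimately be invoked for $\widetilde{\go{g}}_i$ rather than for $\widetilde{\go{g}}$ (where the analogous statement involves $\lambda_0$ rather than $\lambda_i$). This hinges on Corollary \ref{cor-orth=fortementorth}, which turns the purely numerical condition $\lambda \perp \lambda_\ell$, directly visible from $\lambda \in E_{i,j}(1,-1)$, into the Lie-theoretic condition $\lambda \sorth \lambda_\ell$.
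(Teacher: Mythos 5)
Your proof is correct and takes essentially the same route as the paper's: both arguments reduce by the symmetry $\lambda\mapsto-\lambda$ (which exchanges $i$ and $j$), descend to $\widetilde{\go{g}}_{\min(i,j)}$ via Corollary \ref{cor-orth=fortementorth}, obtain the adjacent root $\lambda\mp\lambda_{\min(i,j)}$ from the nonzero pairing, and read off the sign of $\lambda$ from Theorem \ref{thbasepi}(3) applied to the descent algebra with $\widetilde{\Pi}_{\min(i,j)}=\Pi_{\min(i,j)}\cup\{\lambda_{\min(i,j)}\}$. The only difference is which implication is proved directly — the paper shows $i>j\Rightarrow\lambda\in\Sigma^+$ using $\lambda+\lambda_j$, while you show $i<j\Rightarrow\lambda\in\Sigma^-$ using $\lambda-\lambda_i$ — so the two proofs are mirror images of the same argument.
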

  
  \begin{proof} Let $\lambda\in \Sigma$ such that  $\lambda(H_{\lambda_{i}})=1$ and  $\lambda(H_{\lambda_{j}})=-1$. From Theorem  \ref{th-decomp-Eij}, $\lambda(H_{\lambda_{s}})=0$ if $s\neq i$ and  $s\neq j$.
  
 If $i> j$, then $\lambda\perp \lambda_{s}$ for $s< j$. But by Corollary \ref{cor-orth=fortementorth}, one has  $\lambda\sorth \lambda_{s}$, for $s< j$. Hence  $\lambda\in \widetilde{\Sigma}_{j}$. But as  $(\lambda,\lambda_{j})<0$ ($\lambda(H_{\lambda_{j}})=-1$),  $\lambda+\lambda_{j}$ is a root. We have also  $\lambda\sorth \lambda_{s}$ for  $s< j$ and    $\lambda_{j}\sorth \lambda_{s}$ for $s< j$. Hence $\lambda+\lambda_{j}\in \widetilde{\Sigma}_{j}$. As $\lambda\in \Sigma_{j}$ and as $\lambda_{j}\in \widetilde{\Pi}_{j}\setminus \Pi_{j}$ (Notation \ref{notationsgj}),we obtain that $\lambda\in \Sigma_{j}^+$. But then $\lambda\in {\Sigma}^+$.
  
 Conversely suppose that  $\lambda\in \Sigma$ and  $\go{g}^{\lambda}\subset E_{i,j}(1,-1)$ with $i< j$. Then  $\go{g}^{-\lambda}\in E_{j,i}(1,-1)$, and from above $-\lambda\in \Sigma^+$, hence $\lambda\in \Sigma^-$. Hence if  $\lambda$ is positive and  $\go{g}^\lambda\in E_{i,j}(1,-1)$, then $i>j$.  
  \end{proof}
  \vskip 5pt
  
  We will denote by $\widetilde{W}$ and  $W$ the Weyl groups of $\widetilde{\Sigma}$   and $\Sigma$, respectively. $W$ is the subgroup of  $\widetilde{W}$ generated by the reflections with respect to the roots in $\Sigma$. In particular $H_{0}$ is fixed by each element of  $W$.

\vskip 5pt
\begin{prop}\label{propWconjugues}\hfill

           Let  $s_0$ be the unique element of   $W$ sending $\Sigma ^+$ on  $\Sigma ^-$.  Then
           $$s_0.\lambda _j=\lambda _{k-j} \hbox{  for  }j=0,1,\ldots ,k\ .$$
Moreover the roots   $\lambda _i$ and  $\lambda _j$ are conjugated under $W$ for all $i$ and  $j$ in  $\{0,1,\ldots ,k\}$.
\end{prop}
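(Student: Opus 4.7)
The second assertion (mutual $W$-conjugacy of the $\lambda_i$) is immediate from Proposition~\ref{prop-conjdeslambda(i)}: since $w_j\lambda_j=\lambda^0$ with $w_j\in W_j\subset W$, every $\lambda_j$ is $W$-conjugate to $\lambda^0$, hence the $\lambda_j$ are mutually $W$-conjugate. So the substantive content is the identity $s_0\lambda_j=\lambda_{k-j}$, which I would prove by induction on $k$.

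For the inductive step I first treat $j=0$: since $s_0=w_0$, Proposition~\ref{prop-conjdeslambda(i)} already gives $s_0\lambda_0=\lambda^0$, so the claim reduces to the identification $\lambda^0=\lambda_k$. To see this I verify the two characterizing properties of Proposition~\ref{prop.plusgranderacine}. First, $\lambda_k(H_0)=2$: this follows from $H_0=\sum_i H_{\lambda_i}$ (Theorem~\ref{th-decomp-Eij}) together with $\lambda_k(H_{\lambda_i})=2\delta_{k,i}$. Second, I claim that for every $\lambda\in\Sigma^+$, $\mu:=\lambda_k+\lambda\notin\widetilde{\Sigma}$. Supposing otherwise, $\mu(H_0)=2$, so by the decomposition of $V^+$ in Theorem~\ref{th-decomp-Eij} either $\mu=\lambda_m$ for some $m$, or $\widetilde{\go g}^\mu\subset E_{r,s}(1,1)$ for some $r<s$. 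In the first case $\lambda=\lambda_m-\lambda_k$ with $m\ne k$ forces $\lambda(H_{\lambda_k})=-2$, so the $\lambda_k$-string through $\lambda$ produces the root $\lambda+2\lambda_k$ of $H_0$-weight $4$, contradicting the 3-grading. In the second case, either $s=k$ (so $\lambda\in E_{r,k}(1,-1)$ with $r<k$, which is negative by Proposition~\ref{proplambda>0}, contradicting $\lambda\in\Sigma^+$) or $k\notin\{r,s\}$ (which again yields $\lambda(H_{\lambda_k})=-2$ and the same contradiction). Hence $\lambda^0=\lambda_k$ and $s_0\lambda_0=\lambda_k$.

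For $j\ge 1$ I would apply the inductive hypothesis to the graded subalgebra $\widetilde{\go g}_1$ of rank $k$, whose descent is $\lambda_1,\ldots,\lambda_k$ and whose Weyl group $W_1$ sits inside $W$: the longest element $s_0^{(1)}$ of $W_1$ fixes $\lambda_0$ and satisfies $s_0^{(1)}\lambda_i=\lambda_{k+1-i}$ for $i\ge 1$. To conclude, I would combine this with $s_0\lambda_0=\lambda_k$, the involutivity $s_0^2=1$, and the analogous induction applied to the nested subalgebras $\widetilde{\go g}_A$ of Corollary~\ref{cor-gA} (in particular for $A=\{1,\ldots,k-1\}$, then $\{2,\ldots,k-2\}$, and so on), each of which produces the partial reversal of the subset $\{\lambda_j\}_{j\in A}$ via its own longest Weyl element. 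Compatibility of these partial reversals with the $W$-action on the maximal strongly orthogonal system $\{\lambda_0,\ldots,\lambda_k\}$, together with the characterization of $s_0$ as the element sending $\Sigma^+$ to $\Sigma^-$, forces $s_0$ to act precisely as the full reversal $\lambda_j\mapsto\lambda_{k-j}$.

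The main obstacle is clearly the case analysis in the middle paragraph identifying $\lambda^0$ with $\lambda_k$, which is where the structure theorem on $V^+$ (Theorem~\ref{th-decomp-Eij}), the sign rule of Proposition~\ref{proplambda>0}, and the constraint from the $3$-grading all come together. Once $\lambda_k=\lambda^0$ is established, the inductive extension to arbitrary $j$ via the descent subalgebras is essentially bookkeeping.
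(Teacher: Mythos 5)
Your handling of the second assertion and of the case $j=0$ is correct: $s_0\lambda_0=\lambda^0$ is indeed immediate from Proposition \ref{prop-conjdeslambda(i)} (with $s_0=w_0$), and your verification that $\lambda_k$ satisfies the two characterizing properties of Proposition \ref{prop.plusgranderacine}, via the decomposition of Theorem \ref{th-decomp-Eij}, the sign rule of Proposition \ref{proplambda>0} and the $3$-grading, is sound and close to the paper's own identification $\lambda^0=\lambda_k$; your derivation of mutual $W$-conjugacy directly from Proposition \ref{prop-conjdeslambda(i)} is also fine (the paper gets it instead from the first assertion applied to the $\widetilde{\go g}_i$). The genuine gap is the inductive step for $1\le j\le k-1$. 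Knowing that the longest element $s_0^{(1)}$ of $W_1$ (and likewise the longest elements attached to the nested subalgebras $\widetilde{\go g}_A$) reverses $\lambda_1,\ldots,\lambda_k$ says nothing, by itself, about how $s_0$ acts on these roots: $s_0$ lies in none of these subgroups, and you supply no relation between $s_0$ and the partial-reversal elements. ``Compatibility of these partial reversals with the $W$-action'' is the statement to be proved, not an argument. Concretely, the only constraint you actually have on $s_0\lambda_1$ is that it is a long root of $H_0$-weight $2$ strongly orthogonal to $s_0\lambda_0=\lambda_k$; nothing in your sketch rules out that it is a root lying in some $E_{r,s}(1,1)$ with $r,s\neq k$ rather than one of the $\lambda_m$ (Proposition \ref{prop-systememax} only gives $W$-conjugacy of maximal strongly orthogonal systems, not that $s_0$ preserves the set $\{\lambda_0,\ldots,\lambda_k\}$), so no bookkeeping with $s_0^2=1$ and the sub-Weyl groups can pin it down.

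What is needed, and what the paper does, is to transport the intrinsic characterization of $\lambda_1$ through $s_0$: since $\lambda_1$ is characterized (Corollary \ref{corlambda0} applied to $\widetilde{\go g}_1$) by $\lambda_1(H_0)=2$, $\lambda_1\sorth\lambda_0$, and $\lambda_1-\lambda\notin\widetilde{\Sigma}$ for every $\lambda\in\Sigma^+$ with $\lambda\sorth\lambda_0$, the root $\mu_1=s_0\lambda_1$ is characterized by $\mu_1(H_0)=2$, $\mu_1\sorth\lambda_k$, and $\mu_1+\lambda\notin\widetilde{\Sigma}$ for every $\lambda\in\Sigma^+$ with $\lambda\sorth\lambda_k$ (using $s_0\Sigma^+=\Sigma^-$ and $s_0\lambda_0=\lambda_k$). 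One then checks directly that $\lambda_{k-1}$ satisfies these three conditions: for $\lambda\in\Sigma^+$ with $\lambda\sorth\lambda_k$, either $\lambda(H_{\lambda_{k-1}})=0$, in which case $\lambda\sorth\lambda_{k-1}$ by Corollary \ref{cor-orth=fortementorth}, or $\go g^{\lambda}\subset E_{k-1,j}(1,-1)$ with $j<k-1$ by Proposition \ref{proplambda>0}, in which case $(\lambda+\lambda_{k-1})(H_{\lambda_{k-1}})=3$; hence $s_0\lambda_1=\lambda_{k-1}$, and the induction continues in the centralizers of $\widetilde{\go l}_k,\widetilde{\go l}_{k-1},\ldots$ with the reversed order $s_0.\widetilde{\Sigma}^+$. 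An argument of this kind, i.e.\ an $s_0$-transported characterization verified for $\lambda_{k-j}$, has to replace your appeal to compatibility; without it the induction does not close.
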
 

\begin{proof} Let us first prove that  $s_{0}.\lambda_{0}=\lambda_{k}$.

Recall (Corollary \ref{corlambda0}) that the root   $\lambda _0$ is the unique root in  $\widetilde{\Sigma }$ such that 
$$ (1)\quad\begin{cases}
\bullet&\hskip 5pt \lambda _0 (H_0)=2 \  ;\cr
\bullet& \hskip 5pt  \lambda \in \Sigma ^+\Longrightarrow \lambda _0-\lambda \notin \widetilde{
\Sigma }\ .
\end{cases}$$

Therefore the root  $\mu=s_{0}.\lambda_{0}$ is characterized by the properties:
$$(2)\quad
\begin{cases}
           \bullet&\hskip 5pt \mu  (H_0)=2 \  (\text{since }s_{0}.H_{0}=2);\\
            \bullet& \hskip 5pt \lambda   \in \Sigma ^+\Longrightarrow
            \mu +\lambda \notin \widetilde{\Sigma }\  (\text{since } s_{0}.\Sigma^+=\Sigma^-)\ . 
\end{cases}$$

From Proposition \ref{prop.plusgranderacine} (and its proof) we know that $s_{0}.\lambda_{0}$ is the root $\lambda^{0}$ which is the restriction to  $\go{a}$ of the highest weight  $\omega$ of  $\go{g}$ on  $\overline{V^+}$. From the proof of  Proposition \ref{prop-gtilde(1)} we know also that $\omega$ is the highest weight of  $\go{g}_{1}$ on $\overline{V_{1}^+}$, and by induction  $\omega$ will be the highest weight of $\go{g}_{k}$ on $\overline{V_{k}^+}$. Hence $\omega_{|_{\go{a}}}$ is a root of $\widetilde{\Sigma}$ which is strongly orthogonal to $\lambda_{0},\lambda_{1},\dots,\lambda_{k-1}$. From  Theorem \ref{th-decomp-Eij}, there is only one root having this property, namely $\lambda_{k}$. Hence  $\lambda^0=s_{0}.\lambda_{0}=\lambda_{k}$.
\vskip 5pt
We will now prove that  $s_{0}.\lambda_{1}=\lambda_{k-1}$. We first decide to take  $s_{0}.\widetilde{\Sigma}^+$ as the set of positive roots in $\widetilde{\Sigma}$. The corresponding base will be $s_{0}.\widetilde{\Pi}$. We have  $s_{0}.\Sigma^+=\Sigma^-=-\Sigma^+$. As  $H_{0}$ is fixed by  $W$, the elements  $\lambda\in s_{0}.\widetilde{\Pi}$ still verify $\lambda(H_{0})=0 \text{ or } 2$ (condition (1) in Theorem \ref{thbasepi}). We apply now all we did before  and we obtain by "descent" a sequence $\mu_{0},\mu_{1},\dots,\mu_{k}$ of strongly orthogonal roots.  The root $\mu_{0}$ is the unique root in $s_{0}.\widetilde{\Pi}$, such that  $\mu_{0}(H_{0})=2$. Hence  $\mu_{0}=s_{0}.\lambda_{0}=\lambda_{k}$. 
\vskip 5pt
Now we will prove that  $s_{0}.\lambda_{1}=\lambda_{k-1}$. The centralizer of  $\widetilde{\go{l}}_{k}$ verifies again ${\bf (H_{1})}$ and ${\bf (H_{2})}$ and we will apply the preceding results to ${\cal Z}_{\widetilde{\go{g}}}(\widetilde{\go{l}}_{k})$.  Corollary  \ref{corlambda0} applied to this graded algebra implies that the root $\mu_{1}$ is characterized by
$$  (3)\quad
\begin{cases}
           \bullet & \mu _1(H_0)=2 \ ;\hfill\cr
              \bullet & \mu _1\sorth \lambda _k\ ;\hfill\cr
            \bullet&  \lambda \in \Sigma ^+\hbox{  and }\lambda \sorth\lambda _k
                   \Longrightarrow  \mu_1 +\lambda \notin\widetilde{ \Sigma }\ .  
\end{cases}$$

The same Corollary  applied to the graded algebra $\widetilde{\go{g}}_{1}$ iimplies that the root  $\lambda_{1}$ is characterized by
$$\begin{cases}
              \bullet & \lambda _1(H_0)=2 \ ;\hfill\cr
             \bullet& \lambda _1\sorth\lambda _0\ ;\hfill\cr
              \bullet& \lambda \in \Sigma ^+\hbox{  and }\lambda \sorth \lambda _0
             \Longrightarrow  \lambda _1 -\lambda \notin\widetilde{
               \Sigma }\ . 
\end{cases} $$
\vskip 5pt

As $s_{0}.\Sigma^+=\Sigma^-$ and  $s_{0}.\lambda_{0}=\lambda_{k}$, we get  $\mu_{1}=s_{0}.\lambda_{1}$.
\vskip 5pt

On the other hand the root $\lambda_{k-1}$ appears in  $V^+$ and is strongly orthogonal to $\lambda_{k}$. Let  $\lambda\in \Sigma^+$ be a root strongly orthogonal to  $\lambda_{k}$. If  $\lambda(H_{\lambda_{k-1}})=0$, then $\lambda$ is strongly orthogonal to  $\lambda_{k-1}$ from Corollary \ref{cor-orth=fortementorth}. Hence $\lambda+\lambda_{k-1}$ is not a root. If $\lambda(H_{\lambda_{k-1}})\neq 0$, then by  Proposition \ref{proplambda>0} there exists  $j< k-1$ such that $\go{g}^{\lambda}\subset E_{k-1,j}(1,-1)$. As $(\lambda+\lambda_{k-1})(H_{\lambda_{k-1}})=3$, $\lambda+\lambda_{k-1}$ is not a root. This shows that  $\lambda_{k-1}$ verifies the properties  $(3)$. Hence $\lambda_{k-1}= \mu_{1}=s_{0}.\lambda_{1}$.
\vskip 5pt

The first assertion is then proved by induction on $j$.

For the second assertion one applies the preceding result  to the graded algebras  $\widetilde{\go{g}}_{i}$ and $\widetilde{\go{g}}_{j}$ where $\lambda_{i}$ and $\lambda_{j}$ play the role of  $\lambda_{0}$. There exists an element $s_{i}\in W_{i}$ ($W_{i}$ is the Weyl group of  $(\widetilde{\go{g}}_{i}, \go{a}_{i})$) such that $s_{i}.\lambda_{i}=\lambda_{k}$ and an element $s_{j}\in W_{j}$ such that  $s_{j}.\lambda_{j}=\lambda_{k}$. As $W_{i}$ and  $W_{j}$ are subgroups of $W$, $s=s_{j}^{-1}s_{i}$ is an element of  $W$ which verifies  $s.\lambda_{i}=\lambda_{j}$.

 \end{proof}
 \vskip 5pt
 \begin{prop}\label{prop-memedimension}\-
 
        {\rm (1)} For  $j =0,\ldots ,k$ the root spaces   $\widetilde{ {\go g}}^{\lambda _j}$ have the same dimension.
        
         {\rm (2)} More generally the Lie algebras  $\widetilde{ {\go l}}_i={\widetilde{\go{g}}^{-\lambda_{i}}}\oplus[{\widetilde{\go{g}}^{-\lambda_{i}}},{\widetilde{\go{g}}^{\lambda_{i}}}]\oplus {\widetilde{\go{g}}^{\lambda_{i}}}$ are two by two conjugated by $G$.

        {\rm (3)} For  $i\not = j$, the spaces   $E_{i,j}(1,1),E_{i,j}(-1,-1)$ and $ E_{i,j}(1,-1)$ have the same dimension. This dimension is non zero    and independant of the pair  $\{i,j\} \in \{0,1\ldots ,k\}^2 $.

 \end{prop}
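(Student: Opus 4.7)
The three claims will follow in turn from the Weyl-group conjugacy statements already at our disposal. For (1) and (2): Proposition \ref{propWconjugues} supplies, for every pair $(i,j)$, an element $s\in W$ with $s.\lambda_i=\lambda_j$. Because $W$ is generated by reflections $s_\alpha$ with $\alpha\in\Sigma$, and the corresponding Weyl-$\go{sl}_2$ representatives $w_\alpha=e^{\ad X_\alpha}e^{\ad X_{-\alpha}}e^{\ad X_\alpha}$ commute with $H_0$ (since $\alpha(H_0)=0$), one has $W\subset G$. The element $s$ then induces isomorphisms $\widetilde{\go g}^{\lambda_i}\simeq\widetilde{\go g}^{\lambda_j}$ and $\widetilde{\go l}_i\simeq\widetilde{\go l}_j$, yielding (1) and (2).

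For the three-way equality in (3) with $i,j$ fixed, consider the elementary automorphisms $w_i,w_j\in{\rm Aut}_e(\widetilde{\go g})$ associated with the $\go{sl}_2$-triples $(X_{-\ell},H_{\lambda_\ell},X_\ell)$. Each $w_\ell$ sends $H_{\lambda_\ell}$ to $-H_{\lambda_\ell}$ while fixing the other $H_{\lambda_m}$'s by strong orthogonality. Therefore $w_j$ yields an isomorphism $E_{i,j}(p,q)\xrightarrow{\sim}E_{i,j}(p,-q)$, and $w_i$ an isomorphism $E_{i,j}(p,q)\xrightarrow{\sim}E_{i,j}(-p,q)$; combining these gives $\dim E_{i,j}(1,1)=\dim E_{i,j}(1,-1)=\dim E_{i,j}(-1,-1)$.

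To prove independence of this common dimension on $\{i,j\}$, I strengthen Proposition \ref{prop-systememax}\,(2): applied to an arbitrary reordering $(\lambda_{\sigma(0)},\dots,\lambda_{\sigma(k)})$ of our system (which is again a maximal system of strongly orthogonal long roots in $\widetilde\Sigma^+\setminus\Sigma^+$), it produces $w_\sigma\in W\subset G$ realizing the permutation $\sigma^{-1}$ on $\{H_{\lambda_0},\dots,H_{\lambda_k}\}$. Thus $W$ acts on this set as the full symmetric group $S_{k+1}$. Given pairs $(i,j)$ and $(i',j')$, pick $\pi\in S_{k+1}$ with $\pi(i)=i',\pi(j)=j'$ and let $w\in W$ realize $\pi$; the computation $[H_{\lambda_m},wX]=w[H_{\lambda_{\pi^{-1}(m)}},X]$ then shows $wE_{i,j}(p,q)=E_{i',j'}(p,q)$.

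Finally, for non-vanishing (when $k\geq 1$): apply Corollary \ref{cor-gA} with $A=\{i,j\}$. Then $(\widetilde{\go g}_A,H_A)$ is a graded Lie algebra satisfying $(\bf H_1),(\bf H_2),(\bf H_3)$, and the $H_A$-eigenvalue-$2$ part of the Theorem \ref{th-decomp-Eij}\,(2) decomposition reads $V^+_A=\widetilde{\go g}^{\lambda_i}\oplus\widetilde{\go g}^{\lambda_j}\oplus E_{i,j}(1,1)$. Were $E_{i,j}(1,1)=0$, a case-by-case check of all summands of $\go g_A$ via Theorem \ref{th-decomp-Eij}\,(1) (using strong orthogonality to see $\lambda_j-\lambda_i$ is not a root, and noting that the only bracket potentially carrying $\widetilde{\go g}^{\lambda_i}$ into $\widetilde{\go g}^{\lambda_j}$, namely $[E_{j,i}(1,-1),\widetilde{\go g}^{\lambda_i}]$, actually lands in $E_{i,j}(1,1)=0$) would show $[\go g_A,\widetilde{\go g}^{\lambda_i}]\subset\widetilde{\go g}^{\lambda_i}$, contradicting $({\bf H_2})$ for $\widetilde{\go g}_A$. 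This final reducibility bookkeeping is the most delicate step.
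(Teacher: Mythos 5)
Your argument is correct, and for the two delicate points of part (3) it takes a genuinely different route from the paper. For (1), (2) and the equality $\dim E_{i,j}(1,1)=\dim E_{i,j}(1,-1)=\dim E_{i,j}(-1,-1)$ at a fixed pair, you and the paper do essentially the same thing (the paper uses $\ad X_i,\ad X_j$ directly as isomorphisms with inverses $-\ad Y_i,-\ad Y_j$, where you use the Weyl elements $w_i,w_j$; these are equivalent). For independence of the pair $\{i,j\}$, the paper does not invoke the full symmetric group: it applies Proposition \ref{propWconjugues} inside the subalgebras $\widetilde{\go g}_i$ to get $s_i\in W_i$ exchanging $\lambda_i$ and $\lambda_k$ while fixing $\lambda_j$ for $j<i$, and then chains the isomorphisms $E_{i,j}(1,1)\simeq E_{k,j}(1,1)\simeq E_{k-j,0}(1,1)\simeq E_{k,0}(1,1)\simeq\cdots\simeq E_{i',j'}(1,1)$ using also $s_0$; your observation that Proposition \ref{prop-systememax}\,(2), applied to a permuted copy of $(\lambda_0,\dots,\lambda_k)$, already yields the full $S_{k+1}$-action of $W$ on $\{\lambda_0,\dots,\lambda_k\}$ gives the transport $wE_{i,j}(p,q)=E_{i',j'}(p,q)$ in one step and is a legitimate (and cleaner) shortcut, provided, as you note, the lift of $W$ into $G$ acting on all of $\widetilde{\go g}$ is justified. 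For non-vanishing, the paper argues globally: if one $E_{i,j}(1,1)$ vanished then (by the equal dimensions just proved) all would, forcing $\go g={\cal Z}_{\go g}(\go a^0)$ and $V^+=\oplus_j\widetilde{\go g}^{\lambda_j}$, so $\widetilde{\go g}^{\lambda_0}$ would be $\go g$-stable, contradicting $({\bf H_2})$ for $\widetilde{\go g}$ itself; you instead localize to $A=\{i,j\}$ and contradict $({\bf H_2})$ for $\widetilde{\go g}_A$ via Corollary \ref{cor-gA}, which works pair by pair and does not even need the equal-dimension statement, at the cost of the weight-space bookkeeping you carry out (your parenthetical about $[E_{j,i}(1,-1),\widetilde{\go g}^{\lambda_i}]$ landing in $E_{i,j}(1,1)$ is the right point, though the phrase ``carrying $\widetilde{\go g}^{\lambda_i}$ into $\widetilde{\go g}^{\lambda_j}$'' is loosely worded). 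Both approaches are sound; the paper's non-vanishing step is shorter, yours is more local and self-contained.
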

 
 \begin{proof} $(1)$ From  Proposition \ref{propWconjugues} there exists an element $s\in W$ such that  $s.\lambda_{i}=\lambda_{j}$. Let $g$ be an element of  $G$ such that  $g_{|_{\go{a}}}=s$. It is the easy to see that  $g. {\widetilde{\go{g}}^{\lambda_{i}}}={\widetilde{\go{g}}^{\lambda_{j}}}$. Therefore the vector spaces  ${\widetilde{\go{g}}^{\lambda_{i}}}$ and  ${\widetilde{\go{g}}^{\lambda_{j}}}$ are isomorphic.

 $(2)$ Let  $g\in G$  be the preceding element. Then one has also  $g. {\widetilde{\go{g}}^{-\lambda_{i}}}={\widetilde{\go{g}}^{-\lambda_{j}}}$. Hence $g.\widetilde{ {\go l}}_i=\widetilde{ {\go l}}_j$.
 
 $(3)$ Fix a pair $(i,j)$ with $i\neq j$. We choose $(X_{i},Y_{i})$ (resp. $(X_{j},Y_{j})$) in  $\widetilde{\go{g}}^{-\lambda_{i}}\times \widetilde{\go{g}}^{\lambda_{i}}$ (resp. in  $\widetilde{\go{g}}^{-\lambda_{j}}\times \widetilde{\go{g}}^{\lambda_{j}}$) such that$(Y_{i},H_{\lambda_{i}}, X_{i})$ (resp. $(Y_{j},H_{\lambda_{j}}, X_{j})$ is an  $\go{sl}_{2}$-triple.
 
 Then $\ad X_i: E_{i,j}(-1,-1)\longrightarrow E_{i,j}(1,-1)$ is an isomorphism  whose   inverse is -$\ad Y_i$ (if  $u\in E_{i,j}(-1,-1)$ then $-\ad Y_{i}\ad X_{i}(u)=-[H_{\lambda_{i}},u]+[X_{i},[Y_{i},u]]=u$).
 
 Similarly  $\ad X_j: E_{i,j}(1,-1)\longrightarrow E_{i,j}(1,1)$ is an isomorphism whose inverse is  $-\ad Y_{j}$.

 This implies that the spaces  $E_{i,j}(\pm1,\pm1)$ are isomorphic when $i$ and  $j$ 	are fixed.
 
 In order to prove that the spaces  $E_{i,j}(1,1)$ are isomorphic  for distinct pairs  $(i,j)$   (with $i\neq j$), we will use the elements of the Weyl group which permute the $\lambda_{j}$.
 
 \vskip 5pt
 
 We prove first that  $E_{i,j}(1,1)\simeq E_{k,j}(1,1)$ for $j<i\leq k $ (recall that $k$ is the final index in the descent). Proposition  \ref{propWconjugues} applied to the graded algebra $\widetilde{\go{g}}_{i}$ implies the existence of  $s_{i}\in W_{i}$ which permutes  $\lambda_{i}$ and $\lambda_{k}$. The group  $W_{i}$ is generated by the reflections  defined by the roots $\lambda$ strongly orthogonal to  $\lambda_{0},\lambda_{1},\dots,\lambda_{i-1}$. As $j<i$, $\lambda_{j}$ is invariant under $W_{i}$. Hence 
 $$s_{i}: E_{i,j}(1,1)\longrightarrow E_{k,j}(1,1)$$
  is an isomorphism.
  Indeed if  $u\in E_{i,j}(1,1)$ then $[H_{\lambda_{k}}, s_{i}.u]= s_{i}. [s_{i}^{-1}.H_{\lambda_{k}},u]=s_{i}.[H_{\lambda_{i}},u]=s_{i}.u$ and  
  $[H_{\lambda_{j}}, s_{i}.u]= s_{i}. [s_{i}^{-1}.H_{\lambda_{j}},u]=s_{i}.[H_{\lambda_{j}},u]=s_{i}.u$. Hence $s_{i}.E_{i,j}(1,1)\subset E_{k,j}(1,1)$ and the restriction of $s_{i}^{-1}$ to $E_{k,j}(1,1)$ is the inverse.
  
  \vskip 5pt
  
Applying this to the triple $0<k-j\leq k$ one obtains that  
  $$s_{k-j}: E_{k-j,0}(1,1)\longrightarrow E_{k,0}(1,1)$$
  is an isomorphism.   
  
  One the  other hand a similar proof  (and Proposition \ref{propWconjugues}) shows that
$$s_{0}:E_{k,j}(1,1)\longrightarrow E_{k-j,0}(1,1)$$
is an isomorphism.

This will imply that all the spaces  $E_{i,j}(1,1)$ are isomorphic. Indeed let us start from  $E_{i,j}(1,1)$ and  $E_{i',j'}(1,1)$ with  $j<i$ and $j'<i'$.

From above we have the following isomorphisms:
  $$E_{i,j}(1,1)\simeq E_{k,j}(1,1)\simeq E_{k-j,0}(1,1)\simeq E_{k,0}(1,1)\simeq E_{k-j',0}(1,1)\simeq E_{k,j'}(1,1)\simeq E_{i',j'}(1,1).$$
  
  It remains to prove that these spaces are not reduced to  $\{0\}$. \\
 If they were trivial,  the spaces $E_{i,j}(\pm1,\pm 1)$ would all be trivial and one would have the following decompositions:
  $$V^+=\oplus _{j=0}^k \widetilde{ {\go g}}^{\lambda _j}\quad\textrm{ and}\quad \go g= {\cal Z}_{\go g}({\go a}^0).$$
  
  But then $\widetilde{\go{g}}^{\lambda_{0}}$ would be invariant under $\go{g}$.  This is impossible by  ${\bf(H_{2})}$.
 
 \end{proof}
 \vskip 5pt
 \begin{notation}\label{notdle} In the rest of the paper we will use the following notations:
\medskip
           \begin{align*}
           \ell&=\dim \widetilde{ {\go g}}^{\lambda _j} \hbox{  for }j=0,\ldots ,k\ ;\cr
           d&=\dim E_{i,j}(\pm 1,\pm 1) \hbox{  for }i\not = j\in \{0,\ldots ,k\}\;\cr
           e&=\dim \tilde{\go g}^{(\lambda_i+\lambda_j)/2}  \hbox{  for }i\not = j\in \{0,\ldots ,k\}\ (e \text{ may be equal to } 0).\
                \end{align*}
\end{notation} 
\vskip 5pt

From  Theorem \ref{th-decomp-Eij} giving the decomposition of $\dim V^+$, we obtain the following relation between, $k$, $d$ and $\ell$.

\begin{prop}\label{propdimV}
$$\dim V^+=(k+1)\left( \ell+\frac{kd}{2}\right)\ .$$
\end{prop}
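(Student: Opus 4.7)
The plan is to simply combine the direct sum decomposition of $V^+$ from Theorem \ref{th-decomp-Eij}(2) with the uniform-dimension statement of Proposition \ref{prop-memedimension} and the definitions in Notation \ref{notdle}. There is no real obstacle here; the proposition is essentially a bookkeeping consequence of what has already been established.

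More precisely, I would first recall that Theorem \ref{th-decomp-Eij}(2) gives
\[
V^+ \;=\; \bigoplus_{j=0}^{k}\widetilde{\go g}^{\lambda_j} \;\oplus\; \bigoplus_{0\le i<j\le k}E_{i,j}(1,1),
\]
so that
\[
\dim V^+ \;=\; \sum_{j=0}^{k}\dim \widetilde{\go g}^{\lambda_j} \;+\; \sum_{0\le i<j\le k}\dim E_{i,j}(1,1).
\]

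Next I would invoke Proposition \ref{prop-memedimension}: part (1) says that all the root spaces $\widetilde{\go g}^{\lambda_j}$ have the common dimension $\ell$, and part (3) says that all spaces $E_{i,j}(1,1)$ (with $i\ne j$) have the common dimension $d$. Plugging these into the above display, the first sum has $k+1$ terms each equal to $\ell$, and the second sum is indexed by the set of pairs $\{(i,j)\mid 0\le i<j\le k\}$, which has cardinality $\binom{k+1}{2}=\frac{k(k+1)}{2}$.

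Collecting these gives
\[
\dim V^+ \;=\; (k+1)\ell \;+\; \frac{k(k+1)}{2}\,d \;=\; (k+1)\!\left(\ell+\frac{kd}{2}\right),
\]
which is the claimed formula. The only point worth a brief comment is that Proposition \ref{prop-memedimension}(3) guarantees the value $d$ is independent of the pair $\{i,j\}$, so no case distinction is required in the second sum.
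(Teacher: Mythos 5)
Your proof is correct and is exactly the computation the paper intends: the paper states the proposition as an immediate consequence of the decomposition in Theorem \ref{th-decomp-Eij}(2) together with the common dimensions $\ell$ and $d$ from Proposition \ref{prop-memedimension}, which is precisely your counting argument. Nothing further is needed.
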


  \vskip 20pt
 \subsection{Normalization of the  Killing form}\hfill
  \vskip 10pt
  
  Let   $\widetilde{B}$ be a non degenerate  extension to $\widetilde{\go{g}}$ of the Killing form of  $[\widetilde{\go{g}},\widetilde{\go{g}}]$. As $\widetilde{\go{g}}={\cal Z}(\widetilde{\go{g}})\oplus [\widetilde{\go{g}},\widetilde{\go{g}}]$ where ${\cal Z}(\widetilde{\go{g}})$ is the center of  $\widetilde{\go{g}}$, we have 
  $$\widetilde{B}(z_{1}+u,z_{2}+u')= \kappa(z_{1},z_{2})+B(u,u')\hskip5pt \text{ for }z_{1},z_{2}\in {\cal Z}(\widetilde{\go{g}}) \text{ and }u,u'\in [\widetilde{\go{g}},\widetilde{\go{g}}],$$
  where  $B$ is the Killing form of  $[\widetilde{\go{g}},\widetilde{\go{g}}]$ and  $\kappa$ a non degenerate form on  ${\cal Z}(\widetilde{\go{g}})$.
  We fix once and for all such a form $\widetilde{B}$.
  \vskip 5pt
  
  \begin{definition} \label{defb(X,Y)}  For $X$ and $Y$ in   $\widetilde{{\go g}}$, we define the normalized Killing form   by setting :
$$b(X,Y)= -\frac{k+1}{4\,\dim V^+} \widetilde{ B }(X,Y)\ .$$
\end{definition}
A first consequence of this definition is the following Lemma.
\vskip 5pt
\begin{lemme}\label{lemmeb}\-

For  $j\in\{0,\ldots,k\}$ one has 
  $b(H_{\lambda_j},H_{\lambda_j})=-2$. Moreover if  $(Y_{j},H_{\lambda _j},X_j) $ is an  $\go{sl}_{2}$-triple  such that $X_{j }\in \widetilde{ {\go g} }
^{\lambda _j}$ and  $Y_{j }\in \widetilde{ {\go g} }
^{-\lambda _j}$, then $b(X_j,Y_{j})=1$.

\end{lemme}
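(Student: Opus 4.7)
The plan is to reduce both assertions to a single computation of the trace $\operatorname{tr}_{\widetilde{\go g}}((\ad H_{\lambda_j})^2)$, which coincides with $\widetilde B(H_{\lambda_j},H_{\lambda_j})$ because $H_{\lambda_j}\in[\widetilde{\go g},\widetilde{\go g}]$ and $\ad H_{\lambda_j}$ annihilates the center of $\widetilde{\go g}$. To evaluate that trace I would use the $\ad(\go a^0)$-weight decomposition of $\widetilde{\go g}$ given by Theorem~\ref{th-decomp-Eij}, combined with the common-dimension statements $\dim\widetilde{\go g}^{\pm\lambda_s}=\ell$ and $\dim E_{i,j'}(p,q)=d$ from Proposition~\ref{prop-memedimension}.

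Going through each summand of the decomposition, the only non-zero contributions to $(\ad H_{\lambda_j})^2$ arise from $\widetilde{\go g}^{\pm\lambda_j}$ (eigenvalues $\pm 2$) and from those blocks $E_{i,j'}(p,q)$ whose index pair $\{i,j'\}$ contains $j$ (eigenvalues $\pm 1$); all other $\widetilde{\go g}^{\pm\lambda_s}$ are killed by $\ad H_{\lambda_j}$ thanks to strong orthogonality, and so is ${\cal Z}_{\go g}(\go a^0)$. Counting the pairs meeting $j$ gives $k$ unordered pairs in each of the families $E_{i<j'}(1,1)$, $E_{i<j'}(-1,-1)$, and $2k$ ordered pairs in the family $E_{i\neq j'}(1,-1)$. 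Summing yields
$$\widetilde B(H_{\lambda_j},H_{\lambda_j})=2\cdot 4\ell+(k+k+2k)\,d=8\bigl(\ell+\tfrac{kd}{2}\bigr),$$
and combined with $\dim V^+=(k+1)(\ell+\tfrac{kd}{2})$ from Proposition~\ref{propdimV}, the normalizing factor in the definition of $b$ collapses and gives $b(H_{\lambda_j},H_{\lambda_j})=-2$.

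For the second formula, the $\go{sl}_2$-triple convention used throughout the paper (compare the proof of Lemma~\ref{lem-sl2-generique}, where the relation $[Y,X]=H_0$ is used explicitly) is $[Y_j,X_j]=H_{\lambda_j}$. Using this together with $\ad$-invariance of $\widetilde B$ and $[X_j,H_{\lambda_j}]=-2X_j$ one gets
$$\widetilde B(H_{\lambda_j},H_{\lambda_j})=\widetilde B([Y_j,X_j],H_{\lambda_j})=\widetilde B(Y_j,[X_j,H_{\lambda_j}])=-2\,\widetilde B(X_j,Y_j),$$
so that $b(X_j,Y_j)=-\tfrac12\, b(H_{\lambda_j},H_{\lambda_j})=1$.

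The one delicate point is the bookkeeping in the trace: for fixed $j$, carefully identifying which $E_{i,j'}$-blocks contribute and with which sign. Everything else is $\ad$-invariance together with the triple relations. It is worth noting that the multiplicative constant $-\tfrac{k+1}{4\dim V^+}$ has clearly been chosen precisely so that $b(H_{\lambda_j},H_{\lambda_j})$ is $-2$ uniformly in the parameters $k,\ell,d$ of the descent.
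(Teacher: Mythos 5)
Your proof is correct, and for the first assertion it takes a genuinely different route from the paper's. The paper gets $\widetilde B(H_{\lambda_j},H_{\lambda_j})$ indirectly: it invokes the $W$-conjugacy of the roots $\lambda_j$ (Proposition \ref{propWconjugues}) and their orthogonality to write $\widetilde B(H_{\lambda_j},H_{\lambda_j})=\frac{1}{k+1}\widetilde B(H_0,H_0)$ via the regularity identity $H_0=H_{\lambda_0}+\cdots+H_{\lambda_k}$, and then only needs the eigenvalues $\pm2$ of $\ad H_0$ to get $\widetilde B(H_0,H_0)=\tr_{\widetilde{\go g}}(\ad H_0)^2=8\dim V^+$; no knowledge of $\ell$ or $d$ is required. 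You instead compute $\tr_{\widetilde{\go g}}(\ad H_{\lambda_j})^2$ directly for a single coroot from the block decomposition of Theorem \ref{th-decomp-Eij}, using the equidimensionality statements of Proposition \ref{prop-memedimension}, and your bookkeeping ($8\ell+4kd$) is right; you then fold the answer back into $\dim V^+$ via Proposition \ref{propdimV} (which is itself a consequence of Theorem \ref{th-decomp-Eij}, so there is no circularity). What the paper's argument buys is brevity and independence from the constants $\ell,d$; what yours buys is that it works coroot by coroot without appealing to Weyl-group conjugacy, and in passing it re-derives $\widetilde B(H_0,H_0)=(k+1)\cdot 8(\ell+\tfrac{kd}{2})=8\dim V^+$ as a consistency check. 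Your identification $\widetilde B(H_{\lambda_j},H_{\lambda_j})=\tr_{\widetilde{\go g}}(\ad H_{\lambda_j})^2$ is justified exactly as you say, since $\ad H_{\lambda_j}$ kills the center, and the paper makes the same silent identification for $H_0$. For the second assertion your computation is essentially the paper's, run in the direction $\widetilde B([Y_j,X_j],H_{\lambda_j})=\widetilde B(Y_j,[X_j,H_{\lambda_j}])=-2\widetilde B(X_j,Y_j)$ with the convention $[Y_j,X_j]=H_{\lambda_j}$, which is indeed the one fixed in the paper.
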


\begin{proof} As the elements  $H_{\lambda_{j}}$ are conjugated (Proposition \ref{propWconjugues}), as the roots  $\lambda_{j}$ are strongly orthogonal, and as $H_{0}=H_{\lambda_{0}}+H_{\lambda_{1}}+\dots+H_{\lambda_{k}}$ ( Theorem  \ref{th-decomp-Eij}) one has:
$$\widetilde{ B }(H_{\lambda _j},H_{\lambda _j})=\frac{1}{k+1}\widetilde{ B }(H_0,H_0)=\frac{1}{k+1}
{\rm tr}_{\widetilde{ {\go g} }}(\ad H_0)^2=8\,\frac{{\rm dim} V^+}{k+1}\,.$$
And then from the definition of $b$, we obtain  $b(H_{\lambda_j},H_{\lambda_j})=-2$.

On the other hand
$$\widetilde{ B }(Y_{j},X_j)=\frac{1}{2}\widetilde{ B }(Y_{j},[H_{\lambda
_j},X_j])=-\frac{1}{2}\widetilde{ B }(H_{\lambda _j},H_{\lambda
_j})=-4\,\frac{\dim V^+}{k+1}\, ,$$
and hence  $b(Y_{j},X_{j})=1$.

\end{proof}

Let  $A$ be a subset of $\{0,1,\dots,k\}$.  Consider the graded algebra $\widetilde{\go{g}}_{A}$ defined in Corollary \ref{cor-gA} which is graded by $H_{A}=\sum_{j\in A}H_{\lambda_{j}}$. We denote by  $b_{A}$ a normalized nondegenerate bilinear form $\widetilde{\go{g}}_{A}$ (defined as  $b$ on $\widetilde{\go{g}}$).

 \begin{lemme} Let  $\widetilde{\go{G}}_{A}$ be the subalgebra of $\widetilde{ {\go g} }_A$ generated by $V^+_A$ and $V^-_A$. If $X$ and  $Y$ belong to   $\widetilde{ {\go g} }_A$, then:
 $$b_A(X,Y)=b(X,Y)\ .$$
\end{lemme}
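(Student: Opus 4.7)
The strategy is to identify both bilinear forms with the same normalized Killing form on the absolutely simple ideal $\widetilde{\go{G}}_A$, and then to bootstrap to all of $\widetilde{\go{g}}_A$. First, by Corollary \ref{cor-gA}, the graded algebra $\widetilde{\go{g}}_A$ is itself regular, so Proposition \ref{prop-lie(V+V-)simple} applied to it shows that $\widetilde{\go{G}}_A = V^-_A \oplus [V^-_A,V^+_A] \oplus V^+_A$ is an absolutely simple ideal of $\widetilde{\go{g}}_A$. By Remark \ref{rem-simple} (applied to $\widetilde{\go{g}}_A$) we obtain an orthogonal decomposition $\widetilde{\go{g}}_A = \widetilde{\go{G}}_A \oplus \widetilde{\go{G}}'_A$ with respect to $\widetilde{B}_A$, in which $\widetilde{\go{G}}'_A$ is also an ideal.

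The central step is the comparison on $\widetilde{\go{G}}_A$. Since $\widetilde{B}$ restricts to the Killing form of $[\widetilde{\go{g}},\widetilde{\go{g}}]$ by construction, and since $\widetilde{\go{G}}$ is an ideal of $[\widetilde{\go{g}},\widetilde{\go{g}}]$ (being simple), the restriction $\widetilde{B}|_{\widetilde{\go{G}}}$ is the Killing form $B_{\widetilde{\go{G}}}$. Therefore $\widetilde{B}|_{\widetilde{\go{G}}_A}$ is a non-zero symmetric $\widetilde{\go{G}}_A$-invariant form on the absolutely simple algebra $\widetilde{\go{G}}_A$, hence equals a scalar $c$ times the Killing form $B_{\widetilde{\go{G}}_A}$. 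The same reasoning inside $\widetilde{\go{g}}_A$ gives $\widetilde{B}_A|_{\widetilde{\go{G}}_A} = B_{\widetilde{\go{G}}_A}$. To determine $c$ I would evaluate at $H_{\lambda_j}$ for some $j \in A$, which lies in $[V^-_A,V^+_A] \subset \widetilde{\go{G}}_A$: the trace computation already performed in the proof of Lemma \ref{lemmeb} gives $\widetilde{B}(H_{\lambda_j},H_{\lambda_j}) = 8\dim V^+/(k+1)$, and the analogous computation inside $\widetilde{\go{g}}_A$ yields $\widetilde{B}_A(H_{\lambda_j},H_{\lambda_j}) = 8\dim V^+_A/|A|$. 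Hence $c = \frac{|A|\dim V^+}{(k+1)\dim V^+_A}$, and this constant exactly cancels the ratio of normalization factors between the definitions of $b$ and $b_A$, yielding $b(X,Y) = b_A(X,Y)$ for all $X,Y \in \widetilde{\go{G}}_A$.

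To extend the identity to general $X,Y \in \widetilde{\go{g}}_A$, I would use the ideal decomposition $\widetilde{\go{g}}_A = \widetilde{\go{G}}_A \oplus \widetilde{\go{G}}'_A$. Invariance of both forms and the fact that $\widetilde{\go{G}}_A$ and $\widetilde{\go{G}}'_A$ commute (as orthogonal ideals in a reductive algebra) force the cross-terms to vanish for both $b$ and $b_A$; on $\widetilde{\go{G}}'_A$ one exploits the freedom in the choice of the extension $\widetilde{B}_A$ to the center of $\widetilde{\go{g}}_A$ (only non-degeneracy is required) to match $\widetilde{B}$ there, which is possible because $\widetilde{\go{G}}'_A$ contains the center and the complementary simple components and both $\widetilde{B}$ and $\widetilde{B}_A$ induce non-degenerate invariant forms on $\widetilde{\go{G}}'_A$.

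The main obstacle is precisely this last extension step: off the absolutely simple piece $\widetilde{\go{G}}_A$, the identity is really a statement about a consistent choice of extension of the Killing form beyond the derived subalgebra, and one must verify that the orthogonal decomposition for $\widetilde{B}_A$ is compatible with that for $\widetilde{B}$. The clean, essential computation is the simplicity-plus-$H_{\lambda_j}$ argument in the second paragraph; everything else is a bookkeeping reduction to this case, and this is where I would concentrate the proof.
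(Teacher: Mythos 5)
Your central argument coincides with the paper's proof of this lemma: both rest on Proposition \ref{prop-lie(V+V-)simple} (absolute simplicity of $\widetilde{\go{G}}_{A}$), on the fact that the space of invariant bilinear forms on an absolutely simple Lie algebra is one-dimensional, so that the restrictions of $b$ and $b_{A}$ to $\widetilde{\go{G}}_{A}$ are proportional, and on pinning the constant by evaluating at one common element via Lemma \ref{lemmeb}. The paper does this with $b_{A}(X_{j},Y_{j})=b(X_{j},Y_{j})=1$, you do it with $b_{A}(H_{\lambda_{j}},H_{\lambda_{j}})=b(H_{\lambda_{j}},H_{\lambda_{j}})=-2$; your detour through the unnormalized forms $\widetilde{B}$, $\widetilde{B}_{A}$ and the explicit constant $c=\frac{|A|\dim V^+}{(k+1)\dim V^+_{A}}$ is just a rewriting of this evaluation (it tacitly uses, as the paper does, that $\widetilde{\go{g}}_{A}$ is regular of rank $|A|$ so that Lemma \ref{lemmeb} applies to it). So on $\widetilde{\go{G}}_{A}$ your proof is correct and essentially identical to the paper's.

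Note, however, that the paper's own proof stops there: it only establishes the equality for $X,Y\in\widetilde{\go{G}}_{A}$, which is all that is used later, and the hypothesis ``$X,Y\in\widetilde{\go{g}}_{A}$'' in the statement should be read with that restriction in mind. Your third paragraph, which tries to extend the identity to all of $\widetilde{\go{g}}_{A}$, has a genuine gap: the only freedom in the choice of $\widetilde{B}_{A}$ lies on the center of $\widetilde{\go{g}}_{A}$, while on the simple ideals of $[\widetilde{\go{g}}_{A},\widetilde{\go{g}}_{A}]$ other than $\widetilde{\go{G}}_{A}$ the form $\widetilde{B}_{A}$ is forced to be the Killing form; the restriction of $b$ to such an ideal is an invariant form with a scalar determined by the global normalization $-\frac{k+1}{4\dim V^+}$ and by how that ideal sits inside $\widetilde{\go{g}}$, and nothing forces these scalars to agree. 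Since neither the paper's proof nor its later applications require anything beyond $\widetilde{\go{G}}_{A}$, the right move is simply to drop that extension step rather than try to repair it.
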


\begin{proof} We know from Proposition \ref{prop-lie(V+V-)simple} that $\widetilde{\go{G}}_{A}$ is  absolutely  simple. Then the dimension of the space of invariant bilinear forms on  $\widetilde{\go{G}}_{A}$ is equal to $1$ ( \cite{Bou3}, Exercice 18 a) of \S 6). Hence the restrictions of $b$ and  $b_{A}$ to  $\widetilde{\go{G}}_{A}$ are proportional. For  $j\in A$, $X_{j }\in \widetilde{ {\go g} }
^{\lambda _j}$ and  $Y_{j }\in \widetilde{ {\go g} }^{-\lambda _j}$ such that $(Y_{j },H_{\lambda_{j}},X_{j}) $ is an $\go{sl}_{2}$-triple, we have  $b_{A}(X_{j},Y_{j})=b(X_{j},Y_{j})=1$ (  Lemma \ref{lemmeb}).

\end{proof}

     \vskip 20pt
 \subsection{The relative invariant $\Delta_{0}$}\hfill
  \vskip 10pt
  
  Recall (\S $1.7$) that the group we are interested in and which will act on $V^+$ is   
  $$G= {\cal Z}_{\text{Aut}_{0}(\widetilde{\go{g}})}(H_{0})=\{g\in \text{Aut}_{0}(\widetilde{\go{g}})\,,\, g.H_{0}=H_{0}\}$$
  Recall also that  $(G,V^+)$ is a prehomogeneous vector space. Let  $S$ be the complementary set of the union of the open orbits in $V^+$.

  Recall also the definition of a relative invariant:
  \begin{definition}\label{def-inv-rel}  A rational function  $R$ on $V^+$ is a relative invariant under  $G$ if there exists a rational character  $\chi $ of $G$ such that 
$$R(g.X)=\chi (g)R(X) \hbox{  for all }g\in G \hbox{  and all  }X\in V^+\setminus S .$$
\end{definition}

\begin{rem} \label{rem-extension-invariant} From the density of $G$ in  $\overline{G}=G(\overline{F})={\cal Z}_{\text{Aut}_{_{0}}(\overline{\widetilde{\go{g}}})}(H_{0})={\cal Z}_{\text{Aut}_{e}(\overline{\widetilde{\go{g}}})}(H_{0})$ (\S 1.7), and from the density of  $V^+$ in $\overline{V^+}$, the natural extension of $R$ from $V^+$ to  $\overline{V^+}$, is a relative  invariant of $(\overline{G}, \overline{V^+})$
\end{rem}
\vskip 5pt

\begin{lemme}\label{lem-tId-dansG}
For $t\in F^*$, one has $t{\rm Id}_{V^+}\in G_{|_{V^+}}$. More precisely, we have  $t{\rm Id}_{V^+}\in L_{|_{V^+}}$.
\end{lemme}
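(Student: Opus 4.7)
The plan is to exhibit an explicit element $\psi_t\in L$ whose restriction to $V^+$ is $t\,\mathrm{Id}_{V^+}$. Define a linear map $\psi_t\colon\widetilde{\go g}\to\widetilde{\go g}$ by declaring it to act as $t^{-1}\mathrm{Id}$ on $V^-$, as $\mathrm{Id}$ on $\go g$, and as $t\,\mathrm{Id}$ on $V^+$. Using the bracket relations listed just after $({\bf H_1})$, one checks at once that $\psi_t$ preserves brackets: the only nontrivial compatibility is $[V^+,V^-]\subset\go g$, where $t\cdot t^{-1}=1$ matches the identity action on $\go g$; the brackets $[V^\pm,V^\pm]$ vanish so any scalar works; and on $[\go g,V^\pm]\subset V^\pm$ and $[\go g,\go g]\subset\go g$ the relation is trivially satisfied. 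Thus $\psi_t\in\mathrm{Aut}(\widetilde{\go g})$, $\psi_t$ fixes $H_0\in\go g$, and $\psi_t$ centralizes every $H_{\lambda_j}\in\go a\subset\go g$ pointwise.

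The main step is to show $\psi_t\in\mathrm{Aut}_0(\widetilde{\go g})$, i.e.\ that $\psi_t\in\mathrm{Aut}_e(\overline{\widetilde{\go g}})$. Here the regularity hypothesis $({\bf H_3})$ is essential: it provides an $\go{sl}_2$-triple $(I^-,H_0,I^+)$ with $I^\pm\in V^\pm$. Because $\ad I^+$ and $\ad I^-$ are nilpotent (by the $\Z$-grading, as $V^\pm$ are commutative and of weights $\pm 2$), this triple integrates to a homomorphism
\[
\rho:\mathrm{SL}_2(\overline F)\longrightarrow\mathrm{Aut}_e(\overline{\widetilde{\go g}}),
\]
sending the standard upper/lower unipotent one-parameter subgroups to $\exp(x\,\ad I^+)$ and $\exp(y\,\ad I^-)$ respectively. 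Choose $u\in\overline F^{\,*}$ with $u^2=t$ and consider $h(u)=\mathrm{diag}(u,u^{-1})\in\mathrm{SL}_2(\overline F)$. The standard Bruhat/Gauss factorization writes $h(u)$ as a finite product of upper and lower unipotent matrices, so $\rho(h(u))$ is a product of elementary automorphisms and therefore lies in $\mathrm{Aut}_e(\overline{\widetilde{\go g}})$. The usual $\go{sl}_2$-representation computation shows $\rho(h(u))$ acts on the $a$-eigenspace of $\ad H_0$ by $u^a$, which for $a=2,0,-2$ gives $t,1,t^{-1}$. Hence $\rho(h(u))=\psi_t$ as endomorphisms of $\overline{\widetilde{\go g}}$, and $\psi_t$ is elementary over $\overline F$.

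Combining the two paragraphs, $\psi_t\in\mathrm{Aut}(\widetilde{\go g})\cap\mathrm{Aut}_e(\overline{\widetilde{\go g}})=\mathrm{Aut}_0(\widetilde{\go g})$ and centralizes $H_0$, so $\psi_t\in G$; moreover it centralizes each $H_{\lambda_j}$, so $\psi_t\in\bigcap_{i=0}^k G_{H_{\lambda_i}}=L$ in the sense of Definition \ref{def-a^0-L}. Its restriction to $V^+$ is $t\,\mathrm{Id}_{V^+}$, proving both assertions of the lemma.

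The one real obstacle is the verification that $\psi_t$ is elementary over the algebraic closure: the restriction of $\psi_t$ to the semisimple part corresponds to $\ad H_0$ which is semisimple, not nilpotent, so $\psi_t$ is not directly of the form $\exp(\ad x)$ with $x$ nilpotent over $F$. The trick is to pass to $\mathrm{SL}_2(\overline F)$, where the diagonal torus is generated (as an abstract group) by unipotent elementary matrices, and pull this decomposition back via the $\go{sl}_2$-triple furnished by $({\bf H_3})$. This is precisely why the intermediate group $\mathrm{Aut}_0$ — rather than $\mathrm{Aut}_e$ — was used to define $G$ in Section \ref{sub-section-generic-strongly-orth}: a square root of $t$ need not exist in $F$, but does over $\overline F$.
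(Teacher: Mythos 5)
Your proof is correct and follows essentially the same route as the paper's own (second) proof: there the element is produced explicitly as $h(\sqrt{t})=\theta(\sqrt{t})\,\theta(-1)$, a product of exponentials of nilpotent multiples of $X$ and $Y$ over $\overline{F}$ which acts by $t$, $1$, $t^{-1}$ on $V^+$, $\go{g}$, $V^-$, and this is exactly your factorization of $\mathrm{diag}(u,u^{-1})$ into unipotents through the integrated homomorphism $\rho:\mathrm{SL}_2(\overline{F})\to\mathrm{Aut}_e(\overline{\widetilde{\go{g}}})$ attached to the triple given by $(\bf H_3)$. The conclusion that the element lies in $L$ (it fixes each $H_{\lambda_j}$) is obtained in the same way, so apart from presentation your argument coincides with the paper's.
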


\begin{proof} Consider the subalgebra $\go{u}\simeq \go{sl}_{2}(F)$ generated by  an $\go{sl}_{2}$-triple $(Y,H_{0},X)$ where  $X$ is generic in   $V^+$, $Y\in V^-$. Consider also the group  $U=\text{Aut}_{0}(\go{u})$. Extending the adjoint representation from  $\go{u}$ to $\overline{\widetilde{\go{g}}}$, one sees that the group   $U$ can be injected into  the group $\text{Aut}_{0}(\overline{\widetilde{\go{g}}})$. On the other hand, one verifies easily that, for $t\in F^*$, the map  $\Psi_{t}:  {\widetilde{\go{g}}}\longrightarrow {\widetilde{\go{g}}}$, defined by  $\Psi_{t}(x)=tx,\, \Psi_{t}(y)=t^{-1}y,\, \Psi_{t}(h)=h$, for $x\in V^+,y\in V^-, h\in \go{g}$ is an automorphism of  $\widetilde{\go{g}}$, which stabilizes $\go{u}$ and fixes  $H_{0}$.  From  \cite{Bou2} (Chap. VIII,\S5, $n^\circ3$, Corollaire 2 de la Proposition 5, p.110), one has  $\text{Aut}_{0}(\go{u})=\text{Aut}(\go{u})$. Hence there exist nilpotent elements  $u_{1},u_{2},\dots,u_{p}\in \overline{\go{u}}$ such that  ${\Psi_{t}}_{|_{\go{u}}}=(e^{\ad_{_{\overline{\go{u}}}} u_{_1}}e^{\ad_{_{\overline{\go{u}}}} u_{_2}}\dots e^{\ad_{_{\overline{\go{u}}}}u_{_p}})_{|_{\go{u}}}$. Note that these elements are also nilpotent in $\overline{\widetilde{\go{g}}}$. Hypothesis  ${\bf (H_{1})}$ implies that the irreducible components of the  $\overline{\go{u}}$-module $\overline{\widetilde{\go{g}}}$ are either isomorphic to the trivial module (the isotypic component being ${\cal Z}_{\overline{\go{g}}}(X)$), or isomorphic to the adjoint representation, of dimension 3. In this last case the space of highest weight vectors is $\overline{V^+}$. Let $X'\in V^+$, $X'\neq0$. The $\overline{\go{u}}$-module $\overline{\go{u}'}$ generated by $X'$ is therefore isomorphic to the $\overline{\go{u}}$-module $\overline{\go{u}}$. Hence there exists an isomorphism  $\alpha: \overline{\go{u}}\longrightarrow \overline{\go{u}'}$ such that $\alpha(X)=X'$ and such that for all  $u\in \overline{\go{u}}$, one has $\alpha\circ \ad u= \ad u\circ \alpha$. Therefore $\alpha$ intertwines also  $e^{\ad_{_{\overline{\go{\widetilde{\go{g}}}}}} u_{_1}}e^{\ad_{_{\overline{\go{\widetilde{\go{g}}}}}} u_{_2}}\dots e^{\ad_{_{\overline{\go{\widetilde{\go{g}}}}}} u_{_p}}$:

$$\alpha\circ {\Psi_{t}}_{|_{\go{u}}} = e^{\ad_{_{\overline{\go{\widetilde{\go{g}}}}}} u_{_1}}e^{\ad_{_{\overline{\go{\widetilde{\go{g}}}}}} u_{_2}}\dots e^{\ad_{_{\overline{\go{\widetilde{\go{g}}}}}} u_{_p}}\circ \alpha.$$
 Explicitly: 
 
 $\alpha\circ {\Psi_{t}}_{|_{\go{u}}}(X)=\alpha(tX)=t\alpha(X)=tX' =e^{\ad_{_{\overline{{\widetilde{\go{g}}}}}} u_{_1}}e^{\ad_{_{\overline{\go{\widetilde{\go{g}}}}}} u_{_2}}\dots e^{\ad_{_{\overline{\go{\widetilde{\go{g}}}}}} u_{_p}}\circ \alpha(X)$
 
 $=e^{\ad_{_{\overline{\go{\widetilde{\go{g}}}}}} u_{_1}}e^{\ad_{_{\overline{\go{\widetilde{\go{g}}}}}} u_{_2}}\dots e^{\ad_{_{\overline{\go{\widetilde{\go{g}}}}}} u_{_p}}(X')$.
 
 Hence for all  $X'\in V^+$, we have shown that $e^{\ad_{_{\overline{\go{\widetilde{\go{g}}}}}} u_{_1}}e^{\ad_{_{\overline{\go{\widetilde{\go{g}}}}}} u_{_2}}\dots e^{\ad_{_{\overline{\go{\widetilde{\go{g}}}}}} u_{_p}}(X')=tX'$. 
 
 As $e^{\ad_{_{\overline{\go{\widetilde{\go{g}}}}}} u_{_1}}e^{\ad_{_{\overline{\go{\widetilde{\go{g}}}}}} u_{_2}}\dots e^{\ad_{_{\overline{\go{\widetilde{\go{g}}}}}} u_{_p}}\in   {\cal Z}_{\text{Aut}_{0}(\widetilde{\go{g}})}(H_{0})=G$, we get $t{\rm Id}_{V^+}\in G_{|_{V^+}}$. The same proof shows that if   $(Y_{j},H_{\lambda _j},X_j) $ is an  $\go{sl}_{2}$-triple  such that $X_{j }\in \widetilde{ {\go g} }
^{\lambda _j}$ and  $Y_{j }\in \widetilde{ {\go g} }
^{-\lambda _j}$, then $$e^{\ad_{_{\overline{\go{\widetilde{\go{g}}}}}} u_{_1}}e^{\ad_{_{\overline{\go{\widetilde{\go{g}}}}}} u_{_2}}\dots e^{\ad_{_{\overline{\go{\widetilde{\go{g}}}}}} u_{_p}}(X_{\lambda_{j}})=tX_{\lambda_{j}}\text{ and } e^{\ad_{_{\overline{\go{\widetilde{\go{g}}}}}} u_{_1}}e^{\ad_{_{\overline{\go{\widetilde{\go{g}}}}}} u_{_2}}\dots e^{\ad_{_{\overline{\go{\widetilde{\go{g}}}}}} u_{_p}}(Y_{\lambda_{j}})=t^{-1}Y_{\lambda_{j}}.$$
And hence $e^{\ad_{_{\overline{\go{\widetilde{\go{g}}}}}} u_{_1}}e^{\ad_{_{\overline{\go{\widetilde{\go{g}}}}}} u_{_2}}\dots e^{\ad_{_{\overline{\go{\widetilde{\go{g}}}}}} u_{_p}}(H_{\lambda_{j}})=H_{\lambda_{j}}$. This implies that $t{\rm Id}_{V^+}\in L_{|_{V^+}}$.

\end{proof}

{\bf Another proof:}
\begin{proof} Let us give another proof of the preceding Lemma, more explicit,  but based on the same idea which is to use $\go{sl}_{2}$. 

Let again   $(Y,H_{0},X)$ be an $\go{sl}_{2}$-triple with  $X$ generic in  $V^+$, $Y\in V^-$. For  $t\in F^*$ consider the automorphism  $\theta(t)\in {\rm Aut}_{e}(\widetilde{\go{g}})$ defined by 
$$\theta(t)=e^{t\ad_{{\widetilde{\go{g}}} }X}e^{t^{-1}\ad_{{\widetilde{\go{g}}}}Y} e^{t\ad_{{\widetilde{\go{g}}} }X}.\eqno (*)$$
Set then 
$$h(t)=\theta(t)\theta(-1).\eqno (**)$$
Recall that  $V^+$ (resp. $V^-$, resp. $\go{g}$) is the space of weight vectors of weight   $2$  (resp. -2, resp. 0) of  $\widetilde{\go{g}}$ for the adjoint action of the algebra  $\go{u}\simeq \go{sl}_{2}(F)$. From \cite{Bou2} (Chap. VIII, \S1, $n^\circ5$, Prop. 6, p.75), $h(t)_{|_{V^+}}$ is scalar multiplication by $t^2$,  $h(t)_{|_{V^-}}$ is scalar multiplication by $t^{-2}$,  and  $h(t)_{|_{\go{g}}}$ is the identity. Over  $\overline{F}$, we can consider  $\sqrt{t}\in \overline{F}^*$. Then the automorphism $h(\sqrt{t})$ belongs to  ${\rm Aut}_{e}(\overline{\go{\widetilde{\go{g}}}})$ and stabilizes $\widetilde{\go{g}}$ as  $h(\sqrt{t})_{|_{V^+}}=t{\rm Id}_{V^+}$,  $h(\sqrt{t})_{|_{V^-}}=t^{-1}{\rm Id}_{V^-}$, $h(\sqrt{t})_{|_{\go{g}}}={\rm Id}_{\go{g}}$. The preceding relations  $(*)$ and  $(**)$,  (for $\theta(\sqrt{t})$and $h(\sqrt{t})$), imply then that the automorphism  $h(\sqrt{t})$ belongs to ${\cal Z}_{\text{Aut}_{0}(\widetilde{\go{g}})}(H_{0})=G$. The same argument as in the first proof shows that $t{\rm Id}_{V^+}\in L_{|_{V^+}}$.

\end{proof}
 \vskip 5pt
\begin{theorem}\label{thexistedelta_0}\-

$(1)$ There exists on  $V^+$ a unique (up to scalar multiplication) relative invariant polynomial $\Delta_{0}$ which is absolutely irreducible  (i.e. irreducible as a polynomial on $\overline{V^+}$).

$(2)$ Any relative invariant on  $V^+$ is  (up to scalar multiplication) a power of $\Delta_{0}$.

$(3)$ An element  $X\in V^+$ is generic if and only if $\Delta_{0}(X)\neq 0$.
\end{theorem}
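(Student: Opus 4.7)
The strategy is to apply standard prehomogeneous vector space theory to $(\overline{G}, \overline{V^+})$ and then descend to $F$. As recalled in \S 1.7, Vinberg's theorem ensures that $\overline{G}$ has a unique open orbit $\Omega = \overline{G}\cdot I^+$ in $\overline{V^+}$, and since $\overline{G}$ is connected, the singular set $\overline{S} = \overline{V^+}\setminus \Omega$ is a Zariski-closed $\overline{G}$-stable subset whose irreducible components are each $\overline{G}$-stable. A key consequence of Lemma \ref{lem-tId-dansG} is that $F^{*}\mathrm{Id}_{V^+} \subset G|_{V^+}$, so any rational relative invariant $R$ must be homogeneous of some integer degree; writing $R = R_1/R_2$ in lowest terms shows that both $R_1$ and $R_2$ are themselves relative invariant polynomials (their zero loci being $G$-stable hypersurfaces). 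Therefore the group of relative invariants modulo constants is multiplicatively generated by the irreducible polynomials cutting out the codimension-$1$ irreducible components of $\overline{S}$: each such defining polynomial is a relative invariant because $\overline{G}$ permutes it up to a scalar character, and absolute irreducibility is built in since we work over $\overline{F}$.

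For \emph{existence} of a non-constant relative invariant I use regularity. The $\go{sl}_2$-triple $(I^-, H_0, I^+)$ equips $\widetilde{\go{g}}$ with an $\go{sl}_2$-module structure whose non-trivial isotypic component consists of copies of the $3$-dimensional adjoint representation (no higher weights can appear because $\ad H_0$ has eigenvalues only in $\{-2, 0, 2\}$ on $\widetilde{\go{g}}$). On each such copy the operator $\ad(I^+)^{2}$ sends the lowest-weight line in $V^-$ isomorphically onto the highest-weight line in $V^+$. Hence $\ad(I^+)^{2} : V^- \to V^+$ is an isomorphism, and in particular $\dim V^- = \dim V^+$. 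Fixing bases, the map $X \mapsto \det(\ad(X)^{2}|_{V^-})$ is a non-zero polynomial on $V^+$ whose non-vanishing locus coincides exactly with the set of generic elements, by Proposition \ref{prop-generiques-cas-regulier} together with Definition \ref{def-elementsgeneriques}. This shows $\overline{S}$ is a genuine hypersurface, so a non-constant relative invariant exists.

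The main obstacle is \emph{uniqueness}, i.e.\ showing that $\overline{S}$ has a single irreducible codimension-$1$ component. I would handle this via the descent structure: every $\overline{G}$-orbit meets the principal diagonal $\go{d} = \widetilde{\go{g}}^{\lambda_0} \oplus \cdots \oplus \widetilde{\go{g}}^{\lambda_k}$, and on each line-type summand $\widetilde{\go{g}}^{\lambda_j}$ the rank-one graded subalgebra $\widetilde{\go{l}}_j$ provides a natural one-dimensional relative invariant $n_j$ for the stabilizer action, so that an element $X = \sum_j X_j \in \go{d}$ is generic if and only if all $n_j(X_j) \neq 0$. The Weyl group elements from Proposition \ref{propWconjugues}, realized inside $G$, permute the roots $\lambda_0, \ldots, \lambda_k$ transitively; hence the product $\prod_j n_j$ is the restriction to $\go{d}$ of a single $G$-relative invariant $\Delta_0$ that admits no finer $G$-stable factorization. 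Taking $\Delta_0$ as this irreducible polynomial, part (2) follows by unique factorization: any relative invariant $P$ has zero locus contained in $\{\Delta_0 = 0\}$ and is homogeneous, so $P = c \Delta_0^m$ for some scalar $c$ and integer $m$. Part (3) reduces to the set-theoretic equality $\{\Delta_0 \neq 0\} = \Omega \cap V^+ = \{X \in V^+ : X \text{ generic}\}$.
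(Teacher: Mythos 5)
Your construction of the non-constant invariant is exactly the paper's: the polynomial $P(X)=\det\bigl((\ad X)^2|_{V^-}\bigr)$, non-vanishing precisely on the generic set by Proposition \ref{prop-generiques-cas-regulier}, and Lemma \ref{lem-tId-dansG} to see it is non-constant. The general dictionary between relative invariants and the codimension-one components of the singular set of $(\overline{G},\overline{V^+})$ is also fine. But the heart of assertions (1)--(2) is precisely that there is only \emph{one} such component, i.e.\ that the group of characters of relative invariants has rank one, and this is where your argument has a genuine gap. The paper obtains it by quoting Sato--Kimura (Proposition 12), whose proof rests on the absolute irreducibility of the representation, i.e.\ hypothesis $(\mathbf H_2)$ over $\overline{F}$. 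Your substitute -- ``every orbit meets the diagonal, the Weyl group permutes the $\lambda_j$ transitively, hence $\prod_j n_j$ extends to a single relative invariant admitting no finer $G$-stable factorization'' -- does not prove this: over $\overline{F}$ there is automatically a single open orbit, so transitivity statements about $G$ or $W$ add nothing, and ``admits no finer $G$-stable factorization'' is exactly the assertion to be established. Nothing in your sketch excludes two non-associate irreducible relative invariants $f_1,f_2$ (two distinct codimension-one components of $\overline{S}$) whose restrictions to the diagonal are both proportional to products of the $n_j$ with equal exponents; moreover the relevant diagonal over $\overline{F}$ is the larger one coming from the descent over $\overline{F}$ (the $\kappa(k+1)$ strongly orthogonal roots used later in Theorem \ref{thpropridelta_0}), so even the restriction analysis is not the one you describe. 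You need either the Sato--Kimura argument or some genuine use of $(\mathbf H_2)$ at this point.

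A second, smaller omission: you work entirely over $\overline{F}$, but $\Delta_0$ in the statement is a relative invariant on $V^+$ in the sense of Definition \ref{def-inv-rel}, and part (2) concerns $F$-rational relative invariants with scalars in $F$. The irreducible factor produced over $\overline{F}$ is a priori only defined up to $\overline{F}^*$ and need not take values in $F$ on $V^+$; the paper devotes Lemma \ref{lem-invariantdefinisurF} to this, via the Galois action $\Delta_0^\sigma=c_\sigma\Delta_0$ and normalization at a generic $F$-point $x_0$ (setting $\alpha=\Delta_0(x_0)^{-1}$). Your proof should include this descent step (and then the density of $G$ in $\overline{G}$ and of $V^+$ in $\overline{V^+}$ to extend $F$-relative invariants) before concluding (2); once $P=a\Delta_0^m$ is available with $\Delta_0(V^+)\subset F$, your reduction of (3) to $\{\Delta_0\neq0\}=\{P\neq0\}=\{X \text{ generic}\}$ is correct.
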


\begin{proof} We begin by constructing a non trivial relative invariant  $P$ of  $(G,V^+)$. We choose a base of  $V^+$ and a base of  $V^-$, for example the dual base  if we identify  $V^- $ and  $(V^+)^*$ by using the form $b$. One can then define a determinant  for all linear map from $V^-$ into  $V^+$. Consider the following linear map:
$$(\ad X)^2:V^-\longrightarrow V^+.$$
Set, for all  $X\in V^+$:
$$P(X)=\det(\ad X)^2.$$
We have, for $g\in G$:

\begin{align*}&P(g.X)=\det(\ad(g.X))^2)=\det(g.(\ad X)^2 g^{-1}\\
&=\det_{V^+}(g)\det_{V^-}(g^{-1})\det(\ad X)^2=\det_{V^+}(g)^2P(X)
\end{align*}
 If  $X$ is generic,  it can be put in an  $\go{sl}_{2}$-triple  $(Y,H_{0},X)$ ($Y\in V^-$), and then  $(\ad X)^2$ is an isomorphism between  $V^-$ and $V^+$. Hence $P\neq 0$. By Lemma \ref{lem-tId-dansG}, $t{\rm Id}_{V^+}\in G$. Therefore the character of  $P$ is non trivial, hence $P$ is non constant.
 
By Remark \ref{rem-extension-invariant}, the natural extension of  $P$ to $\overline{V^+}$ is a relative invariant of  $(\overline{G},\overline{V^+})$. Then (by \cite{Sato-Kimura}, Proposition 12, p. 64), there exists a non trivial relative invariant  $\Delta_{0}$ of  $(\overline{G},\overline{V^+})$ which is an irreducible polynomial  and any other relative invariant is of the form $c. \Delta_{0}^m$ $(m\in \Z)$.

\vskip 5 pt 
We will need the following Lemma.
 
 \begin{lemme}\label{lem-invariantdefinisurF}
There exists  $\alpha\in \overline{F}^*$ such that  $\alpha\Delta_{0}$ takes values in  $F$ on  $V^+$.
\end{lemme}
Proof of the Lemma:
Let  ${\cal G}={\rm Gal}(\overline{F},F)$ be the Galois group of $\overline{F}$ over $F$. Let  $\sigma\in {\cal G}$. Then  $\sigma$ acts on $\overline{G}$ and fixes each point of $G$. It acts also on  $\overline{V^+}$ and fixes $V^+$. Then $\sigma$ acts on  $\Delta_{0}$ by  $\Delta_{0}^\sigma(x)=\sigma(\Delta_{0}(\sigma^{-1}x))$. The action on characters of  $\overline{G}$ is similarly defined. The polynomial   $\Delta_{0}^\sigma$ is still irreducible. Let now  $\overline{G}$ act on $\Delta_{0}^{\sigma}$.

 Let $\chi_{0} $ be the character of $\Delta_{0}$ (it is a character of $\overline{G}$ for the moment). Let  $x\in \overline{V^+}$ and  $g\in \overline{G}$.  One has:

$\Delta_{0}^\sigma(g.x)=\sigma(\Delta_{0}(\sigma^{-1}(g.x)))=\sigma(\Delta_{0}(\sigma^{-1}(g).\sigma^{-1}(x)))=\sigma(\chi_{0}(\sigma^{-1}(g))\sigma(\Delta_{0}(\sigma^{-1}(x)))$

$=\chi_{0}^\sigma(g)\Delta_{0}^\sigma(x).$ Hence   $\Delta_{0}^\sigma$   is an irreducible relative invariant of $(\overline{G},\overline{V^+})$, with character $\chi_{0}^\sigma$.   As this representation  is irreducible (${\bf (H_{3})}$), there exists  $c_{\sigma}\in \overline{F}$ such that 
$$\Delta_{0}^\sigma=c_{\sigma}\Delta_{0}.$$

Let  $x_{0}$ be a generic element of $V^+$, this implies that  $\Delta_{0}(x_{0})\neq 0$. Define $\alpha=\frac{1}{\Delta_{0}(x_{0})}.$ Then $$c_{\sigma}=\frac{\Delta_{0}^\sigma(x_{0})}{\Delta_{0}(x_{0})}= \frac{\sigma(\Delta_{0}(x_{0}))}{\Delta_{0}(x_{0})}=\frac{\alpha}{\sigma(\alpha)}.$$
For  $x\in V^+$ one has:
$$\sigma(\alpha)\Delta_{0}^\sigma(x)=\sigma(\alpha)c_{\sigma}\Delta_{0}(x)=\sigma(\alpha)\frac{\alpha}{\sigma(\alpha)}\Delta_{0}(x)=\alpha\Delta_{0}(x).$$
As $x\in V^+$, this can also be written:
$$\forall \sigma\in {\cal G},\,\,\sigma(\alpha\Delta_{0}(x))=\alpha\Delta_{0}(x)$$

One knows that the fixed points of  ${\cal G}$ in $\overline{F}$ are exactly the points in $F$. Hence for all  $x\in V^+$,  one has $\alpha\Delta_{0}(x)\in V^+$.

The Lemma is proved.

From now on we will denote by  $\Delta_{0}$ the modified relative invariant of Lemma \ref{lem-invariantdefinisurF} which takes it values in  $F$ on $V^+$.

Let  $P$ be a relative invariant of  $(G,V^+)$. Its extension  $\overline{P}$ to $\overline{V^+}$ is a relative invariant of  $(\overline{G}, \overline{V^+})$ (use the density of $G$ in $\overline{G}$ and of $V^+$ in  $\overline{V^+}$). Hence it exists  $a\in \overline{F}$ and  $m\in \Z$ such that  $\overline{P}=a\Delta_{0}^m$. As  $P(x)=a\Delta_{0}(x)^m$ for a generic  point $x$ in  $V^+$ and as  $\Delta_{0}(V^+)\subset F$, one get that $a\in F$. 

Hence assertions  $1) $ and  $2)$ are proved.

 One knows that  $X\in V^+$ is generic if and only if there exists an $\go{sl}_{2}$-triple $(Y,H_{0},X)$ ($Y\in V^-$) (Proposition \ref{prop-generiques-cas-regulier}). In that case $P(X)\neq0$ where $P$ is the relative invariant    defined at the beginning of the proof. Hence $\Delta_{0}(X)\neq0$. Conversely if $\Delta_{0}(X)\neq0$ for $X\in V^+$, then  $P(X)\neq0$, and therefore  $(\ad X)^2:V^-\longrightarrow V^+$ is an isomorphism. Hence $V^+={\rm Im}(\ad X_{|_{\go g}})$, and  $X$ is generic.
 
Assertion $3)$ is now proved.

\end{proof}
\vskip 5pt

    \vskip 20pt
 \subsection{The case $k=0$}\label{subsectionk=0}\hfill
  \vskip 10pt
  
  Recall that the case  $k=0$ corresponds to the graded algebra $$\widetilde{ {\go l}}_0={\widetilde{\go{g}}^{-\lambda_{0}}}\oplus[{\widetilde{\go{g}}^{-\lambda_{0}}},{\widetilde{\go{g}}^{\lambda_{0}}}]\oplus {\widetilde{\go{g}}^{\lambda_{0}}}.$$
  
 This algebra is an absolutely simple algebra of split rank  $1$ and it satisfies hypothesis  ${\bf (H_{1})}$ and ${\bf (H_{2})}$ (Proposition \ref{ell-0-simple}).  As this algebra is graded by  $H_{\lambda_{0}}$   and as there exist $X\in {\widetilde{\go{g}}^{\lambda_{0}}}$ and  $Y\in {\widetilde{\go{g}}^{-\lambda_{0}}}$ such that $(Y,H_{\lambda_{0}}, X)$ is an  $\go{sl}_{2}$-triple (\cite{Seligman}, Corollaire du Lemme 6, p.6, or  \cite{Schoeneberg}, Proposition 3.1.9 p.23), this algebra  $\widetilde{ {\go l}}_0$ satisfies also  ${\bf (H_{3})}$.

  \begin{lemme}\label{lemme-diagrammes-k=0}\hfill
  
 The absolutely simple Lie algebras of split rank  $1$ graded by  $H_{\lambda}$ ($\lambda$ being the unique restricted root) and which satisfy ${\bf (H_{1})},{\bf (H_  {2})}$ and  ${\bf (H_{3})}$ (this last condition is automatically satisfied) have the following  Satake-Tits diagrams $(d\in \N^*)$:
  \vskip 15pt
  
\hskip 70pt \hbox{\unitlength=0.5pt
\begin{picture}(280,30)
\put(-80,0) {$A_{2\delta-1}$}
  \put(10,10){\circle*{10}}
\put(15,10){\line (1,0){30}}
\put(50,10){\circle*{10}}
\put(60,10){\circle*{1}}
\put(65,10){\circle*{1}}
\put(70,10){\circle*{1}}
\put(75,10){\circle*{1}}
\put(80,10){\circle*{1}}
\put(90,10){\circle*{10}}
\put(95,10){\line (1,0){30}}
\put(130,10){\circle{10}}
\put(130,10){\circle{16}}
\put(135,10){\line (1,0){30}}
\put(170,10){\circle*{10}}
\put(180,10){\circle*{1}}
\put(185,10){\circle*{1}}
\put(190,10){\circle*{1}}
\put(195,10){\circle*{1}}
\put(200,10){\circle*{1}}
\put(210,10){\circle*{10}}
\put(215,10){\line (1,0){30}}
\put(250,10){\circle*{10}}
 \put(30,-20){$\delta-1$}  \put(180,-20){$\delta-1$} 
 \put(395,0){$B_{2}=C_{2}$}
 \put(505,10){\circle{10}}
 \put(505,10){\circle{16}}
\put(508,12){\line (1,0){41}}
\put(508,8){\line (1,0){41}}
\put(520,5){$>$}
\put(550,10){\circle*{10}}

\end{picture}
} 
\end{lemme}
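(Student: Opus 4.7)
The strategy is to translate the three hypotheses into combinatorial constraints on the Satake-Tits diagram and then invoke Tits' classification of absolutely simple $F$-groups of $F$-rank $1$ over a $p$-adic field.

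Proposition \ref{ell-0-simple} already guarantees that $\widetilde{\go{l}}_0$ is absolutely simple of $F$-split rank $1$, graded by $H_{\lambda_0}$, and $({\bf H_1})$ forces the eigenvalues of $\ad H_{\lambda_0}$ to lie in $\{-2,0,2\}$; in particular $2\lambda_0 \notin \widetilde{\Sigma}$, so the restricted root system $\widetilde{\Sigma}=\{\pm\lambda_0\}$ is reduced of type $A_1$. By Proposition \ref{prop.alpha0}, there exists a unique simple absolute root $\alpha_0\in\widetilde{\Psi}$ with $\rho(\alpha_0)=\lambda_0$, while every other simple root of $\widetilde{\Psi}$ restricts to $0$ and hence belongs to the anisotropic kernel. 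In Satake-Tits language this says that the diagram carries exactly one ``white'' vertex, namely $\alpha_0$, and that the Galois $*$-action must fix it (Galois arrows can only pair white vertices among themselves).

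Next I would translate $({\bf H_2})$ and the three-level eigenvalue condition into a numerical constraint on $\alpha_0$: Theorem \ref{thbasepi}(3), read over $\overline F$, forces $\alpha_0$ to occur with coefficient exactly $1$ in every positive absolute root lying above $\lambda_0$, in particular in the highest root of the (connected) Dynkin diagram $\widetilde{\Psi}$. Going through the list of highest roots by type, the admissible pairs $(\widetilde{\Psi},\alpha_0)$ are then: any vertex of $A_n$; the short end of $B_n$; the long end of $C_n$; one of the three terminal vertices of $D_n$; an end vertex of $E_6$; or the distinguished terminal vertex of $E_7$. The types $E_8$, $F_4$, $G_2$ are excluded outright since no simple root appears with coefficient $1$ in their highest root.

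The heart of the argument is the enumeration of absolutely simple $F$-groups of $F$-rank $1$ over a $p$-adic field of residue characteristic $\neq 2$ whose Satake-Tits diagram is one of the above candidates. I would invoke Tits' classification tables (\textit{Boulder}, 1966), together with two $p$-adic inputs: every quadratic form in $\geq 5$ variables over $F$ is isotropic (Hasse--Minkowski), and the only anisotropic absolutely simple $F$-groups of classical type are norm-one groups of central division algebras, essentially quaternion algebras for the small dimensions at play. For type $A_{n-1}$, the inner forms ${\rm SL}_m(D)$ with $D$ of index $\delta$ have $F$-rank $m-1$, so $m=2$ produces the Satake-Tits diagram $A_{2\delta-1}$ with the $\delta$-th vertex circled, i.e. the first family. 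Outer forms ${}^2A_{n-1}$ of $F$-rank $1$ yield restricted root system $BC_1$ and are eliminated by the already-established reduced $A_1$ condition. For type $B_n$ with the short end circled, the $F$-rank $1$ form is the spin group of a quadratic form of Witt index $1$ on $2n+1$ variables; Hasse--Minkowski forces the anisotropic kernel to involve at most $4$ variables, hence $n\leq 2$, and $n=2$ produces precisely the second diagram, with anisotropic kernel ${\rm SL}_1(D)$ for $D$ a quaternion division algebra. The remaining cases (long end of $C_n$ for $n\geq 3$, and the distinguished ends of $D_n$, $E_6$, $E_7$) would demand an anisotropic absolutely simple $F$-group of type $C_{n-1}$, $D_{n-1}$, $D_5$, or $E_6$ respectively, none of which exists over a $p$-adic field.

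The main obstacle is the last step: although each individual candidate is disposed of without difficulty, carrying out the full enumeration requires Tits' tables combined with the extremely short list of anisotropic absolutely simple $F$-forms, and it is precisely the hypothesis that the residue characteristic is $\neq 2$ that makes the classifications of $p$-adic quadratic forms and of $F$-division algebras with involution behave uniformly enough for this bookkeeping to close.
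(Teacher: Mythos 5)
Your overall strategy is the same as the paper's: translate $({\bf H_1})$, $({\bf H_2})$ into the combinatorial constraints that the diagram has a single white vertex, not tied by an arrow to another white vertex, occurring with coefficient $1$ in the highest root, and then run through Tits' classification of $F$-rank $1$ absolutely simple groups over a $p$-adic field (the paper does exactly this, citing the relevant propositions of Schoeneberg in place of a case-by-case discussion). Your conclusion and the two surviving diagrams are correct, and the $A$- and $B$-cases are handled essentially as in the paper (apart from a labelling slip: the coefficient-$1$ end of $B_n$ is the end \emph{away} from the double bond, i.e.\ a long root, not the short end, which has coefficient $2$; the group you actually analyse, the spin group of a Witt-index-$1$ form in $2n+1$ variables, is nevertheless the right one).

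There is, however, a genuine defect in the elimination of the remaining candidates. For $C_n$ with the long end $\alpha_n$ circled, deleting $\alpha_n$ leaves a subdiagram of type $A_{n-1}$, not $C_{n-1}$, and anisotropic inner forms of type $A_{n-1}$ (the groups $SL_1(D)$) do exist over $p$-adic fields, so your stated obstruction simply does not apply; the correct reason is that for any non-split form of type $C_n$ the underlying algebra is $M_m(D)$ with $D$ quaternion and a symplectic involution, so isotropic ideals have even reduced dimension and the distinguished vertices sit at $\alpha_2,\alpha_4,\dots$ --- whence a lone white $\alpha_n$ is impossible for $n\geq 3$. This is precisely the kind of information one must read off Tits' tables (or Schoeneberg's Proposition 5.4.5, which the paper cites), not a consequence of the non-existence of anisotropic kernels. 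The same problem affects type $D_n$: your argument only treats the vertex $\alpha_1$, and even there only for $n\geq 5$ (for $n=4$ removing $\alpha_1$ leaves $A_3$, which does admit an anisotropic form $SL_1(D)$ with $\deg D=4$); the two fork tips also have coefficient $1$ and their removal leaves $A_{n-1}$, so they too escape your criterion. One again needs the tables: over a $p$-adic field a quadratic form in $2n\geq 8$ variables has Witt index $\geq 2$, and the quaternionic forms of type $D_n$ have their distinguished vertices at even positions, so no $F$-rank $1$ form of $D_n$ has a lone white end vertex (trialitarian $D_4$ being excluded already by your single-white-vertex condition). With these cases argued from the tables rather than from the non-existence of anisotropic kernels, the proof closes and coincides with the paper's.
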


   \begin{proof}
   
   The proof is a consequence of a careful reading of the tables of Tits  (\cite{Tits}). It can also be  extracted from the recent work of T. Schoeneberg (\cite{Schoeneberg}). For the convenience of the reader, we give some guidelines in connection with this last paper:

 - The fact that there exist no diagram of  type $G_{2}$, $F_{4}$, $E_{6} \text{ (inner forms)}$, $E_{6} \text{ (outer forms)}$, $E_{7}$, $E_{8}$, $D_{4}$ (with {\it trialitarian} action of the Galois group  $\overline{F}/F$) is a consequence of, respectively: Proposition 5.5.1 p.116,   Proposition 5.5.3 p.118,   Proposition 5.5.4 p.118,   Proposition 5.5.13 p.134,   Proposition 5.5.4 p.122,   Proposition 5.5.7 p.126, and of  Proposition 5.5.8 p.127. 
 
 - The case $A_{n}$ is a consequence of  Proposition 4.5.21 p. 93 (inner forms) and of pages 112-113 (outer forms).
 
 - The case $B_{n}$ is a consequence of Proposition 5.4.4 p. 113.
 
 - The case  $C_{n}$ is a consequence of Proposition 5.4.5 p. 113.
 
 - The case $D_{n}$ ({\it  non trialitarian})  is a consequence of  Proposition 5.4.6 p. 114 and of the fact that the diagrams on p. 115-116 do not occur.
 
 One can note that ${\bf (H_{1})}$ excludes  the diagrams of split rank  $1$ where the the unique white root has a coefficient  $>1$ in the highest root of the underlying Dynkin diagram, and those which have two white roots connected by an arrow.

   \end{proof} 
   
   \begin{cor}\label{cor-classification-k=0}\hfill
   
   The graded algebras  $$\widetilde{ {\go l}}_0={\widetilde{\go{g}}^{-\lambda_{0}}}\oplus[{\widetilde{\go{g}}^{-\lambda_{0}}},{\widetilde{\go{g}}^{\lambda_{0}}}]\oplus {\widetilde{\go{g}}^{\lambda_{0}}}$$
  are either isomorphic to $\go{sl}_{2}(D)$ where  $D$ is a central division algebra over $F$, of degree  $\delta$, or isomorphic to  $\go{o}(q,5)$ where  $q$ is a non degenerate quadratic form on  $F^5$ which is the direct sum of an hyperbolic plane and an anisotropic form of dimension $3$ (in other word a form of index  $1$). 
   
   \end{cor}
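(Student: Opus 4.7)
The strategy is to combine Lemma \ref{lemme-diagrammes-k=0} with the standard classification of absolutely simple $F$-forms of types $A_n$ and $B_n/C_n$. The lemma reduces us to exactly two families of Satake--Tits diagrams: a ``doubled type'' diagram of type $A_{2\delta-1}$ in which only the middle node (the $\delta$-th) is white (and circled), and the split diagram of type $B_2=C_2$ of split rank one.

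For the first case, one invokes the classical description of absolutely simple inner $F$-forms of type $A_{n-1}$: they are precisely the algebras $\go{sl}_m(D)$, where $D$ is a central division (or simple) algebra over $F$ of degree $\delta$ with $n=m\delta$, and the black nodes of the Satake--Tits diagram are grouped into $m$ consecutive blocks of $\delta-1$ black nodes separated by $m-1$ white nodes (see e.g.\ the tables in Tits \cite{Tits} or Proposition 4.5.21 of \cite{Schoeneberg}). The configuration of Lemma \ref{lemme-diagrammes-k=0} has a single white node with $\delta-1$ black nodes on each side; this forces $m=2$ and shows $\widetilde{\go{l}}_0\simeq \go{sl}_2(D)$ with $D$ of degree $\delta$. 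One then checks that the grading element $H_{\lambda_0}$ corresponds, up to conjugation, to the matrix $\mathrm{diag}(1,-1)$ in $\go{sl}_2(D)$, so that $\widetilde{\go{g}}^{\pm\lambda_0}\simeq D$ as a module over $[\widetilde{\go{g}}^{-\lambda_0},\widetilde{\go{g}}^{\lambda_0}]$ (consistent with Proposition \ref{ell-0-simple}).

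For the second case, a rank-one split index of type $B_2=C_2$ must be identified with $\go{o}(q,5)$ for a non-degenerate quadratic form $q$ on $F^5$ of Witt index $1$. Under the isomorphism $B_2\simeq C_2$ at the root-system level, the group corresponds to the special orthogonal group of a $5$-dimensional quadratic space; the number of circled (white) roots is exactly the Witt index of $q$, and the existence of a unique anisotropic $3$-dimensional complement is then forced by the diagram (one white circled root, one black root corresponding to the anisotropic kernel). This decomposition of $q$ as the orthogonal sum of a hyperbolic plane and an anisotropic form of dimension $3$ follows from the classification of quadratic forms over a $p$-adic field (where anisotropic forms have dimension at most $4$, and here dimension $3$ is forced by the Satake--Tits diagram).

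The main obstacle is simply keeping the bookkeeping straight: one must translate the combinatorial data of the Satake--Tits diagram into the correct classical algebra and verify that the $\Z$-grading defined by $H_{\lambda_0}$ coincides with the natural grading on $\go{sl}_2(D)$ or on $\go{o}(q,5)$. In both cases this is routine because the split rank is $1$ and the grading element is uniquely determined (up to the action of the Weyl group) by the condition that $\lambda_0(H_{\lambda_0})=2$. No genuinely new computation is needed beyond invoking the cited classifications.
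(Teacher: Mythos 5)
Your proposal is correct and follows essentially the same route as the paper: the paper's proof of this corollary consists precisely in citing the tables of Tits and Schoeneberg to identify the two diagrams of Lemma \ref{lemme-diagrammes-k=0} with $\go{sl}_{2}(D)$ (inner form of type $A_{2\delta-1}$, the single white node forcing $m=2$ blocks) and with $\go{o}(q,5)$, $q$ of Witt index $1$. Your extra remarks on the grading element and the Witt-index reading of the $B_2$ diagram are harmless elaborations of the same argument.
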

   
   \begin{proof}
   
   These are the only Lie algebras  over $F$ whose Satake-Tits diagram is of the type given in the preceding Lemma (\cite{Tits}, \cite{Schoeneberg}).
   
   \end{proof}
   
  \begin{definition}\label{def-1-type}
  By  \ref{prop-memedimension},  all the algebras  $\widetilde{\go{l}}_{i}$ are isomorphic  either  to

  \hskip 40pt \hbox{\unitlength=0.5pt
\begin{picture}(280,30)
\put(-80,0) {$A_{2\delta-1}$}
  \put(10,10){\circle*{10}}
\put(15,10){\line (1,0){30}}
\put(50,10){\circle*{10}}
\put(60,10){\circle*{1}}
\put(65,10){\circle*{1}}
\put(70,10){\circle*{1}}
\put(75,10){\circle*{1}}
\put(80,10){\circle*{1}}
\put(90,10){\circle*{10}}
\put(95,10){\line (1,0){30}}
\put(130,10){\circle{10}}
\put(130,10){\circle{16}}
\put(135,10){\line (1,0){30}}
\put(170,10){\circle*{10}}
\put(180,10){\circle*{1}}
\put(185,10){\circle*{1}}
\put(190,10){\circle*{1}}
\put(195,10){\circle*{1}}
\put(200,10){\circle*{1}}
\put(210,10){\circle*{10}}
\put(215,10){\line (1,0){30}}
\put(250,10){\circle*{10}}
 \put(30,-20){$\delta-1$}  \put(180,-20){$\delta-1$} 
 \end{picture}
} or to 
\hbox{\unitlength=0.5pt
\begin{picture}(280,30)
 \put(20,0){$B_{2}$}
 \put(90,10){\circle{10}}
 \put(90,10){\circle{16}}
\put(93,12){\line (1,0){41}}
\put(93,8){\line (1,0){41}}
\put(105,5){$>$}
\put(135,10){\circle*{10}}

\end{picture}
}
\vskip 2pt 
In the first case we will say that $\widetilde{\go g}$  is of $1$-type $A$ $($or $(A, \delta)$ to be more precise$)$, in the second case  we wil say that  $\widetilde{\go g}$ is of  $1$-type $B$.

   \end{definition}
   
   \vskip 5pt
   
   \begin{theorem}\label{th-k=0}\hfill
   
   $1)$ If $\widetilde{ {\go l}}_0=\go{sl}_{2}(D)$ where  $D$ is a central division algebra  over $F$, of degree  $\delta$, the group $G$   is the group of isomorphisms  of  $ \go{sl}_{2}(D)$ of the form  
   $$\begin{array}{rll}
  \go{sl}_{2}(D)\ni X=\begin{pmatrix} a&x\\
   y&b\\
   \end{pmatrix}&\longmapsto&\begin{pmatrix} u&0\\
   0&v\\
   \end{pmatrix} \begin{pmatrix} a&x\\
   y&b\\
   \end{pmatrix}\begin{pmatrix} u^{-1}&0\\
   0&v^{-1}\\
   \end{pmatrix}
   \end{array}
$$

where $a,b,x,y \in D$, with  $\tr_{D/F}(a+b)=0$ ($\tr_{D/F}$ is the reduced trace), and where  $u,v\in D^*$.

Therefore the action of  $G$ on $V^+\simeq D$ can be identified with the action of  $D^*\times D^*$ on  $D$ given by
$$(u,v).x=uxv^{-1},$$
 the group  $G$ being isomorphic to  $(D^*\times D^*)/H$ where $H=\{(\lambda,\lambda), \,\lambda\in F^*\}$.

 Hence there are two orbits: $\{0\}$ and $D^*$,   and the fundamental relative invariant is the reduced norm $\nu_{D/F}$ of $D$ over $F$. Its degree is $\delta$.

   $2)$ If    $\widetilde{ {\go l}}_0=\go{o}(q,5)$ where $q$ is a non degenerate quadratic form over  $F^5$ which is the sum of an hyperbolic plane  and an anisotropic  form   $Q$ of dimension  $3$, then the group  $G$ can be identified  with the group  $SO(Q)\times F^*$ acting by the natural action on  $F^3$.  The fundamental relative invariant is then  $Q$. There are four orbits, namely  $\{0\}$ and three open orbits  which are the sets  ${\cal O}_{i}=\{x\in F^3, Q(x)\in u_{i}\}$, ($i=1,2,3$), where  $u_{i}$ runs over the three classes modulo $F^{*^2} $  distinct from $-d(Q)$ ($d(Q)$ being the  discriminant of  $Q$).
   \end{theorem}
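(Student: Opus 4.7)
The proof splits along the dichotomy of Corollary~\ref{cor-classification-k=0}, and in both cases I follow the same four-step pattern. First, realize the grading explicitly (as $2\times 2$ matrices over $D$ in the first case, as isometries of $q$ in the second) so that $V^{+}$, the element $H_{0}=H_{\lambda_{0}}$ and the adjoint action all become concrete. Second, use the fact that over $\overline{F}$ the group $\text{Aut}_{0}$ coincides with the connected component of the adjoint group, in order to describe $\text{Aut}_{0}(\widetilde{\go{l}}_{0})$ as an $F$-form and to compute the centralizer of $H_{0}$ in it. Third, transport the resulting action to $V^{+}$, exhibit a candidate fundamental relative invariant, and use Theorem~\ref{thexistedelta_0} to identify it with $\Delta_{0}$ up to scalar. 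Fourth, classify the orbits by a transitivity argument on the fibres of this invariant.

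\textbf{Case $\widetilde{\go{l}}_{0}=\go{sl}_{2}(D)$.} Take $H_{0}=\mathrm{diag}(1,-1)$, so that $V^{+}$ consists of the strictly upper triangular matrices and identifies with $D$ via $\bigl(\begin{smallmatrix}0 & x\\ 0 & 0\end{smallmatrix}\bigr)\mapsto x$. Scalar extension gives $\go{sl}_{2}(D)\otimes_{F}\overline{F}\simeq\go{sl}_{2\delta}(\overline{F})$, hence $\text{Aut}_{0}(\go{sl}_{2}(D))$ is the $F$-form $PGL_{2}(D)=GL_{2}(D)/F^{*}$ of $PGL_{2\delta}(\overline{F})$, acting by conjugation. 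The centralizer of $H_{0}$ is the image of the diagonal subgroup $\{\mathrm{diag}(u,v):u,v\in D^{*}\}$, and its kernel is exactly $H=\{(\lambda,\lambda):\lambda\in F^{*}\}$ since $D$ has centre $F$. A direct computation gives $\mathrm{diag}(u,v)\cdot\bigl(\begin{smallmatrix}0 & x\\ 0 & 0\end{smallmatrix}\bigr)\cdot\mathrm{diag}(u^{-1},v^{-1})=\bigl(\begin{smallmatrix}0 & uxv^{-1}\\ 0 & 0\end{smallmatrix}\bigr)$, so the action on $V^{+}\simeq D$ reads $(u,v)\cdot x=uxv^{-1}$. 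Because $D$ is a division algebra, $(1,x)$ sends any $x\neq 0$ to $1$, so there are exactly two orbits $\{0\}$ and $D^{*}$. The reduced norm $\nu_{D/F}$ satisfies $\nu_{D/F}(uxv^{-1})=\nu_{D/F}(u)\nu_{D/F}(v)^{-1}\nu_{D/F}(x)$, hence is a non-constant relative invariant; pulled back to $M_{\delta}(\overline{F})$ it becomes the determinant, which is absolutely irreducible of degree $\delta$, and Theorem~\ref{thexistedelta_0} identifies it with $\Delta_{0}$.

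\textbf{Case $\widetilde{\go{l}}_{0}=\go{o}(q,5)$.} Realize $\go{o}(q,5)$ on $V=Fe\oplus W\oplus Ff$ with $(e,f)$ a hyperbolic pair, $Q=q|_{W}$ anisotropic ternary, and $H_{0}$ acting as $2,0,-2$ on the three summands. Antisymmetry with respect to $q$ forces any $X\in V^{+}$ to satisfy $Xe=0$, $Xf=w\in W$ and $Xw'=-B_{Q}(w',w)\,e$, where $B_{Q}$ is the polar form of $Q$; hence $X\mapsto Xf$ yields an isomorphism $V^{+}\simeq W$, and similarly $V^{-}\simeq W$. Since the Dynkin diagram of $B_{2}$ has no nontrivial automorphisms, $\text{Aut}_{0}(\go{o}(q,5))$ is the image of $SO(q)$ acting by conjugation. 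Its centralizer of $H_{0}$ consists of isometries preserving the grading, hence of block matrices $\varphi=\mathrm{diag}(\alpha,g,\alpha^{-1})$ with $\alpha\in F^{*}$ and $g\in SO(Q)$, and a direct check shows the induced action on $W$ is faithful, so $G\simeq SO(Q)\times F^{*}$. Transporting to $V^{+}\simeq W$, conjugation by $\varphi$ sends $X\leftrightarrow w$ to $X'\leftrightarrow\alpha g(w)$, so the action of $G$ reads $(g,\alpha)\cdot w=\alpha g(w)$. The identity $Q(\alpha g(w))=\alpha^{2}Q(w)$ exhibits $Q$ as a relative invariant; being an irreducible quadratic, $Q=\Delta_{0}$ up to scalar by Theorem~\ref{thexistedelta_0}. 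By Witt's theorem applied to $(W,Q)$, $SO(Q)$ acts transitively on $\{w\in W:Q(w)=c\}$ for every $c\in Q(W\setminus\{0\})$: an $O(Q)$-element can be corrected to $SO(Q)$ by composing with a reflection in a vector of $w^{\perp}$, which is $2$-dimensional anisotropic and hence contains reflections. Combined with $F^{*}$-scaling this shows the orbits on $W\setminus\{0\}$ are indexed by $Q(W\setminus\{0\})/F^{*2}$. The last input is the standard $p$-adic fact that an anisotropic ternary form $Q$ represents every class of $F^{*}/F^{*2}$ except $-d(Q)\cdot F^{*2}$; since $|F^{*}/F^{*2}|=4$ this yields the three open orbits $\mathcal{O}_{i}$.

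\textbf{Main obstacles.} The only genuinely non-formal points are (i) that the inner automorphisms exhibited from $GL_{2}(D)$ and from $SO(q)$ really exhaust $\text{Aut}_{0}$ over $F$, which uses the absence of diagram automorphisms for types $A_{1}$ and $B_{2}$ together with the classification of $F$-forms of adjoint groups of type $A_{2\delta-1}$; and (ii) the arithmetic classification of the values represented by an anisotropic ternary $p$-adic form, which follows from the isotropy of every $5$-dimensional $p$-adic quadratic form together with the uniqueness of the $4$-dimensional $p$-adic anisotropic form (the reduced norm of the quaternion division algebra, of discriminant $1$). Everything else reduces to bracket computations using the explicit realizations, kernel checks, and classical Witt-type transitivity.
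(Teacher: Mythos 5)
Your proof is correct, and its skeleton is the same as the paper's (explicit realization, computation of $G$ as the centralizer of $H_{0}$ among automorphisms that are inner over $\overline{F}$, identification of the fundamental relative invariant, orbit classification via division in $D$ resp.\ Witt transitivity plus the representation theorem for anisotropic ternary forms), but two steps take a genuinely different route. In case 1 the paper never appeals to the classification of $F$-forms: it extends $g\in G$ to $\overline{F}$, writes it as conjugation by some $U\in GL_{2\delta}(\overline{F})$ (Bourbaki), uses stability of $\overline{V^{\pm}}$ to force $U$ block-diagonal, and then applies Skolem--Noether blockwise to $D$; your shortcut $\text{Aut}_{0}(\go{sl}_{2}(D))=GL_{2}(D)/F^{*}$ is true, but the justification you sketch (``absence of diagram automorphisms for type $A_{1}$'') is off target: the absolute type is $A_{2\delta-1}$, which does carry a diagram automorphism for $\delta\geq 2$, and what actually makes your claim work is either Hilbert 90 applied to the centre $\mathbb{G}_m$ of $GL_{2}(D)$, or Skolem--Noether applied to the associative hull $M_{2}(D)=F\cdot 1\oplus\go{sl}_{2}(D)$ (outer automorphisms are excluded by the very definition of $\text{Aut}_{0}$, not by the diagram). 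In case 2 the paper constructs an explicit quaternionic model of $\go{o}(q,5)$, computes $\overline{W}$, $\overline{q}$ and proves two matrix lemmas characterizing ${\cal Z}(H_{0})$ and its action on $V^{+}$; your abstract model $Fe\oplus W\oplus Ff$ with $H_{0}$ of weights $2,0,-2$ reaches the same conclusion $G\simeq SO(Q)\times F^{*}$ acting by $(g,\alpha)\cdot w=\alpha g(w)$ more economically, and it is legitimate to start there since the isomorphism type of $\widetilde{\go{l}}_{0}$ is furnished by Corollary \ref{cor-classification-k=0}; the triviality of the centre of $SO(q)$ in odd dimension is what gives $\text{Aut}_{0}=SO(q)(F)$ directly, replacing the paper's Lemma \ref{lemme-caracterisation-Z(H)}. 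The orbit arguments agree up to cosmetics (you pass from $O(Q)$ to $SO(Q)$ by a reflection fixing the target vector, the paper uses $-\text{Id}$ in dimension $3$; both rest on the same facts from Lam).
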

   
   \begin{proof} 1) Let us make explicit the structure of   $\go{sl}_{2}(D)$. For the material below, see \cite{Schoeneberg}, p. 93.
   
   It is well known that  
   $$ \go{sl}_{2}(D)=\{\begin{pmatrix} a&x\\
   y&b\\
   \end{pmatrix}, \text{ where } a,b,x,y \in D,\text{ with } \tr_{D/F}(a+b)=0\}.$$
  A maximal split abelian subalgebra is given by:
   $$\go{a}=\{\begin{pmatrix} t&0\\
   0&-t\\
   \end{pmatrix},\,  t\in F\}.$$
   It is easy to see that  
   $$\go{m}={\cal Z}_{\go{sl}_{2}(D)}(\go{a})=\{\begin{pmatrix} a&0\\
   0&b\\
   \end{pmatrix}, a,b \in D,  \tr_{D/F}(a+b)=0\}=\go{a}\oplus \begin{pmatrix} [D,D]&0\\
   0&[D,D]\\
   \end{pmatrix}$$
   (recall that  $[D,D]=\ker  (\tr_{D/F})$, by \cite{Bou4}, \S17, $n^\circ3$, Corollaire of  proposition 5, p. A VIII.337).
   
   Therefore the anisotropic kernel of  $\go{sl}_{2}(D)$ is given by 
   $$[\go{m}, \go{m}]=[{\cal Z}_{\go{sl}_{2}(D)}(\go{a}),{\cal Z}_{\go{sl}_{2}(D)}(\go{a})]=\begin{pmatrix} [D,D]&0\\
   0&[D,D]\\
   \end{pmatrix}\simeq [D,D]\oplus [D,D].$$
   
  Its  Satake-Tits diagram is 
   
   \vskip 15pt

   \hskip 200pt \hbox{\unitlength=0.5pt
\begin{picture}(280,30)
\put(-150,0) {$A_{d-1}\times A_{d-1}$}
  \put(10,10){\circle*{10}}
\put(15,10){\line (1,0){30}}
\put(50,10){\circle*{10}}
\put(60,10){\circle*{1}}
\put(65,10){\circle*{1}}
\put(70,10){\circle*{1}}
\put(75,10){\circle*{1}}
\put(80,10){\circle*{1}}
\put(90,10){\circle*{10}}
\put(170,10){\circle*{10}}
\put(180,10){\circle*{1}}
\put(185,10){\circle*{1}}
\put(190,10){\circle*{1}}
\put(195,10){\circle*{1}}
\put(200,10){\circle*{1}}
\put(210,10){\circle*{10}}
\put(215,10){\line (1,0){30}}
\put(250,10){\circle*{10}}
 \put(30,-20){$\delta-1$}  \put(180,-20){$\delta-1$}

\end{picture}
} 
\vskip 30pt

  The grading of  $\widetilde{\go g}=\go{sl}_{2}(D)$ is then defined by  $H_{0}=\begin{pmatrix} 1&0\\
   0&-1\\
   \end{pmatrix}$, and this implies that  $\widetilde{\go g}=\go{sl}_{2}(D)= V^-\oplus {\mathfrak g}\oplus V^+$ where 
   $$V^-= \{\begin{pmatrix} 0&0\\
   y&0\\
   \end{pmatrix}, \,y\in D\}\simeq D,\quad V^+= \{\begin{pmatrix} 0&x\\
   0&0\\
   \end{pmatrix}, \,x\in D\}\simeq D$$
  and 
   $$\go{g}=\{\begin{pmatrix} a&0\\
   0&b\\
   \end{pmatrix}, a,b\in D, \tr_{D/F}(a+b)=0\}.$$
   
   We will now determine the group  $G= {\cal Z}_{\text{Aut}_{0}(\go{sl}_{2}(D))}(H_{0})=\{g\in \text{Aut}_{0}(\go{sl}_{2}(D))\,,\, g.H_{0}=H_{0}\}$ and his action on  $V^+\simeq D$. \\
    Let  $g\in G$. As $g.H_{0}=H_{0}$, one get  $g.V^{-}\subset V^{-},\, g.V^{+}\subset V^{+}, g.\go{g}\subset\go{g}$.
Let  $\overline{g}$ be the natural  extension of  $g$ as an automorphism of  $\go{sl}_{2}(D) \otimes \overline{F}=\go{sl}_{2d}( \overline{F})$. From above, $\overline{g}$ stabilizes $\overline{V^{-}},\overline{V^{+}}$ and  $\overline{\go{g}}$. By \cite{Bou2} (Chap. VIII, \S 13, $n^\circ 1$, (VII), p.189), there exists  $U\in GL(2d,\overline{F})$   such that 
$\overline{g}.x=UxU^{-1}, \forall x\in \go{sl}_{2d}( \overline{F})$. Let us write  $U$ in  the  form 
$$U=\begin{pmatrix}\alpha&\beta\\
\gamma&\delta
\end{pmatrix}, \,\, \alpha,\beta,\gamma,\delta \in M_{d}( \overline{F}).$$

As $\overline{g}$ stabilizes $\overline{V^{+}}$, for all $x\in M_{d}(\overline{F})$, there exists  $x'\in M_{d}(\overline{F})$ such that 
$$\begin{pmatrix}\alpha&\beta\\
\gamma&\delta
\end{pmatrix}\begin{pmatrix} 0&x\\
   0&0\\
   \end{pmatrix}=\begin{pmatrix} 0&x'\\
   0&0\\
   \end{pmatrix}\begin{pmatrix}\alpha&\beta\\
\gamma&\delta
\end{pmatrix}$$
It follows that  $\gamma=0$. Similarly the invariance of  $\overline{V^{-}}$ implies that  $\beta=0$. Hence  $U= \begin{pmatrix}\alpha&0\\
0&\delta
\end{pmatrix}$, with  $\alpha,\delta \in GL_{d}(\overline{F})$. Let us now write down that the conjugation by  $U$ (that is, the action of  $\overline{g}$) stabilizes $\go{sl}_{2}(D)$. For all $a,b\in D$, the element 
$$ \begin{pmatrix}\alpha&0\\
0&\delta
\end{pmatrix} \begin{pmatrix}a&0\\
0&b
\end{pmatrix} \begin{pmatrix}\alpha^{-1}&0\\
0&\delta^{-1}
\end{pmatrix}= \begin{pmatrix}\alpha a \alpha^{-1}&0\\
0&\delta b \delta^{-1}
\end{pmatrix}$$
belongs to  $\go{sl}_{2}(D)$. Therefore the map  $a\longmapsto \alpha a \alpha^{-1}$ (resp. $b\longmapsto\delta b \delta^{-1}$ ) will be an  automorphism of the associative algebra  $D$. By the  Skolem-Noether Theorem (\cite{Blanchard} (Th\'eor\`eme III-4 p.70), \cite{Pierce} (12.6 p.230) there exists $u_0$ (resp. $v_0$) in $D^*$ such that  $\alpha a \alpha^{-1}=u_0 a u_0^{-1}$ for all  $a\in D$ (resp. $ \delta b \delta^{-1}=v_0 b v_0^{-1}$ for all  $b\in D$). Hence  $u_0^{-1}\alpha$ (resp. $v_0^{-1}\delta$) is an element of  $M_{d}(\overline{F})=\overline{D}$ which commutes with any element of  $D$, and hence it belongs to the center of  $M_{d}(\overline{F})=\overline{D}$. Therefore  $u_0^{-1}\alpha= \lambda.1 $ and $v_0^{-1}\delta= \mu.1 $ ($\lambda,\mu \in \overline{F}$), i.e. $\alpha=\lambda u_0$ and $\delta=\mu v_0$.  
  As  $\bar{g}$ stabilizes $V^+$ and $V^-$ and 
$$ \begin{pmatrix}\lambda u_0&0\\
0&\mu v_0
\end{pmatrix} \begin{pmatrix}a&x\\
y&b
\end{pmatrix} \begin{pmatrix}\lambda^{-1}u_0^{-1}&0\\
0&\mu^{-1} v_0^{-1}
\end{pmatrix}= \begin{pmatrix}u_0 a u_0^{-1}&\lambda \mu^{-1} u_0xv_0^{-1}\\
\lambda^{-1} \mu v_0yu_0^{-1}&v_0 b v_0^{-1}
\end{pmatrix},$$ 
we deduce that  $\lambda \mu^{-1} \in F^*$ and  $\overline{g}$ is the conjugation by  $V= \begin{pmatrix}  u&0\\
0&v
\end{pmatrix}$ with  $u=\lambda \mu^{-1}  u_0\in D^*$ and $v=v_0\in D^*$.

Therefore the assertions concerning the action and the orbits are clear. It is also clear that  the conjugation by $\begin{pmatrix}u&0\\
0&v
\end{pmatrix}$ induces the trivial automorphism if and only if $u=v=\lambda\in F^*$. This proves that $G= (D^*\times D^*)/H$. 

As $\nu_{D/F}$ is a polynomial on  $D$ which takes values in  $F$, it is also clear that  $P(\begin{pmatrix} 0&x\\
   0&0\\
   \end{pmatrix})=\nu_{D/F}(x)$ is a relative invariant. As the reduced norm is an irreducible polynomial  (\cite{Saltman}), this relative invariant is the fundamental one.
   
   \vskip 5pt
   
   2) We will now give a realization  \footnote{We thank Marcus Slupinski for having indicated this realization to us.} of the Lie algebra  (there is only one up to isomorphism) whose Satake-Tits diagram is
   
     \vskip 15pt
  
\hskip 70pt \hbox{\unitlength=0.5pt
\begin{picture}(280,30)
 
 \put(285,0){$B_{2}$}
 \put(355,10){\circle{10}}
\put(358,12){\line (1,0){41}}
\put(358,8){\line (1,0){41}}
\put(370,5){$>$}
\put(400,10){\circle*{10}}

\end{picture}
}.
\vskip 5pt 
For this, let  $D$  be a central division algebra of degree  $2$ over  $F$. There is only one such algebra  by \cite{Blanchard} (Corollaire V-2, p. 130), and of course $D= (\frac{\pi,u}{F})$ is the unique quaternion division algebra over  $F$ (\cite{Lam}, Th. 2.2. p.152). Recall that  $\pi$ is a uniformizer of  $F$ and  that $u$ is a unit  which is not a square. Let $a\longmapsto \overline{a}$ ($a\in D$), be the usual conjugation in a quaternion algebra. The derived Lie algebra  $[D,D]$ is then the space of pure quaternions, that is the set of  $a\in D$ such that  $\overline{a}=-a$.

Set 
$$\widetilde{ {\mathfrak g}}=\{X=\begin{pmatrix}a&b\\
c&-\overline{a}
\end{pmatrix}, a\in D, b,c\in [D,D]\}.$$
It is easy to verify that  $\widetilde{\go{g}}$ is a Lie algebra for the bracket $[X,X']=XX'-X'X$, $X,X'\in \widetilde{\go{g}}$. Set also 
$$\go{a}=\{\begin{pmatrix}t&0\\
0&-t
\end{pmatrix}, t\in F\}.$$
It is clear that  $\go{a}$ is a split torus in  $\widetilde{\go{g}}$. If  $\go{a}'$  is a split torus containing  $\go{a}$, then  $\go{a}'$ is contained in the eigenspace  for the eigenvalue  $0$ of  $\ad(\go{a})$, that is  $$\go{a}'\subset \{\begin{pmatrix}a&0\\
0&-\overline{a}
\end{pmatrix}, a\in D\}\simeq D.$$
As  $D=F.1\oplus [D,D]$, and as  $[D,D]$ is anisotropic (see for example \cite{Schoeneberg} Corollaire 4.4.3. p.78), we obtain that  $\go{a}'=\go{a}$, hence  $\go{a}$ is a split maximal torus in  $\widetilde{\go{g}}$. The Lie algebra  $\widetilde{\go{g}}$  is graded by  $H_{0}=\begin{pmatrix}1&0\\
0&-1
\end{pmatrix}$:

$$\widetilde{ {\mathfrak g}}=V^-\oplus {\mathfrak g}\oplus V^+ ,$$
where 
$$V^+= \{\begin{pmatrix}0&b\\
0&0
\end{pmatrix},b\in [D,D]\},$$
$$V^-= \{\begin{pmatrix}0&0\\
c&0
\end{pmatrix},c\in [D,D]\},$$
$$\go{g}= \{\begin{pmatrix}a&0\\
0&-\overline{a}
\end{pmatrix},a\in D\}\simeq D.$$
Note that  $[\go{g}, \go{g}]\simeq [D,D]$ is anisotropic of dimension  3.
Let us now show that  $\widetilde{ {\mathfrak g}}$ is  simple. If  $I$ is an ideal of  $\widetilde{ {\mathfrak g}}$, as $[\go{a},I]\subset I$,  one has:
$$I=(V^-\cap I)\oplus( \go{g}\cap I)\oplus (V^+\cap I).$$

If  $(V^+\cap I)\neq \{0\}$ and if  $b\in (V^+\cap I)\setminus \{0\}$ then the elements of the form $$[\begin{pmatrix}a&0\\
0&-\overline{a}
\end{pmatrix}, \begin{pmatrix}0&b\\
0&0
\end{pmatrix}]= \begin{pmatrix}0&ab+b\overline{a}\\
0&0
\end{pmatrix}= \begin{pmatrix}0&ab-\overline{ab}\\
0&0
\end{pmatrix}= \begin{pmatrix}0& 2 \text{Im}(ab)\\
0&0
\end{pmatrix}$$
run over  $V^+$ if  $a\in D$. An analogous statement is true if  $(V^-\cap I)\neq \{0\}$. If 
$( \go{g}\cap I)\neq \{0\}$, then $(V^-\cap I)\neq \{0\}$ and  $(V^+\cap I)\neq \{0\}$. On the other hand, if  $V^+\subset I$ and  $V^-\subset I$, then it is easy to see that  $\go{g}\subset I$. Finally we have shown that  $\widetilde{ {\mathfrak g}}$ has no non trivial ideal. Hence  $\widetilde{ {\mathfrak g}}$ is a simple Lie algebra of dimension $10$. Therefore  $\overline{\widetilde{\go{g}}}=\widetilde{\go{g}}\otimes_{F}\overline{F}$ is a semi-simple Lie algebra of  dimension $10$ over $\overline{F}$. There is only one such algebra, it is the orthogonal algebra $\go{o}(5,\overline{F})$ whose Dynkin diagram is  $B_{2}$. Therefore the algebra $\widetilde{ {\mathfrak g}}$, with the grading described before, is indeed the algebra whose Satake-Tits diagram is   

     \vskip 15pt
  
\hskip 70pt \hbox{\unitlength=0.5pt
\begin{picture}(280,30)
 
 \put(285,0){$B_{2}$}
 \put(355,10){\circle{10}}
\put(358,12){\line (1,0){41}}
\put(358,8){\line (1,0){41}}
\put(370,5){$>$}
\put(400,10){\circle*{10}}

\end{picture}
}.

(To avoid the preceding dimension argument, it is possible to compute explicitly $\overline{\widetilde{\go{g}}}$ by using the fact that  $\overline{F}$ is a splitting field of  $D$ and show  that 
$$\overline{\widetilde{\go{g}}}=\{\begin{pmatrix}A&B\\
C&-\tau(A)
\end{pmatrix}, A\in M_{2}(\overline{F}), B,C\in [M_{2}(\overline{F}),M_{2}(\overline{F})]=\go{sl}_{2}(\overline{F})\}$$
where the anti-involution  $\tau$ of  $M_{2}(\overline{F})$ is defined by  $\tau(\begin{pmatrix}\alpha&\beta\\
\gamma&\delta
\end{pmatrix})=\begin{pmatrix}\delta&-\beta\\
-\gamma&\alpha
\end{pmatrix}$. This algebra is  $\go{o}(5,\overline{F})$).

Remind that   $[D,D]=\text{Im}D=\{a\in D, \overline{a}=-a\}$. Set
$$W=\{\begin{pmatrix}h&\lambda\\
\mu&-h\\
\end{pmatrix}, \text{ where }h\in \text{Im}D=[D,D],\lambda,\mu\in F\}.$$

One sees easily that  $[\widetilde{\go{g}},W]\subset W$, where $[\,\,,\,\,]$ is the usual bracket of matrices. The corresponding representation is of  dimension 5. An easy  but a little tedious computation shows that the symmetric bilinear form  
$$\Psi(\begin{pmatrix}h&\lambda\\
\mu&-h\\
\end{pmatrix},\begin{pmatrix}h'&\lambda'\\
\mu'&-h'\\
\end{pmatrix})= -\frac{1}{2}(hh'+h'h+\lambda'\mu+\lambda \mu')$$
is invariant under the action of  $\widetilde{\go{g}}$. The form  $\Psi$ is the bilinear form  associated to the quadratic form  "determinant" $q(\begin{pmatrix}h&\lambda\\
\mu&-h\\
\end{pmatrix})= -h^2-\lambda\mu$. This form  is the direct sum of an hyperbolic plane and the anisotropic form $Q(h)=-h^2$ on  $[D,D]$. The space  $F^3$ in the statement of the Theorem  is therefore $[D,D]$ and the algebra  $\widetilde{\go{g}}$  is realized as the algebra  $\go{o}(q,5)$ as in the statement.

We need also to consider  $\overline{W}$. One has:

$$\overline{W}=\{\begin{pmatrix}U&x \text{Id}_{2}\\
y \text{Id}_{2}&-U\\
\end{pmatrix}, \text{ where }U\in [M_{2}(\overline{F}), M_{2}(\overline{F})]=\go{sl}_{2}(\overline{F}),x, y \in \overline{F}\}.$$

Let us denote by  $\overline{q}$, $\overline{Q}$ and $\overline{\Psi}$ the lifts    of $q,Q$ and  $\Psi$  to $\overline{W}$, respectively. 
These are given by  (remark that $UU'+U'U$ is a scalar matrix if $U,U'$ are in ${\go sl}_2(\bar{F})$): 
$$\overline{q}(\begin{pmatrix}U&x \text{Id}_{2}\\
y \text{Id}_{2}&-U\\
\end{pmatrix})= -U^2-xy$$
$$\overline{Q}(\begin{pmatrix}U&0\\
0&-U\\
\end{pmatrix})= -U^2$$
$$\overline{\Psi}(\begin{pmatrix}U&x \text{Id}_{2}\\
y \text{Id}_{2}&-U\\
\end{pmatrix}, \begin{pmatrix}U'&x' \text{Id}_{2}\\
y' \text{Id}_{2}&-U'\\
\end{pmatrix})= -\frac{1}{2}(UU'+U'U+x'y+xy')$$

\vskip 10pt

The Lie algebra  $\overline{\widetilde{\go{g}}}$ acts on  $\overline{W}$ by the adjoint action: $[\overline{\widetilde{\go{g}}},\overline{W}]\subset \overline{W}$. 
More explicitly a calculation shows that for  $\begin{pmatrix}A&B\\
C&-\tau(A)
\end{pmatrix}\in \overline{\widetilde{\go{g}}}$ and $\begin{pmatrix}U&x \text{Id}_{2}\\
y \text{Id}_{2}&-U\\
\end{pmatrix}\in \overline{W}$ one has

$$[\begin{pmatrix}A&B\\
C&-\tau(A)
\end{pmatrix},\begin{pmatrix}U&x \text{Id}_{2}\\
y \text{Id}_{2}&-U\\
\end{pmatrix}]=\begin{pmatrix}[A,U]+yB-xC&-(UB+BU)+x(A+\tau(A))\\
CU+UC-y(A+\tau(A))&[\tau(A),U]+xC-yB\\
\end{pmatrix}$$
and this last matrix  is effectively an element of $\overline{W}$.\\
 
This implies that the representation of  $\overline{\widetilde{\go{g}}}$ in $W$ is faithful, that the form  $\overline{\Psi}$ is invariant under $\overline{\widetilde{\go{g}}}$ and  therefore  $\overline{\widetilde{\go{g}}}$ is realized as   $\go{o}(\overline{W},\overline{q})$.

Let  $\varphi \in \text{Aut}(\overline{\widetilde{\go{g}}})$.   
As $\overline{\widetilde{\go{g}}} \simeq \go{o}(\overline{W},\overline{q})$ is a split simple Lie algebra, we know by \cite{Bou3}, Chap. VIII, \S 13, $n^\circ$2, (VII), p. 199, that 
$$\text{Aut}_{0}(\overline{\widetilde{\go{g}}})=\text{Aut}_{e}(\overline{\widetilde{\go{g}}})=\text{Aut}(\overline{\widetilde{\go{g}}})$$
and that there exists a unique  $M\in SO(\overline{W}, \overline{q})$ such that, for  $X\in \overline{\widetilde{\go{g}}}$ one has 
$$\varphi(X)=MXM^{-1}, $$
the right hand side  being a product of endomorphisms of  $\overline{W}$. In the rest of the proof  we will write $\varphi=M$ , by abuse of notation, and we will consider that  $ \varphi\in SO(\overline{W}, \overline{q})$.

Under the action of  $H_{0}$ the space $\overline{W}$ decomposes into three eigenspaces corresponding to the eigenvalues  $-2, 0, 2$ :

$$\overline{W}_{-2}=\{\begin{pmatrix}0&0\\
y \text{Id}_{2}& 0\\
\end{pmatrix}, y\in \overline{F}\},\,\,\overline{W}_{0}=\{\begin{pmatrix}U&0\\
0 & -U\\
\end{pmatrix}, U\in\go{sl}_{2}(\overline{F})\},\,\,\overline{W}_{2}=\{\begin{pmatrix}0&x \text{Id}_{2}\\
0 & 0\\
\end{pmatrix}, x\in \overline{F}\}.$$

We will need the following Lemma.

\begin{lemme}\label{lemme-caracterisation-Z(H)}\hfill

Let  $\varphi \in \text{\rm Aut}(\overline{\widetilde{\go{g}}})=SO(\overline{W}, \overline{q})$.

{\rm a)} $\varphi$ fixes  $H_{0} \Longleftrightarrow   \begin{cases}
\varphi\text{  stabilizes } \overline{W}_{-2},\overline{W}_{0}, \overline{W}_{2};\cr
 \varphi_{|_{\overline{W}_{0}}}\in SO(\overline{W}_{0}, \overline{Q}) ;\cr
 \text{ there exists }\alpha_{\varphi}\in \overline{F}^* \text{ such that }\varphi_{|_{\overline{W}_{-2}}}=\alpha^{-1}_{\varphi}{\rm Id}_{|_{\overline{W}_{-2}}} \text{ and  } \varphi_{|_{\overline{W}_{2}}}={\alpha_{\varphi}} {\rm Id}_{|_{\overline{W}_{2}}}.
\end{cases}
$
\vskip 10pt
{\rm b)} Let  $g\in  \text{\rm Aut}(\widetilde{\go{g}})$ and let  $\overline{g}$ be his natural extension to an element of   $\text{\rm Aut}(\overline{\widetilde{\go{g}}})$. Then one has:

$g\in G= {\cal Z}_{\text{Aut}_{0}(\widetilde{\go{g}})}(H_{0}) \Longleftrightarrow   \begin{cases}
\overline{g}\text{  stabilizes } {W}_{-2}, {W}_{0},  {W}_{2};\cr
\overline{g}_{|_{ {W}_{0}}}\in SO( {W}_{0},  {Q}) ;\cr
\alpha_{\overline{g}} \in F^*.\cr
\end{cases}$

\end{lemme}
\vskip 10pt

{Proof of the Lemma}: 

a) Suppose that $\varphi$ commutes with $H_{0}$, then $H_{0}=\varphi H_{0}\varphi^{-1}$  (products of endomorphisms of $\overline{W}$).   ($i=-2,0,2$). Let  $T\in \overline{W}_{i}$ ($i=-2,0,2$). Then  $H_{0}.\varphi (T)= \varphi H_{0}\varphi^{-1} \varphi (T)=\varphi H_{0}(T)= \varphi(iT)=i\varphi(T)$. Hence $\varphi$ stabilizes the spaces $\overline{W}_{i}$.

As $\overline{W}_{-2}$ and  $\overline{W}_{2}$ are  $1$-dimensional , there exist $\alpha_{\varphi},\beta_{\varphi}\in \overline{F}^*$ such that 

$$\varphi(\begin{pmatrix}0&0\\
y \text{Id}_{2}& 0\\
\end{pmatrix})=\begin{pmatrix}0&0\\
\beta_{\varphi} y \text{Id}_{2}& 0\\
\end{pmatrix}, \,\, \varphi(\begin{pmatrix}0&x \text{Id}_{2}\\
0 & 0\\
\end{pmatrix})=\begin{pmatrix}0&\alpha_{\varphi} x \text{Id}_{2}\\
0 & 0\\
\end{pmatrix},\,\, x,y\in \overline{F}.$$

But as  $ \varphi\in SO(\overline{W}, \overline{q})$,  one has
$$\overline{q}(\varphi(\begin{pmatrix}0&x \text{Id}_{2}\\
y \text{Id}_{2}&0\\
\end{pmatrix}))=\overline{q}(\begin{pmatrix}0&\alpha_{\varphi} x \text{Id}_{2}\\
\beta_{\varphi} y \text{Id}_{2}&0\\
\end{pmatrix})=-\alpha_{\varphi}\beta_{\varphi} xy=\overline{q}(\begin{pmatrix}0&  x \text{Id}_{2}\\
  y \text{Id}_{2}&0\\
\end{pmatrix})=-xy.$$
Hence $\beta_{\varphi}=\alpha^{-1}_{\varphi}.$
As $\overline{q}_{|_{\overline{W}_{0}}}=\overline{Q}$ one get that  $ \varphi_{|_{\overline{W}_{0}}}\in SO(\overline{W}_{0}, \overline{Q})$.

Conversely suppose that  $\varphi$ satisfies the conditions on the  right hand side of  a). Let us look how $\varphi H_{0} \varphi^{-1}$ acts on  $\overline{W}$:

\xymatrix{
    {\begin{pmatrix}U&x \text{Id}_{2}\\
y \text{Id}_{2}&-U\\
\end{pmatrix}}  \ar@{|->}[r]^{\varphi^{-1}}   & {\begin{pmatrix}\varphi^{-1}(U)&\alpha^{-1}_{\varphi} x \text{Id}_{2}\\
\alpha_{\varphi} y \text{Id}_{2}&-\varphi^{-1}(U)\\
\end{pmatrix}}\ar@{|->}[r]^{H_{0} }&{\begin{pmatrix}0&2\alpha^{-1}_{\varphi} x \text{Id}_{2}\\
-2\alpha_{\varphi} y \text{Id}_{2}&0\\
\end{pmatrix}}\ar@{|->}[r]^{\varphi }&{\begin{pmatrix}0&2 x \text{Id}_{2}\\
-2y \text{Id}_{2}&0\\
\end{pmatrix}} 
 .}
 But one has:
 
 \xymatrix{
    {\begin{pmatrix}U&x \text{Id}_{2}\\
y \text{Id}_{2}&-U\\
\end{pmatrix}}  \ar@{|->}[r]^{{H_{0}}}   &  {\begin{pmatrix}0&2 x \text{Id}_{2}\\
-2y \text{Id}_{2}&0\\
\end{pmatrix}} 
 .}
 Therefore  $\varphi H_{0}\varphi^{-1}=H_{0}$, and this proves a).
 
 b) is a consequence of a).
 
 \hfill End of the proof of Lemma \ref{lemme-caracterisation-Z(H)}.
   \vskip15pt
 If  $g\in G$, let   $\alpha_{g}$ be the element  $\alpha_{\overline{g}}\in F^*$ obtained in the preceding Lemma.
  \vskip15pt

 \begin{lemme}\label{lemme-action-G}\hfill
 
 The action of   $g\in G$ on  $V^+=\{\begin{pmatrix}0&b\\
 0&0\end{pmatrix},\,\, b\in [D,D]\}$ is as follows
 $$g.\begin{pmatrix}0&b\\
 0&0\end{pmatrix}= \begin{pmatrix}0&\alpha_{g}g_{|_{[D,D]}}(b)\\
 0&0\end{pmatrix}.$$
  \end{lemme}
  
  \vskip 10pt
  Proof of the Lemma:
  \vskip 5pt
  
 In order to compute  $g.\begin{pmatrix}0&b\\
 0&0\end{pmatrix}=g\begin{pmatrix}0&b\\
 0&0\end{pmatrix}g^{-1}$, we will compute its action on the element 
 
 $\begin{pmatrix}h&\lambda\\
 \mu&-h
 \end{pmatrix}\in W$, using the preceding  Lemma \ref{lemme-caracterisation-Z(H)}. One has:

\centerline {\xymatrix{
    {\begin{pmatrix}h&\lambda\\
\mu&-h\\
\end{pmatrix}}  \ar@{|->}[r]^{g^{-1}}   & {\begin{pmatrix}g^{-1}(h)&\alpha^{-1}_{g} \lambda\\
\alpha_{g} \mu&-g^{-1}(h)\\
\end{pmatrix}}\ar@{|->}[r]^{\begin{pmatrix}0&b\\
 0&0\end{pmatrix}} &{\begin{pmatrix}\alpha_{g}\mu b&-bg^{-1}(h)-g^{-1}(h)b\\
0&-\alpha_{g}\mu b\\
\end{pmatrix}}\ar@{|->}[d]^{g }\\
{}&{}&{\begin{pmatrix}\alpha_{g}\mu g(b)& -\alpha(bg^{-1}(h)+g^{-1}(h)b)\\
0&-\alpha_{g}\mu g(b)\\
\end{pmatrix}} 
 (*).}
  } 
 Let us show now that the action of the element  $ \begin{pmatrix}0&\alpha_{g}g_{|_{[D,D]}}(b)\\
 0&0\end{pmatrix}$ is the same. One has:
 
 $$[ \begin{pmatrix}0&\alpha_{g}g_{|_{[D,D]}}(b)\\
 0&0\end{pmatrix}, \begin{pmatrix}h&\lambda\\
\mu&-h\\
\end{pmatrix} ]= {\begin{pmatrix}\alpha_{g}\mu g(b)& -\alpha(g(b) h+ hg(b))\\
0&-\alpha_{g}\mu g(b)\\
\end{pmatrix}}(**)$$

As the bilinear form associated to  $Q$ is   $B(b,h)=bh+hb$ ($b,h\in [D,D]$) and as $g_{|_{W_{0}}}\in SO(W_{0},Q)$ we obtain   $bg^{-1}(h)+g^{-1}(h)b=g(b) h+ hg(b)$, hence (*)=(**).

\vskip 5pt

\hfill End of the proof of Lemma \ref{lemme-action-G}.
\vskip 10pt

From Lemma  \ref{lemme-action-G} we know that the map  $$\begin{array}{rll}
G& \longrightarrow & SO(W_{0},Q)\times F^*\cr
g&\longmapsto &(g_{|_{W_{0}}},\alpha_{g})\cr
\end{array}$$
is a bijection. We have therefore proved the first part of the statement  2) Theorem \ref{th-k=0}.

From  \cite{Lam} (Th\'eor\`eme 2.2. 1)  p. 152 ), there are four classes modulo ${F^*}^2$ in $F^*$, which are the classes of  $1,u, \pi, u\pi$ ($u$ is a non square unit and $\pi$ is a uniformizer).  And from  \cite{Lam} (Corollary 2.5 3), p.153-154) the anisotropic form  $Q$ represents all (non zero) classes  except the class of  $-d(Q)$. If for  $b_{1}, b_{2}\in V^+\simeq [D,D]$, the elements  $Q(b_{1})$ and  $Q(b_{2})$   are in the same class  then there exists  $t\in F^*$ such that $Q(b_{1})=t^2Q(b_{2})=Q(tb_{2})$. Witt's Theorem  implies then that  there exists  $g\in O(Q)$ such that  $b_{1}=tg(b_{2})$. As the dimension is  3, det($-\text{Id}_{V^+}$)$=-1$ and one can suppose that   $g\in SO(Q)$. The Theorem is proved.

 \end{proof}
 
 \begin{rem}\label{0=j} As the algebras  $\widetilde{ {\go l}}_0$ and  $\widetilde{ {\go l}}_j$ 	are isomorphic (Proposition \ref{prop-memedimension}),  Corollary \ref{cor-classification-k=0} and Theorem \ref{th-k=0} will also be true for  $\widetilde{ {\go l}}_j$
. \end{rem}
 
 \begin{lemme} \label{lemmecodim}\hfill
\vskip 5pt
 Let us fix an element   $ 
X^1=X_1+X_2+\cdots +X_k \quad (X_j\in \widetilde{
       {\go g}}^{\lambda _j}\backslash \{0\}) $. Then for  $X\in \widetilde{ {\go g}}^{\lambda _0}$ one has  
       $$V^+=[\go g,X^1+X]+ \widetilde{ {\go g}}^{\lambda _0}\ .$$
       The   codimension   of the  $G$-orbit of  $X+X^1$ in  $V^+$ is given by  
$$\codim [\go g,X+X^1 ] = 
\begin{cases}
\ell \text{ if } X=0\ ;\\
0 \text{ if } X\not = 0 \ .
\end{cases}$$
       
       \end{lemme}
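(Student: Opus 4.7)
The plan is to exploit the explicit decomposition of $V^+$ coming from Theorem \ref{th-decomp-Eij}, namely
\[ V^+ = \widetilde{\go g}^{\lambda_0} \oplus V_1^+ \oplus \bigoplus_{s=1}^k E_{0,s}(1,1), \]
and to split the analysis according to whether $X=0$ or $X\neq 0$.

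For the identity $V^+ = [\go g, X+X^1] + \widetilde{\go g}^{\lambda_0}$ I would first obtain the inclusion $V_1^+ \subset [\go g, X+X^1]$: by Proposition \ref{X0+...+Xkgenerique} applied to the descended algebra $\widetilde{\go g}_1$ (of rank $k-1$), $X^1$ is generic in $V_1^+$, so $[\go g_1, X^1] = V_1^+$; moreover $\go g_1 \subset \widetilde{\go g}_1 = {\cal Z}_{\widetilde{\go g}}(\widetilde{\go l}_0)$ annihilates $X \in \widetilde{\go g}^{\lambda_0} \subset \widetilde{\go l}_0$, giving $[\go g_1, X+X^1] = V_1^+$. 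To capture the summand $\bigoplus_{s\geq 1} E_{0,s}(1,1)$ I would, for each $W \in E_{0,s}(1,1)$, push it into $\go g$ by bracketing with $Y_s$ (the negative half of the $\go{sl}_2$-triple $(Y_s, H_{\lambda_s}, X_s)$): the element $[Y_s, W]$ lies in $E_{0,s}(1,-1) \subset \go g$, and a short Jacobi-identity computation — using that $V^+$ is commutative (so $[X,W]=[X_j,W]=0$) and that $[X,Y_s]=0$ for eigenvalue reasons — yields $[X+X^1,[Y_s,W]] = [X_s,[Y_s,W]] = W$.

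For the codimension, the case $X\neq 0$ is immediate from Proposition \ref{X0+...+Xkgenerique}: with $X_0:=X\neq 0$ the element $X+X^1 = X_0+X_1+\cdots+X_k$ is generic in $V^+$, so $[\go g, X+X^1] = V^+$ and the codimension is zero. The case $X=0$ is the one requiring work: I must upgrade the decomposition $V^+ = [\go g, X^1] + \widetilde{\go g}^{\lambda_0}$ to a direct sum by showing $[\go g, X^1] \cap \widetilde{\go g}^{\lambda_0} = \{0\}$. I plan to decompose an arbitrary $Y\in \go g$ via Theorem \ref{th-decomp-Eij} as $Y = Y_0 + \sum_{i\neq j} Y_{ij}$ with $Y_0 \in {\cal Z}_{\go g}({\go a}^0)$ and $Y_{ij} \in E_{i,j}(1,-1)$, and then expand
\[ [Y,X^1] = \sum_{s=1}^k [Y_0,X_s] + \sum_{i\neq j,\; s\geq 1}[Y_{ij},X_s]. \]
Tracking the simultaneous $H_{\lambda_r}$-eigenvalues, $[Y_0,X_s]$ lies in $\widetilde{\go g}^{\lambda_s}$ with $s\geq 1$, while $[Y_{ij},X_s]$ is forced to vanish unless $s=j$, in which case it lies in $E_{i,j}(1,1)$; none of these components can ever sit in $\widetilde{\go g}^{\lambda_0}$, which gives the desired transversality and hence $\codim [\go g, X^1] = \dim \widetilde{\go g}^{\lambda_0} = \ell$.

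The main obstacle is purely bookkeeping: one has to systematically exclude eigenvalue patterns that do not appear in $V^+$ (by Theorem \ref{th-decomp-Eij} only the patterns $(2,0,\dots,0)$ up to permutation and $(1,1,0,\dots,0)$ occur). Once this is kept in mind, both the ``surjectivity onto $V^+/\widetilde{\go g}^{\lambda_0}$'' step and the ``transversality'' step reduce to routine checks on the spaces $E_{i,j}(p,q)$ and the $\widetilde{\go g}^{\lambda_s}$.
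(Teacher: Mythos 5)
Your proof is correct and follows essentially the same route as the paper's: the decomposition $V^+=V_1^+\oplus\bigl(\oplus_{s=1}^k E_{0,s}(1,1)\bigr)\oplus\widetilde{\go g}^{\lambda_0}$, the genericity of $X^1$ in $V_1^+$, bracketing with $[Y_s,W]\in E_{0,s}(1,-1)$ to recover $E_{0,s}(1,1)$, and the eigenvalue bookkeeping showing $[\go g,X^1]\subset V_1^+\oplus\bigl(\oplus_{s=1}^k E_{0,s}(1,1)\bigr)$, which gives the directness and hence codimension $\ell$ when $X=0$. The only quibble is a harmless sign: with the paper's convention $[Y_s,X_s]=H_{\lambda_s}$ one gets $[X_s,[Y_s,W]]=-W$ rather than $W$, which does not affect the conclusion.
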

       
       \begin{proof}\hfill
       
      If $X\neq0$, then by  Proposition \ref{X0+...+Xkgenerique},   $X+X^1$ is generic in  $V^+$.  Hence $V^+=[\go g, X+X^1]$ and  $\codim [\go g,X+X^1 ] = 0$.
       
      If  $X=0$, we know by Corollary \ref{cor-decomp-j} that  
      $$V^+=V^+_1\oplus \left( \oplus_{j=1}^k E_{0,j}(1,1)  \right)\oplus
\widetilde{ {\go g}}^{\lambda _0}\ . $$

 $X^1$ is generic in  $V_{1}^+$ by  Proposition \ref{X0+...+Xkgenerique} applied to  $\widetilde{\go{g}}_{1}$. Therefore:
$$V_{1}^+=[\go{g}_{1},X^{1}]\subset [\go{g},X^1].\eqno (*)$$ 
   Let $Y_j\in
\widetilde{\go g}^{-\lambda_j}$ be such that  $\{Y_j,H_{\lambda
_j},X_j\}$ is an   $\go {sl}_2$-triple. Let    $A\in E_{0,j}(1,1)$, then  $B=[Y_j,A]$ is an element of   $E_{0,j}(1,-1)\subset \go{g}$ and one has
$$[B,X^1]=[B, X_{1}+X_{2}+\dots+X_{k}]=[B,X_j]=[[Y_{j},A],X_{j}]= [[Y_j,X_j],A]=[H_{j},A]=A\ .$$
Hence 
$\oplus_{j=1}^kE_{0,j}(1,1)\subset [\go g,X^1]$ and from decomposition $(*)$  above we get
$$V^+=[\go g,X^1]+ \widetilde{ {\go g}}^{\lambda _0}\ .$$    
To obtain the result on the codimension,  it is enough to prove that the preceding sum is direct.
The relations 
\begin{align*}
[\go z_{\go g}(\go a^0),X^1]&\subset \oplus_{j=1}^k
\widetilde {\go g}^{\lambda_j}\subset V^+_1\ ,\\
[E_{i,j}(1,-1), X^1]&\subset
         \begin{cases}
                       \{0\} \text{ if } j=0  ,\\
                      E_{i,j}(1,1) \text{ if } j\not =0\ ,
         \end{cases}
\end{align*}
imply that 
$$[\go g,X^1]\subset V^+_1\oplus\left(\oplus_{j=1}^k E_{0,j}(1,1)\right)\
.$$
Hence  
$$[\go g, X^1]= V^+_1\oplus\left(\oplus_{j=1}^k E_{0,j}(1,1)\right)\
,$$
and finally
$$V^+=[\go g, X^1]\oplus \widetilde {\go g}^{\lambda_0}\ .$$

       \end{proof}
       
       \vskip 5pt

    \vskip 20pt
 \subsection{Properties of  $\Delta_{0}$}\hfill
  \vskip 10pt

Let  $\delta_{i}$ be the fundamental relative invariant of  the prehomogeous vector space $\widetilde{ {\go l}}_i$ ($0\leq i\leq k$). By Theorem  \ref{thexistedelta_0}$,  \delta_{i}$ is absolutely irreducible.  As all the algebras $\widetilde{ {\go l}}_i$ are isomorphic (Remark \ref{0=j}),  	 the  $\delta_{i}$'s have all the same degree .  
\vskip 5pt

\begin{notation}\label{notation-kappa}

Let us denote by  $\kappa$ the common degree of the polynomial  $\delta_{i}$. By Theorem  \ref{th-k=0} there exists two types of graded algebras of  rank 1. 
One has \begin{align*}
 \kappa&=
         \begin{cases}
                      \delta \text{ in case   (1) } \text{corresponding  to the diagram}   \hskip 200pt \hbox{\unitlength=0.5pt
\begin{picture}(280,30)
  \put(-360,10){\circle*{10}}
\put(-355,10){\line (1,0){30}}
\put(-320,10){\circle*{10}}
\put(-310,10){\circle*{1}}
\put(-305,10){\circle*{1}}
\put(-300,10){\circle*{1}}
\put(-295,10){\circle*{1}}
\put(-290,10){\circle*{1}}
\put(-280,10){\circle*{10}}
\put(-275,10){\line (1,0){30}}
\put(-240,10){\circle{10}}
\put(-240,10){\circle{16}}
\put(-235,10){\line (1,0){30}}
\put(-200,10){\circle*{10}}
\put(-190,10){\circle*{1}}
\put(-185,10){\circle*{1}}
\put(-180,10){\circle*{1}}
\put(-175,10){\circle*{1}}
\put(-170,10){\circle*{1}}
\put(-165,10){\circle*{10}}
\put(-160,10){\line (1,0){30}}
\put(-125,10){\circle*{10}}
 \put(-330 ,-20){$\delta-1$}  \put(-190,-20){$\delta-1$} 
  \put(-100,0){$(\delta\in \N^*) $}\end{picture}} \\
  {}\\
                     2 \text{ in case  (2) } \text{corresponding to the diagram}   \hskip 70pt \hbox{\unitlength=0.5pt
\begin{picture}(280,30)
 \put(-70,10){\circle{10}}
 \put(-70,10){\circle{16}}
\put(-67,12){\line (1,0){41}}
\put(-67,8){\line (1,0){41}}
\put(-55,5){$>$}
\put(-25,10){\circle*{10}}
\end{picture}
}   \\
  \end{cases}
\end{align*}

\end{notation}

  \begin{theorem}\label{thpropridelta_0}\- 

  \noindent      {\rm (1)}  $\Delta _0$ is a homogeneous polynomial of degree   $\kappa (k+1)$.

 \noindent      {\rm (2)} For $j=0,\ldots ,k$, let $X_j$ be an element of   $\widetilde{
         {\go g}}^{\lambda
        _j}\backslash\{0\}$ and let  $x_j\in F$. Then 
      $$\Delta _0\Bigl(\sum _{j=0}^k x_jX_j\Bigr)=\prod _{j=0}^k x_j^\kappa\; . \Delta
          _0\Bigl(\sum _{j=0}^k X_j\Bigr)\ .$$

\end{theorem}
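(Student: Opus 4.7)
The approach is to act on $V^+$ by an explicit $(k+1)$-dimensional family of scaling operators inside $L \subset G$, coming from the $\go{sl}_2$-triples attached to the $\lambda_j$'s, pin down the restriction of $\Delta_0$ to the principal diagonal as a monomial in the $x_j$'s, and then identify the resulting exponent with $\kappa$ via the rank-one pieces $\widetilde{\go{l}}_j$.

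For each $j$, fix an $\go{sl}_2$-triple $(Y_j^0,H_{\lambda_j},X_j^0)$ with $X_j^0\in\widetilde{\go{g}}^{\lambda_j}$, $Y_j^0\in\widetilde{\go{g}}^{-\lambda_j}$, and define $h_j(t)=\theta_j(t)\theta_j(-1)$ with $\theta_j(t)=e^{t\,\ad X_j^0}\,e^{t^{-1}\ad Y_j^0}\,e^{t\,\ad X_j^0}\in\text{Aut}_e(\widetilde{\go{g}})$, exactly as in the second proof of Lemma~\ref{lem-tId-dansG}. Standard $\go{sl}_2$-theory makes $h_j(t)$ act as $t^p$ on every $\ad H_{\lambda_j}$-weight-$p$ subspace of $\widetilde{\go{g}}$; strong orthogonality of the $\lambda_i$'s forces $h_j(t)$ to centralize every $H_{\lambda_i}$, so $h_j(t)\in L\subset G$, acting as $t^2$ on $\widetilde{\go{g}}^{\lambda_j}$, trivially on $\widetilde{\go{g}}^{\lambda_i}$ for $i\neq j$, and as $t$ on each $E_{j,l}(1,1)$. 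Reading off Theorem~\ref{th-decomp-Eij} one sees $\prod_j h_j(t)$ acts on $V^+$ as $t^2\,\mathrm{Id}_{V^+}$. Write $\chi_0(h_j(t))=t^{c_j}$ for the character of $\Delta_0$; conjugating $h_i$ by the Weyl element in $W\subset G$ that sends $\lambda_i$ to $\lambda_j$ (Proposition~\ref{propWconjugues}) shows $c_j=c$ is independent of $j$. Applying $g=\prod_j h_j(t_j)\in L$ to $\sum_j x_j X_j$ and using $\Delta_0(g\cdot X)=\chi_0(g)\Delta_0(X)$ yields the polynomial identity
$$\Delta_0\Bigl(\sum_j t_j^2 x_j X_j\Bigr)=(t_0\cdots t_k)^{c}\,\Delta_0\Bigl(\sum_j x_j X_j\Bigr)$$
over the infinite field $F$. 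Matching monomials forces $c=2a$ even and
$$\Delta_0\Bigl(\sum_j x_j X_j\Bigr)=\Delta_0\Bigl(\sum_j X_j\Bigr)\cdot\prod_j x_j^{a},$$
which is (2) with $\kappa$ replaced by $a$; comparing the characters of $\prod_j h_j(t)$ and $t^2\,\mathrm{Id}_{V^+}$ gives $t^{c(k+1)}=t^{2\deg\Delta_0}$, whence $\deg\Delta_0=a(k+1)$, which is (1) with $\kappa$ replaced by $a$.

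The main obstacle is then to prove $a=\kappa$. Fix nonzero $X_j\in\widetilde{\go{g}}^{\lambda_j}$ for $j\geq 1$ and consider $P_0(Z)=\Delta_0(Z+X_1+\cdots+X_k)$ for $Z\in\widetilde{\go{g}}^{\lambda_0}$: by Proposition~\ref{X0+...+Xkgenerique} this is a nonzero homogeneous polynomial of degree $a$. Over $\overline F$, every element of $\overline{G_0}\subset \mathrm{Aut}_e(\overline{\widetilde{\go{l}}_0})$ extends to an element of $\overline G$ acting trivially on $\overline{V^+_1}\ni X_1+\cdots+X_k$, because $V^+_1\subset Z_{\widetilde{\go{g}}}(\widetilde{\go{l}}_0)$ makes $e^{\ad x}$ act trivially on $\overline{V^+_1}$ for every nilpotent $x\in\overline{\widetilde{\go{l}}_0}$. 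Thus $\overline{P_0}$ is a relative invariant of $\overline{G_0}$ on $\overline{\widetilde{\go{g}}^{\lambda_0}}$ and, by Theorem~\ref{th-k=0}, equals $C'\,\overline{\delta_0}^m$ for some $m\geq 1$, so $a=m\kappa$. Running this argument for each $\lambda_j$ (and using Weyl symmetry) gives
$$\Delta_0\Bigl(\sum_{j=0}^k Z_j\Bigr)=C''\prod_{j=0}^k\delta_j(Z_j)^{m}$$
on the principal diagonal. To rule out $m>1$, I would invoke Theorem~\ref{th-V+caplambda} (every $G$-orbit in $V^+$ meets the principal diagonal): if $m\geq 2$, then $\prod_j\delta_j$ would extend by $G$-equivariance to a polynomial on $V^+$ whose $m$-th power is a scalar multiple of $\Delta_0$, contradicting the absolute irreducibility of $\Delta_0$ in $\overline F[V^+]$. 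This last step --- translating the purely diagonal factorization into a global polynomial identity --- is the delicate part of the argument and is where all the structure (orbits meeting the diagonal, density of $G\cdot\mathrm{diag}$ in $V^+$, uniqueness of the character $\chi_0$) must be brought to bear.
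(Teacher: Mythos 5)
Your scaling argument is correct as far as it goes: the elements $h_j(t)$ do lie in $L\subset G$, the identity $\Delta_0\bigl(\sum_j t_j^2x_jX_j\bigr)=(t_0\cdots t_k)^{c}\,\Delta_0\bigl(\sum_j x_jX_j\bigr)$ over the infinite field $F$ does force the restriction of $\Delta_0$ to the diagonal to be the monomial $\prod_j x_j^{a}$ with $c=2a$, and comparing $\prod_jh_j(t)$ with $t^2\,\mathrm{Id}_{V^+}$ gives $\deg\Delta_0=a(k+1)$. The identification $\overline{P_0}=C'\,\overline{\delta_0}^{\,m}$, hence $a=m\kappa$ with $m\geq1$, is also legitimate (though the fact you need is Theorem \ref{thexistedelta_0}~(2) applied to $\widetilde{\go l}_0$ over $\overline F$, via Remark \ref{rem-extension-invariant}, rather than Theorem \ref{th-k=0}; and your ``$\overline{G_0}$'' is the paper's $\overline{L_0}$, not its $G_0=G$). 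So what you have actually proved is the theorem with $\kappa$ replaced by an undetermined integer $a=m\kappa$.

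The gap is the last step, $m=1$, which you yourself flag as the delicate point. The assertion that for $m\geq 2$ the diagonal function $\prod_j\delta_j$ ``would extend by $G$-equivariance to a polynomial on $V^+$ whose $m$-th power is a scalar multiple of $\Delta_0$'' is not an argument: Theorem \ref{th-V+caplambda} only says every orbit meets the diagonal; it guarantees neither that such an extension is well defined (distinct diagonal representatives of one orbit need not have compatible values), nor, crucially, that it is polynomial, and without a global polynomial identity there is no contradiction with the absolute irreducibility of $\Delta_0$. This is precisely where the paper brings in an external input: by Proposition 2.16 of \cite{M-R-S}, applied over $\overline F$, the degree of the fundamental relative invariant of a regular graded algebra equals the number of strongly orthogonal roots produced by the descent; applied to $\overline{\widetilde{\go l}_0}$ this gives $\deg\delta_0=\kappa$ as a root count, and applied to $\overline{\widetilde{\go g}}$ (the $\kappa(k+1)$ roots $\beta_{i,s}$ coming from the descents of the $\overline{\widetilde{\go l}_i}$ form a maximal strongly orthogonal system in $\overline{V^+}$) it gives $\deg\Delta_0=\kappa(k+1)$ --- exactly the identification $a=\kappa$ you are missing; part (2) is then deduced as in your first paragraph. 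To close your proof you would have to either import that root-counting result as the paper does, or give an independent argument that the slice polynomial $P_0$ vanishes to order exactly one along $\{\delta_0=0\}$ (i.e.\ $m=1$), neither of which is in the proposal.
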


\begin{proof}\hfill

(1) Consider the prehomogeneous vector space  $\overline{\widetilde{ {\go l}}_0}= \widetilde{ {\go l}}_0\otimes_{F}\overline{F}=\overline{{\widetilde{\go{g}}^{-\lambda_{0}}}}\oplus[\overline{{\widetilde{\go{g}}^{-\lambda_{0}}}},\overline{{\widetilde{\go{g}}^{\lambda_{0}}}}]\oplus \overline{{\widetilde{\go{g}}^{\lambda_{0}}}}$. By  \cite{M-R-S} (Proposition 2.16) the degree of  $\delta_{0}$ ($=\kappa$) is equal to the number of strongly orthogonal roots (over  $\overline{F}$),    $\beta_{0,1},\dots,\beta_{0,\kappa}$,  appearing in the descent  applied to the graded algebra  $\overline{\widetilde{ {\go l}}_0}$ (see \cite{M-R-S} for details). More generally let us denote by  $\beta_{i,1},\dots,\beta_{i,\kappa}$ the strongly orthogonal roots  appearing in the descent  applied to the graded algebra $\overline{\widetilde{ {\go l}}_i}$. But then the  set of $\kappa(k+1)$ roots  $\beta_{0,1},\dots,\beta_{0,\kappa},\dots,\beta_{k,1},\dots,\beta_{k,\kappa}$ is a maximal set of strongly orthogonal roots in  $\overline{V^+}$ (if not it would exist a root  $\lambda_{k+1}$ over  $F$ which is strongly orthogonal to $\lambda_{0},\dots,\lambda_{k}$). Then, again by Proposition 2.16 of  \cite{M-R-S}, one obtain that the degree of  $\Delta_{0}$ is  $\kappa(k+1)$.
\vskip 5pt
(2) For  $i=0,\dots,k$, let us fix elements  $X_{i}\in \widetilde{ {\go g}}^{\lambda _i}\setminus\{0\}$. Consider the polynomial map on  $\widetilde{ {\go g}}^{\lambda _i}$ given by 
$$\mu_{i}  :X\mapsto
\Delta _0(X+X^i)\text{ where } X^i=X_0+X_2+\cdots +X_{i-1}+X_{i+1}+\dots+X_{k}\ .$$
Let   $L_i$ be the the group similar to  $G$ for the graded algebra  ${\widetilde{ {\go l}}_i}$, that is  $L_{i}= {\cal Z}_{\text{Aut}_{0}({\widetilde{ {\go l}}_i})}(H_{\lambda_{i}})$.  By Corollary \ref{cor-decomp-j}   one gets that   ${\widetilde{ {\go l}}_k}={\cal Z}_{\widetilde{\go g}}(\widetilde{ {\go l}}_0\oplus  \dots\oplus  \widetilde{ {\go l}}_{k-1}$). As all the algebras   $\widetilde{ {\go l}}_{j}$ are conjugated   (Proposition \ref{propWconjugues}) one has also  ${\widetilde{ {\go l}}_i}={\cal Z}_{\widetilde{\go g}}(\widetilde{ {\go l}}_0\oplus \dots \oplus \widetilde{ {\go l}}_{i-1}\oplus \widetilde{ {\go l}}_{i+1}\oplus \dots \oplus \widetilde{ {\go l}}_k$). Therefore the elements of  $L_{i}$, which are products of exponentials (of adjoints)  of nilpotent elements of  ${\widetilde{ {\go l}}_i}\otimes \overline{F}$, fix all the element  $X^i$. 

  Since $L_i\subset \overline{G}=G(\overline{F})$ and since  $\Delta_0$ is,  by construction, the restriction to $V^+$ of a relative invariant polynomial of $(\overline{G}, \overline{V^+})$, we deduce that
 for  $g\in L_{i}$ and  $X\in \widetilde{ {\go g}}^{\lambda _i} $ one has :
$$\mu_{i}(g.X)=\Delta _0(g.X+X^i)=\Delta _0(g.(X+X^i))=\chi_{0}(g)\Delta _0(X+X^i)=\chi_{0}(g)\mu_{i}(X).$$

Hence  $\mu_{i}$ is a relative invariant for the prehomogeneous space  $(L_{i},\widetilde{ {\go g}}^{\lambda _i})$, with character  ${\chi_{0}}_{|_{L_{i}}}$.  This invariant is non zero, as  $\mu_{i}(X_{i})=\Delta_{0}(X_{0}+\dots+X_{k})\neq0$. For $t\in F$, let  $g_{0}(t)$ be an element of  $L_{0}$ such that $g_{0}(t)_{|_{\widetilde{ {\go g}}^{\lambda _0}}}=t\text{Id}_{\widetilde{ {\go g}}^{\lambda _0}}$ (Lemme \ref{lem-tId-dansG}). If  $g\in G$ is such that $g.\widetilde{ {\go l}}_0=\widetilde{ {\go l}}_i$,  then the element  $g_{i}(t)=gg_{0}(t)g^{-1}\in L_{i}$ satisfies
$g_{i}(t)_{|_{\widetilde{ {\go g}}^{\lambda _i}}}=t\text{Id}_{\widetilde{ {\go g}}^{\lambda _i}}$ and  $\chi_{0}(g_{i}(t))=\chi_{0}(g_{0}(t))$. Therefore the polynomials $\mu_{i}$ have the same homogeneous degree, say $p$. Then 

$\Delta_{0}(g_{0}(t)\dots g_{k}(t).(X_{0}+\dots+X_{k}))=\Delta_{0}(t(X_{0}+\dots+X_{k}))=t^{\kappa(k+1)}\Delta_{0}(X_{0}+\dots+X_{k})$

$=\chi_{0}(g_{0}(t))\Delta_{0}(X_{0}+g_{1}(t).X_{1}+g_{2}(t).X_{2}+\dots+g_{k}(t).X_{k})$

$=....$

$=\chi_{0}(g_{0}(t))\chi_{0}(g_{1}(t))\dots\chi_{0}(g_{k}(t))\Delta_{0}(X_{0}+\dots+X_{k})$

$=t^{p(k+1)}\Delta_{0}(X_{0}+\dots+X_{k})$

Hence $\kappa=p$ is the common degree of the $\mu_{i}$'s, and  $\mu_{i}=c_{i}\delta _{i}$, with  $c_{i}\in F^*$ (remind that  $\delta_{i}$ is the fundamental relative invariant of  $(L_{i},\widetilde{ {\go g}}^{\lambda _i})$).

Also for  $(x_{0},x_{1},\dots,x_{k})\in F^{k+1}$:

$\Delta_{0}(x_{0}X_{0}+\dots+x_{k}X_{k})=\prod _{j=0}^k x_j^\kappa\; . \Delta
          _0\ (X_{0}+\dots+X_{k})$

\end{proof}

    \vskip 5pt

    \vskip 20pt
 \subsection{The polynomials   $\Delta_{j}$}\label{sectiondeltaj}\hfill
  \vskip 10pt
  
  Let  $j\in \{1,2,\dots,k\}$. By \ref{thexistedelta_0} applied to the graded regular algebra $(\widetilde{g}_{j}, H_{\lambda_{j}}+\dots+H_{\lambda_{k}})$,  there exists an absolutely irreducible polynomial  $P_{j}$ on $V^+_{j}$ which is relatively invariant under the action of  $G_{j}={\cal Z}_{\text{Aut}_{0}(\widetilde{\go{g}}_{j})}(H_{\lambda_{j}}+\dots+H_{\lambda_{k}})\subset  \overline{G}$.   By Corollary \ref{cor-structureG} applied to  $(\widetilde{g}_{j}, H_{\lambda_{j}}+\dots+H_{\lambda_{k}})$, one has $G_j=\textrm{Aut}_e(\go g_j).(\cap_{ s=j}^k (G_j)_{H_{\lambda_s}})$.
  
  Let $\chi _j$ \label{chij} be the corresponding character of $G_j$. We will define extensions of these polynomials to   $V^+$, using the following decomposition:
$$V^+=V^+_j\oplus V^\perp_j \hbox{  where }
V^\perp_j=\bigl(\plus _{r<s,r<j} E_{r,s}(1,1)\bigr)\oplus \bigl(\plus  _{r<j}\widetilde{ {\go
g}}^{\lambda _r}\bigr)\ .$$

\begin{definition} \label{defdeltaj} We denote by  $\Delta _j$ \label{Deltaj} the unique polynomial (up to scalar multiplication)    such that  
$$\Delta _j(X+Y)=P_j(X)\hbox{  for  }X\in V^+_j, Y\in V^\perp_j\ ,$$
where  $P_j$  is an absolutely irreducible polynomial on $V^+_j$, under the action of  $G_j$.
\end{definition}
\vskip 5pt

It may be noticed that, as $P_{j}$ is the restriction to $V^+_{j}$ of an irreducible polynomial on $\overline {V_{j}^+}$, which is relatively invariant under the action of $\overline{G_j}$, the polynomial $\Delta_{j}$ is   the restriction to $V^+$ of a polynomial defined on $\overline{V^+}$.\vskip 5pt 
\begin{theorem}\label{thproprideltaj} Let  $j,s\in \{0,1,\ldots ,k\}$ and let $X_s\in  \widetilde{ {\go g}}^{\lambda _s}\setminus \{0\}$.

\noindent {\rm (1)}  $\Delta _j$ is an absolutely irreducible polynomial of degree $\kappa (k+1-j)$.

\noindent {\rm (2)} For  $X\in V^+$ one has
 \begin{align*}
      \Delta _j(g.X)&=\chi _j(g)\Delta _j(X) \hbox{  for  }g\in G_j \ ;\cr
       \Delta _j(g.X)& =\Delta _j(X) \hbox{  for }g\in \hbox{Aut}_e(\go g_j) \ ;\cr
        \Delta _j(g.X)&=\Delta _j(X) \hbox{  for }g= \exp(\ad Z) \hbox{  where }Z\in
               \plus_{r<s}E_{r,s}(1,-1)\ .
\end{align*}
\noindent {\rm (3)}    For $s=0,\ldots ,k$, let $X_s$ be an element of   $\widetilde{
         {\go g}}^{\lambda
        _s}\backslash\{0\}$ and let  $x_s\in F$.  Then, for $j=0,\ldots ,k$, 

$$\Delta _j\Bigl(\sum _{s=0}^k x_sX_s\Bigr)=\prod _{s=j}^k x_s^\kappa \; . \Delta
_j\Bigl(\sum _{s=j}^k X_s\Bigr)\ .$$


\noindent {\rm (4)}  The polynomial   $X\in
V^+_j\mapsto \Delta _0(X_0+X_1+\cdots +X_{j-1}+X)$ is non zero and  equal  (up to scalar multiplication) to the restriction of 
$\Delta _j$ to $V^+_j$.
\end{theorem}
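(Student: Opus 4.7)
My plan is to prove the four assertions in the order (4), (1), (3), (2). Part (4) is the heart of the proof: it identifies the restriction of $\Delta_j$ to $V_j^+$ with a translated slice of $\Delta_0$ and thereby ties $\Delta_j$ to the fundamental invariant $P_j$ of the descended graded algebra $\widetilde{\go g}_j$. Once (4) is in hand, (1) and (3) are immediate from the definition of $\Delta_j$ combined with Theorems \ref{thexistedelta_0} and \ref{thpropridelta_0} applied to $\widetilde{\go g}_j$, while (2) is the only part requiring additional work.

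For (4), fix nonzero $X_s\in\widetilde{\go g}^{\lambda_s}$ for $s=0,\ldots,j-1$ and consider the polynomial $\phi:V_j^+\to F$, $\phi(X)=\Delta_0(X_0+\cdots+X_{j-1}+X)$. I would first note $\phi(X_j+\cdots+X_k)=\Delta_0(X_0+\cdots+X_k)\ne 0$, since $X_0+\cdots+X_k$ is generic by Proposition \ref{X0+...+Xkgenerique} and Theorem \ref{thexistedelta_0}(3). Next, for $g\in G_j$, the extension $\operatorname{ext}(g)\in\overline G$ (Remark \ref{rem-inclusion-groupes}) fixes each $X_s$ with $s<j$ and restricts to $g$ on $V_j^+$, giving $\phi(g.X)=\chi_0(\operatorname{ext}(g))\phi(X)$. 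Hence $\phi$ is a nonzero relative invariant of $(G_j,V_j^+)$, and by Theorem \ref{thexistedelta_0} applied to $\widetilde{\go g}_j$ there exist $c\in F^*$ and $m\in\mathbb{Z}_{\ge 0}$ with $\phi=c\cdot P_j^m$. Specializing to $X=t(X_j+\cdots+X_k)$ and using Theorem \ref{thpropridelta_0}(2) on each side forces $m=1$.

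Part (1) then follows because $\Delta_j$ is $P_j$ composed with the linear projection $V^+\to V_j^+$ along $V_j^\perp$, so both polynomials have the same degree $\kappa(k+1-j)$ by Theorem \ref{thpropridelta_0}(1) applied to the rank-$(k+1-j)$ graded algebra $\widetilde{\go g}_j$; absolute irreducibility of $\Delta_j$ reduces to that of $P_j$ because $\Delta_j$ has zero degree in the $V_j^\perp$ variables in $F[V^+]=F[V_j^+]\otimes F[V_j^\perp]$, so any factorization must lie in $F[V_j^+]$. Part (3) is then immediate: decomposing $\sum_{s=0}^k x_sX_s=\sum_{s<j}x_sX_s+\sum_{s\ge j}x_sX_s$ with the first summand in $V_j^\perp$ and the second in $V_j^+$, one has $\Delta_j(\sum_s x_sX_s)=P_j(\sum_{s\ge j}x_sX_s)$, and Theorem \ref{thpropridelta_0}(2) applied to $\widetilde{\go g}_j$ yields the product formula.

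For (2), the first identity is part of the definition of $\Delta_j$ once one checks that $\operatorname{ext}(g)$ preserves the decomposition $V^+=V_j^+\oplus V_j^\perp$—which it does because $\operatorname{ext}(g)$ fixes both $H_j$ and $H_{\lambda_0}+\cdots+H_{\lambda_{j-1}}$, and $V_j^+$, $V_j^\perp$ are joint eigenspaces of that commuting pair. The second identity reduces to the first because any rational character vanishes on a group generated by one-parameter unipotent subgroups. The third identity, for $Z\in E_{r,s}(1,-1)$ with $r<s$, I would handle by case analysis on the position of $\{r,s\}$ relative to $j$. If $j\le r<s$, then $Z\in\go g_j$ and the third identity reduces to the second. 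If $r<s<j$, a bracket computation using Theorem \ref{th-decomp-Eij} shows that $\ad Z$ annihilates $V_j^+$ and stabilizes $V_j^\perp$. If $r<j\le s$, the same type of computation shows $\ad Z(V_j^+)\subset V_j^\perp$ and $\ad Z(V_j^\perp)\subset V_j^\perp$. In both of these latter cases one concludes $\exp(\ad Z).X-X\in V_j^\perp$, so the projection onto $V_j^+$ is preserved and hence $\Delta_j$ is invariant. I expect the main obstacle to be the bracket bookkeeping in this last step; the underlying principle is that $\ad Z$ shifts $H_{\lambda_r}$- and $H_{\lambda_s}$-eigenvalues by $+1$ and $-1$ respectively, and the rigid description of $V^+$ in Theorem \ref{th-decomp-Eij} drastically restricts which summands can appear in $\ad Z(V^+)$.
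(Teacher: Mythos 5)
Your overall strategy coincides with the paper's: parts (1) and (3) read off from Theorem \ref{thexistedelta_0} and Theorem \ref{thpropridelta_0} applied to $\widetilde{\go g}_j$, part (4) is proved by observing that $X\mapsto \Delta_0(X_0+\cdots+X_{j-1}+X)$ is a nonzero relative invariant of $(G_j,V_j^+)$ (because $\operatorname{ext}(g)$ fixes $X_0,\ldots,X_{j-1}$) and pinning down the exponent by scaling, and in (2) the triviality of $\chi_j$ on $\hbox{Aut}_e(\go g_j)$ is the usual "no characters on unipotents" fact (the paper phrases it via the derived group of ${\rm Aut}_0(\go g_j)$); your joint-eigenspace argument that $\operatorname{ext}(g)$ preserves $V_j^+\oplus V_j^\perp$ is a clean substitute for the paper's remark that $V_j^\perp$ is $\ad(\go g_j)$-stable. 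All of this is correct.

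The one point you must still close is the third identity of (2). The statement is for an \emph{arbitrary} $Z\in \plus_{r<s}E_{r,s}(1,-1)$, i.e.\ a general element of the nilradical $\go n$, whereas your case analysis treats only $Z$ lying in a single summand $E_{r,s}(1,-1)$; $\exp(\ad Z)$ for a sum is not literally a product of the exponentials you have handled, so some extra step is needed. Two repairs are available. Either invoke the standard fact that the unipotent group $\exp(\ad \go n)$ (characteristic $0$, $\go n$ nilpotent) is generated by the one-parameter subgroups $\exp(\ad\, E_{r,s}(1,-1))$, so invariance under each of these implies invariance under every $\exp(\ad Z)$, $Z\in\go n$; or argue as the paper does: write $\go n=\go n_j\oplus K(0)\oplus K(1)$ with $\go n_j=\plus_{j\le r<s}E_{r,s}(1,-1)$, $K(0)=\plus_{r<s\le j-1}E_{r,s}(1,-1)$, $K(1)=\plus_{r\le j-1,\,j\le s}E_{r,s}(1,-1)$; since $[\go n_j,K(0)]=0$ one has $[\go n_j,K(0)\oplus K(1)]\subset K(1)$, so $K(0)\oplus K(1)$ is an ideal of $\go n$ and every element of $\exp(\ad\go n)$ factors as $\exp(\ad Z')\exp(\ad Z'')$ with $Z'\in\go n_j$ (your first case, i.e.\ the second identity) and $Z''\in K(0)\oplus K(1)$. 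Note that your eigenvalue bookkeeping for the two remaining cases applies verbatim to an arbitrary $Z''\in K(0)\oplus K(1)$, not just to a single summand: $\ad Z''$ kills nothing essential, sends $V_j^+$ into $V_j^\perp$ and preserves $V_j^\perp$, so the $V_j^+$-projection, hence $\Delta_j$, is unchanged. With that supplement your proof is complete and essentially identical to the paper's.
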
  

\begin{proof}\hfill

Statements $(1)$ and $(3)$ are just  Theorem  \ref{thpropridelta_0} applied to the graded algebra  $\widetilde{\go{g}}_{j}$.

As  ${\go{g}}_{j}={\cal Z}_{{\go{g}}}(\widetilde{ {\go l}}_0\oplus \widetilde{ {\go l}}_1\oplus \dots\oplus \widetilde{ {\go l}}_{j-1})$, it is easy to see that  $V^\perp_j$ is stable under  $\ad(\go{g}_{j})$. 
  Hence $\overline{G}_{j}.\overline{V}^\perp_j\subset \overline{V}^\perp_j$.  
  
  The first assertion in  $(2)$ is a consequence of the definition of  $\Delta_{j}$.

We know from  \textsection 1.7, that the groups ${\rm Aut}_e(\go g_j)$ and  ${\rm Aut}_0(\go g_j)$ are respectively isomorphic to   ${\rm Aut}_e([\go g_j, \go g_j])$ and  ${\rm Aut}_0([\go g_j,\go g_j])$. Then, from   (\cite{Bou2} Chap VIII \textsection $11$ $n^\circ 2$, Proposition 3 page 163), the group  ${\rm Aut}_e(\go g_j)$ is the derived group of  ${\rm Aut}_0(\go g_j)$. It follows that the character  $\chi_j$ is trivial on  ${\rm Aut}_e(\go g_j)$, this is the second assertion of  (2).\\

As any element of  $${\go n}_j=\plus _{j\leq r<s} E_{r,s}(1,-1)\subset \go g_j $$ is nilpotent, we obtain that $\Delta _j$ is invariant under the action of the group generated by the elements    $\exp(\ad {\go n}_j)$.\\
 One has the decomposition:
$${\go n}_{0}=\plus _{r<s} E_{r,s}(1,-1)={\go n}_j\oplus K(0)\oplus K(1)$$

where 
$$K(0)=\plus_{r<s\leq j-1} E_{r,s} (1,-1) \text{ and  }K(1)=\plus_{  r \leq j-1,\,\, j\leq s} E_{r,s} (1,-1).$$

Note that  $$K(1)=\{X\in \go{n}, [H_{\lambda_{0}}+\ldots+H_{\lambda_{j-1}},X]=X\}$$
and 
$$\go{n}_{j}\oplus K(0)=\{X\in \go{n}, [H_{\lambda_{0}}+\ldots+H_{\lambda_{j-1}},X]=0\}.$$
Therefore, as  $[\go{n}_{j}, K(0)]=0$, we get   $[\go{n}_{j},K(0)\oplus K(1)]\subset K(1)$.

Hence any element of  $\exp(\ad(\go{n}))$ can be written $\exp(\ad Z')\exp(\ad Z)$ with $Z'\in \go{n}_{j}$ and  $Z\in K(0)\oplus K(1)$.  Therefore it is enough to show that  $\Delta_{j}$ is invariant by  $\exp(\ad Z)$.

But $[K(0),V_{j}^+]=[\plus_{r<s\leq j-1} E_{r,s} (1,-1), (\plus_{j\leq r'<s'} E_{r',s'} (1,1))\plus(\plus_{\ell=j}^k\widetilde{\go{g}}^{\lambda_{\ell}})]=\{0\}$. One has also  $[K(1),V^{+}_{j}]\subset V_{j}^{\perp}$ as $V^{+}_{j}$ is the eigenspace of $\ad(H_{\lambda_{j}}+\ldots+H_{\lambda_{k}})$ for the eigenvalue  $2$ and as  $K(1)$ is the eigenspace of  $\ad(H_{\lambda_{j}}+\ldots+H_{\lambda_{k}})$ for the eigenvalue $-1$. Hence for  $X\in V^{+}_{j}$ and for  $Z\in K(0)\plus K(1)$ one has $\exp(\ad Z).X=X+X'$, with $X'\in V_{j}^{\perp}$. This implies that  $\Delta_{j}(\exp (\ad Z).X=\Delta_{j}(X)$.  

It remains to prove $(4)$. Let $Q_{j}$ be the polynomial on  $V^+_{j}$ defined by 
$$Q_{j}(X)= \Delta_{0}(X_{0}+\ldots+X_{j-1}+X).$$

The polynomial $Q_{j}$ is non zero because $Q_{j}(X_{j}+\ldots+X_{k})= \Delta_{0}(X_{0}+\ldots+X_{k})\neq 0$ (as $X_{0}+\ldots+X_{k}$ is  generic by the criterion of Proposition \ref{prop-generiques-cas-regulier}).

 This polynomial  $Q_{j}$ is also relatively invariant under $G_{j}$ (because  $G_{j} \subset \bar{G}$ centralizes the elements  $X_{0},\ldots,X_{j-1}$). For $t\in F$, let $g_{t}$ be the element of  $G_{j}$ whose action on  $ V^+_{j}$ is $t.\text{Id}_{ {V}^+_{j}}$ (Lemma \ref{lem-tId-dansG}). By Theorem  \ref{thpropridelta_0} one has:
 $$Q_{j}(g_{t}X)=\Delta_{0}(X_{0}+\ldots+X_{j-1}+tX)=t^{\kappa(k-j+1)}Q_{j}(X).$$
 
 Hence $Q_{j}$ is a relative invariant of the same degree as $\Delta_{j}$. Therefore  $Q_{j}=\alpha \Delta_{j}$, with  $\alpha\in F^*$.
 
 \end{proof}

\vskip 30pt
  
 
 \section{Classification of regular graded Lie algebras}\label{section-classification}
 \vskip 10pt
 \subsection{General principles for the classification}\label{section2-1-principes}\hfill
 
 \vskip 5pt
 
 Our aim is to classify the regular graded Lie algebras defined in  Definition \ref{def-regulier}. The notations are those of section \ref{section-PH}. We have seen in section   \ref{section1-extension} that the graded algebra
  $$\overline{\widetilde {\go g}}=\overline{V^-}\oplus \overline{\go g}\oplus \overline{V^+}$$
  
  is a regular prehomogeneous space of commutative type, over the algebraically closed field  $\overline{F}$, defined by the data $(\widetilde{\Psi}, \alpha_{0})$. We associate to such an object the Dynkin diagram  $\widetilde{\Psi}$, on which the vertex corresponding to  $\alpha_{0}$ is circled. Such a diagram is called the weighted   Dynkin diagram  of the graded algebra  $\overline{\widetilde {\go g}}$. The classification is the same as over  $\C$. It was given in  \cite{M-R-S}. This is the list:
  
  \vskip 30pt
  
  \hskip 150pt{$A_{2n-1}$ \hskip 20pt  \hbox{\unitlength=0.5pt
\begin{picture}(250,30)
\put(10,10){\circle*{10}}
\put(15,10){\line (1,0){30}}
\put(50,10){\circle*{10}}
\put(60,10){\circle*{1}}
\put(65,10){\circle*{1}}
\put(70,10){\circle*{1}}
\put(45,-20){$n-1$}
\put(75,10){\circle*{1}}
\put(80,10){\circle*{1}}
\put(90,10){\circle*{10}}
\put(95,10){\line (1,0){30}}
\put(130,10){\circle*{10}}
\put(130,10){\circle{16}}
\put(135,10){\line (1,0){30}}
\put(170,10){\circle*{10}}
\put(180,10){\circle*{1}}
\put(185,10){\circle*{1}}
\put(190,10){\circle*{1}}
\put(170,-20){$n-1$}
\put(195,10){\circle*{1}}
\put(200,10){\circle*{1}}
\put(210,10){\circle*{10}}
\put(215,10){\line (1,0){30}}
\put(250,10){\circle*{10}}
\end{picture}
}
  
}
  \vskip 30pt
  
  \hskip 150pt{$B_{n}$ \hskip 20pt  \hbox{\unitlength=0.5pt
\begin{picture}(250,30)(-10,0)
\put(10,10){\circle*{10}}
\put(10,10){\circle{16}}
\put(15,10){\line (1,0){30}}
\put(50,10){\circle*{10}}
\put(55,10){\line (1,0){30}}
\put(90,10){\circle*{10}}
\put(95,10){\line (1,0){30}}
\put(130,10){\circle*{10}}
\put(135,10){\circle*{1}}
\put(140,10){\circle*{1}}
\put(145,10){\circle*{1}}
\put(150,10){\circle*{1}}
\put(155,10){\circle*{1}}
\put(160,10){\circle*{1}}
\put(165,10){\circle*{1}}
\put(170,10){\circle*{10}}
\put(174,12){\line (1,0){41}}
\put(174,8){\line (1,0){41}}
\put(190,5.5){$>$}
\put(220,10){\circle*{10}}
\end{picture}
}}
 \vskip 30pt
 
\hskip 150pt{$C_{n}$ \hskip 20pt  \hbox{\unitlength=0.5pt
\begin{picture}(250,30)
\put(10,10){\circle*{10}}
\put(15,10){\line (1,0){30}}
\put(50,10){\circle*{10}}
\put(55,10){\line (1,0){30}}
\put(90,10){\circle*{10}}
\put(95,10){\circle*{1}}
\put(100,10){\circle*{1}}
\put(105,10){\circle*{1}}
\put(110,10){\circle*{1}}
\put(115,10){\circle*{1}}
\put(120,10){\circle*{1}}
\put(125,10){\circle*{1}}
\put(130,10){\circle*{1}}
\put(135,10){\circle*{1}}
\put(140,10){\circle*{10}}
\put(145,10){\line (1,0){30}}
\put(180,10){\circle*{10}}
\put(184,12){\line (1,0){41}}
\put(184,8){\line(1,0){41}}
\put(200,5.5){$<$}
\put(230,10){\circle*{10}}
\put(230,10){\circle{16}}
\end{picture}
}}

 \vskip 30pt
 \hskip 150pt{$D_{n,1}$ \hskip 20pt \hbox{\unitlength=0.5pt
\begin{picture}(240,40)(0,0)
\put(10,10){\circle*{10}}
\put(10,10){\circle{16}}
\put(15,10){\line (1,0){30}}
\put(50,10){\circle*{10}}
\put(55,10){\line (1,0){30}}
\put(90,10){\circle*{10}}
\put(95,10){\line (1,0){30}}
\put(130,10){\circle*{10}}
\put(140,10){\circle*{1}}
\put(145,10){\circle*{1}}
\put(150,10){\circle*{1}}
\put(155,10){\circle*{1}}
\put(160,10){\circle*{1}}
\put(170,10){\circle*{10}}
\put(175,10){\line (1,0){30}}
\put(210,10){\circle*{10}}
\put(215,14){\line (1,1){20}}
\put(240,36){\circle*{10}}
\put(215,6){\line(1,-1){20}}
\put(240,-16){\circle*{10}}
\end{picture}
}}

 \vskip 30pt
 \hskip 150pt{$D_{2n,2}$ \hskip 20pt \hbox{\unitlength=0.5pt
\begin{picture}(240,40)(0,0)
\put(10,10){\circle*{10}}
\put(15,10){\line (1,0){30}}
\put(50,10){\circle*{10}}
\put(55,10){\line (1,0){30}}
\put(90,10){\circle*{10}}
\put(95,10){\line (1,0){30}}
\put(130,10){\circle*{10}}
\put(140,10){\circle*{1}}
\put(145,10){\circle*{1}}
\put(150,10){\circle*{1}}
\put(155,10){\circle*{1}}
\put(160,10){\circle*{1}}
\put(170,10){\circle*{10}}
\put(175,10){\line (1,0){30}}
\put(210,10){\circle*{10}}
\put(215,14){\line (1,1){20}}
\put(240,36){\circle*{10}}
\put(240,36){\circle{16}}
\put(215,6){\line (1,-1){20}}
\put(240,-16){\circle*{10}}
\end{picture}
}
}\hskip 15pt  ($2n$ vertices)

 \vskip 30pt
 \hskip 150pt{$E_{7}$ \hskip 25pt \hbox{\unitlength=0.5pt
\begin{picture}(240,40)(-10,0)
\put(10,10){\circle*{10}}
\put(15,10){\line (1,0){30}}
\put(50,10){\circle*{10}}
\put(55,10){\line (1,0){30}}
\put(90,10){\circle*{10}}
\put(90,5){\line (0,-1){30}}
\put(90,-30){\circle*{10}}
\put(95,10){\line (1,0){30}}
\put(130,10){\circle*{10}}
\put(135,10){\line (1,0){30}}
\put(170,10){\circle*{10}}
\put(175,10){\line (1,0){30}}
\put(210,10){\circle*{10}}
\put(210,10){\circle{16}}
\end{picture}
}
} \vskip 40pt

 Remind that the circled root    $\alpha_{0}$ is the unique root in $\widetilde{\Psi}$ whose restriction to  $\go{a}$ is the root  $\lambda_{0}$.
 
 Therefore the  Satake-Tits diagram of $\widetilde{\go{g}}$ is such that $\alpha_{0}$ is a white root  which is not connected by an arrow to another white root. We associate to the regular graded Lie algebra $(\widetilde{\go{g}},H_{0})$, the  Satake-Tits diagram of $\widetilde{\go{g}}$ where the white root  $\alpha_{0}$ is circled. Such a diagram will be called the weighted  Satake-Tits diagram of $(\widetilde{\go{g}},H_{0})$.  Conversely if we are given  a   Satake-Tits diagram where the unique circled root, not connected by an arrow to another white root, such that the underlying  weighted Dynkin diagram is in the list above, then this diagram defines uniquely a regular graded Lie algebra. The grading is just defined by the element $H_{0}\in \go{a}$ satisfying the equations  $\alpha_{0}(H_{0})=2$ ($\alpha_{0}$ being the circled root) and $\beta(H_{0})= 0$ if  $\beta$ is one of the other simple roots.
 
 Remind that in the $p$-adic case, in which we are interested in here, and unlike the case of $\R$, the  Satake-Tits diagram does not characterize  $\widetilde{\go{g}}$ up to isomorphism. Two algebras having the same  Satake-Tits diagram may have distinct anisotropic kernels (\cite{Tits}, \cite {Schoeneberg}). However, as far as we are concerned, graded algebras having the same  weighted  Satake-Tits diagram will give rise to the same orbital decomposition of  $(G, V^+)$.   
 
 The  Satake-Tits diagram of $\go{g}$ is obtained from the weighted diagram of  $\widetilde{\go{g}}$ by removing the vertex  $\alpha_{0}$, and all the edges connected to  $\alpha_{0}$. 
 
Although this is not needed here,  let us note  that the infinitesimal representations  $(\go{g},V^+)$ obtained this way   exhaust  all the  $F$-forms of  $(\overline{\go{g}},\overline{V^+})$. By $F$-form we mean here a pair $(\go{u},W)$, where the Lie algebra $\go{u}$ is an $F$-form  of $\go{g}$, and where $W$ is a  $F$-form of $\overline{V^+}$  such that $[\go{u}, W]\subset W$. To prove this, one can remark that the results obtained over $\R$ in \cite{Rubenthaler1}, are still true in the p-adic case (see  also Proposition 4.1.2. p.66  of  \cite{Schoeneberg}).
 
We will now give a simple ``diagrammatic''   or ``combinatorial`` algorithm wich allows to determine the weighted Satake-Tits diagram of $\widetilde{\go{g}}_{1}$ (section \ref{section-descente1}) from the diagram of  $\widetilde{\go{g}}$. By induction this algorithm will give    the 1-type (Definition \ref{def-1-type}). This algorithm allows also to determine easily the rank of the graded algebra.  (Remark  \ref{rem-combinatoire} b) and c) below).

For the definition of the extended Dynkin diagram see \cite{Bou1}, Chap. VI \S4 $n^\circ 3$ p.198.

 \begin{prop}\label{prop-diagg1}\hfill

   Let us make the following  operations on the Satake-Tits diagram of the regular graded lie algebra $\widetilde{\go{g}}$:
 
 $1)$ One extends  the  Satake-Tits diagram by considering the underlying extended Dynkin diagram where the additional root  $-\omega$  is white ($\omega$ being the greatest root of $ \widetilde{ {\cal R}}^+$).
 
$ 2)$ One removes the vertex  $\alpha_{0}$ (the circled root), as well as all the white vertices  which are connected to  $\alpha_{0}$ through a chain of black vertices, and one removes also these  black vertices.
 
$ 3)$ One circles the vertex  $-\omega$ which has been added.
 
The diagram which is obtained after these three operations   is the weighted Satake-Tits diagram of  $\widetilde{\go{g}}_{1}$.

 \end{prop}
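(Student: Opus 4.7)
The plan is to prove the proposition in two steps, first working at the absolute level over $\overline F$, then transferring the Satake--Tits structure down to $F$.

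First I would establish that the underlying Dynkin diagram of the absolute root system $\widetilde{\cal R}_1$ of $\overline{\widetilde{\go{g}}_1}$ is obtained from the extended Dynkin diagram of $\widetilde{\cal R}$ by removing the vertex $\alpha_0$ together with all the vertices in its Galois orbit $S_{\lambda_0} \cap \widetilde{\Psi}$ and the intermediate simple roots that restrict to zero on $\go{a}$. This is essentially Borel--de Siebenthal applied in the graded setting, already worked out over $\overline F$ in \cite{M-R-S}: the Dynkin diagram of the centralizer of the $\overline F$-span of $\widetilde{\go{l}}_0$ is described by deleting the node $\alpha_0$ from the extended diagram $\widetilde{\Psi} \cup \{-\omega\}$. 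The crucial computation is that $\omega(H_1) = \omega(H_0) - \omega(H_{\lambda_0}) = 2 - 0 = 2$, using that $\omega$ restricts to $\lambda^0 = \lambda_k$, which is strongly orthogonal to $\lambda_0$ by Proposition \ref{propWconjugues}; thus the vertex labelled $-\omega$ in the new diagram plays the role of the new simple root with value $2$ on $H_1$.

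Second, I would transfer the Satake--Tits data. A simple root of $\widetilde{\cal R}_1$ is black in the Satake--Tits diagram of $\widetilde{\go{g}}_1$ precisely when its restriction to $\go{a}_1 = \go{a} \cap \ker \lambda_0$ vanishes. Since $\lambda_0$ already vanishes on $\go{a}_1$, a simple root of $\widetilde{\cal R}_1$ is black for $\widetilde{\go{g}}_1$ iff its restriction to $\go{a}$ lies in $F\lambda_0$; in particular every root black for $\widetilde{\go{g}}$ remains black. The arrows in the new diagram connect simple roots of $\widetilde{\cal R}_1$ sharing the same restriction to $\go{a}_1$, equivalently the same restriction to $\go{a}$ modulo $F\lambda_0$. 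This justifies the content of step $2)$: the white vertices connected to $\alpha_0$ through a chain of black vertices are exactly the elements of the Galois orbit $S_{\lambda_0} \cap \widetilde{\Psi}$, while the intermediate black vertices are the simple roots restricting to zero on $\go{a}$ that appear in the Satake--Tits diagram of $\widetilde{\go{l}}_0$ (equivalently, those of its anisotropic kernel, cf. Corollary \ref{cor-classification-k=0}).

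Step $3)$ is then verified as follows: once the vertices listed in step $2)$ are removed, $-\omega$ becomes a simple root of the new Dynkin diagram, and by the computation above its positive counterpart $\omega$ satisfies $\omega(H_1) = 2$; hence it is the unique simple root of $\widetilde{\Pi}_1$ characterized by Theorem \ref{thbasepi}(2) applied to the graded algebra $(\widetilde{\go{g}}_1, H_1)$. Its restriction to $\go{a}_1$ is nonzero (it equals $\pm \lambda_1$), so the vertex is white, and the procedure correctly circles it.

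The main obstacle lies in justifying step $2)$ intrinsically, namely showing that the combinatorial notion of ``white vertices connected to $\alpha_0$ through a chain of black vertices'' matches the Galois orbit $S_{\lambda_0} \cap \widetilde{\Psi}$ together with its black intermediaries. In the $p$-adic, non quasi-split setting, Galois orbits of simple roots can be larger than pairs and are not always displayed by simple arrows in the Satake--Tits diagram; I expect that the cleanest verification is by case-by-case inspection of the admissible weighted Satake--Tits diagrams listed in Table 1 of section \ref{section-Table}. A more uniform argument would exploit that the anisotropic kernel of $\widetilde{\go{g}}$ is Galois-stable and that $S_{\lambda_0}$ is determined by the Galois action on the highest weight of $(\overline{\go{g}}, \overline{V^+})$, but carrying this out without recourse to classification appears to be delicate.
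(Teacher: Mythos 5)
There is a genuine gap, and it sits exactly where you yourself locate your ``main obstacle''. Your justification of step $2)$ rests on identifying the white vertices joined to $\alpha_{0}$ by chains of black vertices with the Galois orbit $S_{\lambda_{0}}\cap\widetilde{\Psi}$; but by Proposition \ref{prop.alpha0} there is a \emph{unique} simple root restricting to $\lambda_{0}$, so $S_{\lambda_{0}}\cap\widetilde{\Psi}=\{\alpha_{0}\}$ and this identification is false. For instance in case (1) of Table 1 ($\go{sl}(2(k+1),D)$, an inner form, trivial $*$-action) the two white neighbours of $\alpha_{0}$ across the black strings must be deleted although they are not Galois conjugates of $\alpha_{0}$; they are deleted because they fail to be strongly orthogonal to the \emph{non-simple} elements of $S_{\lambda_{0}}$, namely the roots $\alpha_{0}+\gamma_{1}+\cdots+\gamma_{p}$ built from the black chain $\gamma_{1},\ldots,\gamma_{m}$ attached to $\alpha_{0}$. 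This is precisely what the paper proves (Lemma \ref{lemme-base-R1}): $\Psi_{1}={\cal R}_{1}\cap\Psi$, by an induction along these partial sums, and that lemma is what legitimises the deletion rule. Your appeal to \cite{M-R-S} for the absolute diagram also does not apply as stated: over $\overline{F}$ one centralizes the whole algebra $\overline{\widetilde{\go{l}}_{0}}$, generated by \emph{all} root spaces whose restriction is $\pm\lambda_{0}$, not the $\go{sl}_{2}$ attached to the single root $\alpha_{0}$, so the Borel--de Siebenthal statement you quote is not the one needed. The paper instead shows that the longest element $w_{1}$ of $W({\cal R}_{1})$ satisfies $w_{1}(\alpha_{1})=\omega$ (an absolute analogue of Proposition \ref{propWconjugues}), whence ${\rm Dyn}(\widetilde{\Psi}_{1})={\rm Dyn}(\Psi_{1}\cup\{-\omega\})$; your computation $\omega(H_{1})=2$ concerns the restricted root $\lambda^{0}$ and does not by itself make $-\omega$ a member of a basis of $\widetilde{{\cal R}}_{1}$.

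The second missing piece is the matching of colours and arrows. Your restriction argument does show that a vertex of $\Psi_{1}$ keeps its colour, but the diagram isomorphism obtained above is induced by $w_{1}$, which does not fix $\Psi_{1}$ pointwise, and the edges and arrows involving the new vertex are exactly what has to be compared with those of $\alpha_{1}$ in the Satake--Tits diagram of $\widetilde{\go{g}}_{1}$. The paper settles this with the $\Gamma$-order formalism of Schoeneberg: the orders used before and after the descent come from $\Gamma$-orders, so $w_{1}\in(W_{1})_{\Gamma}$ and it sends black roots to black roots and white roots to white roots. Your proposed fallback, a case-by-case inspection of Table 1, is not carried out, and would in any case be partly circular, since the descent data recorded there (rank and $1$-type) are computed by means of the very algorithm this proposition is meant to justify.
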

 
 \begin{proof}
 
 Remind that (Proposition \ref{prop-gtilde(1)}) 
 $$ \widetilde{ {\cal R}}_1=\{\beta \in \widetilde{ {\cal R}}\mid  \ \beta \sorth
\alpha, \, \forall \alpha \in S_{\lambda_{0}} \}.$$

Hence 
$${\cal R}_1= \widetilde{{\cal R}}_1\cap{\cal R}=\{\beta \in  { {\cal R}}\mid  \ \beta \sorth
\alpha, \, \forall \alpha \in S_{\lambda_{0}} \}.$$

Remind also that we have defined the following sets of positive roots:
$$\widetilde{ {\cal R}}_1^+=\widetilde{ {\cal R}}_1\cap \widetilde{ {\cal R}}^+,\,\, {\cal R}_1^+={\cal R}_{1}\cap  \widetilde{ {\cal R}}^+=\widetilde{ {\cal R}}_1\cap {\cal R}^+,$$
which correspond to the basis  $\widetilde{\Psi}_{1}$ and  $\Psi_{1}$ of  $ \widetilde{ {\cal R}}_1$ and  ${\cal R}_1$ respectively.

Let us first prove a Lemma.
 
 \begin{lemme}\label{lemme-base-R1}\hfill
 
 One has  $\Psi_{1}= {\cal R}_{1}\cap \Psi$.

 \end{lemme}

Proof of the Lemma: Let  $\langle {\cal R}_{1}\cap \Psi \rangle ^+$ be the set of positive linear combinations of elements in  ${\cal R}_{1}\cap \Psi$.
It is enough to show that  $\langle {\cal R}_{1}\cap \Psi \rangle ^+= {\cal R}_{1}^+$.

The inclusion  $\langle {\cal R}_{1}\cap \Psi \rangle ^+\subset {\cal R}_{1}^+$ is obvious. Conversely let  $\beta\in {\cal R}_{1}^+$. Let us write  $\beta$ as a positive linear combination of elements of  $\Psi$:
$$\beta=\sum_{\beta_{i}\in \Psi}x_{i}\beta_{i},\hskip 20pt x_{i}\in \N.$$
As  $\beta$ is orthogonal  to any root which restricts to  $\lambda_{0}$, one has  
$$\langle \beta, \alpha_{0}\rangle=0=\sum_{\beta_{i}\in \Psi}x_{i}\langle \beta_{i}, \alpha_{0}\rangle.$$
As $\langle \beta_{i}, \alpha_{0}\rangle \leq 0$ (scalar product of two roots in the same base), one gets
$$\beta=\sum_{\beta_{i}\in \Psi, \beta_{i}\perp \alpha_{0}}x_{i}\beta_{i}.$$
Let now  $\{\gamma_{1},\gamma_{2},\ldots,\gamma_{m}\}$ be the set of black roots which are connected to  $\alpha_{0}$ through a chain of black roots  (in the   Satake-Tits diagram). Moreover we suppose that this set of roots is ordered  in such a way that $\alpha_{0}+\gamma_{1}+\ldots+\gamma_{p}\in \widetilde{\cal R}^+$ for all $p=1,\ldots,m$. This is always possible. 

We will now show by induction on $j$ that if  $\beta_{i}$ belongs to the support of $\beta$ then  $\beta_{i}\perp \gamma_{j}$ for $j=0,\ldots,m$ (where we have set  $\gamma_{0}= \alpha_{0}$) . What we have done before is the first step of the induction.
Suppose that  $\beta_{i}\perp \gamma_{j}$, for all  $j\leq p$ ($p\in \{ 0,\ldots, m-1\}$). As $\beta\in {\cal R}_{1}^+$, using the induction hypothesis, one gets:
$$\langle \beta, \alpha_{0}+\gamma_{1}+\ldots+\gamma_{p+1}\rangle=0=\sum_{\beta_{i}\in \Psi, \beta_{i}\perp \gamma_{j}\,(j=0,\ldots,m-1)}x_{i}\langle \beta_{i}, \gamma_{p+1}\rangle.$$
Again, as  $\langle \beta, \gamma_{p+1}\rangle \leq 0$, one obtains 
$$\beta=\sum_{\beta_{i}\in \Psi, \beta_{i}\perp \gamma_{j}\,(j=0,\ldots,m)}x_{i} \beta_{i}.$$
\vskip 5 pt
End of the proof of the Lemma.
\vskip 5pt

Hence  $\widetilde{\Psi}_{1}=\Psi_{1}\cup \{\alpha_{1}\}$ where  $\alpha_{1}$ is the unique root of   $\widetilde{\Psi}_{1}$ such that  $\alpha_{1}(H_{1})=2$ (Proposition  \ref{prop.alpha0}). Let $W_{1}$ be the Weyl group of  ${\cal R}_{1}$, and let $w_{1}$ be the unique element in  $W_{1}$ such that  $w_{1}(\Psi_{1})=-\Psi_{1}$. Then  Proposition \ref{propWconjugues} (in his ``absolute`` version over $\overline{F}$, whose proof is exactly the same) implies that $w_{1}(\alpha_{1})=\omega$ where  $\omega$ is the greatest root of $\widetilde{ {\cal R}}_1$, which is also the greatest root of $\widetilde{ {\cal R}}$.

Let us denote  by $\rm{Dyn}(\,.\,)$ the Dynkin diagram of the basis ''$. $'' of a root system. One has:
$${\rm Dyn}(\widetilde{\Psi}_{1})={\rm Dyn}(\Psi_{1}\cup \{\alpha_{1}\})={\rm Dyn}(w_{1}(\Psi_{1}\cup \{\alpha_{1}\})={\rm Dyn}(-\Psi_{1}\cup \{\omega\})={\rm Dyn}(\Psi_{1}\cup \{-\omega\}).$$

The preceding Lemma implies that the Dynkin diagram of  $\widetilde{\go{g}}_{1}$ is the underlying Dynkin diagram  of the Satake-Tits diagram described in the statement. It remains to show that the  ``{\it colors }'' (black or white) are the right one.  

For this,   let  $X$ be the root lattice of  $\widetilde{\cal R}$ (that is the $\Z$-module generated by  $\widetilde{\cal R}$). The  restriction morphism  $\rho$ extends to a surjective morphism:
$$\rho:X\longrightarrow \overline{X},$$
where  $ \overline{X}$ is the root lattice of $\Sigma$ (in  $\go{a}^*$).  Consider also the sublattice 
$$X_{0}=\{\chi \in X\,|\, \rho(\chi)=0\}.$$
The map $\rho$ induces a bijective morphism: $\overline{\rho}: X/X_{0}\longrightarrow \overline{X}$. Set
 $${\cal R}_{0}=\{\alpha\in {\cal R}\,|\, \rho(\alpha)=0\}=\{\alpha\in \widetilde{\cal R}\,|\, \rho(\alpha)=0\}.$$

The order on  ${\cal R}$ induces an order on  ${\cal R}_{0}$  by setting  ${\cal R}_{0}^+={\cal R}^+\cap {\cal R}_{0}$. We choose an additive order   $\leq$ on $X_{0}$ in such a way that  ${\cal R}_{0}^+\subset X_{0}^+$ (for the definition of an additive order on a lattice and for the notation we refer to the paper by Schoeneberg (\cite{Schoeneberg}, p.37), who call it  ``group linear order''). For this it is enough to consider an hyperplane in the vector space generated by $X_{0}$, whose intersection with  $X_{0}$ is reduced to $\{0\}$ and such that ${\cal R}_{0}^+$ is contained in one of the half-spaces defined by this hyperplane. $X_{0}^+$ is then defined  as the intersection of  $X_{0}$ with this half-space.

Similarly we choose an additive order on the lattice  $\overline{X}$ such that  $\Sigma^+\subset \overline{X}^+$ and we set :
$$(X/X_{0})^+=\{\overline{\chi}\in X/X_{0}\,|\,\overline{\rho}(\overline{\chi})\in \overline{X}^+\}$$.

Let  $\Gamma$ be the Galois group of the finite Galois extension  on which  $\widetilde{\go{g}}$ splits (\cite{Schoeneberg}, p. 29). The data  $(X/X_{0})^+$ and $X_{0}^+$ define what is called by Schoeneberg a $\Gamma$-order on $X$ (\cite{Schoeneberg}, Definitions 3.1.37 and 3.1.38 p. 37). This additive order is defined by 
$$X^+= (X/X_{0})^+ \cup X_{0}^+$$
where  $(X/X_{0})^+$ stands here for  the set of elements which are in a strictly positive class. If $\alpha\in \widetilde{\cal R}^+$, then either $\rho(\alpha)=0$, and in this case    $\alpha\in X_{0}^+\subset X^+$, or  $\rho(\alpha)\in \Sigma^+$ and then   $\alpha\in X^+$. This shows that the order chosen at the beginning of this paper  and which was defined by $\widetilde{\cal R}^+$ comes from a  $\Gamma$-order in the sense of  Schoeneberg on the corresponding root lattices.  Similarly, at step $1$ of the descent,  the  order defined by $\widetilde{\cal R}_{1}^+$ comes from a  $\Gamma$-order. Then from the proof of  Lemma 4.3.1 p. 72 of  \cite{Schoeneberg}, we obtain that  $w_{1}\in (W_{1})_{\Gamma}=\{w\in W({\cal R}_{1}), w(X^1_{0})=X^1_{0}\}$ (where $X^1$ is the root lattice of  ${\cal R}_{1}$,  and  $X^1_{0}$ is the subset of  $X^1$ which vanish on $\go{a}_{1}$), and that $w_{1}$ sends a black root on a  black  root  and a white root  on a white root.
 This ends the proof.

 \end{proof}
 \begin{rem}\label{rem-combinatoire}\hfill
 
 a) As expected, if one applies the procedure of  Proposition \ref{prop-diagg1} to the Satake-Tits diagrams of  Lemma \ref {lemme-diagrammes-k=0}, one  obtains the empty diagram.
 
 b) It is worth noting that the rank of  $\widetilde{\go{g}}$ (cf. Definition \ref{def-rang}) is the number of times one must apply the procedure of  Proposition \ref{prop-diagg1} until one obtains the empty diagram.
 
 c) The last diagram,    obtained before the empty diagram, when one iterates this procedure, is necessarily one of the two diagrams of Lemma \ref{lemme-diagrammes-k=0}. It defines therefore the $1$-type de $\widetilde{\go{g}}$.
 
 d) It may happen that the iteration of the procedure of  Proposition \ref{prop-diagg1}  gives a non-connected  Satake-Tits diagram   (see example below). In that case the next iteration  is made only on the connected component containing the circled root.  
 \end{rem}
 
 \begin{example}
 
 \end{example} The following split diagram corresponds to a graded algebra  $\widetilde{\go{g}}$ verifying the hypothesis  (${\bf H}_{1}$), (${\bf H}_{2}$), (${\bf H}_{3}$):

 \centerline{ \hbox{\unitlength=0.5pt
\begin{picture}(250,30)(-10,0)
\put(10,10){\circle{10}}
\put(10,10){\circle{16}}
\put(15,10){\line (1,0){30}}
\put(50,10){\circle{10}}
\put(60,10){\circle*{1}}
\put(65,10){\circle*{1}}
\put(70,10){\circle*{1}}
\put(75,10){\circle*{1}}
\put(80,10){\circle*{1}}
\put(85,10){\circle*{1}}
\put(95,10){\circle{10}}
\put(100,10){\line (1,0){30}}
\put(135,10){\circle{10}}
\put(140,10){\circle*{1}}
\put(145,10){\circle*{1}}
\put(150,10){\circle*{1}}
\put(155,10){\circle*{1}}
\put(160,10){\circle*{1}}
\put(165,10){\circle*{1}}
\put(170,10){\circle*{1}}
\put(175,10){\circle{10}}
\put(179,12){\line (1,0){41}}
\put(179,8){\line (1,0){41}}
\put(195,5.5){$>$}
\put(225,10){\circle{10}}
\end{picture} $B_{n}$
}
}

 \vskip 20pt
 The extended diagram is:
 
 \vskip 10pt
\centerline{ \hbox{\unitlength=0.5pt
\begin{picture}(250,30)(-10,0)
\put(10,10){\circle{10}}
\put(10,10){\circle{16}}
\put(15,10){\line (1,0){30}}
\put(50,10){\circle{10}}
\put(50,5){\line(0,-1){31}}
\put(50,-30){\circle{10}}
\put(60,10){\circle*{1}}
\put(65,10){\circle*{1}}
\put(70,10){\circle*{1}}
\put(75,10){\circle*{1}}
\put(80,10){\circle*{1}}
\put(85,10){\circle*{1}}
\put(95,10){\circle{10}}
\put(100,10){\line (1,0){30}}
\put(135,10){\circle{10}}
\put(140,10){\circle*{1}}
\put(145,10){\circle*{1}}
\put(150,10){\circle*{1}}
\put(155,10){\circle*{1}}
\put(160,10){\circle*{1}}
\put(165,10){\circle*{1}}
\put(170,10){\circle*{1}}
\put(175,10){\circle{10}}
\put(179,12){\line (1,0){41}}
\put(179,8){\line (1,0){41}}
\put(195,5.5){$>$}
\put(225,10){\circle{10}}
\end{picture}
}
}  

\vskip 20pt

The diagram obtained by applying the procedure of Proposition  \ref{prop-diagg1} is : 
\vskip 20pt

 \centerline{ \hbox{\unitlength=0.5pt
\begin{picture}(30,30)(-10,0)
\put(10,10){\circle{10}}
\put(10,10){\circle{16}}
\end{picture} $A_{1}$
}}

\hbox{\unitlength=0.5pt
\begin{picture}(250,30)(-10,0)
\put(50,10){\circle{10}}
\put(60,10){\circle*{1}}
\put(65,10){\circle*{1}}
\put(70,10){\circle*{1}}
\put(75,10){\circle*{1}}
\put(80,10){\circle*{1}}
\put(85,10){\circle*{1}}
\put(95,10){\circle{10}}
\put(100,10){\line (1,0){30}}
\put(135,10){\circle{10}}
\put(140,10){\circle*{1}}
\put(145,10){\circle*{1}}
\put(150,10){\circle*{1}}
\put(155,10){\circle*{1}}
\put(160,10){\circle*{1}}
\put(165,10){\circle*{1}}
\put(170,10){\circle*{1}}
\put(175,10){\circle{10}}
\put(179,12){\line (1,0){41}}
\put(179,8){\line (1,0){41}}
\put(195,5.5){$>$}
\put(225,10){\circle{10}}
\end{picture} $B_{n-2}$
}

\vskip 20pt

When one applies again  the procedure  to the diagram   \hbox{\unitlength=0.5pt
\begin{picture}(30,30)(-10,0)
\put(10,10){\circle{10}}
\put(10,10){\circle{16}}
\end{picture}  
}, one obtains the empty diagram. Hence the rank is $2$. 
 \subsection{Table }\label{section-Table}\hfill
 
 \vskip 5pt

\begin{notation}\label{notations-tables} {\bf (Notations for Table 1)}\hfill
 
 \vskip 5pt

We first  define the {\it type} of the graded Lie algebra $\tilde{\go g}$ according to possible values which can be taken by $e$ and $\ell$. This notion of type allows to split the classification of graded Lie algebras according to the number of $G$-orbits in $V^+$.

\begin{definition}\label{def-type} \footnote {We caution the reader that our notion of type in the non archimedean case is not related to the notion of type in the archimedean($F=\R)$  case done in \cite{BR}. Our definition of type is related to the structure of the  open $G$-orbits (see Theorem \ref{thm-orbouverte} below).} .\hfill 

$\bullet$ $\tilde{\go g}$ is said to be of type I if  $\ell=\delta^2,\;\delta\in\N^*$  and   $e=0$ or $4$. \\
$\bullet$ $\tilde{\go g}$  is said to be of type II if $\ell=1$ and  $e=1$,$2$ or $3$,\\
$\bullet$ $\tilde{\go g}$  is said to be of type III if $\ell=3$. 
\end{definition}

- We denote always by $D$ a central division algebra over $F$. Its degree is denoted by  $\delta$ (remind that this means that the dimension is $\delta^2$). If its degree is  $2$, then  $D$ is necessarily the unique quaternion  division algebra over $F$.
\vskip 5pt
- This quaternion division algebra over $F$ is denoted by  $\H$   and its canonical anti-involution is denoted by $\gamma:x\mapsto \bar{x}$.
\vskip 5pt

- $M(m,D)=M_{m}(D)$ is the algebra    of $m\times m$ matrices with coefficients in $D$.
\vskip 5pt
- $\go{sl}(m,D)$ is the derived Lie algebra of $M(m,D)$. It is also the space of matrices in  $M(m,D)$ whose reduced trace is zero. (Recall that if  $x=(x_{i,j})\in M(m,D)$, $Tr_{red}(x)=\sum \tau(x_{i,i})$ where  $\tau$ is the reduced trace of  $D$. (\cite{Weil}, IX,\S2, Corollaire 2 p.169). Recall also that the reduced trace of the quaternion division algebra is  $\tau(x)=x+\overline{x}$.
\vskip 5pt
- $E=F(y)$ is a quadratic extension of  $F$. Then $\sigma$ is the canonical conjugation in  $E$: $\sigma(a+by)=a-by.$

- $H_{2n}$ is the hermitian form on  $E^{2n}$ defined by  
$H_{2n}(u,v)={^tu}S_{n}\sigma(v) $ where $u,v$ are columns vectors of  $E^{2n}$ and where  $S_n=\left(\begin{array}{cc}0 & I_n\\ I_n & 0\end{array}\right)$.  

\vskip 5pt

- $\go{u}(2n,E,H_{2n})$$=\{X\in \go{sl}(2n,E), XS_{n}+S_{n}{^t(\sigma(X))}=0\}$ (this is the so-called unitary algebra of the form $H_{2n}$)
\vskip 5pt
- ${\rm Herm}_{\sigma}(n,E)$ is defined as follows:
$${\rm Herm}_{\sigma}(n,E)=\{U\in M(n,E), \, {^t\sigma(U)}=U\}$$
\vskip 5pt

- $q_{(p,q)}$, with  $p\geq q$ is a quadratic form of  Witt index $q$ on $F^{p+q}$. $\go{o}(q_{(p,q)})$ is the corresponding orthogonal algebra.

\vskip 5pt

- $\go{sp}(2n,F)$ is the usual symplectic Lie algebra (the matrices in it being of type $2n\times 2n$, with coefficients in  $F$). 
\vskip 0,5cm

 - ${\rm Sym}(n,F)$ is the space of symmetric matrices of type $n\times n$ with coefficients in  $F$.
\vskip 5pt

  - On  $\H^{2n}$ we denote also by $H_{2n}$ the hermitian form defined by  
$H_{2n}(u,v)={^t\gamma(u)}S_{n}v $ where $u,v$ are columns vectors of  $\H^{2n}$, and where, as above, $S_n=\left(\begin{array}{cc}0 & I_n\\ I_n & 0\end{array}\right)$.

 - $\go{u}(2n, \H, H_{2n})=\{A\in M(2n, \H), \, {^t\overline{A}}S_{n}+S_{n}A=0\}$
\vskip 5pt
  - ${\rm SkewHerm}(n,\mathbb H)=\{A\in M(n, \H), \, {^t\overline{A}}+A=0\}$

\vskip 5pt
  - ${\rm Herm}(n,\mathbb H)=\{A\in M(n, \H), \, {^t\overline{A}}=A\}$

\vskip 5pt

- $\mathrm{Skew}(2n,F)$ is the space of skew-symmetric matrices of type $2n\times 2n$  with coefficients in $F$.

\vskip 5pt

  - $K_{2n}$ is the $\gamma$-skewhermitian form on  $\H^{2n}$ defined by  
$K_{2n}(u,v)={^t\gamma(u)}K_{2n}v $ where $u,v$ are columns vectors of  $\H^{2n}$ and where, by abuse of notation,we also set  $K_{2n}=\left(\begin{array}{cc} 0 & I_n\\ -I_n & 0\end{array}\right)$.
 - $\go{u}(2n, \H, K_{2n})=\{A\in M(2n, \H), \, {^t\overline{A}}K_{2n}+K_{2n}A=0\}$
\end{notation}

\def\AI{
\hbox{\unitlength=0.5pt
\begin{picture}(250,30)(-50,0)
\put(10,40){\circle{10}}
\put(15,40){\line (1,0){30}}
\put(50,40){\circle{10}}
\put(60,40){\circle*{1}}
\put(65,40){\circle*{1}}
\put(70,40){\circle*{1}}
\put(75,40){\circle*{1}}
\put(80,40){\circle*{1}}
\put(85,40){\circle*{1}}
\put(90,40){\circle*{1}}
\put(95,40){\circle*{1}}
\put(100,40){\circle*{1}}
\put(110,40){\circle{10}}
\put(115,40){\line (1,-1){21}}
\put(140,15){\circle{10}}
\put(140,15){\circle{16}}
\put(115,-10){\line (1,1){21.5}}
\put(110,-10){\circle{10}}
\put(110,8){\vector(0,1){23}}
\put(110,22){\vector(0,-1){23}}
\put(100,-10){\circle*{1}}
\put(95,-10){\circle*{1}}
\put(90,-10){\circle*{1}}
\put(85,-10){\circle*{1}}
\put(80,-10){\circle*{1}}
\put(75,-10){\circle*{1}}
\put(70,-10){\circle*{1}}
\put(65,-10){\circle*{1}}
\put(60,-10){\circle*{1}}
\put(50,-10){\circle{10}}
\put(50,8){\vector(0,1){23}}
\put(50,22){\vector(0,-1){23}}
\put(15,-10){\line (1,0){30}}
\put(10,-10){\circle{10}}
\put(10,8){\vector(0,1){23}}
\put(10,22){\vector(0,-1){23}}
\end{picture}
}}

\def\AII{
\hbox{\unitlength=0.4pt
\begin{picture}(310,00)
\put(10,10){${\linethickness{1mm}\line(1,0){20}}$}
 \put(30,10){\line (1,0){17}}
 \put(52,10){\circle{10}}
 \put(57,10){\line (1,0){17}}
 \put(74,10){${\linethickness{1mm}\line(1,0){20}}$}
 \put(99,10){\circle*{1}}
\put(104,10){\circle*{1}}
\put(109,10){\circle*{1}}
\put(114,10){\circle*{1}}
\put(119,10){\circle*{1}}
\put(122,10){${\linethickness{1mm}\line(1,0){20}}$}
 \put(142,10){\line (1,0){15}}
 \put(164,10){\circle{10}}
 \put(164,10){\circle{16}}
 \put(171,10){\line (1,0){15}}
 \put(186,10){${\linethickness{1mm}\line(1,0){20}}$}
 \put(211,10){\circle*{1}}
\put(216,10){\circle*{1}}
\put(221,10){\circle*{1}}
\put(226,10){\circle*{1}}
\put(231,10){\circle*{1}}
\put(233,10){${\linethickness{1mm}\line(1,0){20}}$}
 \put(253,10){\line (1,0){17}}
 \put(275,10){\circle{10}}
  \put(280,10){\line (1,0){17}}
 \put(297,10){${\linethickness{1mm}\line(1,0){20}}$}
 \end{picture}
}
}

\def\AIII{
\hbox{\unitlength=0.5pt
\begin{picture}(250,30)
\put(10,10){\circle*{10}}
\put(15,10){\line (1,0){30}}
\put(50,10){\circle{10}}
\put(60,10){\circle*{1}}
\put(65,10){\circle*{1}}
\put(70,10){\circle*{1}}
\put(75,10){\circle*{1}}
\put(80,10){\circle*{1}}
\put(90,10){\circle*{10}}
\put(95,10){\line (1,0){30}}
\put(130,10){\circle{10}}
\put(130,10){\circle{16}}
\put(135,10){\line (1,0){30}}
\put(170,10){\circle*{10}}
\put(180,10){\circle*{1}}
\put(185,10){\circle*{1}}
\put(190,10){\circle*{1}}
\put(195,10){\circle*{1}}
\put(200,10){\circle*{1}}
\put(210,10){\circle{10}}
\put(215,10){\line (1,0){30}}
\put(250,10){\circle*{10}}
\end{picture}
}}

\def\BI{
\hbox{\unitlength=0.5pt
\begin{picture}(250,30)(-10,0)
\put(10,10){\circle{10}}
\put(10,10){\circle{16}}
\put(15,10){\line (1,0){30}}
\put(50,10){\circle{10}}
\put(55,10){\line (1,0){30}}
\put(90,10){\circle{10}}
\put(95,10){\line (1,0){30}}
\put(130,10){\circle{10}}
\put(135,10){\circle*{1}}
\put(140,10){\circle*{1}}
\put(145,10){\circle*{1}}
\put(150,10){\circle*{1}}
\put(155,10){\circle*{1}}
\put(160,10){\circle*{1}}
\put(165,10){\circle*{1}}
\put(170,10){\circle{10}}
\put(174,12){\line (1,0){41}}
\put(174,8){\line (1,0){41}}
\put(190,5.5){$>$}
\put(220,10){\circle*{10}}
\end{picture}
}}

\def\BII{
\hbox{\unitlength=0.5pt
\begin{picture}(250,30)(-10,0)
\put(10,10){\circle{10}}
\put(10,10){\circle{16}}
\put(15,10){\line (1,0){30}}
\put(50,10){\circle{10}}
\put(60,10){\circle*{1}}
\put(65,10){\circle*{1}}
\put(70,10){\circle*{1}}
\put(75,10){\circle*{1}}
\put(80,10){\circle*{1}}
\put(85,10){\circle*{1}}
\put(95,10){\circle{10}}
\put(100,10){\line (1,0){30}}
\put(135,10){\circle{10}}
\put(140,10){\circle*{1}}
\put(145,10){\circle*{1}}
\put(150,10){\circle*{1}}
\put(155,10){\circle*{1}}
\put(160,10){\circle*{1}}
\put(165,10){\circle*{1}}
\put(170,10){\circle*{1}}
\put(175,10){\circle{10}}
\put(179,12){\line (1,0){41}}
\put(179,8){\line (1,0){41}}
\put(195,5.5){$>$}
\put(225,10){\circle{10}}
\end{picture}
}}

\def\BIII{
\hbox{\unitlength=0.5pt
\begin{picture}(250,30)(-10,0)
\put(130,10){\circle{10}}
\put(130,10){\circle{16}}
\put(134,12){\line (1,0){41}}
\put(134,8){\line (1,0){41}}
\put(150,5.5){$>$}
\put(180,10){\circle*{10}}
\end{picture}
}}

\def\CI{
\hbox{\unitlength=0.5pt
\begin{picture}(250,30)
\put(10,10){\circle{10}}
\put(15,10){\line (1,0){30}}
\put(50,10){\circle{10}}
\put(55,10){\line (1,0){30}}
\put(90,10){\circle{10}}
\put(95,10){\circle*{1}}
\put(100,10){\circle*{1}}
\put(105,10){\circle*{1}}
\put(110,10){\circle*{1}}
\put(115,10){\circle*{1}}
\put(120,10){\circle*{1}}
\put(125,10){\circle*{1}}
\put(130,10){\circle*{1}}
\put(135,10){\circle*{1}}
\put(140,10){\circle{10}}
\put(145,10){\line (1,0){30}}
\put(180,10){\circle{10}}
\put(184,12){\line (1,0){41}}
\put(184,8){\line(1,0){41}}
\put(200,5.5){$<$}
\put(230,10){\circle{10}}
\put(230,10){\circle{16}}
\end{picture}
}}

\def\CIII{
\hbox{\unitlength=0.5pt
\begin{picture}(250,30)
\put(10,10){\circle*{10}}
\put(15,10){\line (1,0){30}}
\put(50,10){\circle{10}}
\put(55,10){\line (1,0){30}}
\put(90,10){\circle*{10}}
\put(95,10){\circle*{1}}
\put(100,10){\circle*{1}}
\put(105,10){\circle*{1}}
\put(110,10){\circle*{1}}
\put(115,10){\circle*{1}}
\put(120,10){\circle*{1}}
\put(125,10){\circle*{1}}
\put(130,10){\circle*{1}}
\put(135,10){\circle*{1}}
\put(140,10){\circle{10}}
\put(145,10){\line (1,0){30}}
\put(180,10){\circle*{10}}
\put(184,12){\line (1,0){41}}
\put(184,8){\line (1,0){41}}
\put(200,5.5){$<$}
\put(230,10){\circle{10}}
\put(230,10){\circle{16}}
\end{picture}
}}

\def\DI{
\hbox{\unitlength=0.5pt
\begin{picture}(240,40)(0,-10)
\put(10,10){\circle{10}}
\put(10,10){\circle{16}}
\put(15,10){\line (1,0){30}}
\put(50,10){\circle{10}}
\put(55,10){\line (1,0){30}}
\put(90,10){\circle*{10}}
\put(95,10){\line (1,0){30}}
\put(130,10){\circle*{10}}
\put(140,10){\circle*{1}}
\put(145,10){\circle*{1}}
\put(150,10){\circle*{1}}
\put(155,10){\circle*{1}}
\put(160,10){\circle*{1}}
\put(170,10){\circle*{10}}
\put(175,10){\line (1,0){30}}
\put(210,10){\circle*{10}}
\put(215,14){\line (1,1){20}}
\put(240,36){\circle*{10}}
\put(215,6){\line(1,-1){20}}
\put(240,-16){\circle*{10}}
\end{picture}
}}

\def\DIIA{
\hbox{\unitlength=0.5pt
\begin{picture}(240,40)(0,-10)
\put(10,10){\circle{10}}
\put(10,10){\circle{16}}
\put(15,10){\line (1,0){30}}
\put(50,10){\circle{10}}
\put(55,10){\line (1,0){30}}
\put(90,10){\circle{10}}
\put(95,10){\line (1,0){30}}
\put(130,10){\circle{10}}
\put(140,10){\circle*{1}}
\put(145,10){\circle*{1}}
\put(150,10){\circle*{1}}
\put(155,10){\circle*{1}}
\put(160,10){\circle*{1}}
\put(170,10){\circle{10}}
\put(175,10){\line (1,0){30}}
\put(210,10){\circle{10}}
\put(215,14){\line (1,1){20}}
\put(240,36){\circle{10}}
\put(215,6){\line (1,-1){20}}
\put(240,-16){\circle{10}}
\end{picture}
}}

\def\DIIB{
\hbox{\unitlength=0.5pt
\begin{picture}(240,40)(0,-10)
\put(10,10){\circle{10}}
\put(10,10){\circle{16}}
\put(15,10){\line (1,0){30}}
\put(50,10){\circle{10}}
\put(55,10){\line (1,0){30}}
\put(90,10){\circle{10}}
\put(95,10){\line (1,0){30}}
\put(130,10){\circle{10}}
\put(140,10){\circle*{1}}
\put(145,10){\circle*{1}}
\put(150,10){\circle*{1}}
\put(155,10){\circle*{1}}
\put(160,10){\circle*{1}}
\put(170,10){\circle{10}}
\put(175,10){\line (1,0){30}}
\put(210,10){\circle{10}}
\put(215,14){\line (1,1){20}}
\put(240,36){\circle{10}}
\put(215,6){\line (1,-1){20}}
\put(240,-16){\circle{10}}
\put(240,2){\vector(0,1){23}}
\put(240,16){\vector(0,-1){23}}
\end{picture}
}}

\def\DIIC{
\hbox{\unitlength=0.5pt
\begin{picture}(240,40)(0,-10)
\put(10,10){\circle{10}}
\put(10,10){\circle{16}}
\put(15,10){\line  (1,0){30}}
\put(50,10){\circle{10}}
\put(60,10){\circle*{1}}
\put(65,10){\circle*{1}}
\put(70,10){\circle*{1}}
\put(75,10){\circle*{1}}
\put(80,10){\circle*{1}}
\put(85,10){\circle*{1}}
\put(95,10){\circle{10}}
\put(100,10){\line  (1,0){30}}
\put(135,10){\circle{10}}
\put(140,10){\circle*{1}}
\put(145,10){\circle*{1}}
\put(150,10){\circle*{1}}
\put(155,10){\circle*{1}}
\put(160,10){\circle*{1}}
\put(165,10){\circle*{1}}
\put(170,10){\circle*{1}}
\put(175,10){\circle{10}}
\put(180,10){\line  (1,0){30}}
\put(215,10){\circle{10}}
\put(215,14){\line  (1,1){20}}
\put(240,36){\circle*{10}}
\put(215,6){\line  (1,-1){20}}
\put(240,-16){\circle*{10}}
\end{picture}
}}

\def\DIII{
\hbox{\unitlength=0.5pt
\begin{picture}(240,40)(0,-10)
\put(10,10){\circle{10}}
\put(10,10){\circle{16}}
\put(15,10){\line (1,0){30}}
\put(50,10){\circle*{10}}
\put(55,10){\line (1,0){30}}
\put(90,10){\circle*{10}}
\put(95,10){\line (1,0){30}}
\put(130,10){\circle*{10}}
\put(140,10){\circle*{1}}
\put(145,10){\circle*{1}}
\put(150,10){\circle*{1}}
\put(155,10){\circle*{1}}
\put(160,10){\circle*{1}}
\put(170,10){\circle*{10}}
\put(175,10){\line (1,0){30}}
\put(210,10){\circle*{10}}
\put(215,14){\line (1,1){20}}
\put(240,36){\circle*{10}}
\put(215,6){\line (1,-1){20}}
\put(240,-16){\circle*{10}}
\end{picture}
}}

\def\DIde{
\hbox{\unitlength=0.5pt
\begin{picture}(240,35)(0,-15)
\put(10,0){\circle*{10}}
\put(15,0){\line (1,0){30}}
\put(50,0){\circle{10}}
\put(55,0){\line (1,0){30}}
\put(90,0){\circle*{10}}
\put(95,0){\line (1,0){30}}
\put(130,0){\circle{10}}
\put(140,0){\circle*{1}}
\put(145,0){\circle*{1}}
\put(150,0){\circle*{1}}
\put(155,0){\circle*{1}}
\put(160,0){\circle*{1}}
\put(170,0){\circle*{10}}
\put(175,0){\line (1,0){30}}
\put(210,0){\circle{10}}
\put(215,4){\line (1,1){20}}
\put(240,26){\circle{10}}
\put(240,26){\circle{16}}
\put(215,-4){\line (1,-1){20}}
\put(240,-26){\circle*{10}}
\end{picture}
}}

\def\DIIde{
\hbox{\unitlength=0.5pt
\begin{picture}(240,35)(0,-15)
\put(10,0){\circle{10}}
\put(15,0){\line (1,0){30}}
\put(50,0){\circle{10}}
\put(55,0){\line (1,0){30}}
\put(90,0){\circle{10}}
\put(95,0){\line (1,0){30}}
\put(130,0){\circle{10}}
\put(140,0){\circle*{1}}
\put(145,0){\circle*{1}}
\put(150,0){\circle*{1}}
\put(155,0){\circle*{1}}
\put(160,0){\circle*{1}}
\put(170,0){\circle{10}}
\put(175,0){\line (1,0){30}}
\put(210,0){\circle{10}}
\put(215,4){\line (1,1){20}}
\put(240,26){\circle{10}}
\put(240,26){\circle{16}}
\put(215,-4){\line (1,-1){20}}
\put(240,-26){\circle{10}}
\end{picture}
}}

\def\EI{
\hbox{\unitlength=0.5pt
\begin{picture}(240,40)(-10,-45)
\put(10,0){\circle{10}}
\put(15,0){\line  (1,0){30}}
\put(50,0){\circle*{10}}
\put(55,0){\line  (1,0){30}}
\put(90,0){\circle*{10}}
\put(90,-5){\line  (0,-1){30}}
\put(90,-40){\circle*{10}}
\put(95,0){\line  (1,0){30}}
\put(130,0){\circle*{10}}
\put(135,0){\line  (1,0){30}}
\put(170,0){\circle{10}}
\put(175,0){\line (1,0){30}}
\put(210,0){\circle{10}}
\put(210,0){\circle{16}}
\end{picture}
}}

\def\EII{
\hbox{\unitlength=0.5pt
\begin{picture}(240,40)(-10,-35)
\put(10,10){\circle{10}}
\put(15,10){\line (1,0){30}}
\put(50,10){\circle{10}}
\put(55,10){\line (1,0){30}}
\put(90,10){\circle{10}}
\put(90,5){\line (0,-1){30}}
\put(90,-30){\circle{10}}
\put(95,10){\line (1,0){30}}
\put(130,10){\circle{10}}
\put(135,10){\line (1,0){30}}
\put(170,10){\circle{10}}
\put(175,10){\line (1,0){30}}
\put(210,10){\circle{10}}
\put(210,10){\circle{16}}
\end{picture}
}}


\begin{landscape}
\vskip 20pt
\centerline{{\bf Table 1}\hskip 20pt Simple Regular Graded Lie Algebras over a $p$-adic field} 

\vskip 20pt
{\scriptsize
\label{tables1}
\begin{tabular}{|c|c|c|c|c|c|c|c|c|c|c|c|c|}
\hline
&&&&&&&&&&&&\\
&$\widetilde{\go g}$
&${\go g}'$
 &$V^+$
&$\widetilde {\mathcal R}$
&$\widetilde{\Sigma}$
  &{Satake-Tits diagram}
  &$ rank (=k+1)$ 
& $ \ell$
& $d$
&$e$
&Type 
&1-type\\
\hline
\hline 
(1)& $\go {sl}(2(k+1),D)$
& $\begin{matrix}\go {sl}(k+1,D)\\ \oplus \\ \go {sl}(k+1,D) \end{matrix}$ 
&  $ {\mathrm M}(k+1,D) $
& $\underset{n=(k+1)\delta}{ A_{2n-1}}$
& $A_{2k+1}$ 
& $\begin{matrix}\AII \\ \text{where} \hbox{\unitlength=0.4pt
\begin{picture}(30,30)
\put(10,10){${\linethickness{1mm}\line(1,0){20}}$}\end{picture}} = \hbox{\unitlength=0.4pt
\begin{picture}(200,00)
\put(10,10){\circle*{10}} \put(15,10){\line(1,0){20}}  \put(40,10){\circle*{10}}\put(50,10){\circle*{1}}\put(55,10){\circle*{1}}\put(60,10){\circle*{1}}\put(65,10){\circle*{1}}\put(70,10){\circle*{1}}\put(75,10){\circle*{10}}\put(80,10){\line(1,0){20}}\put(105,10){\circle*{10}} \put(30,-15){$\delta-1$} \put(120,10){ ($\delta\in \N^*)$} \end{picture}}   \end{matrix}$ 
&$k+1$
&$\delta^2$
&$2\delta^2$
&$   0$
&  I
&$(A,\delta)$\\
\hline
&&&&&&&&&&&&\\
(2)& $\go {u}(2n,E,H_{n})$
& $\go {sl}(n,E)$ 
&  $ \mathrm {Herm}_{\sigma}(n,E) $
& $\underset{n\geqslant 1}{ A_{2n-1}}$
& $C_{n}$ 
&\AI 
&$n$
&$1$
&$2$
&$ 2$
&  II
&$(A,1)$\\ 
\hline
\hline
&&&&&&&&&&&&\\
(3)&$\!\!\go {o}(q_{(n+1,n)})$\!\!
&\!\! $\go {o}(q_{(n,n-1)})$\!\! 
&  $  F^{2n-1} $
& $\begin{matrix} \underset{n\geq 3}{ B_n}\\  
 \end{matrix}$
&\!  ${B_{n}}$ 
&\BII 
&$2$
&$1$
&$2n\!-\!3$
&$ 1$
& II
&$(A,1)$\\ 
\hline
&&&&&&&&&&&&\\
(4)&$\go {o}(q_{(n+2,n-1)})$
& $\go {o}(q_{(n+1,n-2)} )$ 
&  $  F^{2n-1} $
& $\underset{n\geqslant 3}{ B_n}$
&  $B_{n-1}$ 
&\BI 
&$2$
&$1$
&$2n\!-\!3$
&$  3$
& II
&$(A,1)$\\ 
\hline
&&&&&&&&&&&&\\
(5)&$\go {o}(q_{(4,1)})$
& $\go {o}(3)$ 
&  $  F^{3} $
& $ { B_2}$
&  $B_1=A_1$ 
&\BIII 
&$1$
&$3$
&$--$
&$ --$
&III
&$B$\\ 
\hline
\hline
&&&&&&&&&&&&\\
(6)&$\go {sp}(2n, F)$
& $\go {sl}(n,F)$ 
&  $ \mathrm {Sym}(n, F) $
& $\underset{n\geqslant 2}{ C_{n}}$
&  $C_{n}$ 
&\CI 
&$n$
&$1$
&$1$
&$ 1$
& II
&$(A,1)$\\ 
\hline
&&&&&&&&&&&&\\
(7)&$ \go{u}(2n, \H, H_{2n})$
& $\go {sl}(n,\bb H)$ 
& \!\!  
 $  \mathrm{SkewHerm (n,\H)}$
& ${ C_{2n}}$
&  $C_{n}$ 
&\CIII 
&$n$
&$3$
&$ 4$
&$ 4$
&  III
&$B$\\ 
\hline
 
\end{tabular}
\vskip 5pt 
\hskip 450pt {(continued next page)}

\pagebreak

\vskip 20pt
\centerline{{\bf Table 1}\,(continued)\hskip 20pt Simple Regular Graded Lie Algebras over a $p$-adic field} 

\vskip 20pt

\label{tables2}

\begin{tabular}{|c|c|c|c|c|c|c|c|c|c|c|c|c|}
\hline
&&&&&&&&&&&&\\
&$\widetilde{\go g}$
&${\go g}'$
 &$V^+$
&$\widetilde {\mathcal R}$
&$\widetilde{\Sigma}$
  &{Satake diagram}
  &$  rank (=k+1)$ 
& $ \ell$
& $d$
&$e$
&Type
&1-type \\
\hline
\hline 
&&&&&&&&&&&&\\
(8)&$\go {o}(q_{(m,m)})$
& $\go {o}(q_{(m\!-\!1,m-1)})$ 
&  $  F^{2m\!-\!2} $
& $\underset{m\geqslant 4}{ D_m}$
&  $ D_m$ 
&\DIIA 
&$2$
&$1$
&$2m\!-\!4$
&$0$
&  I
&$(A,1)$\\ 
\hline
&&&&&&&&&&&&\\
(9)&$\go {o}(q_{(m\!+1,m\!-\!1)})$
& $\go {o}(q_{(m,m\!-\!2)}$ 
&  $  F^{2m\!-\!2} $
& $\underset{m\geqslant 4}{ D_m}$
&  $ B_{m-1}$ 
&\DIIB 
&$2$
&$1$
&$2m\!-\!4$
&$2$
& II
&$(A,1)$\\ 
\hline
&&&&&&&&&&&&\\
(10)&$\go {o}(q_{(m+2,m-2)})$
& \!\!\!$\go {o}(q_{(m+ 1,m-3)})$\!\!\! 
&  $  F^{2m-2} $
& $\underset{m\geqslant 4}{ D_m}$
&  $ { B_{m-2}}$ 
&\DIIC 
&$2$
&$1$
&$2m\!-\!4$
&$4$
& I
&$(A,1)$\\ 
\hline
\hline
&&&&&&&&&&&&\\
(11)&\!\!\!$\go {o}(q_{(2n,2n)})$\!\!\!
& $\go {sl}(2n, F)$ 
& \!\! $\mathrm{Skew}(2n,F)\!\! $
& $\underset{n\geqslant 3}{ D_{2n}}$
&  $D_{2n}$ 
&\DIIde 
&$n$
&$1$
&$4$
&$0$
& I
&$(A,1)$\\ 
\hline
&&&&&&&&&&&&\\
(12)&$\go{u}(2n, \H, K_{2n})$
& $\go {sl}(n,\bb H)$ 
& $  \mathrm{Herm}(n,\bb H)$

& $\underset{n\geqslant 3}{ D_{2n}}$
&  $C_{n}$ 
&\DIde 
&$n$
&$1$
&$ 4$
&$ \bf 4$
&  I
&$(A,1)$\\ 
\hline
\hline
&&&&&&&&&&&&\\
(13)&$\text{split  } E_{7}$
& 
$\text{split  } E_{6}$

&  $ \mathrm{Herm}(3,\bb O_s) $
& $E_7$
&  $E_7$ 
&\EII 
&$3$
&$1$
&$8$
&$0$
&  I
&(A,1)\\ 
\hline

\end{tabular}

}
\end{landscape}

 \newpage
  \section {The  $G$-orbits  in $V^+$}
\subsection{Representations of  $\go {sl}(2,F)$}\label{sl2module}\hfill
 \vskip 10pt

(For the convenience of the reader we recall here some classical facts about $\go {sl}_2$ modules, see \cite{Bou2}, chap.VIII, \textsection 1, Proposition and Corollaire). \medskip

Let  $\{Y,H,X\}$ be an  $\go {sl}_2$-triple (ie. $[H,X]=2X, [H,Y]=-2Y, [Y,X]=H$) in 
$ \widetilde{\go g}$ and let  $E$ be a subspace of $ \widetilde{\go g}$ which is invariant under this  $\go {sl}_2$-triple. \\
Let  $M$ be an irreducible submodule of dimension  $m+1$ of  $E$, then $M$ is generated by a primitive element $e_0$ (ie. $X.e_0=0$) of weight  $m$ under the action of  $H$ and the set of elements  $e_p:=\dfrac{(-1)^p}{p!} (Y)^p.e_0$, of weight $m-2p$ under the action of  $H$,  is a basis  of$M$.

We have also the   weight decomposition  $\displaystyle E=\oplus_{p\in\Z} E_p$ into weight spaces of weight  $p$ under the action of  $H$. The following properties are classical:

\begin{enumerate}\item  The non trivial element of the Weyl group of $SL(2,F)$ acts on  $E$ by 
$$w=e^Xe^Ye^X,$$
\item If, as before,  $M$ is an irreducible submodule of $E$ of dimension  $m+1$, then \\
$w:M_{m-2p}\to M_{2p-m}$ is an isomorphism. More precisely, on the base $(e_p)$ defined above, one has 
$$w.e_p=(-1)^{m-p}e_{m-p}=\frac{1}{(m-p)!} (Y)^{m-p}.e_0.$$
\item For  $Z\in E_p$, one has  $w^2 Z=(-1)^p Z$.
\item $X^j: E_p\to E_{p+2j}$ is $\left\{\begin{array}{cc} \textrm{ injective for } & j\leq -p\\
\textrm{ bijective for } & j=-p\\
\textrm{ surjective for  } & j\geq -p\end{array}\right.$

$Y^j: E_p\to E_{p-2j}$ is $\left\{\begin{array}{cc} \textrm{ injective for } & j\leq p\\
\textrm{ bijective for } & j=p\\
\textrm{ surjective for  } & j\geq p\end{array}\right.$
\end{enumerate}
\subsection{First reduction}\hfill
 \vskip 10pt

Remind the definition of  $V_1^+$ (cf. Corollaire  \ref{cor-decomp-j}):
$$V_1^+= \widetilde{\go g}_1\cap V^+=\{ X\in V^+| [H_{\lambda_0}, X]=0\}.$$
The decomposition of  $V^+$ into eigenspaces of  $H_{\lambda_0}$ is then given by
$$V^+=V_1^+\oplus  \widetilde{\go g}^{\lambda_0}\oplus W_1^+, \textrm{ where } W_1^+=\{ X\in V^+| [H_{\lambda_0}, X]=X\}=\{ X\in V^+| [H_{\lambda_1}+\ldots +H_{\lambda_k},X]=X\}.$$

\begin{prop}\label{prop.prem-reduc} \hfill

 Let  $X\in V^+$.\\
\begin{enumerate} \item If  $\Delta_0(X)=0$ then  $X$ is conjugated under   ${\rm Aut}_e(\go g)\subset G$ to an element of  $V_1^+$.
\item if $\Delta_1(X)\neq 0$ then $X$ is conjugated under  ${\rm Aut}_e(\go g)\subset G$  to an element of  $  \widetilde{\go g}^{\lambda_0}\oplus V_1^+$.
\end{enumerate}
\end{prop}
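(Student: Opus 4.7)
Decompose $X = X^{(0)} + X^{(1)} + X^{(2)}$ according to $V^+=V_1^+\oplus W_1^+\oplus\widetilde{\go g}^{\lambda_0}$, the eigenspaces of $\ad H_{\lambda_0}$ of eigenvalues $0,1,2$. In both parts the strategy is to kill the ``middle'' component $X^{(1)}\in W_1^+$ (and, in Part 1, also $X^{(2)}$) by conjugating with $\exp(\ad Z)$ for suitable nilpotent $Z\in\go g$; such elements lie in $\mathrm{Aut}_e(\go g)\subset G$ since $[Z,H_0]=0$.

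For Part (2), the key observation is that, by the definition of $\Delta_1$ (cf.\ Theorem~\ref{thproprideltaj}), the hypothesis $\Delta_1(X)\neq 0$ is equivalent to $P_1(X^{(0)})\neq 0$, i.e.\ to $X^{(0)}$ being generic in $V_1^+$. By Proposition~\ref{prop-generiques-cas-regulier} applied to $\widetilde{\go g}_1$, there is thus an $\go{sl}_2$-triple $(Y^{(0)},H^{(0)},X^{(0)})$ with $H^{(0)}=H_{\lambda_1}+\cdots+H_{\lambda_k}$ and $Y^{(0)}\in V_1^-$. A direct inspection using Theorem~\ref{th-decomp-Eij} shows that the $(-1)$-eigenspace of $\ad H^{(0)}$ in $\go g$ contains exactly $\bigoplus_{j\geq 1}E_{0,j}(1,-1)$, while its $(+1)$-eigenspace in $V^+$ is exactly $W_1^+$. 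Applying property (4) from Section~\ref{sl2module} and respecting the $H_0$-grading, the map $\ad X^{(0)}:\bigoplus_{j\geq 1}E_{0,j}(1,-1)\to W_1^+$ is a bijection. Choose $Z$ in the source with $[Z,X^{(0)}]=-X^{(1)}$. Since $H_{\lambda_0}$-weight considerations force $[Z,X^{(2)}]=0$ and truncate all brackets of $Z$ with the $X^{(i)}$'s at order two, a direct expansion of $\exp(\ad Z)(X)$ shows its $W_1^+$-component equals $X^{(1)}+[Z,X^{(0)}]=0$, yielding an element of $V_1^+\oplus\widetilde{\go g}^{\lambda_0}$.

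For Part (1), I would induct on the rank $k+1$. The base case $k=0$ is immediate by Theorem~\ref{th-k=0}, since then $V^+=\widetilde{\go g}^{\lambda_0}$, $V_1^+=\{0\}$, and the only non-generic element is $0$. In the inductive step, split according to whether $\Delta_1(X)\neq 0$ or not. If $\Delta_1(X)\neq 0$, first apply Part (2) to reduce to $X'=\tilde X^{(0)}+\tilde X^{(2)}\in V_1^+\oplus\widetilde{\go g}^{\lambda_0}$ with $\tilde X^{(0)}$ generic; relative invariance of $\Delta_0$ preserves $\Delta_0(X')=0$. Now use $\mathrm{Aut}_e(\go g_1)\subset G$, which acts trivially on $\widetilde{\go g}^{\lambda_0}$ (since $\go g_1$ centralizes $\widetilde{\go l}_0$), to conjugate $\tilde X^{(0)}$ to the canonical form $X_1+\cdots+X_k$ with $X_j\in\widetilde{\go g}^{\lambda_j}$. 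If $\tilde X^{(2)}\neq 0$, a further normalization in $\widetilde{\go g}^{\lambda_0}$ (using the rank-one subgroup acting there, available since by Theorem~\ref{th-k=0} the $\delta_0$-invariant vanishes only at $0$) brings $\tilde X^{(2)}$ into the form $x_0X_0$; but then Proposition~\ref{X0+...+Xkgenerique} gives $\Delta_0(X')\neq 0$, contradiction. Hence $\tilde X^{(2)}=0$ and $X'\in V_1^+$. If instead $\Delta_1(X)=0$, apply the induction hypothesis to $X^{(0)}$ in $\widetilde{\go g}_1$ (where the corresponding $\Delta_0^{(1)}$ is $P_1=\Delta_1|_{V_1^+}$) to push $X^{(0)}$ into $V_2^+$; then iterate the descent.

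The main obstacle lies in Part (1): one must rigorously justify that the ``independent'' normalizations of $\tilde X^{(0)}\in V_1^+$ and $\tilde X^{(2)}\in \widetilde{\go g}^{\lambda_0}$ can both be carried out inside $\mathrm{Aut}_e(\go g)\subset G$ (the first via $\mathrm{Aut}_e(\go g_1)$, which commutes with $\widetilde{\go l}_0$ and hence fixes $\tilde X^{(2)}$, the second via a rank-one block acting on $\widetilde{\go g}^{\lambda_0}$). A parallel subtlety is coordinating the induction step when $\Delta_1(X)=0$: the $\mathrm{Aut}_e(\go g_1)$-conjugation modifying $X^{(0)}$ also moves $X^{(1)}$ and $X^{(2)}$, so one has to track these extra components and iterate consistently until the full reduction to $V_1^+$ is achieved.
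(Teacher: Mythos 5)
Your Part (2) reproduces the paper's argument almost verbatim and is correct. The real problems are in Part (1). In the subcase $\Delta_1(X)\neq 0$, the step ``conjugate $\tilde X^{(0)}$ to the canonical form $X_1+\cdots+X_k$ by $\mathrm{Aut}_e(\go g_1)$'' is exactly Theorem \ref{th-V+caplambda} applied to $\widetilde{\go g}_1$, and in the paper that theorem is proved \emph{after} and \emph{from} the present proposition, so as written your reduction is circular (it can be repaired without any diagonalization: by Theorem \ref{th-k=0} every nonzero element of $\widetilde{\go g}^{\lambda_0}$ is generic in $\widetilde{\go l}_0$, hence fits into an $\go{sl}_2$-triple with $H_{\lambda_0}$, and the proof of Lemma \ref{recurrence-genericite} then shows $\tilde X^{(2)}+\tilde X^{(0)}$ is generic whenever $\tilde X^{(2)}\neq 0$, contradicting $\Delta_0(X')=0$). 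The more serious gap is the subcase $\Delta_1(X)=0$: the point of Part (1) is to remove the components of $X$ lying in $\widetilde{\go g}^{\lambda_0}\oplus W_1^+$, but applying the induction hypothesis inside $\widetilde{\go g}_1$ only moves the $V_1^+$-component $X^{(0)}$ deeper into $V_2^+$, while every element of $\mathrm{Aut}_e(\go g_1)$ fixes $\widetilde{\go g}^{\lambda_0}$ and stabilizes $W_1^+$; ``iterating the descent'' therefore never touches $X^{(1)}$ or $X^{(2)}$. Already for $X$ a nonzero element of $\widetilde{\go g}^{\lambda_0}$ (so $X^{(0)}=X^{(1)}=0$ and $\Delta_0(X)=\Delta_1(X)=0$) your procedure leaves $X$ unchanged, although the proposition asserts it is $\mathrm{Aut}_e(\go g)$-conjugate into $V_1^+$.

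The missing idea is the one the paper uses, which treats all components of $X$ at once: since $\Delta_0(X)=0$, $X$ is nilpotent, so by Jacobson--Morozov it embeds in an $\go{sl}_2$-triple $(Y,H,X)$ with $H\in\go g$, $Y\in V^-$; one conjugates by $\mathrm{Aut}_e(\go g)$ so that $H$ lies in the closed dominant chamber of $\go a$, shows $\lambda_0(H')=0$ (if $\lambda_0(H')>0$ then $\lambda(H')>0$ for every root $\lambda$ occurring in $V^+$, which would make $\ad X'$ surjective onto $V^+$ and $X'$ generic, a contradiction), and then an $\go{sl}_2$-module argument shows that $X'$ commutes with $\widetilde{\go g}^{\pm\lambda_0}$, i.e. $X'\in V_1^+$. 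Some argument of this kind, handling the $\widetilde{\go g}^{\lambda_0}$ and $W_1^+$ components simultaneously with the $V_1^+$ one, is indispensable and is not supplied by your induction on the $V_1^+$-component.
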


\begin{proof} (we give the proof for the convenience of the reader although it is the same as in the real case, see Prop 2.1 page 38 of [BR]),  

 (1) Let  $X$ be a non generic element of $V^+$. Then $X$ belongs to the semi-simple part of  $ \widetilde{\go g}$ and satisfies $(\ad X)^3=0$. By the Jacobson-Morozov Theorem  (\cite{Bou2} chap. VIII, \textsection 11 Corollaire of  Lemme 6), there exists an   $\go {sl}_2$-triple $\{Y,H,X\}$. Decomposing these elements  according to the decomposition  $ \widetilde{\go g}=V^-\oplus \go g\oplus V^+$, one sees easily that one can suppose that  $Y\in V^-$ and  $H\in\go g$. \medskip

The eigenvalue of  $\ad H$ are the weights of the representation of this  ${\go sl}_2$-triple in  $ \widetilde{\go g}$, they are therefore integers. Hence $H$ belongs to a maximal abelian split subalgebra of   $ \go g $. By (\cite{Schoeneberg} Theorem 3.1.16 or \cite{Seligman}, I.3), there exists $g\in {\rm Aut}_e(\go g)\subset G$ such that  $g.H\in\go a$. One can choose  $w\in N_{{\rm Aut}_e(\go g)}(\go a)$ such that  $wg.H$ belongs to the Weyl  chamber $\overline{\go a}^+:=\{ Z\in \go a; \lambda(Z)\geq 0 \,{\rm for  } \,\lambda\in \Sigma^+\}$.  Let 
$\{Y',H',X'\}=wg\{Y,H,X\}$. We show first that  
$$\lambda_0(H')=0.$$
Any element in  $ \widetilde{\go g}^{\lambda_0}\backslash \{0\}$ is primitive with weight  $\lambda_0(H')$ under the action of this  ${\go sl}_2$-triple, hence $\lambda_0(H')\geq 0$.\medskip

Suppose that  $\lambda_0(H')>0$. Let $\lambda\in \widetilde{\Sigma}^+\backslash \Sigma$. By Theorem  \ref{thbasepi} we obtain $\lambda=\lambda_0+\sum_i m_i\lambda_i$ with  $m_i\in\N$ and  $\lambda_i\in \Sigma^+$ and hence $\lambda(H')>0$. Therefore, for all $Z\in \widetilde{\go g}^\lambda\subset V^+$, we get 
$$Z=-\frac{1}{\lambda(H')} \ad X' (\ad Y'( Z)).$$
It follows that  $\ad X': \go g\to V^+$ is surjective, and this is not possible because $X'$ is not generic. Therefore $\lambda_0(H')=0$.\medskip

Let us show that  $X'\in V_1^+$. Let  $Y_0\in \widetilde{\go g}^{-\lambda_0}\backslash \{0\}$. The  ${\go sl}_2$-module generated by $Y_0$ under the action of $\{Y',H',X'\}$ has lowest weight $0$ ($\ad Y'. Y_0=0$ and $\ad H'.Y_0=0$)  and hence it is the trivial module. It follows that  $\ad X'.Y_0=0$, and then $X'$ commutes with  $ \widetilde{\go g}^{\lambda_0}$ and with  $ \widetilde{\go g}^{-\lambda_0}$. Hence $X'$ commutes with  $H_{\lambda_0}$ and therefore $X'\in V_1^+$.    Since $wg\in{\rm Aut}_e(\go g)$, statement (1) is proved.\medskip

(2) Let  $X\in V^+$ such that  $\Delta_1(X)\neq 0$. This element decomposes as follows
$$X=X_0+X_1+X_2,\quad {\rm with }\, X_0\in  \widetilde{\go g}^{\lambda_0}, X_1\in W_1^+, X_2\in V_1^+.$$
From the definition of  $\Delta_1$ (cf. Definition \ref{defdeltaj}), one has 
$$\Delta_1(X)=\Delta_1(X_2).$$
Therefore $X_2$ is generic in  $V_1^+$. By  Proposition \ref{prop-generiques-cas-regulier} applied to  $ \widetilde{\go g}_1$, there exists  $Y_2\in V_1^-$ such that  $\{Y_2, H_{\lambda_1}+\ldots +H_{\lambda_k}, X_2\}$ is a  ${\go sl}_2$-triple. The weights of this triple  on $V^+$ are $2$ on $V_1^+$, $1$ on $W_1^+$ and  $0$ on  $ \widetilde{\go g}^{\lambda_0}$. Let us note:
$$\go g_{-1}:=\{ X\in\go g| [H_{\lambda_1}+\ldots +H_{\lambda_k},X]=-X\}.$$
The map  $\ad X_2$ is a bijection from  $\go g_{-1}$ onto  $W_1^+$, hence there exists $Z\in \go g_{-1}$ such that  $[X_2,Z]=X_1$. If we write the decomposition of $e^{\ad Z}X$ according to the weight spaces of $V^+$, we obtain 
$$e^{\ad Z}X=X_0+[Z,X_1]+\frac{1}{2} [Z,[Z,X_2]]+X_1+[Z,X_2]+X_2$$
$$=X_0+[Z,X_1]+\frac{1}{2} (\ad Z)^2. X_2+X_2.$$
Hence $e^{\ad Z}X$ belongs to  $X_2\oplus  \widetilde{\go g}^{\lambda_0}\subset V_1^+\oplus   \widetilde{\go g}^{\lambda_0}$ and this gives (2).
\end{proof}
\begin{theorem}\label{th-V+caplambda} Any element of  $V^+$ is  ${\rm Aut}_e(\go g)$-conjugated to an element of  $  \widetilde{\go g}^{\lambda_0}+\ldots\oplus   \widetilde{\go g}^{\lambda_k}$.
\end{theorem}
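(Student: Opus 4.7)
I would prove the theorem by induction on the rank $k+1$ of the graded Lie algebra, using Proposition \ref{prop.prem-reduc} as the main engine. The base case $k=0$ is immediate: by Corollary \ref{cor-decomp-j} one has $V^+=\widetilde{\go g}^{\lambda_0}$, so every element already lies in the desired subspace.

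For the inductive step, assume the theorem for regular graded Lie algebras of rank at most $k$, and let $X\in V^+$. If $\Delta_0(X)=0$, Proposition \ref{prop.prem-reduc}(1) yields $g\in\mathrm{Aut}_e(\go g)$ with $g\cdot X\in V_1^+$. By Theorem \ref{th-descente} the graded Lie algebra $(\widetilde{\go g}_1,H_{\lambda_1}+\cdots+H_{\lambda_k})$ is regular of rank $k$ with maximal system of strongly orthogonal long roots $\lambda_1,\ldots,\lambda_k$, so the induction hypothesis produces $h\in\mathrm{Aut}_e(\go g_1)$ sending $g\cdot X$ into $\bigoplus_{j=1}^k\widetilde{\go g}^{\lambda_j}\subset\bigoplus_{j=0}^k\widetilde{\go g}^{\lambda_j}$. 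I then need the routine verification that $h$ is realised by an element of $\mathrm{Aut}_e(\go g)$: each $\mathrm{ad}_{\go g_1}$-nilpotent element $x\in\go g_1\subset\go g$ acts nilpotently on the full reductive algebra $\widetilde{\go g}$ (using the decomposition $\widetilde{\go g}=\widetilde{\go g}_1\oplus\widetilde{\go g}_1^{\perp}$ and finite-dimensionality), so the corresponding exponentials are elementary automorphisms of $\go g$.

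If instead $\Delta_0(X)\neq 0$, then $X$ is generic, and I first arrange that $\Delta_1(X)\neq 0$ via an auxiliary $\mathrm{Aut}_e(\go g)$-conjugation. The polynomial $\Delta_1$ is nonzero on $V^+$ (by Theorem \ref{thproprideltaj}(3) it does not vanish on any diagonal element with all components nonzero), so $\{\Delta_1=0\}$ is a proper Zariski-closed subvariety of $V^+$; meanwhile the Weyl group $W\subset\mathrm{Aut}_e(\go g)$ acts transitively on $\{\lambda_0,\ldots,\lambda_k\}$ (Proposition \ref{propWconjugues}). Using a representative $w\in W$ that moves the relevant $\lambda_j$ to $\lambda_1$, together with the density of the $\mathrm{Aut}_e(\go g)$-orbit of $X$ inside the open $\overline{G}$-orbit of $\overline{V^+}$, I obtain an element of the form $w\cdot X$ with $\Delta_1(w\cdot X)\neq 0$. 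Proposition \ref{prop.prem-reduc}(2) then yields a further $\mathrm{Aut}_e(\go g)$-conjugate $X_0+X_2$ with $X_0\in\widetilde{\go g}^{\lambda_0}$ and $X_2\in V_1^+$ generic (since $\Delta_1(X_2)=\Delta_1(w\cdot X)\neq 0$). Applying the induction hypothesis to $X_2$ inside $\widetilde{\go g}_1$ gives $h\in\mathrm{Aut}_e(\go g_1)$ with $h\cdot X_2\in\bigoplus_{j=1}^k\widetilde{\go g}^{\lambda_j}$; since $\go g_1$ centralises $\widetilde{\go l}_0\supset\widetilde{\go g}^{\lambda_0}$, the automorphism $h$ fixes $X_0$, producing $h\cdot(X_0+X_2)\in\bigoplus_{j=0}^k\widetilde{\go g}^{\lambda_j}$.

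The main obstacle is the generic case: one must show that $X$ can be moved out of $\{\Delta_1=0\}$ by an element of $\mathrm{Aut}_e(\go g)$. This is delicate because $\Delta_1$ is invariant only under the proper subgroup $G_1\subsetneq G$, so $\Delta_1(g\cdot X)$ genuinely varies with $g$; the Weyl-group trick sketched above is the cleanest way to produce the desired translate, but its justification requires an orbit-density argument over the $p$-adic field $F$ which is worth spelling out carefully. Once this is in hand, the rest of the induction proceeds cleanly from the two cases of Proposition \ref{prop.prem-reduc}.
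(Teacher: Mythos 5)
Your overall strategy (reduce to $\widetilde{\go g}^{\lambda_0}\oplus V_1^+$ via Proposition \ref{prop.prem-reduc} and then induct along the descent, extending elements of ${\rm Aut}_e(\go g_1)$ to elementary automorphisms of $\widetilde{\go g}$ that fix $\widetilde{\go g}^{\lambda_0}$) is exactly the paper's, and your treatment of the non-generic case, as well as the extension of ${\rm Aut}_e(\go g_j)$-elements to ${\rm Aut}_e(\go g)$, is sound (nilpotent elements of $[\go g_1,\go g_1]$ act nilpotently in any finite-dimensional representation, here ${\rm ad}_{\widetilde{\go g}}$).

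The genuine gap is in the generic case, precisely at the step you flag as delicate: your proposed justification does not work. The density claim is false: since $\chi_0$ is trivial on ${\rm Aut}_e(\go g)$ (Theorem \ref{thproprideltaj}(2)), the orbit ${\rm Aut}_e(\go g)\cdot X$ is contained in the level hypersurface $\{Y\in V^+ : \Delta_0(Y)=\Delta_0(X)\}$, so it cannot be Zariski-dense in the open $\overline{G}$-orbit of $\overline{V^+}$; and the Weyl-group transitivity on $\{\lambda_0,\ldots,\lambda_k\}$ is of no help here because $X$ is an arbitrary generic element, not a diagonal one, so there is no ``relevant $\lambda_j$'' attached to it. What you actually need is only the weaker statement that ${\rm Aut}_e(\go g)\cdot X\not\subset\{\Delta_1=0\}$, and the paper's argument for this is different: since $\go g=FH_0\oplus[\go g,\go g]$ (the center of $\go g$ is one-dimensional), the Lie algebra of $F^*\times{\rm Aut}_e(\go g)$ is $\go g$, so for generic $X$ the orbit $F^*\!\cdot{\rm Aut}_e(\go g)\cdot X$ is open; as $\{\Delta_1=0\}$ is the zero set of a homogeneous polynomial, hence a cone, an inclusion ${\rm Aut}_e(\go g)\cdot X\subset\{\Delta_1=0\}$ would force the open set $F^*\!\cdot{\rm Aut}_e(\go g)\cdot X$ into this proper Zariski-closed cone, which is impossible. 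This cone-plus-scaling argument is the missing idea; once you replace your Weyl-group/density sketch by it, the rest of your induction goes through as in the paper.
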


\begin{proof} Let us show that any element of  $V^+$ is conjugated under  ${\rm Aut}_e(\go g)$ to an element of  $V_1^+\oplus   \widetilde{\go g}^{\lambda_0}$.\\
Let $X\in V^+$. If  $X$ is not generic, then  $X$ is ${\rm Aut}_e(\go g)$-conjugated  to an element of $V_1^+$ (Proposition \ref{prop.prem-reduc}, (1)). 

Suppose now that  $X$ is generic.  As  the Lie algebra of ${\rm Aut}_e(\go g)$ is equal to $[\go g,\go g]$,  the Lie algebra of $F^*\times {\rm Aut}_e(\go g)$ is $F\times [\go g,\go g]$,  which is the Lie algebra of $\go g$,  (see Remark \ref{rem-simple}, remember also that, as $\go g\oplus V^+$ is a maximal parabolic subalgebra, the center of $\go g $ has dimension one). Therefore the orbit of $X$ under the group $F^*.{\rm Aut}_e(\go g)$ is open. Suppose that ${\rm Aut}_e(\go g).X\cap \{Y\in V^+,\Delta_{1}(Y)=0\}=\emptyset$. Then, as $\{Y\in V^+,\Delta_{1}(Y)=0\}$ is a cone, we would have $F^*.{\rm Aut}_e(\go g).X\subset \{Y\in V^+,\Delta_{1}(Y)=0\}$. This is impossible, as a Zariski open set is never a subset of a closed one. Hence ${\rm Aut}_e(\go g).X\cap \{Y\in V^+,\Delta_{1}(Y)=0\}\neq\emptyset$. From Proposition \ref{prop.prem-reduc} (2), we obtain that $X$ is conjugated under  ${\rm Aut}_e(\go g)$ to an element of  $  \widetilde{\go g}^{\lambda_0}\oplus V_1^+$.

The same argument applied to  $ \widetilde{\go g}_j$ ($j=1,\ldots, k$)  shows that if $X\in V_j^+$ then the   ${\rm Aut}_e(\go g_j)$-orbit of  $X_j$ meets  $   \widetilde{\go g}^{\lambda_j} \oplus V_{j+1}^+$.  Any element of  ${\rm Aut}_e(\go g_j)$ stabilizes  $  \widetilde{\go g}^{\lambda_0}+\ldots\oplus   \widetilde{\go g}^{\lambda_{j-1}}$ and, by Corollary \ref{cor-decomp-j}, one has  $V_k^+= \widetilde{\go g}^{\lambda_k}$. The result is then obtained by induction. 

\end{proof}
\subsection{An involution which permutes the roots in  $E_{i,j}(\pm 1, \pm 1)$}\hfill
 \vskip 10pt

Define  $ \widetilde{G}:={\rm Aut}_0( \widetilde{\go g}).$ 

For  $i=1,\ldots, k$, we fix  $X_i\in \widetilde{\go g}^{\lambda_i}$. There exist then  $Y_i\in  \widetilde{\go g}^{-\lambda_i}$ such that $\{Y_i,H_{\lambda_i}, X_i\}$ is an $\go sl_2$-triple. The action of the non trivial  Weyl group element of this triple is given by 
$$w_i:=e^{\ad X_i}e^{\ad Y_i}e^{\ad X_i}=e^{\ad Y_i}e^{\ad X_i}e^{\ad Y_i}\in N_{ \widetilde{G}}(\go a).$$
\begin{lemme}\label{actionwi} Let  $j\neq i$ and $p=\pm1$. One has
\begin{enumerate}\item If  $H\in \go a$ then  $w_i.H=H-\lambda_i(H)H_{\lambda_i};$
\item If  $X\in E_{i,j}(1,p)$ then  $w_i.X= \ad Y_i.X\in  E_{i,j}(-1,p)$;
\item  If  $X\in E_{i,j}(-1,p)$ then  $w_i.X= \ad X_i.X\in  E_{i,j}(1,p)$;
\item If  $X\in E_{i,j}(\pm1,p)$ then  $w_i^2.X=-X$.
\end{enumerate}
\end{lemme}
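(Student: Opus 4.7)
The plan is to prove all four items simultaneously by exploiting the $\go{sl}_2$-representation theory recalled in section \ref{sl2module}, applied to the adjoint action of the triple $(Y_i, H_{\lambda_i}, X_i)$ on $\widetilde{\go g}$.

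First I would establish item (1). Any $H \in \go a$ satisfies $[H, X_i] = \lambda_i(H) X_i$ and $[H, Y_i] = -\lambda_i(H) Y_i$, so when $\lambda_i(H) = 0$ the element $H$ is annihilated by $\ad X_i$ and $\ad Y_i$ and therefore fixed by $w_i$. Together with $w_i.H_{\lambda_i} = -H_{\lambda_i}$ (which is the standard statement for the Weyl element of an $\go{sl}_2$-triple, or equivalently item (1) in section \ref{sl2module} applied to the adjoint module spanned by $\{Y_i, H_{\lambda_i}, X_i\}$), this forces $w_i$ to act on $\go a$ as the reflection $H \mapsto H - \lambda_i(H) H_{\lambda_i}$.

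For items (2) and (3), the key observation is that by Theorem \ref{th-decomp-Eij} the eigenvalues of $\ad H_{\lambda_i}$ on $\widetilde{\go g}$ lie in $\{-2,-1,0,1,2\}$. Hence every irreducible $\go{sl}_2$-submodule of $\widetilde{\go g}$ (under the triple $(Y_i, H_{\lambda_i}, X_i)$) has highest weight at most $2$, and any vector of weight $\pm 1$ must live in a direct sum of copies of the $2$-dimensional $\go{sl}_2$-module. In particular, for $X \in E_{i,j}(1,p)$ we have $\ad X_i.X = 0$ (since it would have weight $3$), and for $X \in E_{i,j}(-1,p)$ we have $\ad Y_i.X = 0$. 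Applying item (2) of section \ref{sl2module} with $m=1$, $p=0$ (so that $e_1 = -\ad Y_i.e_0$ and $w.e_0 = -e_1 = \ad Y_i.e_0$), I obtain $w_i.X = \ad Y_i.X$ when $X \in E_{i,j}(1,p)$, and symmetrically $w_i.X = \ad X_i.X$ when $X \in E_{i,j}(-1,p)$. The fact that these images land in $E_{i,j}(\mp 1, p)$ follows from the observation that $\ad Y_i$ (resp.\ $\ad X_i$) shifts the $H_{\lambda_i}$-weight by $-2$ (resp.\ $+2$) and preserves the $H_{\lambda_\ell}$-weight for $\ell \neq i$, because $\lambda_\ell$ is strongly orthogonal to $\lambda_i$ so $[H_{\lambda_\ell}, X_i] = [H_{\lambda_\ell}, Y_i] = 0$.

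Item (4) is then immediate from item (3) of section \ref{sl2module}: since $X \in E_{i,j}(\pm 1, p)$ has $H_{\lambda_i}$-weight $\pm 1$, one has $w_i^2.X = (-1)^{\pm 1} X = -X$. I do not anticipate any real obstacle: the only non-mechanical point is the weight bound $|\alpha(H_{\lambda_i})| \leq 2$, which is already packaged into Theorem \ref{th-decomp-Eij} and makes the $\go{sl}_2$-modules involved all of dimension at most $2$, reducing the problem to elementary $\go{sl}_2$-theory.
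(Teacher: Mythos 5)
Your proof is correct and follows essentially the same route as the paper: statement (1) via the reflection action of $w_i$ on $\go a$, and statements (2)--(4) by observing that each $E_{i,j}(\pm 1,p)$ sits in an $H_{\lambda_i}$-weight space of weight $\pm 1$ (hence inside a sum of two-dimensional irreducible modules, thanks to the weight bound from Theorem \ref{th-decomp-Eij}) and then invoking the $\go{sl}_2$-module formulas of section \ref{sl2module}. The paper's proof is just a terser version of exactly this argument.
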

\begin{proof} The first statement is obvious as     $w_i$ acts as the reflection on  $\go a$ associated to $\lambda_i$.\\
As each  $E_{i,j}(\pm1,p)$ is included in a weight space for the action of the  $\go sl_2$-triple $\{Y_i,H_{\lambda_i}, X_i\}$, the other statements are immediate consequences of the properties of  $\go sl_2$-modules given in  section \ref{sl2module}.
\end{proof}
For  $i\neq j$, we set 
$$w_{i,j}:=w_iw_j=w_jw_i.$$
The preceding Lemma implies that  $w_{i,j}$  satisfies the following properties.
\begin{cor}\label{actionwij} For $i\neq j$ and $p,q\in\{\pm1\}$, one has
\begin{enumerate}\item $w_{i,j}$ is an isomorphism from  $E_{i,j}(p,q)$ onto $E_{i,j}(-p,-q)$
\item The restriction of  $w_{i,j}^2$ to  $E_{i,j}(p,q)$ is the identity.
\end{enumerate}
\end{cor}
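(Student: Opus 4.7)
The corollary is essentially a bookkeeping exercise using Lemma~\ref{actionwi}, so the plan is short.

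The first point to nail down is that $w_i$ and $w_j$ indeed commute, which is implicit in the definition $w_{i,j}=w_iw_j=w_jw_i$. Since $\lambda_i$ and $\lambda_j$ are strongly orthogonal, the $\mathfrak{sl}_2$-triples $(Y_i,H_{\lambda_i},X_i)$ and $(Y_j,H_{\lambda_j},X_j)$ lie in orthogonal commuting copies of $\mathfrak{sl}_2$ inside $\widetilde{\go g}$ (the brackets $[X_i,X_j]$, $[X_i,Y_j]$, $[Y_i,X_j]$, $[Y_i,Y_j]$ all vanish, the first and last by commutativity of $V^\pm$, and the middle two because $\lambda_i\pm\lambda_j\notin\widetilde{\Sigma}$ by strong orthogonality). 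Consequently the corresponding adjoint exponentials commute in $\widetilde G$, and in particular $w_iw_j=w_jw_i$.

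For assertion~(1), I take $X\in E_{i,j}(p,q)$ with $p,q\in\{\pm1\}$. By parts~(2) and~(3) of Lemma~\ref{actionwi} (switching the roles of $i$ and $j$), $w_j$ sends $E_{i,j}(p,q)$ isomorphically onto $E_{i,j}(p,-q)$, and then $w_i$ sends $E_{i,j}(p,-q)$ isomorphically onto $E_{i,j}(-p,-q)$. Composing, $w_{i,j}=w_iw_j$ is an isomorphism $E_{i,j}(p,q)\xrightarrow{\sim}E_{i,j}(-p,-q)$ as claimed.

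For assertion~(2), using that $w_i$ and $w_j$ commute I write $w_{i,j}^2=w_i^2w_j^2$. On $E_{i,j}(p,q)$, part~(4) of Lemma~\ref{actionwi} gives $w_i^2=-\mathrm{Id}$, and the symmetric statement (same lemma with $i$ and $j$ interchanged, whose proof is identical) gives $w_j^2=-\mathrm{Id}$ on $E_{i,j}(-p,q)$ (the image of $E_{i,j}(p,q)$ under $w_i$, where $w_j^2$ is still $-\mathrm{Id}$ since $w_j^2$ acts as $-\mathrm{Id}$ on every $E_{i,j}(\pm1,\pm1)$). Therefore $w_{i,j}^2=(-\mathrm{Id})(-\mathrm{Id})=\mathrm{Id}$ on $E_{i,j}(p,q)$, completing the proof.

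There is really no obstacle here beyond checking commutativity of $w_i$ and $w_j$ carefully; everything else is a direct two-line composition of the four cases in Lemma~\ref{actionwi}.
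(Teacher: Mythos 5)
Your proof is correct and follows essentially the same route as the paper, which simply records the corollary as an immediate consequence of Lemma~\ref{actionwi} applied to $w_i$ and $w_j$ (with the roles of $i$ and $j$ interchanged). Your explicit verification that $w_i$ and $w_j$ commute, via strong orthogonality of $\lambda_i$ and $\lambda_j$, is the same justification the paper uses elsewhere (in the proof of Theorem~\ref{th-decomp-Eij}) and fills in a step the paper leaves implicit here.
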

\begin{rem}\label{rem-actionwij}
The involution  $w_{i,j}$ permutes the roots $\lambda$ such that $ \widetilde{\go g}^{\lambda}\subset E_{i,j}(\pm 1,\pm 1)$. More precisely one has 
$$ \widetilde{\go g}^{\lambda}\subset E_{i,j}(  1, -1)\Longrightarrow w_{i,j}(\lambda)=\lambda-\lambda_i+\lambda_j,$$
$$ \widetilde{\go g}^{\lambda}\subset E_{i,j}(  1,  1)\Longrightarrow w_{i,j}(\lambda)=\lambda-\lambda_i-\lambda_j,$$
$$ \widetilde{\go g}^{\lambda}\subset E_{i,j}(  -1, -1)\Longrightarrow w_{i,j}(\lambda)=\lambda+\lambda_i+\lambda_j,$$
\end{rem}


\subsection{Construction of elements interchanging  $\lambda_i$ and $\lambda_j$}\hfill
 \vskip 10pt

Let  $i$ and  $j$ be two distinct elements of  $\{0,\ldots, k\}$. By Proposition \ref{propWconjugues},  the roots  $\lambda_i$ and $\lambda_j$ are conjugated by the  Weyl  group $W$. The aim of this section is to construct explicitly an element of $G$ which exchanges  $\lambda_i$ and  $\lambda_j$.

\begin{lemme}\label{lem-mu+wijmu} Let   $\lambda$ be a root such that  $ \widetilde{\go g}^\lambda\subset E_{i,j}(1,-1)$.  Then  $\lambda+w_{i,j}(\lambda)$ is not a root.
\end{lemme}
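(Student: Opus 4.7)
By Remark \ref{rem-actionwij}, the hypothesis $\widetilde{\go g}^\lambda\subset E_{i,j}(1,-1)$ gives $w_{i,j}(\lambda)=\lambda-\lambda_i+\lambda_j$, so the claim amounts to showing that $\mu:=\lambda+w_{i,j}(\lambda)=2\lambda-\lambda_i+\lambda_j$ is not a root of $\widetilde\Sigma$. My plan is to rule this out by a norm comparison: I will compute $(\mu,\mu)$ and show that it is either zero or strictly larger than the squared length of any root.

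First I would gather the scalar product data. The roots $\lambda_i,\lambda_j$ are long by Proposition \ref{prop-conjdeslambda(i)}, hence $(\lambda_i,\lambda_i)=(\lambda_j,\lambda_j)=:L^2$; they are strongly orthogonal, so $(\lambda_i,\lambda_j)=0$; and the very definition of $E_{i,j}(1,-1)$ is that $\lambda(H_{\lambda_i})=1$, $\lambda(H_{\lambda_j})=-1$, which (together with the fact that $\lambda_i,\lambda_j$ are long) translates into $(\lambda,\lambda_i)=L^2/2$ and $(\lambda,\lambda_j)=-L^2/2$. Expanding then yields the clean identity
\begin{equation*}
(\mu,\mu)=4(\lambda,\lambda)-2L^2.
\end{equation*}

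Finally, since $\lambda_i$ is a long root, every root $\alpha\in\widetilde\Sigma$ satisfies $(\alpha,\alpha)\le L^2$. If $\lambda$ is long, then $(\mu,\mu)=2L^2$ strictly exceeds this upper bound, so $\mu$ cannot be a root. If $\lambda$ is short, the possible root-length ratios in a reduced root system are $\sqrt 2$ or $\sqrt 3$; the ratio $\sqrt 2$ gives $(\mu,\mu)=0$, so $\mu=0$ is not a root, and the ratio $\sqrt 3$ (the $G_2$ case) would give $(\mu,\mu)=-2L^2/3<0$, which is absurd since $(\,,\,)$ is positive definite on $\go a^*$. In all cases $\mu\notin\widetilde\Sigma$, proving the lemma. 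The only substantive point is the simple norm computation; the case split by length of $\lambda$ is routine and presents no real obstacle.
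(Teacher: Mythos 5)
Your proof is correct, and it takes a genuinely different route from the paper's. You reduce the lemma to a pure length computation: writing $\mu=\lambda+w_{i,j}(\lambda)=2\lambda-\lambda_i+\lambda_j$ (Remark \ref{rem-actionwij}), the facts that the $\lambda_i$ are long and strongly orthogonal (Proposition \ref{prop-conjdeslambda(i)}) together with $\lambda(H_{\lambda_i})=1$, $\lambda(H_{\lambda_j})=-1$ give $(\mu,\mu)=4(\lambda,\lambda)-2L^2$, and the admissible root lengths then force $(\mu,\mu)\in\{0,2L^2\}$, neither of which is the squared length of a root. The paper argues instead with Cartan integers and root strings: assuming $\mu\in\widetilde\Sigma$, it first notes that $\mu$ is (strongly) orthogonal to every $\lambda_s$ (Corollary \ref{cor-orth=fortementorth}), deduces $n(w_{i,j}(\lambda),\lambda)=-1$, hence $n(\lambda_i,\delta)+n(\lambda_j,\delta)=3$ for $\delta=\lambda+\lambda_j$, and then the grading constraint $|\alpha(H_0)|\le 2$ applied to the $\delta$-string through $-\lambda_i$ produces the root $-\lambda_i+2\delta=\mu+\lambda_j$, contradicting the strong orthogonality just established. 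Your argument is shorter and exploits the longness of the $\lambda_i$ directly; the paper's avoids any discussion of which length ratios occur in $\widetilde\Sigma$, which is relevant because $\widetilde\Sigma$ is a restricted root system over a $p$-adic field and is not a priori reduced.

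That last point is the one place your write-up should be tightened: you quote the ratio list $\sqrt2,\sqrt3$ "in a reduced root system", but a component of type $BC$ would also allow the squared ratio $4$. This costs nothing: since $\lambda$ is neither orthogonal nor proportional to $\lambda_i$, one has $n(\lambda,\lambda_i)n(\lambda_i,\lambda)\in\{1,2,3\}$, so the squared ratio is still $1$, $2$ or $3$; and even without that remark, the extra case would give $(\mu,\mu)=-L^2<0$, absurd by the same positivity argument you use for the $G_2$ case. Finally, note that a nonzero $\mu$ lies in the span of the irreducible component of $\widetilde\Sigma$ containing $\lambda_i$, so comparing $(\mu,\mu)$ with the maximal squared length $L^2$ of that component is legitimate even though $\widetilde{\go g}$ has not yet been assumed simple at this point of the paper.
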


\begin{proof} If $\lambda=\dfrac{\lambda_i-\lambda_j}{2}$ then  $w_{i,j}(\lambda)=-\lambda$,   this implies the statement. \\
Let  now   $\lambda\neq\dfrac{\lambda_i-\lambda_j}{2}$. Suppose that  $\mu=\lambda+w_{i,j}(\lambda)$  is a root.

 As $ \widetilde{\go g}^\lambda\subset E_{i,j}(1,-1)$, one has  
$w_{i,j}(\lambda)=\lambda-\lambda_i+\lambda_j$  (Remark \ref{rem-actionwij}) and    $\mu=2\lambda-\lambda_i+\lambda_j$. It follows that for  $s\in\{0,\ldots, k\},$ the root   $\mu$ is orthogonal to   $\lambda_s$, and hence strongly orthogonal to $ \lambda_s$ (Corollaire \ref{cor-orth=fortementorth}). Let us write
$$\lambda=\frac{\mu}{2}+\frac{\lambda_i-\lambda_j}{2},  \quad\textrm{and }\quad w_{i,j}(\lambda)=\lambda-\lambda_i+\lambda_j=\frac{\mu}{2}-\frac{\lambda_i-\lambda_j}{2}.$$\\
If  $\alpha$ and  $\beta$ are two roots, we set, as usually
$n(\alpha,\beta)=\alpha(H_\beta)=2\frac{\langle \alpha,\beta\rangle}{\langle \beta,\beta\rangle}.$
As  $\lambda- w_{i,j}(\lambda)=\lambda_i-\lambda_j$ is not a root, and as  $\lambda \neq w_{i,j}(\lambda)$,   we obtain that $n(\lambda,w_{i,j}(\lambda))\leq 0$. Remind that we have supposed that  $\mu=\lambda+w_{i,j}(\lambda)$  is a root. As $w_{i,j}(\lambda)-\lambda$ is not a root,  the  $\lambda$-chain through $w_{i,j}(\lambda)$ cannot be symmetric with respect to  $w_{i,j}(\lambda)$. This implies that $n(\lambda,w_{i,j}(\lambda))<0$. As $\lambda$ and  $w_{i,j}(\lambda)$ have the same length (and hence $n(\lambda,w_{i,j}(\lambda))=n(w_{i,j}(\lambda),\lambda)=-1)$, we get
$$ -1=n(w_{i,j}(\lambda),\lambda)=2-n(\lambda_i,\lambda)+n(\lambda_j,\lambda).$$
Consider the root  $\delta=\lambda+\lambda_{j}=w_j(\lambda)=\dfrac{\mu}{2}+\dfrac{\lambda_i+\lambda_j}{2}$. Then   $n(\delta,\lambda_i)=n(\delta,\lambda_j)=1$ and the preceding relation gives 
$$3=n(\lambda_i,\delta)+n(\lambda_j,\delta).$$
It follows that either  $n(\lambda_i,\delta)$ or  $n(\lambda_j,\delta)$ is  $\geq 2$. Suppose for example that  $n(\lambda_i,\delta)\geq 2$. Consider the  $\delta$-chain through $-\lambda_i$. As $(-\lambda_i-\delta)(H_0)=-4$,     $-\lambda_i-\delta$ is not a root.  It follows that  $-\lambda_i+2\delta=\mu+\lambda_j$ is a root. This is  impossible as  $\mu$ is strongly orthogonal to  $\lambda_j$. \medskip

Hence  $\mu=\lambda+w_{i,j}(\lambda)$ is not a root. 

\end{proof}

\begin{lemme}\label{lem-sl2ij} There exist  $X\in E_{i,j}(1,-1)$ and  $Y\in E_{i,j}(-1,1)$ such that $\{Y,H_{\lambda_i}-H_{\lambda_j}, X\}$ is an  $\go sl_2$-triple.
\end{lemme}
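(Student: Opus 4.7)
The relations $[H_{\lambda_i}-H_{\lambda_j},X]=2X$ for $X\in E_{i,j}(1,-1)$ and $[H_{\lambda_i}-H_{\lambda_j},Y]=-2Y$ for $Y\in E_{i,j}(-1,1)$ are automatic from the eigenvalue definitions, so the entire content of the lemma reduces to producing $X\in E_{i,j}(1,-1)$ and $Y\in E_{i,j}(-1,1)$ with $[Y,X]=H_{\lambda_i}-H_{\lambda_j}$. By Proposition \ref{prop-memedimension}(3) these spaces are nonzero, so the construction is not vacuous.

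My strategy is to decompose $E_{i,j}(1,-1)$ as a direct sum of root spaces $\widetilde{\mathfrak{g}}^\alpha$ indexed by roots $\alpha\in\widetilde{\Sigma}$ satisfying $\alpha(H_{\lambda_i})=1$, $\alpha(H_{\lambda_j})=-1$ and $\alpha(H_{\lambda_s})=0$ for $s\neq i,j$ (an eigenvalue argument rules out any contribution from the centralizer $\mathcal{Z}_{\widetilde{\mathfrak{g}}}(\mathfrak{a})$, which has weight $0$ on every $H_{\lambda_s}$). For each such $\alpha$ the standard $\mathfrak{sl}_2$-triple construction produces $X_\alpha\in\widetilde{\mathfrak{g}}^\alpha$ and $Y_\alpha\in\widetilde{\mathfrak{g}}^{-\alpha}\subset E_{i,j}(-1,1)$ with $[Y_\alpha,X_\alpha]=H_\alpha$. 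I would then select a pairwise strongly orthogonal family $\{\alpha_1,\dots,\alpha_r\}$ of such roots whose coroots satisfy
\[
H_{\alpha_1}+\cdots+H_{\alpha_r}=H_{\lambda_i}-H_{\lambda_j};
\]
the strong orthogonality forces $[Y_{\alpha_p},X_{\alpha_q}]=0$ for $p\neq q$, so $X:=\sum_p X_{\alpha_p}\in E_{i,j}(1,-1)$ and $Y:=\sum_p Y_{\alpha_p}\in E_{i,j}(-1,1)$ will satisfy $[Y,X]=\sum_p H_{\alpha_p}=H_{\lambda_i}-H_{\lambda_j}$, completing the $\mathfrak{sl}_2$-triple.

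The main obstacle is producing such a family, and the argument naturally splits according to the Type (Definition \ref{def-type}). In Types II and III (where $e>0$, so that $(\lambda_i-\lambda_j)/2\in\widetilde{\Sigma}$), a single root $\alpha=(\lambda_i-\lambda_j)/2$ does the job: a direct Killing-form length computation using the strong orthogonality of $\lambda_i,\lambda_j$ together with $|\lambda_i|=|\lambda_j|$ (both are long by Proposition \ref{prop-conjdeslambda(i)}) yields $H_\alpha=H_{\lambda_i}-H_{\lambda_j}$ exactly. In Type I ($e=0$), no single root carries the right coroot, and one must combine at least two. Here I would reduce to the rank-two subalgebra $\widetilde{\mathfrak{g}}_A$ for $A=\{i,j\}$ (Corollary \ref{cor-gA}) and appeal to the very short classification of rank-two regular graded Lie algebras over $F$ extracted from Table 1; in each case an explicit strongly orthogonal pair of roots can be exhibited. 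For instance in the $D_m$-cases, writing $\lambda_i=e_a+e_b$ and $\lambda_j=e_c+e_d$ with $a,b,c,d$ distinct, the pair $\alpha_1=e_a-e_c$, $\alpha_2=e_b-e_d$ is strongly orthogonal and satisfies $H_{\alpha_1}+H_{\alpha_2}=H_{\lambda_i}-H_{\lambda_j}$; analogous explicit combinations handle the remaining Type I entries.
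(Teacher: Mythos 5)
Your reduction (the eigenvalue relations are automatic, so everything comes down to producing $X\in E_{i,j}(1,-1)$, $Y\in E_{i,j}(-1,1)$ with $[Y,X]=H_{\lambda_i}-H_{\lambda_j}$) and your first branch coincide with the paper: when $(\lambda_i-\lambda_j)/2$ is a root its coroot is exactly $H_{\lambda_i}-H_{\lambda_j}$, and a single restricted $\go{sl}_2$-triple finishes. Where you diverge is the remaining case. The paper's argument there is uniform and classification-free: take \emph{any} root $\lambda$ with $\widetilde{\go g}^{\lambda}\subset E_{i,j}(1,-1)$, set $\lambda'=-w_{i,j}(\lambda)$, and pair $X=X_0+w_{i,j}(Y_0)$ with $Y=Y_0+w_{i,j}(X_0)$, where $\{Y_0,H_{\lambda},X_0\}$ is an $\go{sl}_2$-triple. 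Lemma \ref{lem-mu+wijmu} (i.e.\ $\lambda+w_{i,j}(\lambda)\notin\widetilde\Sigma$) together with the strong orthogonality of $\lambda_i,\lambda_j$ (which kills $\lambda+\lambda'=\lambda_i-\lambda_j$) makes the cross brackets vanish, so $[Y,X]=H_{\lambda}-w_{i,j}(H_{\lambda})$; the $w_{i,j}$-antiinvariance forces this element into $\go a^0$, a Killing-form evaluation identifies it as a nonzero multiple of $H_{\lambda_i}-H_{\lambda_j}$, and a rescaling of $Y$ concludes. So the paper in effect produces your strongly orthogonal pair canonically, via the involution $w_{i,j}$, with no appeal to Table 1; your route instead buys concreteness (explicit root pairs in explicit models) at the price of a case-by-case check. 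Using the classification here is logically admissible, since Section \ref{section-classification} precedes this lemma.

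Two points in your sketch need repair before it is a proof. First, the dichotomy should be $e>0$ versus $e=0$, not Type II/III versus Type I: Type I contains the entries with $e=4$ (rows (10) and (12) of Table 1), where $(\lambda_i-\lambda_j)/2$ \emph{is} a root and your single-root argument applies, contrary to your claim that in Type I "no single root carries the right coroot". Second, the $e=0$ case analysis is not actually carried out: your displayed pair assumes $\lambda_i=e_a+e_b$, $\lambda_j=e_c+e_d$ with $a,b,c,d$ distinct, which covers row (11) but not row (8), where the two strongly orthogonal roots share a coordinate (there $\lambda_0=e_1-e_2$, $\lambda_1=e_1+e_2$, and one must instead take, say, $\alpha_1=-e_2+e_3$, $\alpha_2=-e_2-e_3$); rows (1) (restricted system $A_{2k+1}$, division algebra coefficients) and (13) (split $E_7$, $d=8$) are not treated at all, and the identification of the rank-two subalgebra $\widetilde{\go g}_{\{i,j\}}$ with an entry of Table 1 is itself a small argument you would have to supply. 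Each of these gaps is fillable, but the uniform $w_{i,j}$-construction above avoids them entirely.
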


\begin{proof}  By definition, for a root $\lambda$, the element $H_{\lambda}$ is the unique element such that $\beta(H_{\lambda})=n(\beta,\lambda)$, for all $\beta\in \widetilde \Sigma$. If  $\lambda=(\lambda_i-\lambda_j)/2$ is a root it is then easy to see that $H_\lambda=H_{\lambda_i}-H_{\lambda_j}$.  Any  non zero element    $X\in   \widetilde{\go g}^\lambda\subset E_{i,j}(1,-1)$ can be completed in an  ${\go sl}_2$-triple $\{Y,H_{\lambda_i}-H_{\lambda_j}, X\}$ where  $Y\in  \widetilde{\go g}^{-\lambda}\subset E_{i,j}( -1,1)$. \\
 If $(\lambda_i-\lambda_j)/2$ is not a root, let us fix a root  $\lambda$   such that $ \widetilde{\go g}^\lambda\subset E_{i,j}(1,-1)$ and set  $\lambda'=-w_{i,j}(\lambda)$.    One has  $ \widetilde{\go g}^{\lambda'}\subset E_{i,j}(1,-1)$.   Also  $\lambda' \neq \lambda$  ($\lambda'=-w_{i,j}(\lambda)=-\lambda+\lambda_{i}-\lambda_{j}=\lambda$ would imply that $\lambda=\frac{\lambda_{i}-\lambda_{j}}{2}$) . Let  $X_0\in  \widetilde{\go g}^\lambda\setminus \{0\}$. Choose $Y_0\in  \widetilde{\go g}^{-\lambda}\subset E_{i,j}(-1,1)$ such that  $\{Y_0, H_\lambda, X_0\}$ is an  ${\go sl}_2$-triple. Then  $\{w_{i,j}(X_0), H_{\lambda'},w_{i,j}(Y_0)\}$ is an  ${\go sl}_2$-triple as  $w_{i,j}(H_\lambda)=-H_{\lambda'}$ and as 
$w_{i,j}(X_0)\in \widetilde{\go g}^{-\lambda'}$ and  $w_{i,j}(Y_0)\in \widetilde{\go g}^{\lambda'}$ . Define  
$$X=X_0+w_{i,j}(Y_0)\quad\textrm{and}\quad  Y=Y_0+w_{i,j}(X_0)=w_{i,j}(X).$$ \\
Then one has 
$$[H_{\lambda_i}-H_{\lambda_j}, X]=2X \quad\textrm{and}\quad [H_{\lambda_i}-H_{\lambda_j}, Y]=2Y.$$\\
It remains to prove that $[Y,X]=H_{\lambda_i}-H_{\lambda_j}$. 
Let  $Z=[Y,X]$. By the preceding lemma,   $\lambda-\lambda'$ is not a root, hence $[Y_0, w_{i,j}(Y_0)]=0$ and  $[w_{i,j}(X_0), X_0]=0$ and this implies
$$Z=[Y_0,X_0]-w_{i,j}([Y_0,X_0])=H_\lambda-w_{i,j}(H_{\lambda})\in\go a.$$

This shows that $Z\neq 0$ ($Z=0\Leftrightarrow H_{\lambda}=w_{i,j}(H_\lambda)\Leftrightarrow \lambda=w_{i,j}(\lambda)=\lambda-\lambda_{i}+\lambda_{j}\Leftrightarrow \lambda_{i}=\lambda_{j}$).

 Lemma  \ref{actionwi} implies that  $w_{i,j}(Z)=Z-\lambda_i(Z)H_{\lambda_i}-\lambda_j(Z)H_{\lambda_j}$. As  $w_{i,j}Z=-Z$, we obtain
 $$Z=\frac{\lambda_i(Z)H_{\lambda_i}+\lambda_j(Z)H_{\lambda_j}}{2}.$$
Therefore $Z\in  \go a^0=\oplus_{i=0}^kF\; H_{\lambda_{i}}$. 

Let $H\in \go a^0$. Then $H=\sum _{i=0}^k\frac{\lambda_{i}(H)}{2}H_{\lambda_{i}}$ and an easy calculation shows that $[H,Y]= \frac{\lambda_{j}(H)-\lambda_{i}(H)}{2}Y$. Therefore 
$$ \widetilde{B}(H,Z)=\frac{\lambda_j(H)-\lambda_i(H)}{2}  \widetilde{B}(Y,X).$$
On the other hand, the roots  $\lambda_i$ and $\lambda_j$ are $W$-conjugate (Proposition \ref{propWconjugues}), hence $ \widetilde{B}(H_{\lambda_i},H_{\lambda_i})= \widetilde{B}(H_{\lambda_j},H_{\lambda_j})$ for all $i,j$. Define  $C_1:=  \widetilde{B}(H_{\lambda_i},H_{\lambda_i})\in F^*$. Then
$$ \widetilde{B}(H,H_{\lambda_i}-H_{\lambda_j})=C_1\frac{\lambda_i(H)-\lambda_j(H)}{2},\quad {\rm for}\, H\in\go a^0.$$
As $ \widetilde{B}$ is nondegenerate on  $\go a^0$, if we set  $C_2:=- \widetilde{B}(X,Y)\in F^*$, we obtain
$$Z=\frac{C_2}{C_1}(H_{\lambda_i}-H_{\lambda_j}).$$
If we replace  $Y$ by $\dfrac{C_1}{C_2}Y$,  then $\{Y,H_{\lambda_i}-H_{\lambda_j}, X\}$ is an $\go sl_2$-triple.

\end{proof}

\vskip 5pt

Let  $\{Y,H_{\lambda_i}-H_{\lambda_j}, X\}$ be the $\go sl_2$-triple obtained in the preceding Lemma. The action of the non trivial element of the Weyl group  of this $\go sl_2$-triple  is given by 
$$\gamma_{i,j}=e^{\ad X}e^{\ad Y}e^{\ad X}=e^{\ad Y}e^{\ad X}e^{\ad Y} \in {\rm Aut}_e(\go g).$$

\begin{prop}\label{prop-gammaij} For  $i\neq j\in\{0,\ldots ,k\}$, the elements $\gamma_{i,j}$ belong to $ N_{{\rm Aut}_e(\go g)}(\go a^0)$ and 
\begin{enumerate}\item $$\gamma_{i,j}(H_{\lambda_s})=\left\{\begin{array}{ll} H_{\lambda_i} & {\rm for }\, s=j\\H_{\lambda_j} & {\rm for }\, s=i\\H_{\lambda_s} & {\rm for }\, s\notin\{i,j\}\end{array}\right.$$
\item The  action of  $\gamma_{i,j}$ is trivial on each root space $ \widetilde{\go g}^{\lambda_s}$ for  $s\notin\{i,j\}$ and it is a bijective involution from   $ \widetilde{\go g}^{\lambda_i}$ onto   $ \widetilde{\go g}^{\lambda_j}$.
\end{enumerate}
\end{prop}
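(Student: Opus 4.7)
The plan is to view $\gamma_{i,j}$ as the Weyl element attached to the $\go{sl}_2$-triple $(Y,H_{ij},X)$ with $H_{ij}:=H_{\lambda_i}-H_{\lambda_j}$, and to read its action off $\widetilde{\go{g}}$ using the $\go{sl}_2$-module formulas recalled in Section~\ref{sl2module}. Since $X,Y\in E_{i,j}(\pm 1,\mp 1)\subset\go{g}$, one has automatically $\gamma_{i,j}\in\mathrm{Aut}_e(\go{g})$, so once item (1) is proved, $\gamma_{i,j}$ stabilizes $\go{a}^0=\bigoplus_s F\,H_{\lambda_s}$ and belongs to $N_{\mathrm{Aut}_e(\go{g})}(\go{a}^0)$.

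For (1) I would decompose $\go{a}^0$ as an $\mathrm{ad}(\go{sl}_2)$-submodule. Strong orthogonality (Proposition~\ref{prop-systememax}) gives $\lambda_i(H_{\lambda_s})=\lambda_j(H_{\lambda_s})=0$ for $s\notin\{i,j\}$, so $[H_{\lambda_s},X]=[H_{\lambda_s},Y]=0$ and $F\,H_{\lambda_s}$ is a trivial submodule, hence fixed by $\gamma_{i,j}$. The same computation shows $\tfrac{1}{2}(H_{\lambda_i}+H_{\lambda_j})$ commutes with $X$ and $Y$, hence is fixed. The complementary line $F\,H_{ij}$ sits inside the three-dimensional adjoint submodule $FX\oplus FH_{ij}\oplus FY$, on which the Weyl element negates $H_{ij}$. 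Splitting $H_{\lambda_i}=\tfrac{1}{2}(H_{\lambda_i}+H_{\lambda_j})+\tfrac{1}{2}H_{ij}$ then yields $\gamma_{i,j}(H_{\lambda_i})=H_{\lambda_j}$, and symmetrically $\gamma_{i,j}(H_{\lambda_j})=H_{\lambda_i}$.

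For (2) the crucial input is the long-root bound $\alpha(H_{\lambda_\ell})\in\{-2,-1,0,1,2\}$ for every $\alpha\in\widetilde{\Sigma}$ (Proposition~\ref{prop-conjdeslambda(i)}), combined with the $\go{a}^0$-weight bookkeeping provided by the decomposition of $V^+$ in Theorem~\ref{th-decomp-Eij}. If $Z\in\widetilde{\go{g}}^{\lambda_s}$ with $s\notin\{i,j\}$, any root of the form $\lambda_s+\mu$ arising from a root-vector component of $X$ (resp.\ $Y$) would carry $\go{a}^0$-weight $(1,-1,2,0,\ldots)$ (resp.\ $(-1,1,2,0,\ldots)$) at positions $(i,j,s,\ldots)$; none of these patterns appears in the decomposition of $V^+$, so $[Z,X]=[Z,Y]=0$ and $Z$ spans a trivial submodule, hence is fixed. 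For $Z\in\widetilde{\go{g}}^{\lambda_i}$ the same bookkeeping shows that $[X,Z]$ would land in root spaces of $H_{\lambda_i}$-weight $3$ and $[Y,[Y,[Y,Z]]]$ in root spaces of $H_{\lambda_j}$-weight $3$, both forbidden; so $Z$ is $\go{sl}_2$-primitive and generates an irreducible three-dimensional module of weights $(2,0,-2)$, whose lowest vector $\tfrac{1}{2}[Y,[Y,Z]]$ has $\go{a}^0$-weight $(0,2,0,\ldots)$ and therefore lies in $\widetilde{\go{g}}^{\lambda_j}$ by Theorem~\ref{th-decomp-Eij}. The formula $w.e_0=e_m$ of Section~\ref{sl2module} yields $\gamma_{i,j}(Z)=\tfrac{1}{2}[Y,[Y,Z]]\in\widetilde{\go{g}}^{\lambda_j}$, and the relation $w^2=(-1)^p$ on the weight-$p$ subspace gives $\gamma_{i,j}^2 Z=Z$, so $\gamma_{i,j}$ restricts to a bijective involution $\widetilde{\go{g}}^{\lambda_i}\leftrightarrow\widetilde{\go{g}}^{\lambda_j}$.

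The main obstacle is precisely this weight-bookkeeping step: ruling out unwanted root contributions in $[Z,X]$, $[Z,Y]$ and the iterated brackets, which forces one to combine the long-root bound with the exhaustive list of $\go{a}^0$-weights occurring in Theorem~\ref{th-decomp-Eij}. Once this is in hand, every remaining assertion is a direct application of the standard $\go{sl}_2$ formulas of Section~\ref{sl2module}.
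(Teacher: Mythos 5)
Your proof is correct and follows essentially the same route as the paper: you identify $\gamma_{i,j}$ as the Weyl element of the $\go{sl}_2$-triple $\{Y,H_{\lambda_i}-H_{\lambda_j},X\}$ from Lemma \ref{lem-sl2ij}, obtain (1) from the standard action of $w$ on the adjoint and trivial submodules of $\go a^0$, and get (2) from the $\go{sl}_2$-formulas of Section \ref{sl2module} together with the weight bookkeeping supplied by Theorem \ref{th-decomp-Eij}. The only cosmetic difference is that you exhibit the map $\widetilde{\go g}^{\lambda_i}\to\widetilde{\go g}^{\lambda_j}$ explicitly as $Z\mapsto\tfrac12(\ad Y)^2Z$, whereas the paper deduces it directly from (1) by transporting $\go a^0$-weight spaces under the automorphism; both arguments are sound.
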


\begin{proof}  
As  $X\in E_{i,j}(1,-1)$, for $H\in\go a^0$ one has 
$$\gamma_{i,j}(H)=H-\frac{\lambda_i(H)-\lambda_j(H)}{2} (H_{\lambda_i}-H_{\lambda_j}),$$
and this gives the relations $(1)$.

For  $\{s,l\}\cap\{i,j\}=\emptyset$ and $p,q\in\{\pm1\}$, one has $[E_{s,l}(p,q),E_{i,j}(p,q)]=\{0\}$, therefore  the action  $\gamma_{i,j}$ is trivial on the spaces $E_{s,l}(p,q)$, and in particuliar of  $ \widetilde{\go g}^{\lambda_s}$ for  $s\notin\{i,j\}$. The relations   (1) imply  that $\gamma_{i,j}$ is an isomorphism from $ \widetilde{\go g}^{\lambda_i}$ onto $ \widetilde{\go g}^{\lambda_j}$. It is an involution because the action of  $\gamma_{i,j}^2$ on the even weight spaces of  $H_{\lambda_i}-H_{\lambda_j}$ is trivial (section \ref{sl2module}).\\
\end{proof}
It is worth noting that the action of $\gamma_{i,j}^2$ on  $ \widetilde{\go g}$ is not trivial . Indeed, if  $X$ is in an odd weight space for  $H_{\lambda_i}-H_{\lambda_j}$ then  $\gamma_{i,j}^2(X)=-X$. Therefore, on order to obtain an involution, we will modify $\gamma_{i,j}$. This is the purpose of the next proposition.

\begin{prop}\label{prop-gamijtilde} For  $i\neq j\in\{0,\ldots, k\}$, the element   $\widetilde{\gamma_{i,j}}=\gamma_{i,j}\circ w_i^2$  belongs to $N_{ \widetilde{G}}(\go a^0)$ and it verifies the following relations: 
$$\widetilde{\gamma_{i,j}}(H_{\lambda_s})=\left\{\begin{array}{ll} H_{\lambda_i} & {\rm for }\, s=j\\H_{\lambda_j} & {\rm for }\, s=i\\H_{\lambda_s} & {\rm for }\, s\notin\{i,j\}\end{array}\right.$$
and  $$\widetilde{\gamma_{i,j}}^2={\rm Id}_{ \widetilde{\go g}}.$$\end{prop}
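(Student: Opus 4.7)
The plan is to reduce everything to the basic fact from \S\ref{sl2module} that for any $\go{sl}_{2}$-triple $\{Y,H,X\}$, the associated Weyl element $w=e^{\ad X}e^{\ad Y}e^{\ad X}$ satisfies $w^{2}Z=(-1)^{p}Z$ on the eigenspace of $\ad H$ with eigenvalue $p$. This will handle both the behaviour on $\go{a}^{0}$ and the involution identity.

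First I would handle the easy assertions. Since the eigenvalue of $\ad H_{\lambda_{i}}$ on any $H_{\lambda_{s}}\in \go{a}^{0}$ is $0$, the factor $w_{i}^{2}$ fixes $\go{a}$ pointwise; hence $\widetilde{\gamma_{i,j}}$ acts on $\go{a}^{0}$ exactly as $\gamma_{i,j}$, which gives the permutation of the $H_{\lambda_{s}}$ by Proposition \ref{prop-gammaij}. Together with the fact that $\gamma_{i,j}\in{\rm Aut}_{e}(\go{g})\subset \widetilde{G}$ and $w_{i}^{2}\in{\rm Aut}_{e}(\widetilde{\go g})\subset \widetilde G$, this shows $\widetilde{\gamma_{i,j}}\in N_{\widetilde G}(\go{a}^{0})$.

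For the involution property I would proceed by a conjugation trick. Writing
\[
\widetilde{\gamma_{i,j}}^{\,2}=\gamma_{i,j}\,w_{i}^{2}\,\gamma_{i,j}\,w_{i}^{2}
=\gamma_{i,j}^{\,2}\cdot\bigl(\gamma_{i,j}^{-1}w_{i}^{2}\gamma_{i,j}\bigr)\cdot w_{i}^{2},
\]
I would show that the middle factor equals $w_{j}^{2}$. Indeed, since $\gamma_{i,j}$ is an automorphism of $\widetilde{\go g}$ carrying the $\go{sl}_{2}$-triple $\{Y_{i},H_{\lambda_{i}},X_{i}\}$ onto an $\go{sl}_{2}$-triple of the form $\{Y',H_{\lambda_{j}},X'\}$ with $X'\in\widetilde{\go g}^{\lambda_{j}}$, $Y'\in\widetilde{\go g}^{-\lambda_{j}}$, the element $\gamma_{i,j}^{-1}w_{i}\gamma_{i,j}$ is the Weyl element attached to this new triple. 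By the basic sl2 property recalled above, both $(\gamma_{i,j}^{-1}w_{i}\gamma_{i,j})^{2}$ and $w_{j}^{2}$ act as $(-1)^{p}$ on the eigenspace of $\ad H_{\lambda_{j}}$ of eigenvalue $p$; being automorphisms that agree on every weight space, they coincide. Hence $\widetilde{\gamma_{i,j}}^{\,2}=\gamma_{i,j}^{\,2}\,w_{j}^{2}\,w_{i}^{2}$.

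Finally I would evaluate this product on a joint eigenvector $Z$ of $(\ad H_{\lambda_{i}},\ad H_{\lambda_{j}})$ with eigenvalues $(p_{i},p_{j})$, using Theorem \ref{th-decomp-Eij} and the remark of the author that $\gamma_{i,j}^{\,2}$ acts as $(-1)^{p}$ on the $\ad(H_{\lambda_{i}}-H_{\lambda_{j}})$-eigenspace of eigenvalue $p$. One gets
\[
\widetilde{\gamma_{i,j}}^{\,2}Z=(-1)^{p_{i}-p_{j}}(-1)^{p_{j}}(-1)^{p_{i}}Z=(-1)^{2p_{i}}Z=Z,
\]
and since every element of $\widetilde{\go g}$ is a sum of such joint eigenvectors (the $H_{\lambda_{s}}$'s commute, so the decomposition in Theorem \ref{th-decomp-Eij} diagonalises them simultaneously), the involution identity follows. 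The only technical point to be careful about is the conjugation step establishing $\gamma_{i,j}^{-1}w_{i}^{2}\gamma_{i,j}=w_{j}^{2}$; once that is in place, the rest is a direct sign computation on weight spaces.
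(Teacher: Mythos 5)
Your proof is correct, but it is organized differently from the paper's. For the involution identity the paper does not use the conjugation identity $\gamma_{i,j}^{-1}w_i^{2}\gamma_{i,j}=w_j^{2}$ at all: it computes $\widetilde{\gamma_{i,j}}^{\,2}$ block by block on the decomposition of Theorem \ref{th-decomp-Eij}, using that $w_i^{2}$ is $-\mathrm{Id}$ precisely on $\oplus_{s\neq i}E_{i,s}(\pm1,\pm1)$ and trivial elsewhere, that $\gamma_{i,j}$ exchanges $E_{i,s}(\pm1,\pm1)$ and $E_{j,s}(\pm1,\pm1)$ and is trivial on the blocks $E_{s,l}(\pm1,\pm1)$ with $\{s,l\}\cap\{i,j\}=\emptyset$, and finally that $\gamma_{i,j}^{2}=-\mathrm{Id}$ on the odd weight spaces of $H_{\lambda_i}-H_{\lambda_j}$. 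Your route instead isolates the observation that the square of the Weyl element of an $\go{sl}_2$-triple depends only on its semisimple element (it acts by $(-1)^{p}$ on each $\ad H$-eigenspace, and $\widetilde{\go g}$ is the sum of these); combined with $\gamma_{i,j}^{-1}(H_{\lambda_i})=H_{\lambda_j}$ from Proposition \ref{prop-gammaij} — the only point where the specific properties of $\gamma_{i,j}$ enter — this legitimizes $\gamma_{i,j}^{-1}w_i^{2}\gamma_{i,j}=w_j^{2}$ and reduces the whole statement to the sign identity $(-1)^{p_i-p_j}(-1)^{p_j}(-1)^{p_i}=1$ on joint eigenvectors of $\ad H_{\lambda_i}$ and $\ad H_{\lambda_j}$. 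Both arguments rest on the same facts from \S\ref{sl2module} and on Proposition \ref{prop-gammaij}, so the difference is one of packaging rather than substance; still, your version has the small advantage of treating all of $\widetilde{\go g}$ uniformly (including ${\cal Z}_{\widetilde{\go g}}(\go a^0)$, the root spaces $\widetilde{\go g}^{\pm\lambda_s}$ and the blocks $E_{i,j}(\pm1,\pm1)$ themselves), whereas the paper's case list only spells out the odd-weight blocks $E_{i,s},E_{j,s}$ ($s\notin\{i,j\}$) and the blocks disjoint from $\{i,j\}$, leaving the remaining even-weight pieces implicit. The part of your argument concerning the action on $\go a^0$ and membership in $N_{\widetilde{G}}(\go a^0)$ is the same as what the paper takes for granted, and is fine.
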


\begin{proof}   By section \ref{sl2module}, the action of  $w_i^2$ on the spaces $E_{i,s}(\pm1,\pm1)$ (for $s\neq i$) is the scalar multiplication by  $-1$, and is trivial on the on the sum of the other spaces. Moreover, the action of $\gamma_{i,j}$ on the spaces  $E_{s,l}(\pm1,\pm1)$ with  $\{s,l\}\cap\{i,j\}=\emptyset$ is trivial, and is  an isomorphism from $E_{i,s}(\pm1,\pm1)$ onto  $E_{j,s}(\pm1,\pm1)$. Therefore one obtains 
$$\widetilde{\gamma_{i,j}}^2(X)=\left\{\begin{array}{ll} -{\gamma_{i,j}}^2(X) & {\rm for }\, X\in\oplus_{s\notin\{i,j\}}E_{i,s}(\pm1,\pm1)\oplus E_{j,s}(\pm1,\pm1)\\
{\gamma_{i,j}}^2(X)&  {\rm for  }\, X\in\oplus_{\{s,l\}\cap\{i,j\}=\emptyset}E_{s,l}(\pm1,\pm1).\end{array}\right.$$
As the subspace $\oplus_{s\notin\{i,j\}}E_{i,s}(\pm1,\pm1)\oplus E_{j,s}(\pm1,\pm1)$ of the odd weightspaces for  $H_{\lambda_i}-H_{\lambda_j}$, the element  $\gamma_{i,j}^2$ acts by  $-1$ on it, and this   ends the proof.
\end{proof}

\subsection{Quadratic forms}\label{section-qXiXj}\hfill
 \vskip 10pt
 Remind that the quadratic form  $b$ is a normalization of the Killing form (Definition \ref{defb(X,Y)}).
 For $X\in V^+$, let  $Q_X$  be the quadratic form on $V^-$ defined by 
 $$Q_X(Y)=b(e^{\ad X}Y,Y),\quad Y\in V^-.$$

 If  $g\in G$, the quadratic forms $Q_X$ and $Q_{g.X}$ are equivalent.\\
 Therefore we will study the quadratic form $Q_X$ for  $X\in \oplus_{j=0}^k \widetilde{\go g}^{\lambda_j}.$ 
 
  The grading of  $ \widetilde{\go g}$ is orthogonal for $ \widetilde{B}$ and hence also for $b$. One obtains
 $$Q_X(Y)=\frac{1}{2} b((\ad X)^2Y,Y)=-\frac{1}{2} b([X,Y],[X,Y])$$

Let 
$$X=\sum_{j=0}^kX_j,\quad X_j\in \widetilde{\go g}^{\lambda_j}.$$

Let  $E_{s,l}(-1,-1)\subset V^-$. The action of  $\ad X_i \ad X_j$ on  $E_{s,l}(-1,-1)$ is non zero if and only if $(s,l)=(i,j)$ where  $i\neq j$ or  $s=l=i=j$. The quadratic forms  $q_{X_i,X_j}$ on  $E_{i,j}(-1,-1)$ (\resp   $ \widetilde{\go g}^{\lambda_j}$) for $i\neq j$ (\resp $i=j$) are defined by
$$q_{X_i,X_j}(Y)=-\frac{1}{2} b([X_i,Y],[X_j,Y]),\quad {\rm for  }\, Y\in E_{i,j}(-1,-1) ({\rm \resp } \widetilde{\go g}^{\lambda_j}).$$

The decomposition of  $V^-$ implies  that the quadratic form $Q_X$ is  equal to 
$$(\oplus_{j=0}^k q_{X_j,X_j})\oplus (\oplus_{i<j}^k 2\, q_{X_i,X_j}).$$

 \begin{theorem}\label{th-qnondeg} Let
$X=\sum_{j=0}^kX_j$
 where $ X_j\in \widetilde{\go g}^{\lambda_j}.$ Let $i,j\in\{0,\ldots, k\}$. 
 \begin{enumerate} \item  If $X_{i}\neq0$  and  $X_{j}\neq0$, then $q_{X_i,X_j}$ is non degenerate.
 \item Let $m$ be the number of indices $i$ such that  $X_i\neq 0$. Then
 $${\rm rank }\, Q_X =m\ell+\frac{m(m-1)}{2}d,$$
 where  $\ell={\rm dim}\,  \widetilde{\go g}^{\lambda_i}$ and  $d={\rm dim} \,E_{i,j}(-1,1)$ for $i\neq j$.
 \end{enumerate}
 \end{theorem}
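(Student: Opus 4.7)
The decomposition
\[
Q_X = \Bigl(\oplus_{j=0}^k q_{X_j,X_j}\Bigr) \oplus \Bigl(\oplus_{i<j} 2\, q_{X_i,X_j}\Bigr)
\]
is orthogonal with respect to the corresponding splitting of $V^-$ into the pieces $\widetilde{\go g}^{-\lambda_j}$ and $E_{i,j}(-1,-1)$. Hence part (2) will reduce to (1) plus a counting step: each non-trivial diagonal piece $q_{X_j,X_j}$ contributes $\dim \widetilde{\go g}^{-\lambda_j}=\ell$, each non-trivial off-diagonal piece $q_{X_i,X_j}$ ($i\neq j$) contributes $\dim E_{i,j}(-1,-1)=d$, and any piece in which one of the arguments $X_s$ vanishes is identically zero; summing over the $m$ non-vanishing diagonal pieces and the $\binom{m}{2}$ non-vanishing off-diagonal pieces yields $m\ell + \frac{m(m-1)}{2}d$.

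The key computational step for (1) is the identity
\[
q_{X_i,X_j}(Y) \;=\; \tfrac{1}{2}\, b\bigl(T_{ij}\,Y,\,Y\bigr), \qquad T_{ij} := \ad(X_i)\,\ad(X_j),
\]
which follows from a single application of the invariance $b([A,B],C)=-b(B,[A,C])$ to the defining formula $q_{X_i,X_j}(Y)=-\tfrac{1}{2}b([X_i,Y],[X_j,Y])$. Combined with $[X_i,X_j]=0$ (trivial if $i=j$, otherwise from commutativity of $V^+$), a second application of invariance shows that $T_{ij}$ is $b$-self-adjoint, so the polarization of $q_{X_i,X_j}$ is $\tilde q_{X_i,X_j}(Y,Y')=\tfrac{1}{2}\,b(T_{ij}Y,Y')$. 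Note that $T_{jj}=(\ad X_j)^2$ sends $\widetilde{\go g}^{-\lambda_j}$ into $\widetilde{\go g}^{\lambda_j}$, while for $i\neq j$ the operator $T_{ij}$ sends $E_{i,j}(-1,-1)$ into $E_{i,j}(1,1)$.

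Non-degeneracy of $\tilde q_{X_i,X_j}$ then splits into two claims. First, the Killing form pairs the source and target of $T_{ij}$ non-degenerately: over $\overline F$ each of the spaces $\widetilde{\go g}^{\pm \lambda_j}$ and $E_{i,j}(\pm 1,\pm 1)$ is a direct sum of root spaces $\overline{\widetilde{\go g}}^\alpha$, and the ``opposite'' space is exactly the sum over the roots $-\alpha$; non-degeneracy of the Killing pairing between opposite root spaces is classical, and it descends to $F$. Second, $T_{ij}$ is an isomorphism when $X_i$ and $X_j$ are non-zero: any non-zero element of $\widetilde{\go g}^{\lambda_s}$ is generic in the rank-one algebra $\widetilde{\go l}_s$---because in both cases of Theorem \ref{th-k=0} the fundamental relative invariant (the reduced norm of $D$, resp.\ the anisotropic form $Q$) vanishes only at $0$---so by Proposition \ref{prop-generiques-cas-regulier} applied to $\widetilde{\go l}_s$, one can complete $X_s$ to an $\go{sl}_2$-triple $(Y_s, H_{\lambda_s}, X_s)$. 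The $\go{sl}_2$-representation theory recalled in section \ref{sl2module} then yields the isomorphism $(\ad X_j)^2 : \widetilde{\go g}^{-\lambda_j}\to\widetilde{\go g}^{\lambda_j}$; the same reasoning as in the proof of Proposition \ref{prop-memedimension}(3) produces the intermediate isomorphisms $\ad X_j:E_{i,j}(-1,-1)\to E_{i,j}(-1,1)$ and $\ad X_i:E_{i,j}(-1,1)\to E_{i,j}(1,1)$, whose composition is $T_{ij}$.

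The main obstacle is the invariance-theoretic reduction to $T_{ij}$ and the verification of its $b$-self-adjointness, both of which rely essentially on the commutativity $[V^+,V^+]=0$. Once these are in hand the proof is essentially mechanical: it combines only the elementary $\go{sl}_2$-representation theory already used throughout section 1, the root-space structure of the Killing form, and the rank-one classification given by Theorem \ref{th-k=0}.
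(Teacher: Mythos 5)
Your proof is correct and follows essentially the same route as the paper: part (2) is handled by the same orthogonal decomposition of $Q_X$, and part (1) rests on the same ingredients — completing a nonzero $X_s$ to an $\go{sl}_2$-triple with $H_{\lambda_s}$ (via genericity in the rank-one algebra $\widetilde{\go l}_s$), the resulting isomorphisms $(\ad X_j)^2$ and $\ad X_i\,\ad X_j$ between opposite weight spaces, and non-degeneracy of the Killing pairing between those spaces. The only notable difference is cosmetic: for $i\neq j$ you argue globally with the operator $T_{ij}$ on $E_{i,j}(-1,-1)$, whereas the paper runs the same argument restricted-root-space by restricted-root-space using the involution $w_{i,j}$; you also spell out the $\go{sl}_2$-triple justification for arbitrary nonzero $X_i$, which the paper leaves implicit.
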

 \begin{proof}  The bilinear form associated to  $q_{X_i,X_j}$ is given by 
 $$L_{X_i,X_j}(u,v)=-\frac{1}{2} b([X_i,u], [X_j,v]),\quad u,v\in E_{i,j}(-1,-1) ({\rm \resp } \widetilde{\go g}^{\lambda_j})\, {\rm for  }\, i\neq j  ({\rm \resp }i=j).$$
 
If  $i=j$,   $(\ad X_i)^2$ is an isomorphism from  $ \widetilde{\go g}^{-\lambda_i}$ onto  $ \widetilde{\go g}^{\lambda_i}$. As the form  $b$ (proportional to  $ \widetilde{B}$) is non degenerate on  $ \widetilde{\go g}^{-\lambda_i}\times  \widetilde{\go g}^{\lambda_i}$, the form $L_{X_i,X_i}$ is non degenerate on  $ \widetilde{\go g}^{\lambda_i}$.\medskip

If $i\neq j$, let us consider two roots  $\lambda$ and  $\mu$ in the decomposition of $E_{i,j}(-1,-1)$. As   $\ad X_j( \widetilde{\go g}^{\mu})\subset  \widetilde{\go g}^{\mu+\lambda_j}$ and   $\ad X_i( \widetilde{\go g}^{\lambda})\subset  \widetilde{\go g}^{\lambda+\lambda_i}$, the restriction of $L_{X_i, X_j}$ to  $ \widetilde{\go g}^\lambda\times \widetilde{\go g}^\mu$ is non zero if and only if  $\lambda+\lambda_i=-(\mu+\lambda_j)$, that is if and only if  $\mu=-w_{i,j}(\lambda)$.\\
Let  $u\in  \widetilde{\go g}^\lambda$ such that, for all $v\in \widetilde{\go g}^{-w_{i,j}(\lambda)}$, one has $L_{X_i,X_j}(u,v)=0$. 
By section  \ref{sl2module},  $\ad X_i$ is an isomorphism from  $ \widetilde{\go g}^{\lambda}\subset E_{i,j}(-1, -1)$ onto $  \widetilde{\go g}^{\lambda+\lambda_i}\subset E_{i,j}(1,-1)$ and   $\ad X_j$ is an isomorphism from  $ \widetilde{\go g}^{-w_{i,j}(\lambda)}\subset E_{i,j}(-1, -1)$ onto  $ \widetilde{\go g}^{-(\lambda+\lambda_i)}\subset E_{i,j}(-1,1)$.  As the restriction of  $b$ to $ \widetilde{\go g}^{\lambda+\lambda_i}\times \widetilde{\go g}^{-(\lambda+\lambda_i)}$ is non degenerate, we get $\ad X_i (u)=0$ and hence  $u=0$.  This proves the first statement

The second statement is an immediate consequence of the formula $Q_{X}= (\oplus_{j=0}^k q_{X_j,X_j})\oplus (\oplus_{i<j}^k 2\, q_{X_i,X_j})$ seen before.

  \end{proof}

 \begin{prop}\label{prop-equivalenceqXiXj}  There exist $\go{sl}_2$-triples $\{Y_s,H_{\lambda_s}, X_s\}$, $s\in\{0,\dots, k\}$,  such that,  for $i\neq j$, the quadratic forms  $q_{X_i,X_j}$ are all $G$-equivalent (this means that there exists $g\in G$ such that $q_{X_{0},X_{1}}=q_{X_{i},X_{j}}\circ g)$, and such that each of the forms   $q_{X_i,X_j}$ represents $1$ (i.e. there exists   $u\in E_{i,j}(-1,-1)$ such that  $q_{X_i,X_j}(u)=1$).\\
 Moreover, if $\ell =1$, these forms satisfy the following conditions  (remember that  $e={\rm dim}\; \tilde{\go g}^{(\lambda_i+\lambda_j)/2}$ and  $d={\rm dim} \,E_{i,j}(-1,1)$ for $i\neq j$):
  \begin{enumerate}
 \item If   $e\neq 0$, then the restriction of  $q_{X_i,X_j}$ to  $\tilde{\go g}^{- (\lambda_i+\lambda_j)/2}$  is anisotropic of rank $e$.
 \item If  $d-e\neq 0$, and if $W_{i,j}(-1,-1)$ denotes the direct sum of the spaces  $\tilde{\go g}^{-\mu}\subset E_{i,j}(-1,-1)$ where  $\mu\in\tilde{\Sigma}$ and $\mu\neq (\lambda_i+\lambda_j)/2$, then the restriction of  $q_{X_i,X_j}$ to $W_{i,j}$ is hyperbolic of rank  $d-e$ (and therefore $d-e$ is even).\end{enumerate}
 \end{prop}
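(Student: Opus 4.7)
The plan is to construct the $\go{sl}_2$-triples step by step, starting from an arbitrary one and transporting it by means of the elements $\widetilde{\gamma_{i,j}}$ of Proposition \ref{prop-gamijtilde}. More precisely, I would first choose an $\go{sl}_2$-triple $\{Y_0,H_{\lambda_0},X_0\}$ with $X_0\in\widetilde{\go g}^{\lambda_0}\setminus\{0\}$, and then define $X_j=\widetilde{\gamma_{0,j}}(X_0)$ and $Y_j=\widetilde{\gamma_{0,j}}(Y_0)$ for $j\geq 1$. Since $\widetilde{\gamma_{0,j}}$ sends $H_{\lambda_0}$ to $H_{\lambda_j}$ and is an automorphism of $\widetilde{\go g}$, the $\{Y_j,H_{\lambda_j},X_j\}$ are again $\go{sl}_2$-triples with $X_j\in\widetilde{\go g}^{\lambda_j}$ and $Y_j\in\widetilde{\go g}^{-\lambda_j}$.

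To establish the $G$-equivalence, I would use that for any two distinct pairs $(i,j)$ and $(i',j')$, a suitable product of elements $\gamma_{r,s}$ (composed so as to define an element of $G$ acting on $\widetilde{\go g}$, i.e.\ the element $\widetilde{\gamma_{i,i'}}\widetilde{\gamma_{j,j'}}$ when the indices are disjoint, with a small adjustment in the overlapping case) sends $(X_i,X_j)$ to $(X_{i'},X_{j'})$ and also realizes a linear isomorphism $E_{i,j}(-1,-1)\to E_{i',j'}(-1,-1)$ under which $q_{X_i,X_j}$ pulls back to $q_{X_{i'},X_{j'}}$; this follows from the naturality of $q$ with respect to $G$ and from Proposition \ref{prop-gamijtilde}. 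To arrange that each form represents $1$, I would use Lemma \ref{lem-tId-dansG}: the scalar action $t\,\mathrm{Id}_{V^+}$ lies in $L$, and rescaling $X_0$ by $t$ (and compensating $Y_0$ by $t^{-1}$ so that the triple property is preserved) multiplies $q_{X_0,X_1}$ by a scalar whose character we can compute; thus one can normalize once and for all so that $q_{X_0,X_1}$ represents $1$, after which $G$-equivalence transports this property to all other pairs.

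For the case $\ell=1$, the decomposition $E_{i,j}(-1,-1)=\widetilde{\go g}^{-(\lambda_i+\lambda_j)/2}\oplus W_{i,j}(-1,-1)$ (with the first summand absent when $e=0$) is orthogonal for $q_{X_i,X_j}$, since for $Y\in\widetilde{\go g}^{-\mu}\subset E_{i,j}(-1,-1)$ the element $[X_i,Y]$ lies in $\widetilde{\go g}^{\lambda_i-\mu}$ and the bilinear form pairs $\widetilde{\go g}^{-\mu}$ with $\widetilde{\go g}^{-\nu}$ only if $\nu=-w_{i,j}(\mu)$ (from the calculation in the proof of Theorem \ref{th-qnondeg}). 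Now $\mu=(\lambda_i+\lambda_j)/2$ satisfies $-w_{i,j}(\mu)=\mu$, so this root space is orthogonal to all others, while the remaining roots in $E_{i,j}(-1,-1)$ split into pairs $\{\mu,-w_{i,j}(\mu)\}$ with $\mu\neq -w_{i,j}(\mu)$. For each such pair, $\widetilde{\go g}^{-\mu}$ and $\widetilde{\go g}^{w_{i,j}(\mu)}$ are both one-dimensional (using that $\ell=1$ together with the fact that all nonzero root spaces in this configuration have the same dimension, which follows from the $W$-conjugacy of Proposition \ref{propWconjugues}), are isotropic for $q_{X_i,X_j}$ (they are orthogonal to themselves by the parity/restriction argument), and are paired non-degenerately. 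This forces the restriction of $q_{X_i,X_j}$ to $W_{i,j}(-1,-1)$ to be hyperbolic of rank $d-e$.

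The main difficulty, as I see it, is the anisotropy of the restriction to $\widetilde{\go g}^{-(\lambda_i+\lambda_j)/2}$ when $e\neq 0$. The natural approach is to pass to the subalgebra $\widetilde{\go g}_{\{i,j\}}$ from Corollary \ref{cor-gA}, which is a regular graded algebra of rank $2$ containing $\widetilde{\go g}^{\pm\lambda_i}$, $\widetilde{\go g}^{\pm\lambda_j}$ and $\widetilde{\go g}^{\pm(\lambda_i+\lambda_j)/2}$; since $\ell=1$, this algebra has the same restricted root system type as one of the rank $2$ algebras listed in Table 1 which carries nonzero half-root spaces, and in particular its anisotropic kernel acts on $\widetilde{\go g}^{(\lambda_i+\lambda_j)/2}$ in a way dictated by a division algebra structure (typically a quaternion algebra $\mathbb{H}$ or an anisotropic quadratic form coming from the Satake--Tits data). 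I would then argue, using the normalization $b_A=b$ of the Killing form from the Lemma just after Lemma \ref{lemmeb}, that the restricted form $q_{X_i,X_j}$ on $\widetilde{\go g}^{-(\lambda_i+\lambda_j)/2}$ is, up to scalar, the norm form of this anisotropic structure, hence anisotropic of rank $e$; a case-by-case verification referring to Table 1 (cases (3), (4), (6), (7), (9), (10), (12)) handles the remaining bookkeeping.
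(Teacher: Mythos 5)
The decisive gap is in part (1), the anisotropy statement, which you yourself identify as the main difficulty but then only sketch. You propose to pass to the rank-two algebra of Corollary \ref{cor-gA}, assert that the restriction of $q_{X_i,X_j}$ to $\widetilde{\go g}^{-(\lambda_i+\lambda_j)/2}$ is ``up to scalar the norm form'' of an anisotropic structure read off from the Satake--Tits data, and defer the rest to an unexecuted case-by-case check of Table 1 (a list which, incidentally, includes case (7), where $\ell=3$, and omits case (2), where $\ell=1$ and $e=2$). But the identification with an anisotropic norm form is exactly what has to be proved, and nothing in your text proves it: for $e=2,3,4$ nondegeneracy of the restriction gives no anisotropy, so each case would require a concrete computation in an explicit realization. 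Moreover no classification is needed. Since $(\lambda_i+\lambda_j)/2$ is a root whose coroot is $H_{\lambda_i}+H_{\lambda_j}$, any nonzero $Y\in\widetilde{\go g}^{-(\lambda_i+\lambda_j)/2}$ lies in an $\go{sl}_2$-triple $\{Y,H_{\lambda_i}+H_{\lambda_j},X\}$; if $w=e^{\ad X}e^{\ad Y}e^{\ad X}$, then $wX_i=\tfrac12(\ad Y)^2X_i$ is a nonzero element of $\widetilde{\go g}^{-\lambda_j}$, hence equals $aY_j$ with $a\in F^*$ because $\ell=1$, and therefore $q_{X_i,X_j}(Y)=\tfrac12\, b\bigl((\ad Y)^2X_i,X_j\bigr)=a\,b(Y_j,X_j)=a\neq0$ by the normalization $b(X_j,Y_j)=1$ of Lemma \ref{lemmeb}. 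This uniform two-line computation is the paper's proof of (1) and is what your argument is missing.

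There is also a smaller but real imprecision in the equivalence step: the element $\widetilde{\gamma_{i,i'}}\widetilde{\gamma_{j,j'}}$ does map $\widetilde{\go g}^{\lambda_i}$ onto $\widetilde{\go g}^{\lambda_{i'}}$, but the various $\gamma_{r,s}$ are chosen independently of one another, so nothing forces it to send the chosen $X_i$ to the chosen $X_{i'}$, while the statement requires the exact equality $q_{X_0,X_1}=q_{X_i,X_j}\circ g$. The repair (and the paper's route) is to transport only with the maps used to define the data: $\gamma_{0,i}$ fixes $X_j$ for $j\neq0,i$ and, being an involution between $\widetilde{\go g}^{\lambda_0}$ and $\widetilde{\go g}^{\lambda_i}$, sends $X_i$ back to $X_0$, giving $q_{X_i,X_j}\sim q_{X_0,X_j}$; then $g=\gamma_{0,1}\gamma_{0,j}\gamma_{0,1}$ fixes $X_0$ and carries $X_1$ to $X_j$, giving $q_{X_0,X_j}\sim q_{X_0,X_1}$. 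Your rescaling trick for representing $1$ is a legitimate alternative (the paper gets it for free: with $Y$ the negative element of the triple defining $\gamma_{0,1}$, one has $q_{X_0,X_1}([Y_1,Y])=1$ directly from $b(X_1,Y_1)=1$), and your treatment of part (2) is in substance the paper's; note only that the one-dimensionality of the root spaces in $W_{i,j}$ is neither needed nor a consequence of Proposition \ref{propWconjugues} — total isotropy of each root space together with the pairing pattern $\nu=-w_{i,j}(\mu)$ already forces hyperbolicity of rank $d-e$.
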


 \begin{proof} Let $X_0\in \widetilde{\go g}^{\lambda_0}$. We fix a $\go sl_2$-triple $\{Y_0,H_{\lambda_0}, X_0\}$. Let  $j\in\{1,\ldots, k\}$. We choose the maps  $\gamma_{0,j}$ such that Proposition \ref{prop-gammaij} is satisfied and we set  $Y_j=\gamma_{0,j} Y_0$ and $X_j=\gamma_{0,j}X_0$. Then $\{Y_j,H_{\lambda_j}, X_j\}$ is an  $\go sl_2$- triple  and for $i\neq j$, we have $\gamma_{0,i}(X_j)=X_j$. Then, for  $Y\in E_{i,j}(-1,-1)$, we obtain 
 $$q_{X_i,X_j}(Y)=-\frac{1}{2} b([\gamma_{0,i}(X_0), Y], [X_j,Y])$$
 $$=-\frac{1}{2} b([X_0, \gamma_{0,i}^{-1}(Y)], [X_j,\gamma_{0,i}^{-1}(Y)])=q_{X_0,X_j}(\gamma_{0,i}^{-1}(Y)).$$
Therefore  $q_{X_i,X_j}$ is equivalent to $q_{X_0,X_j}$for all $j\neq 0$ and  $j\neq i$. \medskip

Let $j\geq 2$. One has  $X_j=\gamma_{0,1}(X_j)=\gamma_{0,1}\gamma_{0,j}(X_0)$. As the restriction of  $\gamma_{0,1}^2$ to  $ \widetilde{\go g}^{\lambda_0}$ is the identity, one has 
$X_0=\gamma_{0,1}^2 (X_0)=\gamma_{0,1}(X_1)$ and hence $X_j=\gamma_{0,1}\gamma_{0,j}\gamma_{0,1}(X_1)$. As $g=\gamma_{0,1}\gamma_{0,j}\gamma_{0,1}$ fixes $X_0$, one obtains, for $Y\in E_{0,j}(-1,-1)$,
$$q_{X_0, X_j}(Y)=-\frac{1}{2} b([X_0, g^{-1}Y], [X_1,g^{-1}Y])= q_{X_0, X_1}(g^{-1}Y).$$
Therefore $q_{X_0,X_j}$ is equivalent to  $q_{X_0,X_1}$.\medskip

Let us now  prove that $q_{X_0,X_1}$ represents $1$. We fix an  $\go sl_2$-triple $\{Y, H_{\lambda_0}-H_{\lambda_1},X\}$ with $X\in E_{0,1}(1,-1)$, $Y\in E_{0,1}(-1,1)$  such that  
$\gamma_{0,1}=e^{\ad X}e^{\ad Y}e^{\ad X}$. As $X_0$ is of weight  $2$ for the action of this  $\go sl_2$-triple , one has  $\gamma_{0,1}(X_0)= \dfrac{1}{2} (\ad Y)^2(X_0)$.
From the normalization of  $b$ (Lemme \ref{lemmeb}), we get 
  $$1=b(X_1,Y_1)=b(\gamma_{0,1}(X_0), Y_1)=-\frac{1}{2}b([Y,X_0],[Y,Y_1]).$$
Set $Z=[Y_1,Y]\in E_{0,1}(-1,-1)$. 
 Using the Jacobi identity, one has 
$[X_0,Z]=\ad(Y_1)([X_0,Y])$ and  $[X_1,Z]=-[Y_1,[Y,X_1]]-[Y,[X_1,Y_1]]=-Y$. Therefore
$$q_{X_0,X_1}(Z)=-\frac{1}{2} b([X_0,Z],[X_1,Z])=-\frac{1}{2}b(\ad(Y_1)([X_0,Y]), -Y)$$
$$=-\frac{1}{2}b([X_0,Y], [Y_1,Y])=1. $$

Suppose now that $\ell=1$. \\
If $e\neq 0$ then $\mu=(\lambda_i+\lambda_j)/2$ is a root,  and its coroot is  $H_{\lambda_i}+H_{\lambda_j}$. Let  $Y$ be   a non zero element in $\tilde{\go g}^{-(\lambda_i+\lambda_j)/2}$. Let  $\{Y,H_{\lambda_i}+H_{\lambda_j}, X\}$ be an $\go sl_2$-triple with $X\in \tilde{\go g}^{(\lambda_i+\lambda_j)/2}$ and 
denote by   $w=e^{\ad X}e^{\ad Y} e^{\ad X}$ the non trivial Weyl group element associated to this  $\go sl_2$-triple .  As $X_i$ is if weight $2$ for the action of this  $\go sl_2$-triple, one has  $wX_i= \dfrac{({\rm ad}\;Y)^2}{2}X_i$ and  $wX_i$ is a non zero element  of  $\tilde{\go g}^{\lambda_j}$. As $\ell=1$, there exists $a\in F^*$ such that $wX_i=aY_j$. Therefore, we get
$$q_{X_i,X_j}(Y)=\dfrac{1}{2}b({\rm ad}(Y))^2X_i,X_j)=b(w.X_i,X_j)=a\; b(Y_j,X_j)=a\neq 0.$$
Hence the  restriction of  $q_{X_i,X_j}$ to  $\tilde{\go g}^{-(\lambda_i+\lambda_j)/2}$ is anisotropic.\\
We prove now the last assertion. We have $d> e$. For  $s=i$ or $j$, we note $w_s=e^{\ad X_s}e^{\ad Y_s} e^{\ad X_s}$ and  $w_{i,j}=w_iw_j=w_jw_i$. Let  $\mu\in\tilde{\Sigma}$ such that  $\mu\neq(\lambda_i+\lambda_j)/2$ and such that  $\tilde{\go g}^{\mu}\subset E_{i,j}(1,1)$. From Remark  \ref{rem-actionwij}, we have $w_{i,j} \mu=\mu-\lambda_i-\lambda_j$. Hence $\mu'=-w_{i,j} \mu$ is a root, distinct from $\mu$ (because  $\mu'=\mu$ would imply  $\mu=(\lambda_i+\lambda_j)/2$ and this is not the case) such  that ${\go g}^{-\mu'}\subset E_{i,j}(-1,-1)$. Let us fix an $\go sl_2$-triple $\{X_{-\mu}, H_\mu, X_\mu\}$ where $X_{\pm\mu}\in\tilde{\go g}^{\pm\mu}$. Applying  $w_{i,j}$ we obtain the $\go sl_2$-triple   $\{w_{i,j}X_{\mu}, H_{\mu'}, w_{i,j}X_{-\mu}\}$ where $w_{i,j}X_\mu\in\tilde{\go g}^{-\mu'}$. Using Lemma \ref{actionwi}, we  obtain
$$q_{X_i,X_j}(X_{-\mu})=-\dfrac{1}{2} b([X_i,X_{-\mu}], [X_j,X_{-\mu}])=-\dfrac{1}{2} b(w_iX_{-\mu}, w_jX_{-\mu})=-\dfrac{1}{2} b(X_{-\mu},w_{i,j}X_{-\mu})=0$$
and $$q_{X_i,X_j}(w_{i,j}X_{\mu})=-\dfrac{1}{2} b([X_i,X_{\mu}], [X_j,X_\mu])=-\dfrac{1}{2} b(w_iX_{\mu}, w_jX_{\mu})=-\dfrac{1}{2} b(X_{\mu},w_{i,j}X_{\mu})=0.$$
This implies that the restriction of  $q_{X_i,X_j}$ to the vector space generated by $X_{-\mu}$ and $w_{i,j}X_\mu$ is a hyperbolic plane. This ends the proof.


\end{proof}
\begin{rem}\label{rem-lienmuller} Rather than the forms $q_{X_i, X_j}$ which were already used in the real case by N.~Bopp and H. Rubenthaler (\cite{BR}), I . Muller  (\cite{Mu98},\cite{Mu08}) introduced the quadratic form 
$f_{\lambda_i,\lambda_j}$ on $E_{i,j}(-1,1)$ defined as follows: if   $(Y_i,H_{\lambda_i}, X_i)$ is an $\go sl_2$-triple then
$$f_{\lambda_i,\lambda_j}(u)=-\frac{1}{2}b([u, X_i], [u, Y_j]).$$
From the preceding proof we see that
 $$f_{\lambda_i,\lambda_j}(u)=q_{X_i, X_j}([Y_j,u]),\quad u\in E_{i,j}(-1,1).$$
\end{rem}
 \subsection{Reduction to the diagonal. Rank of an element}
\begin{prop}\label{prop G-diag} For any element  $Z$ in  $V^+$, there exists a unique  $m\in\N$, and for  $j=0,\ldots, m-1$ there exist non zero elements   $Z_j\in\tilde{\go g}^{\lambda_j}$   (non unique) such that $Z$ is $G$-conjugated  to  $Z_0+Z_1+\ldots+Z_{m-1}$, or, equivalently, $Z$ is $G$-conjugated to a generic element of $V^+_{k-m+1}$. 
\end{prop}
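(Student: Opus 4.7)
My approach combines Theorem \ref{th-V+caplambda} with the index-permuting elements $\gamma_{i,j}\in G$ of Proposition \ref{prop-gammaij} to handle existence, and the quadratic form $Q_Z$ together with Theorem \ref{th-qnondeg} to establish uniqueness of $m$.

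For existence, I first apply Theorem \ref{th-V+caplambda} to reduce $Z$, up to $G$-conjugation, to the diagonal form $\sum_{j=0}^k Z_j$ with $Z_j\in\widetilde{\go g}^{\lambda_j}$. I then set $m:=|J|$, where $J=\{j:Z_j\neq 0\}$. By Proposition \ref{prop-gammaij} each $\gamma_{i,j}$ lies in $\operatorname{Aut}_e(\go g)\subset G$, acts as the identity on every $\widetilde{\go g}^{\lambda_s}$ with $s\notin\{i,j\}$, and defines a bijection between $\widetilde{\go g}^{\lambda_i}$ and $\widetilde{\go g}^{\lambda_j}$. Composing such transpositions gives an element of $G$ conjugating $Z$ to $\sum_{j=0}^{m-1}Z'_j$ with every $Z'_j\in\widetilde{\go g}^{\lambda_j}\setminus\{0\}$; this is the first formulation. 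Applying next the longest element $s_0\in W\subset G$ of Proposition \ref{propWconjugues}, which satisfies $s_0(\lambda_j)=\lambda_{k-j}$, I obtain a further $G$-conjugate of the form $\sum_{j=k-m+1}^{k}Z''_j$ with each $Z''_j\in\widetilde{\go g}^{\lambda_j}\setminus\{0\}$. Since the descent sequence of the regular graded subalgebra $\widetilde{\go g}_{k-m+1}$ is precisely $\lambda_{k-m+1},\ldots,\lambda_k$ (Theorem \ref{th-descente}), Proposition \ref{X0+...+Xkgenerique} applied to $\widetilde{\go g}_{k-m+1}$ shows that this element is generic in $V^+_{k-m+1}$; this is the second formulation. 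The reverse implication between the two formulations follows by applying $s_0$ in the other direction.

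For uniqueness of $m$, I use the fact that $Q_Z$ and $Q_{g\cdot Z}$ are equivalent quadratic forms on $V^-$ for any $g\in G$ (Section \ref{section-qXiXj}), so the rank of $Q_Z$ is a $G$-invariant. Evaluating on the diagonal representative $\sum_{j=0}^{m-1} Z'_j$ produced above, Theorem \ref{th-qnondeg}(2) yields $\operatorname{rank}(Q_Z)=m\ell+\tfrac{m(m-1)}{2}d$, which, since $\ell\geq 1$, is strictly increasing in $m$. Hence $m$ is uniquely determined by the orbit of $Z$. The main obstacle is really this uniqueness step: the rearrangement arguments are essentially bookkeeping given Proposition \ref{prop-gammaij} and Proposition \ref{propWconjugues}, whereas uniqueness genuinely relies on the nondegeneracy assertion Theorem \ref{th-qnondeg}(1), which is what guarantees that each additional diagonal component contributes strictly positively to the rank.
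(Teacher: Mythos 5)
Your proposal is correct and follows essentially the same route as the paper: Theorem \ref{th-V+caplambda} combined with the transpositions $\gamma_{i,j}$ of Proposition \ref{prop-gammaij} to produce the diagonal representative (your extra use of $s_0$ from Proposition \ref{propWconjugues} to pass to the second formulation is a harmless variant of the paper's bookkeeping), and the $G$-invariance of the rank of $Q_Z$ together with the rank formula of Theorem \ref{th-qnondeg} to pin down $m$. Nothing is missing.
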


 \begin{proof} By Theorem  \ref{th-V+caplambda}, any non zero element of  $V^+$ is $G$-conjugated to an element of the form
$$Y=Y_0+Y_1+\ldots+Y_k,\quad{\rm with }\, Y_j\in \tilde{\go g}^{\lambda_j}.$$
Let  $m$ be the number of indices  $j$ such that $Y_j\neq 0$. For  $i\neq j$, the element   $\gamma_{i,j}\in N_G(\go a^0)$ obtained in Proposition \ref{prop-gammaij} exchanges  $ \tilde{\go g}^{\lambda_i}$ and  $ \tilde{\go g}^{\lambda_j}$ and fixes   $ \tilde{\go g}^{\lambda_s}$ for  $s\notin\{i,j\}$. Therefore $Y$ is conjugated either to an element of the form
$$Z_0+Z_1+\ldots+Z_{m-1},\quad{\rm where }\, Z_j\in \tilde{\go g}^{\lambda_j}\backslash\{0\},$$
or, equivalently, to an element of the form $Y_{k-m+1}+\ldots +Y_k$ where $Y_j\in  \tilde{\go g}^{\lambda_j}\backslash\{0\}$, that is to a generic element of  $V_{k-m+1}^+$.\medskip

Let us now show that for  $m\neq m'$, the elements $Z=Z_0+Z_1+\ldots+Z_m$ and $Z'=Z'_0+Z'_1+\ldots+Z'_{m'}$  where the  $Z_j$'s  and  $Z'_j$'s  are non zero in  $\tilde{\go g}^{\lambda_j}$,  are not  $G$-conjugate. If they were, the quadratic forms $Q_Z$ and  $Q_{Z'}$ would   have the same rank. But, according to Theorem  \ref{th-qnondeg}, one has 
$${\rm rang}\, Q_Z-{\rm rang}\, Q_{Z'} =(m-m')\Big(\ell+\frac{m-m'}{2}d\Big).$$
Hence ${\rm rang}\, Q_Z\neq {\rm rang}\, Q_{Z'}$ if $m\neq m'$. 
 
 \end{proof}

\begin{definition}\label{def-rang-element} For  $Z\in V^+$, the rank of $Z$  is  defined to be the integer   $m$ appearing in the preceding Lemma.
\end{definition}

Remember from Notation \ref{notdle} that $\ell$ is the common dimension of the  spaces  $\tilde{\go g}^{\lambda_i}$ and that $e$ is the common dimension of the  spaces  $\tilde{\go g}^{(\lambda_i+\lambda_j)/2}$. 
The  structure of the $G$-orbits in  $V^+$ depends on the integers $(\ell, e)$.    

In the next (sub)sections, we will completely describe the  $G$-orbits  in  $V^+$, not only the open one, see Theorem \ref{thm-delta2}, Theorem \ref{thm-orbites-e04}, Theorem \ref{thm-orbites-e1},  Theorem \ref{thm-orbites-e2}  and Theorem \ref{th-d=3} below.

But as the open orbits will be of particular interest for our purpose, let us summarize here our results concerning the number of these orbits (this is a Corollary of the results obtained in the following sections):

\begin{theorem}\label{thm-orbouverte}\- 

\noindent {\rm (1)} If  $\ell=\delta^2,\; \delta\in\N^*$ and  $e=0$ or $4$ (i.e. if $\tilde{\go g}$ is of type I), the group  $G$ has a unique open orbit in  $V^+$,\\
{\rm (2)}  if  $\ell=1$ and   $e\in\{1,2,3\}$ (i.e. if $\tilde{\go g}$ is of type II), the number of open  $G$-orbits in  $V^+$ depends on $e$ and on the parity of  $k$:

{\rm (a)} if  $e=2$ then $G$ has a unique open orbit in $V^+$ if $k$ is even and  $2$ open orbits if $k$ is odd,

{\rm (b)} if  $e=1$ then  $G$ has a unique open orbit in  $V^+$ if $k=0$, it has $4$ open orbits if $k=1$, it has  $2$ open orbits if $k\geq 2$ is even , and  $5$ open orbits if $k\geq 2$ is odd .\\
{\rm (c)}  if  $e=3$, then $G$ has $4$ open orbits.

{\rm (3)} If  $\ell=3$ (i.e. if $\tilde{\go g}$ is of type III, in that case $e=d= 4$), the group $G$ has $3$ open orbits in  $V^+$ if  $k=0$ and  $4$ open orbits if  $k\geq 1$.
 
\vskip20pt
 \end{theorem}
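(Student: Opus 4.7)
The plan is to reduce the problem to a classification of generic elements on the principal diagonal $\bigoplus_{j=0}^{k}\widetilde{\go g}^{\lambda_j}$, and then to assemble the count from the case-by-case orbit descriptions which will be proved in Theorems \ref{thm-orbites-e04}, \ref{thm-orbites-e1}, \ref{thm-orbites-e2} and \ref{th-d=3}.

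First, Proposition \ref{prop G-diag} shows that every element of $V^+$ is $G$-conjugate to one of the form $Z_0+\cdots+Z_{m-1}$ with $Z_j\in\widetilde{\go g}^{\lambda_j}\setminus\{0\}$, where the integer $m$, the rank of the element (Definition \ref{def-rang-element}), is an invariant of the orbit. Combining Proposition \ref{X0+...+Xkgenerique} with the characterisation of generic elements by $\Delta_0\neq 0$ (Theorem \ref{thexistedelta_0}(3)), such an element lies in an open $G$-orbit if and only if $m=k+1$. Therefore enumerating open $G$-orbits in $V^+$ amounts to enumerating $G$-orbits among elements $X_0+X_1+\cdots+X_k$ with each $X_j\in\widetilde{\go g}^{\lambda_j}\setminus\{0\}$.

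On this diagonal we plan to separate $G$-orbits using two families of invariants. First, the values of the relative invariants $\Delta_0,\ldots,\Delta_k$ taken modulo the appropriate power classes of $F^*$; by Theorems \ref{thpropridelta_0}(2) and \ref{thproprideltaj}(3) these restrict on the diagonal to products of the coordinates $x_j$ to the power $\kappa$, so that they encode power-class data of those coordinates. Second, the Witt equivalence classes of the quadratic forms $q_{X_i,X_j}$ of Section \ref{section-qXiXj}, whose rank, discriminant and Hasse invariant are largely pinned down in Proposition \ref{prop-equivalenceqXiXj}. The involutions $\gamma_{i,j}$ and $\widetilde{\gamma_{i,j}}$ constructed in Propositions \ref{prop-gammaij} and \ref{prop-gamijtilde}, which swap the summands $\widetilde{\go g}^{\lambda_i}$ and $\widetilde{\go g}^{\lambda_j}$, provide the main tool for identifying diagonal configurations that actually belong to the same $G$-orbit.

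We then simply collect the results: case (1) is supplied by Theorem \ref{thm-orbites-e04} (Type I, $\ell=\delta^2$, $e\in\{0,4\}$); case (2a) by Theorem \ref{thm-orbites-e2} ($e=2$); cases (2b) and (2c) by Theorem \ref{thm-orbites-e1} ($e\in\{1,3\}$); and case (3) by Theorem \ref{th-d=3} (Type III, $\ell=3$). The main obstacle lies not in the reduction to the diagonal but in the parity-dependent counts of case (2b): for odd $k$ the involutions $\widetilde{\gamma_{i,j}}$ with appropriate index sums produce extra identifications among diagonal configurations while simultaneously giving rise to new square-class invariants, whereas for even $k$ the resulting arithmetic collapses differently. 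Making this interaction explicit, by a careful manipulation of Witt/Hasse invariants together with the square classes of the $\Delta_j$, is precisely the content of Theorem \ref{thm-orbites-e1}.
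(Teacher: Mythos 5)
Your overall strategy is exactly the paper's: Theorem \ref{thm-orbouverte} is stated there as a corollary of the case-by-case orbit classifications, and your reduction (every element is $G$-conjugate to a diagonal element $Z_0+\cdots+Z_{m-1}$ by Proposition \ref{prop G-diag}, and it lies in an open orbit iff $m=k+1$ by Proposition \ref{X0+...+Xkgenerique} together with Theorem \ref{thexistedelta_0}(3)) is precisely how those classifications are organized, with the swapping elements $\gamma_{i,j}$ playing the role you assign them.

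There is, however, one genuine coverage gap in your assembly of case (1). Type I includes $\ell=\delta^2$ with $\delta\geq 2$ (case (1) of Table 1, $\widetilde{\go g}=\go{sl}(2(k+1),D)$ with $D$ a division algebra of degree $\delta$ and $e=0$), and for this family Theorem \ref{thm-orbites-e04} does not apply: it is proved under the standing hypothesis $\ell=1$ of that subsection (its arguments rely on Lemma \ref{chiG} and Corollary \ref{gia}, both of which assume $\ell=1$). The missing ingredient is Theorem \ref{thm-delta2}, which handles $(\ell,d,e)=(\delta^2,2\delta^2,0)$ directly by computing $G$ explicitly, showing $\chi_0(G)=F^*$ and that $L=Z_G(\go a^0)$ acts transitively on the generic diagonal elements, whence the unique open orbit. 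You should add this citation; without it your list of sources, and hence your count for case (1), only covers the split part $\ell=1$ of Type I. A minor further remark: the quadratic forms $q_{X_i,X_j}$ are used in the paper essentially to separate ranks (Theorem \ref{th-qnondeg}), while the separation of open orbits in Types II and III is carried out via $\chi_0(G)$ and explicit matrix models in Theorems \ref{thm-orbites-e1}, \ref{thm-orbites-e2} and \ref{th-d=3}; since you defer the substance to those theorems this does not invalidate your argument, but the mechanism you sketch for the parity-dependent count in (2b) is not quite the one the paper uses (there the count comes from similarity classes of quadratic forms of rank $k+1$ over $F$).
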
 
 
{ \bf We know from Remark \ref{rem-simple} that we can always assume that $\tilde{\go g}$ is simple. This will be the case in the sequel of the paper.}

\subsection{ $G$-orbits in the case where $(\ell, d, e)=(\delta^2,  2\delta^2,0)$ (Case ($1$) in Table 1)}\hfill
\vskip 10pt

\begin{theorem}\label{thm-delta2} If  $(\ell, d,e)=(\delta^2,  2\delta^2, 0)$, then       $\chi_0(G)=F^*$    and the group
 $G$ has exactly $k+1= \text{ rank}(\tilde{\go{g}})$ non zero orbits in $V^+$.  These orbits are characterized by the rank of their elements, and a set of representatives is given by  the elements $X_0+\ldots +X_j$ ($j=0,\ldots,k$) where the  $X_j$'s are non zero elements of  $\tilde{\go g}^{\lambda_j}$.  
  Any two generic elements of $\oplus_{j=0}^k\tilde{\go g}^{\lambda_j}$ are conjugated by the subgroup  $L=Z_G(\go a^0)$.

\end{theorem}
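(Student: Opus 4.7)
\medskip

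\emph{Plan of proof.} My plan is to first prove the final assertion (that $L$ conjugates generic diagonal elements to one another), then deduce the orbit classification by combining this with the normal form of Proposition~\ref{prop G-diag} and the descent, and finally derive $\chi_{0}(G)=F^{*}$ from surjectivity of the reduced norm.

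\medskip

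\emph{Step 1: Embedding $L_{i}$ in $L$.} For each $i\in\{0,\dots,k\}$, I would show that the group $L_{i}=\mathcal{Z}_{\mathrm{Aut}_{0}(\widetilde{\mathfrak{l}}_{i})}(H_{\lambda_{i}})$ embeds naturally in $L=\mathcal{Z}_{G}(\mathfrak{a}^{0})$. Indeed, any element of $L_{i}$ is a finite product of $\exp(\mathrm{ad}_{\widetilde{\mathfrak{l}}_{i}}y)$ with $y\in\widetilde{\mathfrak{l}}_{i}$ nilpotent; since $\widetilde{\mathfrak{l}}_{i}\subset\widetilde{\mathfrak{g}}$ each such $y$ is nilpotent in $\widetilde{\mathfrak{g}}$, so the corresponding product of $\exp(\mathrm{ad}_{\widetilde{\mathfrak{g}}}y)$ lies in $\mathrm{Aut}_{e}(\widetilde{\mathfrak{g}})$. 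This extended product fixes $H_{\lambda_{i}}$ (by construction of $L_{i}$) and fixes $H_{\lambda_{s}}$ for $s\neq i$ because strong orthogonality of the $\lambda_{j}$'s (Corollary~\ref{cor-orth=fortementorth}) yields $[\widetilde{\mathfrak{l}}_{i},\widetilde{\mathfrak{g}}^{\pm\lambda_{s}}]=0$, hence $[\widetilde{\mathfrak{l}}_{i},H_{\lambda_{s}}]=0$. Thus the extension lies in $L$, and by the same token acts trivially on every $\widetilde{\mathfrak{g}}^{\lambda_{s}}$ with $s\neq i$.

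\medskip

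\emph{Step 2: Transitivity of $L$ on generic diagonal elements.} Since $\widetilde{\mathfrak{l}}_{i}\simeq\mathfrak{sl}_{2}(D)$ with $D$ of degree $\delta$, Theorem~\ref{th-k=0}(1) says the action of $L_{i}$ on $\widetilde{\mathfrak{g}}^{\lambda_{i}}\simeq D$ is $(u,v)\cdot x=uxv^{-1}$, which is plainly transitive on $D\setminus\{0\}$. Combining Step~1 with this transitivity, the product subgroup $L_{0}\cdots L_{k}\subset L$ acts on $\bigoplus_{j=0}^{k}\widetilde{\mathfrak{g}}^{\lambda_{j}}$ as the direct product of transitive actions on each $\widetilde{\mathfrak{g}}^{\lambda_{j}}\setminus\{0\}$, and therefore sends any generic element $\sum X_{j}$ to any other generic element $\sum X_{j}'$. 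This proves the last assertion of the theorem.

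\medskip

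\emph{Step 3: Orbit classification.} By Proposition~\ref{prop G-diag}, every nonzero $X\in V^{+}$ is $G$-conjugate to a generic element of some $V^{+}_{k-m+1}$, where $m=\operatorname{rank}(X)\in\{1,\dots,k+1\}$ is a $G$-invariant. Hence there are at most $k+1$ nonzero $G$-orbits. Since the graded regular algebra $\widetilde{\mathfrak{g}}_{k-m+1}$ of rank $m$ obtained by descent has the same invariants $(\ell,d,e)=(\delta^{2},2\delta^{2},0)$ (this being a property of Case~(1) in Table~1, preserved by descent), Step~2 applied to $\widetilde{\mathfrak{g}}_{k-m+1}$ gives a single orbit under the analogue $L^{(m)}$, which sits inside $G$ by Remark~\ref{rem-inclusion-groupes} and Step~1. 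Using the elements $\gamma_{i,j}$ of Proposition~\ref{prop-gammaij} to reindex, any two rank-$m$ elements fall in this single orbit, proving that there are exactly $k+1$ nonzero orbits with representatives $X_{0}+\cdots+X_{j}$, $0\le j\le k$.

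\medskip

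\emph{Step 4: Surjectivity of $\chi_{0}$.} By Theorem~\ref{thexistedelta_0} applied to $\widetilde{\mathfrak{l}}_{i}$, the fundamental invariant $\delta_{i}$ on $\widetilde{\mathfrak{g}}^{\lambda_{i}}\simeq D$ is, by Theorem~\ref{th-k=0}(1), the reduced norm $\nu_{D/F}$. From the computation in the proof of Theorem~\ref{thpropridelta_0}(2), the restriction of $\Delta_{0}$ to $\bigoplus_{j=0}^{k}\widetilde{\mathfrak{g}}^{\lambda_{j}}$ is, up to a nonzero scalar, the product $\prod_{j}\delta_{j}(X_{j})$; since $L_{i}\subset L$ acts trivially on $\widetilde{\mathfrak{g}}^{\lambda_{j}}$ for $j\neq i$, one gets $\chi_{0}|_{L_{i}}=\nu_{D/F}\circ(\text{action on }\widetilde{\mathfrak{g}}^{\lambda_{i}})$. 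Since the reduced norm of a central division algebra over a $p$-adic field is surjective onto $F^{*}$, we conclude $\chi_{0}(G)\supset\chi_{0}(L_{i})=F^{*}$, hence equality.

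\medskip

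\emph{Main obstacle.} The principal technical point is Step~1 — the verification that every $\mathrm{Aut}_{0}$-automorphism of $\widetilde{\mathfrak{l}}_{i}$ centralizing $H_{\lambda_{i}}$ extends canonically to an element of $L\subset G$ realizing the Muller description of $L_{i}$ on $\widetilde{\mathfrak{g}}^{\lambda_{i}}$. Once this is in place, the rest consists of formal manipulations (descent, the $\gamma_{i,j}$'s, and the classical surjectivity of $\nu_{D/F}$).
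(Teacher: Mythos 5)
There is a genuine gap, and it sits exactly where you locate your ``main obstacle'': Step~1. You assert that every element of $L_i=\mathcal{Z}_{\mathrm{Aut}_0(\widetilde{\go l}_i)}(H_{\lambda_i})$ is a finite product of $\exp(\mathrm{ad}\,y)$ with $y\in\widetilde{\go l}_i$ nilpotent over $F$. That describes $\mathrm{Aut}_e(\widetilde{\go l}_i)$, which is in general a proper subgroup of $\mathrm{Aut}_0(\widetilde{\go l}_i)=\mathrm{Aut}(\widetilde{\go l}_i)\cap\mathrm{Aut}_e(\overline{\widetilde{\go l}_i})$; elements of $L_i$ are only products of exponentials of nilpotent elements of $\widetilde{\go l}_i\otimes\overline F$, and such an extension to $\overline{\widetilde{\go g}}$ need not stabilize the $F$-form $\widetilde{\go g}$. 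This is precisely the pitfall the paper flags in Remark~\ref{rem-inclusion-groupes} (and again, for the $L_j$'s, in the proof of Lemma~\ref{lem-propchij}): the groups $G_j$, $L_j$ are \emph{not} in general subgroups of $G$ — so your citation of that Remark in Step~3 to place $L^{(m)}$ inside $G$ reads it backwards. The error is not harmless: if you only had $\mathrm{Aut}_e(\widetilde{\go l}_i)$ at your disposal, its centralizer of $H_{\lambda_i}$ acts on $\widetilde{\go g}^{\lambda_i}\simeq D$ through pairs $(u,v)$ subject to a reduced-norm constraint, and then neither the transitivity on $D\setminus\{0\}$ used in Step~2 nor the surjectivity of $\chi_0$ onto $F^*$ used in Step~4 would follow; the whole point of Muller's group $\mathrm{Aut}_0$ and of Theorem~\ref{th-k=0}(1) is to obtain the full $D^*\times D^*$ action, and that does not come for free inside $G$.

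The way to close the gap is essentially what the paper does: use the classification to realize $\widetilde{\go g}=\go{sl}(2(k+1),D)$ with $V^+\simeq M(k+1,D)$, and compute $G$ explicitly (extend scalars, note that the extension stabilizes $\overline{V^\pm}$ and $\overline{\go g}$, and apply Skolem--Noether) to find that $G$ acts by $Z\mapsto g_1Zg_2^{-1}$ with $g_1,g_2\in GL(k+1,D)$, so that $L=Z_G(\go a^0)$ is the subgroup where $g_1,g_2$ are diagonal over $D$. From this explicit description the slot-wise $D^*\times D^*$ action you want in Steps~1--2 is immediate, transitivity of $L$ on generic diagonal elements follows, $\Delta_0$ is the reduced norm and $\chi_0(G)=F^*$ by surjectivity of $\nu_{D/F}$, and your Steps~3--4 (reduction to the diagonal via Proposition~\ref{prop G-diag}, separation of ranks via Theorem~\ref{th-qnondeg}) then go through as in the paper. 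In other words, the abstract embedding you hoped would let you avoid the matrix model is exactly the statement that requires the matrix model (or an equivalent rationality argument) to prove.
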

\begin{proof}    From the classification (cf. Table 1 - (1)), we can suppose that   $\tilde{\go g}= {\go sl}(2(k+1), D)$ 
 where  $D$ is a central division algebra of degree $\delta$ over$F$, graded by the element $H_0= \left(\begin{array}{cc}I_{k+1} & 0\\ 0 & -I_{k+1}\end{array} \right)$.  Then  $V^+$ is isomorphic to the matrix space $M(k+1,D)$ through the map  $$B\mapsto X(B)=\left(\begin{array}{cc}0& B\\ 0 & 0\end{array} \right).$$
 
 The maximal split abelian subalgebra  $\go a$ is the the set $H(\phi_0,\ldots, \phi_{2k+1})=diag(\phi_0,\ldots, \phi_{2k+1})$ where the $\phi_{j}$'s belong to $F$ and the maximal set of strongly orthogonal roots associated to this grading is given by $\lambda_j(H(\phi_0,\ldots, \phi_{2k+2}))=\phi_{k+1-j}-\phi_{k+2+j}$ for  $j\in\{0,\ldots k+1\}$. \medskip


 Let us denote by $\nu$ the reduced norm of the simple central algebra $M(k+1,D)$. Remember that if $E$ is a splitting field for $M(k+1,D)$, then $M(k+1,D)\otimes E\simeq M((k+1)\delta,E)$, and if $\varphi:M(k+1,D)\longrightarrow M((k+1)\delta,E)$ is the canonical embedding, then $\nu(x)=\det(\varphi(x))$, for $x\in M(k+1,D)$ (see for example, Proposition IX.6 p.168 in \cite{Weil}). Also if  $x=(x_{i,j})$ is a triangular matrix in $M(k+1,D)$ and if $\nu_{0}$ denotes the reduced norm of $D$, then $\nu(x)=\prod_{i=1}^{i=k+1}\nu_{0}(x_{i,i})$ (\cite {Weil}, Corollary IX.2 p.169).
\medskip
 
 Let us now describe the group  $G={\mathcal Z}_{{\rm Aut}_0({\go sl}(2(k+1), D))}(H_0)$. 
 
 Consider  first an element $g\in {\rm Aut}_0({\go sl}(2(k+1), D))$ and denote by  $g_E$ the natural extension of $g$ to  ${\go sl}(2(k+1), D)\otimes E={\go sl}(2(k+1)\delta, E)$. We know from  \cite{Bou2} (Chap. VIII, \S 13, $n^\circ 1$, (VII), p.189), that there exists $U\in GL(2(k+1)\delta, E)$ such that  $g_E.x=UxU^{-1}$ for all $x\in {\go sl}(2(k+1)\delta, E)$. Let us write $U$ in the form
 $$U=\left(\begin{array}{cc} u_1 & u_3\\ u_4 & u_2\end{array}\right),\quad u_j\in M((k+1)\delta, E).$$
 Let now $g\in G$. As $g.H_0=H_0$, we have  $g.V^-\subset V^-$,  $g.V^+\subset V^+$and  $g.\go g\subset\go g$. Then  $g_E$ stabilizes $V^+\otimes E$, $V^-\otimes E$ and  $\go g\otimes E$, and a simple computation shows that $u_3=u_4=0$. On the other hand, the algebra   $\go g$ is the set of matrices $\left(\begin{array}{cc} X & 0\\ 0 & Y\end{array}\right)\in{\go sl}(2(k+1), D)$ with $X,Y\in M( k+1, D)$. As  $g_E$ also stabilizes  $\go g$, the maps  $X\mapsto u_1 Xu_1^{-1}$ and $Y\mapsto u_2 Yu_2^{-1}$ are automorphisms of $M( k+1, D)$, for the ordinary associative product.

 By the   Skolem-Noether Theorem, any automorphism of  $M( k+1 , D)$ is inner, hence it exists  $v_1$ and  $v_2$ in  $GL(k+1,D)$ such that  $u_1Xu_1^{-1}=v_1X v_1^{-1}$ and $u_2Xu_2^{-1}=v_2X v_2^{-1}$ for all $X\in M(k+1,D)$. Therefore $v_1^{-1}u_1$ and  $v_2^{-1}u_2$  belong to the center of  $M((k+1)\delta, E)$. It follows that there exist  $\lambda_1$ and $\lambda_2$ in  $E^*$ such that $u_1=\lambda_1 v_1$  and  $u_2=\lambda_2 v_2$. Hence the automorphism  $g$ is given by the conjugation by  $U=\left(\begin{array}{cc} \lambda_1 v_1 & 0\\ 0 & \lambda_2 v_2\end{array}\right)$  and its action on $V^+$ is given by $g.X(Z)=X(\lambda_1\lambda_2^{-1} v_1 Z v_2^{-1})$ for   $Z\in M(k+1,D)$.   As $g$ stabilizes $V^+$  this implies that $\lambda_1\lambda_2^{-1}\in F^*$ and $g$ is given by the conjugation by $diag(g_1,g_2)=\left(\begin{array}{cc} g_1 & 0\\ 0 & g_2\end{array}\right)$ where $g_1=\lambda_1\lambda_2^{-1}v_1\in GL(k+1,D)$ and $g_2=v_2\in GL(k+1,D)$. \\

The polynomial  $\Delta_0$  defined by  $\Delta_0(X(Z))=  \nu(Z)$  is then relatively invariant under $G$  and it  character is $\chi_0(g)=\Delta_0 (g_1g_2^{-1})$ for $g=diag(g_1,g_2)$ . Therefore $\chi_0(G)=F^*$. \medskip

The group $L=\cap_{j=0}^k G_{H_{\lambda_j}}$ corresponds to the action of the elements $ diag(g_1,g_2)$ where $g_{1}$ and $g_{2}$ are diagonal matrices with coefficients in $D^*$.\medskip

From Proposition  \ref{prop G-diag}, any element in  $V^+$ is conjugated to an non zero element $Z\in\oplus_{j=0}^k\tilde{\go g}^{\lambda_j}$. Such an element corresponds to a  matrix of $M(k+1, D)$ whose coefficients are zero except those on the 2nd diagonal. This shows that the group  $L$ acts transitively on $\oplus_{j=0}^k\tilde{\go g}^{\lambda_j}\setminus\{0\}$.  \end{proof}

 \subsection{$G$-orbits  in the case  $\ell=1$}\hfill
\vskip 10pt

In this section, we will always   assume that $\ell=1$.

If $k=0$   the  $G$-orbits in  $V^+$ were already described in Theorem \ref{th-k=0}.  Therefore we suppose that $k\geq 1$.\medskip

If $\ell=1$, the $G$-orbits in  $V^+$ were studied by I. Muller in a more general context (the so-called quasi-commutative prehomogeneous vector spaces), see \cite{Mu98}. Our results,   are more precise than hers in the sense that we obtain  explicit representatives for the orbits, and also we give detailed proofs.
\\

For $j\in\{0,\ldots, k\}$, we fix   $\go sl_2$-triples $\{Y_j,H_{\lambda_j}, X_j\}$    which satisfy the conditions of  proposition \ref{prop-equivalenceqXiXj}. The quadratic forms  $q_{X_i,X_j}$ are then  $G$-equivalent. We take  $q=q_{X_0,X_1}$ as a representative of this equivalence class. Then
$q=q_{an}^e+q_{hyp}$ where $q_{an}^e$ is an anisotropic quadratic form of rank  $e$ ($q_{an}^0=0$), and where $q_{hyp}$ is a hyperbolic quadratic form of rank   $d-e$.

We set  $Im(q)^*= Im(q)\cap F^*$. Let $a\in F^*$. As $q$ represents $1$, if  $q$ is equivalent to  $a q$, (which will be denoted  $aq\sim q$), then $a\in Im(q)^*$.\medskip

Remember (Corollary \ref{cor-structureG}) that
 $$G={\mathcal Z}_{{\rm Aut}_0(\tilde{\go g})}(H_0)={\rm Aut}_e(\go g).L\quad{\rm where }\quad L=Z_G(\go a^0).$$

\begin{definition}\label{hX} For $t\in F^*$ and $j\in\{0,\ldots, k\}$, we define the following elements of $G$:
$$\theta_{X_j}(t)=e^{t\;\ad_{\tilde{\go g}} X_j}e^{t^{-1}\;\ad_{\tilde{\go g}} Y_j}e^{t\;\ad_{\tilde{\go g}} X_j},\quad \theta_{X_j}=\theta_{X_j}(1)\quad{\rm and } \quad h_{X_j}(t)=\theta_{X_j}(t)\theta_{X_j}(-1)$$
If  $\{Y,H_0,X\}$ is an $\go sl_2$-triple where $X$ is generic in $V^+$ and $Y\in V^-$, we set also
$$\theta_{X}(t)=e^{t\;\ad_{\tilde{\go g}} X}e^{t^{-1}\;\ad_{\tilde{\go g}} X}e^{t\;\ad_{\tilde{\go g}} X},\quad{\rm and } \quad h_{X}(t)=\theta_{X}(t)\theta_{X}(-1).$$
Then $h_X(\sqrt{t})\in L$ and acts by $t.Id_{V^+}$ on $V^+$ (see  Lemma \ref{lem-tId-dansG}). \end{definition}

\begin{lemme}\label{chiG}(\cite{Mu98}, Proposition 3.2 page 175). 
\begin{enumerate}\item $F^{*2}\subset \chi_0(G)=\chi_0(L)$.
\item If $k+1$ is odd then $\chi_0(G)=F^*$. 
\item If $k+1$ is even  then  $\chi_0(G)\subset\{a\in F^*; aq\sim q\}\subset Im(q)^*.$
\end{enumerate}
\end{lemme}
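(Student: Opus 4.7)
By Corollary~\ref{cor-structureG} one has $G=\mathrm{Aut}_e(\go g)\cdot L$, and by Theorem~\ref{thproprideltaj}(2) (with $j=0$), $\Delta_0$ is fixed pointwise by $\mathrm{Aut}_e(\go g)$; hence $\chi_0(G)=\chi_0(L)$. Since $\ell=1$, Table~1 forces the $1$-type of $\widetilde{\go g}$ to be $(A,1)$, so $\kappa=1$ (Notation~\ref{notation-kappa}). Each one-dimensional root space $\widetilde{\go g}^{\lambda_j}=FX_j$ is stabilised by $L=Z_G(\go a^0)$, so $L$ acts on $X_j$ through a character $\alpha_j\colon L\to F^*$, and Theorem~\ref{thproprideltaj}(3) collapses to $\chi_0(g)=\prod_{j=0}^k\alpha_j(g)$ for $g\in L$.

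\textbf{Proofs of (1) and (2).} For (1), I would take $h_{X_j}(t)\in L$ from Definition~\ref{hX}. Because the roots $\lambda_i$ are strongly orthogonal, $\widetilde{\go g}^{\lambda_i}$ with $i\ne j$ commutes with the $\go{sl}_2$-triple $(Y_j,H_{\lambda_j},X_j)$ and is fixed by $h_{X_j}(t)$, while on $\widetilde{\go g}^{\lambda_j}$ the element acts by $t^2$ (Section~\ref{sl2module}). Hence $\alpha_j(h_{X_j}(t))=t^2$ and $\alpha_i(h_{X_j}(t))=1$ for $i\ne j$, giving $\chi_0(h_{X_j}(t))=t^2$ and $F^{*2}\subset\chi_0(L)$. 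For (2), take $g_t\in L$ from Lemma~\ref{lem-tId-dansG} acting as $t\cdot\mathrm{Id}_{V^+}$, so $\chi_0(g_t)=t^{\deg\Delta_0}=t^{k+1}$ by Theorem~\ref{thpropridelta_0}(1). Thus $\chi_0(G)$ contains both $F^{*2}$ and $(F^*)^{k+1}$; when $k+1$ is odd, $\gcd(2,k+1)=1$, and Bezout's identity shows that the subgroup they generate is all of $F^*$.

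\textbf{Proof of (3).} Fix $i\ne j$, $g\in L$, $Y\in E_{i,j}(-1,-1)$. Since $g$ is an automorphism of $\widetilde{\go g}$ one has $[X_i,gY]=g[g^{-1}X_i,Y]=\alpha_i(g)^{-1}g[X_i,Y]$ and similarly for $X_j$, so using $G$-invariance of $b$,
\[
q_{X_i,X_j}(gY)=-\tfrac12\,b([X_i,gY],[X_j,gY])=\alpha_i(g)^{-1}\alpha_j(g)^{-1}\,q_{X_i,X_j}(Y).
\]
Consequently $g$ exhibits an equivalence $q_{X_i,X_j}\sim(\alpha_i\alpha_j)(g)\,q_{X_i,X_j}$. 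Since all the $q_{X_i,X_j}$ are equivalent to $q$ by Proposition~\ref{prop-equivalenceqXiXj}, $(\alpha_i\alpha_j)(g)$ lies in the subgroup $A=\{a\in F^*:aq\sim q\}$ of $F^*$. When $k+1=2m$ is even, I would group the indices in pairs and write $\chi_0(g)=\prod_{i=0}^{m-1}\alpha_{2i}(g)\alpha_{2i+1}(g)\in A$. Finally, because $q$ represents $1$, any $a\in A$ satisfies $a=a\cdot q(v)=q(Pv)\in\mathrm{Im}(q)^*$, where $v$ is chosen with $q(v)=1$ and $P$ implements the equivalence $aq\sim q$; this gives $A\subset\mathrm{Im}(q)^*$.

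\textbf{Main obstacle.} The delicate step is (3): one must verify the precise transformation rule for $q_{X_i,X_j}$ under $L$, confirm $\kappa=1$ from the classification so that $\chi_0$ factors as $\prod\alpha_j$, and then exploit the parity of $k+1$ to pair the characters $\alpha_j$. The reason (2) and (3) diverge is precisely that the Bezout argument producing all of $F^*$ from $F^{*2}$ and $(F^*)^{k+1}$ fails when $k+1$ is even, so a genuine obstruction (the class of $q$ in $F^*/A$) appears.
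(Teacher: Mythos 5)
Your proposal is correct and follows essentially the same route as the paper: part (1) via $\chi_0(G)=\chi_0(L)$ and the elements $h_{X_j}(t)$, part (2) via the scalar element $t\,\mathrm{Id}_{V^+}$ of Lemma~\ref{lem-tId-dansG} combined with the parity of $k+1$ (the paper leaves the Bezout-type step implicit), and part (3) via the transformation law of the forms $q_{X_i,X_j}$ under $L$ together with a pairing of the indices. The only differences are cosmetic: the paper pairs $j$ with $k-j$ rather than $2i$ with $2i+1$, and computes $q_{gX_i,gX_j}$ instead of $q_{X_i,X_j}\circ g$, which amounts to the same invariance argument for $b$.
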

\begin{proof}\hfill

\noindent { (1)} We know from Theorem \ref{thproprideltaj} (2), that the character $\chi_0$ is trivial on  ${\rm Aut}_e(\go g)$ and hence  $\chi_0(G)=\chi_0(L)$. \medskip

Let  $g=h_{X_0}(t)$. From  \cite{Bou2} (Chap VIII \textsection $1$ $n^\circ 1$, Proposition 6 p. 75), we have $g(X_0+\ldots+X_k)=t^2X_0+X_1+\ldots +X_k$. As $\ell =1$, we obtain by Theorem \ref{thpropridelta_0} that   $\Delta_0(t^2X_0+X_1+\ldots +X_k)=t^{2}\Delta_0(X_0+X_1+\ldots +X_k)$. Therefore  $\chi_0(g)=t^{2}\in F^{*2}$, and this proves the first assertion.\medskip

\noindent { (2)} Consider the generic element  $X=X_0+\ldots +X_k\in V^+$ and  $g=h_X(\sqrt{t})$. Theorem  \ref{thpropridelta_0}   implies  $\chi_0(g)=t^{k+1}$ and this proves  the second assertion.\medskip

\noindent { (3)} Let $g\in L$. Then  $g$ stabilizes each of the spaces  $\tilde{\go g}^{\lambda_j}$ and  $E_{i,j}(\pm 1,\pm 1)$. As $\ell=1$, there exist scalars $a_j(g)\in F^*$ such that  $g.X_j=a_j(g) X_j$. Theorem \ref{thpropridelta_0} implies then $\chi_0(g)=\prod_{j=0}^k a_j(g)$.

On the other hand one gets easily that  $q_{g.X_j,g.X_{k-j}}=a_j(g)a_{k-j}(g)q_{X_j,X_{k-j}}$. As  $q_{g.X_j,g.X_{k-j}}$ and  $q_{X_j,X_{k-j}}$ are $G$ - equivalent  to $q$, we get 
$a_j(g)a_{k-j}(g) q\sim q$. As  $k+1$ is even, the scalar  $\chi_0(g)=\prod_{j=0}^{(k-1)/2} a_j(g)a_{k-j}(g)$ is such that  $\chi_0(g) q\sim q$.  This gives the third  assertion.\end{proof}

\begin{lemme} (compare with \cite{Mu98}, lemme 2.2.2 page 168).
Let  $A\in E_{i,j}(-1,1)$. Then there exists  $B\in E_{i,j}(1,-1)$ such that  $\{B,H_{\lambda_j}-H_{\lambda_i}, A\}$ is an ${\go sl}_2$-triple   if and only if the map ${\rm ad}(A)^2:\tilde{\go g}^{\lambda_i}\to \tilde{\go g}^{\lambda_j}$ is injective (remember from Lemma \ref {lem-sl2ij} that such an ${\go sl}_2$-triple always exists).

In that case, the element $\theta_A=e^{\ad A} e^{\ad B} e^{\ad A}$ of  ${\rm Aut}_e(\go g)\subset G$ satisfies  $\theta_A(X_s)=X_s$ for $s\neq i,j$, $\theta_A(X_i)= a X_j$ and  $\theta_A(X_j)=a^{-1} X_i$ where $a=q_{X_i,X_j}([Y_j,A])$.

\end{lemme}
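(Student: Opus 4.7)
The strategy is to characterize the existence of $B$ using $\go{sl}_2$-representation theory combined with a Morozov-type reduction, and then derive the explicit formulas for $\theta_A$ by computation inside the three-dimensional $\go{sl}_2$-submodule generated by $X_i$.

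I would first observe that, by strong orthogonality of $\lambda_i$ and $\lambda_j$ combined with Theorem \ref{th-decomp-Eij}, the eigenvalues of $H:=H_{\lambda_j}-H_{\lambda_i}$ on $\widetilde{\go g}$ lie in $\{-2,-1,0,1,2\}$: a root $\mu$ with $\mu(H)=\pm 4$ would force $\mu=\pm\lambda_i=\mp\lambda_j$, impossible, and a similar argument rules out $\pm 3$. For the forward implication, given an $\go{sl}_2$-triple $(B,H,A)$ with $B\in E_{i,j}(1,-1)$, any $X_i\in\widetilde{\go g}^{\lambda_i}$ has $H$-weight $-2$ and satisfies $[B,X_i]\in\widetilde{\go g}(H,-4)=\{0\}$, so $X_i$ is a lowest weight vector generating a three-dimensional irreducible $\go{sl}_2$-submodule. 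Its highest weight vector $\tfrac12\ad(A)^2X_i$ lies in the $H$-weight $+2$ part of $\widetilde{\go g}$ killed by all the other $\ad H_{\lambda_\ell}$, which by Theorem \ref{th-decomp-Eij} is exactly $\widetilde{\go g}^{\lambda_j}$, and is non-zero whenever $X_i\neq 0$. Hence $\ad(A)^2:\widetilde{\go g}^{\lambda_i}\to\widetilde{\go g}^{\lambda_j}$ is injective.

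For the converse I would invoke Morozov's lemma: a nilpotent $A$ with $[H,A]=2A$ lies in an $\go{sl}_2$-triple of the form $(B,H,A)$ iff $H\in\ad(A)(\widetilde{\go g})$, in which case $B$ can be taken in $\widetilde{\go g}(H,-2)=\widetilde{\go g}^{\lambda_i}\oplus E_{i,j}(1,-1)\oplus\widetilde{\go g}^{-\lambda_j}$. A direct root-space tracking shows that $\ad(A)$ sends these three summands into $E_{i,j}(1,1)$, ${\cal Z}_{\go g}(\go a^0)$, and $E_{i,j}(-1,-1)$ respectively, so only $E_{i,j}(1,-1)$ can contribute the component $H_{\lambda_i}-H_{\lambda_j}\in{\cal Z}_{\go g}(\go a^0)$. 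Consequently both the existence of $B$ and the refinement $B\in E_{i,j}(1,-1)$ reduce to verifying $H_{\lambda_i}-H_{\lambda_j}\in[A,E_{i,j}(1,-1)]$; this is the main obstacle. My plan is to use the non-degenerate Killing-form pairing $b:E_{i,j}(1,-1)\times E_{i,j}(-1,1)\to F$ together with invariance $b([X,A],Z)=b(X,[A,Z])$ to identify the orthogonal of $[A,E_{i,j}(1,-1)]$ inside ${\cal Z}_{\go g}(\go a^0)$ as the centralizer $\{Z\in{\cal Z}_{\go g}(\go a^0):[A,Z]=0\}$. Any such $Z$ commutes with $\ad(A)^2$, and since $\ell=1$ it acts on the lines $\widetilde{\go g}^{\lambda_i}$ and $\widetilde{\go g}^{\lambda_j}$ by scalars which must coincide because $\ad(A)^2$ intertwines them bijectively. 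A short calculation with the $\go{sl}_2$-triples $(Y_s,H_{\lambda_s},X_s)$, namely $b(Z,H_{\lambda_s})=b(Z,[Y_s,X_s])=b([Z,Y_s],X_s)$ equals minus the scalar by which $Z$ acts on $\widetilde{\go g}^{\lambda_s}$, then yields $b(Z,H_{\lambda_i}-H_{\lambda_j})=0$ and produces $B$.

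Finally, for the formulas describing $\theta_A$: if $s\notin\{i,j\}$, then $[A,X_s]$ and $[B,X_s]$ would sit in weight spaces forbidden by Theorem \ref{th-decomp-Eij}, so both vanish and $\theta_A(X_s)=X_s$. In the three-dimensional $\go{sl}_2$-submodule $FX_i\oplus F[A,X_i]\oplus F\cdot\ad(A)^2X_i$, the standard Weyl-type formula recalled in Section \ref{sl2module} sends the lowest weight vector to $\tfrac12$ of the highest, so $\theta_A(X_i)=\tfrac12\ad(A)^2X_i$, which under the assumption $\ell=1$ equals $aX_j$ for a unique scalar $a\in F$. To identify $a$ with $q_{X_i,X_j}([Y_j,A])$, I would expand the defining formula of $q_{X_i,X_j}$, apply the Jacobi identity using the relations $[X_i,Y_j]=0$, $[X_j,A]=0$, and $[X_j,Y_j]=H_{\lambda_j}$, then appeal to invariance of $b$ and the normalization $b(X_j,Y_j)=1$ to reduce the whole expression to $\tfrac12\,b(Y_j,\ad(A)^2X_i)=a$. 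The companion identity $\theta_A(X_j)=a^{-1}X_i$ follows from the symmetric computation with the roles of $i$ and $j$ exchanged, together with the fact (again from Section \ref{sl2module}) that $\theta_A^2$ acts trivially on the three-dimensional submodule under consideration.
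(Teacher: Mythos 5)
Your argument is correct, but your proof of the converse follows a genuinely different route from the paper's. The paper invokes Jacobson--Morozov to produce some triple $\{B,u,A\}$ in $\go g$, adjusts its components so that $B\in E_{i,j}(1,-1)$ and $u\in{\cal Z}_{\go g}(\go a^0)$, and only afterwards identifies $u=H_{\lambda_j}-H_{\lambda_i}$: since $\ell=1$ one has $[u,X_i]=\alpha X_i$, the injectivity hypothesis forces $\alpha=-2$, and then $\theta_A(H_{\lambda_i})$ is computed in two ways (as $H_{\lambda_j}$, by transporting the triple $(Y_i,H_{\lambda_i},X_i)$ via $\theta_A(X_i)=aX_j$, $\theta_A(Y_i)=a^{-1}Y_j$, and as $H_{\lambda_i}+u$ by direct computation). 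You instead fix the semisimple element in advance and reduce everything to showing $H_{\lambda_i}-H_{\lambda_j}\in[A,E_{i,j}(1,-1)]$ -- after which the three bracket relations are immediate, so Morozov is not even needed -- and you prove this membership by a duality argument: the $b$-orthogonal of $[A,E_{i,j}(1,-1)]$ inside ${\cal Z}_{\go g}(\go a^0)$ is the centralizer of $A$ there, and any such $Z$ acts by the same scalar on the lines $\widetilde{\go g}^{\lambda_i}$ and $\widetilde{\go g}^{\lambda_j}$ (here is where injectivity of $\ad(A)^2$ and $\ell=1$ enter), whence $b(Z,H_{\lambda_i}-H_{\lambda_j})=0$. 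This is a clean alternative: it makes transparent exactly where the two hypotheses are used, avoids the paper's glossed adjustment step ``one can suppose $B\in E_{i,j}(1,-1)$ and $u\in{\cal Z}_{\go g}(\go a^0)$'', and in fact treats the claims $\theta_A(X_s)=X_s$ and $\theta_A(X_j)=a^{-1}X_i$ more explicitly than the paper does; the price is that you must also note that $b$ restricted to ${\cal Z}_{\go g}(\go a^0)$ (the zero weight space of $\go a^0$, which equals ${\cal Z}_{\widetilde{\go g}}(\go a^0)$ since $H_0\in\go a^0$) is non-degenerate, so that the double-orthogonal step $W=W^{\perp\perp}$ is legitimate; this is standard but should be said.

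One small slip: with the paper's convention for $\go{sl}_2$-triples ($[Y_j,X_j]=H_{\lambda_j}$, see Section \ref{sl2module}), your listed relation should read $[X_j,Y_j]=-H_{\lambda_j}$; used as you wrote it, the expansion of $q_{X_i,X_j}([Y_j,A])$ would come out as $-\tfrac12\,b(Y_j,\ad(A)^2X_i)$ instead of the value $\tfrac12\,b(Y_j,\ad(A)^2X_i)=a$ that you (correctly) announce. With the sign fixed, your Jacobi-identity computation reproduces the paper's identity, which the paper obtains instead by quoting Remark \ref{rem-lienmuller}.
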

\begin{proof} It is a well known property of  $\go sl_2$-triples, that if such a triple exists the map  ${\rm ad}(A)^2:\tilde{\go g}^{\lambda_i}\to \tilde{\go g}^{\lambda_j}$ is injective. \medskip

Conversely, suppose that the map  ${\rm ad}(A)^2:\tilde{\go g}^{\lambda_i}\to \tilde{\go g}^{\lambda_j}$ is injective, and hence  bijective.  As  $A$ is nilpotent in $\go{g}$ (more precisely  $(\ad A)^3=0$), the  Jacobson-Morosov Theorem gives the existence of an  $\go sl_2$-triple $\{B,u,A\}$ in $\go g$. If one decomposes the element $B$ and $u$ according to $\go g={\mathcal Z}_{\go g}(\go a^0) \oplus(\oplus_{r\neq s} E_{r,s}(1,-1))$, we easily see that one can suppose that  $B\in E_{i,j}(1,-1)$ and  $u\in {\mathcal Z}_{\go g}(\go a^0)$. 

We will show that  $u=H_{\lambda_j}-H_{\lambda_i}$. 

As $u$ commutes with the elements  $H_{\lambda_s}$, the endomorphism   $\ad  u$  stabilizes all eigenspaces of these elements. Therefore, as $\ell=1$,  there exists  $\alpha\in F$ such that  $[u,X_i]=\alpha X_i$. \medskip

 As $[B,X_i]=0=(\ad\; A)^3 X_i $ and  as by hypothesis ${\rm ad}(A)^2 X_i\neq 0$, the  $\go sl_2$ -  module generated by  $X_i$  under the action of  $\{B,u,A\}$  is irreducible of dimension $3$ and a base of this module is  $(X_i, [A,X_i], {\rm ad}(A)^2 X_i)$. This implies that  $\alpha=-2$.

Let   $\theta_A=e^{\ad A} e^{\ad B} e^{\ad A}$ be the non trivial element of the Weyl group of  the  $\go sl_2$-triple $\{B,u,A\}$. Then (cf. \textsection \ref{sl2module}): $$\theta_A(X_i)=\dfrac{1}{2} {\rm ad}(A)^2 X_i\in \tilde{\go g}^{\lambda_j}.$$ 
Hence, there exists $a\in F^*$ such that  $\theta_{A}(X_i)=a X_j$. As $0\neq B(X_i,Y_i)=B(\theta_A(X_i),\theta_A(Y_i))$ , we have  $\theta_A(Y_i)=a^{-1} Y_j$. And as  $\{\theta_A(Y_i),\theta_A(H_{\lambda_i}),\theta_A(X_i)\}$ is again an  $\go sl_2$-triple, we obtain that  $\theta_A(H_{\lambda_i})=H_{\lambda_j}$.\medskip

On the other hand, a simple computation shows that 
$ \theta_A(H_{\lambda_s})=H_{\lambda_s} $ if $s\neq i,j$ and 
$  \theta_A(H_{\lambda_i})=H_{\lambda_i}+u $.
Hence   $u=H_{\lambda_j}-H_{\lambda_i}$, which gives the first assertion of the Lemma.
\medskip

By Remark \ref{rem-lienmuller}  and the normalization of $b$ (Lemma \ref{lemmeb}), we get $$q_{X_i,X_j}([Y_j,A])=-\dfrac{1}{2}b([A,X_i], [A,Y_j])=b(\dfrac{1}{2}(\ad A)^2 X_i,Y_j)=b(\theta_A(X_i), Y_j)=a\ b(X_j,Y_j)=a.$$
This ends the proof.\end{proof}


\begin{cor}\label{gia}(Compare with \cite{Mu98}   Corollaire 4.2.2 and  Remarques 4.1.6)\hfill

 Let $a\in Im(q)^*$. Let  $i\neq j\in\{0,\ldots k\}$.
\begin{enumerate} \item There exists $g_{i,j}^a\in L\cap {\rm Aut}_e(\go g)$ such that  $g_{i,j}^a(X_i)=aX_i$, $g_{i,j}^a(X_j)=a^{-1} X_j$ and  $g_{i,j}^a(X_s)=X_s$ for $s\neq i,j$. 

\item  If either
\begin{enumerate}\item $k+1$ $= \text{rank}(\tilde{\go{g}})$ is odd,\\
or 
\item $k+1$ $= \text{rank}(\tilde{\go{g}})$ is  even  and  if there exists  a regular graded Lie algebra   $(\tilde{\go r}, \tilde{H}_0)$ satisfying the hypothesis  $({\mathbf H}_1), ({\mathbf H}_2)$ and  $({\mathbf H}_3)$   such that the algebra  $\tilde{\go r}_1$ obtained at the first step of the descent   (cf. Theorem \ref{th-descente}) is equal to  $\tilde{\go g}$ (in other words, the algebra $\tilde{\go g}$ is the first step in the descent from a bigger graded Lie algebra),
\end{enumerate}
\medskip
then there exists  $g_i^a\in L$ such  that $g_i^a(X_i)=a X_i$ and  $g_i^a(X_s)=X_s$ for  $s\neq i$. In particular we have  $\chi_0(G)=Im(q)^*$.
\end{enumerate}
\end{cor}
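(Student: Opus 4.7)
For part (1), I would start from $a\in Im(q)^*$ and use the $G$-equivalence of the forms $q_{X_i,X_j}$ with $q$ (Proposition \ref{prop-equivalenceqXiXj}) to find $v\in E_{i,j}(-1,-1)$ with $q_{X_i,X_j}(v)=a$. Since $\ad(Y_j):E_{i,j}(-1,1)\to E_{i,j}(-1,-1)$ is a bijection (by $\go sl_2$-theory applied to $\{Y_j,H_{\lambda_j},X_j\}$), I can lift $v$ to $A\in E_{i,j}(-1,1)$ with $[Y_j,A]=v$. The computation inside the proof of the preceding lemma rewrites $q_{X_i,X_j}([Y_j,A])=b(\tfrac{1}{2}(\ad A)^2X_i,Y_j)$, so $a\ne 0$ forces $(\ad A)^2 X_i\ne 0$, and the preceding lemma produces $\theta_A\in\text{Aut}_e(\go g)$ with $\theta_A(X_i)=aX_j$, $\theta_A(X_j)=a^{-1}X_i$ and $\theta_A(X_s)=X_s$ for $s\notin\{i,j\}$. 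The same procedure applied to the scalar $1$ (which is represented by $q$, by Proposition \ref{prop-equivalenceqXiXj}) gives $\theta_B$ that swaps $X_i$ and $X_j$ without scaling, and setting $g_{i,j}^a:=\theta_B\circ\theta_A$ produces the required relations on the $X_s$. Since each of $\theta_A,\theta_B$ comes from an $\go sl_2$-triple with semisimple element $H_{\lambda_j}-H_{\lambda_i}$, each exchanges $H_{\lambda_i}\leftrightarrow H_{\lambda_j}$ and fixes the orthogonal complement of $F(H_{\lambda_i}-H_{\lambda_j})$ in $\go a^0$, so their composition centralizes $\go a^0$ pointwise and $g_{i,j}^a\in L\cap\text{Aut}_e(\go g)$.

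For part (2)(a), with $k+1$ odd and $i\in\{0,\ldots,k\}$ fixed, I form
$$g:=(a\cdot\text{Id}_{V^+})\circ\prod_{s\ne i}g_{i,s}^a,$$
which lies in $L$ by Lemma \ref{lem-tId-dansG} and part (1). Each $g_{i,s}^a$ multiplies $X_i$ by $a$ and $X_s$ by $a^{-1}$, so the product multiplies $X_i$ by $a^k$ while each $X_s$ ($s\ne i$) picks up a single factor $a^{-1}$; the scalar correction by $a$ then yields a net factor $a^{k+1}$ on $X_i$ while fixing every $X_s$ with $s\ne i$. Because $k$ is even, $a^{k+1}\in a\cdot F^{*2}$, and combining with the elements $h_{X_i}(t)\in L$ (Definition \ref{hX}, which scale $X_i$ by $t^2$ and fix the other $X_s$) realizes every element of $a\cdot F^{*2}$ as a scaling of $X_i$. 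As $a$ ranges over $Im(q)^*$, which is a union of $F^{*2}$-cosets, I obtain $g_i^b\in L$ for every $b\in Im(q)^*$. Since $\chi_0(g_i^b)=b$ (using $\kappa=1$ in Theorem \ref{thpropridelta_0}(2)), combining with Lemma \ref{chiG}(3) gives $\chi_0(G)=Im(q)^*$.

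For part (2)(b), with $k+1$ even and $\widetilde{\go g}=\widetilde{\go r}_1$, I exploit the rank $k+2$ (odd) of $\widetilde{\go r}$. Writing $\mu_0,\mu_1=\lambda_0,\ldots,\mu_{k+1}=\lambda_k$ for its strongly orthogonal roots, and noting that by Proposition \ref{prop-equivalenceqXiXj} applied to $\widetilde{\go r}$ the analogous quadratic forms on $\widetilde{\go r}$ have image $Im(q)^*$, case (2)(a) applied inside $\widetilde{\go r}$ produces, for any $a\in Im(q)^*$, an element $g'\in L(\widetilde{\go r})$ scaling $X_{\lambda_i}=X_{\mu_{i+1}}$ by $a$ and fixing $X_{\mu_0}$ together with every other $X_{\lambda_s}$. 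Being in $L(\widetilde{\go r})$, $g'$ fixes $H_{\mu_0}$ and stabilizes $\widetilde{\go l}_0(\widetilde{\go r})$ setwise; it therefore preserves its centralizer $\widetilde{\go g}$, and the restriction $g$ of $g'$ to $\widetilde{\go g}$ fixes every $H_{\lambda_s}$ and hence $H_0$. The main obstacle is to verify that $g\in\text{Aut}_0(\widetilde{\go g})$: the exponentials $e^{\ad A}$ composing $g'$ have parameters in $\widetilde{\go r}$ but not in $\widetilde{\go g}$, so elementary-ness of $g$ in $\widetilde{\go g}$ is not immediate. I would resolve this by a connectedness argument: as noted in subsection \ref{sub-section-generic-strongly-orth}, $\overline{G(\widetilde{\go r})}$ is connected, and hence so is $\overline{L(\widetilde{\go r})}$, being the centralizer in a connected algebraic group of the torus corresponding to $\go a^0(\widetilde{\go r})$; its image under the restriction morphism into $\text{Aut}(\overline{\widetilde{\go g}})$ is connected and lies in $\text{Aut}(\overline{\widetilde{\go g}})$, hence in the identity component $\text{Aut}_e(\overline{\widetilde{\go g}})$. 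Since $g$ is defined over $F$, it follows that $g\in\text{Aut}_0(\widetilde{\go g})$ and thus $g\in L$. Replacing $a$ by $a^{-1}$ yields the desired $g_i^a$, and $\chi_0(G)=Im(q)^*$ follows as in case (a).
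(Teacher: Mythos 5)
Your parts (1) and (2)(a) are essentially the paper's own argument: in (1) the paper also lifts a vector representing $a$ through $\ad(Y_j)$, forms $\theta_A$, and composes with a swap of $X_i$ and $X_j$ (it uses $f_{i,j}=\gamma_{0,i}\gamma_{0,j}\gamma_{0,i}$ where you use a second element $\theta_B$ built from a representative of $1$; both induce the same reflection on $\go a^0$, so the compositions agree in effect), and in (2)(a) the paper performs the same bookkeeping, writing $g_0^a=h_{X_0+\cdots+X_k}(\sqrt a)\circ h_{X_0}(a^{-k/2})\circ g_{0,1}^a\cdots g_{0,k}^a$. Where you genuinely diverge is (2)(b). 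The paper restricts to $\widetilde{\go g}$ an element $r$ obtained from part (1) applied to $\widetilde{\go r}$ (then corrects by $h_{X_0}(a)$), and establishes the crucial point $r_1\in{\rm Aut}_e(\widetilde{\go g}\otimes\overline F)$ by a case-by-case analysis of the possible weighted diagrams of $\widetilde{\go r}$ ($A_{2n-1}$, $C_n$, $D_{2n,2}$) with explicit matrix and similitude computations. You instead apply (2)(a) inside $\widetilde{\go r}$ (legitimate, since $\widetilde{\go r}$ has odd rank and $\ell=1$, and the compatible choice of the $\widetilde X_i$ plus the agreement of the normalized forms on the common subalgebra identifies the relevant image of the quadratic form with $Im(q)^*$, exactly as the paper arranges), and then replace the classification by a uniform connectedness argument: ${\mathcal Z}_{\overline R}(\widetilde{\go a}^0)$ is connected as the centralizer of a torus in the connected group $\overline R$, restriction to $\overline{\widetilde{\go g}}$ is a morphism of algebraic groups, and a connected subgroup of ${\rm Aut}(\overline{\widetilde{\go g}})$ containing the identity lies in the identity component ${\rm Aut}_e(\overline{\widetilde{\go g}})$. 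This is a cleaner and more general route, and it is correct under the standing assumption that $\widetilde{\go g}$ is (semi)simple, which is precisely what makes ${\rm Aut}_e(\overline{\widetilde{\go g}})$ the identity component of the automorphism group.

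Two small points you should tighten. First, in the applications the paper has in mind for hypothesis (b) (e.g.\ $\widetilde{\go r}$ of type $A$, where $\widetilde{\go r}_1$ is of $\go{gl}$ type with a one-dimensional centre), your identity-component argument only controls the action on the derived algebra; to conclude membership in ${\rm Aut}_0(\widetilde{\go g})$ one must also note that the restriction acts trivially on the centre of $\widetilde{\go r}_1$ (under the literal hypothesis ``$\widetilde{\go r}_1=\widetilde{\go g}$ simple'' the issue disappears, and the paper's explicit computations handle it implicitly). Second, your derivation of $\chi_0(G)=Im(q)^*$ in case (a) cites Lemma \ref{chiG}(3), which is stated only for $k+1$ even; for $k+1$ odd Lemma \ref{chiG}(2) gives $\chi_0(G)=F^*$, so the asserted equality uses that $q$ is universal in the situations where case (a) is invoked (the paper leaves this point implicit as well); the inclusion $Im(q)^*\subset\chi_0(G)$ coming from $\chi_0(g_i^a)=a$ is the part your construction actually proves.
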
 
\begin{rem} From the classification (cf. Table 1) and from Proposition \ref{prop-diagg1}, the condition on the descent in case {\it 2 (b)} occurs if $e=d\in\{1,2\}$ (Table 1,   (2) and  (6))   or $(d,e)=(2,0)$ (Table 1,  (1)) or   $e=0, 4$, and    $k+1\geq 4$ (Table 1,   (11) and  (12)).
\end{rem}
\begin{proof}\hfill

 (1) By hypothesis, there exists $Z\in E_{i,j}(-1,-1)$ such that  $q_{X_i, X_j}(Z)=a$. As ${\rm ad}(Y_j): E_{i,j}(-1,1)\to E_{i,j}(-1,-1)$ is an isomorphism, there exists $A\in E_{i,j}(-1,1)$ such that  $[Y_j,A]=Z$. Then, as we have already seen at the end of the preceding proof, one has
$$a=q_{X_i,X_j}([Y_j,A])= \dfrac{1}{2} b({\rm ad}(A)^2 X_i, Y_j).$$
As $a\neq 0$,  the map  ${\rm ad}(A)^2:\tilde{\go g}^{\lambda_i}\to \tilde{\go g}^{\lambda_j}$ is non zero, and hence injective because $\ell=1$ and the preceding Lemma says that there exists an  $\go sl_2$-triple $\{B,H_{\lambda_j}-H_{\lambda_i}, A\}$ with  $B\in E_{i,j}(1,-1)$. 

Moreover the element  $\theta_A\in {\rm Aut}_e(\go g)\subset G$ fixes each  $X_s$ and each $H_{\lambda_s}$ for  $s\neq i,j$ and  we have $\theta_A(H_{\lambda_i})=H_{\lambda_j}$,  $\theta_A(X_i)=aX_j$ and  $\theta_A(X_j)=a^{-1}X_j$.  From the proof of Proposition \ref{prop-equivalenceqXiXj}, let us set  $f_{i,j}=\gamma_{0,i}\gamma_{0,j}\gamma_{0,i}$. Then the automorphisms $f_{i,j}\in {\rm Aut}_e(\go g)\subset G$ satisfy  the properties of  Proposition \ref{prop-gammaij}    and we have  $f_{i,j}(X_i)=X_j$ and  $f_{i,j}(X_s)=X_s$ for $s\neq i,j$. Then, from the preceding Lemma, the  automorphism $g_{i,j}^a=f_{i,j}\circ \theta_A\in  L\cap {\rm Aut}_e(\go g) $ and  has the other required properties.\medskip

  (2) As the involutions  $f_{0,i}$ exchange  $X_0$ and $X_i$ and fix $X_s$ if  $s\neq i$, we can suppose that  $i=0$.\medskip

Let  $k+1$ be odd. The element $g=  g_{0,1}^a\ldots g_{0,k}^a\in L$ satisfies $g(X_0)=a^k X_0$ and $g(X_s)=a^{-1}X_s$ if  $s> 0$. From Definition \ref{hX}, the element  $h_{X_0+\ldots+X_k}(\sqrt{a})\in L$ acts by multiplication by $a$ on  $V^+$ and  $h_{X_0}(a^{-k/2})$ fixes  $X_s$ if  $s\neq 0$, and   also $h_{X_0}(a^{-k/2})(X_0)=a^{-k}X_0$. It follows that the element $g_0^a=h_{X_0+\ldots+X_k}(\sqrt{a})\circ h_{X_0}(a^{-k/2})\circ g$  has the required properties.\medskip

Let now  $k+1$ be even and suppose that there exists a regular graded Lie algebra  $(\tilde{\go r}, \tilde{H}_0)$,  such that the algebra  $\tilde{\go r}_1$ obtained by performing one step in the descent  (cf. Theorem \ref{th-descente}.) is equal to  $\tilde{\go g}$. As we are only concerned by the action of an element of $G$ on $V^+$, we can always suppose that $\tilde{\go r}$ is  simple  (cf. Remark  \ref{rem-simple}).

Let    $(\tilde{\lambda}_0,\ldots \tilde{\lambda}_{k+1})$ the maximal set of stronly orthogonal roots associated to $\tilde{\go r}$.   Then   $(\tilde{\lambda}_1,\ldots \tilde{\lambda}_{k+1})=(\lambda_0,\ldots \lambda_k)$. Let us fix the elements    $\tilde{X}_i\in\tilde{\go r}^{\tilde{\lambda}_i}$ satisfying the conditions of Proposition  \ref{prop-equivalenceqXiXj} in such a way that $(\tilde{X}_1,\ldots \tilde{X}_{k+1})=(X_0,\ldots, X_k)$.\medskip

Let  $R$ be the analogue of the group  $G$ for the algebra $\tilde{\go r}$. Set  $\go r={\mathcal Z}_{\tilde {\go r}}(\tilde H_{0})$   and  $\tilde{\go a}^0=\oplus_{i=0}^{k+1} F H_{\tilde{\lambda}_i}$. Therefore $R={\rm Aut}_e(\go r) {\mathcal Z}_R(\tilde{\go a}^0)$. The first assertion of the Corollary  gives the existence of an element   $r\in {\mathcal Z}_R(\tilde{\go a}^0)\cap{\rm Aut}_e(\go r)$ such that  $r.\tilde{X_0}=a \tilde{X_0}$, $r.\tilde{X_1}=a^{-1} \tilde{X_1}$  and  $r.\tilde{X_s}= \tilde{X_s}$ for  $s\neq 0$. 

As the automorphism $r$ also  fixes   $H_{\lambda_0}$, it stabilizes $\tilde{\go r}_1=\tilde{\go g}$. Let   $r_1\in{\rm Aut}(\tilde{\go g})$ be the  restriction  of  $r$ to  $\tilde{\go g}$. As $r$ centralizes  $\tilde{\go a}^0$, it is clear that   $r_1$ centralizes $\go a^0$. Moreover, one  has   $r_1.X_0=a^{-1}X_0$ and  $r_1.X_s=X_s$ if  $s\neq 0$,  from our choice of the elements  $\tilde{X_j}$.  Then,  if we set   $g=r_1\circ h_{X_0}(a)$, we have    $g.X_0=aX_0, g.X_s=X_s$ for  $s\neq 0$  and also $\chi_0(g)=a$. \\

It remains to prove that  $r_1\in{\rm Aut}_0({\tilde{\go r}_1})$. But as  $r_1\in{\rm Aut}(\tilde{\go r}_1)$, it suffices to prove that  $r_1$ belongs to ${\rm Aut}_e(\tilde{\go r}_1\otimes \bar{F})$.  As we have supposed that  $k+1$ is even, the rank $k+2$ of  $\tilde{\go r}$   is $\geq   3$.  Using the classification  (Proposition \ref{prop-diagg1} and Table 1), one sees easily that the weighted Dynkin diagram    of  $\tilde{\go r}$ is of type   $A_{2n-1}$ (corresponding to the cases $(1)$ (with $\delta=1)$ and $(2)$ in Table 1), $C_n $ (corresponding to the case $(6)$ in Table 1) or $D_{2n,2}$   (corresponding to the cases   $(11)$ and  $(12)$ in Table 1). Here in all cases   $n=k+2$. \\

- If   $\tilde{\go{r}}$ is of type $A_{2n-1}$, then  $\tilde{\go{r}}\simeq \go{sl}(2(k+2)+1,F)$. From the description of the roots $\lambda_{j}$ given in the proof of Theorem  \ref{thm-delta2}  and also from  \cite{Bou2} (Chap. VIII, \S 13, $n^\circ 1$, (VII), p.189), it is easy to see that the group ${\mathcal Z}_{{\rm Aut}_e(\tilde{\go r}\otimes \bar{F})}(\tilde{\go a}^0)$  is the group of conjugations by invertible diagonal matrices Therefore the restriction $r_{1}$ of $r$ to $\tilde{\go{g}}$ belongs effectively to ${\rm Aut}_e(\tilde{\go r}_1\otimes \bar{F})$.\\

- If   $\tilde{\go{r}}$ is of type   $C_n$, then, by   \cite{Bou2}(Chap. VIII \textsection 13    $n^\circ 3$  (VII) page 205.), one has  ${\rm Aut}_e(\tilde{\go g}\otimes \bar{F})={\rm Aut}(\tilde{\go g}\otimes \bar{F})$, 	and this implies the result. \\

- If  $\Psi=D_{2n,2}$ then the algebra $\tilde{\go r}\otimes \bar{F}$ is isomorphic to the orthogonal algebra ${\go o}(q_{(2n,2n)},\bar{F})$. We realize it as in  (\cite{Bou2} Chap VIII \textsection 13 $n^\circ 4$ page 207). One fixes a  Witt bases in which the matrix of  $q_{(2n,2n)}$ is the square matrix  $s_{4n}$  of size  $4n$ whose coefficients  are all zero except those of the second diagonal which are equal to $1$. The algebra  $\tilde{\go r}\otimes \bar{F} $ is then the set of matrices 
$Z=\left(\begin{array}{cc} A&B \\  C&-s_{2n}\;^{t}A s_{2n}\end{array}\right)$ with  $B=-s_{2n}\;^{t}B s_{2n}$ and  $C=-s_{2n}\;^{t}Cs_{2n}$. This algebra is then graded by the element $H_0=\left(\begin{array}{cc}  I_{2n}&0 \\  0&- I_{2n} \end{array}\right) $ where $I_{2n}$ is the identity matrix of size $2n$ and one can choose  $\lambda_0$ in such a way that      $H_{\lambda_0}=\left(\begin{array}{c|c}\begin{array}{cc}  0_{2n-2}& 0 \\0&   I_{2} \end{array}&0 \\\hline 0 &   \begin{array}{cc} -I_2 &0 \\ 0 & 0_{2n-2}  \end{array}\end{array}\right). $

 Recall that a {\it similarity} of a quadratic form $Q$ is a linear isomorphism $g$ of the underlying space $E$ such that $Q(gX)=\lambda(g)Q(X)$, where the scalar $\lambda(g)$ is called the ratio of $g$. If $\dim E= 2l$, then a similarity $g$ is said to be {\it direct} if $\det(g)=\lambda(g)^l$.

 But we know from    \cite{Bou2} (Chap VIII \textsection 13 $n^\circ 4$ page 211), that the group ${\rm Aut}_e(\tilde{\go r}\otimes \bar{F})$ is the group of automorphisms of the form  $\varphi_s: Z\mapsto sZs^{-1}$ where $s$ is a direct similarity of   $q_{(2n,2n)}$. It is easy to see that a similarity $s$ commutes with  $H_0$ if and only if  $s=\left(\begin{array}{cc}  g & 0\\ 0 & \mu s_{2n} \;^{t}g^{-1} s_{2n} \end{array}\right) $ with  $\mu\in \bar{F}^*$ and    $g\in GL(2n,\bar{F})$. Moreover, if  $s$ commutes with    $H_{\lambda_0}$, then    $g$ is of the form  $g=\left(\begin{array}{cc}  g_1 & 0\\ 0 &g_2 \end{array}\right) $ with $g_1\in GL(2n-2,\bar{F})$ and  $g_2\in GL(2,\bar{F})$. It is then clear that  $\varphi_s$ restricts to a direct similarity  of $q_{(2n-2,2n-2)}$ and hence the restriction belongs to  ${\rm Aut}_e(\tilde{\go r}_1\otimes   \bar{F})$.  
 
 \end{proof}

 The following result on anisotropic quadratic forms of rank 2 will be used later.
 
 \begin{lemme}\label{lem-q2} Let   $Q$ be  an anisotropic quadratic form of rank $2$. Let $Im(Q)^*$ be the set of non zero scalars which are represented by  $Q$. Then\\
$(1)$     $Im(Q)^*$ is the union of exactly  $2$ classes  $a$ and  $b$ in $F^*/F^{*2}$.  \\
$(2)$  Let $Q'$ be another anisotropic quadratic form of rank 2. Then $Q'\sim Q$ if and only if  $Im(Q')^*=Im(Q)^*$. In particular, one has   $\mu Q\sim Q$ if and only if  $\mu=1$ or  $\mu=ab$ modulo $F^{*2}$.
\end{lemme}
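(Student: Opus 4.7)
The plan is to reduce everything to the structure of $F^*/F^{*2}$ and to the norm subgroups coming from quadratic extensions of $F$. First I would diagonalise $Q$: since $Q$ is anisotropic of rank $2$, there exist $\alpha\in F^*$ and a non-square $\delta\in F^*$ (non-squareness being forced by anisotropy) such that $Q(x,y)=\alpha(x^2-\delta y^2)=\alpha\,N_{F(\sqrt{\delta})/F}(x+y\sqrt{\delta})$. Writing $N_\delta:=N_{F(\sqrt{\delta})/F}(F(\sqrt{\delta})^*)$, this identifies $Im(Q)^*$ with the coset $\alpha\cdot N_\delta$ of the norm subgroup in $F^*$.

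For (1), I would invoke the classical fact (from local class field theory, or from the explicit Hilbert symbol tables over non-dyadic $p$-adic fields as in \cite{Lam}) that for any non-square $\delta$ the norm subgroup $N_\delta$ has index $2$ in $F^*$. Since $F^{*2}\subset N_\delta$ and $|F^*/F^{*2}|=4$, the image $N_\delta/F^{*2}$ has order $2$, hence its coset $Im(Q)^*/F^{*2}=\{a,b\}$ also consists of exactly two classes.

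For (2), one direction is trivial. For the converse, writing $Q'\sim\langle\alpha',-\alpha'\delta'\rangle$ in the same way, the equality $Im(Q)^*=Im(Q')^*$ of cosets forces the underlying stabilising subgroups to coincide, i.e.\ $N_\delta=N_{\delta'}$. The assignment $\delta\mapsto N_\delta$ from non-trivial classes of $F^*/F^{*2}$ to index-$2$ subgroups of $F^*$ is injective (both sets have three elements, and distinct quadratic extensions have distinct norm groups), so $\delta\equiv\delta'\pmod{F^{*2}}$ and $Q,Q'$ share the same discriminant. Moreover $\alpha/\alpha'\in N_\delta$ translates, via the Hilbert symbol, into $(\alpha,\delta)=(\alpha',\delta)$, so the Hasse invariants of $Q$ and $Q'$ agree as well. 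The classification of quadratic forms over a $p$-adic field (equivalence is controlled by dimension, discriminant and Hasse invariant) then yields $Q\sim Q'$. I expect the only mildly subtle step to be the injectivity of $\delta\mapsto N_\delta$, but this follows directly from the tables in \cite{Lam}.

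For the ``in particular'' statement, $\mu Q\sim Q$ is equivalent to $\mu\cdot\{a,b\}=\{a,b\}$ in $F^*/F^{*2}$, which holds iff $\mu\in\{1,\,ab\}$ modulo $F^{*2}$ (using that $a^2\equiv b^2\equiv 1$ in $F^*/F^{*2}$).
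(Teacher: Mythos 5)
Your proof is correct, but it takes a genuinely different route from the paper's. You identify an anisotropic binary form with $\alpha\,N_{F(\sqrt{\delta})/F}$ and reduce both statements to facts about norm groups: the index-$2$ property of $N_{\delta}=N_{F(\sqrt{\delta})/F}(F(\sqrt{\delta})^*)$ in $F^*$ gives (1), and the injectivity of $\delta\mapsto N_{\delta}$ together with the dimension--discriminant--Hasse-invariant classification gives (2). The paper instead argues by hand with the explicit representatives $\{1,u,\pi,u\pi\}$ of $F^*/F^{*2}$: for (1) it uses the criterion that $Q$ represents $\mu$ if and only if $Q\perp\langle -\mu\rangle$ is isotropic, together with the uniqueness of the rank-$4$ anisotropic form $x^2-uy^2-\pi z^2+u\pi t^2$, splitting into the cases $-1\in F^{*2}$ and $-1\notin F^{*2}$; for (2) it invokes only the elementary criterion for equivalence of binary forms (same discriminant and a common represented value, Proposition 5.1 of Chapter I in \cite{Lam}). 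Your argument is shorter and avoids the case distinction on $-1$, at the price of the norm-group/Hilbert-symbol input (equivalently, non-degeneracy of the Hilbert symbol, which does hold in the non-dyadic setting of the paper); the paper's version is more elementary and keeps everything at the level of the explicit square-class representatives that are reused later in the orbit computations. One small simplification available to you: once you know $N_{\delta}=N_{\delta'}$ and $\alpha'/\alpha\in N_{\delta}$, the isometry $Q\sim Q'$ follows directly from multiplicativity of the norm form (scaling a norm form by a norm of the extension yields an isometric form), so the appeal to Hasse invariants can be dispensed with.
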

\begin{proof}  Let  $\pi$ be a uniformizer of  $F$ and let   $u$ be a unit of  $F^*$  which is not a square.  Then $F^*/F^{*2}=\{1, u,\pi, u\pi\}$.  If $-1\notin F^{*2}$ we suppose  moreover that  $u=-1$ . \\

$(1)$ As  $Q$ is anisotropic of rank $2$, there exist $v,w\in F^*/F^{*2}$ with $v\neq -1$ such that  $Q\sim w(x^2+vy^2)$. Therefore it is enough to prove the assertion for  $Q_v=x^2+vy^2$.

From  \cite{Lam} (Chapter I, Corollary 3.5 page 11), we know that $\mu\in Im(Q_v)^*$ if and only if the quadratic form $x^2+vy^2-\mu z^2$ is isotropic. On the other hand the quadratic form  $Q_{an}=x^2-uy^2-\pi z^2+u\pi t^2$ is the unique anisotropic form of rank $4$, up to equivalence (see \cite{Lam}   Chapter VI, Theorem 2.2 (3) page 152).

If  $-1\in F^{*2}$ then  $v\in\{u,\pi, u\pi\}$. Suppose that for example that $v=u$. Then $Q_{v}$ cannot represent $\pi$  or $u\pi$, because in that case the forms $x^2+vy^2-\pi z^2$ or $x^2+vy^2-u\pi z^2$ would be isotropic, and hence and then $Q_{an}=x^2-uy^2-\pi z^2+u\pi t^2$ would be isotropic too.  Finally $Q_v$  represents  exactly the classes of  $1$ and $v$. The same argument works for the other possible values of $v$.

  If $u=-1\notin F^{*2}$    then  $-1$ is sum of two squares   (\cite{Lam}  Chapter VI, Corollary 2.6 page 154) and   $v\in\{1,\pi,-\pi\}$ (as the form $Q_{v}$ is anisotropic, $v$ cannot be equal to $u=-1$). The forms     $x^2+y^2$ and $ -(x^2+y^2)$ have the same discriminant and represent both the element $-1$. They are therefore equivalent   (\cite{Lam} Chapter I, Proposition 5.1 page 15). Then, as $Q_{an}=x^2+y^2-\pi z^2-\pi t^2\sim -(x^2+y^2)-\pi z^2-\pi t^2\sim x^2+y^2+\pi z^2+\pi t^2$, the same argument as above shows that the form  $Q_1$ represents exactly the classes of $1$ and  $-1$. By the same way, if  $v=\pm \pi$  one shows that  $Q_v$ represents exactly the classes of $1$ and $v$. \\

$(2)$   From above we know that an anisotropic quadratic form   $Q=ax^2+by^2$ with  $a,b\in F^*/F^{*2}$  represents  $a$ and  $b$ if $a\neq b$ and  it represents $\pm a$ if $a=b$ and if $-1\notin F^{*2}$ (because then -1 is a sum of two squares, the case where $-1\in F^{*2}$ has not to be considered because the form $Q$ would be isotropic). From  \cite{Lam} (Chapter I, Proposition 5.1 page 15),  we know that $Q'=cx^2+dy^2\sim Q$  if and only if  $ab=cd$ modulo $F^{*2}$ and  $\{a,b\}\cap \{c,d\}\neq \emptyset$.  This implies the second assertion.  

 \end{proof}

\begin{prop}\label{prop-k=1} Suppose   $k=1$ (i.e. $\text{\rm rank}(\tilde{\go g})=2)$. Let us denote by $[F^*:\chi_0(G)]$ the index of  $\chi_0(G)$ in  $F^*$ (equal to  $1,2$ or $4$ according to Lemma \ref{chiG} (1)).  Then
\begin{enumerate}
\item  The group $G$ has $1+[F^*:\chi_0(G)]$ non zero orbits in $V^+$: the non open orbit of    $X_0$ and the open orbits of  $X_0+vX_1$ where  $v\in F^*/\chi_0(G)$.

\item Two generic elements in  $\  \tilde{\go g}^{\lambda_0}\oplus \tilde{\go g}^{\lambda_1}$ are  $G$-conjugated if and only if they are  $L$-conjugated.
\item \begin{enumerate}
\item If $e=1$   or $3$, then $\chi_0(G)=F^{*2}$,
\item if $e=0$ or $4$, then $\chi_0(G)=F^*$,
\item if  $e=2$ then  $\chi_0(G)$ is a subgroup of index $2$ of  $F^*$.
\end{enumerate}
\end{enumerate}
\end{prop}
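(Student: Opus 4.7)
I will treat the three parts together, computing $\chi_0(G)$ first and then deriving the orbit structure.

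\emph{Part (3): computation of $\chi_0(G)$.} Lemma \ref{chiG} gives the a priori bracketing
\[
F^{*2}\subset\chi_0(G)\subset\{a\in F^*\mid aq\sim q\}.
\]
Writing $q=q_{an}^e\oplus q_{hyp}$ and using Witt cancellation, $aq\sim q$ iff $a\,q_{an}^e\sim q_{an}^e$, and this condition is: vacuous for $e=0$ (since $q_{an}^0=0$) and for $e=4$ (by unicity of the rank-$4$ anisotropic form over a $p$-adic field, \cite{Lam} Chap.~VI Th.~2.2); equivalent to $a\in F^{*2}$ for $e=1$ (rank-$1$ form) and $e=3$ (since scaling a rank-$3$ form by $a$ multiplies the discriminant by $a$ modulo squares); a subgroup of index $2$ in $F^*$ for $e=2$ by Lemma \ref{lem-q2}. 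This yields $\chi_0(G)=F^{*2}$ in cases $e=1,3$ (3(a)). For $e=2$ (3(c)), it remains to realize a non-square in $\chi_0(G)$; this follows from Corollary \ref{gia}(2)(b) applied to an ambient rank-$3$ graded algebra having $\tilde{\go g}$ as its first descent step, obtained by reversing Proposition \ref{prop-diagg1} (cases (2), (9) of Table~1 embed respectively in cases (6), (10) with indices incremented).

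\emph{The main difficulty} is 3(b): the equality $\chi_0(G)=F^*$ when $e\in\{0,4\}$. Case (1) with $\delta=1$ is already covered by Theorem \ref{thm-delta2}. The remaining cases are (8) ($e=0$, $\tilde{\go g}=\go o(q_{(m,m)})$) and (10) ($e=4$, $\tilde{\go g}=\go o(q_{(m+2,m-2)})$), and here the descent condition of Corollary \ref{gia}(2)(b) fails in rank $2$. I will use the concrete orthogonal realization to identify $V^+$ with a $(2m-2)$-dimensional $F$-vector space on which $\Delta_0$ coincides (up to a non-zero scalar) with the induced quadratic form, and realize the action of $G$ as passing through a group of similitudes of this form. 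Using the scalings $h_X(\sqrt t)$ from Lemma \ref{lem-tId-dansG} together with the orthogonal transformations sitting in $\mathrm{Aut}_e(\go g)\subset G$, one exhibits similitudes of arbitrary ratio $t\in F^*$, giving $\chi_0(G)=F^*$ in both cases; this explicit similitude computation is the technical heart of the argument.

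\emph{Parts (1) and (2).} By Proposition \ref{prop G-diag}, every non-zero $Z\in V^+$ is $G$-conjugate either to $X_0$ (a single non-open orbit, since $\Delta_0(X_0)=0$) or to a generic element $z_0X_0+z_1X_1$ with $z_0,z_1\in F^*$. By Theorem \ref{thpropridelta_0}(2), $\Delta_0(z_0X_0+z_1X_1)=z_0z_1\,\Delta_0(X_0+X_1)$, so $z_0z_1\bmod\chi_0(G)$ is an invariant of the $G$-orbit giving at most $[F^*\!:\!\chi_0(G)]$ generic orbits. To show this invariant is complete \emph{and realized within $L$} (yielding (1) and (2) simultaneously), given $z_0z_1\equiv z'_0z'_1\pmod{\chi_0(G)}$ I will compose three building blocks of $L$: an element $g\in L$ with $\chi_0(g)=(z'_0z'_1)/(z_0z_1)$ (which exists since $\chi_0(L)=\chi_0(G)$ by Lemma \ref{chiG}(1)), the scaling $h_X(\sqrt t)\in L$ acting diagonally as $(t,t)$, and the element $g_{0,1}^a\in L$ of Corollary \ref{gia}(1) acting as $(a,a^{-1})$ for $a\in\mathrm{Im}(q)^*$. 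Solving the resulting $(t,a)$-system produces the required conjugation; in the cases where $\chi_0(G)=F^{*2}$ (i.e.\ $e=1,3$) the scalings alone already suffice, and the more delicate cases $e=0,2,4$ are accommodated by the large $\mathrm{Im}(q)^*$ provided by the hyperbolic (or rank-$4$ anisotropic) component of $q$ established in Proposition \ref{prop-equivalenceqXiXj}.
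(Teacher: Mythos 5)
Your parts (1), (2) and 3(a) are essentially the paper's own argument (reduce to diagonal elements, use $\Delta_0$ and $\chi_0(L)=\chi_0(G)$, then repair the diagonal imbalance with $h_{X_0}$ and the global scaling of Lemma \ref{lem-tId-dansG}; the extra element $g_{0,1}^a$ is not needed), and your Witt-cancellation treatment of $e=1,3$ is the paper's as well. The problem is precisely where you locate the "technical heart", 3(b), and part of 3(c).

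For 3(b) your proposed mechanism cannot work: every element you invoke has $\chi_0$-value in $F^{*2}$. Indeed $\chi_0$ is trivial on ${\rm Aut}_e(\go g)$ by Theorem \ref{thproprideltaj}(2), and the scalings satisfy $\chi_0(h_X(\sqrt t))=t^{k+1}=t^2$ and $\chi_0(h_{X_j}(t))=t^2$; hence any composition of "orthogonal transformations sitting in ${\rm Aut}_e(\go g)$" with such scalings yields only square ratios, and can never exhibit $\chi_0(G)=F^*$ for cases (8) and (10). The missing ingredient is a determination of the \emph{whole} group $G$ in the orthogonal cases: the paper proves (via \cite{Bou2}, Chap.~VIII, \S 13, $n^\circ 4$, in the split case, and a rationality/stability argument when $r\neq 0$) that $G$ is isomorphic to the full group of direct similitudes ${\mathcal Sim}_0(q_{(m+r-1,m-1-r)})$ acting on $V^+\simeq F^{2m-2}$ with $\chi_0(g)=\mu(g)$; only then does the existence of similitudes of arbitrary ratio for hyperbolic forms and for the rank-$4$ anisotropic form give $\chi_0(G)=F^*$ (and, for $e=2$, the index-$2$ statement). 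Elements of $G$ realizing a non-square ratio are genuinely outside the subgroup generated by ${\rm Aut}_e(\go g)$ and the scalings, so some structural description of this kind is unavoidable.

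Relatedly, in 3(c) your appeal to Corollary \ref{gia}(2)(b) is justified only for case (2) of Table 1, and there the ambient algebra is case (2) with $n$ increased by one (the descent of $\go u(2(n+1),E,H_{n+1})$ is $\go u(2n,E,H_n)$), not case (6); for case (9) ($e=2$, $d=2m-4\geq 4$) there is no rank-$3$ regular graded algebra having $\tilde{\go g}$ as its first descent step (no rank $\geq 3$ entry of Table 1 has $e=2$ with $d\geq 4$, and case (10) always has rank $2$, so "case (10) with indices incremented" is not a rank-$3$ ambient algebra). Thus the case-(9) instance of 3(c), like 3(b), must be handled by the explicit similitude description of $G$, which your sketch does not supply.
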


\begin{proof}    
By Proposition \ref{prop G-diag}, any non zero element in  $V^+$ is $G$-conjugated to an element  $Z=x_0X_0+x_1X_1$ in  $V^+$ with  $(x_0,x_1)\neq (0,0)$. 
\medskip

If rank($Z$)$=1$  (i.e.   $x_0x_1=0$), as  we can use the element $\gamma_{0,1}$ of Proposition \ref{prop-gammaij} which exchanges  $\tilde{\go g}^{\lambda_0}$ and  $\tilde{\go g}^{\lambda_1}$, we can suppose  $x_1=0$ and  $x_0\neq 0$. The element  $h_{X_0+X_1}(\sqrt{x_0}^{-1})$ associated to $X_0+X_1$ (see Definition \ref{hX}) belongs to  $L$ and acts by  $x_0^{-1}Id_{V^+}$ on  $V^+$. It follows that  $Z$ is  $L$-conjugated  to  $X_0$.\medskip

Suppose now that rank($Z$)$=2$  (i.e.   $x_0x_1\neq 0$). Then, as above, we obtain that  $Z$ is  $L$-conjugated  to  $Z_v=X_0+vX_1$ with  $v=x_0^{-1} x_1\neq 0$. \medskip

By  Theorem \ref{thpropridelta_0}, one has  $\Delta_0(Z_v)=v\Delta_0(Z_1)$. It follows easily that if  $v\notin w\chi_0(G)$, then the elements  $Z_v$ and  $Z_w$ are not  $G$-conjugated. \medskip

Suppose that   $v=w\mu $ with  $\mu\in \chi_0(G)=\chi_0(L)$ (cf. Lemma \ref{chiG}). Let   $g\in L$ such that $\chi_0(g)=\mu$. As $g\in L$, it  stabilizes the spaces  $\tilde{\go g}^{\lambda_j}$ for  $j=0$ or  $1$. As  ${\rm dim} \tilde{\go g}^{\lambda_j}=\ell=1$,   there exist  $\alpha$ and $\beta$ in  $F^*$  such  that $g.X_0=\alpha X_0$ and  $g.X_1=\beta X_1$. Therefore   $\Delta_0(g.(X_0+X_1))=\Delta_0(\alpha X_0+\beta X_1)=\alpha\beta\Delta_0(X_0+X_1)$, and hence  $\alpha\beta=\chi_0(g)=\mu$.  \medskip

Then  $g.(X_0+wX_1)=\alpha X_0+\beta wX_1=\alpha^{-1}(\alpha^2 X_0+vX_1)$.  The element  $h_{X_0}(\alpha)$ associated to  $X_0$ belongs to $L$ and  satisfies $ h_{X_0}(\alpha)X_0=\alpha^2 X_0$ and  $h_{X_0}(\alpha)X_1=X_1$. The element  $h_{X_0+X_1}(\sqrt{\alpha}^{-1})$  also belongs to  $L$ and acts by multiplication by  $\alpha^{-1}$ on  $V^+$. As  $g.(X_0+wX_1)= h_{X_0+X_1}(\sqrt{\alpha}^{-1})\circ h_{X_0}(\alpha)(X_0+vX_1)$, the elements  $X_0+wX_1$ and $X_0+vX_1$ are  $L$-conjugated. This ends the proof of $(1)$ and $(2)$.
\medskip

\noindent It remains to prove the  last assertion.  

By Lemma  \ref{chiG}, one has $F^{*2}\subset \chi_0(G)\subset\{a\in F^*; aq\sim q\}\subset Im(q)^*$.\\
If  $e=1$, then  $q$ is the sum of the form  $q_{an}^e$ of rank  $1$ and of a hyperbolic quadratic form   $q_{hyp}$ of rank $d-1$ (which may be zero). See Proposition \ref{prop-equivalenceqXiXj}. As  $\mu q_{hyp}\sim q_{hyp}$ for all $\mu\in F^*$ (\cite{Lam}  Chapter I, Theorem 3.2 page 9),  Witt's decomposition Theorem (\cite{Lam}  Chapter I, Theorem 4.2 page 12), implies that  $aq\sim q$ if and only if  $aq_{a n}^{ e}\sim q_{an}^e$. As $q_{an}^{e}$ is of rank  $1$, we obtain $\chi_0(G)=F^{*2}$, and this is  the assertion {\it 3 (a)}, in the case where $e=1$.\\

If $e=3$,  then  $q$ is the sum of an  anisotropic form  $q_{an}^e$ of rank  $3$ and of a hyperbolic quadratic form   $q_{hyp}$ of rank $d-3$. As above  $aq\sim q$ if and only if  $aq_{a n}^{ e}\sim q_{an}^e$. But from \cite{Lam}, Chap. $VI$, Corollary 2.5, p.152-153, we have $aq_{a n}^{ e}\sim q_{an}^e\Longleftrightarrow -disc(a q_{an}^e)=-adisc(q_{an}^e)=-disc(q_{an}^e)$. This means that $aq_{a n}^{ e}\sim q_{an}^e\Longleftrightarrow a\in F^{*2}$. As above this again implies that $\chi_0(G)=F^{*2}$. Hence $(3)$ $(a)$ is proved.
 
Let us suppose now that  $e$ even  and hence $e\in\{0,2,4\}$. The proofs will depend on the values of  $e$ and $d$. \\
If   $(d,e)=(2,0)$ or  $(2,2)$ (cases (1) and (2) in Table 1) then the algebra $\tilde{\go g}$ satisfies the condition   {\it 2 (b)} of corollary \ref{gia} and hence $\chi_0(G)=Im(q)^*$.

 \hskip 10pt - If   $(d,e)=(2,2)$, the quadratic form $q$ is anisotropic of rank $2$, and therefore     $q$ two classes in  $F^*/F^{*2}$ (cf. Lemme \ref{lem-q2}). From above we obtain     $[F^*:\chi_0(G)]=2$.

 \hskip 10pt -  $(d,e)=(2,0) $ then  $q$ is hyperbolic,  hence universal  and therefore $\chi_0(G)=Im(q)^*=F^*$ (cf. \cite{Lam}   Chapter I, Theorem 3.4 page 10). \\

It remains to study the cases where  $e\in\{0,2,4\}$ and  $d\geq 4$, corresponding to the cases  (8), (9) and (10) of  Table 1. From Remark  \ref{rem-simple}, one can suppose that     $\tilde{\go g}={\go o}(q_{(m+r,m-r)})$ with  $2r=e$ and  $m\geq 4$. We will describe precisely the  group  $G$ is this case.

The quadratic form  $q_{(m+r,m-r)}$ is the sum of  $m-r$ hyperbolic planes  and of an anisotropic quadratic form $q_{an,2r}$ of rank $2r$, where  $q_{an,0}=0$. It exists a basis   $(e_1,\ldots , e_{m},  e_{-m}, \ldots e_{-1})$ in which the matrix of $q_{(m+r,m-r)}$  is given by $$S_{2m,2r}=\left(\begin{array}{ccc} 0 & 0 & s_{m-r}\\ 0 & J_{an,2r} & 0\\ s_{m-r}  &0 & 0\end{array}\right),$$
where  $s_n$ stands for the square matrix of size  $n$ whose coefficients are all zero  except those on the second diagonal  which are equal to  $1$ and where   $J_{an,2r}$ is the  square matrix of size $2r $    of the form  $q_{an,r}$ which is supposed to be diagonal for $r\neq 0$ and equal to the empty block for $r=0$. \medskip
Hence
$$\tilde{\go g}=\{X\in M_{2m}(F),\, ^tXS_{2m,2r}+S_{2m,2r}X=0\}$$

  Then the maximal split abelian subalgebra $\go{a}$  of $\tilde{\go g}$ is the set of diagonal elements  $\sum_{i=1}^{m-r}\varphi_{i} (E_{i,i}-E_{2m-i+1,2m-i+1})$ with $\varphi_i\in F$ (as usual   $E_{i,j}$ is  the matrix  whose coefficients are all zero except   the coefficient of index $(i,j) $ which is equal to $1$) and the algebra   $\tilde{\go g}$ is graded by the element $H_0=2E_{1,1}-2E_{2m,2m}$ . It is then easy to see that   $V^+$ is isomorphic  $F^{2m-2}$ by the map

   $$y\in F^{2m-2}\mapsto X(y)=\left(\begin{array}{ccc} 0 & y& 0\\ 
0&0&-S_{2(m-1),2r}^{-1}\;^{t}y\\
0&0&0\end{array}\right)\in V^+.$$

\noindent Set  $X_0=X(1,0,\ldots,0)\in \tilde{\go g}^{\lambda_0}$ and  $X_1=X(0,0,\ldots,1)\in \tilde{\go g}^{\lambda_1}$.
\medskip

We denote by  ${\mathcal Sim}_0(q_{(m+r,m-r)})$ the group of direct similarities of  $q_{(m+r,m-r)}$, that is the group of elements  $A\in GL(2m,F)$ such there exists $\mu\in F^*$ satisfying  $^{t}AS_{m,r} A=\mu S_{m,r}$ and such that  ${\rm det}(A)=\mu^m$. We will denote by  $\mu(A)=\mu$ the ratio of  $A$.\medskip

If $e=r=0$, the algebra $\tilde{\go g}$ is split and from   ([Bou] Chap VIII \textsection 13 $n^\circ 4$ page 211), we know that the group   ${\rm Aut}_0(\tilde{\go g})$ is the group of automorphisms of the form  $Z\mapsto AZA^{-1}$ where $A\in{\mathcal Sim}_0(q_{(m,m)})$. It is easy to see that an  element  $A\in {\mathcal Sim}_0(q_{(m,m)})$ commutes with  $H_0$ if and only if there  exists  $b\in F^*$ and  $g_1\in {\mathcal Sim}_0(q_{(m-1,m-1)})$ of ratio  $ \mu(g_1)$ such that $A= \left(\begin{array}{ccc} \mu(g_1) b^{-1} & 0 & 0  \\
0 & g_1 & 0 \\ 0 &0 &b\end{array}\right)=b\left(\begin{array}{ccc} \mu(g_1) b^{-2} & 0 & 0  \\
0 & b^{-1}g_1 & 0 \\ 0 &0 &1\end{array}\right)$. As $b^{-1}g_1$ is a direct similarity of ratio  $\mu(g_1) b^{-2}$, we obtain that the group  $G$ is isomorphic to the group of direct similarities  ${\mathcal Sim}_0(q_{(m-1,m-1)})$ of the quadratic form $q_{(m-1,m-1)}$. Its action   on $V^+$ is given by  $g.X(y)=X(\mu(g)\; yg^{-1})$ for $g\in  {\mathcal Sim}_0(q_{(m-1,m-1)})$ with ratio $\mu(g)$.\medskip

\noindent Suppose now that   $r\neq 0$. Let  $g\in G$ and denote by $\bar{g}$ its natural extension to  $\tilde{\go g}\otimes \bar{F}$. As  $g$ commutes with  $H_0$, the same is true for  $\bar{g}$ and therefore , $\bar{g}$ stabilizes  $\bar{V}^+$ and  $\bar{V}^-$ (ie. $\bar{g}\bar{V}^+\subset \bar{V}^+$ and $\bar{g}\bar{V}^-\subset \bar{V}^-$). From the split case above, the action of $\bar{g}$ on  $\tilde{\go g}\otimes \bar{F}$ is given by the conjugation of an element  $\bar{A}=\left(\begin{array}{ccc} \mu & 0 & 0  \\
0 & \bar{g}_1 & 0 \\ 0 &0 &1 \end{array}\right)$with  $\mu\in \bar{F}^*$,  $^{t}\bar{g}_1S_{2(m-1),2r}\bar{g}_1=\mu S_{2(m-1),2r}$ and   ${\rm det}(\bar{g}_1)=\mu^{m-1}$. \medskip

\noindent Such an element acts on  $V^+$   by 
$$\bar{A}X(y)\bar{A}^{-1}=X(\mu  \;y \bar{g}_1^{-1})=\left(\begin{array}{ccc} 0 & \mu  \;y \bar{g}_1^{-1}& 0\\
0&0&-\bar{g}_1 S_{m-1,r}^{-1}\;^{t}y\\
0&0&0\end{array}\right),\quad  y\in F^{2m-2}.$$

\noindent As  $g\in G$ stabilizes $V^+$ and  $V^-$, we see that the coefficients of   $\bar{g}_1$ are in $F$ and   that  $\mu\in F$. Moreover the restriction of  $\bar{g}_1$ to  $F^{2m-2}$, which we denote by  $g_1$, is a direct similarity of  $q_{(m+r-1,m-r-1)}$ with ratio  $\mu$, i.e.  $g_1\in {\mathcal Sim}_0(q_{(m-1+r,m-1-r)})$. \medskip

Hence, in each case, the group  $G$ is isomorphic to the group  ${\mathcal Sim}_0(q_{(m+r-1,m-1-r)})$ of direct similarities of  $q_{(m+r-1,m-1-r)}$. Its action on $V^+$ is given by  $g.X(y)=\mu(g) X(yg^{-1})$ where $\mu(g)$ is the ratio of  $g$. Then the polynomial  $\Delta_0(X(y))=q_{(m+r-1,m-1-r)}(y)$ is relatively invariant under the action of  $G$ and its character is given by $\chi_0(g)=\mu(g)$.\medskip

All the anisotropic quadratic forms of rank  $4$ are equivalent (\cite{Lam}   Chapter VI, Theorem 2.2 page 152) and all the hyperbolic quadratic forms are equivalent   (\cite{Lam}   Chapter I, Theorem 3.2 page 9). Therefore, if $e=r=0$ or $e=4=2r$,   for any  $\mu\in F^*$, there exists $g\in {\mathcal Sim}_0(q_{(m+r-1,m-1-r)})$ whose ratio is  $\mu$. Hence $\chi_0(G)=F^*$.\medskip

If  $e=2=2r$, then by Lemma  \ref{lem-q2},  the anisotropic form  $q_{an,2}$ represents exactly $2$ classes   $a$ and $b$  in  $F^*/F^{*2}$ and    $\mu q_{an,2}\sim q_{an,2}$ if and only if    $\mu =1$ or $\mu=ab$ modulo $F^{*2}$. It follows that  if    $g\in {\mathcal Sim}_0(q_{(m+r-1,m-1-r)})$ then  $\mu(g)\in \{1,ab\}$ modulo $F^{*2}$. Hence the subgroup $\chi_0(G)$ is of index $2$ in $F^*$. \medskip

This ends the proof of Proposition \ref{prop-k=1}.

\end{proof}

\begin{theorem}\label{thm-orbites-e04} In the case where  $e=0$ or $4$ we have    $\chi_0(G)=F^*$    and the group  
 $G$ has exactly  $k+1= \text{ rank}(\tilde{\go{g}})$  non zero orbits in $V^+$. These orbits are characterized by their rank and a representative of the unique open orbit is $X_0+\ldots +X_k$.  
 
 Moreover two generic elements in $\oplus_{j=0}^k\tilde{\go g}^{\lambda_j}$ are conjugated under the subgroup $L=Z_G(\go a^0)$.

\end{theorem}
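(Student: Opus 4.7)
\textbf{The plan} is to combine Proposition \ref{prop G-diag}, which reduces every element of $V^{+}$ to a ``diagonal'' form $\sum_{j\in J}a_jX_j$ with $a_j\in F^{*}$, with the rescaling automorphisms constructed in Corollary \ref{gia}. The key preliminary observation is that under the hypothesis $e\in\{0,4\}$ the quadratic form $q=q_{X_0,X_1}$ of Proposition \ref{prop-equivalenceqXiXj} is universal, and hence $\mathrm{Im}(q)^{*}=F^{*}$: indeed, for $e=0$ the form $q$ is hyperbolic of rank $d\geq 2$, while for $e=4$ it is the orthogonal sum of a hyperbolic form with the unique anisotropic quadratic form of rank $4$ over $F$ (the reduced norm of the quaternion division algebra), which is universal by the classical arithmetic of $p$-adic quadratic forms. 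The same reasoning shows $aq\sim q$ for every $a\in F^{*}$.

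First I would prove that $\chi_0(G)=F^{*}$. When $k+1$ is odd this is immediate from Lemma \ref{chiG}(2). When $k+1$ is even, inspection of Table $1$ shows that only the cases $(8)$, $(10)$, $(11)$, $(12)$ can occur; for $k+1\geq 4$ (cases $(11)$ and $(12)$) the descent condition applies and Corollary \ref{gia}(2)(b) gives $\chi_0(G)=\mathrm{Im}(q)^{*}=F^{*}$, while for $k+1=2$ (cases $(8)$ and $(10)$) the explicit description of $G$ in Proposition \ref{prop-k=1}(3)(b) yields the same conclusion. Next I would exhibit the representatives: by Proposition \ref{prop G-diag} any non-zero $X\in V^{+}$ is $G$-conjugate to some $Z=\sum_{j\in J}a_jX_j$ with $|J|=m=\mathrm{rank}(X)\in\{1,\dots,k+1\}$ and $a_j\in F^{*}$, and elements of distinct ranks are not $G$-conjugate, so there are at least $k+1$ non-zero orbits. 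Using the involutions $f_{i,j}=\gamma_{0,i}\gamma_{0,j}\gamma_{0,i}\in\mathrm{Aut}_e(\go g)\subset G$ introduced in the proof of Proposition \ref{prop-equivalenceqXiXj}, which swap $\tilde{\go g}^{\lambda_i}\leftrightarrow\tilde{\go g}^{\lambda_j}$ and fix the other $\tilde{\go g}^{\lambda_s}$, I can assume $J=\{0,\dots,m-1\}$. Since $\mathrm{Im}(q)^{*}=F^{*}$, Corollary \ref{gia}(2) provides, for each $j\in\{0,\dots,m-1\}$ and each $a\in F^{*}$, an element $g_j^{a}\in L$ acting by $X_j\mapsto aX_j$ while fixing the other $X_s$, provided $k+1$ is odd or the descent condition holds; composing $g_0^{a_0^{-1}}\!\cdots g_{m-1}^{a_{m-1}^{-1}}\in L$ transforms $Z$ into $X_0+\dots+X_{m-1}$. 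This simultaneously proves that each rank gives exactly one orbit, that $X_0+\dots+X_k$ lies in the unique open orbit (it is generic by Proposition \ref{X0+...+Xkgenerique}), and that two generic elements of $\bigoplus_{j=0}^{k}\tilde{\go g}^{\lambda_j}$ are $L$-conjugate. The remaining even case $k+1=2$ (cases $(8)$ and $(10)$) is settled directly by Proposition \ref{prop-k=1}(1) and (2), which under $\chi_0(G)=F^{*}$ gives exactly $1+1=2=k+1$ non-zero orbits with representatives $X_0$ and $X_0+X_1$, together with the $L$-conjugacy of generic diagonal elements.

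\textbf{The main obstacle} is the interplay between the parity of $k+1$ and the availability of the ``individual'' rescaling elements $g_j^{a}$: Corollary \ref{gia}(2) fails precisely when $k+1$ is even and the descent condition does not hold, i.e.\ in the low-rank cases $(8)$ and $(10)$ of Table $1$, which must be handled separately via the explicit Proposition \ref{prop-k=1}. A secondary but indispensable input is the universality of the unique anisotropic quadratic form of rank $4$ over a non-archimedean local field, without which the equality $\mathrm{Im}(q)^{*}=F^{*}$ would fail in the case $e=4$ and the argument would only produce $[F^{*}:\chi_0(G)]$ open orbits instead of a single one.
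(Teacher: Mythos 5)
Your proposal is correct and follows essentially the same route as the paper's proof: reduction to the diagonal via Proposition \ref{prop G-diag}, universality of $q$ when $e\in\{0,4\}$ (hyperbolic, or containing the unique anisotropic rank-$4$ form), the rescaling elements of Corollary \ref{gia}(2) under the parity/descent alternative, and Proposition \ref{prop-k=1} for the rank-$2$ case. The only slip is that your inventory of even-rank cases omits case (1) of Table 1 with $\delta=1$ (which also has $\ell=1$, $e=0$ and arbitrary rank), but since there $(d,e)=(2,0)$ satisfies the descent condition, the same appeal to Corollary \ref{gia}(2)(b) covers it, exactly as in the paper's $d\leq 4$ branch.
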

\begin{proof} From Proposition \ref{prop G-diag}, any non zero element in  $V^+$ is  $G$-conjugated to an element  $Z=x_0X_0+\ldots +x_{m-1}X_{m-1}$ of  $V^+$ such that $\prod_{s=0}^{m-1} x_s\neq 0$. It suffices to prove that  $Z$ is  $L$-conjugated to  $X_0+\ldots X_{m-1}$.

If  $k+1=2$, the result is a consequence of    proposition \ref{prop-k=1}. 
   
Suppose now that  $k+1\geq 3$. As  $e=0$ or  $4$, the form $q$ is either isotropic or anisotropic of dimension 4, and hence  $Im(q)^*=F^*$ (an anisotropic form of dimension 4 represents any nonzero element because any form of dimension 5 is isotropic (\cite {Lam}, Chap $VI$, Theorem 2.2 p. 152)).\\
 If  $d\leq 4$ (as  the conditions are then $\ell=1$, $e=0 \text{ or } 4$,  and $k+1\geq 3$, this corresponds to the   cases   1 (with $\delta=1$), (11) and   (12 ) in  the Table 1)  then the algebra  $\tilde{\go g}$ satisfies one of the two hypothesis in Corollary \ref{gia} (2). Hence,  for  $i\in\{0,,\ldots m-1\}$,  there exists $g_i^{x_i}\in L$  such that  $g_i^{x_i}X_i=x_iX_i$ and $g_i^{x_i}X_s=X_s$ for  $s\neq i$. It follows that if $g=\prod_{i=0}^{m-1} g_i^{x_i}$, then $g.(X_0+\ldots X_{m-1})=Z$, and this proves the required result.\medskip
 
In the case where  $e\in\{0,4\}$,  $k+1\geq 3$ and $d>4$ (and $\ell=1$ of course), the classification shows that this corresponds to the only case (13) in   Table 1, and that $k+1=3$. Again  Corollary \ref{gia} gives the result. 

\end{proof}

The next Theorem gives the $G$-orbits in $V^+$ in the case where  $e=1$ or $e=3$. If $e=d=1$, this classification coincides with the classification of similarity classes of quadratic forms over  $F$: a quadratic form  $Q$ is {\it similar } to a quadratic form $Q'$ if and only if there exists  $a\in F^*$ such that   $Q$ is equivalent to $aQ'$.
\begin{theorem}\label{thm-orbites-e1} Suppose that  $e=1$ or $e=3$. 
\begin{enumerate}\item If   $k+1=\text{ rank}(\tilde{\go{g}})=2$ (this corresponds to the cases (3), (4)  and  (6) with $n=2$ in Table 1), then  $\chi_0(G)=F^{*2}$. The group  $G$ has  $5$ non zero orbits, for which  $4$ are open.   A set of representatives of the open orbits is given by the elements $X_0+vX_1$ for $v\in F^*/F^{*2}$. \\Two generic elements of $ \tilde{\go g}^{\lambda_0}\oplus \tilde{\go g}^{\lambda_1}$ are  $L$-conjugated if and only if   they are  $G$-conjugated.
\item  If   $k+1=\text{ rank}(\tilde{\go{g}})\geq 3$ then   $e=d=1$. This corresponds to the case (6) in Table 1, namely the symplectic algebra. Remember also that  $F^*/F^{*2}=\{1,u,\pi,u\pi\}$ where  $\pi$ is a uniformizer  of $F$ and where  $u$ is a unit which is not a square.
\begin{enumerate}
\item If  $k+1=\text{ rank}(\tilde{\go{g}})\geq 3$ is odd  then $\chi_0(G)=F^{*}$ and the group  $G$ has $2$ open orbits with representatives given by 
$$X_0+\sum_{j=1}^{k/2} X_{2j-1}-X_{2j}$$ and  
$$ X_0-uX_1-\pi X_2+\sum_{j=2}^{k/2} X_{2j-1}-X_{2j} .$$

\item If  $k+1=\text{ rank}(\tilde{\go{g}})\geq 3$ is even, then  $\chi_0(G)=F^{*2}$ and the group   $G$ has  $5$ open orbits in $V^+$, with representatives  given by
$$\sum_{j=0}^{(k-1)/2} X_{2j}-X_{2j+1},$$ 
$$X_0+vX_1+\sum_{j=1}^{(k-1)/2} X_{2j}-X_{2j+1},\;{\rm with }\;  v\in \{-u,-\pi, u\pi\}.$$

 and  $$X_0-uX_1-\pi  X_2+u\pi X_3+\sum_{j=2}^{(k-1)/2} X_{2j}-X_{2j+1}\;.$$

\item For $m\in\{0,\ldots, k\}$, two generic elements of  $V_m^+$ are $G$-conjugated  if and only if they are $G_m$-conjugated. 

If  $k+1=\text{ rank}(\tilde{\go{g}})=2p+1$ with $p\geq 1$ then the group  $G$ has  $7p$ non zero orbits  and if  $k+1=\text{ rank}(\tilde{\go{g}})=2p+2$ with  $p\geq 1$ then  $G$ has  $7p+5$ non zero orbits. The representatives of these orbits  are the representatives  of the $G_m$-orbits  of the generic elements of  $V_m^+$  where $m\in \{0,\ldots, k\}$.
\item Let  $X=x_0X_0+\dots +x_kX_k$ and  $X'=x'_0X_0+\dots +x'_kX_k$ be two generic elements in  $V^+$. Then  $X$and  $X'$ are  $L$ -conjugated  if and only if there exits $\mu\in F^*$ such that  $\mu x_ix'_i\in F^{*2}$ for all  $i\in\{0,\ldots , k\}$.

\end{enumerate}
\end{enumerate}
\end{theorem}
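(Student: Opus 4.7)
\emph{Part (1).} When $k+1=2$, everything reduces to Proposition~\ref{prop-k=1}. Since $e\in\{1,3\}$, part~(3)(a) of that proposition gives $\chi_0(G)=F^{*2}$; part~(1) then furnishes $1+[F^*:F^{*2}]=5$ non-zero $G$-orbits in $V^+$, with the four open ones represented by $X_0+vX_1$, $v\in F^*/F^{*2}$, and part~(2) is the stated $L$-versus-$G$-conjugacy equivalence.

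\emph{Part (2): reduction and the character $\chi_0(G)$.} An inspection of Table~1 shows that $\ell=1$, $e\in\{1,3\}$, $\mathrm{rank}(\widetilde{\go g})\geq 3$ forces case~(6): $\widetilde{\go g}\simeq\go{sp}(2(k+1),F)$, $V^+\simeq\mathrm{Sym}(k+1,F)$, and $d=e=1$. Consequently $q=q_{X_0,X_1}$ is anisotropic of rank one and represents one, so $\mathrm{Im}(q)^*=F^{*2}$. By Lemma~\ref{chiG}, $\chi_0(G)=F^*$ when $k+1$ is odd; when $k+1$ is even, the sandwich $F^{*2}\subseteq\chi_0(G)\subseteq\{a\in F^*:aq\sim q\}=F^{*2}$ forces $\chi_0(G)=F^{*2}$. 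This proves the $\chi_0$-parts of (2)(a) and (2)(b).

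\emph{Parts (2)(a), (2)(b), (2)(d): open orbits and $L$-conjugacy.} Since $\go{sp}(2(k+1),F)$ is the first descent step of $\go{sp}(2(k+2),F)$, Corollary~\ref{gia}(2) applies in both parities of $k+1$ and produces, for every $a\in F^{*2}$ and every $i\in\{0,\ldots,k\}$, an element $g_i^a\in L$ acting by $a\cdot\mathrm{Id}$ on $\widetilde{\go g}^{\lambda_i}$ and trivially on each $\widetilde{\go g}^{\lambda_j}$, $j\neq i$. Together with the scalings $h_{X_0+\cdots+X_k}(\sqrt t)\in L$ of Lemma~\ref{lem-tId-dansG}, this identifies the image of $L$ in $(F^*)^{k+1}$ via the characters $a_0,\ldots,a_k$ with $\{(t\alpha_0,\ldots,t\alpha_k):t\in F^*,\;\alpha_i\in F^{*2}\}$; setting $\mu=t^{-1}$, $L$-equivalence of $(x_i)$ and $(x'_i)$ becomes $\mu x_ix'_i\in F^{*2}$ for every $i$, which is (2)(d). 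On the other hand, the identification $V^+\simeq\mathrm{Sym}(k+1,F)$ carries $\sum x_iX_i$ to the diagonal matrix $\mathrm{diag}(x_0,\ldots,x_k)$ and the $G$-action to congruence-and-scaling by $GL(k+1,F)\times F^*$, so generic $G$-orbits correspond to similarity classes of non-degenerate quadratic forms of rank $n=k+1$ over $F$. For $n$ odd, $\mu^n\equiv\mu\pmod{F^{*2}}$ makes the discriminant immaterial and only the Hasse invariant remains, yielding $2$ classes; for $n$ even, the discriminant is similarity-invariant and a Hilbert-symbol computation of the Hasse-invariant transformation under scaling yields exactly $5$ classes (two for trivial discriminant, one for each of the three non-trivial classes). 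Direct evaluation of discriminant and Hasse invariant on the listed diagonal representatives completes (2)(a) and (2)(b).

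\emph{Part (2)(c), total orbit count, main obstacle.} For generic $Y,Y'\in V_m^+$ with $gY=Y'$, $g\in G$, Proposition~\ref{prop-generiques-cas-regulier} applied inside $\widetilde{\go g}_m$ gives unique $\go{sl}_2$-triples $(Y^-,H_m,Y)$ and $((Y')^-,H_m,Y')$; uniqueness of the completion forces $g\cdot H_m=H_m$, so $g$ stabilizes $\widetilde{\go g}_m$ and its restriction (membership in $\mathrm{Aut}_0$ being handled as in the proof of Corollary~\ref{gia}(2)(b)) lies in $G_m$, proving (2)(c). Each $\widetilde{\go g}_m$ is again symplectic of rank $k+1-m$, so the open-orbit counts follow the parity pattern $1,4,2,5,2,5,\ldots$; summation over $m\in\{0,\ldots,k\}$ yields $7p$ (resp.\ $7p+5$) for $k+1=2p+1$ (resp.\ $k+1=2p+2$). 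The main technical obstacle is the quadratic-form enumeration behind (2)(a) and (2)(b): one must carry out a Hilbert-symbol case analysis according to $n\bmod 4$ and to whether $-1\in F^{*2}$, and then verify that the explicit list of diagonal forms realizes each similarity class exactly once.
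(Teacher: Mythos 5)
Your part (1) and your determination of $\chi_0(G)$ are fine (the route through Lemma \ref{chiG} with $q$ anisotropic of rank $d=e=1$ is a legitimate substitute for the paper's direct computation $\chi_0([\mathbf g,\mu])=\mu^{-(k+1)}\det(\mathbf g)^2$). The decisive flaw is in your argument for (2)(c). Proposition \ref{prop-generiques-cas-regulier} asserts uniqueness of the negative element of an $\go{sl}_2$-triple once the generic element \emph{and} the grading element are both fixed; it says nothing about the triple $(g\cdot Y^-,\,g\cdot H_m,\,Y')$, whose middle term is a priori different from $H_m$, so it does not force $g\cdot H_m=H_m$. In fact that claim is false: in the symplectic model take $g=[\mathbf g,1]$ with $\mathbf g=\left(\begin{smallmatrix} A & B\\ 0 & D\end{smallmatrix}\right)$ block upper triangular, $A\in GL(k+1-m,F)$, $B\neq 0$. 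Then $g\cdot X(\iota(M))=X(\iota(AM\,{}^{t}A))$, so $g$ sends generic elements of $V_m^+$ to generic elements of $V_m^+$, while $\mathbf g\,\mathrm{diag}(I_{k+1-m},0)\,\mathbf g^{-1}\neq \mathrm{diag}(I_{k+1-m},0)$, i.e. $g\cdot H_m\neq H_m$ and $g$ does not stabilize $\widetilde{\go g}_m$. Hence (2)(c) cannot be obtained by restricting $g$ itself; the paper instead extracts the upper-left $(k+1-m)\times(k+1-m)$ block $\mathbf g_1$ of $\mathbf g$, observes that $\mathbf g_1 M\,{}^{t}\mathbf g_1=\mu M'$ with $\mathbf g_1$ invertible because $M,M'$ are, and uses the \emph{different} element $[\mathbf g_1,\mu]\in G_m$. (Your subsequent orbit counts $7p$ and $7p+5$ are correct once (2)(c) is actually established.)

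Two further gaps. First, the substance of (2)(a)--(2)(b) is exactly the enumeration and realization that you defer as the ``main technical obstacle'': stating that a Hilbert-symbol case analysis ``yields $2$, resp.\ $5$, classes'' and that the listed diagonal elements realize them is not a proof, and your parenthetical ``two for trivial discriminant'' is only accurate when $(-1)^{n/2}$ is a square. The paper does this via Witt decomposition (anisotropic kernel of rank $\leq 4$), uniqueness of the rank-$4$ anisotropic form, similarity of all rank-$1$ and of all rank-$3$ anisotropic forms, and Lemma \ref{lem-q2} for the three similarity classes in rank $2$, then reads off the invariants of the explicit representatives. Moreover your identification of $\sum_j x_jX_j$ with $\mathrm{diag}(x_0,\ldots,x_k)$ is not automatic: one first needs the normalization of Proposition \ref{prop-equivalenceqXiXj}, which forces $v_iv_j\in F^{*2}$ for $X_j=X(v_j\mathbf E_{k+1-j,k+1-j})$ and hence a preliminary conjugation by a diagonal element of $G$. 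Second, in (2)(d) you only prove the ``if'' direction (realizing the transformations $(x_i)\mapsto(\mu^{-1}\alpha_i^2x_i)$ inside $L$); the ``only if'' direction needs an upper bound on how $L$ can act on the diagonal, which the paper gets from the explicit description $L=\{[\mathrm{diag}(a_k,\ldots,a_0),\mu]\}$, and which in your abstract setting would require an argument such as $q_{l.X_i,\,l.X_j}=a_i(l)a_j(l)\,q\sim q$, hence $a_i(l)a_j(l)\in F^{*2}$ for all $i\neq j$.
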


\begin{proof} If  $k+1=2$   the result is a consequence of  Proposition \ref{prop-k=1}.

If  $k+1\geq 3$ then the classification in Table 1 implies that   $d=e=1$. As we have noticed in Remark \ref{rem-simple}, one can suppose that   $\tilde{\go g}$ is the symplectic algebra $${\go sp}(2n,F)=\left\{\left(\begin{array}{cc} A & B\\ C & -^{t}A\end{array}\right);\; A, B, C\in M(k+1,F), ^{t}B=B, ^{t}C=C\right\}, \text{ with }k+1=n,$$
which is graded by the element  $H_0=\left(\begin{array}{cc} I_{k+1} & 0\\ 0 & -I_{k+1} \end{array}\right)$. It follows that,  $V^+$ (respectively $V^-$) can be identified with the space $Sym(k+1,F)$ of symmetric matrices of size  $k+1$ on $F$ through the map  $$B\in Sym(k+1,F)\mapsto X(B)=\left(\begin{array}{cc} 0 & B\\ 0 &0\end{array}\right)$$ (respectively through the map $$B \in Sym(k+1,F)\mapsto Y(B)=\left(\begin{array}{cc} 0 & 0\\ B &0\end{array}\right))$$ .   \medskip

The algebra  $\go a$ is then the set of  matrices $H (t_k,\ldots ,t_0) =\left(\begin{array}{cc} diag(t_k,\ldots ,t_0) & 0\\ 0& -diag(t_k,\ldots ,t_0) \end{array}\right)$ where  $diag(t_k,\ldots ,t_0) $ stands for the diagonal matrix whose diagonal elements are respectively  $ t_k,\ldots ,t_0$. The set of strongly orthogonal roots  associated to these choices is given by  $\lambda_j(H(t_k,\ldots ,t_0) )=2t_j$ for  $j=0,\ldots , k+1$ and the space   $\tilde{\go g}^{\lambda_j}$ is the space of matrices  $X(x_j {\mathbf E}_{k+1-j,k+1-j})$ with $x_j\in F$, where  ${\mathbf E}_{i,j}$ is the square matrix of size $k+1$ whose coefficients are zero except the coefficient of index $(i,j)$ which is equal to $1$.\medskip

 By  (\cite{Bou2}  Chap VIII, \textsection  13, $n^0 3$), we know that the group  ${\rm Aut}_0(\tilde{\go g})$ is the group  of conjugations by the similarities of the symplectic form defining  $\tilde{\go g}$. This implies that  $G$ is the group of elements  $[{\mathbf g},\mu]={\rm Ad}\left(\begin{array}{cc} {\mathbf g} & 0\\ 0 &\mu\;^{t}{\mathbf g}^{-1}\end{array}\right)$ where  ${\mathbf g}\in GL(k+1,F)$ and $\mu\in F^*$. Let us denote by $g=[{\mathbf g},\mu]$ such an element of  $G$. Its action on  $V^+$ is given by  $[{\mathbf g},\mu]B=\mu^{-1}{\mathbf g}B\;^{t}{\mathbf g}$ for  $B\in Sym(k+1,F)$.  We normalize the relatively invariant polynomial  $\Delta_0$ by setting $\Delta_0(X(B))={\rm det}(B)$ for $B\in Sym(k+1,F)$ and from above we see that  $$\chi_0([{\mathbf g},\mu])=\mu^{-(k+1)}{\rm det}({\mathbf g})^2.$$
 In particuliar , on a  $\chi_0(G)=F^*$  if $k+1$ is odd and  $\chi_0(G)=F^{*2}$ if $k+1$ is even.\medskip

\noindent Hence the orbits of $G$ in  $V^+$ are the classes of similar quadratic forms. 

In order to give a set of representatives of these  $G$-orbits, we will first normalize the elements $X_j$. Remember that the  $ X_j,\; j=0,\ldots, k$ satisfy the conditions of Proposition \ref{prop-equivalenceqXiXj}. This means that  for $i\neq j$, the quadratic form  $q_{X_i,X_j}$ represents  $1$.  This quadratic form is defined on $E_{i,j}(-1,-1)$ by $q_{X_i,X_j}(Y)= -\dfrac{1}{2}b([X_i,Y],[X_j,Y])$.  As ${\rm dim} V^+=\dfrac{(k+1)(k+2)}{2}$, the normalization of the Killing form given in  Definition \ref{defb(X,Y)}  is 
$$b(X,Y)=-\dfrac{k+1}{2(k+1)(k+2)} \tilde{B}(X,Y)={\rm Tr}(XY),\quad X,Y\in\tilde{\go g}.$$
Then, if we set  $X_j=X(v_j{\mathbf E}_{k+1-j,k+1-j})$ with  $v_j\in F^*$, a simple computation shows that for  $Y=Y(y ({\mathbf E}_{k+1-i,k+1-j}+{\mathbf E}_{k+1-j,k+1-j}))\in E_{i,j}(-1,-1)$,  we have
$$q_{X_i,X_j}(Y)= v_iv_j y^2.$$
Hence  $q_{X_i,X_j}$ represents  $1$ if and only if  $v_iv_j\in F^{*2}$. Therefore for all $j\in\{0,\ldots k\}$, there exists $a_j\in F^*$ such that  $v_j=a_j^2 v_0$. Any element   $X=\sum_{j=0}^k x_j X_j$ is then conjugated to   $X(\sum_{j=0}^k x_j  {\mathbf E}_{k+1-j,k+1-j})$ by the element $g=[\mathbf g, v_0^{-1}]$ where  $\mathbf g$ is the diagonal matrix $diag(a_k,\ldots ,a_0)$.\medskip

For  $X=X(B)\in V^+$ with  $B\in Sym(k+1,F)$, let us denote by $f_X$  the quadratic form on $F^{k+1}$ defined by  $B$ (ie. $ f_X(z)=\;^{t}zB z$).  From above we obtain that for $X=\sum_{j=0}^k x_j X_j$, the quadratic form  $f_X$ is similar to the form  $T\in F^{k+1}\mapsto \sum_{j=0}^k x_j T_j^2$.\medskip

We describe now the similarity classes of quadratic forms. From  Witt's Theorems (\cite{Lam}   Theorem I.4.1 and  Theorem I.4.2, p.12), any quadratic form  $Q$ of rank  $r$ is the orthogonal sum of an unique (up to equivalence) anisotropic form $Q_{an}$ of rank  $r_{an}$,   and a hyperbolic form $Q_{(m,m)}$ which is the sum of $m$ hyperbolic planes   with  $2m+r_{an}=r$ ($m$ is the so-called {\it Witt index} of $Q$). Moreover,  two quadratic forms are similar  if and only if  they have the same  Witt index and if their anisotropic parts  are  similar. \medskip
 
 We recall the following classical results  (\cite{Lam}   Chapter VI, Theorem 2.2 page 152, and Corollary 2.5 p. 153-154): \\
 $(1)$  Every quadratic form of rank $\geq 5$ is isotropic.\\
 $(2)$  Up to equivalence, there exists a unique anisotropic form of rank $4$, given by  $x^2-uy^2-\pi z^2+u\pi z^2$.\\
  $(3)$  If  $Q$ is an anisotropic form of rank  $3$, then $Q$ represents every class modulo $F^{*2}$ except $-disc(Q)$ where $disc(Q)$ is the discriminant of  $Q$.\vskip 0,5cm
 
 If  $Q'$ is anisotropic of rank  $3$ with the same discriminant as $Q$ then $Q+disc(Q)t^2$ and $Q'+disc(Q) t^2$ are anisotropic of rank  $4$, hence they are equivalent. Witt's cancellation Theorem  (\cite{Lam}   Chapter I, Theorem 4.2, p.12) implies then that $Q\sim Q'$. Therefore there exist  $4$ equivalence classes of anisotropic quadratic forms  of rank $3$ characterised by the discriminant. Hence all the anisotropic quadratic forms of rank 3 are similar. Such a form is given by  $x^2-uy^2-\pi z^2$.\vskip 0,5cm

We describe first the similarity classes of anisotropic quadratic forms of rank $2$. We know from Lemma  \ref{lem-q2}, that an anisotropic quadratic form $Q$ of rank  $2$ represents exactly  two classes of squares  $a$ and  $b$  in $F^*/F^{*2}$ which characterize the equivalence class of $Q$. Moreover  $\mu Q$ is equivalent to $Q$ if and only if  $\mu=1$ or $ab$ modulo $F^{*2}$.  \\
Let  $a\neq b$  be two elements of  $F^*/F^{*2}$. If  $ab\neq -1$ the form $ax^2+by^2$ is anisotropic  and represents $a$ and $b$ and if  $a=-b$ with  $-1\notin F^{*2}$ then  $-1$ the sum of  two squares (\cite{Lam}   Chapter VI, Corollary 2.6 page 154)  and then $ax^2+ay^2$ is anisotropic and represents $\pm a$. As there are four classes modulo $F^{*2}$, there $\binom{4}{2}=6$   equivalence classes of anisotropic quadratic forms of rank $2$. \\
Let $a\neq b\in F^*/F^{*2}$ defining the equivalence class of an anisotropic form $Q$ of rank  $2$. Then, for  $w\neq 1, ab$ modulo $F^{*2}$, one has  $F^*/F^{*2}=\{1, ab, w, w ab\}$.  As $ab\,Q$ is equivalent to $ Q$, and as  $wQ$ is not  equivalent to $Q$,   a form which is similar to $Q$ is equivalent either to $Q$ or to  $ab\,Q$. Hence there are  $3$ similarity classes  of anisotropic quadratic forms of rank  $2$.
A set  of representatives of these classes are given by  $x^2+vy^2$ with  $v\in\{-u, -\pi, u\pi\}$. \medskip

Let  $Q$ be a quadratic form of rank  $k+1\geq 3$  and Witt index  $m$:  $Q=Q_{an}+Q_{(m,m)}$. Hence  ${\rm rank} ( Q_{an}) \leq 4$ and  $  k+1={\rm rank}(Q_{an})+2m$. By the classical results we recalled above, we get:\\
  
  - If $k+1$ is odd, then   ${\rm rank}(Q_{an})=1$ or  $3$ there are two similarity classes of quadratic forms, 
  
  -  and if  $k+1$ is even, then ${\rm rank}(Q_{an})=0, 2$ or  $4$ 	and hence there are  $5$ similarity classes of quadratic forms. \medskip

\noindent   The statements  $2(a)$ and  $2(b)$ are consequences of the  description of the anisotropic quadratic forms of rank $\leq 4$ given above.\medskip
  
\noindent Let us prove statement  {\it 2.(c)}.  
We denote by  $\iota$ the natural injection from  $ Sym(k+1-m,F)$ into  $ Sym(k+1,F)$ given by  $M\mapsto \iota(M)=\left(\begin{array}{c|c}  \begin{array}{ccc}  & & \\
& M &\\
 & & \end{array}&   \begin{array}{c} \\ 0\\ \\ \end{array}\\
\hline  
0 &0_m\end{array}\right)\in Sym(k+1,F)$. Therefore the space  $V_m^+$ is identified to the space  $ Sym(k+1-m,F)$ by the map  $M\mapsto X(\iota(M))$. An element $X(\iota(M))\in V_m^+$ is generic in  $V_m^+$ if and only if  ${\rm det}(M)\neq 0$. The group  $G_m$ is the group of elements  $g_1=[\mathbf g_1,\mu]$ with $\mathbf g_1\in GL(k+1-m,F)$ and  $\mu\in F^*$ acting on  $ Sym(k+1-m,F)$ by $[\mathbf g_1,\mu].M=\mu^{-1} {\mathbf g}_1 M\;^{t} {\mathbf g}_1$.\medskip

Let  $Z=X(\iota(M))$ and  $Z'=X(\iota(M'))$ be two generic elements in  $V_m^+$. If $Z$ and $Z'$ are $G$-conjugated  then there exist $\mathbf g\in GL(k+1,F)$ and  $\mu\in F^*$ such that  ${\mathbf g}\iota(M)\;^{t} {\mathbf g}=\mu \; \iota(M')$. Let  ${\mathbf g}_1\in GL(k+1-m,F)$ be the submatrix of ${\mathbf g}$ of the  $k+1-m$ first rows and the $k+1-m$ first columns of  ${\mathbf g}$. Then one sees easily that  ${\mathbf g}_1 M\;^{t} {\mathbf g}_1=\mu\; M'$. As $Z$ and  $Z'$ are generic in  $V_m^+$, the matrices $M$ and $M'$ are invertible  and hence  $ {\mathbf g}_1\in GL(k+1-m,F)$. This shows that  $Z$ and  $Z'$ are $G_m$-conjugated.\\
Conversely, if  $Z$ and  $Z'$ are  $G_m$-conjugated then there exist ${\mathbf g}_1 \in GL(k+1-m,F)$ and $\mu\in F^*$ such that ${\mathbf g}_1 M\;^{t}{\mathbf g}_1=\mu \; M'$. We set ${\mathbf g}=\left(\begin{array}{cc} {\mathbf g}_1 & 0\\ 0 & I_m\end{array}\right)$. The element  $[{\mathbf g},\mu]$ belongs to  $G$ and satisfies $Z'=[{\mathbf g},\mu].Z$. Hence $Z$ and  $Z'$ are $G$-conjugated. 

This proves that two generic elements in  $V_m^+$ are $G$-conjugated if and only if they are $G_m$-conjugated. \\
Let $S$ be the number of  non zero $G$-orbits in $V^+$ and let  $S_r$ be the number of $G$-orbits of rank $r$ in  $V^+$. 
By Proposition \ref{prop G-diag}, we have   $S=\sum_{r=1}^{k+1} S_r$. From above, $S_r$ is exactly the number of  open $G_{k+1-r}$- orbits in $V_{k+1-r}$.  Using the statements $1$, $2(a)$ and  $2(b)$ we see that $S_1=1$, $S_2=4$ and $S_{2p+1}=2$ for  $p\geq 1$,  $S_{2p}=5$  for  $p\geq 2$ . For $k=2$, we therefore have  $S=S_1+S_2+S_3=7$. If  $k=2p\geq 4$ is even, we get  $S=S_1+S_2+\sum_{j=1}^{p} S_{2j+1}+\sum_{j=2}^p S_{2j}= 5+2p+5(p-1)=7p$ and for  $k=2p+1\geq 3$ odd, we get $S=S_1+S_2+\sum_{j=1}^{p} S_{2j+1}+\sum_{j=2}^{p+1} S_{2j}=5+2p+5p=7p+5$.  This ends the proof of    {\it 2.(c)}.
\medskip

 Here the subalgebra  $\go a^0$ is equal to  $\go a$.  Therefore the group $L$ is the group of elements  $[{\mathbf g},\mu]$ where $\mu\in F^*$ and where ${\mathbf g}$ is a diagonal matrix of $GL((k+1),F)$. If  ${\mathbf g}$ is the diagonal matrix in  $GL(k+1,F)$ whose diagonal elements  are $(a_k,\ldots a_0)$, then we have    $[{\mathbf g},\mu].\sum_{j=0}^k x_j X_j=\mu^{-1}(a_0^2 x_0X_0+\ldots +a_k^2 x_k X_k)$. This proves  $2(d)$ and ends the proof of the Theorem.
 
 \end{proof}

\begin{theorem}\label{thm-orbites-e2} Suppose that  $e=2$.
\begin{enumerate}\item If  $k+1=\text{ rank}(\tilde{\go{g}})=2$,  then  $[F^*: \chi_0(G)]=2$. This case is case $(9)$ and case (2) with $n=2$  in Table 1. The group  $G$ has  $3$ non zero orbits, for which  $2$ are open. A set of representatives of the open orbits is given by  $X_0+vX_1$ where $v\in F^*/\chi_0(G)$. Two generic elements of $ \tilde{\go g}^{\lambda_0}\oplus \tilde{\go g}^{\lambda_1}$ are conjugated under the group $L=Z_G(\go a^0)$ if and only if they are $G$-conjugated.

\item If $k+1=\text{ rank}(\tilde{\go{g}})\geq 3$ then  $e=d=2$. This case corresponds to  case $(2)$ in Table 1, namely  $\tilde{\go g}={\go u}(2n, E, H_n)$ where $E$ is an unramified quadratic extension of $F$.
   
    \begin{enumerate}\item 
 If $k+1=\text{ rank}(\tilde{\go{g}})$ is odd then   $\chi_0(G)=F^*$ and the group   $G$  has a unique open orbit in  $V^+$.
 
\item If  $k+1=\text{ rank}(\tilde{\go{g}})$  is even then  $ \chi_0(G) =N_{E/F}(E^*)$    and the group  $G$ has two open orbits given by  ${\mathcal O}_1=\{X\in V^+; \Delta_0(X)=1\;{\rm mod}\; N_{E/F}(E^*)\}$ and  ${\mathcal O}_\pi=\{X\in V^+; \Delta_0(X)=\pi \;{\rm mod}\; N_{E/F}(E^*)\}$, where  $\pi$ is a uniformizer in  $F$. 
\item  For $m\in\{0,\ldots, k\}$, two generic elements in  $V_m^+$ are  $G$-conjugated if and only if  they are  $G_m$-conjugated. If $k+1=2p$ is even then the group  $G$ has $3p$ non zero orbits and  if $k+1=2p+1$ is odd, the group  $G$ has  $3p+1$ non zero orbits. The   representatives  of these orbits are  the representatives of the orbits under  $G_m$ of the generic elements of  $V_m^+$  where $m\in \{0,\ldots, k\}$.
\item Let $X=x_0X_0+\dots +x_kX_k$ and  $X'=x'_0X_0+\dots +x'_kX_k$ be two generic elements of  $V^+$. Then $X$ and  $X'$ are $L$-conjugated if and only if there exists $\mu\in F^*$ such  $\mu x_ix'_i\in N_{E/F}(E^*)$ for all $i\in\{0,\ldots , k\}$.

 \end{enumerate} 

 \end{enumerate} 
\end{theorem}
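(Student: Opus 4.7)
For part (1), since $k+1=2$ and $e=2$, Proposition \ref{prop-equivalenceqXiXj} forces $d=e=2$, so $q=q_{X_0,X_1}$ is anisotropic of rank $2$. By Lemma \ref{lem-q2}, the subgroup $\{a\in F^*:aq\sim q\}$ has index $2$ in $F^*$. Combined with Proposition \ref{prop-k=1}(3c) and (2), this yields $[F^*:\chi_0(G)]=2$, the three non-zero orbits, the description of the open ones via $F^*/\chi_0(G)$, and the coincidence of $L$- and $G$-conjugacy on generic elements. So (1) requires no new work.

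For part (2), the classification (Table 1, row (2), with $n=k+1\geq 3$) forces $\tilde{\go g}\cong\go{u}(2n,E,H_n)$ for a quadratic extension $E=F(y)$ of $F$, and the standard realization gives $V^+\cong {\rm Herm}_\sigma(n,E)$ with $\tilde{\go g}^{\lambda_i}=F\cdot e_{n-i,n-i}$, so that $\sum_i x_i X_i$ corresponds to the diagonal Hermitian matrix ${\rm diag}(x_k,\dots,x_0)$. The first and main technical step is to identify $G$, following the same Galois-descent pattern used in Theorem \ref{th-k=0} and Theorem \ref{thm-delta2}: any $g\in G$ extends to $\bar g\in{\rm Aut}_0(\tilde{\go g}\otimes\bar F)\cong{\rm Aut}_0(\go{sl}(2n,\bar F))$; Bourbaki (Chap.~VIII \S13, (VII)) writes $\bar g$ as conjugation by $U\in GL(2n,\bar F)$; commutation with $H_0$ makes $U$ block-diagonal; and the Skolem--Noether theorem together with the unitary $F$-rationality condition $X S_n+S_n\,{}^t\sigma(X)=0$ forces the two blocks to be related so that the action on $V^+$ is $B\mapsto \mu^{-1} gB\,{}^t\sigma(g)$ for $g\in GL(n,E)$, $\mu\in F^*$. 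Since $\det B\in F$ for $B\in{\rm Herm}_\sigma(n,E)$, the polynomial $\Delta_0(B)=\det B$ is the fundamental relative invariant, with character $\chi_0(g,\mu)=\mu^{-n}N_{E/F}(\det g)$.

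From this formula, (2a) and (2b) follow directly. If $n$ is odd, then $\gcd(n,2)=1$ gives $F^{*n}\cdot F^{*2}=F^*$ by Bezout, and combined with $F^{*2}\subset\chi_0(G)$ (Lemma \ref{chiG}(1)) and $F^{*n}\subset\chi_0(G)$ (take $g=I$ and vary $\mu$), this yields $\chi_0(G)=F^*$ and a single open orbit. If $n$ is even, then $\mu^{-n}\in F^{*2}\subset N_{E/F}(E^*)$, so $\chi_0(G)\subseteq N_{E/F}(E^*)$; the reverse inclusion (take $\mu=1$, vary $g$) and local class field theory give $\chi_0(G)=N_{E/F}(E^*)$ of index $2$ in $F^*$, hence the two open orbits ${\cal O}_1$ and ${\cal O}_\pi$. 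For (2c), $V_m^+$ is realized as Hermitian matrices supported on the upper-left $(n-m)\times(n-m)$ block, and the block-matrix argument used at the end of the proof of Theorem \ref{thm-orbites-e1}(2c) shows that two generic elements of $V_m^+$ are $G$-conjugate iff they are $G_m$-conjugate. Rank-$r$ orbits of $G$ on $V^+$ are thus in bijection with open $G_{n-r}$-orbits on $V_{n-r}^+$, of which there is $1$ if $r$ is odd and $2$ if $r$ is even by (2a),(2b) applied to the sub-algebra; summing over $r=1,\dots,n$ gives $3p$ non-zero orbits if $n=2p$ and $3p+1$ if $n=2p+1$.

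For (2d), $L=Z_G(\go a^0)$ consists exactly of pairs $(g,\mu)$ with $g={\rm diag}(a_0,\dots,a_k)$, $a_i\in E^*$, $\mu\in F^*$; such an element sends ${\rm diag}(x_0,\dots,x_k)$ to ${\rm diag}(\mu^{-1}N_{E/F}(a_i)x_i)_i$. Hence $X$ is $L$-conjugate to $X'$ iff there exist such $\mu,a_i$ with $\mu x_i'=N_{E/F}(a_i)x_i$ for all $i$; since $x_i^2\in F^{*2}\subset N_{E/F}(E^*)$, this is equivalent to $\mu x_ix_i'\in N_{E/F}(E^*)$ for every $i$. The main obstacle in this entire proof is the explicit Galois-descent identification of $G$ and $\chi_0$ in paragraph 2: combining Bourbaki, Skolem--Noether, and the $\sigma$-unitary $F$-structure requires care to track the $F$-rationality constraints through the extension to $\bar F$. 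Once $G$ and $\chi_0$ are in hand, everything else reduces to classical facts about Hermitian forms over $E/F$ (classification by rank and determinant modulo $N_{E/F}(E^*)$, Witt cancellation) and the elementary structure of the norm group.
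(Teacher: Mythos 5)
Your plan is correct and, for most of its length, runs parallel to the paper: part (1) is indeed just Proposition \ref{prop-k=1}; the paper likewise realizes $\tilde{\go g}={\go u}(2n,E,H_n)$, identifies $G$ as the classes $[{\mathbf g},\mu]$ (${\mathbf g}\in GL(n,E)$, $\mu\in F^*$) acting on ${\rm Herm}(n,E)$ by $B\mapsto\mu^{-1}{\mathbf g}B\,{}^{t}\bar{\mathbf g}$, computes $\chi_0([{\mathbf g},\mu])=\mu^{-n}N_{E/F}(\det{\mathbf g})$, proves (2c) by exactly the block-matrix argument you borrow from Theorem \ref{thm-orbites-e1}, and proves (2d) from the diagonal description of $L$. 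The genuine divergence is at the heart of (2a)--(2b): where you invoke the classical classification of Hermitian forms over a $p$-adic field (rank and determinant modulo $N_{E/F}(E^*)$, plus Witt cancellation) to conclude that ${\mathcal O}_1$ and ${\mathcal O}_\pi$ are single orbits (resp.\ that all generic elements are conjugate when $n$ is odd), the paper proves this from scratch: it reduces to diagonal matrices via Proposition \ref{prop G-diag} and runs an induction on $n$ whose engine is the explicit computation $(**)$ producing ${\mathbf g}\in GL(2,E)$ with $\pi I_2={\mathbf g}\,{}^{t}\bar{\mathbf g}$ (treated separately according to whether $-1\in F^{*2}$), together with a parity count of the diagonal entries lying in $\pi N_{E/F}(E^*)$. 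Your route is shorter and leans on a standard external theorem; the paper's is self-contained and yields the explicit representatives directly. Two points should be tightened if you write this up: the character computation by itself ($\chi_0(G)=F^*$ or $N_{E/F}(E^*)$) gives only an upper bound of invariance classes, so the whole burden of ``each class is one orbit'' rests on the Hermitian classification and must be cited precisely rather than mentioned in passing; and in the orbit count of (2c) the ranks $r=1$ and $r=2$ are not covered by (2a)/(2b) (which presuppose rank $\geq 3$) but by Theorem \ref{th-k=0} and part (1) of the present theorem --- the pattern of $1$ (odd rank) versus $2$ (even rank) open orbits persists, so your totals $3p$ and $3p+1$ are unaffected. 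Finally, your identification of $G$ descends to $\bar F$ and uses Skolem--Noether, whereas the paper stops at $E$ and quotes Bourbaki's description of the automorphisms of $\go{sl}(2n,E)$ stabilizing the unitary algebra as similitudes of $H_n$; both are legitimate.
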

\begin{proof}  The case  $k+1=2$ is a consequence of  Proposition \ref{prop-k=1}.

If  $k+1\geq 3$, then from Table 1,  we have   $d=e=2$. Using Remark  \ref{rem-simple}, we can suppose that  $\tilde{\go g}$ is  ${\go u}(2n,E,H_n)$. This algebra can be  realized as follows.  Let $E=F[\sqrt{u}]$ be a quadratic extension of  $F$ where $u$ is a non square unit. 
We denote by $x\mapsto \bar{x}$ the natural conjugation of $E$ and by    $N_{E/F}$ the norm on  $E$ defined by associated to this extension  $N_{E/F}(x)=x\bar{x}$. We also set $n=k+1$.

Define $S_n=\left(\begin{array}{cc}0 & I_n\\ I_n & 0\end{array}\right)$. Consider the Lie algebra
$$\tilde{\go g}=\{ Z\in {\go sl}(2n, E); ZS_n+ S_n\; ^{t}\bar{Z}=0\},$$ 
which is graded by the element $H_0=\left(\begin{array}{cc} I_n& 0\\ 0 & -I_n\end{array}\right)$. This implies that  
$$\tilde{\go g}=\left\{ Z=\left(\begin{array}{cc} A & B\\ C & -\;^{t}\bar{A}\end{array}\right); A,B,C\in M(n ,E)\;\; Tr(A -\;^{t}\bar{A})=0,\;^{t}\bar{B}=-B \;{\rm and }\;  ^{t}\bar{C}=-C\right\}.$$
The subspace  $V^+$ is then isomorphic to the space  ${\rm Herm}(n ,E)$ of hermitian matrices in  $M(n,E)$    (ie. matrices  $B$ such that  $^{t}\bar{B}=B$) through the map  $$B\in {\rm Herm}(n,E)\mapsto X(B)=\left(\begin{array}{cc}0 & \sqrt{u}B\\0 & 0\end{array}\right)\in V^+.$$\medskip

The space  $\go a$ is then the space of matrices   
$$H(t_0,\ldots,t_{n-1})=\left(\begin{array}{cc} diag(t_{n-1},\ldots,t_0)& 0\\ 0 & diag(-t_{n-1},\ldots,-t_0)\end{array}\right),$$ with  $(t_0,\ldots,t_{n-1})\in (F^*)^n$ and the roots  $\lambda_j$ are given by $\lambda_j(H(t_0,\ldots,t_{n-1}))=2t_j$. 

 We fix a basis of  $\tilde{\go g}^{\lambda_j}$ by setting $X_j=X(B_j)$ where $B_j\in {\rm Herm}(n,E)$ is a diagonal matrix whose coefficient are zero  except the coefficient of index  $(n-j,n-j)$ which is equal to $1$.  As in the proof of Theorem \ref{thm-orbites-e1}, we see easily that $X_0,\ldots ,X_k$ satisfy conditions of Proposition \ref{prop-equivalenceqXiXj}.\medskip

We will now describe the group  $G$.

As $\tilde{\go{g}}\otimes_{F}E=\go{sl}(n,E)$, the group  ${\rm Aut}_0(\tilde{\go g})$ is the subgroup of  ${\rm Aut}_0({\go {sl}}(2n,E))$ which stabilizes $\tilde{\go g}$, and hence (using \cite{Bou2}  Chap VIII, \textsection  13, $n^0 1$, VII, p.189) it is the group of the automorphisms  ${\rm Ad}(g)$ (conjugation by $g$) where  $g\in GL(2n,E)$ such there exists $\mu\in E^*$ vérifiant  $^{t}\bar{g} S_ng=\mu S_n$. The group  $G$ is the subgroup of  ${\rm Aut}_0(\tilde{\go g})$  of elements which commute with $H_0$. Therefore an element of  $G$ corresponds to the action of ${\rm Ad}(g)$ where  $g=\left(\begin{array}{cc} g_1 & 0 \\ 0 & g_2\end{array}\right)\in GL(2n,E)$ is such that there exists  $\mu\in E^*$ satisfying  $g_2=\mu\;^{t}\bar{g_1}^{-1}$ and  $g_1=\mu ^{t}\bar{g_2}^{-1}$.  This  implies that  $\bar{\mu}=\mu$, hence $\mu\in F^*$, and $g_2=\mu\;^{t}\bar{g_1}^{-1}$. Finally:
$$G=\left\{ {\rm Ad}(g);\; g=\left(\begin{array}{cc} {\mathbf g} & 0 \\ 0 & \mu\;^{t}\bar{{\mathbf g} }^{-1}\end{array}\right), \mu\in F^*\; {\mathbf g}\in GL(n,E)\right\}.$$
We denote by $[{\mathbf g},\mu]$ such an element of  $G$. The action of  $[{\mathbf g},\mu]$ on  $V^+$ corresponds to the action  
$[{\mathbf g} ,\mu]. B=\mu^{-1} {\mathbf g} B\;^{t}\bar{\mathbf g}$ on  ${\rm Herm}(n,E)$.\medskip

The polynomial  $B\in {\rm Herm}(n,E)\mapsto {\rm det}(B)\in F^*$ is relatively invariant under the action of  $G$ and we normalize the polynomial  $\Delta_0$ on$V^+$ by setting $\Delta_0(X(B))={\rm det}(B),\; B\in {\rm Herm}(n,E)$. This implies that 
$$\Delta_0(x_0X_0+\ldots x_{n-1} X_{n-1})=x_0\ldots x_{n-1}.$$

\noindent Therefore  $$\chi_0([{\mathbf g}, \mu])=\mu^{-n} N_{E/F}({\rm det}({\mathbf g}))\quad\quad (*)$$
and hence if $n$ is even, we have  $\chi_0(G)= N_{E/F}(E^*)$ and if  $n$ is odd , we have $\chi_0(G)=F^*$. This is a part of statements $(2)(a)$ and $(2)(b)$\medskip

\noindent We describe now the $G$-orbits in  $V^+$. 
We will prove the results by induction on  $n= \text{ rank}(\tilde{\go{g}})$. \medskip

In what follows, we identify $V^+$ with $Herm(n,E)$ and we recall that the action of  $G$ is given by  $[{\mathbf g},\mu].B=\mu^{-1} {\mathbf g}B\;^{t}\bar{{\mathbf g}}$.   By Proposition \ref{prop  G-diag}, any generic element of $V^+$ is $G$-conjugated to  an element of the form $x_0X_0+\ldots+x_{n-1}X_{n-1}$. Therefore it suffices to study the $G$-orbits in the space of diagonal matrices (with coefficients in $F)$ under this action.
We set  $I_\pi=\left(\begin{array}{cc} I_{n-1} & 0\\ 0 &\pi \end{array}\right).$

 If $n=2$ (ie. $k=1$), one has  ${\rm det}(I_\pi)=\pi$ and ${\rm det}(I_2)=1$. As $\chi_0(G)=N_{E/F}(E^*)$, we see  from relation  $(*)$, that the elements  $I_\pi$ and $I_2$ 	are not conjugated .  \medskip

Let $X= \left(\begin{array}{cc}x_1& 0\\ 0 &x_0\end{array}\right)$ with  $x_0x_1\neq 0$.   As  $N_{E/F}(E^*)=F^{*2}\cup uF^{*2}$, we obtain that if  $x_0x_1\notin N_{E/F}(E^*)$ then  $x_0=\pi a\bar{a}x_1$ for an element  $a\in E^*$ and hence $X= x_1\left(\begin{array}{cc}1& 0\\ 0 &a\end{array}\right)I_\pi\left(\begin{array}{cc}1& 0\\ 0 &\bar{a}\end{array}\right)=[{\mathbf g}, x_1^{-1}].I_\pi$ where ${\mathbf g}=\left(\begin{array}{cc}1& 0\\ 0 &a\end{array}\right)$, and therefore  $X$  is $G$-conjugated to  $I_\pi$.\medskip

If  $x_0x_1\in N_{E/F}(E^*)$, then:

 -  either  $x_0$ and $x_1$ belong to  $N_{E/F}(E^*)$, then  $x_0=a\bar{a}$ and $x_1=b\bar{b}$ and hence  $X$ is conjugated to  $I_2$ by the matrix  ${\mathbf g}=\left(\begin{array}{cc}b& 0\\ 0 &a\end{array}\right)$.

 - or $x_0$ and  $x_1$ belong to  $\pi N_{E/F}(E^*)$, then $x_0=\pi a\bar{a}$ and  $x_1=\pi b\bar{b}$ and then  $X=[{\mathbf g},\pi^{-1}].I_2$. \\
This ends the proof for  $n=2$.\medskip

We will need the following result for the induction: 
$$\textrm{ there exists } \; {\mathbf g}\in GL(2,E)\;\textrm{ tel que}\; \pi I_2={\mathbf g}\;^{t}\bar{{\mathbf g}}.\qquad\qquad (**)$$
(remember that from above $\pi I_2$ is either conjugated to $I_{2}$ or to $I_{\pi}$, this proves that it is actually conjugated to $I_{2}$.)

Suppose that   $-1\in F^{*2}$, then  $-1=\alpha_0^2$ with  $\alpha_0\in F^*$ and we set  ${\mathbf g}=\left(\begin{array}{cc} \dfrac{1+\pi}{2} &  \dfrac{1-\pi}{2\alpha_0} \\   &\\- \dfrac{1-\pi}{2\alpha_0}  &  \dfrac{1+\pi}{2} \end{array}\right).$ 
Then  $${\rm det}({\mathbf g})= \dfrac{(1+\pi)^2}{4}-\dfrac{(1-\pi)^2}{4}=\pi$$
hence  ${\mathbf g}\in GL(2,E)$ and 
$${\mathbf g}\;^{t}\bar{\mathbf g}=\left(\begin{array}{cc} \dfrac{1+\pi}{2} &  \dfrac{1-\pi}{2\alpha_0} \\   &\\- \dfrac{1-\pi}{2\alpha_0}  &  \dfrac{1+\pi}{2} \end{array}\right)\left(\begin{array}{cc} \dfrac{1+\pi}{2} & - \dfrac{1-\pi}{2\alpha_0} \\   &\\ \dfrac{1-\pi}{2\alpha_0}  &  \dfrac{1+\pi}{2} \end{array}\right)=\pi I_2.$$

If  $-1\notin F^{*2}$ then we can take $u=-1$. The quadratic form on $F^4$ defined by $q(x,y,z,t)=x^2+y^2+z^2+t^2$ is isotropic  (because $-1$ is a sum of two squares by \cite{Lam}, Chapter VI, Corollary 2.6. p. 154) and hence it represents all elements of $F^*$. Therefore there exist  $a,b,c,d\in F$ such that  $\pi=a^2+b^2+c^2+d^2$. We set $\alpha=a+\sqrt{u} b$, $\beta=c+\sqrt{u} d$ and  
${\mathbf g}=\left(\begin{array}{cc}\alpha & \beta\\ -\bar{\beta} &\bar{\alpha}\end{array}\right)$. One has  ${\rm det}({\mathbf g})=|\alpha|^2+|\beta|^2=a^2-ub^2+c^2-ud^2=\pi$ (as $-u=1$). Finally  ${\mathbf g}\in GL(2,E)$ and  ${\mathbf g}\;^{t}\bar{\mathbf g}=(|\alpha|^2|+|\beta|^2)I_2=\pi I_2$.\medskip

This ends the proof of  $(**)$.\medskip

\noindent We suppose now that the statements  { 2.(a)}  and  { 2.(b)}  in the Theorem  are true if   $\text{ rank}({\tilde{\go{g}}})=p\leq n$ and we will prove that they remain true for $n+1$.

Let  $X=\left(\begin{array}{ccc} x_n & 0 & 0 \\ 0 & \ddots & 0\\ 0 & 0 & x_0\end{array}\right)$ whis $x_j\in F^*$. We denote by  $n(X)$ the cardinality of set  $\{j\in\{0,\ldots, ,n\}, x_j\in \pi N_{E/F}(E^*)\}$.\medskip

\noindent Suppose first that  $n(X)$ is even.

As there exist automorphisms  $\gamma_{i,j}$ interverting  $X_i$ and  $X_j$ (see Proposition \ref{prop-gammaij}), we can suppose that  $x_j\in \pi N_{E/F}(E^*)$ if and only if  $j=0,\ldots, n(X)-1$ and then  $x_j=\pi a_j\bar{a}_j$, and if $j\geq n(X)$, we have  $x_j\in N_{E/F}(E^*)$ and hence  $x_j= a_j\bar{a}_j$. We denote by ${\bf g}_1$ a matrix in $GL(2,E)$ satisfying  $(**)$.

Let   ${\mathbf g}_\pi$ the block diagonal matrix      whose  $n-n(X)$ first blocks are just scalars equal to 1 and whose $\frac{1}{2}n(X)$ last blocks are equal to  ${\bf g}_1$. Let  $diag(a_n,\ldots , a_0)$ be the diagonal matrix whose diagonal coefficients are $a_n,\ldots, a_0$. Then  ${\mathbf g}=diag(a_n,\ldots , a_0) {\mathbf g}_\pi$ satisfies  ${\mathbf g}\;^{t}\bar{\mathbf g}=X$, in other words   $X$ is  $G$-conjugated  to  $I_{n+1}$.\medskip

\noindent Suppose now that  $n(X)$ is odd. \medskip

If  $n$ is even then   $n+1$ is odd, and hence   $n(\pi X)$ is even. From above, there exists ${\mathbf g}\in GL(n+1,E)$ such that   $\pi X={\mathbf g}\;^{t}\bar{\mathbf g}$. This means that    $X$ is $G$-conjugated to  $I_{n+1}$ by the element  $[{\mathbf g},\pi]$.\medskip

 We have proved that if  $n$ is even , any generic element is  $G$-conjugated to $I_{n+1}$.\medskip

If  $n$ is odd,  as ${\rm det}(I_{n+1})=1$,  ${\rm det}( I_\pi)=\pi$ and  $\chi_0(G)\subset N_{E/F}(E^*)$, the relation   $(*)$ implies that  $I_{n+1}$ and $I_\pi$ are not  $G$-conjugated .\medskip

Using again the automorphisms $\gamma_{i,j}$,we can suppose that  $x_0=\pi a\bar{a}$ with  $a\in E^*$. We can write $X=\left(\begin{array}{cc} X_1 & 0\\ 0 & x_0\end{array}\right)$ where  $X_1\in M(n,E)$ is the diagonal matrix  $diag(x_n,\ldots, x_1)$.
Then  $n(X_1)$ is even, and hence there exists ${\mathbf g}_1\in GL(n,E)$ tel que $X_1=  {\mathbf g}_1\;^{t}\bar{{\mathbf g}_1}$. If we set     ${\mathbf g}=\left(\begin{array}{cc} {\mathbf g}_1& 0\\ 0 & a\end{array}\right)\in GL(n,E)$, we get  $X={\mathbf g} I_\pi\;^{t}\bar{\mathbf g}$ and hence  $X$ is $G$-conjugated  to  $I_\pi$. \medskip

This ends the prove of statement  { (2) (a)}  and   {(2)  (b)} of the Theorem.\medskip

\noindent Let us now prove the statement 2 (c). Let $\iota$ be the natural injection of ${\rm Herm}(n-m,E)$ into ${\rm Herm}(n,E)$ which associates to  $M\in {\rm Herm}(n-m, E)$ the matrix  $\iota(M)=\left(\begin{array}{c|c}  \begin{array}{ccc}  & & \\
& M &\\
 & & \end{array}&   \begin{array}{c} \\ 0\\ \\ \end{array}\\
\hline  
0 &0_m\end{array}\right)\in M(n,E)$. This way we identify  the  $V_m^+$ to  $ {\rm Herm}(n-m,E)$ by the map  $M\mapsto X(\iota(M))$. An  element $X(\iota(M))\in V_m^+$  is generic in  $V_m^+$ if and only if  ${\rm det}(M)\neq 0$. The group  $G_m$ is the group of elements  $g_1=[\mathbf g_1,\mu]$ where  $\mathbf g_1\in GL(n-m,E)$ and  $\mu\in F^*$,  acting on $ {\rm Herm}(n-m,E)$ by  $[\mathbf g_1,\mu].M=\mu^{-1} {\mathbf g}_1 M\;^{t}\bar{\mathbf g}_1$.\medskip

Let $Z=X(\iota(M))$ and  $Z'=X(\iota(M'))$ be two generic elements in  $V_m^+$. If $Z$ 	and $Z'$ are  $G$-conjugated  then there exist $\mathbf g\in GL(n,E)$ and  $\mu\in F^*$ such that  ${\mathbf g}\iota(M)\;^{t}\bar{\mathbf g}=\mu\, \iota(M')$. Let ${\mathbf g}_1\in GL(n-m,E)$ be the submatrix  of ${\mathbf g}$ given by the first  $m-n$ rows and the first  $m-n$ first columns of  ${\mathbf g}$. An easy computations shows that  ${\mathbf g}_1 M\;^{t}\bar{\mathbf g}_1=\mu M'$. As $Z$ and $Z'$ are generic in  $V_m^+$, the matrices $M$ and  $M'$ are invertible  and therefore  $ {\mathbf g}_1\in Gl(n-m,E)$. The preceding relation shows then that $Z$ and  $Z'$ are  $G_m$-conjugated.\\
Conversely, if $Z$ and $Z'$ are $G_m$-conjugated then there exist ${\mathbf g}_1 \in GL(n-m,E)$ and $\mu\in F^*$ such that  ${\mathbf g}_1 M\;^{t}\bar{\mathbf g}_1=\mu M'$. We set b ${\mathbf g}=\left(\begin{array}{cc} {\mathbf g}_1 & 0\\ 0 & I_m\end{array}\right)$. The element $[{\mathbf g},\mu]$ belongs to  $G$ and we have  $Z'=[{\mathbf g},\mu].Z$. Hence   $Z$ and  $Z'$ are $G$-conjugated. The assertion concerning the number of orbits can be easily proved the same way as the corresponding assertion in Theorem \ref {thm-orbites-e1}. The statement   (2) (c)   is now proved.
\medskip

\noindent The group $L$ is the group of elements $l=[{\mathbf g},\mu]$ where  ${\mathbf g}=diag(a_{n-1},\ldots , a_0)$ is a diagonal matrix  and where $\mu\in F^*$. For such  an element  $l$, we have  $l.\sum_{j} x_jX_j=\sum_j (\mu^{-1} x_j a_j\bar{a}_j) X_j$. The last statement is then easy.

 This ends the proof. 
 
 \end{proof}

 
   \subsection{ $G$-orbits  in the case  $\ell=3$}\label{subsection(l=3)}\hfill

In this subsection we suppose  $\ell=3$. \medskip

By Remark \ref{rem-simple}, we can assume that  $ \widetilde{\go g}$ is simple.
 If    $\ell=3$ (and rank($ \widetilde{\go g}$)$=k+1 > 1$) then it corresponds to case $(7)$ in Table 1 and its Satake-Tits diagram  is of type  $C_{2(k+1)}$, ($k\in \N$) and is given by

\hskip 150pt \hbox{\unitlength=0.5pt\begin{picture}(280,30)

  \put(55,10){\circle*{10}}
\put(60,10){\line (1,0){30}}
\put(95,10){\circle{10}}
\put(100,10){\circle*{1}}
\put(105,10){\circle*{1}}
\put(110,10){\circle*{1}}
\put(115,10){\circle*{1}}
\put(120,10){\circle*{1}}
 \put(130,10){\circle*{10}} 
\put(135,10){\line (1,0){30}}
\put(170,10){\circle{10}}
\put(175,10){\line (1,0){30}}
 \put(210,10){\circle*{10}}
\put(213,12){\line (1,0){41}}
\put(213,8){\line (1,0){41}}
\put(225,5){$<$}
\put(255,10){\circle{10}}

\end{picture}}

Note that case $(5)$ in Table 1 is a particular case of case $(7)$.

By  (\cite{Schoeneberg} Proposition 5.4.5.), $ \widetilde{\go g}$ splits  over any quadratic extension  $E$ of $F$, and (up to isomorphism) $ \widetilde{\go g}\otimes_F E\simeq {\go sp}(4(k+1),E)$.  We will use the following classical realization of $ \widetilde{\go g}$ (also used in  \cite{Mu08}):\medskip

Let $F^{*2}$ the set of squares in  $F^*$. Let  $u$ be a unit of  $F$ which is not a square and let  $\pi$ be a uniformizer of  $F$. According to \cite{Lam} (Theorem VI. 2.2, p.152), the set 
$$ \{ {1},  {u},  {\pi},  {u\pi}\}$$
is a set of representatives of $F^*/F^{*2}$.
 We set  $E=F[\sqrt{u}]$ and we note  $\mapsto \bar{x}$ the conjugation in $E$. For  $n\in\N$, we denote by  $I_n$ the identity matrix of size $n$. Consider the symplectic form  $\Psi$ on  $E^{4(k+1)}$ defined by 
$$\Psi(X,Y)={^{t}X} K_{2(k+1)} Y,\quad\textrm{where}\quad K_{2(k+1)}=\left(\begin{array}{cc} 0 & I_{2(k+1)}\\ -I_{2(k+1)} & 0\end{array}\right).$$

Then
$$ \widetilde{\go g}\otimes_F E={\go sp}(4(k+1),E)=\left\{\left(\begin{array}{cc} A & B\\ C & -^{t}A\end{array}\right);\; A, B, C\in M(2(k+1),E), {^{t}B}=B, {^{t}C}=C\right\}.$$
We set:
$$J_\pi=\begin{pmatrix} 0&\pi\\ 1&0\end{pmatrix}\in M(2,E),\,J=\begin{pmatrix} J_\pi &0&0\\ 0 & \ddots&0 \\ 0&0 &J_\pi \end{pmatrix} \in M(2(k+1),E),\, T= \begin{pmatrix} J & 0\\ 0 & ^{t}J\end{pmatrix} \in M(4(k+1),E).$$

\noindent The subalgebra  $ \widetilde{\go g}$ is then the subalgebra of  $ \widetilde{\go g}\otimes_F E$ whose elements are the matrices $X$ satisfying  $T\overline{X}=XT$. \medskip

Let us make precise the different objects which have been introduced  in earlier section in relation with the Lie algebra $ \widetilde{\go g}$.\\

\noindent The algebra $ \widetilde{\go g}$  is graded by the element $H_0= \begin{pmatrix} I_{2(k+1)} & 0\\ 0 & -I_{2(k+1)}\end{pmatrix} $.  More precisely we have
$$V^-=\left\{ \begin{pmatrix} 0 & 0\\ C &0\end{pmatrix} ;\;  C\in M(2(k+1),E),  ^{t}C=C, \;\;^{t}J\overline{C}=CJ\right\},$$
$$  V^+=\left\{\begin{pmatrix}  0 & B\\ 0 &0\end{pmatrix} ;\;   B \in M(2(k+1),E), ^{t}B=B, J\overline{B}= B\; ^{t}J \right\},$$
and 
$$\quad {\go g}=\left\{ \begin{pmatrix} A & 0\\ 0 & -{^{t}A}\end{pmatrix} ;\; A \in M(2(k+1),E), J\overline{A}=A J\right\}.$$

The subspace  $\go a$ defined by
$$\go a=\left\{H(t_0,\ldots, t_k)= \begin{pmatrix} \mathbf{H} & 0\\ 0 & -\mathbf{H}\end{pmatrix} ; \textrm{ with } \mathbf{H}= \begin{pmatrix}  t_kI_2 & & & \\  & t_{k-1} I_2  & & \\ & & \ddots & \\  & & & t_0 I_2\end{pmatrix} \in M(2(k+1),F)\right\}$$
is a maximal split abelian subalgebra of  $ \widetilde{\go g}$. If the linear forms  $\eta_j$ on  $\go a$ are defined by 
$$\eta_j(H(t_0,\ldots, t_k))= t_j,$$
then the root system of the pair  $( \widetilde{\go g},\go a)$ is given by 
$$ \widetilde{\Sigma}=\{ \pm \eta_i\pm \eta_j, \textrm{ for } \;\; 0\leq i<j\leq k\}\cup\{ 2\eta_j;\textrm{ for } \;\; 0\leq  j\leq k\}$$
and the set of strongly orthogonal roots given in  Theorem \ref{th-descente} is the set  
$$\{\lambda_0,\ldots,\lambda_k\}\quad\textrm{ where  }\quad \lambda_j= 2\eta_j.$$
We set also
$${\mathbb S}^+=\left\{X\in M(2,E); \; ^{t}X=X, J_\pi\overline{X}=X\; {^{t}J}_\pi\right\}=\left\{ \begin{pmatrix}  \pi \bar{x} &\mu\\ \mu& x\end{pmatrix} ; x\in E, \mu\in F\right\},$$
$$ {\mathbb S}^-=\left\{Y\in M(2,E); \; ^{t}Y=Y, \; ^{t}J_\pi\overline{Y}=YJ_\pi\right\}=\left\{ \begin{pmatrix} y &\mu\\ \mu&\pi \bar{y}\end{pmatrix} ; y\in E, \mu\in F\right\},$$ 
and 
$${\mathbb L}=\left\{ A\in M(2,E), J_\pi\overline{A}=AJ_\pi\right\}=\left\{ \begin{pmatrix} x &\pi y\\ \bar{y}&  \bar{x}\end{pmatrix} ; x,y\in E\right\}.$$
For  $M\in M(2,E)$, the matrix $E_{i,i}(M)\in M(2(k+1),E)$ is block diagonal,   the $(k+1-i)$-th  block being equal to  $M$  and all other $2\times 2$ blocks being equal to $0$. In other words:
$$E_{i,i}(M)= \begin{pmatrix}  X_k &  &\\ & \ddots & \\ & & X_0\end{pmatrix} \;{\rm where }\; X_j=0 \textrm{ if }\; i\neq j\;{\rm and } \; X_i=M.$$
All the algebras $ \widetilde{l}_j$ are isomorphic and given by 
 $$ \widetilde{l}_j=\left\{ \begin{pmatrix}  E_{j,j}(A) &E_{j,j}(X)\\ E_{j,j}(Y)&E_{j,j}( -\;^{t} A)\end{pmatrix}, A\in {\mathbb L}, X\in {\mathbb S}^+, Y\in   {\mathbb S}^-  \right\}.$$\\
The algebra  $ \widetilde{\go g}_j$ is the centralizer of  $ \widetilde{\go l}_0\oplus\ldots\oplus  \widetilde{\go l}_{j-1}$. It is given by 
$$ \widetilde{\go g}_j=\left\{ \begin{pmatrix}  {\mathbf A}_j & 0 & {\mathbf X}_j  & 0\\ 0 & 0 & 0 & 0\\ {\mathbf Y}_j  & 0 &- ^{t} {\mathbf A}_j  & 0\\ 0 & 0 & 0 & 0\end{pmatrix}  \in  \widetilde{\go g}\right\},$$
where the  matrices $ {\mathbf A}_j,  {\mathbf X}_j$ and  $ {\mathbf Y}_j$ are square matrices of size $2(k+1-j)$.\medskip 


Let us now describe the group  $G={\mathcal Z}_{{\rm Aut}_0( \widetilde{\go g)}}(H_0)$. \\
By (\cite{Bou2} Chap VIII, \textsection  13, $n^0 3$), we know that the group  ${\rm Aut}_0( \widetilde{\go g}\otimes_F E)={\rm Aut}( \widetilde{\go g}\otimes_F E)$ is the group of automorphisms of the form   $\varphi_s: X\mapsto sXs^{-1}$ where  $s$ is a symplectic similarity of the form  $\Psi$. This means that  $s\in GL(4(k+1),E)$ and there exists a scalar $\mu(s)\in E^*$ such that  $^{t}sK_{2(k+1)}s=\mu(s) K_{2(k+1)}$.  The scalar $\mu(s)$ is called the ratio  of the similarity $s$. It is easily seen that any similarity  $s$   with ratio $\mu$ such that  $\varphi_s(H_0)=H_0$ can be written $s=s({\mathbf g},\mu):=\left(\begin{array}{cc} {\mathbf g}& 0\\ 0 &\mu ^{t}{\mathbf g}^{-1}\end{array}\right)$ with   ${\mathbf g}\in GL(2(k+1), E)$   and $\mu\in E^*$.  As $s(\mu I_{2(k+1)},\mu^2)=\mu I_{4(k+1)}$, the elements  $s(\mu I_{2(k+1)},\mu^2)$ with  $\mu\in E^*$ act trivially on  $ \widetilde{\go g}\otimes_F E$. More precisely one can easily show that $\varphi_{s(g,\lambda)}=\text{Id}\Longleftrightarrow \exists \mu \text{ such that } g=\mu  I_{2(k+1)}, \text{ and }\lambda=\mu^2$.

Let  $H_E$ be the subgroup of  $GL(2(k+1), E)\times E^* $ whose elements are of the form  $(\mu I_{2(k+1)}, \mu^2)$ with  $\mu\in E^* $. Then, from above, the map 
$({\mathbf g}, \lambda)\mapsto \varphi_{s({\mathbf g},\lambda)}$ induces an isomorphism from  $GL(2(k+1),E) \times E^* /H_E$ onto the centralizer of  $H_0$ in ${\rm Aut}_0( \widetilde{\go g}\otimes_F E)$, which we will denote by  $G_E$. If  $({\mathbf g},\mu)\in GL(2(k+1),E) \times E^*$, we will denote by $[{\mathbf g},\mu]$ its class in  $G_E$. The action of   $G_E$ on  $ \widetilde{\go g}\otimes_F E$ stabilizes  $V^+\otimes_FE$ and    $V^-\otimes_FE$. More precisely the action on $V^+\otimes_FE$ is as follows:$$[{\mathbf g},\mu]\left(\begin{array}{cc} 0 &\mathbf X\\ 0 &0\end{array}\right)=\left(\begin{array}{cc} 0 &\mu^{-1} {\mathbf g}{\mathbf X}\; ^{t}{\mathbf g}\\ 0 &0\end{array}\right).$$

The group we are interested in, namely  $G={\mathcal Z}_{{\rm Aut}_0(\widetilde{\go g})}(H_0)$, is the subgroup of $G_E$ of all elements which normalize $ \widetilde{\go g}$. As  $V^+$ and $V^-$ generate  $ \widetilde{\go g}$,    $G$ is also the subgroup of  $G_E$ of all elements normalizing  $V^+$ and  $V^-$.\medskip

A result  in the spirit of the  following is indicated without proof in \cite{Mu08} (p. 112).

\begin{prop}\label{prop-G} Let  $G^0(2(k+1))$ be the subgroup of elements  ${\mathbf g}\in GL(2(k+1), E)$ such that $J\bar{{\mathbf g}}=  {\mathbf g}J$. Then
$$G= \{g=[{\mathbf g},\mu], {\mathbf g}\in G^0(2(k+1))\cup\sqrt{u} G^0(2(k+1)), \mu\in F^*\}$$\end{prop}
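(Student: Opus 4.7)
The plan is to characterize membership of $[\mathbf g,\mu]\in G_E$ in $G$ by a pair of explicit conditions on $(\mathbf g,\mu)$, and then to exploit the equivalence $[\mathbf g,\mu]=[\alpha\mathbf g,\alpha^2\mu]$ ($\alpha\in E^*$) defining $G_E$ to normalize $\mu$ into $F^*$. First I would observe that $\widetilde{\go g}$ arises as the fixed point set of the antilinear involution $\sigma(X)=T\bar XT^{-1}$ on $\widetilde{\go g}\otimes_{F}E=\go{sp}(4(k+1),E)$; this is an involution because $\bar T=T$ and $T^2=\pi I$ is central. Consequently the condition $\varphi_g(\widetilde{\go g})\subset\widetilde{\go g}$ for $g=s(\mathbf g,\mu)$ translates into $\varphi_g\circ\sigma=\sigma\circ\varphi_g$, which forces $g^{-1}T\bar gT^{-1}$ to be a scalar matrix, i.e., $T\bar g=\nu\,gT$ for some $\nu\in E^*$. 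Applying $\sigma$ once more to this relation and using $T^2=\pi I$ then forces $N_{E/F}(\nu)=1$ automatically.

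\medskip

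Next I would project this single matrix equation onto its two block components. Using $g=\mathrm{diag}(\mathbf g,\mu\,{}^t\mathbf g^{-1})$ and $T=\mathrm{diag}(J,{}^tJ)$, the upper block yields $J\bar{\mathbf g}=\nu\,\mathbf g\,J$; substituting $\bar{\mathbf g}=\nu J^{-1}\mathbf g J$ into the lower block and using $J^2=\pi I$ (hence $({}^tJ)^2=\pi I$) produces $\bar\mu=\nu^2\mu$. So $[\mathbf g,\mu]\in G$ if and only if these two relations hold for some $\nu\in E^*$ with $N_{E/F}(\nu)=1$.

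\medskip

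The crucial step is the rescaling. Under the substitution $(\mathbf g,\mu)\mapsto(\alpha\mathbf g,\alpha^2\mu)$, the scalar $\nu$ is replaced by $\nu'=(\bar\alpha/\alpha)\nu$. Since $N_{E/F}(\nu)=1$, Hilbert's Theorem~90 supplies $\beta\in E^*$ with $\bar\beta/\beta=\nu^{-1}$; taking $\alpha=\beta$ makes $\nu'=1$, and the identity $(\alpha/\bar\alpha)^2=\nu^2=\bar\mu/\mu$ rearranges to $\overline{\alpha^2\mu}=\alpha^2\mu$, so $\mu'=\alpha^2\mu\in F^*$. Thus every element of $G$ admits a representative with $\mu\in F^*$, and in that representative the equation $\bar\mu=\nu^2\mu$ collapses to $\nu\in\{\pm 1\}$.

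\medskip

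Finally, with $\mu\in F^*$ fixed, the case $\nu=1$ reads $J\bar{\mathbf g}=\mathbf g J$, meaning $\mathbf g\in G^0(2(k+1))$, while the case $\nu=-1$ reads $J\bar{\mathbf g}=-\mathbf g J$; writing $\mathbf g=\sqrt u\,\mathbf h$ reduces this to $J\bar{\mathbf h}=\mathbf h J$, so $\mathbf g\in\sqrt u\,G^0(2(k+1))$. The reverse inclusion is immediate, since any such pair $(\mathbf g,\mu)$ satisfies both derived relations with $\nu=\pm 1$ (and hence $N_{E/F}(\nu)=1$), so $[\mathbf g,\mu]\in G$. The principal obstacle is the Hilbert~90 rescaling in the third paragraph, which rests essentially on the automatic norm identity $N_{E/F}(\nu)=1$ established in the first paragraph from the centrality of $T^2$.
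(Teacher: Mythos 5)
Your argument is correct, but it reaches Proposition \ref{prop-G} by a genuinely different route than the paper. The paper never introduces the antilinear map $\sigma(X)=T\overline{X}T^{-1}$: it tests $[\mathbf g,\mu]$ on $V^+\simeq Sym_J(2(k+1),E)$, turning stability of $V^+$ into the identity $(\mathbf g^{-1}J\overline{\mathbf g}J^{-1})\mathbf X\;{}^t(\mathbf g^{-1}J\overline{\mathbf g}J^{-1})=\mu^{-1}\overline{\mu}\,\mathbf X$ for all $\mathbf X\in Sym_J(2(k+1),E)$, and the core of its proof is the rigidity statement Lemma \ref{lemma-G} (namely $M\mathbf X\;{}^tM=\nu\mathbf X$ for all $\mathbf X$ forces $M=aI_{2(k+1)}$ and $\nu=a^2$), proved by an explicit computation for $k=0$ and induction on $k$; it then normalizes $\mu$ to a representative of $E^*/E^{*2}$ in $\{1,e,\pi,e\pi\}$ and eliminates the classes $e,e\pi$ by a contradiction. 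You instead realize $\widetilde{\go g}$ as the fixed points of $\sigma$ on $\go{sp}(4(k+1),E)$, so that stabilizing $\widetilde{\go g}$ is equivalent to commuting with $\sigma$; the scalarity of $\overline{g}^{-1}T^{-1}gT$ then comes from Schur's lemma for the absolutely irreducible standard representation of $\go{sp}(4(k+1),E)$ rather than from Lemma \ref{lemma-G} (do state this irreducibility input explicitly, it is the only nontrivial point of that step), and Hilbert 90 applied to $\nu$, legitimated by your observation that $N_{E/F}(\nu)=1$, replaces the paper's case analysis on square classes by a direct normalization to $\mu\in F^*$ and $\nu=\pm1$. Your block computations (the relations $J\overline{\mathbf g}=\nu\mathbf gJ$ and $\overline{\mu}=\nu^2\mu$, the transformation rule $\nu\mapsto(\overline{\alpha}/\alpha)\nu$, the descent of $\alpha^2\mu$ to $F^*$) all check out, as does the converse inclusion. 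What your route buys is brevity and a cleaner Galois-descent structure, with no need for the auxiliary unit $e$; what the paper's route buys is Lemma \ref{lemma-G} itself, which is not a throwaway: it is reused later, for instance in the proof of Lemma \ref{lem-G0-orbites} to identify which $\mathbf g$ act trivially on $\mathbb S^+$.
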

	
\begin{proof}

 We identify  $V^+$ with the space  $Sym_J(2(k+1))$ of symmetric matrices   $\mathbf X\in M(2(k+1),E)$ such that  $J\overline{\mathbf X}={\mathbf X}\;^{t}J$  through the map  ${\mathbf X}\mapsto  X=\left(\begin{array}{cc} 0 & {\mathbf X}\\ 0 & 0\end{array}\right)$.  
 
 As the extension $E=F(\sqrt{u})$ is unramified (\cite {Lam}, Chap. VI, Remark 2.7 p. 154), the uniformizer $\pi$ of $F$ is still a uniformizer in $E$. We will now define a unit $e$ of $E$ which is not a square (and hence, as before for $F$, the  set   $\{1, e, \pi, e\pi\}$ will be a set of representatives of the classes  in  $E^*/E^{*2}$).  
\\
If $-1\in F^{*2}$ then one can easily see that   $\sqrt{u}$ is not a square in  $E^*$. In this case we set  $e=\sqrt{u}$ (of course $e$ is still a unit in $E$). If $-1\notin F^{*2}$, we can  suppose    that $u=-1$. Then by    (\cite{Lam} Corollary V.2.6, p.154) $-1$  is a sum of two squares in $F^*$. That is  $-1=x_0^2+y_0^2$ ($x_0,y_0 \in F^*$) and in this case we set $e=x_0+y_0\sqrt{u}\notin E^{*2}$ (again one verifies easily that $e$ is not a square in $E^*$).\medskip

Let   $g\in G_E$. From the definition of the group  $G_E$, one sees that there exist ${\mathbf g}\in GL(2(k+1), E)$ and $\mu\in\{1,e, \pi, e\pi\}$ such that  $g=[{\mathbf g},\mu]$.  We will now fix such a pair  $({\mathbf g},\mu)$. If  $g\in G$  then  $[{\mathbf g},\mu]V^+\subset V^+$, and this implies that for all  $\mathbf X\in Sym_J(2(k+1))$, one has
$$  J\overline{\mu^{-1} {\mathbf g}{\mathbf X}\; ^{t}{\mathbf g}}=\mu^{-1}  {\mathbf g}{\mathbf X}\; ^{t}{\mathbf g}\;^{t}J.$$
As  $J^2=\pi I_{4(k+1)}$, we obtain that for all ${\mathbf X}\in Sym_J(2(k+1))$, on  has
$$({\mathbf g}^{-1} J\overline{{\mathbf g}} J^{-1}) {\mathbf X} \; ^{t}({\mathbf g}^{-1} J\overline{{\mathbf g}} J^{-1}) =\mu^{-1} \overline{\mu}\; {\mathbf X}.\qquad\qquad \qquad\qquad (*)$$

The proposition will then be a consequence of the following Lemma:
\begin{lemme}\label{lemma-G} Let  $M\in M(2(k+1), E)$ and  $\nu\in E^*$ such that  $ M  {\mathbf X}\; ^{t}M= \nu{\mathbf X}$ for all  $\mathbf X\in Sym_J(2(k+1))$,  then there exists  $a\in E^*$ such that  $\nu=a^2$ and $M=a I_{2(k+1)}.$
\end{lemme}

Proof of Proposition \ref{prop-G} (Lemma \ref{lemma-G} being assumed)

Remember that we have fixed   a pair $({\mathbf g},\mu)$ in $GL(2(k+1), E)\times\{1,e,\pi, e\pi\}$ such that $g=[{\mathbf g},\mu]$.

$\bullet$ If  $\mu=1$ or  $\mu=\pi$ then $  \mu^{-1} \overline{\mu}=1$. The preceding Lemma and the relation $(*)$ imply ${\mathbf g}^{-1} J\overline{{\mathbf g}} J^{-1}=\pm I_{2(k+1)}$  and hence  $ J\overline{{\mathbf g}}=\pm {\mathbf g}J$. If    $J\overline{{\mathbf g}}= {\mathbf g}J$ then ${\mathbf g}\in   G^0(2(k+1))$. If     $J\overline{{\mathbf g}}= -{\mathbf g}J$  then $\sqrt{u}{\mathbf g}$ satisfies  $J \overline{\sqrt{u}{\mathbf g}}=\sqrt{u} {\mathbf g}J$ and hence  ${\mathbf g}\in \sqrt{u} G^0(2(k+1))$. Conversely, it is easy to see that if ${\mathbf g}$ verifies  $ J\overline{{\mathbf g}}=\pm {\mathbf g}J$ and if  $\mu$ belongs to $F^*$ then   $[{\mathbf g},\mu]$ stabilizes  $V^+$ and $V^-$.  \medskip

$\bullet \bullet$ We show now that if    $\mu=e$ or $\mu= e\pi$ then the element  $[{\mathbf g},\mu]$   of  $G_E$ does not belong to $G$.
 Suppose that   $\mu=e$ or  $\mu= e\pi$ and $[{\mathbf g},\mu]\in G$. \\
If $-1\in F^{*2}$ then  $-1=\alpha_0^2$ with  $\alpha_0\in F^{*}$ and we have set $e=\sqrt{u}\notin E^{*2}$. Then  $ \mu^{-1} \overline{\mu}=-1=\alpha_0^2$. The preceding lemma implies   $J\overline{{\mathbf g}}=\epsilon \alpha_0 {\mathbf g}J$ with  $\epsilon=\pm1$. Taking the conjugate of this equality, one obtains $J{\mathbf g}=\epsilon \alpha_0 \overline{\mathbf g}J$. And therefore 
 $$\overline{\mathbf{g}}=\epsilon \alpha_0J^{-1}\mathbf{g}J=\alpha_0^2 J^{-2}\overline{\mathbf{g}}J^2=-\overline{\mathbf{g}},$$
and this is impossible as $\mathbf{g}\neq 0$.

\noindent If $-1\notin F^{*2}$, we have set $u=-1 = x_0^2+y_0^2$, with  $x_0,y_0$ in  $F^*$, and $e=x_0+y_0\sqrt{u}$.   Then  $ \mu^{-1} \overline{\mu}=\dfrac{\bar{e}^2}{e\bar{e}}=-\bar{e}^2=(\bar{e}\sqrt{u})^2$. Hence the preceding Lemma gives $J\overline{{\mathbf g}}=\epsilon \bar{e}\sqrt{u} {\mathbf g}J$ with $\epsilon^2=1$, and this implies that   $J{\mathbf g}=-\epsilon e\sqrt{u}\overline{\mathbf g}J$ and therefore $$\overline{\mathbf{g}}=\epsilon \bar{e}\sqrt{u}J^{-1}\mathbf{g}J=-u\bar{e} e J^{-2}\overline{\mathbf{g}}J^2=-\overline{\mathbf{g}}.$$
Again this is impossible as $\mathbf{g}\neq 0$.

This proves the proposition.
\end{proof}

\noindent{\it Proof of Lemma} \ref{lemma-G}:

 For  $k=0$, we have  $Sym_J(2,E)=\mathbb S^+=\left\{\left(\begin{array}{cc} \pi \bar{x} &\lambda\\ \lambda& x\end{array}\right); x\in E,\lambda\in F\right\}$. Let us set  $M=\left(\begin{array}{cc} a & b\\ c& d\end{array}\right)\in GL(2,E)$. The relation which is satisfied by  $M$ and $\nu$ implies that for all $x\in E$ and  $\lambda\in F$, one has
$$\left\{\begin{array}{ccc} a^2\pi \overline{x}+b^2x+ 2 ab \lambda&=&\nu \pi\overline{x}\\
c^2 \pi\overline{x}+d^2 x+2dc\lambda&=& \nu x\\
ac\pi\overline{x}+bdx +(ad+bc)\lambda&=&\nu\lambda\end{array}\right.$$
The first two equations   imply   $b=c=0$ and $a^2=d^2=\nu$ and the third  equation  implies then $a=d$. Therefore it exists $a\in E^*$ such that 
$$M= a I_2,\quad{\rm and }\quad \nu=a^2\in E^{*2}.$$

On the other hand, a similar computation shows that if  $M\in M(2,E)$ satisfies $M{\mathbf X} \;^{t}M=0$ for all  ${\mathbf X}\in{\mathbb S}^+$ then $M=0$.\medskip

By induction we suppose that the expected result is true up to the rank  $k-1$. Let  $(M,\nu)\in M(2(k+1),E)\times E^*$ satisfying the hypothesis of  Lemma  \ref{lemma-G}. Let us write
$$M=\left(\begin{array}{cc} M' &  \begin{array}{c} M_k\\ \vdots \end{array}\\ \begin{array}{ccc} L_k &\ldots & L_{1}\end{array} & M_0\end{array}\right),$$
with  $M'\in M(2k,E)$ and  $M_j, L_j\in M_2(E)$. The equality  $M{\mathbf X}\; ^{t}M=\nu \mathbf X$ for all matrices  ${\mathbf X}\in Sym_J(2(k+1), E)$ of the form 
$${\mathbf X}=\left(\begin{array}{cc} {\mathbf X}' &  \begin{array}{c} 0\\ \vdots\\0 \end{array}\\ \begin{array}{ccc} 0 &\ldots & 0\end{array} & {\mathbf X}_0\end{array}\right),$$
(with  ${\mathbf X}'\in Sym_J(2k,E)$ and ${\mathbf X}_0\in Sym_J(2,E)$) implies then that for all ${\mathbf X}'\in Sym_J(2k,E)$ and all  ${\mathbf X_0}\in Sym_J(2,E)$, one has

  $$\left\{\begin{array}{cc } M' {\mathbf X}'\; ^{t}M'=\nu{\mathbf X}' & \\
  M_0{\mathbf X_0}\; ^{t}M_0=\nu {\mathbf X}_0&\\
 M_j {\mathbf X_0}\; ^{t}M_j=0& {\rm for }\quad j=1,\ldots k.\end{array}\right.$$
  By induction , there exists  $a\in E^*$ such that  $\nu=a^2$, $M_k=\ldots =M_1=0$ and  $M'=\epsilon' a I_{2k}, M_0=\epsilon_0 a I_2$ with $\epsilon',\epsilon_0\in\{\pm 1\}$. \\
  Taking  ${\mathbf X_0}=0$ and  $\mathbf X'$ diagonal by blocks, each block being equal to  ${\mathbf Y}\in Sym_J(2,E)$, one obtains 
  $$   L_j{\mathbf Y}\;^{t}L_j=0 \quad{\rm for } \quad  j=1,\ldots k,\quad \textrm{ for all } \; {\mathbf Y}\in Sym_J(2,E)
$$ and hence  $L_k=\ldots =L_1=0$ from the case  $k=0$.\medskip
 
 \noindent  The equality  $M{\mathbf X}\; ^{t}M=a^2 \mathbf X$ for all matrices  ${\mathbf X}\in Sym_J(2(k+1), E)$ of the form
$${\mathbf X}=\left(\begin{array}{cc}0&  \begin{array}{c} {\mathbf Y}\\0\\ \vdots\end{array}\\ \begin{array}{ccc} ^{t}{\mathbf Y} & 0&\ldots \end{array} & 0\end{array}\right),$$
where   ${\mathbf Y}\in M(2,E)$ and $J_\pi \overline{{\mathbf Y}}={\mathbf Y} \;{^{t}J_\pi}$ implies then that  $\epsilon'\epsilon_0=1$.

  This ends   the proof of Lemma  \ref{lemma-G}. 
\cqfd
\medskip

  \begin{definition}\label{def-G0} The subgroup  $G^0$ of  $G$ is defined to be the subgroup of elements  $[\mathbf g,1]$ of  $G$, with  $\mathbf g\in G^0(2(k+1))$. Hence we have: 
$${\rm Aut}_e(\go g)\subset G^0\subset G.$$
\end{definition}
\begin{rem}\label{rem-inclusion-groupes} Recall that for  $j\in\{0,\ldots, k\}$, the group $G_j$ (with $G_0=G$) is the analogue of the group $G$ for the Lie algebra $\tilde{\go g}_j$ and that $G_j\subset \overline{G}$. From the preceding Definition, one has an injection  $G_j^0\hookrightarrow G^0$ given by  $[\mathbf g_j, 1]  \mapsto [\mathbf g, 1]$ 
 where $\mathbf g_j\in G^0(2(k+1-j))$ and where 
  $\mathbf g=\left(\begin{array}{cc} {\mathbf g_j}& 0\\ 0 &I_j\end{array}\right)$. In what follows, we will identify  $G_j^0$ with a subgroup of  $G^0$ under this injection. In particular, an element  $g_j\in G_j^0$ will have a trivial action on   $\oplus_{s=0}^{j-1} \tilde{\go g}^{\lambda_j}$.\\
  In the same manner, we will denote by  $L_i^0$ the corresponding subgroup  for the Lie algebra $\tilde{\go l}_i$.
  \end{rem}

\noindent We will now normalize the relative invariants  $\Delta_j$ on $V_j^+$ for  $j=0,\ldots, k$. The determinant is an irreducible polynomial on the space of symmetric matrices  which satisfies: 
$$\textrm{det}( \mu^{-1} {\mathbf g} {\mathbf X}\; ^{t}{\mathbf g})=\mu^{-2(k+1)}\textrm{det}({\mathbf g})^2 \textrm{det}( {\mathbf X}),\quad {\mathbf X}\in Sym_J(2(k+1), E), [{\mathbf g},\mu]\in G.$$
Therefore one can normalize the fundamental relative invariant  $\Delta_0$  by setting 
$$\Delta_0\left(\begin{array}{cc}0&  {\mathbf X}\\ 0& 0\end{array}\right)=(-1)^{k+1}\textrm{det}( {\mathbf X}).$$
The character $\chi_0$ is given by  
$$\chi_0(g) =\mu^{-2(k+1)} \textrm{det}({\mathbf g})^2$$ for  $g=[{\mathbf g},\mu]\in G$. This implies that
$$\chi_0(G)\subset F^{*2}.$$

\noindent Similarly, the fundamental relative invariant  $\delta_0$   of  $ \widetilde{\go g}^{\lambda_0}$ (cf. Definition \ref{defdeltaj}) is given by 
$$\delta_0(X)=-{\rm det}( {\mathbf X}); \quad {\rm for }\; X=\left(\begin{array}{cc} 0 &E_{0,0}({\mathbf X})\\0 & 0\end{array}\right),\;  {\mathbf X}\in{\mathbb S}^+.
$$\medskip

\noindent Let us now fix   elements     $\gamma_{i,j}\in{\rm Aut}_e(\go g)\subset G^0$  satisfying the properties of Proposition  \ref{prop-gammaij}. Then
$$\gamma_{i,j} \left(\begin{array}{cc} 0 &E_{i,i}({\mathbf X})\\0 & 0\end{array}\right) =\left(\begin{array}{cc} 0 &E_{j,j}({\mathbf X})\\0 & 0\end{array}\right), \;{\rm for }\;  {\mathbf X}\in{\mathbb S}^+.$$\\
 We normalize   the relative invariant polynomials  $\delta_j$ of   $\widetilde{\go g}^{\lambda_j}$ by setting
$$\delta_j(X)=\delta_0(\gamma_{j,0}(X)),\quad X\in \widetilde{\go g}^{\lambda_j}.$$
From Theorem \ref{th-k=0}, the  $L_j$-orbits de $\widetilde{\go g}^{\lambda_j}\backslash\{0\}$ are given by 
$$\{X\in   \widetilde{\go g}^{\lambda_j}; \delta_j(X)\equiv v\;{\rm mod} F^{*2}\},\quad v\in (F^*/F^{*2})\backslash\{-disc(\delta_j)\}\}.$$
Then, we normalize the polynomials  $ \Delta_i$ (defined by their restriction to $V_i^+$) (cf. Definition \ref{defdeltaj}) by setting
$$\Delta_{i}(X_i+\ldots +X_k)=\prod_{i=j}^k \delta_j(X_j),\quad X_i\in \widetilde{\go g}^{\lambda_i}.$$
\begin{lemme}\label{lem-G0-orbites}  Let  $\mathbf X$ and  $\mathbf X'$ be two elements of  $\mathbb S^+$ such that ${\rm det}(\mathbf X)\equiv {\rm det}(\mathbf X')\;{\rm mod}\; F^{*2}$. Then there exists $\mathbf g\in G^0(2)$ such that ${\mathbf X}= {\mathbf g}{\mathbf X}'\;^{t}{\mathbf g}$.
\end{lemme}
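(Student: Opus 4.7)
I first observe that $\det$ gives a $3$-dimensional $F$-quadratic form $q$ on $\mathbb S^+$: writing $\mathbf X = \begin{pmatrix}\pi\bar x & \mu\\ \mu & x\end{pmatrix}$ with $x = a+b\sqrt u$, one has $q(a,b,\mu) = \pi a^2 - u\pi b^2 - \mu^2$, of discriminant $u$ modulo $F^{*2}$. The form is anisotropic: since $E/F$ is unramified, $N_{E/F}(E^*)$ is the index-two subgroup of $F^*$ consisting of elements of even valuation, so $\pi N_{E/F}(E^*)\subset \pi F^{*2}\cup u\pi F^{*2}$ meets no square, making $\mu^2 = \pi N_{E/F}(x)$ impossible unless $x = \mu = 0$. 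In particular $\det\mathbf X = 0 \Longleftrightarrow \mathbf X = 0$, which disposes of the degenerate case, and in what follows I assume $\det\mathbf X,\det\mathbf X'\in F^*$. Since $\det(\mathbf g\mathbf X\,{}^t\mathbf g) = (\det\mathbf g)^2\det\mathbf X$ with $\det\mathbf g\in F^*$, the group $G^0(2)$ acts preserving $\det$ modulo $F^{*2}$.

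Next I reduce to the situation $\det\mathbf X = \det\mathbf X'$. The algebra $\mathbb L$ is the $F$-quaternion algebra $(u,\pi)_F$, which is the unique quaternion division algebra over $F$; consequently its reduced norm $\det:\mathbb L^*\to F^*$ is surjective (a standard fact for quaternion algebras over local fields). By hypothesis there exists $\alpha\in F^*$ with $\alpha^2 = \det\mathbf X/\det\mathbf X'$; choosing $\mathbf g_0\in G^0(2)$ with $\det\mathbf g_0 = \alpha$ and replacing $\mathbf X'$ by $\mathbf g_0\mathbf X'\,{}^t\mathbf g_0$ reduces the problem to the case $\det\mathbf X = \det\mathbf X' = c\in F^*$.

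For the remaining step I would apply Witt's theorem to the anisotropic space $(\mathbb S^+,q)$ to obtain an isometry $\sigma\in O(q)(F)$ sending $\mathbf X'$ to $\mathbf X$, and then realize $\sigma$, after multiplication by an isometry fixing $\mathbf X'$, by the action of some $\mathbf g\in G^0(2)$ with $\det\mathbf g = \pm 1$. Over $\overline F$ the map $\Phi:\{\mathbf g\in G^0(2):\det\mathbf g = \pm 1\}\to O(q)$ given by $\Phi(\mathbf g)\mathbf Y = \mathbf g\mathbf Y\,{}^t\mathbf g$ is the classical double cover $SL_2\to SO_3$ with kernel $\{\pm I_2\}$; over $F$ the cokernel of $\Phi$ on $F$-points sits in $F^*/F^{*2}$ (the spinor norm), and may be killed by pre-composing $\sigma$ with reflections $r_v$ for $v\in(\mathbf X')^\perp$, which fix $\mathbf X'$ and whose spinor norms $q(v)\bmod F^{*2}$ range over the values represented by the anisotropic $2$-dimensional form $q|_{(\mathbf X')^\perp}$, sufficient by Lemma \ref{lem-q2}.

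The principal obstacle is this final spinor-norm adjustment: one must verify that the index of the image of $\Phi$ in $SO(q)(F)$ is matched by the spinor norms of stabilizers of $\mathbf X'$. Should this abstract approach prove delicate, I would instead argue by direct computation on the $F$-basis $T_1 = \begin{pmatrix}\pi & 0\\ 0 & 1\end{pmatrix}$, $T_2 = \begin{pmatrix}-\pi\sqrt u & 0\\ 0 & \sqrt u\end{pmatrix}$, $T_3 = \begin{pmatrix}0 & 1\\ 1 & 0\end{pmatrix}$ of $\mathbb S^+$, using the diagonal elements $\begin{pmatrix}x & 0\\ 0 & \bar x\end{pmatrix}$ (which act by $(x_0,\mu)\mapsto (\bar x^2 x_0, N_{E/F}(x)\mu)$), the matrix $J_\pi$ (which multiplies by $\pi$ while swapping the two components of the $\mathbb S^+$ entry), and the transvections $\begin{pmatrix}1 & \pi y\\ \bar y & 1\end{pmatrix}$, to check directly that any two elements of $\mathbb S^+\setminus\{0\}$ with equal $\det$ modulo $F^{*2}$ lie in a common $G^0(2)$-orbit.
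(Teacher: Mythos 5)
Your preliminary analysis (anisotropy of $\det$ on $\mathbb S^+$, the degenerate case) is correct, and your reduction to the case $\det\mathbf X=\det\mathbf X'$ via surjectivity of the reduced norm $\mathrm{Nrd}:\mathbb L^*=G^0(2)\to F^*$ is a clean observation. But the remaining step is the whole content of the lemma, and the mechanism you propose for it does not work as stated. After your reduction you must show that the transformations $\mathbf Y\mapsto\mathbf g\mathbf Y\,^{t}\mathbf g$ with $\det\mathbf g=\pm1$ act transitively on the sphere $\{\det=c\}$. The subgroup $\{\det\mathbf g=1\}$ is the norm-one group of the quaternion division algebra $\mathbb L$, i.e. the spin group of the ternary form, and its image in $SO(q)(F)$ is only the kernel of the spinor norm, a subgroup of index four (the spinor norm of the rotation induced by $d\in\mathbb L^*$ is $\mathrm{Nrd}(d)$ modulo $F^{*2}$, and $\mathrm{Nrd}$ is onto $F^*$). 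Your correcting device, reflections $r_v$ with $v\in(\mathbf X')^\perp$, must be used in pairs if you want to stay inside $SO(q)$, and a pair only changes the spinor norm by $q(v)q(w)$, i.e. by the index-two subgroup $\{1,ab\}$, where $a,b$ are the two classes represented by the anisotropic binary form $q|_{(\mathbf X')^\perp}$ (Lemma \ref{lem-q2}). Two adjustments cannot cover four classes, so "sufficient by Lemma \ref{lem-q2}" is not justified; indeed the index of the image of the norm-one group is \emph{not} matched by the spinor norms of the $SO$-stabilizer of $\mathbf X'$. To close the argument one must in addition bring in the elements with $\det\mathbf g=-1$ (which exist, again by norm surjectivity, and induce improper isometries), or equivalently the scalar maps $\mathbf Y\mapsto\lambda\mathbf Y$, and verify, using that $\mathbb L$ is division, that these recover the missing classes. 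That verification is exactly where the arithmetic of the lemma lives; you flag it yourself as the "principal obstacle" and leave it open, and the fallback "direct computation" with the listed generators is only announced, never carried out.

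For comparison, the paper's proof puts the weight elsewhere: Witt's theorem produces an isometry of $SO(Q)$ carrying a scalar multiple $x\mathbf X'$ to $\mathbf X$; a descent argument through the covering $U\to SO(Q)$ (Skolem--Noether type) produces an element of $G^0(2)$ inducing this isometry, possibly only up to a scalar in $F^*$; and the scalar is then absorbed explicitly, by checking that $\pi\mathbf Y$ and $u\mathbf Y$ are $G^0(2)$-equivalent to $\mathbf Y$ using $J_\pi$, $\mathrm{diag}(\sqrt u,-\sqrt u)$ and $\mathrm{diag}(\alpha,\bar\alpha)$ with $N_{E/F}(\alpha)=u$ acting on the orbit representatives of Theorem \ref{th-k=0}. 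This explicit scalar-absorption step is precisely the ingredient that compensates for the failure of the norm-one subgroup to surject onto $SO(Q)(F)$, and it is the analogue of what is missing from your proposal; your norm-surjectivity reduction is a nice simplification of the bookkeeping, but it does not touch this crux.
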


\begin{proof} We consider here the case  $k=0$, that is the algebra $\widetilde{\go l}_0=\widetilde{\go g}^{-\lambda_0}\oplus \go l_0\oplus \widetilde{\go g}^{\lambda_0}$ with $\go l_0=[\widetilde{\go g}^{-\lambda_0},\widetilde{\go g}^{\lambda_0}]\simeq \mathbb L$,  $\widetilde{\go g}^{\lambda_0}\simeq \mathbb S^+\simeq F^3$ and $\widetilde{\go g}^{-\lambda_0}\simeq \mathbb S^-$. Let  $Q$ be the quadratic form on  $\mathbb S^+$ defined by   $Q(\mathbf Y)=-{\rm det}( \mathbf Y)=-\pi a^2+u\pi b^2+c^2$ for $\mathbf Y=\left(\begin{array}{cc} \pi(a+\sqrt{u}b) &c\\ c & a-\sqrt{u}b\end{array}\right)\in\mathbb S^+$ (where $a,b,c\in F$). Remember also that this form is anisotropic. \medskip

\noindent     Let $U$ be the connected algebraic subgroup of  $G^0(2)\subset GL(2,E)$ whose Lie algebra is $\go{u}=[\mathbb L, \mathbb L]\subset \go{sl}(2,E)$.  We will first give a surjection from $U$ onto $SO(Q)$. By \cite{Tauvel-Yu} (Corollary 24.4.5 and Remark 24.2.6) the group $U$ is the intersection of the algebraic subgroups of $GL(2,E)$ whose Lie algebra contains $\go{u}.$ As the Lie algebra of $SL(2,E)$ is $\go{sl}(2,E)$ (see for example \cite{Borel}, Chap. I, 3.9 (d)), we get that $U\subset SL(2,E)$ and hence the elements in $U$ have determinant $1$.

 We denote by  $\Psi_{\mathbf g}$ the action of an element    ${\mathbf g}$ of  $ U$   on $\mathbb S^+$, in other words  $\Psi_{\mathbf g}(\mathbf Y)={\mathbf g}{\mathbf Y}\; ^{t}{\mathbf g}$. As $U\subset SL(2,E)$ one has $\Psi_{\mathbf g}\in O(Q)$. From Lemma \ref{lemma-G} one has  $\Psi_{\mathbf g}=Id_{{\bb S}^+}$ if and only if ${\bf g}=\pm I_{2}$. Therefore the map 
 $$\begin{array}{rll}
 U_{1}=U/\{\pm I_{2}\}&\longrightarrow&O(Q)\\
 {\bf g}&\longmapsto& \Psi_{\mathbf g}
 \end{array}$$
 
 (defined up to a abuse of notation) is injective.

Let   $\overline{F}$ be an algebraic closure of  $F$.  It is well known that  $SO(Q,\overline{F})$ is connected. As $U$ is connected, the same is true for $U_{1}$. Therefore $\Psi(U_{1}({\overline F}))\subset SO(Q, \overline{F})$, and $\Psi$ is an isomorphism from $U_{1}$ on its image.

From \cite{Tauvel-Yu} (Theorem 24.4.1), the differential of $\Psi$ is injective. As the Lie algebra of $U_{1}$ (which is equal to $\go{u}$) has the same dimension ($3$) as $\go{o}(Q)$, the map $\Psi$ is also a submersion. Hence $\Psi(U_{1}({\overline F}))$ is open in $SO(Q,\overline{F})$. By \cite{Borel} Chap. I, Corollary 1.4, the group $\Psi(U_{1}({\overline F}))$ is also closed in $SO(Q,\overline{F})$. Therefore $\Psi(U_{1}({\overline F}))= SO(Q,\overline{F})$. 

\vskip 30pt

 Let  $\varphi\in SO(Q)$ and  $ \widetilde{\mathbf g}\in U(\overline{F})$ be such that $\Psi_{ \widetilde{\mathbf g}}=\varphi$. The element $g=\left(\begin{array}{cc}  \widetilde{\mathbf g} & 0\\ 0 & \;^{t} \widetilde{\mathbf g}^{-1}\end{array}\right)$ normalizes   $\widetilde{\go g}^{\lambda_0}$, hence by duality $g$  normalizes   $\widetilde{\go g}^{-\lambda_0}$  and therefore it normalizes   $\go l_0$ and $[\go l_0,\go l_0]$. This implies that $ \widetilde{\mathbf g}\in U$. Finally the map $\mathbf g\mapsto \Psi_\mathbf g$ is surjective from  $U$ onto  $SO(Q)$.\medskip

Let us now prove the Lemma. From the assumption, there exists  $x\in F^*$ such that  $Q(\mathbf X)=Q(x \mathbf X')$. From Witt's Theorem $SO(Q)$ acts transitively on the  set $\{{\mathbf Y}\in\mathbb S^+; Q({\mathbf Y})=t\}$, hence there exists  $\mathbf g\in U\subset G^0(2)$ such that $\mathbf X=x {\mathbf g}\mathbf X'\; ^{t}{\mathbf g}$. 

In order to prove the Lemma, it is now enough to prove that for all ${\mathbf Y}\in \mathbb S^+\setminus\{0\}$, the elements   $u\mathbf Y$, $\pi \mathbf Y$ and  $\mathbf Y$ are conjugated under $G^0(2)$, as $F^*/F^{*2}=\{1,u,\pi,u\pi\}$.

Let ${\mathbf Y}\in \mathbb S^+\setminus\{0\}$. As  $J_\pi=\pi J_\pi^{-1}$, one has  $\pi {\mathbf Y}=J_\pi \overline{\mathbf Y}\; ^{t}J_\pi$. And as  ${\rm det}({\mathbf Y})={\rm det}(\overline{\mathbf Y})$, the preceding discussion implies that there exists ${\mathbf g}\in  U\subset G^0(2)$ such that  ${\mathbf g}{\mathbf Y}\;^{t}{\mathbf g}=\overline{\mathbf Y}$.  As  $J_\pi\in G^0(2)$, it follows that  $\pi{\mathbf Y}$ and ${\mathbf Y}$ are conjugated by the element $J_\pi{\mathbf g}$ of $G^0(2)$.\\

  Let us  prove that $u\mathbf Y$ is $G^0(2)$-conjugate to $\mathbf Y$. As $-disc(Q)=u$,  a system of representatives of the $L_0$ - orbits in $\tilde{\go g}^{\lambda_0}\simeq \mathbb S^+$  is given by   
$$X_{1,0}=\left(\begin{array}{cc} \pi &0\\ 0 & 1\end{array}\right),\quad X_{ 0,1}=\left(\begin{array}{cc} 0 &1\\ 1 & 0\end{array}\right),\;\;{\rm and }\;\; X_{\alpha,0}=\left(\begin{array}{cc} \pi\overline{\alpha} &0\\ 0 & \alpha\end{array}\right),$$
where $\alpha\overline{\alpha}=u$ (if  $-1=\alpha_0^2\in F^{*2}$ then $\alpha=\alpha_0\sqrt{u}$, and if  $-1\notin F^{*2}$ then $u=-1=x_0^2+y_0^2$ with $x_0,y_0\in F^*$  and  $\alpha= x_0+y_0\sqrt{u}$) (see Theorem \ref{th-k=0} 2)).\\
 By proposition \ref{prop-G}, $L_0$ is the group of elements $[\mathbf g,\mu]$ with $\mathbf g\in G^0(2)\cup\sqrt{u} G^0(2)$ and $\mu\in F^*$. Hence, there exist $y\in F^*$ and $\mathbf g\in G^0(2)$ such that $y {\mathbf g} \mathbf Y\;^{t}{\mathbf  g}$ equals to $X_{1,0}, X_{0,1}$ or $X_{\alpha,0}$. Thus it is enough to prove the result for this set of representatives.\\
  
Let  $\mathbf g_u=\left(\begin{array}{cc} \sqrt{u}&0\\ 0 & -\sqrt{u}\end{array}\right)\in G^0(2)$,  then  $\mathbf{g}_uX_{1,0}\;^{t}\mathbf{g}_u=uX_{1,0}$ and  $\mathbf{g}_uX_{\alpha,0}\;^{t}\mathbf{g}_u=uX_{\alpha,0}$. If $\mathbf g_\alpha=\left(\begin{array}{cc} \alpha&0\\ 0 & \overline{\alpha}\end{array}\right)\in G^0(2)$,  then $\mathbf{g}_\alpha X_{ 0,1}\;^{t}\mathbf{g}_\alpha=uX_{0,1}$. \\
The Lemma is proved.

\end{proof}

  \begin{cor}\label{cor-conjLj} Let  $k\in\N$. Let    $X=X_0+\ldots +X_k$ be a generic element of  $\oplus_{i=0}^k\tilde{\go g}^{\lambda_i}$. Let  $j\in\{0,\ldots, ,k\}$ and $X'_j\in \tilde{\go g}^{\lambda_j}$ such that   $\delta_j(X_j)=\delta_j(X'_j)$ mod $F^{*2}$. Then $X_0+\ldots X_{j-1}+X'_j+X_{j+1}+\ldots X_k$ is $L^0_j$-conjugated to    $X$.
 \end{cor}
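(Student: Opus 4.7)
The plan is to reduce the statement to Lemma \ref{lem-G0-orbites} applied to the rank-one graded subalgebra $\widetilde{\go l}_j$, and then use the fact that $L_j^0$, viewed as a subgroup of $G^0$ via Remark \ref{rem-inclusion-groupes}, fixes the components $X_i$ for $i\neq j$.

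First I would note that by Proposition \ref{prop-memedimension} all the rank-one subalgebras $\widetilde{\go l}_i$ are $G$-conjugate, and in particular they carry the same structure as $\widetilde{\go l}_0$: an $L^0$-equivariant isomorphism $\widetilde{\go l}_j\simeq \widetilde{\go l}_0$ identifies $\widetilde{\go g}^{\lambda_j}$ with $\widetilde{\go g}^{\lambda_0}\simeq {\mathbb S}^+$, the group $L_j^0$ with $G^0(2)$, and (up to an $F^*$-scalar, which is irrelevant modulo $F^{*2}$) the fundamental relative invariant $\delta_j$ with $\delta_0=-\det$. Under this identification, the hypothesis $\delta_j(X_j)\equiv \delta_j(X'_j)\bmod F^{*2}$ becomes exactly the hypothesis of Lemma \ref{lem-G0-orbites}, so that Lemma produces an element $g_j\in L_j^0$ satisfying $g_j\cdot X_j=X'_j$.

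Next I would verify that such a $g_j$ automatically fixes each $X_i$ for $i\neq j$. By Remark \ref{rem-inclusion-groupes}, $L_j^0\subset G^0\subset G$, and over $\overline F$ every element of $L_j^0$ is a product of exponentials of nilpotent elements of $\widetilde{\go l}_j\otimes \overline F$. Since the roots $\lambda_0,\ldots,\lambda_k$ are pairwise strongly orthogonal, the subalgebras $\widetilde{\go l}_i$ for $i\neq j$ are contained in the centralizer of $\widetilde{\go l}_j$ (this is exactly the content of Corollary \ref{cor-decomp-j} together with Proposition \ref{propWconjugues}); in particular every $X_i\in \widetilde{\go g}^{\lambda_i}\subset \widetilde{\go l}_i$ commutes with all of $\widetilde{\go l}_j\otimes\overline F$, and hence is fixed by $g_j$. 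Consequently
$$g_j\cdot X = X_0+\ldots+X_{j-1}+X'_j+X_{j+1}+\ldots+X_k,$$
which is the desired $L_j^0$-conjugacy.

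The only real point to check carefully is the third step, namely that an element of $L_j^0$ really acts trivially on $\widetilde{\go g}^{\lambda_i}$ for $i\neq j$. This is the main (but essentially bookkeeping) obstacle: one must confirm that under the identification $L_j^0\hookrightarrow G^0$ described in Remark \ref{rem-inclusion-groupes} (which for $j=0$ is given explicitly by the block shape of $\mathbf g$), an element $[\mathbf g_j,1]\in L_j^0$ with $\mathbf g_j\in G^0(2)$ indeed acts as the identity on the $\widetilde{\go g}^{\lambda_i}$ with $i\neq j$. This is clear from the block-diagonal form of the matrix realizing $[\mathbf g_j,1]$, and equivalently from the Lie-theoretic statement above.
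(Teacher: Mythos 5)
Your proposal is correct and follows essentially the same route as the paper: the paper also reduces the hypothesis to $\det(\mathbf X_j)\equiv\det(\mathbf X'_j)\bmod F^{*2}$ (the normalization $\delta_j=\delta_0\circ\gamma_{j,0}$ makes this exact, no scalar ambiguity), applies Lemma \ref{lem-G0-orbites} to get a $G^0(2)$-conjugation, and concludes from the definition of $L_j^0$ via Remark \ref{rem-inclusion-groupes} that this element fixes the components $X_i$, $i\neq j$. Your extra Lie-theoretic justification of that last point (strong orthogonality, hence $\widetilde{\go l}_i$ commutes with $\widetilde{\go l}_j$) is a slightly more detailed version of what the paper leaves to the block-diagonal description.
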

  \begin{proof} Let  $X_j= \left(\begin{array}{cc} 0 &E_{i,i}({\mathbf X_j})\\0 & 0\end{array}\right)$ and $X'_j= \left(\begin{array}{cc} 0 &E_{i,i}({\mathbf X'_j})\\0 & 0\end{array}\right)$, where   ${\mathbf X}_j$ and  $\mathbf X_j'$ belong to  $\mathbb S^+$. Then  $\delta_j(X_j)=\delta_j(X'_j)$ mod $F^{*2}$ if and only if  ${\rm det}({\mathbf X}_j)={\rm det}({\mathbf X}'_j)$ mod $F^{*2}$. By Lemma  \ref{lem-G0-orbites}, the elements  ${\mathbf X}_j$ and  ${\mathbf X'}_j$ are  $G^0(2)$-conjugated. The result is then a consequence of the definition of  $L_j^0$ (see Remark \ref{rem-inclusion-groupes}).
  
  \end{proof}
\vskip 3pt
\begin{lemme}\label{lem-k=1} Suppose that  $k=1$. Let  $X=X_0+X_1$ and $Y=Y_0+Y_1$  be two elements of  $V^+$ such that   $X_j,Y_j\in\widetilde{\go g}^{\lambda_j}\backslash\{0\}$ and  $\delta_0(X_0)\equiv \delta_1(X_1)\;{\rm mod}\; F^{*2}$ and  $\delta_0(Y_0)\equiv \delta_1(Y_1)\;{\rm mod}\; F^{*2}$. Then $X$ and $Y$  are in the same     $G^0$-orbit.

\end{lemme}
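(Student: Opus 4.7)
The plan is to combine Corollary~\ref{cor-conjLj} with the structure of $G$ from Proposition~\ref{prop-G}, by showing that every $G$-action on $V^+$ decomposes as a scalar multiplication by some $c\in F^*$ followed by a $G^0$-action, and then observing that such a scalar preserves the componentwise $\delta_j$-classes modulo $F^{*2}$ (because $\delta_j$ is homogeneous of degree $\kappa=2$), so that the scalar can be absorbed via Corollary~\ref{cor-conjLj}.

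First I would verify that $X$ and $Y$ lie in the same $G$-orbit. The normalization $\Delta_i(X_i+\dots+X_k)=\prod_{j=i}^{k}\delta_j(X_j)$ stated just before the theorem gives $\Delta_0(X)=\delta_0(X_0)\delta_1(X_1)\in F^{*2}$ under the hypothesis, and likewise $\Delta_0(Y)\in F^{*2}$. The formula $\chi_0([\mathbf{g},\mu])=\mu^{-4}(\det\mathbf{g})^2$ from Section~\ref{subsection(l=3)} shows $\chi_0(G)\subset F^{*2}$; conversely, for block-diagonal $\mathbf{g}=\mathrm{diag}(A,I)$ with $A\in\mathbb{L}$ of reduced norm $a\in F^*$ (which can be any element of $F^*$ since $E/F$ is unramified), one has $\chi_0([\mathbf{g},1])=a^2$, whence $\chi_0(G)=F^{*2}$. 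Since open $G$-orbits in $V^+$ are parameterized by the image of $\Delta_0$ in $F^*/\chi_0(G)=F^*/F^{*2}$, a standard feature of $F$-forms of prehomogeneous vector spaces, and both $X$ and $Y$ satisfy $\Delta_0\equiv 1\bmod F^{*2}$, they lie in the same $G$-orbit.

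Next, Proposition~\ref{prop-G} writes every $g\in G$ as $[\mathbf{g},\mu]$ with $\mathbf{g}\in G^0(4)\cup\sqrt u\,G^0(4)$ and $\mu\in F^*$, acting on $\mathbf{X}\in V^+\simeq Sym_J(4)$ by $\mathbf{X}\mapsto\mu^{-1}\mathbf{g}\mathbf{X}\,{}^{t}\mathbf{g}$. Factoring out the scalar, this action equals $\mathbf{X}\mapsto c\cdot g_0(\mathbf{X})$ for some $c\in F^*$ (namely $c=\mu^{-1}$ if $\mathbf{g}\in G^0(4)$, and $c=u\mu^{-1}$ if $\mathbf{g}\in\sqrt u\,G^0(4)$) and some $g_0\in G^0$. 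Choosing $g\in G$ with $g\cdot X=Y$ and writing it in this form yields $g_0\in G^0$ with $g_0(X)=c^{-1}Y$. Since $\delta_j(c^{-1}Y_j)=c^{-2}\delta_j(Y_j)\equiv\delta_j(Y_j)\bmod F^{*2}$ for $j=0,1$, Corollary~\ref{cor-conjLj} applied successively at $j=0$ and $j=1$ produces $\ell\in L_0^0 L_1^0\subset G^0$ with $\ell(c^{-1}Y)=Y$; then $\ell\circ g_0\in G^0$ carries $X$ to $Y$, as required.

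The main delicate point is the orbit parameterization used in the first step. This is a standard consequence of the prehomogeneous structure together with the computation $\chi_0(G)=F^{*2}$, but should it need a more self-contained justification at this stage of the paper, the same conclusion can be reached directly: by Proposition~\ref{prop G-diag} both $X$ and $Y$ are already of the diagonal form it produces, and one exhibits the required element of $G$ explicitly, combining block off-diagonal matrices in $G^0(4)=GL(2,\mathbb{L})$ with the rank-one result Theorem~\ref{th-k=0} applied to each $\widetilde{\go l}_i$ to rescale the individual components.
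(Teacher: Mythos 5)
There is a genuine gap, and it sits exactly at the heart of the lemma. Your reduction in the second half is fine: writing $g=[\mathbf g,\mu]$ with $\mathbf g\in G^0(4)\cup\sqrt u\,G^0(4)$, the action of $G$ on $V^+$ is indeed a scalar $c\in F^*$ composed with a $G^0$-action, and since $\kappa=2$ here the scalar does not change the classes $\delta_j\bmod F^{*2}$, so Corollary \ref{cor-conjLj} absorbs it; this correctly shows that, under the hypothesis on the $\delta_j$-classes, $G$-conjugacy implies $G^0$-conjugacy. But the first step — that $X$ and $Y$ are $G$-conjugate because $\Delta_0(X)\equiv\Delta_0(Y)\bmod\chi_0(G)$ — is unsupported. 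The claim that open orbits of an $F$-form of a prehomogeneous space are parameterized by the value of the relative invariant modulo $\chi_0(G)$ is not a standard fact and is false in general; the paper itself provides a counterexample (Theorem \ref{thm-orbites-e1}: for the symplectic case with $k+1$ even there are $5$ open orbits although $F^*/F^{*2}$ has only $4$ classes). In the present case ($\ell=3$, $k=1$) the parameterization does hold, but only as a consequence of Theorem \ref{th-d=3}, whose proof invokes precisely Lemma \ref{lem-k=1}; so invoking it here is circular. Knowing $\chi_0(G)=F^{*2}$ only gives the necessary condition (same orbit $\Rightarrow$ same square class of $\Delta_0$), not the converse you need.

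Your fallback sketch does not fill this hole. The genuinely hard case is $\delta_0(X_0)\not\equiv\delta_0(Y_0)\bmod F^{*2}$ (e.g.\ $(\delta_0,\delta_1)\equiv(1,1)$ for $X$ versus $(\pi,\pi)$ for $Y$): by Theorem \ref{th-k=0}(2) the rank-one groups $L_i$ (hence $L_i^0$), scalar rescalings, and the permutations $\gamma_{0,1}$ all preserve the square class of each individual $\delta_j$, so no combination of "rescaling the individual components" can pass from $(1,1)$ to $(\pi,\pi)$. What is needed is an element genuinely mixing $\widetilde{\go g}^{\lambda_0}$ and $\widetilde{\go g}^{\lambda_1}$; the paper constructs it by first reducing (via Corollary \ref{cor-conjLj}) to $X_1=\gamma_{0,1}(X_0)$, $Y_1=\gamma_{0,1}(Y_0)$, then solving ${\rm det}(a_0\mathbf X-\mathbf Y)=b_0^2\,{\rm det}(\mathbf X)$ with $a_0,b_0\neq 0$ — possible because the form $A^2-B^2$ is isotropic — then using Lemma \ref{lem-G0-orbites} to write $a_0\mathbf X-\mathbf Y=-a_0\mathbf g_1\mathbf X\,{}^{t}\mathbf g_1$ and exhibiting the explicit off-diagonal block matrix $\mathbf g=\bigl(\begin{smallmatrix} I_2 & \mathbf g_1\\ -\mathbf X({}^{t}\mathbf g_1)\mathbf X^{-1} & I_2\end{smallmatrix}\bigr)\in G^0(4)$, followed by one more application of Lemma \ref{lem-G0-orbites}. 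Nothing in your proposal supplies this construction, so the statement is not proved.
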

\begin{proof} The normalization of  $\delta_1$, the hypothesis,   and Corollary \ref{cor-conjLj}  imply  that the elements  $X_1$ and  $\gamma_{0,1}(X_0)$(respectively  $Y_1$ and  $\gamma_{0,1}(Y_0)$) are in the same $L_1^0$-orbit. As  $L_1^0$ acts trivially on  $\widetilde{\go g}^{\lambda_0}$, one can suppose that $X_1=\gamma_{0,1}(X_0)$  and $Y_1=\gamma_{0,1}(Y_0)$.\medskip 

If   $\delta_0(X_0)\equiv \delta_0(Y_0)\;{\rm mod}\; F^{*2}$ then Corollary  \ref{cor-conjLj}   implies that  $X_j$ and $Y_j$ are conjugated by an element   $l_j$ in $L_j^0$ for  $j=0,1$ and then $X$ and $Y$ are conjugated by   $l_0l_1\in G^0$.\medskip

\noindent  We assume now that   $\delta_0(X_0)\not\equiv \delta_0(Y_0)\;{\rm mod}\; F^{*2}$. We can then write 
$$X=\left(\begin{array}{cc} 0 & \begin{array}{cc} {\mathbf X} & 0\\ 0 & {\mathbf X} \end{array}\\ 0 &0\end{array}\right)\quad {\rm and } \quad Y=\left(\begin{array}{cc} 0 & \begin{array}{cc} {\mathbf Y} & 0\\ 0 & {\mathbf Y} \end{array}\\ 0 &0\end{array}\right),$$
with  ${\mathbf X},{\mathbf Y}\in\mathbb S^+\backslash\{0\}$, and $\det({\mathbf X}) \not\equiv  \det({\mathbf Y}) \;{\rm mod}\; F^{*2}$.\\
 Consider the polynomial  $Q(a,b)$ defined on  $F^2$ by 
 
$$Q(a,b)= {\rm det}(a{\mathbf X}-{\mathbf Y})-b^2{\rm det}({\mathbf X}).$$

Then   $Q(a,b)= {\rm det}({\mathbf X}) (a^2-b^2)+2a\; C_0({\mathbf X},{\mathbf Y})+{\rm det}({\mathbf Y})$ where  $C_0$ is a polynomial function in the variables $({\mathbf X},{\mathbf Y})$. We set 
$C_1({\mathbf X},{\mathbf Y})=({\rm det}({\mathbf X}))^{-1}C_0({\mathbf X},{\mathbf Y})$, and hence
$$Q(a,b)= {\rm det}({\mathbf X}) (a^2-b^2)+2a\;{\rm det}({\mathbf X}) C_1({\mathbf X},{\mathbf Y})+{\rm det}({\mathbf Y}).$$
We obtain then  
$$Q(a,b)=  {\rm det}({\mathbf X})\Big[ \big(a+C_1({\mathbf X},{\mathbf Y})\big)^2-b^2\Big]+ {\rm det}({\mathbf Y})- {\rm det}({\mathbf X})C_1({\mathbf X},{\mathbf Y})^2.$$
As the quadratic form  $(A,B)\mapsto  A^{2}-B^{2}$ is isotropic, it represents all elements of  $F$ (\cite{Lam} Theorem I.3.4 p.10) and hence it exists  $(A_0,B_0)$ such that $$ {\rm det}({\mathbf X})(A_0^2-B_0^2)+ {\rm det}({\mathbf Y})- {\rm det}({\mathbf X})C_1({\mathbf X},{\mathbf Y})^2=0.$$
 It follows that the pair  $(a_0,b_0)=(A_0-C_1({\mathbf X},{\mathbf Y}),B_0)$ satisfies $Q(a_0,b_0)=0$ or equivalently,  ${\rm det} (a_0 {\mathbf X}-{\mathbf Y})=b_0^2{\rm det}({\mathbf X})$.  \\
  As $\mathbf X$ and $\mathbf Y$ belong to $\mathbb S^+$, we have ${\rm det} (a_0 {\mathbf X}-{\mathbf Y})=0$ if and only if $ a_0 {\mathbf X}-{\mathbf Y} =0$. As $\det({\mathbf X}) \not\equiv  \det({\mathbf Y}) \;{\rm mod}\; F^{*2}$,   we deduce that  $a_0\neq 0$ and $b_0\neq 0$. Therefore  $a_0 {\mathbf X}-{\mathbf Y}$ and   $-a_0{\mathbf X}$ are two non zero elements of  ${\mathbb S}^+$ such that  ${\rm det}(a_0{\mathbf X}-{\mathbf Y})\equiv{\rm det}(-a_0{\mathbf X})\;{\rm mod}\;F^{*2}$.  By Lemma \ref{lem-G0-orbites}, it exists  $ {\mathbf g}_1 \in  G^0(2)$ such that  $a_0{\mathbf X}-{\mathbf Y}=-   a_0 {\mathbf g}_1{\mathbf X}\;^{t}{\mathbf g}_1$, which is equivalent to  
$${\mathbf Y}=a_0{\mathbf X}+a_0 {\mathbf g}_1 {\mathbf X}(^{t}{\mathbf g}_1).$$
Let us set  $${\mathbf g}=\left(\begin{array}{cc} I_2 & {\mathbf g}_1\\ - {\mathbf X}(^{t}{\mathbf g}_1) {\mathbf X}^{-1} & I_2\end{array}\right).$$
 As $^{t}\mathbf X=\mathbf X$,   an easy computation shows that:
$$ {\mathbf g}\left(\begin{array}{cc} a_0{\mathbf X} & 0\\ 0 &a_0{\mathbf X}\end{array}\right)(^{t}{\mathbf g})=\left(\begin{array}{cc} {\mathbf Y} & 0\\ 0 & {\mathbf Y}'\end{array}\right),\;{\rm with  }\;\;{\mathbf Y}'=a_0{\mathbf X}+a_0{\mathbf X} (^{t}{\mathbf g}_1){\mathbf X}^{-1} {\mathbf g}_1{\mathbf X}\in \bb S^+.\eqno (*)$$

As ${\mathbf Y}=a_0{\mathbf X}+a_0 {\mathbf g}_1 {\mathbf X}(^{t}{\mathbf g}_1)\neq0$, one has  $ {\mathbf X}\neq - {\mathbf g}_1 {\mathbf X}(^{t}{\mathbf g}_1)$ and this is equivalent to   ${\mathbf X}(^{t}{\mathbf g}_1) {\mathbf X}^{-1} {\mathbf g}_1 \neq -I_2$. This implies that    ${\mathbf Y}'\neq 0$. Then,  by computing the determinant in $(*)$ above, we get  ${\rm det}({\mathbf g})\neq 0$ and  ${\rm det}({\mathbf Y})={\rm det}({\mathbf Y}')$ modulo $F^{*2}$. 

As   $J_\pi\overline{{\mathbf g}_1}={\mathbf g}_1J_\pi$ and   $J_\pi\overline{\mathbf X}={\mathbf X}\; ^{t}J_\pi$, a simple computation shows that $J\overline{{\mathbf g}}={\mathbf g}J$ and hence  ${\mathbf g}\in G^0(4)$. 

The relation ${\rm det}({\mathbf Y})={\rm det}({\mathbf Y}')$ modulo $F^{*2}$ and Lemma  \ref{lem-G0-orbites} imply  that there exists  ${\mathbf g}_0\in G^0(2)$ such that $$ {\mathbf g}_0  {\mathbf Y}'\; ^{t}{\mathbf g}_0=  {\mathbf Y} .$$
We set
$${\mathbf l}_0:=\left(\begin{array}{cc}  I_2 & 0\\ 0 & {\mathbf g}_0\end{array} \right)\in G^0(4).$$
Then    $g=[{\mathbf g}, 1]$ and  $l_0=[{\mathbf l}_0, 1]$ belong to $G^0$ and satisfy
 $ l_0g( a_0X)=Y$. Using Lemma \ref{lem-G0-orbites} again, there exists $\mathbf g_2\in G^0(2)$ such that $a_0\mathbf X= {\mathbf g}_2 {\mathbf X}\;^{t}{\mathbf g}_2$. Taking ${\mathbf g}'_2=\left(\begin{array}{cc}\mathbf g_2 & 0\\ 0 & \mathbf g_2\end{array}\right)\in G^0(4)$ and $g_2=[\mathbf g'_2,1]\in G^0$, we deduce that $l_0g g_2 X=Y$  and $l_0g g_2\in G^0$. The Lemma is proved.

\end{proof}


We are now able to describe the $G$-orbits in $V^+$ in the case $\ell=3$.

Let us set:
$$e_1=1,\,e_2=\pi,\,e_3=u\pi $$

and remember that $\{1,\pi,u\pi\}=F^*/F^{*2} \backslash\{ -disc(\delta_0)\}$.\\

For  $l\in\{1,2,3\}$ and  $X=X_m+\ldots +X_k\in V^+$ with $X_j\in \widetilde{\go g}^{\lambda_j}\backslash\{ 0\}$, we define 
$$n_l(X)= \#\{  j\in\{m,\ldots, k\} \text{  such that }\delta_j(X_j)=e_l\text{ mod } F^{*2}\}$$. 
\begin{theorem}\label{th-d=3} \hfill

We suppose that $\ell=3$

$1)$   Let $X=X_0+\ldots +X_k$ and $X'=X'_0+\ldots +X'_k$ be two elements of $V^+$ such that   $X_i \in  \widetilde{\go g}^{\lambda_i}\backslash\{ 0\}$ $($resp.  $X_j' \in  \widetilde{\go g}^{\lambda_j}\backslash\{ 0\}$$)$.  Then   the following assertions are equivalent:\\
$(a)$ $X$ and $X'$ are in the same $G$-orbit,\\
$(b)$  $n_l(X)\equiv n_l(X')$ mod $2$ for  $l=1, 2$ and $3$,\\
$(c)$ $X$ and $X'$ are in the same $G^0$-orbit. \medskip

$2)$ Suppose that the rank of $\widetilde{ \go{g}}$ is $k+1$. Then the number of $G$-orbits in  $V^+$ is  $4(k+1)$ with   $3$ open orbits if   the rank is $1$ $($i.e. $k=0$$)$ and  $4$ open orbits if  the rank is $\geq 2$ $($i.e. $k\geq 1$$)$. 
\medskip
 
$3)$ For  $v\in \{1,\pi,u\pi\}$, let us fix a representative $X_0(v)\in\widetilde{\go g}^{\lambda_0}$ of the orbit  $\{Y\in  \widetilde{\go g}^{\lambda_0}; \delta_0(Y)\equiv v\;{\rm mod}\; F^{*2}\}$ and, if  $k\geq 1$,  we set  $X_j(v)=\gamma_{0,j}(X_0(v))$, for $j=1,\ldots k$. A set of representatives of the non zero orbits is then: 
$$X_0(1),\quad X_0(\pi),\quad X_0( u\pi),$$
and,  if  $k\geq 1$, for  $m\in\{ 0,\ldots, k\}$,
$$\begin{array}{l}X_m(1)+\ldots+X_{k-1}(1) +X_k(1),\\
 X_m(1)+\ldots +X_{k-1}(1)+X_k(\pi),\\
 X_m(1)+\ldots+X_{k-1}(1) +X_k(u\pi),\\
 X_m(1)+\ldots+X_{k-2}(1)+X_{k-1}(\pi)+X_k(u\pi).\end{array}.$$
$($where we assume that if  $k=1$ then $X_{k-2}(1)=0$$)$. \\
For $k\geq 1$, the $4$ open orbits  are those of the preceding elements where $m=0$. For $k=0$, the $3$  open orbits are those of the elements $X_0(1), X_0(\pi)$ and $ X_0( u\pi)$.\end{theorem}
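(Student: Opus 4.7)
The plan is to combine Proposition \ref{prop G-diag} (reduction to the diagonal), the $L^0_j$-transitivity from Corollary \ref{cor-conjLj}, and Lemma \ref{lem-k=1} (which handles pairs) to describe all $G$-orbits via a parity invariant. By Proposition \ref{prop G-diag}, every nonzero $Z\in V^+$ is $G$-conjugate to a generic element of some $V^+_{k+1-r}$, where $r$ is its rank. I will first establish the analogue for the present case of Theorems \ref{thm-orbites-e1}(2)(c) and \ref{thm-orbites-e2}(2)(c): for each $m\in\{0,\ldots,k\}$, two generic elements of $V^+_m$ are $G$-conjugate iff they are $G_m$-conjugate. Using Proposition \ref{prop-G}, an element $[\mathbf g,\mu]\in G$ acts on $V^+\simeq \mathrm{Sym}_J(2(k+1))$ by $\mathbf X\mapsto \mu^{-1}\mathbf g\mathbf X\,{}^t\mathbf g$; generic elements of $V^+_m$ are symmetric matrices supported in the top-left $2(k+1-m)\times 2(k+1-m)$ block with that block invertible, and a direct block computation forces the top-left block of any conjugating $\mathbf g$ to lie in $G^0(2(k+1-m))$. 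This reduces the whole classification to the case $m=0$ applied to each $\tilde{\go g}_m$.

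For the equivalence (a)$\Leftrightarrow$(b)$\Leftrightarrow$(c) for generic $X,X'\in V^+$, the implication (c)$\Rightarrow$(a) is immediate since $G^0\subset G$. For (a)$\Rightarrow$(b), use that $\chi_0(G)\subset F^{*2}$ (noted after Proposition \ref{prop-G}), so $\Delta_0\bmod F^{*2}$ is a $G$-invariant. With the normalization $\Delta_0(X)=\prod_j \delta_j(X_j)$ from the paragraph preceding Lemma \ref{lem-G0-orbites}, one gets
\[
\Delta_0(X)\equiv \pi^{\,n_2(X)+n_3(X)}\,u^{\,n_3(X)}\pmod{F^{*2}}.
\]
Since $F^*/F^{*2}=\{1,u,\pi,u\pi\}$, the class $\Delta_0(X)\bmod F^{*2}$ determines $(n_2(X),n_3(X))\bmod 2$, and then $n_1(X)\bmod 2$ via $n_1+n_2+n_3=k+1$. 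Hence $\Delta_0(X)\equiv \Delta_0(X')$ mod $F^{*2}$ is equivalent to the equality of all three parities.

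The main obstacle is (b)$\Rightarrow$(c). The plan has two stages. \emph{First}, by Corollary \ref{cor-conjLj}, for each $j$ the group $L^0_j\subset G^0$ acts transitively on each set $\{Y\in\tilde{\go g}^{\lambda_j}:\delta_j(Y)\equiv v\bmod F^{*2}\}$, $v\in\{1,\pi,u\pi\}$, while fixing every $X_\ell$ with $\ell\neq j$; and the elements $f_{i,j}=\gamma_{0,i}\gamma_{0,j}\gamma_{0,i}\in\mathrm{Aut}_e(\go g)\subset G^0$ from the proof of Proposition \ref{prop-equivalenceqXiXj} interchange $X_i$ and $X_j$ while fixing the other components. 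Hence we may freely reduce each $X_j$ to the chosen representative $X_j(v_j)$, and permute the multiset $\{v_0,\ldots,v_k\}$ by elements of $G^0$. \emph{Second}, for any $i<j$, the rank-$2$ graded subalgebra $\tilde{\go g}_A$ with $A=\{i,j\}$ (Corollary \ref{cor-gA}) satisfies $({\mathbf H}_1),({\mathbf H}_2),({\mathbf H}_3)$ with the same value $\ell=3$; applying Lemma \ref{lem-k=1} inside $\tilde{\go g}_A$ and embedding $G^0_A$ into $G^0$ as in Remark \ref{rem-inclusion-groupes}, we may, whenever $\delta_i(X_i)\equiv \delta_j(X_j)\bmod F^{*2}$, replace the pair by $X_i(v')+X_j(v')$ for any $v'\in\{1,\pi,u\pi\}$, fixing all other components. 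Combined with permutations, these pair-swaps generate all moves preserving the parity triple $(n_1,n_2,n_3)\bmod 2$: any two nonnegative triples of integers with the same sum $k+1$ and the same parities are linked by a sequence of elementary moves $(n_a,n_b)\mapsto(n_a-2,n_b+2)$. This proves (b)$\Rightarrow$(c).

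Finally, for (2) and (3): $G$-orbits of generic elements of $V^+_m$ (of rank $r=k+1-m$) are in bijection with parity triples $(\epsilon_1,\epsilon_2,\epsilon_3)\in(\mathbb{Z}/2)^3$ with $\epsilon_1+\epsilon_2+\epsilon_3\equiv r\bmod 2$, a set of $4$ elements when $r\geq 2$ and of $3$ elements when $r=1$ (this last case being covered by Theorem \ref{th-k=0}(2)). Summing over $m\in\{0,\ldots,k\}$ gives $3+4k$ nonzero orbits, hence $4(k+1)$ orbits total; the open orbits ($m=0$) are $3$ when $k=0$ and $4$ when $k\geq 1$. The explicit representatives in (3) are the canonical tuples minimizing the number of non-$1$ entries in each parity class: for the three classes with a single odd coordinate, take $X_m(1)+\cdots+X_{k-1}(1)+X_k(v)$ with $v\in\{1,\pi,u\pi\}$; and for the remaining class (two odd coordinates, available only when $r\geq 2$), take $X_m(1)+\cdots+X_{k-2}(1)+X_{k-1}(\pi)+X_k(u\pi)$.
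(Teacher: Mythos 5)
Your proposal is correct and rests on the same pillars as the paper's proof (the invariant $\Delta_0$ modulo $F^{*2}$ via $\chi_0(G)\subset F^{*2}$ for (a)$\Rightarrow$(b), Corollary \ref{cor-conjLj} and Lemma \ref{lem-k=1} for (b)$\Rightarrow$(c), Theorem \ref{th-k=0} for the rank-one count, and the block-matrix identification of $G$- and $G_m$-conjugacy on generic elements of $V_m^+$), but you organize the key implication (b)$\Rightarrow$(c) genuinely differently. The paper argues by induction on $k$, splitting into the case where some parity $n_l$ is odd (match the $\lambda_0$-components by $L_0^0$ and descend to $V_1^+$) and the case where all parities are even (apply Lemma \ref{lem-k=1} to a tail pair with equal $\delta$-classes and descend to $V_2^+$); you instead reduce to canonical diagonal forms and prove connectivity of parity-equivalent class multisets under two kinds of $G^0$-moves, the transpositions $f_{i,j}$ and the ``pair-swaps'' coming from Lemma \ref{lem-k=1}, concluding by the greedy observation that nonnegative triples with equal sum and equal parities are linked by transfers of $2$ that stay nonnegative. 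This makes the combinatorics more transparent and dispenses with the induction and case split, at the cost of two small points you should tighten. First, Remark \ref{rem-inclusion-groupes} only furnishes the embedding $G_j^0\hookrightarrow G^0$ for the tail subalgebras $\widetilde{\go g}_j$, not for $\widetilde{\go g}_A$ with an arbitrary two-element $A$; either construct the analogous block-diagonal embedding for such $A$, or (simpler, and already in your toolkit) first move the pair to positions $\{k-1,k\}$ with the $f_{i,j}\in G^0$ and apply Lemma \ref{lem-k=1} through $G_{k-1}^0\subset G^0$, which is exactly how the paper uses it. Second, in the $G$-versus-$G_m$ reduction the extracted top-left block of $\mathbf g$ need only lie in $G^0(2(k+1-m))\cup\sqrt{u}\,G^0(2(k+1-m))$, and its invertibility must be extracted from genericity via $\Delta_m$ as in the paper; this is what is actually needed for $[\mathbf g_1,\mu]\in G_m$, and the conclusion you draw from it is unaffected.
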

\begin{proof}\hfill 

 $ 1)$ 
  Clearly, $(c)$ implies $(a)$.
 
If  $X$ and  $X'$ are in the same  $G$-orbit then  $\Delta_0(X)=\Delta_0(X')$ mod $F^{*2}$ (because $\chi_0(G)\subset F^{*2}$), and hence 
$$\prod_{j=0}^k \delta_j(X_j)\equiv \prod_{j=0}^k \delta_j(X'_j)\;{\rm mod }\; F^{*2}.$$

This implies that 
$$\pi^{n_{2}(X)}(u\pi)^{n_{3}(X)}=\pi^{n_{2}(X')}(u\pi)^{n_{3}(X')} \;{\rm mod }\; F^{*2}.$$
Which is the same as:

$$\pi^{n_{2}(X)+n_{3}(X)}(u)^{n_{3}(X)}=\pi^{n_{2}(X')+n_{3}(X')}(u)^{n_{3}(X')} \;{\rm mod }\; F^{*2}.$$

And as $F^*/{F^*}^2\simeq (\Z/2\Z)^2$ this last equality implies that

$$n_{2}(X)+n_{3}(X)=n_{2}(X')+n_{3}(X')\,\text{ mod } 2 \, \text{ and } \,n_{3}(X)=n_{3}(X') \,\text{ mod } 2, $$

and therefore $n_{2}(X)=n_{2}(X')\,\text{ mod } 2$.

As  $n_1(X)+n_2(X)+n_3(X)= n_1(X')+n_2(X')+n_3(X')=k+1$, we obtain also   $n_1(X)\equiv n_1(X')$ mod $2$.  Finally
$$n_{\ell}(X)=n_{\ell}(X') \, \text { mod } 2, \text {for } \ell =1,2,3.$$
 Thus $(a)$ implies $(b)$.
 \medskip

Suppose that    $n_l(X)=n_l(X')$ mod $2$ for $l=1, 2$ and $3$. We will show by induction on $k$ that $X$ and  $X'$ are  $G^0$-conjugated. By Corollary \ref{cor-conjLj} the result is true for  $k=0$. \medskip

 Suppose now that   $k\geq 1$.\\
$\bullet$ If there exists an  $l\in\{1,2,3\}$ such that  $n_l(X)\equiv n_l(X')\equiv1$ mod $2$. Then, applying eventually the elements   $\gamma_{i,j}\in{\rm Aut}_e(\go g)\subset G^0$, we can suppose that  $\delta_0(X_0)=\delta_0(X'_0)=e_l$. From the case $k=0$ there exists  $g_0\in L_0^0$ such that  $g_0X'_0=X_0$. As   $L_0^0$ stabilizes the space $\oplus_{j=1}^k \widetilde{\go g}^{\lambda_j}$, we get  $g_0X'=X_0+X'_1+\ldots X'_k$. \\
By induction on the elements  $X_1+\ldots +X_k$ and  $X_1'\ldots +X'_k$ of $V_1^+$ there exists    $g_1\in G_1^0$ such that  $g_1(X_1'+\ldots +X'_k)=X_1+\ldots +X_k$. As $g_1$ stabilizes $ \widetilde{\go g}^{\lambda_0}$, we obtain  $g_1g_0 X'=X$ and hence  $X$ and $X'$ are  $G^0$-conjugated.\medskip

$\bullet$ Suppose that  $n_l(X)\equiv n_l(X')\equiv 0$ mod $2$ for all $l\in\{1,2,3\}$.  This implies that $k\geq 1$ is odd. The case  $k=1$ is a consequence of  Lemma \ref{lem-k=1}. Hence we assume $k\geq 3$.\\
 As  $n_1(X)+n_2(X)+n_3(X)= k+1 $, we cannot have $n_{\ell}(X)= 0$ (or  $n_{\ell}(X')= 0$) for all $\ell= 1,2,3$. Therefore there exist   $r\neq s$ and $r'\neq s'$ such that    $\delta_r(X_r)\equiv \delta_s(X_s)\;{\rm mod}\; F^{*2}$ and  $\delta_{r'}(X'_{r'})=\delta_{s'}(X'_{s'})\;{\rm mod}\; F^{*2}$ . Using the elements   $\gamma_{i,j}\in G^0$ if necessary, we can suppose that $r=r'=k-1$ and $s=s'=k$. 
 
 Then  $X_{k-1}+X_k$ and $X'_{k-1}+X'_k$ are two elements of  $V_{k-1}^+$ such that 
  $$\delta_{k-1}(X_{k-1})= \delta(X_k) )\;{\rm mod}\; F^{*2} \text { and }\delta_{k-1}(X'_{k-1})=\delta(X'_k)\;{\rm mod}\; F^{*2}.$$
  
  Then by  Lemma \ref{lem-k=1} there exists   $g_{k-1}\in G_{k-1}^0$  such that  $g_{k-1}(X'_{k-1}+X'_k)=(X_{k-1}+X_k)$ and hence  $g_{k-1}(X')= X'_0+\ldots +X'_{k-2}+X_{k-1}+X_k$.
  
The elements $ \widetilde{X}'=\gamma_{0,k-1}\gamma_{1,k}(X'_0+ \ldots +X'_{k-2})$ and   $ \widetilde{X}=\gamma_{0,k-1}\gamma_{1,k}(X_0+ \ldots +X_{k-2})$ of $V_{2}^+$ satisfy the condition  $n_l( \widetilde{X})= n_l( \widetilde{X}')\equiv 0\;{\rm mod}\; 2$ for all $l\in\{1,2,3\}$. By induction (applied to $V_2^+$) there exists   $g'_2\in G_2^0$ such that  $g'_2  \widetilde{X}'= \widetilde{X}$.  \medskip
   
  \noindent  The element   $\gamma_{0,k-1}\gamma_{1,k}(X_{k-1}+X_k)\in \widetilde{\go g}^{\lambda_0}+ \widetilde{\go g}^{\lambda_1}$ is fixed by $g'_2$. We obtain:  
 
  $$g'_2\gamma_{0,k-1}\gamma_{1,k}g_{k-1}(X')=g'_2\gamma_{0,k-1}\gamma_{1,k}( X'_0+\ldots +X'_{k-2}+X_{k-1}+X_k)$$
$$ =g'_2( \widetilde{X}')+\gamma_{0,k-1}\gamma_{1,k}(X_{k-1}+X_k)=  \widetilde{X}+\gamma_{0,k-1}\gamma_{1,k}(X_{k-1}+X_k)=\gamma_{0,k-1}\gamma_{1,k}(X),$$
 and this proves the first assertion.\medskip
 
 \noindent{ 2)}  Let $Z\in V^+\setminus \{0\}$ . We know from Theorem \ref{th-V+caplambda} that the element  $Z$ is $G$-conjugated to an element of the form  $Z_0+\ldots +Z_k$ with $Z_j\in \widetilde{\go g}^{\lambda_j}$. Let  $m$ be the number of indices  $j$ such that  $Z_j=0$. Using the elements  $\gamma_{i,j}\in G$ of Proposition \ref{prop-gammaij},  we see that  $Z$  is  $G$-conjugated  to an element of the form $X=X_m+\ldots +X_k$ with $X_j\in \widetilde{\go g}^{\lambda_j}\backslash\{0\}$ for $j=m,\ldots, k$.\\
 
  $Z$ belongs to an open orbit if and only if $m=0$ (if not we would have $\Delta_{0}(Z)=0$).
 
 If $k=0$ we have already seen (in Theorem \ref{th-k=0} 2)) that the number of open orbits is $3$.
 If $k\geq 1$,  the number of open orbits is equal, according to the first assertion, to the number of classes modulo $2$ of triples $(n_{1}(X),n_{2}(X),n_{3}(X)$) such that $n_{1}(X)+n_{2}(X)+n_{3}(X)=k+1$. This number of classes is $4$. \\

$3)$ Suppose $m\neq m'$. Then, according to Theorem \ref{th-qnondeg}, two elements  $X=X_m+\ldots +X_k$ with  $X_j\in \widetilde{\go g}^{\lambda_j}\backslash\{0\}$ and  $Y=Y_{m'}+\ldots +Y_k$ with  $Y_i\in \widetilde{\go g}^{\lambda_i}\backslash\{0\}$ are not in the same $G$-orbit because  ${\rm rang}\; Q_X\neq {\rm rang}\; Q_Y$.

 Finally, to conclude the proof, it will be enough to show that  two generic elements  $Y=Y_m+\ldots Y_k$ and $Y'=Y'_m+\ldots Y'_k$ of  $V_m^+$  are  $G$-conjugated if and only if they are  $G_m$-conjugated. Let  $g=[\mathbf g,\mu]\in G$ such that $gY=Y'$. Denote by  $\mathbf g_1 \in M(2(k-m+1),E)$ the submatrix of  $\mathbf g$ of the coefficients in the first  $k-m+1$ rows and columns. Set $${\mathbf g}'=\left(\begin{array}{cc} {\mathbf g_1} & 0\\ 0 & I_m\end{array}\right)$$\\
 A simple block by block computation shows that $[{\mathbf g}',\mu]\cdot Y=Y'$. This implies that $\Delta_{m}(Y)=\mu^{-2}\det({\mathbf g}_{1})^2\Delta_{m}(Y')$. As $Y$ and $Y'$ are generic in $V_{m}^+$, it follows that $\det({\mathbf g}_{1})\neq0$. Hence
  the element   $[{\mathbf g}_1,\mu]$ belongs to $G_m$ (see Proposition \ref{prop-G}).  \\
   Conversely, if $Y$ and $Y'$ generic in $V_m^+$ are $G_m$ - conjugate then, by the first assertion,  they are $G_m^0$-conjugate. Since $G_m^0\subset G^0$, this achieves the proof of the Theorem.\\
 
 \end{proof}

 \newpage
  \section{The symmetric spaces $G/H$}\label{section-G/H}

\subsection{The involutions}\label{section-involutions}\hfill

Let  $I^+$ be a generic element of $V^+$. By Proposition \ref{prop-generiques-cas-regulier}, there exists $I^-\in V^-$ such that  $\{I^-,H_0,I^+\}$ is an  $\go sl_2$-triple. The action on $ \widetilde{\go g}$ of the non trivial element of the Weyl group of this $\go sl_2$-triple     is given by  the element  $w\in \widetilde{G}$ defined by  
$$w=e^{{\rm ad} \; I^+}e^{{\rm ad} \; I^-}e^{{\rm ad} \; I^+}=e^{{\rm ad} \; I^-}e^{{\rm ad} \; I^+}e^{{\rm ad} \; I^-}.$$
We denote by  $\sigma$ the corresponding isomorphism of $ \widetilde{\go g}$: $$\sigma(X)=w.X,\quad X\in  \widetilde{\go g}.$$
We denote also by  $\sigma$ the automorphism of $ \widetilde{G}$ induced by  $\sigma$:
$$\sigma(g)=w g w^{-1},\quad \textrm{ for } g\in \widetilde{G}.$$

If  $X\in \widetilde{\go g}$ is nilpotent, then  $\sigma(e^{{\rm ad}\; X})=e^{{\rm ad}\; \sigma(X)}.$

\begin{theorem}\label{th-invol}\hfill

The automorphism  $\sigma$ is an involution of  $ \widetilde{\go g}$ which satisfies the following properties:
\begin{enumerate}\item Define $\go h=\{ X\in\go g;  \sigma(X)=X\}$. Then $\go h={\go z}_{\go g}(I^+)= {\go z}_{\go g}(I^-)$.
\item Define $\go q=\{ X\in\go g;  \sigma(X)=-X\}$.Then ${\rm ad}\; I^+$ is an isomorphism from  $\go q$ onto $V^+$ and   ${\rm ad}\; I^-$ is an isomorphism from $\go q$ onto  $V^-$.
\item $\sigma(I^+)=I^-$ and  $\sigma(V^+)=V^-$. Moreover one has  $\sigma(X)=\dfrac{1}{2} \big({\rm ad }\; I^-\big)^2 X,\;\textrm{ for  } X\in V^+$ and   $\sigma(X)=\dfrac{1}{2} \big({\rm ad }\; I^+\big)^2 X,\;\textrm{ for } X\in V^-$.

\item $\sigma(H_0)=-H_0$ and  $\sigma(\go g)=\go g$. Moreover, for  $X\in\go g$, one has  $\sigma(X)=X+({\rm ad}\; I^-\; {\rm ad}\; I^+) X$.
\end{enumerate}

\end{theorem}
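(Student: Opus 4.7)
The strategy is to exploit the decomposition of $\widetilde{\go g}$ as a module over the $\go{sl}_2$-subalgebra $\go u = F I^- \oplus F H_0 \oplus F I^+$. By hypothesis $({\bf H_1})$ the eigenvalues of $\ad H_0$ on $\widetilde{\go g}$ are only $-2, 0, 2$, so every irreducible $\go u$-summand has dimension $1$ or $3$. The trivial (one-dimensional, weight $0$) summands all lie in $\go g$; the three-dimensional ones each contribute one line to each of $V^-$, $\go g$, $V^+$. The whole proof will rest on the explicit formulas recalled in section \ref{sl2module}: in such a summand, writing $e_p = \frac{(-1)^p}{p!}(\ad I^-)^p e_0$ with $e_0$ primitive of weight $m \in \{0,2\}$, one has $w \cdot e_p = (-1)^{m-p} e_{m-p}$, hence $w^2 e_p = (-1)^p e_p$. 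Since all weights on $\widetilde{\go g}$ are even, $w^2 = \mathrm{Id}$, so $\sigma$ is an involution.

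For (4), the formula $w \cdot e_1 = -e_1$ applied to the canonical 3-dim submodule $\{I^-, H_0, I^+\}$ (whose weight-zero vector is proportional to $H_0$) gives $\sigma(H_0) = -H_0$, and the stability $\sigma(\go g) = \go g$, $\sigma(V^\pm) = V^\mp$ follows from $[\sigma(H_0), \sigma(X)] = \sigma[H_0, X]$. The explicit formula $\sigma(X) = X + (\ad I^-\, \ad I^+) X$ on $\go g$ is checked on each type of summand: on a trivial one both sides equal $X$ since $\ad I^+ X = 0$; on the weight-zero line $F e_1$ of a 3-dim summand, the elementary identity $\ad I^+(e_1) = 2 e_0$ (from $[\ad I^+, \ad I^-] = \ad H_0$ combined with $\ad I^+ e_0 = 0$) together with $\ad I^-(e_0) = -e_1$ yields $(\ad I^-\, \ad I^+)(e_1) = -2 e_1$, so $X + (\ad I^-\, \ad I^+) X = -X = \sigma X$.

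For (3), any $X \in V^+$ is primitive of weight $2$ in its summand (since $[I^+, V^+] \subset [V^+, V^+] = 0$), so the formula for $w \cdot e_0$ with $m=2$, $p=0$ specializes to $\sigma(X) = \frac{1}{2}(\ad I^-)^2 X$; the statement for $X \in V^-$ is symmetric, and $\sigma(I^+) = I^-$ reduces to the direct computation $\frac{1}{2}[I^-,[I^-,I^+]] = \frac{1}{2}[I^-, H_0] = I^-$ using $[I^-, I^+] = H_0$. For (1)--(2), the trivial summands in $\go g$ are exactly $\{X \in \go g : \ad I^+ X = 0\} = \mathcal{Z}_{\go g}(I^+)$ (a weight-$0$ vector killed by the raising operator generates a trivial module, so it is also killed by $\ad I^-$; symmetry then yields $\mathcal{Z}_{\go g}(I^+) = \mathcal{Z}_{\go g}(I^-)$), which identifies $\go h$; dually, $\go q$ is the sum of the weight-zero lines $F e_1$ of the 3-dim summands, and $\ad I^+(e_1) = 2 e_0$ shows that $\ad I^+$ maps each such line isomorphically onto the weight-$2$ line $F e_0$, giving the bijection $\go q \to V^+$; the assertion for $\ad I^-$ is symmetric.

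The main technical obstacle is bookkeeping: each formula requires tracking the specific signs and normalizations dictated by the paper's $\go{sl}_2$-triple convention $[Y,X] = H$ (which governs in particular the sign of $[I^-, I^+] = H_0$ appearing in $\sigma(I^+) = I^-$, of $\ad I^-(e_0) = -e_1$, and of $\ad I^+(e_1) = 2 e_0$). Nothing in the argument goes deeper than the representation theory of $\go{sl}_2$ on at most $3$-dimensional modules, but the various formulas must be consistently aligned with the paper's conventions throughout.
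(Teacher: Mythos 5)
Your proof is correct and follows essentially the same route as the paper: decompose $\widetilde{\go g}$ into $1$- and $3$-dimensional irreducibles under the triple $\{I^-,H_0,I^+\}$ and read off the action of $w$ on each summand via the explicit formulas of section \ref{sl2module}, with all signs ($\ad I^+(e_1)=2e_0$, $\ad I^-(e_0)=-e_1$, $[I^-,I^+]=H_0$) checked consistently. The only (immaterial) differences are that the paper obtains $\sigma(H_0)=-H_0$ from $H_0=[I^-,I^+]$ after proving (3), and handles $V^-$ via injectivity of $(\ad I^+)^2$ rather than by symmetry.
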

\begin{proof} For the convenience of the reader we give the proof although it is the same as for the real case (See \cite{BR}). It is just elementary representation theory of the $\go sl_2$-triple $\{I^-,H_0,I^+\}$.

  The irreducible components of  $ \widetilde{\go g}$ under the action of this $\go sl_2$-triple  are of dimension  $1$ or  $3$  (because the weights of the primitive elements are $0$ or $2$). The action of $w^2$ is trivial on each of these components as they have odd dimension (see section \ref{sl2module}).  Hence  $\sigma$ is an involution of  $ \widetilde{\go g}$.\medskip

The subalgebra  $\go g$ is the sum of the  $0$-weight spaces  of theses irreducible components. If the dimension of the component is  $1$ (respectively  $3$) then the action of  $w$ is trivial (respectively multiplication by  $-1$). Therefore $\go h$ is the sum of the irreducible components of dimension $1$, and this proves the assertion (1), and  $\go q$ is the sum of the $0$-weight spaces of the irreducible components of dimension  $3$, and this proves the assertion  (2).\medskip

The space $V^+$ is the sum of the sum of the subspaces of primitive elements of the irreducible components of dimension $3$. Hence the action of $w$ on $V^+$ is given by $\frac{1}{2}({\rm ad}\; I^-)^2$ (see section \ref{sl2module}). This implies that  $\sigma(I^+)=I^-$ and  $\sigma(V^+)=V^-$. \\
If  $X\in V^-$ then  $Y=({\rm ad }\; I^+)^2 X$ belongs to  $V^+$ and  $\sigma(Y)=({\rm ad } \;\sigma(I^+))^2 \sigma(X)=({\rm ad } \; I^-)^2 \sigma(X)$. From the preceding discussion, we obtain  $\sigma(Y)=\frac{1}{2}({\rm ad }\;  I^-)^2(Y)$. As $({\rm ad } \; I^-)^2$ is injective on  $V^+$, we get 
$$\sigma(X)= \frac{1}{2} ({\rm ad } \; I^+)^2 X.$$
The assertion  (3) is now proved.\medskip

As $H_0=[I^-, I^+]$, we have  $\sigma(H_0)=-H_0$ and therefore  $\sigma(\go g)=\go g$. If $X\in\go h$, we have ${\rm ad}\; I^-\;{\rm ad}\; I^+\; X=0$ and this means that $ \sigma(X)=X=X+ {\rm ad}\; I^-\;{\rm ad}\; I^+\; X$. \\
If  $X\in\go q\setminus\{0\}$, then  ${\rm ad}\; I^+ X$ is a non zero element of $V^+$.  Hence it is a primitive element of an irreducible component of dimension $3$. Therefore  ${\rm ad}\; I^+ {\rm ad}\; I^-\;{\rm ad}\; I^+X= -2\; {\rm ad}\; I^+ X$. As  ${\rm ad}\; I^+$ is injective on  $\go q$, we obtain ${\rm ad}\; I^-\;{\rm ad}\; I^+X=-2 X$. And hence 
$$w.X=-X=X+{\rm ad}\; I^-\;{\rm ad}\; I^+X.$$
This proves  (4).
\end{proof}

\begin{definition}\label{def-conditionC} An $\go{sl}_2$-triple $\{ I^-,H_0, I^+\}$   is called a diagonal $\go{sl}_2$-triple   if  $I^+=X_0+\ldots X_k$ is a generic element of  $V^+$ such that  $X_j\in  \widetilde {\go g}^{\lambda_j}\setminus \{0\}$ and if  $I^-=Y_0+\ldots +Y_k$ where  $Y_j\in  \widetilde {\go g}^{-\lambda_j}\setminus \{0\}$  and   $\{Y_j, H_{\lambda_j} X_j\}$ is an ${\go sl}_2$-triple for all $j\in\{0,\ldots, k\}$. \end{definition}

\begin{rem} Any open $G$-orbit in $V^+$ contains an element  $I^+$ which can be put in a  diagonal $\go{sl}_2$-triple $\{ I^-,H_0, I^+\}$ (this is a consequence of Theorem \ref{th-V+caplambda}).
\end{rem}

For the rest of this section, we fix  a  diagonal $\go{sl}_2$-triple $\{ I^-,H_0, I^+\}$ and we will denote by  $\sigma$ the corresponding involution of  $ \widetilde{\go g}$.

Recall also that  $\go a$ is a maximal split abelian subalgebra of $\go g$ containing $H_0$ and that  $\go a^0$ is the subspace of  $\go a$ defined by 
$$  \go a^0=   \oplus_{j=0}^k FH_{\lambda_j}.$$

\begin{definition} \label{def-SECq}A maximal split abelian subalgebra of $\go q$ is called a Cartan subspace  of  $\go q$.
\end{definition}

\begin{lemme}\label{lem-SECq}\hfill

The maximal split abelian subalgebra  $\go a$ of $\go g$ satisfies $\go a=\go a\cap \go h\oplus {\go a}^0$. Moreover $\go a\cap \go h=\{H\in \go a; \lambda_j(H)=0\;{\rm for }\; j=0,\ldots , k\}$.\\
 The subalgebra   $\go a^0$ is a Cartan subspace of  $\go q$. 
\end{lemme}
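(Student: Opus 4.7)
For the decomposition, I would first compute $\sigma$ on $\go a^0$. Using $\sigma|_{V^+}=\tfrac12(\ad I^-)^2$ from Theorem \ref{th-invol}(3) and the strong orthogonality of the $\lambda_j$, a short computation gives $\sigma(X_j)=Y_j$, whence $\sigma(H_{\lambda_j})=[X_j,Y_j]=-H_{\lambda_j}$ for every $j$. Thus $\sigma|_{\go a^0}=-\operatorname{id}$, so $\go a^0\subset\go q$ and $\go a^0\cap\go h=\{0\}$. By Theorem \ref{th-invol}(1), an element $H\in\go a$ lies in $\go h$ iff $[H,I^+]=\sum_j\lambda_j(H)X_j=0$; since the $X_j$ sit in distinct root spaces this is equivalent to $\lambda_j(H)=0$ for every $j$. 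The strongly orthogonal roots $\lambda_0,\ldots,\lambda_k$ being pairwise orthogonal and nonzero are linearly independent in $\go a^*$, so $\dim(\go a\cap\go h)=\dim\go a-(k+1)=\dim\go a-\dim\go a^0$, and the direct sum decomposition $\go a=(\go a\cap\go h)\oplus\go a^0$ drops out, together with $\go a\cap\go q=\go a^0$.

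For the second assertion, the fact that $\go a^0$ is split abelian in $\go q$ is immediate; maximality is the main content. The plan is: given a split abelian subalgebra $\go b\subset\go q$ with $\go a^0\subset\go b$, pick an arbitrary $H\in\go b$ and show $H\in\go a^0$. Since $H$ commutes with $\go a^0$, it lies in $\go l:=\go z_\go g(\go a^0)$, and the root space decomposition of $\go l$ with respect to $\go a$ reads
$$\go l=\go m\oplus\bigoplus_{\lambda\in\Sigma_0}\go g^\lambda,\qquad \go m=\go z_\go g(\go a),\qquad \Sigma_0=\{\lambda\in\Sigma:\lambda|_{\go a^0}=0\}.$$

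The crucial step is to observe that $\go g^\lambda\subset\go h$ for every $\lambda\in\Sigma_0$. Indeed, $\lambda|_{\go a^0}=0$ means $\lambda\perp\lambda_j$ for every $j$, and by Corollary \ref{cor-orth=fortementorth} this upgrades to strong orthogonality, so $\widetilde{\go g}^\lambda$ commutes with each $\widetilde{\go l}_j$ and hence with $I^+$ and $I^-$; since $\sigma$ is built from exponentials of $\ad I^\pm$, it acts as the identity on such $\go g^\lambda$. Decomposing $H=H_{\go m}+\sum_{\lambda\in\Sigma_0}H_\lambda$ and imposing $\sigma(H)=-H$ therefore forces every $H_\lambda$ to vanish, so $H\in\go m\cap\go q$.

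The final step exploits the anisotropy of $[\go m,\go m]$ (Remark \ref{rem-precisions}): any split torus of $\go m$ must project trivially onto $[\go m,\go m]$, hence lies in the centre of $\go m$ and commutes with all of $\go a$. Since $H$ is split semisimple, $\go a+FH$ is then a split abelian subalgebra of $\go g$ containing the maximal $\go a$, forcing $H\in\go a$, whence $H\in\go a\cap\go q=\go a^0$. The main obstacle in the whole argument is the identification $\go g^\lambda\subset\go h$ for $\lambda\in\Sigma_0$: it is the bridge from the structural decomposition of $\go l$ to the reduction to $\go m$, after which the anisotropy of the kernel takes over.
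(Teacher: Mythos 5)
Your proof is correct and follows essentially the same route as the paper: one first computes $\sigma$ on $\go a$ (the paper simply quotes $\sigma(H)=H-\sum_j\lambda_j(H)H_{\lambda_j}$ from Theorem \ref{th-invol}), and for maximality one decomposes a candidate element along the root spaces of $\go a$, uses Corollary \ref{cor-orth=fortementorth} to see that the components in $\go g^{\lambda}$ with $\lambda_{|_{\go a^0}}=0$ commute with $I^{\pm}$, hence lie in $\go h$ and must vanish, and then concludes with the maximality of $\go a$. The only deviation is your detour through the anisotropy of $[\go m,\go m]$, which is superfluous (and would need a word on why the $[\go m,\go m]$-component of a split semisimple element is again split semisimple): once $H\in\go m=\go z_{\go g}(\go a)$ it already commutes with $\go a$, so $\go a+FH$ is split abelian and maximality of $\go a$ gives $H\in\go a$ directly, exactly as in the paper.
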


\begin{proof} From Theorem  \ref{th-invol} (4), for $H\in \go a$, we get  $\sigma(H)= H +({\rm ad} I^-{\rm ad} I^+) H=H-\sum_{j=0}^k \lambda_j(H)H_{\lambda_j}$. This proves that   $\go a$ is  $\sigma$-stable and also the given decomposition of $\go a$.\medskip

Of course  $\go a^0$ is a split abelian subalgebra of  $\go q$.  It remains to show that  $\go a^0$ is maximal among such subalgebras. Let $X$ be an element  of  $ \go q$ such that   $\go a^0+F X$ is split abelian in  $\go q$. From the root space decomposition of  $\go g$ relatively to  $\Sigma$, we get  
$$X=U+\sum_{\lambda\in\Sigma} X_\lambda,\quad \textrm{ where  }\; U\in {\go z}_{\go g}(\go a)\; {\rm and }\; X_\lambda\in {\go g}^\lambda.$$
As  $X$ centralizes $\go a^0$,   if  $X_\lambda\neq 0$ for $\lambda\in\Sigma$,  we obtain that  $\lambda_{|_{\go {a}^0}}=0$. Corollary \ref{cor-orth=fortementorth} implies now that  $\lambda$ is strongly orthogonal to all roots  $\lambda_j$ and hence  ${\rm ad} I^+ X_\lambda=0$. Therefore $X_\lambda \in \go h$. As  $\sigma(U)$ belongs to  ${\go z}_{\go g}(\go a)$ and  $\sigma(X)=-X$, we  have  $X=U$. This implies  (maximality of $\go{a}$)   that $X\in \go a$ and hence $X\in \go a\cap\go q=\go a^0$. This proves that $\go a^0$ is a Cartan subspace of 
$\go q$.

\end{proof}

 \begin{lemme}\label{lemme-involution-modif} Let  $\underline{\sigma}$ be the involution of  $ \widetilde{\go g}$ defined by  $\underline{\sigma}(X)=\sigma(X)$ for $X\in\go g$ and by  $\underline{\sigma}(X)=-\sigma(X)$ for $X\in V^-\oplus V^+$. Let  $ \widetilde{\go q}= \{X\in  \widetilde{\go g}, \underline{\sigma}(X)=-X\}$. Then  $\go a^0$ is a Cartan subspace of  $ \widetilde{\go q}$.
\end{lemme}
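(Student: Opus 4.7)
The plan is to verify first that $\underline{\sigma}$ is a legitimate involution of the Lie algebra $\widetilde{\go g}$, then to describe $\widetilde{\go q}$ explicitly, and finally to establish maximality of $\go a^0$ inside $\widetilde{\go q}$ by reducing to the already-proved maximality of $\go a^0$ in $\go q$ (Lemma \ref{lem-SECq}).

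\textbf{Step 1 (well-definedness).} First I would check that $\underline{\sigma}$ preserves the bracket. Using the decomposition $\widetilde{\go g}=V^-\oplus\go g\oplus V^+$ and the commutation relations after $({\bf H_1})$, the only brackets to check are those of type $[\go g,\go g]$, $[\go g,V^\pm]$ and $[V^+,V^-]$. In each case the total parity (in the grading by $\Z/2\Z$ assigning $-1$ to $V^\pm$ and $+1$ to $\go g$) matches, so $\underline{\sigma}$ and $\sigma$ differ on a bracket by the correct product of signs. Since $\sigma$ is an involutive automorphism (Theorem \ref{th-invol}), so is $\underline{\sigma}$.

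\textbf{Step 2 (description of $\widetilde{\go q}$).} Writing $Z=Z^-+Z_0+Z^+$ with $Z^\pm\in V^\pm$ and $Z_0\in\go g$, the identity $\underline{\sigma}(Z)=-Z$ becomes, using $\sigma(V^+)=V^-$ (Theorem \ref{th-invol} (3)), the condition $Z_0\in\go q$ and $Z^-=\sigma(Z^+)$. Hence
\[
\widetilde{\go q}=\go q\,\oplus\,\{X+\sigma(X)\mid X\in V^+\}.
\]
In particular $\go a^0\subset\go q\subset\widetilde{\go q}$, and $\go a^0$ is split abelian in $\widetilde{\go g}$ because it sits inside $\go a$.

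\textbf{Step 3 (maximality, the main point).} Let $Z=Z_0+X+\sigma(X)\in\widetilde{\go q}$ with $Z_0\in\go q$, $X\in V^+$, and suppose $\go a^0+FZ$ is split abelian. Then $[H_{\lambda_s},Z]=0$ for every $s$. Projecting on the three grading components, each of $[H_{\lambda_s},Z_0]$, $[H_{\lambda_s},X]$ and $[H_{\lambda_s},\sigma(X)]$ vanishes. Now by the decomposition
\[
V^+=\bigoplus_{j=0}^k\widetilde{\go g}^{\lambda_j}\,\oplus\,\bigoplus_{i<j}E_{i,j}(1,1)
\]
of Theorem \ref{th-decomp-Eij}, every non-zero summand of $V^+$ is a non-trivial eigenspace for some $\ad H_{\lambda_s}$ (eigenvalue $2$ on $\widetilde{\go g}^{\lambda_j}$, eigenvalue $1$ on $E_{i,j}(1,1)$ for $s\in\{i,j\}$). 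Hence the condition that $X$ commute with every $H_{\lambda_s}$ forces $X=0$, and consequently $\sigma(X)=0$. Therefore $Z=Z_0\in\go q$.

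\textbf{Step 4 (conclusion).} Since $\go a^0+FZ_0$ is split abelian in $\widetilde{\go g}$, the restriction of $\ad Z_0$ to the invariant subspace $\go g$ is also diagonalizable with eigenvalues in $F$, so $\go a^0+FZ_0$ is split abelian in $\go q$. Lemma \ref{lem-SECq} then gives $Z_0\in\go a^0$. This proves the maximality and hence the lemma.

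The only delicate point is Step 1, the sign bookkeeping that shows $\underline{\sigma}$ is still a Lie algebra automorphism; once that is in hand, Steps 2--4 are short and rest entirely on Theorem \ref{th-decomp-Eij} and Lemma \ref{lem-SECq}.
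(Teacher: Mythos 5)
Your proof is correct and follows essentially the same route as the paper: reduce to the membership $Z\in\go g$ (hence $Z\in\go q$) using that $Z$ centralizes $\go a^0$, then invoke Lemma \ref{lem-SECq}. The paper's own proof is just a shorter version of your Step 3: since $H_0=H_{\lambda_0}+\dots+H_{\lambda_k}\in\go a^0$, commuting with $\go a^0$ already forces $Z$ into the $0$-eigenspace of $\ad H_0$, i.e.\ $Z\in\go g$, so your componentwise analysis of $V^+$ (and the explicit description of $\widetilde{\go q}$) is sound but not needed.
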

\begin{proof} As $\sigma=\underline{\sigma}$ on $\go g$, the space $\go a^0$ is a split abelian subspace of  $ \widetilde{\go q}$. It remains to prove the maximality. Let $X\in \widetilde{\go q}$ such that  $\go a^0+FX$ is abelian split. Then  $X$ commutes with $\go a^0$ and hence with $H_0$. Therefore  $X\in \go g$.  Then  $X\in \go a^0$ by Lemma \ref{lem-SECq}.  
\end{proof}

\begin{rem}\label{rem-decomp-racines} From  \cite{HW}  (Proposition 5.9), the set of roots $\Sigma( \widetilde{\go g}, \go a^0)$   of  $ \widetilde{\go g}$  with respect to $\go a^0$ is a root system which will be denoted by  $ \widetilde{\Sigma}^0$. The decomposition  $ \widetilde{\go g}$ given in Theorem  \ref{th-decomp-Eij}:
 $$ \widetilde{\go g}={\go z}_{ \widetilde{\go g}}(\go a^0)\oplus\Big( \oplus_{0\leq i<j\leq k} E_{i,j}(\pm1,\pm1)\Big)\oplus\Big(\oplus_{j=0}^k \widetilde{\go g}^{\lambda_j}\Big),$$
 is in fact the root space decomposition associated to the root system $ \widetilde{\Sigma}^0$. Setting
 $$\eta_j(H_{\lambda_i})=\delta_{i,j}=\left\{\begin{array}{ll} 1 & {\rm if}\; i=j\\ 0 &{\rm if }\; i\neq j\end{array}\right.,$$
we obtain 
 $$ \widetilde{\Sigma}^0=\{ \pm\eta_i\pm\eta_j (0\leq i<j\leq k), \pm 2 \eta_j (0\leq j\leq k)\}.$$
 And this shows that  $ \widetilde{\Sigma}^0$ is a root system of type $C_{k+1}$. In fact, as seen in the next Proposition, this root system   is the root system of a subalgebra $\tilde{\go{g}}_{C_{k+1}}\subset \tilde{\go{g}}$, which is isomorphic to $\go{sp}(2(k+1),F)$ and which contains $ \go a^0$ as a maximal split abelian subalgebra. \end{rem}
 
 \begin{prop}\label{prop-inclusionSP}\hfill\\
 For $0\leq i\leq k-1$, consider a family of  $\go{sl}_{2}$-triples $(B_{i},H_{\lambda_{i}}-H_{\lambda_{i+1}},A_{i})$ where $B_{i}\in E_{i,i+1}(-1,1)$ and $A\in E_{i,i+1}(1,-1)$ (such triples exist by Lemma \ref{lem-sl2ij}). Consider also an  $\go{sl}_{2}$-triple of the form $(B_{k},H_{\lambda_{k}},A_{k})$, where $B_{k}\in \tilde{\go g}^{-\lambda_{k}}$ and $A_{k}\in \tilde{\go g}^{\lambda_{k}}$. Then these $k+1$ $\go{sl}_{2}$-triples generate a subalgebra $\tilde{\go{g}}_{C_{k+1}}\subset \tilde{\go{g}}$  which is isomorphic to $\go{sp}(2(k+1),F)$ and which contains $ \go a^0$ as a maximal split abelian subalgebra.
  \end{prop}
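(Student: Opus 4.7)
The strategy is to recognize the given data as a set of Chevalley generators for the split simple Lie algebra of type $C_{k+1}$ over $F$ (namely $\go{sp}(2(k+1),F)$), then to apply Serre's presentation theorem to obtain an injective Lie algebra homomorphism from $\go{sp}(2(k+1),F)$ into $\tilde{\go g}$. Throughout, the crucial input is that the $\go a^0$-weight decomposition of $\tilde{\go g}$ described in Theorem \ref{th-decomp-Eij} is precisely the root space decomposition for the root system $\tilde{\Sigma}^0$ of type $C_{k+1}$ (cf.\ Remark \ref{rem-decomp-racines}), so vectors whose would-be $\go a^0$-weight lies outside $\tilde{\Sigma}^0\cup\{0\}$ automatically vanish.

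The first step is to identify the Chevalley data. For $i=0,\ldots,k-1$, set $h_i=H_{\lambda_i}-H_{\lambda_{i+1}}$, and set $h_k=H_{\lambda_k}$. A direct check using $A_i\in E_{i,i+1}(1,-1)$ (resp.\ $A_k\in\tilde{\go g}^{\lambda_k}$) and the action of $H_{\lambda_j}$ on these spaces shows that $(h_i)_{0\le i\le k}$ are the coroots associated to the simple roots $\alpha_i=\eta_i-\eta_{i+1}$ for $i<k$ and $\alpha_k=2\eta_k$ of the $C_{k+1}$-system on $\go a^0$, and that $[h_i,A_j]=\alpha_j(h_i)A_j$ reproduces exactly the Cartan matrix of $C_{k+1}$ (with $\alpha_{k-1}(h_k)=-1$ and $\alpha_k(h_{k-1})=-2$, as one computes from the decomposition of $E_{k-1,k}(1,-1)$).

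The second step is to verify the remaining Serre relations. The identities $[A_i,B_j]=\delta_{ij}h_i$ split into two cases: for $i=j$ it is exactly the hypothesis, and for $i\ne j$ the bracket $[A_i,B_j]$ is a $\go a^0$-eigenvector of weight $\alpha_i-\alpha_j$; an easy inspection shows that $\alpha_i-\alpha_j$ never belongs to $\tilde{\Sigma}^0\cup\{0\}$ (its $\eta$-support has cardinality $\ge 3$, or it has a $\pm 2$ together with nonzero other coefficients), whence the bracket vanishes. The Serre relations $(\operatorname{ad} A_i)^{1-a_{ji}}A_j=0$ for $i\ne j$ are handled in the same way: each target weight $(1-a_{ji})\alpha_i+\alpha_j$ to be computed lies outside $\tilde{\Sigma}^0$, as one checks case by case (for example $3\alpha_{k-1}+\alpha_k=3\eta_{k-1}-\eta_k$ and $2\alpha_k+\alpha_{k-1}=\eta_{k-1}+3\eta_k$ are not roots of $C_{k+1}$), and the same argument applies to the brackets involving the $B_i$'s. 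The potential difficulty here is bookkeeping, as one must check every adjacency case, including the short--long edge at position $(k-1,k)$; the miracle is that \emph{all} target weights fall outside $\tilde{\Sigma}^0$ thanks to the fact that $C_{k+1}$-roots involve at most two of the $\eta_j$'s with prescribed coefficients.

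Once the Serre relations are established, the presentation theorem (\cite{Bou2}, Chap.\ VIII, \S 4, n$^\circ$ 3) produces a Lie algebra homomorphism $\varphi\colon\go{sp}(2(k+1),F)\to\tilde{\go g}$ sending the standard Chevalley generators to $(A_i,B_i,h_i)$. Since $\go{sp}(2(k+1),F)$ is simple and $\varphi$ is nonzero, $\varphi$ is injective, and we set $\tilde{\go g}_{C_{k+1}}=\varphi(\go{sp}(2(k+1),F))$. This subalgebra contains $h_0,\ldots,h_k$, which span $\go a^0$, so $\go a^0\subset\tilde{\go g}_{C_{k+1}}$; it is a split abelian subalgebra of dimension $k+1$, equal to the split rank of $\go{sp}(2(k+1),F)$, hence is maximal split in $\tilde{\go g}_{C_{k+1}}$. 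The only step requiring care is the Serre-relation verification outlined in the previous paragraph, and this is where the bulk of the combinatorial work lies.
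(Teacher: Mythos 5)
Your proposal is correct and takes essentially the same route as the paper: exhibit $(A_i,B_i,h_i)$ as Chevalley generators realizing the Cartan matrix of type $C_{k+1}$ on $\go a^0$, verify the Serre relations, and conclude via the presentation theorem that the generated subalgebra is $\go{sp}(2(k+1),F)$ containing $\go a^0$ as a maximal split abelian subalgebra. The only (cosmetic) difference is in the higher Serre relations: the paper treats $B_i$ (resp.\ $A_i$) as a primitive vector for the $\go{sl}_2$-triple $(B_j,h_j,A_j)$ and uses $\go{sl}_2$-module theory, whereas you check that the target $\go a^0$-weights fall outside $\widetilde{\Sigma}^0\cup\{0\}$; both verifications rest on the decomposition of Theorem \ref{th-decomp-Eij}.
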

  
  \begin{proof} The linear forms $\eta_{0}-\eta_{1},\eta_{1}-\eta_{2},\ldots,\eta_{k-1}-\eta_{k}, 2\eta_{k}$ form a basis of the root system $\widetilde{\Sigma}^0$ which is of type $C_{k+1}$ as seen in the preceding Remark. As the $\eta_{i}$'s form the dual basis of the $H_{\lambda_{i}}$'s, it is well known that the set of elements $\{H_{\lambda_{0}}-H_{\lambda_{1}}, H_{\lambda_{1}}-H_{\lambda_{2}}, \ldots, H_{\lambda_{k-1}}-H_{\lambda_{k}}, H_{\lambda_{k}}\}$ is a basis of the dual root system in $\go{a}^0$, which is of course of type $B_{k+1}$. Define $H_{i}=H_{\lambda_{i}}-H_{\lambda_{i+1}}$ with $0\leq i\leq k-1$, and $H_{k}=H_{\lambda_{k}}$. As usual we define also $n(\alpha,\beta)=\alpha(H_{\beta})$, for $\alpha,  \beta \in  \widetilde{\Sigma}^0$ 	and where $H_{\beta}\in \go{a}^0$ is the coroot of $\beta$. It is also convenient to set $\alpha_{i}=\lambda_{i}-\lambda_{i+1}$ for $i=0,\ldots,k-1$, and $\alpha_{k}=\lambda_{k}$. 
  
  Then the generators satisfy the following relations,  for $i,j\in \{0,\ldots,k\}$:
  
  \hskip 10pt $(1)$ $[H_{i},H_{j}]=0$,
  
  \hskip 10pt $(2)$ $[B_{i},A_{j}]=\delta_{i,j}H_{i}$, 
  
  \hskip 10pt $(3)$ $[H_{i}, A_{j}]= n(\alpha_{j},\alpha_{i})A_{j}$,
  
  \hskip 10pt $(3')$ $[H_{i}, B_{j}]= - n(\alpha_{j},\alpha_{i})B_{j}$,
  
   \hskip 10pt $(4)$ $(\ad B_{j})^{-n(\alpha_{i},\alpha_{j})+1}B_{i}=0 \text{ if }i\neq j$,
   
    \hskip 10pt $(5)$ $(\ad A_{j})^{-n(\alpha_{i},\alpha_{j})+1}A_{i}=0 \text{ if }i\neq j$.
    
    The relations $(1),(2),(3),(3')$ are obvious. Let us show relation $(4)$. The $\go{sl}_{2}$-triple $(B_{j},H_{j},A_{j})$ defines a structure of finite dimensional $\go{sl}_{2}$-module on $\tilde{\go g}$. We have $[A_{j},B_{i}]=0$ and $[H_{j}, B_{i}]= -n(\alpha_{i},\alpha_{j})B_{i}$ by relations $(2)$ and $(3')$. This means that $B_{i}$ is a primitive vector of weight $-n(\alpha_{i},\alpha_{j})$. Therefore  $B_{i}$ generates an $\go{sl}_{2}$-module of dimension $-n(\alpha_{i},\alpha_{j})+1$. And this implies $(4)$. The same argument proves $(5)$.
    
    The above relations are the well known Serre relations for $\go{sp}(2(k+1),F)$. Hence the algebra generated by these elements is isomorphic to $\go{sp}(2(k+1),F)$.
    
     \end{proof}
    
     \begin{rem}\label{rem-sp=admissible}
    The preceding construction of the   subalgebra $\tilde{\go{g}}_{C_{k+1}}$ uses  the same argument as the construction of the so-called ``admissible'' subalgebras (see \cite{Rubenthaler2}, Th\'eor\`eme 3.1 p.273).
 \end{rem}

\subsection{The minimal $\sigma$-split parabolic subgroup $P$ of  $G$.}
\vskip 10pt
\begin{definition}\label{def-sigma-parabolique} $($\cite{HW}$)$ A parabolic subgroup  $R$  of $G$ (resp. a parabolic subalgebra $\go r$ of $\go g$) is called a $\sigma$-split parabolic subgroup of $G$ (resp. a $\sigma$-split parabolic subalgebra of $\go g$) if  $\sigma(R)$ (resp. $\sigma(\go r)$) is the opposite parabolic subgroup  (resp. parabolic subalgebra) of  $R$ (resp. of $\go r$).
\end{definition}

Let   $\{ I^-,H_0, I^+\}$ be  a   diagonal $\go sl_2$-triple (see Definition \ref{def-conditionC}). As above we denote by   $\sigma$ the involution of  $ \widetilde{\go g}$ associated to this triple. Hence we have the decomposition   $\go g=\go h\oplus \go q$ where $\go h={\go z}_{\go g}(I^+)={\go z}_{\go g}(I^-)$.  We also denote by  $\sigma$ the involution of  $ \widetilde{G}={\rm Aut}_0( \widetilde{\go g})$  given by the conjugation by the element 
$$w=e^{{\rm ad} \; I^+}e^{{\rm ad} \; I^-}e^{{\rm ad} \; I^+}=e^{{\rm ad} \; I^-}e^{{\rm ad} \; I^+}e^{{\rm ad} \; I^-}.$$

As  $\sigma(H_0)=-H_0$, the group  $G$ is invariant under the action of  $\sigma$. Let  $G^{\sigma}\subset G$  be the fixed point group under  $\sigma$. The Lie algebra of  $G^\sigma$ is equal to $\go h$. Define  $H=Z_G(I^+)$. Then the Lie algebra of  $H$ is $\go h$ and hence $H$ is an open subgroup of  $G^\sigma$. \medskip

Consider the subalgebra $\go p$ of  $\go g$ defined by 
$$\go p={\go z}_{\go g}(\go a^0)\oplus\Big(\oplus_{0\leq i<j\leq k} E_{i,j}(1,-1)\Big).$$

\begin{prop}\label{prop-siP}\hfill

 The subalgebra $\go p$ is a minimal  $\sigma$-split parabolic subalgebra of   $\go g$. Its Langlands decomposition is given by  $\go p=\go{l}+\go{n}=\go m_1\oplus \go a_{\go p}\oplus \go n$ where 
$$\left\{ \begin{array}{l} \go n=\oplus_{0\leq i<j\leq k}E_{i,j}(1,-1)\\
\go{l}=\go m_1\oplus \go a_{\go p}= {\go z}_{\go g}(\go a^0)\\
\go a_{\go p}=\{H\in \go a; (\lambda\in \Sigma, \lambda(\go a^0)=0)\Longrightarrow \lambda(H)=0\}\\
{\go m}_1\;\textrm{is the orthogonal of  }\; {\go a}_{\go p}\;{\rm in}\; {\go z}_{\go g}(\go a^0)={\go m}_{1}\oplus \go a_{\go p}\end{array}\right..$$
\end{prop}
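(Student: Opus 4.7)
The approach is to identify $\go p$ with the standard parabolic subalgebra of $\go g$ attached to the split torus $\go a^0$, and then to deduce $\sigma$-splitness and minimality from the fact (Lemma~\ref{lem-SECq}) that $\go a^0$ is a Cartan subspace of $\go q$.

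First I would check that $\go p$ is parabolic. By Theorem~\ref{th-decomp-Eij} the decomposition
$\go g=\go z_{\go g}(\go a^0)\oplus\bigl(\oplus_{i\neq j}E_{i,j}(1,-1)\bigr)$
is precisely the root space decomposition of $\go g$ with respect to $\go a^0$ (Remark~\ref{rem-decomp-racines}). Choose the regular element $H^\star=\sum_{j=0}^{k}(k+1-j)H_{\lambda_j}\in\go a^0$; then $\ad H^\star$ acts on $E_{i,j}(1,-1)$ by the scalar $j-i$, and vanishes on $\go z_{\go g}(\go a^0)$. Thus $\go p$ is exactly the sum of $\go z_{\go g}(\go a^0)$ and the strictly positive eigenspaces of $\ad H^\star$, i.e.\ the standard parabolic associated with the chamber of $\go a^0$ containing $H^\star$, with opposite $\overline{\go p}=\go z_{\go g}(\go a^0)\oplus\bigl(\oplus_{i>j}E_{i,j}(1,-1)\bigr)$.

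Next I would verify the $\sigma$-split property. Since $\go a^0\subseteq\go q$ by Lemma~\ref{lem-SECq}, one has $\sigma(H_{\lambda_\ell})=-H_{\lambda_\ell}$ for every $\ell$, and for $X\in E_{i,j}(p,q)$ a direct computation gives $[H_{\lambda_\ell},\sigma(X)]=-\sigma([H_{\lambda_\ell},X])$, whence $\sigma(E_{i,j}(p,q))=E_{i,j}(-p,-q)=E_{j,i}(-q,-p)$. Consequently $\sigma$ exchanges $\go n=\oplus_{i<j}E_{i,j}(1,-1)$ with $\oplus_{i>j}E_{i,j}(1,-1)$ and stabilises $\go z_{\go g}(\go a^0)$, so $\sigma(\go p)=\overline{\go p}$. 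For the Langlands decomposition, the Levi attached to $\go a^0$ is $\go l=\go z_{\go g}(\go a^0)$, with nilradical $\go n$; the split centre of $\go l$ meets $\go a$ exactly in the set of $H\in\go a$ annihilated by every $\lambda\in\Sigma$ vanishing on $\go a^0$ (these being the only roots contributing to $\go l$), which is the stated formula for $\go a_{\go p}$. The decomposition $\go z_{\go g}(\go a^0)=\go m_1\oplus\go a_{\go p}$ is then obtained by taking $\go m_1$ to be the orthogonal complement of $\go a_{\go p}$ with respect to the Killing form of $\go g$, whose restriction to the split abelian subspace $\go a_{\go p}$ is non-degenerate.

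The main technical point is the minimality. I would rely on the Helminck--Wang characterisation \cite{HW}: a $\sigma$-split parabolic subalgebra is minimal precisely when the $(-1)$-eigenspace of $\sigma$ on the split centre of its Levi is a Cartan subspace of $\go q$. In our setting Lemma~\ref{lem-SECq} gives $\go a=(\go a\cap\go h)\oplus\go a^0$ with $\sigma$ acting as $+1$ on $\go a\cap\go h$ and as $-1$ on $\go a^0$; since $\go a^0\subseteq\go a_{\go p}\subseteq\go a$, the $\sigma$-anti-invariant part of $\go a_{\go p}$ is exactly $\go a^0$, which Lemma~\ref{lem-SECq} again identifies as a Cartan subspace of $\go q$. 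The criterion thus applies and yields minimality. The delicate part of the argument is the correct identification of $\go a_{\go p}$ and its $(-1)$-eigenspace, together with the invocation of the Helminck--Wang criterion; everything else is a bookkeeping exercise on the decomposition into the blocks $E_{i,j}(p,q)$.
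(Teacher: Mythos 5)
Your proposal is correct and follows essentially the same route as the paper: you use the block decomposition into the $E_{i,j}(\pm1,\pm1)$, the fact that $\sigma$ acts by $-1$ on $\go a^0$ (so that $\sigma$ exchanges $\go n=\oplus_{i<j}E_{i,j}(1,-1)$ with its opposite and the $\sigma$-stable Levi is ${\go z}_{\go g}(\go a^0)$), and minimality via Lemma~\ref{lem-SECq} combined with the Helminck--Wang criterion (Proposition 4.7 of \cite{HW}), exactly as the paper does. The only cosmetic difference is that you check parabolicity with the regular element $H^\star\in\go a^0$, while the paper instead notes via Proposition~\ref{proplambda>0} that $\go p$ contains every negative root space of $\Sigma$ and hence a minimal parabolic subalgebra.
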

\begin{proof} The Theorem \ref{th-decomp-Eij} and the  Proposition  \ref{proplambda>0}   imply that  $\go p$ contains all  root spaces corresponding to the negative roots in $\Sigma$. Hence  $\go p$ contains a minimal parabolic subalgebra of $\go g$. Therefore  $\go p$ is a parabolic subalgebra of  $\go g$.\medskip

Let  $\Gamma$ be the set of roots $\lambda\in\Sigma$ such that  $\go g^\lambda\subset \go p$. From the definition of  $\go p$, one has 
$$\Gamma=\Sigma^-\cup\{\alpha\in \Sigma^+; \lambda(\go a^0)=0\} \; {\rm and }\; \go p={\go z}_{\go g}(\go a)\oplus\big(\oplus_{\lambda\in \Gamma} \go g^\lambda\big).$$
It follows that $\Gamma\cap -\Gamma=\{\lambda\in \Sigma; \lambda(\go a^0)=0\}$. And then  $\go n= \oplus_{\lambda\in \Gamma\setminus (\Gamma\cap -\Gamma)} \go g^\lambda= \oplus_{0\leq i<j\leq k}E_{i,j}(1,-1)$ is the nilradical of $\go p$.  If $H\in \go a^0$, then $\sigma(H)=-H$. Therefore if $X\in E_{i,j}(1,-1)$ then  $\sigma(X)\in E_{i,j}(-1,1)$. Then  $\go l=\sigma(\go p)\cap \go p={\go z}_{\go g}(\go a^0)$  is  a $\sigma$-stable Levi component of  $\go p$ and   $\go p$ is a  $\sigma$-split parabolic subalgebra. 
  Let $\go a_p$ be the maximal split abelian subalgebra of the center of $\go l$. Then $\go a_p=\cap_{\lambda\in \Gamma\cap -\Gamma} \ker(\lambda)$. Hence the Langlands decomposition is given by $\go p=\go m_1\oplus\go a_p\oplus \go n$ where  $\go m_1$ is the orthogonal of  $\go a_p$ in  $\go l$ for the Killing form.

As $\go a^0$ is a Cartan subspace of  $\go q$ and  as $\sigma(\go p)\cap \go p={\go z}_{\go g}(\go a^0)$, Proposition  4.7  (iv) of  \cite{HW} (see also Proposition 1.13 of  \cite{HH}) implies that $\go p$ is a minimal $\sigma$-split parabolic subalgebra of $\go{g}$.

 \end{proof}

Let  $N=\exp^{{\rm ad}\;\go n}\subset G$ and   $L=Z_G(\go a^0)$. Then  $P=LN$ is a parabolic subgroup of $G$. From the above discussion  $P$ is in fact a  minimal $\sigma$-split parabolic subgroup of  $G$ with  $\sigma$-stable Levi component $L=P\cap \sigma(P)$  and with nilradical  $N$. 
\medskip

 We denote by  $A$ and  $A^0$ the split tori of $G$ whose Lie algebras are respectively $\go a$ and  $\go a^0$.   $A$ is a maximal split torus of  $G$ and the set of weights of  $A$ in  $\go g$ is a root system  $\Phi(G,A)$ isomorphic to  $\Sigma$.  More precisely, any root in $ \Sigma$ is the differential of a unique root in  $\Phi(G,A)$. In the sequel of the paper we will always identify these two root systems. For  $\lambda\in\Sigma$ and   $a\in A$, we will denote by $ a^\lambda$ the eigenvalue of the action of  $a$ on  $\go g^\lambda$. 

 \vskip 20pt 
 
 \subsection{ The prehomogeneous vector space  $(P,V^+)$}\hfill
 
 \vskip 5pt
 For the convenience of the reader, and although it  will be a consequence of the proof of Theorem \ref{th-delta_{j}invariants}, let us give a simple proof of the prehomogeneity of $(P,V^+)$.
 
 \begin{prop}\hfill
 
 The representation $(P,V^+)$ is prehomogeneous.
  \end{prop}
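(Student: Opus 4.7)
The plan is to exhibit a single element with open $P$-orbit, the natural candidate being a ``diagonal'' generic element of $V^+$. Fix $I^+=X_0+X_1+\cdots+X_k$ with $X_j\in\widetilde{\go g}^{\lambda_j}\setminus\{0\}$; by Proposition \ref{X0+...+Xkgenerique}, $I^+$ is generic in $V^+$, and we pick elements $Y_j\in\widetilde{\go g}^{-\lambda_j}$ completing each $X_j$ to an $\go{sl}_2$-triple $(Y_j,H_{\lambda_j},X_j)$. Since $F$ has characteristic zero and $P$ is algebraic with Lie algebra $\go p=\go l\oplus\go n$, it is enough to show $[\go p,I^+]=V^+$ (surjectivity of the differential of the orbit map then gives openness of $P\cdot I^+$). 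Using the decomposition
\[
V^+=\Bigl(\bigoplus_{j=0}^k\widetilde{\go g}^{\lambda_j}\Bigr)\oplus\Bigl(\bigoplus_{0\leq i<j\leq k}E_{i,j}(1,1)\Bigr)
\]
of Theorem \ref{th-decomp-Eij}, I would split the verification into one claim for $\go n$ and one claim for $\go l$.

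First I would treat the nilradical: show $[\go n,I^+]=\bigoplus_{i<j}E_{i,j}(1,1)$. For $Z\in E_{i,j}(1,-1)$ with $i<j$ and any $m\in\{0,\dots,k\}$, the element $[Z,X_m]$ lies in $V^+$, and its weights under the $H_{\lambda_s}$ can be read off directly: they match a summand of $V^+$ only when $m=j$, in which case $[Z,X_j]\in E_{i,j}(1,1)$; for $m\neq j$ the resulting weight profile (e.g.\ a ``$3$'' on $H_{\lambda_i}$ when $m=i$, or a mixed $(1,-1,2)$-profile when $m\notin\{i,j\}$) appears nowhere in $V^+$, forcing $[Z,X_m]=0$. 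By Proposition \ref{prop-memedimension}, $\ad X_j$ restricts to an isomorphism $E_{i,j}(1,-1)\xrightarrow{\sim}E_{i,j}(1,1)$, so as $Z$ runs over $E_{i,j}(1,-1)$ we recover all of $E_{i,j}(1,1)$, proving the claim.

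Next I would treat the Levi: show $[\go l,X_j]=\widetilde{\go g}^{\lambda_j}$ for each $j$. The inclusion $[\go l,X_j]\subset\widetilde{\go g}^{\lambda_j}$ is immediate from the weights (elements of $\go l=\mathcal Z_{\go g}(\go a^0)$ preserve eigenspaces of $\ad \go a^0$). For the reverse inclusion, observe that $[\widetilde{\go g}^{-\lambda_j},\widetilde{\go g}^{\lambda_j}]$ has weight $0$ on every $H_{\lambda_s}$, hence sits inside $\go l$. Given $Z\in\widetilde{\go g}^{\lambda_j}$, write
\[
2Z=[H_{\lambda_j},Z]=[[Y_j,X_j],Z]=[Y_j,[X_j,Z]]-[X_j,[Y_j,Z]].
\]
The commutativity $[V^+,V^+]=0$ gives $[X_j,Z]=0$, so $Z=-\tfrac12[X_j,[Y_j,Z]]$. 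Since $[Y_j,Z]\in[\widetilde{\go g}^{-\lambda_j},\widetilde{\go g}^{\lambda_j}]\subset\go l$, this exhibits $Z\in[\go l,X_j]$, as required. Summing over $j$ yields $[\go l,I^+]=\bigoplus_{j=0}^k\widetilde{\go g}^{\lambda_j}$.

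Combining the two steps gives $[\go p,I^+]=[\go l,I^+]+[\go n,I^+]=V^+$, so the $P$-orbit of $I^+$ is Zariski open in $V^+$, proving that $(P,V^+)$ is prehomogeneous. There is no real obstacle here; the only point requiring care is the bookkeeping of $\ad\go a^0$-weights used to isolate the correct summands in $V^+$, which is exactly what the $E_{i,j}(\pm1,\pm1)$-decomposition was designed for.
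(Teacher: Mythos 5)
Your proof is correct, but it takes a genuinely different route from the paper. The paper disposes of the proposition in three lines: since prehomogeneity is an infinitesimal (hence geometric) condition, it passes to $(\overline{\go p},\overline{V^+})$, observes that $\overline{\go p}$ contains a Borel subalgebra $\overline{\go b}$ of $\overline{\go g}$, and quotes the result of \cite{M-R-S} (Proposition 3.8) that $(\overline{\go b},\overline{V^+})$ is already prehomogeneous. You instead give a self-contained verification that the diagonal generic element $I^+=X_0+\cdots+X_k$ has $[\go p,I^+]=V^+$, using the $E_{i,j}(\pm1,\pm1)$-decomposition of Theorem \ref{th-decomp-Eij}; this is in the spirit of the paper's own computations in Lemma \ref{lemmecodim} and the proof of Proposition \ref{prop-memedimension} (where the isomorphism $\ad X_j:E_{i,j}(1,-1)\to E_{i,j}(1,1)$ that you invoke is actually established — note it sits in the proof rather than in the statement of that proposition). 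What your approach buys is independence from the external reference and an explicit open orbit, namely that of $I^+$, which fits well with the later determination of all open $P$-orbits in Theorem \ref{th-Porbites}; what the paper's approach buys is brevity and the stronger remark that even a Borel subgroup acts prehomogeneously.

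One small point you should make explicit: from $[\go l,X_j]=\widetilde{\go g}^{\lambda_j}$ for each $j$ it does not formally follow by ``summing over $j$'' that $[\go l,I^+]\supset\bigoplus_j\widetilde{\go g}^{\lambda_j}$, since a given $W\in\go l$ acting on $I^+$ could a priori contribute to several summands. The argument is saved by your specific witness $W=[Y_j,Z]$: for $m\neq j$ one has $[W,X_m]=[[Y_j,X_m],Z]+[Y_j,[Z,X_m]]=0$, because $[Y_j,X_m]=0$ (strong orthogonality of $\lambda_j$ and $\lambda_m$) and $[Z,X_m]=0$ (commutativity of $V^+$), so that $[W,I^+]=[W,X_j]=2Z$ lands exactly in $\widetilde{\go g}^{\lambda_j}$. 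With that one-line addition the proof is complete.
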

  
  \begin{proof} Remember that prehomegeneity is an infinitesimal condition. Therefore it is enough to prove that $(\overline{\go{p}}, \overline{V}^+)$ is prehomogeneous. But the parabolic subalgebra $\overline{\go{p}}$ of $\overline{\go{g}}$ will contain a Borel subalgebra $\overline{\go{b}}$.  The space $(\overline{\go{b}}, \overline{V}^+)$ is   prehomogeneous by \cite{M-R-S}, Prop. 3.8 p. 112 (the proof is the same over $\overline{F}$ as over $\C$).
  
  \end{proof}

We will now  show that the polynomial  $\Delta_j$ (see  Definition \ref{defdeltaj}) are the fundamental relative invariants of this prehomogeneous  vector space .\medskip


  Recall  $\ell$ is the common dimension of the  $ \widetilde{\go g}^{\lambda_j}$'s and that $\ell$ is either a square  or equal to  $3$ (cf. Theorem \ref{th-k=0}). 
  
  \medskip

   \begin{theorem}\label{th-delta_{j}invariants}\hfill
   
   The polynomials  $\Delta_j$ are irreducible, and relatively invariant under the action of  $P$: there exists a rational character ${\chi}_j$ of  $P$ such that 
  $$\Delta_j(p.X)= {\chi}_j(p)\Delta_j(X),\quad X\in V^+, p\in P$$
  
  More precisely: 
\begin{itemize}  \item $ {\chi}_j(n)=1,\quad n\in N,$
\item $ {\chi}_j(a)=a^{\kappa(\lambda_j+\ldots +\lambda_k)}, \quad a\in A,$
\item $ {\chi}_j(m)=1,\quad m\in L\cap H.$ \hskip 6pt $(H=Z_G(I^+))$
\end{itemize}
  \end{theorem}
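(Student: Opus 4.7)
\emph{The plan and the first (easy) step.} The plan is to first establish absolute irreducibility of $\Delta_j$ as a polynomial on $V^+$, then to prove relative invariance under $P$ by showing its zero locus is $P$-stable, and finally to evaluate the resulting character $\chi_j$ on $N$, $A$ and $L\cap H$. Irreducibility is immediate: by Definition~\ref{defdeltaj}, $\Delta_j$ is the pullback of $P_j$ along the linear projection $V^+ = V_j^+ \oplus V_j^\perp \twoheadrightarrow V_j^+$, and $P_j$ is absolutely irreducible on $V_j^+$ by Theorem~\ref{thpropridelta_0}(1) applied to the regular graded algebra $\widetilde{\go g}_j$; hence $\Delta_j$ is absolutely irreducible on $V^+$.

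\emph{Relative invariance.} The decomposition $P = LN$ reduces the problem to showing the zero set of $\Delta_j$ is stable under $N$ and under $L$. For $N$, Proposition~\ref{prop-siP} gives $\go n = \bigoplus_{0\le r<s\le k} E_{r,s}(1,-1)$, and the third line of Theorem~\ref{thproprideltaj}(2) yields that each generator $\exp(\ad Z)$ with $Z\in\go n$ fixes $\Delta_j$ pointwise, so $\chi_j|_N = 1$ already at this stage. For $L = Z_G(\go a^0)$, any $l\in L$ centralizes every coroot $H_{\lambda_s}$ and hence preserves each $\go a^0$-weight space in $V^+$; in particular $l$ preserves both $V_j^+$ and $V_j^\perp$, so
\[ \Delta_j(l\cdot X) = P_j\bigl((l\cdot X)|_{V_j^+}\bigr) = P_j\bigl(l|_{V_j^+}\cdot X|_{V_j^+}\bigr). \]
Moreover, $l$ stabilizes each $\widetilde{\go l}_s$ for $s<j$ and hence the subalgebra $\widetilde{\go g}_j$, so its restriction to $\widetilde{\go g}_j$ defines an automorphism that centralizes the grading element $H_{\lambda_j}+\cdots+H_{\lambda_k}$. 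Identifying this restriction with an element of $G_j$, under which $P_j$ is relatively invariant by its defining property, yields $\Delta_j(l\cdot X) = \chi_j^{G_j}(l|_{\widetilde{\go g}_j})\,\Delta_j(X)$. Thus the zero locus of $\Delta_j$ is $P$-stable, and absolute irreducibility of $\Delta_j$ then produces a rational character $\chi_j:P\to F^\times$ with $\Delta_j\circ p = \chi_j(p)\Delta_j$.

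\emph{Character values.} For $a\in A\subset L$, the action $a\cdot X_s = a^{\lambda_s}X_s$, combined with Theorem~\ref{thproprideltaj}(3) evaluated on the generic element $I^+ = X_0+\cdots+X_k$, gives
\[ \Delta_j(a\cdot I^+) = \prod_{s=j}^k (a^{\lambda_s})^\kappa\,\Delta_j(I^+) = a^{\kappa(\lambda_j+\cdots+\lambda_k)}\,\Delta_j(I^+), \]
and $\Delta_j(I^+) = P_j(X_j+\cdots+X_k)\neq 0$ since $X_j+\cdots+X_k$ is generic in $V_j^+$ by Proposition~\ref{X0+...+Xkgenerique} applied to $\widetilde{\go g}_j$. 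For $m\in L\cap H$, the identity $H = Z_G(I^+)$ gives $m\cdot I^+ = I^+$, whence $\chi_j(m)\,\Delta_j(I^+) = \Delta_j(I^+)$ forces $\chi_j(m) = 1$.

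\emph{Main obstacle.} The delicate point is the identification $l|_{\widetilde{\go g}_j}\in G_j$ in the $L$-invariance argument: the "elementary after base change" property does not automatically pass to a stabilized subalgebra, as flagged in Remark~\ref{rem-inclusion-groupes}. Resolving this cleanly would require either a direct verification that the relevant elementary generators of $l$ (after base change) lie in $\widetilde{\go g}_j\otimes\overline{F}$, or a substitute argument that bypasses $G_j$ altogether, for example by combining root-subgroup-wise invariance via Theorem~\ref{thproprideltaj}(2) with the explicit $A$-equivariance computed above to obtain the relative invariance on a set of generators of $L$.
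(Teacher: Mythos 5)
Your irreducibility argument, your treatment of $N$ (via the third line of Theorem~\ref{thproprideltaj}(2), whose domain is exactly $\go n=\oplus_{r<s}E_{r,s}(1,-1)$), and your evaluations of $\chi_j$ on $A$ and on $L\cap H$ all match the paper and are fine. The problem is precisely the step you flag yourself: the identification $l|_{\widetilde{\go g}_j}\in G_j$ for $l\in L$ is not justified, and without it the relative invariance under $L$ — hence under $P=LN$ — is not established. The paper is explicit that this cannot be taken for granted ("$L$ is in general not a subgroup of $G_j$", and Remark~\ref{rem-inclusion-groupes} shows the analogous inclusion fails in concrete cases): an element of $L$ stabilizes $\widetilde{\go g}_j$, but there is no reason its restriction should lie in $\mathrm{Aut}_0(\widetilde{\go g}_j)$, i.e.\ become elementary over $\overline F$ as an automorphism of $\widetilde{\go g}_j\otimes\overline F$. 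Your suggested fallback also does not close the gap: Theorem~\ref{thproprideltaj}(2) only gives invariance under $\exp(\ad\go n)$ and $\mathrm{Aut}_e(\go g_j)$, and $L=Z_G(\go a^0)$ is not generated by $A$ together with such elements — it contains, e.g., the anisotropic part of $Z_G(\go a)$ and, when $\ell>1$, elements acting non-scalarly on the root spaces $\widetilde{\go g}^{\lambda_s}$ — so "invariance on a set of generators" is not available from what you have proved.

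The paper's proof avoids $G_j$ altogether by a short bootstrapping through $\Delta_0$: by Theorem~\ref{thproprideltaj}(4), for $X\in V_j^+$, $Y\in V_j^\perp$ one has $\Delta_j(X+Y)=\Delta_j(X)=c\,\Delta_0(X_0+\cdots+X_{j-1}+X)$ for a nonzero constant $c$. Since $\Delta_0$ is (by construction) the restriction of a relative invariant of $(\overline G,\overline{V^+})$ and $L\subset G\subset\overline G$ normalizes $V_j^+$, $\overline{V_j^\perp}$ and each $\widetilde{\go g}^{\lambda_s}$, one gets for $m\in L$
\begin{equation*}
\Delta_j(m(X+Y))=c\,\Delta_0\bigl(X_0+\cdots+X_{j-1}+mX\bigr)=c\,\chi_0(m)\,\Delta_0\bigl(m^{-1}X_0+\cdots+m^{-1}X_{j-1}+X\bigr),
\end{equation*}
and since $m^{-1}X_s\in\widetilde{\go g}^{\lambda_s}$, a second application of Theorem~\ref{thproprideltaj}(4) converts the right-hand side into $c\,c_j(m)\chi_0(m)\,\Delta_j(X+Y)$, i.e.\ relative invariance under $L$. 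If you want to salvage your structure, you should replace the unproved claim "$l|_{\widetilde{\go g}_j}\in G_j$" by this argument (or by an equally complete substitute); as it stands, your proposal proves the theorem only modulo that missing step.
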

   
 \begin{proof}  The fact that the $\Delta_{j}$'s are  irreducible, and their  invariance   under $N$, have already been obtained in  Theorem  \ref{thproprideltaj} and its proof.
 
 We have   $A\subset G_j$ and  $a\in A$ acts by  $a^{\lambda_j}$ on $ \widetilde{\go g}^{\lambda_j}$. Therefore by   Theorem \ref{thproprideltaj}  (3)   we get  $$\Delta_j(a(X_0+\ldots +X_k))=\Delta_j(\sum_{s=0}^k a^{\lambda_s} X_s)=\prod_{s=j}^k a^{\kappa\lambda_s} \Delta_j(X_0+\ldots +X_k),$$  
  for $X_s\in\tilde{\go g}^{\lambda_s}\setminus\{0\}$. (This gives the value of $\chi_j$ on $A$).
 
Let us show now that  $\Delta_j$ is relatively invariant under  $L$ (this is  not given by Theorem   \ref{thproprideltaj}  because  $L$   is    in general not  a subgroup of $G_j$).
  
 Let  $Z=X+Y\in V^+$ where  $X\in V_j^+$ and  $Y\in V_j^\perp$ ($V_j^\perp$ was defined at the beginning of section \ref{sectiondeltaj}). By Theorem   \ref{thproprideltaj}   (4), there exists a constant $c$ such that 
 $ \Delta_j(X+Y)=\Delta_j(X)= c \Delta_0(X_0+\ldots +X_{j-1}+X)$. 
 An element of  $L$ normalizes  $V_j^+$, $V_j^\perp\otimes \overline{F}$ and each root space $ \widetilde{\go g}^{\lambda_j}$. As $\Delta_0$ is relatively invariant under $\overline{G}$ and  $L\subset\overline{G}$, we get for $m\in L$:
 $$\Delta_j(m(X+Y))=c\Delta_0(X_0+\ldots +X_{j-1}+mX)=c\chi_0(m) \Delta_0(m^{-1}X_0+\ldots +m^{-1}X_{j-1}+ X).$$ 
Again  by Theorem  \ref{thproprideltaj}   (4), there exists a constant   $ c_j(m)$ such that  $ \Delta_0(m^{-1}X_0+\ldots +m^{-1}X_{j-1}+ X)=c_j(m) \Delta_j(X)=c_j(m) \Delta_j( X+Y)$. Therefore   $\Delta_j(m(X+Y))=c\,c_j(m)\chi_0(m)  \Delta_j(X+Y)$, and hence  $\Delta_j$ is relatively invariant under $L$.

This proves that the $\Delta_{j}$'s are relatively invariant under the parabolic subgroup $P$.

Let $m\in L\cap H$. Then $\Delta_{j}(mI^+)=\Delta_{j}(I^+)=\chi_{j}(m)\Delta_{j}(I^+)$.
Hence $\chi_{j}(m)=1$.

 \end{proof}

 We define the dense open subset of  $V^+$  as follows: 
 $${\mathcal O}^+:=\{ X\in V^+; \Delta_0(X)\Delta_1(X)\ldots \Delta_k(X)\neq 0\}.$$
 We will now prove that  ${\mathcal O}^+$ is the union of the open $P$-orbits of $V^+$. 
 
  \begin{lemme}\label{lem-NO+} Any element of  ${\mathcal O}^+$ is  conjugated under $N$ to an element of  $\oplus_{j=0}^k ( \widetilde{\go g}^{\lambda_j}\setminus \{0\})$.
\end{lemme}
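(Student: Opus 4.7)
The plan is to proceed by induction on the rank $k+1$ of $\widetilde{\go g}$. The case $k=0$ is vacuous: here $\go n = \{0\}$, $N$ is trivial, and $V^+ = \widetilde{\go g}^{\lambda_0}$, so any $X$ with $\Delta_0(X)\neq 0$ automatically lies in $\widetilde{\go g}^{\lambda_0}\setminus\{0\}$.

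For the inductive step, take $X \in {\mathcal O}^+$ and decompose it along $V^+ = V_1^+ \oplus W_1^+ \oplus \widetilde{\go g}^{\lambda_0}$ as $X = X_2 + X_1 + X_0$. Since $\Delta_1(X)\neq 0$ the component $X_2$ is generic in $V_1^+$, so one can repeat verbatim the argument of Proposition \ref{prop.prem-reduc}(2) to produce $Z \in \go g_{-1}:=\{Y\in\go g : [H_{\lambda_1}+\cdots+H_{\lambda_k},Y]=-Y\}$ with $e^{\ad Z}X \in \widetilde{\go g}^{\lambda_0}\oplus V_1^+$, the $V_1^+$-component still being $X_2$. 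The point I would verify carefully, and which I regard as the main technical step, is that $Z$ actually lies in $\go n$: computing the eigenvalues of $\ad(H_{\lambda_1}+\cdots+H_{\lambda_k})$ on the pieces of the decomposition in Theorem \ref{th-decomp-Eij} gives $\go g_{-1} = \bigoplus_{j=1}^{k} E_{0,j}(1,-1)$, which is a subspace of $\go n = \bigoplus_{0\leq i<j\leq k} E_{i,j}(1,-1)$, so indeed $e^{\ad Z} \in N$. After this first step $X$ has been $N$-conjugated to $X_0' + X_2$ for some $X_0' \in \widetilde{\go g}^{\lambda_0}$.

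Next I apply the induction hypothesis to $X_2$ viewed as an element of the regular graded algebra $\widetilde{\go g}_1$ of rank $k$. The fundamental relative invariants $\widetilde{\Delta}_i$ attached to $\widetilde{\go g}_1$ coincide, up to scalars, with the restrictions $\Delta_{i+1}|_{V_{i+1}^+}$ (both being absolutely irreducible fundamental invariants of the same prehomogeneous space), and since $\widetilde{\go g}^{\lambda_0} \subset V_j^\perp$ for $j \geq 1$ and $\Delta_j$ is $N$-invariant by Theorem \ref{th-delta_{j}invariants}, we get $\widetilde{\Delta}_i(X_2) = (\mathrm{const})\cdot\Delta_{i+1}(X_0'+X_2) = (\mathrm{const})\cdot\Delta_{i+1}(X) \neq 0$ for all $i=0,\dots,k-1$. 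By induction there is $n_1 \in N_1 := \exp(\ad\bigoplus_{1\leq i<j\leq k}E_{i,j}(1,-1))$ with $n_1 X_2 = X_1' + \cdots + X_k'$ where each $X_j' \in \widetilde{\go g}^{\lambda_j}\setminus\{0\}$. Since $\go n_1 \subset \widetilde{\go g}_1 = {\mathcal Z}_{\widetilde{\go g}}(\widetilde{\go l}_0)$ commutes with $\widetilde{\go g}^{\lambda_0}$, the element $n_1$ fixes $X_0'$, and $N_1 \subset N$, so combining the two steps $X$ is $N$-conjugate to $X_0' + X_1' + \cdots + X_k'$.

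Finally, to check $X_0' \neq 0$, the $N$-invariance of $\Delta_0$ gives $\Delta_0(X_0' + X_1' + \cdots + X_k') = \Delta_0(X) \neq 0$, so by Theorem \ref{thexistedelta_0}(3) the resulting element is generic in $V^+$. Proposition \ref{prop G-diag}, combined with the automorphisms $\gamma_{i,j}$ of Proposition \ref{prop-gammaij} that permute the root spaces $\widetilde{\go g}^{\lambda_j}$, implies that the rank of any element of $\bigoplus_j \widetilde{\go g}^{\lambda_j}$ equals its number of nonzero summands; genericity forces this number to be $k+1$, and in particular $X_0' \neq 0$. The only delicate point throughout is the weight computation identifying $\go g_{-1} \subset \go n$, which ensures that the \emph{a priori} ${\rm Aut}_e(\go g)$-reduction from Proposition \ref{prop.prem-reduc}(2) can be carried out inside $N$.
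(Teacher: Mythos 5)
Your argument is correct and is essentially the paper's own proof: both use the reduction of Proposition \ref{prop.prem-reduc}(2) with the observation that the conjugating element $Z$ lies in $\oplus_{j=1}^{k}E_{0,j}(1,-1)\subset\go n$, then induct through $\widetilde{\go g}_1$ using the $N$-invariance of the $\Delta_j$, and finally invoke genericity to exclude vanishing components. Your extra verifications (the weight computation identifying $\go g_{-1}$ inside $\go n$, the compatibility of the invariants under descent) only make explicit what the paper asserts.
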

\begin{proof}
  Let  $X\in {\mathcal O}^+$. As  $\Delta_1(X)\neq 0$, we know from the proof of Proposition \ref{prop.prem-reduc}, that there exists  $Z\in \go g$ such that   $[H_{\lambda_1}+\ldots H_{\lambda_k}, Z]=-Z$ (and hence  $[H_{\lambda_0}, Z]=Z$,) and  ${\rm e}^{{\rm ad}\; Z} X\in V_1^+\oplus  \widetilde{\go g}^{\lambda_0}$. 
  
Therefore    $Z\in \oplus_{j=1}^k E_{0,j}(1,-1)\subset \go n$. Let $X^1\in V_1^+$ and $X^0\in  \widetilde{\go g}^{\lambda_0}$ such that  ${\rm e}^{{\rm ad}\; Z} X=  X^0+X^1$. Then, for $j\geq 1$, 
  $\Delta_j(X)=\Delta_j(X^0+X^1 )=\Delta_j(X^1)$ as  $X^0\in V_1^\perp$. 
  
 As  $X\in {\mathcal O}^+$, we have $\Delta_j(X^1)\neq 0$ for  $j\geq 1$. Then, by induction,  we  obtain that $X$ is $N$-conjugated to an element  of $\oplus_{j=0}^k ( \widetilde{\go g}^{\lambda_j})$. And as $X$ is generic for the $G$  action, we see that in fact  $X$ in $N$-conjugated to an element of  $\oplus_{j=0}^k ( \widetilde{\go g}^{\lambda_j}\setminus \{0\})$.
 
 \end{proof}
  
  \noindent\begin{rem}\label {rem-Vj-V(j+1)} Applying this Lemma to   $ \widetilde{\go g}_j$,  we see that if  $X\in V_j^+$ such  $\Delta_s(X)\neq 0$ for  $s\geq j$,  then  $X$ is $N$-conjugated to an element of the form $Y_j+X^{j+1} $ with $X^{j+1}\in V_{j+1}^+$ and  $Y_j\in  \widetilde{\go g}^{\lambda_j}$. 
  \end{rem}\medskip
  
   \noindent\begin{rem}\label {rem-Delta-normalisation} {\bf (Normalization)}
  
 Suppose $\ell =3$. Recall that  $L_j$ is the analogue  of the group  $G$ for the graded Lie algebra  $ \widetilde{\go l}_j$, that is   $L_j={\mathcal Z}_{{\rm Aut}_0(  \widetilde{\go l}_j)}(H_{\lambda_j})$. In the following Theorem we denote by $\delta_{j}$ a choice of a relative invariant of the prehomogeneous space $(L_{j},  \widetilde{\go g}^{\lambda_j})$. And then we normalize the $\Delta_{j}$'s in such a way that
  $$\Delta_{j}(X_{j}+X_{j+1}+\ldots+X_{k})=\delta_{j}(X_{j})\Delta_{j+1}(X_{j+1}+\ldots+X_{k})=\delta_{j}(X_{j})\ldots\delta_{k}(X_{k}) $$
  for  $X_{j}\in   \widetilde{\go g}^{\lambda_j}\setminus \{0\}$ and $ \text{ for }j=0,\ldots,k$. Conversely one could  also choose arbitrarily the $\Delta_{j}$'s, and this choice defines uniquely the $\delta_{j}$'s, according to the above formulas.  
  
  \end{rem}

For  $k\geq 1$, recall that $G_{k-1}$ is the analogue of the group $G$ associated to the graded Lie algebra $\tilde{\go g}_{k-1}$ which is of rank $2$. From Proposition \ref{prop-k=1} and Theorem \ref{thm-orbites-e2} one has $\chi_{k-1}(G_{k-1})=F^{*2}$ if $e=1$ or  $3$ and  $\chi_{k-1}(G_{k-1})=N_{E/F}(E)^*$  if   $e=2$, where  $E$ is a quadratic extension of  $F$. \medskip

 \begin{theorem}\label{th-Porbites}\hfill
  
 In all cases, the dense open set ${\mathcal O}^+$ is the union of the open $P$-orbits in $V^+$.

 \begin{enumerate}\item  If $\tilde{\go g}$ is of Type $I$ (that is, if   $\ell$ is a square and $e=0$ or $4$),  then ${\mathcal O}^+$ is the unique open  $P$-orbit in  $V^+$.
 \item   Let $\tilde{\go g}$ be of  of Type  $II$ (that is  $\ell=1$ and $e\in\{1,2,3\}$)   and let $S=\chi_{k-1}(G_{k-1})$. Then the  subgroup   $P$ has  $|F^*/S|^{k}$ open orbits in   $V^+$ given for  $k\geq 1$ by
  $${\mathcal O}_u=\{ X\in V^+;\dfrac{ \Delta_j(X)}{\Delta_k(X)^{k+1-j}} u_j\ldots u_{k-1}\in S \;{\rm for }\; j=0,\ldots k-1\},$$
where   $u=(u_0,\ldots, u_{k-1})\in (F^*/S)^{k}$. (i.e. $P$ has $4^k $ open orbits in $V^+$ if $e=1$ or $3$, and $2^k$ open orbits if $e=2$).
 
\item  If $\tilde{\go g}$ is of Type $III$ (that is if  $\ell=3$), then the subgroup  $P$ has  $3^{k+1}$ open orbits in $V^+$ given by

  $${\mathcal O}_u=\{ X\in V^+; \Delta_j(X) u_j\ldots u_k\in F^{*2} \;{\rm for  }\; j=0,\ldots k\},$$
 where $u=(u_0,\ldots, u_k)\in \prod_{i=0}^k\Big(\delta_i( \widetilde{\go g}^{\lambda_i}\setminus\{0\})/F^{*2}\Big).$
  \end{enumerate}

  \end{theorem}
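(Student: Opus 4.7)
The plan is to reduce the classification of open $P$-orbits in $\mathcal{O}^+$ to that of $L$-orbits on generic elements of the diagonal $\oplus_{j=0}^k \tilde{\go g}^{\lambda_j}$, and then to apply the orbit theorems of Section $3$ type by type.

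First I would verify that $\mathcal{O}^+$ is exactly the union of the open $P$-orbits. By Theorem \ref{th-delta_{j}invariants} each $\Delta_j$ is a relative $P$-invariant, so $\mathcal{O}^+$ is $P$-stable and Zariski open. Given $X\in\mathcal{O}^+$, Lemma \ref{lem-NO+} provides $n\in N$ with $nX=Z_0+\cdots+Z_k$ where $Z_j\in\tilde{\go g}^{\lambda_j}\setminus\{0\}$. To show that the $P$-orbit of such $Z$ is open I would compute $[\go p,Z]=[\go l,Z]+[\go n,Z]$: the $\mathfrak{sl}_{2}$-theory applied to a triple $(Y_j,H_{\lambda_j},Z_j)$ identifies $\ad(Z_j)\colon E_{i,j}(1,-1)\to E_{i,j}(1,1)$ as an isomorphism for $i<j$, while the weight decomposition of $V^+$ in Theorem \ref{th-decomp-Eij} forces $[E_{i,j}(1,-1),Z_s]=0$ for $s\notin\{i,j\}$ and for $s=i$ (a $3$-eigenvalue for $H_{\lambda_i}$ exceeds the maximum $2$ on $V^+$). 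Hence $[\go n,Z]=\oplus_{i<j}E_{i,j}(1,1)$. Since $\go l$ normalises each $\tilde{\go g}^{\lambda_j}$, and by the relevant case of Theorems \ref{thm-delta2}, \ref{thm-orbites-e04}, \ref{thm-orbites-e1}(2)(d), \ref{thm-orbites-e2}(2)(d), \ref{th-d=3} the $L$-orbit of $Z$ is open in $\oplus_j\tilde{\go g}^{\lambda_j}$, we get $[\go l,Z]=\oplus_j\tilde{\go g}^{\lambda_j}$. Adding gives $[\go p,Z]=V^+$.

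Next, I would reduce the classification of open $P$-orbits to that of $L$-orbits on the diagonal. For $u\in\go n$ and $Z$ diagonal, $\exp(\ad u)Z\in Z+\oplus_{i<j}E_{i,j}(1,1)$, since $[u,Z]$ has this form and iterated brackets land in weight spaces of $V^+$ that vanish by the bound $2$ on each $H_{\lambda_i}$-eigenvalue. The projection along $\oplus_{i<j}E_{i,j}(1,1)$ onto $\oplus_j\tilde{\go g}^{\lambda_j}$ is therefore $N$-invariant, so using $P=LN$ two diagonal generic elements are $P$-conjugate if and only if they are $L$-conjugate. Hence the open $P$-orbits of $V^+$ are in bijection with the $L$-orbits on $\prod_j(\tilde{\go g}^{\lambda_j}\setminus\{0\})$. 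In Type $I$, Theorems \ref{thm-delta2} and \ref{thm-orbites-e04} give a single $L$-orbit on generic diagonal elements, yielding the unique open $P$-orbit $\mathcal{O}^+$. In Type $II$ ($\ell=1$), fixing bases and writing $X=\sum_j x_jX_j$, Theorems \ref{thm-orbites-e1}(2)(d) and \ref{thm-orbites-e2}(2)(d) (or Proposition \ref{prop-k=1} when $k=1$) characterise $L$-orbits by the class of $(x_j/x_k)_{j<k}$ modulo $S^k$, where $S=F^{*2}$ if $e\in\{1,3\}$ and $S=N_{E/F}(E^*)$ if $e=2$; by Proposition \ref{prop-k=1}(3) applied to $\tilde{\go g}_{k-1}$ this is exactly $\chi_{k-1}(G_{k-1})$. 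Since $\kappa=1$ in Type $II$ (Notation \ref{notation-kappa}), Theorem \ref{thproprideltaj}(3) gives $\Delta_j(X)=c_j\prod_{s=j}^k x_s$ up to a normalisation constant, whence $\Delta_j(X)/\Delta_k(X)^{k+1-j}$ is proportional to $\prod_{s=j}^{k-1}(x_s/x_k)$; a telescoping argument (starting from $j=k-1$) shows that the condition $\Delta_j(X)\Delta_k(X)^{-(k+1-j)}u_j\cdots u_{k-1}\in S$ for all $j<k$ is equivalent to $u_j\equiv x_k/x_j\pmod S$ for each $j$, exhibiting $u\mapsto\mathcal{O}_u$ as a bijection from $(F^*/S)^k$ onto the set of open $P$-orbits. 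In Type $III$ ($\ell=3$), Corollary \ref{cor-conjLj} together with Theorem \ref{th-k=0}(2) applied to each $\tilde{\go l}_j$ shows that $L_j^0\subset L$ acts transitively on $\{X_j\in\tilde{\go g}^{\lambda_j}\setminus\{0\}:\delta_j(X_j)\equiv v\pmod{F^{*2}}\}$ for each of the three admissible classes $v$; the product of these actions, which embeds into $L$, yields $3^{k+1}$ $L$-orbits on the diagonal indexed by $(\delta_j(X_j)\bmod F^{*2})_{j=0}^k$. With the normalisation $\Delta_j(X_0+\cdots+X_k)=\prod_{s=j}^k\delta_s(X_s)$ of Remark \ref{rem-Delta-normalisation}, the system $\Delta_j(X)u_j\cdots u_k\in F^{*2}$ telescopes to $\delta_j(X_j)u_j\in F^{*2}$ for each $j$, giving the stated bijection.

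The main obstacle will be the bookkeeping in Type $II$: translating the natural $L$-orbit invariant (the projective class of $(x_0,\ldots,x_k)$ modulo $S$) into the mixed invariants $\Delta_j(X)/\Delta_k(X)^{k+1-j}$ requires careful handling of normalisation constants, and one must verify both that $u\mapsto\mathcal{O}_u$ is well-defined on $(F^*/S)^k$ and bijective onto the set of open orbits. A secondary subtlety in Type $III$ is that the $L$-action on generic diagonal elements is strictly finer than the $G$-action described in Theorem \ref{th-d=3} (which makes use of the exchanges $\gamma_{i,j}$), so one must carefully confine the argument to the per-factor subgroups $L_j^0\subset L$ in order to obtain the refined count of $3^{k+1}$ orbits.
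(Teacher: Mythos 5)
Your overall strategy (reduce to the diagonal by $N$ via Lemma \ref{lem-NO+}, then invoke the $L$-orbit results of Section 3 and the normalisations of the $\Delta_j$'s) is the paper's, but your central reduction step contains a genuine error. You claim that for $u\in\go n$ and a diagonal generic $Z=Z_0+\cdots+Z_k$ one has $\exp(\ad u)Z\in Z+\oplus_{i<j}E_{i,j}(1,1)$, because "iterated brackets land in weight spaces of $V^+$ that vanish by the bound $2$". This is false: for $u\in E_{i,j}(1,-1)$ and $Z_j\in\tilde{\go g}^{\lambda_j}$ the first bracket $[u,Z_j]$ does lie in $E_{i,j}(1,1)$, but the second bracket $[u,[u,Z_j]]$ has eigenvalue $2$ for $H_{\lambda_i}$ and $0$ for the other $H_{\lambda_s}$, i.e.\ it lies in $\tilde{\go g}^{\lambda_i}$ (an allowed weight space of $V^+$, cf.\ Theorem \ref{th-decomp-Eij}) and is in general nonzero; only the third bracket dies. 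Concretely, in case (6) of Table 1 ($\tilde{\go g}=\go{sp}(2n,F)$, $V^+=\mathrm{Sym}(n,F)$, $N$ acting by $B\mapsto gB\,{}^t\!g$ with $g$ unitriangular), conjugating $\mathrm{diag}(z_1,z_0)$ produces a matrix whose diagonal part is $(z_1,\,z_0+x^2z_1)$: the diagonal entries are not $N$-invariant, only the nested minors (the $\Delta_j$'s) are. Hence your claimed $N$-invariant projection does not exist, the asserted equivalence "two diagonal generic elements are $P$-conjugate iff $L$-conjugate" is unjustified as stated, and with it the bijection between open $P$-orbits and $L$-orbits on $\prod_j(\tilde{\go g}^{\lambda_j}\setminus\{0\})$ — which is precisely what gives the separation of the sets $\mathcal{O}_u$ and the counts $|F^*/S|^{k}$ and $3^{k+1}$ in Types II and III. (Type I is unaffected, since no separation is needed there; your opening tangent-space computation is correct but unnecessary, as each $\mathcal{O}_u$ is open simply because $F^{*2}$ and $S$ are open subgroups of $F^*$.)

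The gap is closed by replacing the (false) invariance of the diagonal projection with the invariance of the polynomials themselves, which is how the paper argues: the $\Delta_j$ are $N$-invariant and relatively $P$-invariant (Theorem \ref{thproprideltaj}), so for any $X\in\mathcal{O}^+$ the classes of $\Delta_j(X)/\Delta_k(X)^{k+1-j}$ modulo $S$ (resp.\ of $\Delta_j(X)$ modulo $F^{*2}$ in Type III) can be computed on an $N$-conjugated diagonal representative and serve as the separating invariants. In particular, if two diagonal generic elements are $P$-conjugate, write $p=nl$; then $n(lZ)=Z'$ with $lZ$ and $Z'$ both diagonal, and exact equality of all the $N$-invariant $\Delta_j$'s forces equality of the coordinates in Type II ($\kappa=1$), resp.\ $\delta_s(Z'_s)=\delta_s((lZ)_s)$ in Type III, whence $L$-conjugacy by Theorem \ref{thm-orbites-e1}(2)(d), Theorem \ref{thm-orbites-e2}(2)(d), or Corollary \ref{cor-conjLj}. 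Once this substitution is made, your remaining bookkeeping (the telescoping identification of $\mathcal{O}_u$, and the use of the per-factor subgroups $L_j^0\subset L$ in Type III) goes through and coincides with the paper's proof.
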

  \begin{proof} As the  $\Delta_j$'s are relatively invariant under  $P$, the  union of the open  $P$-orbits is a subset of  ${\mathcal O}^+$.

  \noindent{  $(1)$} Suppose first that  $\ell$ is a square   and $e=0$ or $4$ (in other words $\tilde{\go{g}}$ is of Type I). Let $X\in {\mathcal O}^+$.  By Lemma  \ref{lem-NO+}, $X$ is  $N$-conjugated  to an element  $\sum_{j=0}^k Z_j$ with $Z_j\in  \widetilde{\go g}^{\lambda_j}\setminus  \{0\}$.  This element is of course generic for $G$.\medskip 
   
  From Theorem  \ref{thm-delta2} and  \ref{thm-orbites-e04}, two generic elements of the"diagonal" $\oplus_{j=0}^k\tilde{\go g}^{\lambda_j}$ are  $L$-conjugated. Hence all the  elements of  $\mathcal O^+$are $P$-conjugated.
    \medskip

      \noindent{  $(2)$}  We suppose now that  $\ell=1$ and  $e=1$, $2$ or $3$. 
   
    Let  $k\geq 1$. Let $S=\chi_{k-1}(G_{k-1})$. For $j\in\{0,\ldots, ,k\}$ we fix a non zero element  $X_j$ of $ \tilde{\go g}^{\lambda_j}$ such that for  $I^+=X_0+\ldots +X_k$ one has  $\Delta_j(I^+)=1$ for all  $j$.

   Let  $Z\in {\mathcal O}^+$. As before, $Z$ is $N$-conjugated  to an element   $X\in \oplus_{j=0}^k({\widetilde{\go g}^{\lambda_j}\setminus  \{0\}})$. As $\ell=1$, we can write $X=\sum_{j=0}^k x_jX_j$ with $x_j\neq 0$.

 By Theorem \ref{thproprideltaj}, the polynomials  $\Delta_j$ are $N$-invariant. If we set  $u_s=\dfrac{x_s}{x_k}$ modulo $S$ for  $s=0,\ldots , k-1$, we get 
$$\frac{\Delta_j(Z)}{\Delta_k(Z)^{k+1-j}}=\frac{\Delta_j(X)}{\Delta_k(X)^{k+1-j}}= \prod_{s=j}^{k-1} u_j\;{\rm modulo }\; S,$$
and this implies  $Z\in\mathcal O_u$.

 Conversely, let $u=(u_0,\ldots, u_{k-1})\in (F^*/S)^k$ and let   $Z$ and  $Z'$ be two elements of  $\mathcal O_u$. These elements are  respectively $N$-conjugated to diagonal elements    $X=\sum_{j=0}^k x_jX_j$  and $X'=\sum_{j=0}^k x'_jX_j$.  From the definition of  $\mathcal O_u$, we have  $\dfrac{x_j}{x_k}\ldots \dfrac{x_{k-1}}{x_k}=\dfrac{x'_j}{x'_k}\ldots \dfrac{x'_{k-1}}{x'_k}$ modulo $S$ for all $j\in\{0,\ldots k-1\}$. Therefore   $\dfrac{x_j}{x_k} =\dfrac{x'_j}{x'_k} $ modulo $S$ for all  $j\in\{0,\ldots k-1\}$. This implies that $\frac{1}{x_{k}x'_{k}}x_{i}x'_{i}\in S$  for all $i\in \{0,1,\ldots,k\}$ (because $F^{*2}\subset S$ by Lemma \ref{chiG}). By Theorem \ref{thm-orbites-e1} (d) (in the case  $e=1$ or $3$)  and Theorem \ref{thm-orbites-e2} (d) (in  the case $e=2$), the elements $ X$ and $X'$  are  $L$-conjugated. 
 It follows that two elements in $\mathcal O_u$ are $P$-conjugated.\medskip

 If $u$ and  $v$ are two elements in  $(F^*/S)^{k-1}$ such that  $\mathcal O_u\cap \mathcal O_v\neq \emptyset$, then  $u_j\ldots u_{k-1}=v_j\ldots v_{k-1}$ modulo $S$ for  $j=0,\ldots k-1$. Therefore  $u_j=v_j$ modulo $S$ for all $j$ and hence  $\mathcal O_u=\mathcal O_v$.
  The statement $(2)$ is now proved.
\vskip0,5cm

  \noindent{$ (3)$} Consider finally the case  $\ell=3$.           Remember  (see Theorem \ref{th-k=0}, 2)) that in this case,     $\delta_j$ is a quadratic form which represents three classes modulo $F^{*2}$ (all classes  in  $(F^*/{F^*)^2}$ distinct from  $-disc(\delta_j)$). Let  $X\in V^+$ be a generic element of    $(P,V^+)$. We will show that $X$  belongs to   $ {\mathcal O}_u$ for some  $u=(u_0,\ldots, u_k)\in \prod_{i=0}^k\big(\delta_i( \widetilde{\go g}^{\lambda_i}\setminus\{0\})/F^{*2}\big) $.\medskip

   As before, the element  $X$ is  $N$-conjugated to an element  $Z=\sum_{j=0}^k Z_j$ where $Z_j\in \widetilde{\go g}^{\lambda_j}\setminus \{0\}$.

 From the normalization made in Remark \ref{rem-Delta-normalisation},  we have $\Delta_j(Z)=\prod_{s=j}^k \delta_s(Z_s)$ where  $\delta_s$ is a  fundamental relative invariant of  $(L_s, \widetilde{\go g}^{\lambda_{s}})$. 
  
  If we define  $u_s=\delta_s(Z_s)\;{\rm modulo}\; F^{*2}$, we get  $ \Delta_j(X)u_j\ldots u_k=\Delta_j(Z)u_j\ldots u_k=\prod_{s=j}^k \delta_s(Z_s) u_s\in F^{*2}$, and therefore    $X$  belong to ${\mathcal O}_u$  with  $u=(u_0,\ldots, u_k)$.\medskip

Conversely,  let $X,X'\in {\cal O}_{u}$  These elements are $N$-conjugated to (respectively) two ``diagonal'' elements $Z=Z_0+\ldots +Z_k$ and  $Z'=Z'_0+\ldots +Z'_k$ (Lemma \ref{lem-NO+}). From the definitions we have  $\delta_j(Z_j)=\delta_j(Z'_j)$ modulo $F^{*2}$ for $j=0,\ldots,k$. By  Corollary \ref{cor-conjLj}, there exists  $l_j\in L^0_j$ such that  $l_jZ_j=Z'_j$ (Recall that $L^0_i$ is the subgroup of $L_i$ defined in definition \ref{def-G0}). As $L_j^0$ centralizes   $\oplus_{s\neq j}  \widetilde{\go g}^{\lambda_s}$, we get  $l_0\ldots l_k. Z=Z'$ . Moreover   $l_0\ldots l_k\in P$. Hence two elements in ${\cal O}_{u}$ are $P$-conjugated. \medskip

  If $u$ and  $v$ are two elements of $\big(\Delta_k( \widetilde{\go g}^{\lambda_k}\setminus\{0\})/F^{*2}\big)^{k+1}$ such that  ${\mathcal O}_u\cap {\mathcal O}_v\neq \emptyset$ then $u_j\ldots u_k=v_j\ldots v_k\;{\rm modulo}\; F^{*2}$ for  $j=0,\ldots k$ . And hence $v_j=u_j$ modulo $F^{*2}$ for all $j$, 	and therefore ${\mathcal O}_u= {\mathcal O}_v$.  
  
  Assertion $(3)$ is proved.

  The fact that ${\cal O}^+$   is the union of the open $P$-orbits is now clear.

  \end{proof}
   \subsection{ The involution $\gamma$}
 \hfill

From  Remark \ref{rem-decomp-racines}  we know that the root system of   $( \widetilde{\go g},\go a^0)$ is always of type $C_{k+1}$ and consists of the linear forms  $\pm \eta_j\pm \eta_i$ for  $i\neq j$ and $\pm 2\eta_j$, $1\leq i,j\leq k$ where  

$$\eta_j(H_{\lambda_i})=\delta_{ij}.$$

 We know also (\cite{Bou1}) that then, there exists an element $w$ of the Weyl group of $C_{k+1}$ such that 
$$(*)\qquad w.\eta_i=-\eta_{k-i}\;{\rm for  }\; i=0,\ldots,k.$$
As this Weyl group is isomorphic to  $N_{ \widetilde{G}}(\go a^0)/Z_{ \widetilde{G}}(\go a^0)$, there exists an element  $\gamma\in N_{ \widetilde{G}}(\go a^0)$ such that  $w={\rm Ad}(\gamma)_{|_{\go{a}^0}}$. The property $(*)$ implies that  $\gamma$ normalizes  $\go g$, exchanges  $V^+$ and $V^-$ 	and normalizes also  $P$ (ie. $\gamma P\gamma^{-1}=P$). \medskip

In the Theorem below, we will give explicitly such an element  $\gamma$,  which moreover, will be an involution of  $ \widetilde{\go g}$.
\vskip 10pt

We choose   a diagonal   $\go sl_2$-triple $\{I^-,H_0, I^+\}$   that is such that  $I^+=X_0+\ldots +X_k$ ($X_{j}\in \widetilde{\go{g}}^{\lambda_{j}}\setminus\{0\}$), $I^-=Y_0+\ldots +Y_k$ ($Y_{j}\in \widetilde{\go{g}}^{-\lambda_{j}}\setminus\{0\}$), where each $(Y_{j}, H_{\lambda_{j}}, X_{j})$ is an  $\go sl_2$-triple. 

 Let  $\{Y,H_{\lambda_i}-H_{\lambda_j}, X\}$ be an  $\go sl_2$-triple such that  $X\in E_{i,j}(1,-1)$ and  $Y\in E_{i,j}(-1,1)$ (see Lemma \ref{lem-sl2ij}). Remember  the elements  $$\gamma_{i,j}=e^{\ad X}e^{\ad Y}e^{\ad X}=e^{\ad Y}e^{\ad X}e^{\ad Y}.$$ which have been introduced in Proposition \ref{prop-gammaij}. We suppose moreover that the sequence $X_{i}$ is such that $\gamma_{i,k-i}(X_{i})=X_{k-i}$, for $0\leq i\leq n$, where $n$ is  the integer defined by  $k=2n+2$ if $k$ is even and  $k=2n+1$ if $k$ is odd. It is always possible to choose such a sequence.

Once we have chosen such an $\go sl_2$-triple, we normalize the polynomials $\Delta_{j}$ by the condition:
 $$\Delta_j(I^+)=1,\quad {\rm for  }\; j=0,\ldots, k.$$

\begin{theorem}\label{th-involution-gamma} \hfill

Suppose that $\{I^-,H_0, I^+\}$ is a diagonal  $\go sl_2$-triple satisfying the preceding conditions. 

There exists an element  $\gamma\in N_{ \widetilde{G}}(\go a^0)$such that 
\begin{enumerate}\item $  \gamma.H_{\lambda_j}=-H_{\lambda_{k-j}}\;{\rm for  }\; j=0,\ldots, k$;
\item $\gamma.X_j=Y_{k-j}\;{\rm for}\; j=0,\ldots, k$;
\item $\gamma^2={\rm Id}_{ \widetilde{\go g}}.$
\end{enumerate}
Such an element normalizes $\go g$, exchanges  $V^+$ and $V^-$ and normalizes $G$, $P$, $M$, $A_0$ and  $N$.

\end{theorem}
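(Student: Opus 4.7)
The plan is to construct $\gamma$ explicitly as a product of two commuting involutions built from ingredients we already have. First I would introduce, for each $j\in\{0,\ldots,k\}$, the Weyl-group-type element
$$w_j = e^{\ad X_j}\,e^{\ad Y_j}\,e^{\ad X_j}\in\operatorname{Aut}_e(\widetilde{\go g}).$$
By the $\go{sl}_2$-module dictionary of section \ref{sl2module} applied to the triple $(Y_j,H_{\lambda_j},X_j)$, $w_j$ sends $X_j\mapsto Y_j$, $Y_j\mapsto X_j$ and $H_{\lambda_j}\mapsto -H_{\lambda_j}$. Strong orthogonality of the $\lambda_s$'s and the fact that $V^+$ is commutative make $X_s,Y_s$ highest weight vectors of weight $0$ for the $j$-th $\go{sl}_2$ when $s\neq j$, so $w_j$ fixes $X_s, Y_s, H_{\lambda_s}$ for $s\neq j$. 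The $w_j$'s therefore commute, and
$$w_{\mathrm{all}} := w_0 w_1\cdots w_k$$
satisfies $w_{\mathrm{all}}(X_j)=Y_j$ and $w_{\mathrm{all}}(H_{\lambda_j})=-H_{\lambda_j}$. Next I assemble the permutation part
$$\pi := \prod_{i=0}^{n}\widetilde{\gamma_{i,k-i}},$$
using the involutions of Proposition \ref{prop-gamijtilde}. The pairs $\{i,k-i\}$ are disjoint, so the factors commute and $\pi$ is an involution. Using the normalization $\gamma_{i,k-i}(X_i)=X_{k-i}$ assumed before the statement, the fact that the $X_s, Y_s$ lie in even $H_{\lambda_i}$-weight spaces (so $w_i^2$ fixes them), and invariance of the Killing form (giving $\gamma_{i,k-i}(Y_i)=Y_{k-i}$), one checks $\pi(X_j)=X_{k-j}$, $\pi(Y_j)=Y_{k-j}$, $\pi(H_{\lambda_j})=H_{\lambda_{k-j}}$.

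Now I set $\gamma := \pi\circ w_{\mathrm{all}}$. Properties (1) and (2) are immediate:
$$\gamma(H_{\lambda_j})=\pi(-H_{\lambda_j})=-H_{\lambda_{k-j}},\qquad \gamma(X_j)=\pi(Y_j)=Y_{k-j}.$$
The crux is (3). Because $\pi$ conjugates the triple $(Y_i,H_{\lambda_i},X_i)$ to $(Y_{k-i},H_{\lambda_{k-i}},X_{k-i})$, we get $\pi w_i\pi^{-1}=w_{k-i}$, hence $\pi w_{\mathrm{all}}\pi^{-1}=w_{\mathrm{all}}$, i.e.\ $\pi$ and $w_{\mathrm{all}}$ commute. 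Therefore $\gamma^2 = \pi^2 w_{\mathrm{all}}^2 = w_{\mathrm{all}}^2$. The key observation, which I expect to be the main technical step, is that $w_i^2$ acts on a weight vector of $\ad H_{\lambda_i}$-weight $m_i$ by $(-1)^{m_i}$, so on a root vector of root $\lambda\in\widetilde\Sigma$ (or more generally on a simultaneous eigenvector of $\go a^0$),
$$w_{\mathrm{all}}^2 \text{ acts by }\prod_{j=0}^{k}(-1)^{\lambda(H_{\lambda_j})}=(-1)^{\lambda(H_0)}.$$
Since $\lambda(H_0)\in\{-2,0,2\}$ on $\widetilde{\go g}=V^-\oplus\go g\oplus V^+$ (and is $0$ on $\go m$), this sign is always $+1$, so $w_{\mathrm{all}}^2=\operatorname{Id}_{\widetilde{\go g}}$ and consequently $\gamma^2=\operatorname{Id}$.

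It remains to verify the normalization claims. By construction $\gamma\in\operatorname{Aut}_e(\widetilde{\go g})\subset\widetilde G$, and $\gamma(\go a^0)=\go a^0$, so $\gamma\in N_{\widetilde G}(\go a^0)$. From $\gamma(H_0)=-H_0$ we get $\gamma(V^\pm)=V^\mp$ and $\gamma(\go g)=\go g$. For $g\in G=Z_{\widetilde G}(H_0)$ one has $(\gamma g\gamma^{-1})(H_0)=\gamma(g(-H_0))=\gamma(-H_0)=H_0$, so $\gamma G\gamma^{-1}=G$. Since $\gamma$ sends $E_{i,j}(1,-1)$ to $E_{k-j,k-i}(1,-1)$, and the condition $i<j$ is equivalent to $k-j<k-i$, we have $\gamma(\go n)=\go n$, whence $\gamma N\gamma^{-1}=N$; stability of $L=Z_G(\go a^0)$ (and hence of the corresponding anisotropic Levi factor $M$) follows from stability of $\go a^0$, and of $A^0$ follows from $\gamma(\go a^0)=\go a^0$. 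Thus $\gamma$ normalizes $G,P,M,A^0$ and $N$, completing the proof.
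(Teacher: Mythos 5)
Your construction is correct and is essentially the paper's own: your $\pi$ is exactly the paper's $\widetilde{\gamma}=\widetilde{\gamma_{0,k}}\circ\cdots\circ\widetilde{\gamma_{n,k-n}}$, and since the diagonal $\go{sl}_2$-triples pairwise commute, your $w_{\mathrm{all}}=w_0w_1\cdots w_k$ coincides with the global Weyl element $w=e^{\ad I^+}e^{\ad I^-}e^{\ad I^+}$ that the paper composes with $\widetilde{\gamma}$. The only (harmless) difference is bookkeeping: the paper obtains $\gamma^2=\mathrm{Id}_{\widetilde{\go g}}$ by noting that $\widetilde{\gamma}$ centralizes $I^{\pm}$ and $H_0$ and that $w$ induces the involution $\sigma$ of Theorem \ref{th-invol}, whereas you establish the same two facts directly, by conjugating the individual triples under $\pi$ and by the parity of the $\go a^0$-weights.
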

 \begin{proof} We will first show the existence  of an involution $ \widetilde{\gamma}$ of  $ \widetilde{G}$ such that  $ \widetilde{\gamma}(H_{\lambda_j})=H_{\lambda_{k-j}}$ and  $ \widetilde{\gamma}(X_{\lambda_j})=X_{\lambda_{k-j}}$ for $j=0,\ldots, k$. For $k=0$, then the trivial involution satisfies this property. \medskip

We suppose that $k>0$. Let  $w_i$ be the non trivial element of the  Weyl group   associated to the   ${\go sl}_2$-triple $\{Y_i H_{\lambda_i}, X_i\}$.
Recall  (Proposition \ref{prop-gammaij} and Proposition \ref{prop-gamijtilde})   that    the elements  $\widetilde{\gamma}_{i,j}=\gamma_{i,j}\circ w_i^2\in N_{\widetilde{G}}(\go a^0)$ satisfy the following properties:
$$\widetilde{\gamma}_{i,j}^2={\rm Id}_{\widetilde{\go g}}\quad {\rm and }\quad \widetilde{\gamma}_{i,j}(H_{\lambda_s})=\left\{\begin{array}{ccc} H_{\lambda_i} & {\rm for } & s=j\\
H_{\lambda_j} & {\rm for } & s=i\\
H_{\lambda_s} & {\rm for } & s\notin\{i,j\}\end{array}\right.$$

Consider again the integer   $n$   defined by  $k=2n+2$ if $k$ is even and  $k=2n+1$ if $k$ is odd and set:
$$\widetilde{\gamma}:=\widetilde{\gamma_{0,k}}\circ \widetilde{\gamma_{1,k-1}}\circ\ldots \circ \widetilde{\gamma_{n, k-n}}.$$

Then $$ \widetilde{\gamma}(H_{\lambda_j})=H_{\lambda_{k-j}}\;{\rm for }\; j=0,\ldots, k$$

As the pairs of roots $(\lambda_{i},\lambda_{k-i})$  are mutually stronly orthogonal, the involutions $\widetilde{\gamma_{i,k-i}}$ (see  Proposition \ref{prop-gamijtilde}) commute, and hence  $\widetilde{\gamma}$ is an involution of $\widetilde{\go g}$.\medskip

From our choice of the sequence $X_{j}$, and as the   action of $w_i^2$ on  $\oplus_{s=0}^k\widetilde{\go g}^{\lambda_s}$ is trivial, we obtain that   $\widetilde{\gamma} (X_j)= X_{k-j}$.\medskip

The involution  $\widetilde{\gamma}$ centralizes $I^+$ and $H_{0}$, and hence it centralizes  $I^-$. Therefore  $\widetilde{\gamma}$ commutes with   $w= e^{{\rm ad}\; I^+}e^{{\rm ad}\; I^-}e^{{\rm ad}\; I^+}$ which is the element of  $\widetilde{G}$ which defines the involution $\sigma$ associated to the  ${\go sl}_2$-triple $\{I^-, H_0, I^+\}$. Define 
$$\gamma=\widetilde{\gamma}w=w\widetilde{\gamma}.$$
The automorphism  $\gamma$ commutes with  $w$, and hence  $\gamma$ is an involution of  $\widetilde{\go g}$. Moreover, using Theorem  \ref{th-invol}, we get 
$$\begin{array}{lll} \gamma(H_{\lambda_j})=\sigma(H_{\lambda_{k-j}})=-H_{\lambda_{k-j}}& {\rm for }& j=0,\ldots, k\\
\gamma(X_j)=\sigma(X_{k-j})=Y_{k-j} & {\rm for }& j=0,\ldots, k\end{array}$$
This implies that  $\gamma(H_0)=-H_0$ and hence  $\gamma$ stabilizes  $\go g$, normalizes  $G$ and exchanges $V^+$ and  $V^-$.\medskip

As  $ \gamma(H_{\lambda_j})=-H_{\lambda_{k-j}}$ for  $ j=0,\ldots, k$, the element $\gamma$ stabilizes $\go a^0$ and exchanges  $E_{i,j}(1,-1)$ and  $E_{k-i, k-j}(-1,1)$. Therefore $\gamma$ stabilizes $\go n$ and  $\go{l}={\go z}_{\go g}(\go a^0)$, and hence it stabilizes $\go p$. It follows that $\gamma$ normalizes  $A^0$,  $L$, $N$ and $P$.

 \end{proof}
\subsection{ The  $P$-orbits in  $V^-$ and the polynomials  $\nabla_j$}\hfill

\vskip 10pt

In this section we fix an $\go{sl}_{2}$-triple $(I^-,H_{0},I^+)$ satisfying the same  conditions as for  Theorem \ref{th-involution-gamma} where the involution $\gamma$ is defined. We set $H=Z_{G}(I^+)$

\begin{definition}\label{def-nabla} For  $j=0,\ldots, k$, we denote by  $\nabla_j$ the polynomial on  $V^-$ defined by 
$$\nabla_j(Y)=\Delta_j(\gamma(Y)),\quad {\rm for }\; Y\in V^-.$$
\end{definition}

 \begin{theorem}\label{th-nabla}\hfill
 
  The polynomials  $\nabla_j$ are irreducible of degree $\kappa(k+1-j)$.
 \begin{enumerate}\item $\nabla_0$ is the  relative invariant of  $V^-$ under the action of $G$.
 \item For  $j=0,\ldots ,k$, the polynomial  $\nabla_j$ is a relatively invariant polynomial on $V^-$ under the action of the parabolic subgroup  $P$. More precisely we have 
 $$\nabla_j(p.Y)=\chi_j^-(p) \nabla_j(Y),\quad{\rm for }\; p\in P,$$
 where  $\chi_j^-$ is a character of  $P$ with the following properties:
 
 $\bullet \chi_j^-(n) =1, \quad{\rm for }\; n\in N, $
 
 $\bullet \chi^-_j(a)=a^{-\kappa(\lambda_0+\ldots +\lambda_{k-j})},  \quad{\rm for }\; a\in A,$
 
 $\bullet \chi_j^-(l)=1,{\rm for }\; l\in L\cap H.$
 
 \end{enumerate}
 \end{theorem}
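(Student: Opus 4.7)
The overall strategy is to transport everything from the $V^+$-side to the $V^-$-side via the involution $\gamma$, exploiting the three key properties established in Theorem \ref{th-involution-gamma}: $\gamma$ is an $F$-linear isomorphism $V^-\!\to V^+$, it is an involution, and it normalizes $G$, $L$, $N$, and hence $P$. With these, the theorem reduces almost entirely to Theorem \ref{th-delta_{j}invariants}.

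For (1), since $\gamma:V^-\to V^+$ is an $F$-linear bijection, the polynomial $\nabla_j=\Delta_j\circ\gamma$ has the same degree as $\Delta_j$, namely $\kappa(k+1-j)$, and it is (absolutely) irreducible because $\Delta_j$ is. For the first half of (2), for any $g\in G$ we have $\gamma g\gamma^{-1}\in G$, so
$$\nabla_0(gY)=\Delta_0\bigl(\gamma g\gamma^{-1}\!\cdot\gamma(Y)\bigr)=\chi_0(\gamma g\gamma^{-1})\,\nabla_0(Y),$$
which shows that $\nabla_0$ is a nonzero relative invariant of $(G,V^-)$; by the analogue of Theorem \ref{thexistedelta_0} applied to the regular graded Lie algebra with its grading flipped (for which $V^-$ plays the role of $V^+$), $\nabla_0$ is the fundamental relative invariant up to scalar.

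For (2), (second half), the same computation applied with $p\in P$ in place of $g\in G$ yields, using that $\gamma P\gamma^{-1}=P$,
$$\nabla_j(pY)=\chi_j(\gamma p\gamma^{-1})\,\nabla_j(Y),$$
so $\chi_j^-(p):=\chi_j(\gamma p\gamma^{-1})$ is the sought character. To verify the three formulas for $\chi_j^-$, I would check each factor of the Langlands decomposition: on $N$, since $\gamma N\gamma^{-1}=N$ and $\chi_j|_N=1$, we get $\chi_j^-|_N=1$; on $L\cap H$, observe that $\gamma H\gamma^{-1}=Z_G(\gamma I^+)=Z_G(I^-)=H$ (using Theorem \ref{th-invol}(1) and the fact that $I^-$ is determined by $(H_0,I^+)$ as the unique element of $V^-$ with $[I^-,I^+]=H_0$), so $\gamma$ normalizes $L\cap H$, and the triviality of $\chi_j$ on $L\cap H$ transports. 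For $a\in A$, one computes the differential of the rational character $\chi_j^-$ on $\go a$: by Lemma \ref{lem-SECq} one has $\go a=(\go a\cap\go h)\oplus\go a^0$, and on the first summand $\mathrm{Ad}(\gamma)H$ lies in $\go h\cap\go l=\mathrm{Lie}(L\cap H)$, where $d\chi_j$ vanishes; on $\go a^0=\oplus FH_{\lambda_s}$, writing $H=\sum_s t_sH_{\lambda_s}$ and using $\gamma\cdot H_{\lambda_s}=-H_{\lambda_{k-s}}$, a direct calculation using $\lambda_i(H_{\lambda_s})=2\delta_{is}$ gives
$$d\chi_j(\mathrm{Ad}(\gamma)H)=-2\kappa\sum_{s=j}^{k}t_{k-s}=-2\kappa\sum_{i=0}^{k-j}t_i=-\kappa(\lambda_0+\cdots+\lambda_{k-j})(H),$$
which matches the claim since $\lambda_i$ vanishes on $\go a\cap\go h$.

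The only genuinely subtle point is the character computation on $A$: $\gamma$ is only asserted to normalize $A^0$, not the full $A$, so one cannot directly write $\mathrm{Ad}(\gamma)|_{\go a}$ as an endomorphism of $\go a$. The way around this is to work with the differential $d\chi_j^-:\go a\to F$ (rational characters of a split torus are determined by their differentials), using that $\mathrm{Ad}(\gamma)H$ always lies in $\go l$ (since $\gamma$ normalizes $L$) and then decomposing via Lemma \ref{lem-SECq}; the $\go a\cap\go h$-summand lands in $\mathrm{Lie}(L\cap H)$ where $d\chi_j=0$, and the $\go a^0$-summand is handled by the explicit formula $\gamma\cdot H_{\lambda_s}=-H_{\lambda_{k-s}}$. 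Once this reduction is made, the remainder is bookkeeping.
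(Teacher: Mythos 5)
Your proposal is correct and follows essentially the same route as the paper: transport everything through $\gamma$, set $\chi_j^-(p)=\chi_j(\gamma p\gamma^{-1})$, and read off irreducibility, degree, and the values on $N$, $A$, $L\cap H$ from Theorems \ref{thproprideltaj}, \ref{th-delta_{j}invariants} and \ref{th-involution-gamma}. Your extra care on the torus factor (computing $d\chi_j^-$ on $\go a=(\go a\cap\go h)\oplus\go a^0$ rather than conjugating $A$ by $\gamma$) is a sound refinement of the paper's terser argument, which simply asserts that $\gamma$ normalizes $A$ even though Theorem \ref{th-involution-gamma} only records normalization of $A^0$.
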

 \begin{proof}  As $\gamma$ is linear, $\nabla_{j}$ is effectively an irreducible polynomial of the same degree as $\Delta_{j}$, that is  $\kappa(k+1-j)$ ($\nabla_{j}$ is  non zero  since  $\gamma(I^-)=I^+$). As  $\gamma$ normalizes  $G$ and $P$, the fact that the  $\nabla_j$'s are relatively invariant  is  direct consequence of the same property for the  $\Delta_j$'s (Theorem \ref{thproprideltaj}).\medskip

  For $p\in P$ we have:
 $$ \nabla_j(p.I^-)=\chi_j^-(p)\nabla_{j}(I^-) =\Delta_j(\gamma p\gamma^{-1}\gamma I^+)=\chi_j(\gamma p\gamma^{-1})\nabla_{j}(I^-),$$
 and therefore
 $$\chi_j^-(p) =\chi_j(\gamma p\gamma^{-1}).$$
As  $\gamma$ normalizes $N$, $L$ and $A$ and commutes with  $\sigma$, the assertion concerning the values of $\chi_{j}^-$ on $N$, $A$, and $L\cap H$ is a consequence of the same properties for the  $\chi_j$'s (cf. Theorem  \ref{th-delta_{j}invariants}).

 \end{proof}
 \vskip 5pt
 Let $\mathcal O^-$ be the dense  open subset of  $V^-$ defined by 
 $${\mathcal O}^-=\{ Y\in V^-; \nabla_0(Y)\nabla_1(Y)\ldots \nabla_k(Y)\neq 0\}.$$
 
  Suppose $\ell =3$. Recall that  $L_j$ is the analogue  of the group  $G$ for the graded Lie algebra  $ \widetilde{\go l}_j$, that is   $L_j={\mathcal Z}_{{\rm Aut}_0(  \widetilde{\go l}_j)}(H_{\lambda_j})$. In the following Theorem we denote by $\delta_{j}^-$ the  relative invariant of the prehomogeneous space $(L_{j},  \widetilde{\go g}^{-\lambda_j})$ defined by the identity
  
    $$\nabla_{j}(Y_{j}+Y_{j+1}+\ldots+Y_{k})=\delta_{j}^-(Y_{j})\nabla_{j+1}(Y_{j+1}+\ldots+Y_{k})=\delta_{j}^-(Y_{j})\ldots\delta_{k}^-(Y_{k}) $$
  for  $Y_{j}\in   \widetilde{\go g}^{\lambda_j}\setminus \{0\}$ and $ \text{ for }j=0,\ldots,k$. 
   
Using the involution  $\gamma$, the following description of the open $P$-orbits in $V^-$ is an easy consequence of Theorem \ref{th-Porbites}.

 \begin{theorem}\label{th-PorbitesV-} \hfill
 
 The dense open subset  ${\mathcal O}^-$ is the union of the open   $P$-orbits  in $V^-$.
  \begin{enumerate}\item  If $\tilde{\go g}$ is of Type $I$ (that is if $\ell$ is a square and $e\in \{0,4\}$),  then ${\mathcal O}^-$ is the unique open  $P$-orbit in  $V^-$.
 \item  Let $\tilde{\go g}$ be  of Type $II$ (that is if  $\ell=1$ and $e\in\{1,2,3\}$) and let $S=\chi_{k-1}(G_{k-1})$. Then the  subgroup   $P$ has  $|F^*/S|^{k}$ open orbits in   $V^-$ given for  $k\geq 1$ by
  $${\mathcal O}_u^-=\{ Y\in V^-;\dfrac{ \nabla_j(Y)}{\nabla_k(Y)^{k+1-j}} u_j\ldots u_{k-1}\in S, \;{\rm for }\; j=0,\ldots k-1\},$$
where  $u=(u_0,\ldots, u_{k-1})\in (F^*/S)^{k}$. (i.e. $P$ has $4^k $ open orbits in $V^-$ if $e=1$ or $3$, and $2^k$ open orbits if $e=2$)

\item If $\tilde{\go g}$ is of Type $III$ (that is if  $\ell=3$),  then   $P$ has  $3^{k+1}$open orbits in $V^-$ given by

  $${\mathcal O}^-_u=\{ Y\in V^-; \nabla_j(Y) u_j\ldots u_k\in F^{*2} \;{\rm for  }\; j=0,\ldots k\},$$
 where $u=(u_0,\ldots, u_k)\in \prod_{i=0}^k\big(\delta_i^-( \widetilde{\go g}^{-\lambda_i}\setminus\{0\})/F^{*2}\big).$

  \end{enumerate}

 \end{theorem}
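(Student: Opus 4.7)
My plan is to transport the classification of open $P$-orbits in $V^+$ (Theorem \ref{th-Porbites}) to $V^-$ via the involution $\gamma$ of Theorem \ref{th-involution-gamma}. The key point is that $\gamma$ normalizes $P$ and exchanges $V^+$ and $V^-$, and by Definition \ref{def-nabla} one has $\nabla_j = \Delta_j \circ \gamma$. Consequently $\mathcal{O}^- = \gamma^{-1}(\mathcal{O}^+)$, and the map $Y \mapsto \gamma(Y)$ from $V^-$ to $V^+$ induces a bijection between $P$-orbits: indeed, for $p \in P$ and $Y \in V^-$, $\gamma(p\cdot Y) = (\gamma p \gamma^{-1})\cdot \gamma(Y)$, and $\gamma p \gamma^{-1} \in P$. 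Hence open $P$-orbits in $V^-$ are precisely the $\gamma$-preimages of open $P$-orbits in $V^+$, which proves the first assertion that $\mathcal{O}^-$ is the union of the open $P$-orbits.

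Next, I would translate the explicit description of each orbit in the three type cases. For Type I, since $\mathcal{O}^+$ is a single open orbit, so is $\mathcal{O}^- = \gamma^{-1}(\mathcal{O}^+)$. For Type II with $k \geq 1$, I would unwind the definition: $Y \in \gamma^{-1}(\mathcal{O}_u)$ means $\gamma(Y) \in \mathcal{O}_u$, i.e. $\Delta_j(\gamma(Y))/\Delta_k(\gamma(Y))^{k+1-j} \cdot u_j\cdots u_{k-1} \in S$ for $j = 0,\ldots,k-1$, which by Definition \ref{def-nabla} is exactly $\nabla_j(Y)/\nabla_k(Y)^{k+1-j} \cdot u_j\cdots u_{k-1} \in S$. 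This gives the announced family $\mathcal{O}_u^-$ and shows that the index set $(F^*/S)^k$ is the same. For Type III, the same translation applies: $\mathcal{O}_u^-$ is defined by $\nabla_j(Y)\, u_j\cdots u_k \in F^{*2}$, which via $\gamma$ corresponds to the conditions $\Delta_j(\gamma(Y))\, u_j\cdots u_k \in F^{*2}$ defining $\mathcal{O}_u$ in Theorem \ref{th-Porbites}.

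The only mild verification needed is that the indexing parameter sets match in Type III, that is, that $\delta_j^-(\widetilde{\go g}^{-\lambda_j}\setminus\{0\})/F^{*2}$ coincides with $\delta_j(\widetilde{\go g}^{\lambda_j}\setminus\{0\})/F^{*2}$. This follows from the normalization of Remark \ref{rem-Delta-normalisation} and the analogous normalization for $\nabla_j$ via $\gamma$: since $\gamma$ sends $X_j$ to $Y_{k-j}$ and the factorizations $\Delta_j = \delta_j \cdots \delta_k$ and $\nabla_j = \delta_j^-\cdots \delta_k^-$ are compatible with $\gamma$, each $\delta_j^-$ represents the same classes mod $F^{*2}$ as $\delta_{k-j}$ does on $\widetilde{\go g}^{\lambda_{k-j}}\setminus\{0\}$, which by Theorem \ref{th-k=0} consists of all classes in $F^*/F^{*2}$ distinct from $-\mathrm{disc}(\delta_{k-j})$, a set of cardinality $3$ independent of $j$. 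This yields the count $3^{k+1}$.

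The argument is essentially formal once Theorem \ref{th-Porbites} and the normalization properties of $\gamma$ are in place; there is no genuine obstacle. The most delicate bookkeeping is checking, in Type III, that the parametrizations of $\mathcal{O}_u^-$ and $\mathcal{O}_u$ correspond under the bijection $Y \leftrightarrow \gamma(Y)$ with the correct re-indexing $j \leftrightarrow k-j$ of the ``diagonal'' components; but this is handled automatically by the $\gamma$-equivariance of the chain of relative invariants $\nabla_j = \Delta_j \circ \gamma$.
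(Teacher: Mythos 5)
Your proposal is correct and is exactly the route the paper takes: the paper simply states that the result is an easy consequence of Theorem \ref{th-Porbites} obtained by transporting the $V^+$ picture through the involution $\gamma$ of Theorem \ref{th-involution-gamma}, using $\nabla_j=\Delta_j\circ\gamma$ and the fact that $\gamma$ normalizes $P$, which is precisely what you spell out. Your extra check in Type III that the index sets match (via $\gamma$ sending $\widetilde{\go g}^{-\lambda_j}$ onto $\widetilde{\go g}^{\lambda_{k-j}}$, so that $\delta_j^-$ represents the same three classes mod $F^{*2}$ as the $\delta_i$'s) is a correct and welcome piece of bookkeeping that the paper leaves implicit.
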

  \vskip 15pt 
  The following Lemma gives the relationship between the characters of the $\nabla_{j}$'s and those of the $\Delta_{j}$'s.
  \begin{lemme}\label {lem-propchij} For $g\in G$ and $p\in P$, we have:
  
  $$\chi_0^-(g)=\frac{1}{\chi_0(g)}$$

and  
$$\chi_j^-(p)=\frac{\chi_{k-j+1}(p)}{\chi_0(p)},\hskip 15pt  j\in\{1,\ldots k\}.$$
\end{lemme}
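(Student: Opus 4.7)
The plan is to reduce both identities to the weight formulas on the maximal split torus $A$, using the definition $\nabla_j(Y) = \Delta_j(\gamma Y)$ and the action of $\gamma$ on $\go a^0$ from Theorem \ref{th-involution-gamma}. Since $\gamma$ normalizes both $G$ and $P$, writing $\nabla_j(p\cdot Y) = \Delta_j(\gamma p\gamma^{-1}\cdot\gamma Y)$ gives immediately
\[
\chi_0^-(g) = \chi_0(\gamma g\gamma^{-1}), \quad g\in G, \qquad \chi_j^-(p) = \chi_j(\gamma p\gamma^{-1}), \quad p\in P.
\]
The core step is to compute these right-hand sides on $A$. By Theorem \ref{th-involution-gamma}, $\gamma\cdot H_{\lambda_j} = -H_{\lambda_{k-j}}$, so the dual action of $\mathrm{Ad}(\gamma)^*$ sends $\lambda_i \mapsto -\lambda_{k-i}$ on $\go a^0$. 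Moreover, since $\gamma$ commutes with $\sigma$ (both are built from the commuting elements $w$ and $\widetilde{\gamma}$ of the proof of Theorem \ref{th-involution-gamma}, together with $w^2=\mathrm{Id}$), $\gamma$ preserves the decomposition $\go a = (\go a\cap\go h)\oplus \go a^0$ of Lemma \ref{lem-SECq}, and the identity $\mathrm{Ad}(\gamma)^*\lambda_i = -\lambda_{k-i}$ extends to all of $\go a$ because each $\lambda_i$ vanishes on $\go a\cap\go h$. Substituting into the formula $\chi_j(a)=a^{\kappa(\lambda_j+\ldots+\lambda_k)}$ of Theorem \ref{th-delta_{j}invariants} yields
\[
\chi_j^-(a) \,=\, \chi_j(\gamma a \gamma^{-1}) \,=\, a^{-\kappa(\lambda_0+\ldots+\lambda_{k-j})}, \qquad a\in A,
\]
which equals $\chi_0(a)^{-1}$ for $j=0$ and $\chi_{k-j+1}(a)/\chi_0(a)$ for $1\le j\le k$ by direct computation.

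Both identities also hold trivially on $N$ (where all $\chi_s$ and $\chi_s^-$ vanish) and on $L\cap H$ (by Theorems \ref{th-delta_{j}invariants} and \ref{th-nabla}); for the latter one uses that $\gamma(L\cap H)\gamma^{-1}\subseteq L\cap H$, which follows from the computation $(\gamma l\gamma^{-1})\cdot I^+ = \gamma(l\cdot I^-) = I^+$ for $l\in L\cap H$, exploiting $H=Z_G(I^-)$ from Theorem \ref{th-invol}. It then remains to promote agreement on $A\cup N\cup(L\cap H)$ to equality of characters on all of $P$ (resp.\ $G$). The ratios $\chi_0^-\cdot\chi_0$ and $\chi_j^-\cdot\chi_0/\chi_{k-j+1}$ are rational characters of the algebraic groups in question; after base change to $\bar F$, the groups $\bar G$ and $\bar L$ are connected reductive, and their rational characters are determined by restriction to a maximal torus containing $\bar A$. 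For $G$, one may alternatively invoke the decomposition $G = \mathrm{Aut}_e(\go g)\cdot L$ (Corollary \ref{cor-structureG}) together with triviality of $\chi_0$ and $\chi_0^-$ on $\mathrm{Aut}_e(\go g)$ (by the commutator-subgroup argument of the proof of Theorem \ref{thproprideltaj}(2)) to reduce the problem to characters of $L$; the directions in a maximal torus of $\bar L$ that lie outside $\bar A$ come from the anisotropic kernel and are handled by the triviality on $L\cap H$.

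The main obstacle is precisely this last step: although the weight calculation on $A$ is mechanical, rigorously arguing that two characters of $P$ (or $G$) agreeing on $A$, $N$, and $L\cap H$ must coincide on the full group requires carefully invoking the algebraic-group structure over $\bar F$ and the interplay between the anisotropic kernel and the maximal tori of the Levi $L$. Once that structural point is settled, both identities of the lemma drop out without further computation.
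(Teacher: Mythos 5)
Your starting identity $\chi_j^-(p)=\chi_j(\gamma p\gamma^{-1})$ and the weight computation are fine in themselves; in fact the values of the $\chi_j^-$ on $A$ are already recorded in Theorem \ref{th-nabla}, so you do not even need to conjugate by $\gamma$ --- which is fortunate, because Theorem \ref{th-involution-gamma} only asserts that $\gamma$ normalizes $A^0$, $L$, $N$, $P$, not $\go a$, so your claim that $\mathrm{Ad}(\gamma)^*\lambda_i=-\lambda_{k-i}$ ``extends to all of $\go a$'' presupposes an action of $\gamma$ on $\go a$ that has not been established. The genuine gap is the step you yourself flag as the main obstacle: passing from agreement of the two characters on $A$, $N$ and $L\cap H$ to equality on all of $P$ (resp.\ $G$). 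What you offer there is not a proof and, as formulated, does not work: over $\bar{F}$ a character of the connected group $\bar L$ is indeed determined by its restriction to a maximal torus $\bar T\supset \bar A$, but knowing it on $\bar A$ alone says very little (over $\bar{F}$ there are many distinct characters agreeing on $\bar A$), and the assertion that the directions of $\bar T$ outside $\bar A$ ``come from the anisotropic kernel and are handled by the triviality on $L\cap H$'' is precisely the point that needs proof: nothing in your argument shows that $\bar T$ is contained in $\bar A\cdot(\bar L\cap\bar H)\cdot[\bar L,\bar L]$, and in the cases at hand (e.g.\ Type $I$, where $L$ is built out of copies of $D^*$) this is not obvious. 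The statement that would actually rescue your plan is that an $F$-rational character of the connected group $P$ (or $G$) which is trivial on the maximal split torus $A$ is trivial --- true, because such a character factors through the cocenter torus and the quotient of that torus by the image of $A$ is anisotropic, hence has no nontrivial $F$-rational characters --- but this is neither stated nor proved in your proposal, and it is a different mechanism from the $L\cap H$ argument you invoke.

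For comparison, the paper proves no such determination principle. The first identity is obtained from the duality between $V^+$ and $V^-$: $(\det_{V^{\pm}}\mathrm{Ad}(g))^2$ is the same power of $\chi_0(g)$ resp.\ $\chi_0^-(g)$, whence $(\chi_0^-)^m=\chi_0^{-m}$, and torsion-freeness of $X^*(G)$ gives $\chi_0^-=\chi_0^{-1}$. For the second identity, after discarding $N$, the authors show case by case ($\ell$ a square via the transitivity statement of Theorem \ref{th-k=0}, $\ell=3$ via the explicit matrix description of $L$) that every $m\in L$ factors as $l\cdot h$ with $l\in L'=L_0\cdots L_k\subset\bar L$ and $h\in\bar L\cap\bar H$, where all the characters are trivial on $\bar L\cap\bar H$; they then compute $\chi_j(l)$ and $\chi^-_{k+1-j}(l)$ by splitting $l=l_1l_2$ along the weight spaces of $H_{\lambda_j}+\cdots+H_{\lambda_k}$, using $\gamma$ and the first identity. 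In other words, the entire content of the lemma sits exactly in the step your proposal leaves open.
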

\begin{proof} As the  Killing forms induces a $G$-invariant duality between $V^+$ and  $V^-$, we have: 
$${\rm det}_{V^+} {\rm Ad}(g)={\rm det}_{V^-} {\rm Ad}(g^{-1}),\quad g\in G.$$
On the other hand, for $X\in V^+$, let us consider the determinant $P(X)={\rm det}_{(V^-,V^+)}({\rm ad}\; X)^2$ of the map $({\rm ad}\; X)^2: V^-\to V^+$ (for any choice of basis). This polynomial   $P$ is relatively invariant under the action of $G$ because $P(g.X)= {\rm det}_{(V^-,V^+)}(g ({\rm ad}\; X)^2 g^{-1})=({\rm det}_{V^+} {\rm Ad}(g))^2P(X)$. Hence it is, up to a multiplicative constant,  a power of $\Delta_0$. Therefore there exists $m\in \N$ such that 
$$({\rm det}_{V^+} {\rm Ad}(g))^2=\chi_0(g)^m.$$
If we take  $g\in G$ such that $g_{|_{V^+}}= t \; Id_{V^+}\in G_{|_{V^+}},\;  t\in F^*$ (cf. Lemma \ref{lem-tId-dansG}), then Theorem \ref{thpropridelta_0} implies that $m=\dfrac{2 {\rm dim} V^+}{\kappa (k+1)}$.

\medskip

The same argument for the dual space $(G,V^-)$ implies that  $({\rm det}_{V^-} {\rm Ad}(g))^2=\chi_0^-(g)^m.$ Therefore we get  $\chi_0^-(g)^m=\chi_0(g)^{-m}$ for all  $g\in G$. As the group $X^*(G)$ of rational characters of $G$ is a lattice (see for example \cite{Renard}, p.121), it has no torsion. Therefore $\chi_0^-(g)=\frac{1}{\chi_0(g)}$, and the first assertion is proved.
\medskip

Let us show the second assertion. As all the characters we consider are trivial on $N$, it is enough to prove the relation for  $m\in L$.  We consider the subgroup   $L'=L_0\ldots L_k$ of $\bar{L}=L(\overline{F})$ (keep in mind that the groups $L_{j}$ are in general not included in $G$). The polynomials  $\Delta_j$ are the restrictions to  $V^+$ of polynomials defined on  $\overline{V}^+$, which are  relatively invariant under the action of  $\bar{G}$. Therefore  the polynomials  $\nabla_j$ also are  restrictions to  $V^-$ of polynomials of  $\overline{V}^-$ which are relatively invariant under $\bar{G}$.
\medskip

 We  first show that for $m\in L$, there exists $l\in L'$ such that  $l^{-1}m\in \overline L\cap \overline{H}$ \hskip 3pt (*).\medskip

 (Here $\overline L= L(\overline{F})=Z_{\overline G}({\go{a}}^0)$ and $\overline H=H(\overline{F})= Z_{\overline{G}}(I^+)$).

 If  $\ell$ is a square, as $m.X_{j}\in \tilde{\go{g}}^{\lambda_{j}}$, it follows from Theorem  \ref{th-k=0} that  $m.X_j$ is  $L_j$-conjugated to  $X_j$ and hence there exists  $l_j\in L_j$ such that  $m.X_j=l_j.X_j$. Then the element  $l=l_0\ldots l_k\in L'$ is such that  $l^{-1}m.I^+=I^+$ and therefore $l^{-1}m\in  \overline{L}\cap \overline{H}$. 
 
 If $\ell=3$, we will use the  description of  $G$ given in Proposition  \ref{prop-G}. Let  $E=F[\sqrt{u}]$ be  a unramified quadratic extension of  $F$ and let $\pi$ be a uniformizer of  $F$.  The group  $G$ is the group of automorphisms of $\tilde{\go g}$ given by conjugation by matrices of the form $\left(\begin{array}{cc} \mathbf g & 0\\ 0 & \mu\; ^{t}\mathbf g^{-1}\end{array}\right)$ where $\mathbf g\in G^0(2(k+1))\cup \sqrt{u} G^0(2(k+1))$ and  $\mu\in F^*$. We denote by $[\mathbf g,\mu]$ such an element of  $G$. 
  
The space $\go a^0$ is the space of matrices  $$\left(\begin{array}{cc} {\mathbf H}(t_k,\ldots, t_0) & 0\\ 0 & -{\mathbf H}(t_k,\ldots, t_0)  \end{array}\right),\; \textrm{where  }
\;{\mathbf H}(t_k,\ldots, t_0) =\left(\begin{array}{ccc} t_k I_2 & 0 & 0\\ 0 & \ddots & 0\\ 0 & 0 & t_0 I_2\end{array}\right),$$
and  $(t_k,\ldots , t_0)\in F^{k+1}$.  Therefore the centralizer of  $\go a^0$ in $G$, that is  $L$,   is the subgroup of elements  $m=[\mathbf g, \mu]$ where  ${\mathbf g}=diag(\mathbf g_k,\ldots,\mathbf g_0)$  is a  $2\times 2$ block diagonal matrix  whose diagonal elements  $\mathbf g_j$ belong to  $G^0(2)\cup \sqrt{u} G^0(2)$. Let $  l_j\in GL(2(k+1),E)$ be the $2\times 2$ block diagonal matrix whose all diagonal blocks are the identity in  $GL(2,E)$ except the $(k-j+1)$-th bock which is equal to  $\mathbf g_j$ and the $(2(k+1)-j)$-th block  which is equal to  $\mu\;^{t}\mathbf g_j^{-1}$. Then from the definition  of $L_j$, we have ${\rm Ad}l_j\in L_j$ and $m={\rm Ad}(l_k\ldots l_0)$ belongs to  $L'$.
\medskip

Hence we have proved that in all cases, for all  $m\in L$, there exists  $l\in L'$ such that $l^{-1}m\in \overline{L}\cap \overline{H}$. As the characters  $\chi_j$ and  $\chi_j^-$ are trivial on  $\overline{L}\cap \overline{H}$, we have     $\chi_j(m)=\chi_j(l)$ and  $\chi_j^-(m)=\chi_j^-(l)$ for all  $j\in\{0,\ldots, k\}$. It suffices therefore to prove the result for $l\in L'=L_0L_1\ldots L_k\subset\overline{G}$.\medskip

Let  $l\in L'$ and  $j\in\{1,\ldots, k\}$. Consider the decomposition of  $V^+$ into weight spaces under the action of  $ H_{\lambda_j}+\ldots H_{\lambda_k} $:
  $$V^+=V_j^+\oplus U_j^+\oplus W_j^+,$$
  where  $V_j^+$, $U_j^+$ and  $W_j^+$ are the spaces of weight  $2$, $1$ and $0$ under  $ H_{\lambda_j}+\ldots H_{\lambda_k} $, respectively. More precisely:
  $$\begin{array}{ccc} V_j^+ &= &\oplus_{s=j}^k \tilde{\go g}^{\lambda_s}\oplus\oplus_{j\leq r<s} E_{r,s}(1,1)\\
  {}&{}\\
  U_j^+ & = & \oplus_{r<j\leq s} E_{r,s}(1,1)\\
  {}&{}\\
W_j^+ &= &\oplus_{s=0}^{j-1} \tilde{\go g}^{\lambda_s}\oplus\oplus_{r<s\leq j-1} E_{r,s}(1,1).\end{array}$$
\medskip

Similarly we denote by  $V^-=V_j^-\oplus U_j^-\oplus W_j^-$ the decomposition  of $V^-$ into weight spaces of weight  $-2,-1$ and  $0$ under  $ H_{\lambda_j}+\ldots H_{\lambda_k} $.
 As the eigenspace, in $V^+$, for the eigenvalue $r$ of $ H_{\lambda_j}+\ldots H_{\lambda_k} $, is the same as the eigenspace, for the eigenvalue $2-r$ of $ H_{\lambda_1}+\ldots H_{\lambda_{j-1}} $, it is easy to see that 
 $W_j^-=\gamma (V_{k+1-j}^+)$. Therefore  $(\gamma (G_{k+1-j}), W_j^-)$ is a regular irreducible prehomogeneous vector space  whose fundamental relative invariant is the restriction of  $\nabla_{k-j+1}$ to $W_j^-$.

Let us write  $l=l_1l_2$ where  $l_1\in L_j\ldots L_k\subset \overline{G_j}$ and  $l_2\in L_0\ldots L_{j-1}=\gamma (L_{k+1-j}\ldots L_k)\subset \gamma (\overline{G_{k+1-j}})$.

As $l_1$ acts trivially on  $\overline{W_j}^+$, and as $l_{2}$  acts trivially on $\overline{V_{j}^+}$, we have 
$$\chi_0(l_1)=\Delta_0(X_0+\ldots+X_{j-1}+l_1(X_j+\ldots +X_k))$$
$$= \Delta_j(l_1(X_j+\ldots +X_k))= \Delta_j(l(X_j+\ldots +X_k))=\chi_j(l).$$

Define $l'_2:=\gamma l_2\gamma^{-1}\in L_{k+1-j}\ldots L_k\subset \overline{G_{k+1-j}}$.

If we consider the decomposition $V^+=V_{k+1-j}^+\oplus U_{k+1-j}^+\oplus W_{k+1-j}^+$ and if we apply the same argument as before to  $\gamma l\gamma^{-1}$ we get
$\chi_0(l'_2)=\chi_{k+1-j}(\gamma l\gamma^{-1})$, and this  is equivalent to 
$$\chi^-_0(l_2)=\chi_{k+1-j}^-(l).$$
Applying the first assertion of the Lemma, we obtain  $\chi_0(l_2)=\chi_{k+1-j}^-(l)^{-1}$, and hence finally
$$\chi_0(l)=\chi_0(l_1)\chi_0(l_2)=\chi_j(l) \chi_{k+1-j}^-(l)^{-1},$$
and this proves the second assertion.

\end{proof}

Let  $\Omega^+$ and  $\Omega^-$ be  the set of generic elements in  $V^+$ and  $V^-$, respectively. In other words:
$$\Omega^+=\{x\in V^+, \Delta_0(X)\neq 0\}, \quad \Omega^-=\{x\in V^-, \nabla_0(X)\neq 0\}.$$

\begin{definition} Let $\psi:  \Omega^+\longrightarrow \Omega^-$ be the map which sends $X\in \Omega^+$ to the unique element  $Y\in \Omega^-$ such that  $\{Y, H_0, X\}$ is an ${\go sl}_2$-triple.\end{definition}

If  $\{Y,H_0, X\}$ is an  ${\go sl}_2$-triple, then for each $g\in G$, $\{g.Y,H_0, g.X\}$ is again an  ${\go sl}_2$-triple. Therefore the map $\psi$ is    $G$-equivariant.

\begin{prop}\label{prop-delta-psi} For  $X\in \Omega^+$, we have
$$\nabla_0(\psi(X))=\frac{1}{\Delta_0(X)},\quad{\rm and  }\quad\nabla_{j}(\psi(X))=\frac{\Delta_{k+1-j}(X)}{\Delta_0(X)},\quad j=1,\ldots ,k.$$

\end{prop}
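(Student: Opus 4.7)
\medskip

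The plan is to observe that both sides of each equality are relative invariants under a suitable subgroup of $G$, with matching characters, and then use prehomogeneity to conclude they are equal up to a constant which will turn out to be $1$.

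First I would establish that $\psi : \Omega^+ \to \Omega^-$ is $G$-equivariant: if $g \in G$ and $\{Y,H_0,X\}$ is an $\go{sl}_2$-triple, then, since $g.H_0=H_0$, the triple $\{g.Y,H_0,g.X\}$ is again an $\go{sl}_2$-triple, and the uniqueness statement in Proposition \ref{prop-generiques-cas-regulier} gives $\psi(g.X)=g.\psi(X)$. Consequently $\nabla_0\circ\psi$ transforms under $G$ by the character $\chi_0^-=1/\chi_0$ (Theorem \ref{th-nabla} and Lemma \ref{lem-propchij}), and so does the function $X\mapsto 1/\Delta_0(X)$. Hence the rational function $R_0(X):=\nabla_0(\psi(X))\,\Delta_0(X)$ is $G$-invariant. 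Similarly, for $j\in\{1,\ldots,k\}$, the function $\nabla_j\circ\psi$ is $P$-relatively invariant with character $\chi_j^-=\chi_{k+1-j}/\chi_0$ (by Lemma \ref{lem-propchij}), and so is $X\mapsto \Delta_{k+1-j}(X)/\Delta_0(X)$ by Theorem \ref{th-delta_{j}invariants}. Thus the rational function
\[
R_j(X):=\frac{\nabla_j(\psi(X))\,\Delta_0(X)}{\Delta_{k+1-j}(X)}
\]
is $P$-invariant on $\Omega^+$.

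Next I would argue that $R_0$ and $R_j$ are constant. Extending scalars to $\overline F$, the groups $\overline G$ and $\overline P$ are connected and act prehomogeneously on $\overline{V^+}$ (the prehomogeneity of $(P,V^+)$ was recorded just before Theorem \ref{th-delta_{j}invariants}), so each of them has a unique open orbit in $\overline{V^+}$. Any $\overline G$-invariant (resp.\ $\overline P$-invariant) rational function is therefore constant on the dense open orbit and, being rational, constant everywhere it is defined. Hence $R_0$ and $R_j$ are constants $c_0,c_j\in \overline F^*$, and by definition they take values in $F$.

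Finally I would evaluate at $X=I^+$. Since $\{I^-,H_0,I^+\}$ is a (diagonal) $\go{sl}_2$-triple, $\psi(I^+)=I^-$. From the construction of $\gamma$ in Theorem \ref{th-involution-gamma}, we have $\gamma(X_j)=Y_{k-j}$, so $\gamma(I^+)=I^-$ and, since $\gamma$ is an involution, $\gamma(I^-)=I^+$. Therefore, using the normalization $\Delta_j(I^+)=1$ fixed before Theorem \ref{th-involution-gamma}, one gets $\nabla_j(I^-)=\Delta_j(\gamma(I^-))=\Delta_j(I^+)=1$ for every $j$. Plugging $X=I^+$ into $R_0$ and $R_j$ yields $c_0=1$ and $c_j=1$, which proves the desired formulas. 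The only delicate point in the argument is to know that both sides have exactly matching characters; this is precisely the content of Lemma \ref{lem-propchij}, so no real obstacle remains.
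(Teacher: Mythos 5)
Your proof is correct, but it follows a genuinely different route from the paper. The paper fixes the diagonal $\go{sl}_2$-triple, reduces to $\mathcal O^+$ by density, and then argues case by case according to the Type of $\widetilde{\go g}$: for Type I it uses exactly your character argument (there ${\mathcal O}^+$ is a single $P$-orbit, so evaluation at $I^+$ suffices); for Type II it conjugates by $N$ to a diagonal element $\sum z_jX_j$ and computes $\psi$ and $\gamma$ explicitly on the diagonal; for Type III it carries out a lengthy explicit computation in the realization of $\widetilde{\go g}$ inside $\go{sp}(4(k+1),E)$, including an explicit matrix model for $\gamma$ and determinant formulas for the $\nabla_j$. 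Your argument is uniform in the Type: $G$- (resp.\ $P$-) equivariance of $\psi$, the character identities of Lemma \ref{lem-propchij}, constancy of an invariant rational function via the unique open orbit over $\overline F$, and evaluation at $I^+$ using $\gamma(I^-)=I^+$ and the normalization $\Delta_j(I^+)=1$. What this buys is brevity and the avoidance of the Type III matrix computations; what it costs is one point you should make explicit: over $F$ the pair $(P,V^+)$ has several open orbits (Theorem \ref{th-Porbites}), so the constancy of $R_j$ cannot be read off from $F$-rational data alone, and your passage to $\overline F$ requires that the extension of $R_j$ to $\overline{V^+}$ be $\overline P$-invariant. This follows from the Zariski density of $P$ in $\overline P$ (and of $V^+$ in $\overline{V^+}$), exactly as in Remark \ref{rem-extension-invariant} for $G$; since $P$ is a parabolic subgroup of the connected group $G$ and $F$ has characteristic $0$, this density is standard, but it is the hinge of your shortcut and deserves a sentence. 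Note also that the apparent uniformity is partly inherited: the proof of Lemma \ref{lem-propchij}, which you rely on, is itself case-based, whereas the paper's direct computation in Types II and III gives explicit diagonal formulas that make the identity visible without that lemma. With the density remark added, your proof is complete.
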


\begin{proof} Fix a diagonal   ${\go sl}_2$-triple $\{I^-, H_0, I^+\}$ which satisfies the condition of Theorem  \ref{th-involution-gamma}.  Then  $I^+=X_0+\ldots +X_k$ and $I^-=Y_0+\ldots +Y_k$ where  $\{Y_i, H_{\lambda_i},X_i\}$ are  $\go sl_2$-triples. 

As  $\mathcal O^+$ is open dense in  $\Omega^+$ and as the function we consider here are continuous on  $\Omega^+$, it suffices to prove the result for $ X\in \mathcal O^+$. The proof depends on the Type of  $\tilde{\go g}$. \medskip

\noindent If  $\tilde{\go g}$ is of Type  $I$ (ie. $\ell$ is a square and  $e=0$ or  $4$) then any  $X\in \mathcal O^+$ is  $P$-conjugated  to  $I^+$ (see   Theorem \ref{th-Porbites}). Therefore it suffices to prove that for all $p\in P$, one has 
$$\nabla_j(p.I^-)=\frac{\Delta_{k+1-j}(p.I^+)}{\Delta_0(p.I^+)}.$$

From the normalization of the polynomials  $\nabla_j$ and  $\Delta_j$, the Lemma \ref{lem-propchij} implies 
$$\nabla_j(p.I^-)=\chi_j^-(p) =\dfrac{\chi_{k+1-j}(p)}{\chi_0(p)}=\frac{\Delta_{k+1-j}(p.I^+)}{\Delta_0(p.I^+)},$$
and this proves the statement in this case. \medskip

\noindent Suppose now that  $\tilde{\go g}$ is of Type $II$, that is that $\ell=1$ and  $e=1$ or  $2$. By Lemma \ref{lem-NO+},  $X$ is  $N$-conjugated to an element  $Z=\sum_{j=0}^k z_j X_j$ with  $z_0,\ldots, z_k \in F^*$. By the hypothesis on the  $X_j$'s and from the definition of the involution  $\gamma$, we have 
$\psi(Z) =z_0^{-1} Y_0+\ldots +z_k^{-1} Y_k$ and  $\gamma (\psi(Z))=z_k^{-1} X_0+\ldots +z_0^{-1} X_k$. By Theorem  \ref{th-nabla} and \ref{th-delta_{j}invariants}, the polynomials  $\nabla_j$ and $\Delta_{j}$ are $N$-invariant and hence we have 
$$\nabla_0(\psi(X))=\nabla_0(\psi(Z))=\Delta_0(\gamma.\psi(Z))=\prod_{s=0}^k z_s^{-1}=\frac{1}{\Delta_0(Z)}=\frac{1}{\Delta_0(X)},$$
and  
$$\nabla_j(\psi(X))=\nabla_j(\psi(Z))=\Delta_j(\gamma.\psi(Z))= \prod_{s=0}^{k-j} z_s^{-1}=\frac{\Delta_{k+1-j}(Z)}{\Delta_0(Z)}=\frac{\Delta_{k+1-j}(X)}{\Delta_0(X)},$$
and this again proves the statement.  \medskip

\noindent Suppose now that  $\tilde{\go g}$ is of Type  $III$, this means that  $\ell=3$.  We use the notations and the material developed   in  \textsection \ref{subsection(l=3)}. In particuliar  we realize the algebra  $\tilde{\go g}$ as a subalgebra of  ${\go sp}(4(k+1), E)$ where $E=F[\sqrt{u}]$ and where  $u\in F^*\setminus F^{*2}$ is a unit. We will first describe precisely the involution  $\gamma$ and the polynomials  $\nabla_j$.

We define   $I^+=\left(\begin{array}{cc} 0 & {\mathbf I}^+\\ 0 & 0\end{array}\right)$
where  ${\mathbf I}^+\in M(2(k+1), E)$ is the  $2\times2$ block diagonal matrix whose diagonal blocks are all equal to 
$J_1=\left(\begin{array}{cc} 0 & 1\\ 1 & 0\end{array}\right).$ Then  $I^-=\left(\begin{array}{cc} 0 &0\\ - {\mathbf I}^+& 0\end{array}\right).$
We normalize the polynomials  $\Delta_j$ in such a way that    $\Delta_j(I^+)=1$ for all  $j\in\{0,\ldots, k\}$.
  
 We set $\Gamma=\left(\begin{array}{ccc} 0 & & 1\\ & \adots & \\ 1 & &0\end{array}\right)$. We show now that $ \gamma=\left(\begin{array}{cc} 0 & \Gamma\\ -\Gamma & 0\end{array}\right)$ satisfies the properties of  \ref{th-involution-gamma}. As  $\gamma$ centralizes the matrix $K_{2(k+1)}=\left(\begin{array}{cc} 0 &I_{2(k+1)}\\ -I_{2(k+1)} & 0\end{array}\right)$, the element 
 $\gamma$ (or to be more precise, the conjugation by $\gamma$) belongs to  ${\rm Aut}_0(\tilde{\go g}\otimes_F E)$. In order to verify that  $\gamma \in\tilde{G}$, it suffices to verify that  $\gamma$ normalizes $\tilde{\go g}$.   Recall that  $\tilde{\go g}$ is the set of matrices $Z\in \tilde{\go g}\otimes_F E$ such that   $T\overline{Z}=Z T$ where  $ T=\left(\begin{array}{cc} J &  0\\ 0 & ^{t}J\end{array}\right)$ with  $J=\left(\begin{array}{ccc} J_\pi &0&0\\ 0& \ddots& 0\\ 0&0& J_\pi\end{array}\right)$
 and  $J_\pi= \left(\begin{array}{cc}0 & \pi\\ 1& 0\end{array}\right)$.
As
$\gamma^{-1}T\gamma=\left(\begin{array}{cc}  \Gamma \;^{t}J \Gamma & 0\\0 & \Gamma J \Gamma \end{array}\right)$
and  $ \Gamma J \Gamma=\;^{t}J$, we obtain  $\gamma^{-1}T\gamma=T$. It follows that $\gamma$ normalizes  $\tilde{\go g}$ and hence  $\gamma\in \tilde{G}$.

For  $Z= \left(\begin{array}{cc} {\mathbf A} &   {\mathbf X}\\  {\mathbf Y} & -^{t} {\mathbf A}\end{array}\right)\in\tilde{\go g}
$, we have
$$\gamma.Z=\left(\begin{array}{cc}- \Gamma\; ^{t} {\mathbf A}  \Gamma& - \Gamma  {\mathbf Y} \Gamma\\- \Gamma  {\mathbf X} \Gamma &  \Gamma  {\mathbf A} \Gamma \end{array}\right).$$

If  $ \mathbf Z\in M(2(k+1,E)$ can be written as $ \mathbf  Z= \left(\begin{array}{ccc} {\mathbf Z}_{0,0} & \ldots & {\mathbf Z}_{0,k} \\ \vdots & & \vdots \\ {\mathbf Z}_{k,0}  & \ldots & {\mathbf Z}_{k,k} \end{array}\right)$ with ${\mathbf Z}_{r,s}\in M(2,E)$, then 
$$\Gamma {\mathbf Z}\Gamma=\left(\begin{array}{ccc} J_1 {\mathbf Z}_{k,k}J_1 & \ldots &J_1 {\mathbf Z}_{k,0}J_1 \\ \vdots & & \vdots \\J_1 {\mathbf Z}_{0,k}  J_1& \ldots & J_1{\mathbf Z}_{k,0}J_1 \end{array}\right).$$ 
 
It is now easy to verify that  $\gamma$ normalizes $\go a^0$ and satisfies the properties of Theorem \ref{th-involution-gamma} . 

From the normalization made in section \ref{subsection(l=3)}, we have  
 $$\Delta_j\left(\begin{array}{cc} 0 & \mathbf X\\ 0 & 0\end{array}\right)=(-1)^{k-j+1} {\rm det}(\tilde{\mathbf X}_j)$$
 where $\tilde{\mathbf X}_j$ is the square matrix of size $2(k+1-j)$ defined by the $2(k+1-j)$ first rows and columns of  $\mathbf X$.  Explicitly, if 
 $\mathbf X=({\mathbf X}_{r,s})_{r,s=0,\ldots, k}$ where  ${\mathbf X}_{r,s}\in M(2,E)$, we have  
 $$\Delta_j(  X)=(-1)^{k-j+1} \left|\begin{array}{ccc} {\mathbf X}_{0,0} & \ldots & {\mathbf X}_{0,k-j} \\ \vdots & & \vdots \\ {\mathbf X}_{k-j,0}  & \ldots & {\mathbf X}_{k-j,k-j} \end{array}\right|$$

From the definition of $\nabla_j$, we get 
 $$\nabla_j\left(\begin{array}{cc} 0 &0\\  \mathbf Y & 0\end{array}\right)=\Delta_j\left(\begin{array}{cc} 0 & -\Gamma \mathbf Y \Gamma\\ 0& 0\end{array}\right).$$
Therefore if 
 $ \mathbf Y= \left(\begin{array}{ccc} {\mathbf Y}_{0,0} & \ldots & {\mathbf Y}_{0,k} \\ \vdots & & \vdots \\ {\mathbf Y}_{k,0}  & \ldots & {\mathbf Y}_{k,k} \end{array}\right),$
we have 
$$\nabla_j(Y)=(-1)^{k+1-j} \left|\begin{array}{ccc}-J_1 {\mathbf Y}_{k,k}J_1 & \ldots &-J_1 {\mathbf Y}_{k,j}J_1 \\ \vdots & & \vdots \\-J_1 {\mathbf Y}_{j,k}  J_1& \ldots & -J_1{\mathbf Y}_{j,j}J_1 \end{array}\right|= \left|\begin{array}{ccc}J_1 {\mathbf Y}_{k,k}J_1 & \ldots &J_1 {\mathbf Y}_{k,j}J_1 \\ \vdots & & \vdots \\J_1 {\mathbf Y}_{j,k}  J_1& \ldots & J_1{\mathbf Y}_{j,j}J_1 \end{array}\right|.$$
As ${\rm det}(J_1)^2=1$, we obtain:  
$$\nabla_j(Y)= \left|\begin{array}{ccc}   {\mathbf Y}_{k,k}  & \ldots &  {\mathbf Y}_{k,j}  \\ \vdots & & \vdots \\  {\mathbf Y}_{j,k}  & \ldots &  {\mathbf Y}_{j,j}  \end{array}\right|=\left|\begin{array}{ccc}   {\mathbf Y}_{j,j}  & \ldots &  {\mathbf Y}_{j,k}  \\ \vdots & & \vdots \\  {\mathbf Y}_{k,j}  & \ldots &  {\mathbf Y}_{k,k}  \end{array}\right|.$$

Let  $X=\left(\begin{array}{cc} 0 & \mathbf X\\ 0 & 0\end{array}\right)\in \Omega^+$. A simple computation shows that  $\psi(X)=\left(\begin{array}{cc} 0 &0\\  -\mathbf X^{-1} & 0\end{array}\right)$. 
If  $X\in \mathcal O^+$, then by Lemma \ref{lem-NO+} , there exists $n\in N$ such that $n.X=\left(\begin{array}{cc} 0 & \mathbf Z\\ 0 & 0\end{array}\right)$, where 
  $\mathbf Z=\left(\begin{array}{ccc} {\mathbf Z}_k &0 & 0\\
0 & \ddots &0\\ 0 & 0 &  {\mathbf Z}_0\end{array}\right)$ and  ${\mathbf Z}_j\in M(2,E)$. We set $Z=n.X$. From above we get:
$$\nabla_j(\psi(Z))=(-1)^{k-j+1}\prod_{s=0}^{k-j} \frac{1}{{\rm det}({\mathbf Z}_s)}=(-1)^{k-j+1}\frac{\prod_{s=k-j+1}^{k}{\rm det}({\mathbf Z}_s)}{\prod_{s=0}^{k}{\rm det}({\mathbf Z}_s)}=\frac{\Delta_{k+1-j}(Z)}{\Delta_0(Z)}.$$ 
As the $\Delta_{j}$'s and the $\nabla_j$'s are invariant under $N$, we have $$\nabla_j(\psi(X))=\frac{\Delta_{k+1-j}(X)}{\Delta_0(X)}$$ for all  $X\in \mathcal O^+$and hence for all $X\in \Omega^+$.\end{proof}

\begin{definition} Let  $s=(s_0,\ldots, s_k)\in \mathbb C^{k+1}$. We denote by $|\nabla|^s$ and  $|\Delta^s|$ the functions  respectively defined on  $\mathcal O^+$ and $\mathcal O^-$ by 
$$|\Delta|^s(X)=|\Delta_0(X)|^{s_0}\ldots |\Delta_k(X)|^{s_k},\quad{\rm for } X\in \mathcal O^+,$$
$$|\nabla|^s(Y)=|\nabla_0(Y)|^{s_0}\ldots |\nabla_k(Y)|^{s_k},\quad{\rm for } Y\in \mathcal O^-.$$
\end{definition}
\begin{definition} We denote by  $t$  the involution on $\mathbb C^{k+1}$ defined  by
$$t(s)=(-s_0-s_1-\ldots - s_k, s_k, s_{k-1},\ldots, s_1),$$
for  $s=(s_0,\ldots, s_k)\in \mathbb C^{k+1}$.
\end{definition} 
\begin{cor} Let $X\in \Omega^+$. For $s\in  \mathbb C^{k+1}$, we have $$|\nabla|^s(\psi(X))=|\Delta|^{t(s)}(X).$$
In particular, the polynomials $|\nabla|^s$ and  $|\Delta|^{s'}$ have the same  $A^0$-character if and only if  $s'=t(s)$.
\end{cor}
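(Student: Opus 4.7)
The first identity is essentially a direct computation from Proposition \ref{prop-delta-psi}. Using the formulas $\nabla_0(\psi(X))=\Delta_0(X)^{-1}$ and $\nabla_j(\psi(X))=\Delta_{k+1-j}(X)/\Delta_0(X)$ for $j=1,\dots,k$, I would expand
\[
|\nabla|^s(\psi(X))=|\nabla_0(\psi(X))|^{s_0}\prod_{j=1}^k|\nabla_j(\psi(X))|^{s_j}
=|\Delta_0(X)|^{-s_0-s_1-\cdots-s_k}\prod_{j=1}^k|\Delta_{k+1-j}(X)|^{s_j}.
\]
After the reindexing $i=k+1-j$, the product becomes $\prod_{i=1}^k|\Delta_i(X)|^{s_{k+1-i}}$. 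Comparing with the definition of $t(s)=(-s_0-\cdots-s_k,s_k,s_{k-1},\dots,s_1)$, the exponent of $|\Delta_0(X)|$ is exactly $(t(s))_0$ and the exponent of $|\Delta_i(X)|$ for $i\geq 1$ is exactly $(t(s))_i=s_{k+1-i}$. This yields $|\nabla|^s(\psi(X))=|\Delta|^{t(s)}(X)$.

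For the ``in particular'' statement, I would exploit the fact that $\psi$ is $G$-equivariant, and in particular $A^0$-equivariant. The $A^0$-character of $|\Delta|^{s'}$ on $V^+$ corresponds via $\psi$ to the $A^0$-character of the pullback $|\Delta|^{s'}\circ\psi^{-1}$ on $V^-$, so assuming $|\nabla|^s$ and $|\Delta|^{s'}$ have the same $A^0$-character and applying the just-proved identity, the polynomials $|\Delta|^{t(s)}$ and $|\Delta|^{s'}$ must share the same $A^0$-character.

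It then remains to verify that the $A^0$-characters of $|\Delta_0|,\dots,|\Delta_k|$ are linearly independent, so that $t(s)=s'$. By Theorem~\ref{th-delta_{j}invariants}, $\Delta_j$ has $A$-weight $\kappa(\lambda_j+\cdots+\lambda_k)$, hence its $A^0$-weight is $\kappa(\lambda_j+\cdots+\lambda_k)|_{\go a^0}$. Since $\{\lambda_0,\dots,\lambda_k\}$ is the basis of $(\go a^0)^*$ dual to $\{H_{\lambda_0},\dots,H_{\lambda_k}\}$, the family $\{\lambda_j+\cdots+\lambda_k\}_{j=0}^{k}$ is triangular (and thus linearly independent) with respect to this basis. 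This gives the required linear independence, completing the argument.

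I do not expect any real obstacle here: once Proposition~\ref{prop-delta-psi} is in hand, the first assertion is pure bookkeeping of exponents, and the second is the standard ``distinct weights are linearly independent'' principle applied to the torus $A^0$.
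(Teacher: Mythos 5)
Your proof is correct and follows essentially the same route as the paper: the first identity is the same exponent bookkeeping from Proposition \ref{prop-delta-psi}, and the second assertion again comes down to comparing $A^0$-weights, which are linearly independent because $\{\lambda_0,\ldots,\lambda_k\}$ restricts to a basis of $(\go a^0)^*$. The only (harmless) variation is that you obtain the $A^0$-character of $|\nabla|^s$ via the $A^0$-equivariance of $\psi$ and the first identity, whereas the paper reads it off directly from the explicit character $\chi_j^-$ in Theorem \ref{th-nabla} alongside $\chi_j$ from Theorem \ref{th-delta_{j}invariants}; the content is the same.
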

\begin{proof} The first statement is a straightforward consequence of Proposition  \ref{prop-delta-psi}. The second assertion  follows then from Theorem \ref {th-delta_{j}invariants} and \ref{th-nabla} \end{proof}

\medskip
${\bf Acknowledgments}$

During the writing of this paper we have greatly benefitted from help trough discussions or  mails from several colleagues. We would like to thank Stéphane Bijakowski, Henri Carayol, Rutger Noot, Bertrand Rémy, David Renard, Guy Rousseau,   Torsten Schoeneberg and Marcus Slupinski.

      
   \vskip 5pt

 \end{document}